\newcounter{mainthm}
\newtheorem{maintheorem}[mainthm]{Question}
\newtheorem{theorem}{Theorem}[section]
\newtheorem{lemma}[theorem]{Lemma}
\newtheorem{corollary}[theorem]{Corollary}
\newtheorem{proposition}[theorem]{Proposition}
\theoremstyle{definition}
\newtheorem{example}[theorem]{Example}
\newtheorem{remark}[theorem]{Remark}
\newtheorem{definition}[theorem]{Definition}
\numberwithin{equation}{section}
\begin{document}
\title{\bf\Large Matrix-Weighted Besov--Triebel--Lizorkin Spaces of
Optimal Scale: Real-Variable Characterizations,
Invariance on Integrable Index, and Sobolev-Type Embedding
\footnotetext{\hspace{-0.35cm} 2020 {\it Mathematics Subject Classification}.
Primary 46E35; Secondary 47A56, 42B25, 42C40, 46E40, 42B35.
\endgraf {\it Key words and phrases.}
matrix weight, generalized Besov--Triebel--Lizorkin-type space,
$\varphi$-transform, Peetre maximal function,
Littlewood--Paley function, almost diagonal operator,
molecule, wavelet, Sobolev-type embedding.
\endgraf This project is supported by
the National Key Research and Development Program of China
(Grant No. 2020YFA0712900),
the National Natural Science Foundation of China
(Grant Nos. 12431006 and 12371093),
the Fundamental Research Funds
for the Central Universities (Grant No. 2233300008).}}
\date{}
\author{Fan Bu, Dachun Yang\footnote{Corresponding author, E-mail:
\texttt{dcyang@bnu.edu.cn}/{\color{red}{\today}}/Final version.}
,\ Wen Yuan and Mingdong Zhang}
\maketitle
\date{}
\maketitle

\vspace{-0.8cm}

\begin{center}
\begin{minipage}{13.8cm}
{\small {\bf Abstract}\quad
In this article, using growth functions
we introduce generalized matrix-weighted
Besov--Triebel--Lizorkin-type
spaces with matrix $\mathcal{A}_{\infty}$ weights.
We first characterize these spaces, respectively,
in terms of the $\varphi$-transform,
the Peetre-type maximal function,
and the Littlewood--Paley functions.
Furthermore, after establishing the boundedness
of almost diagonal operators on the corresponding
sequence spaces, we obtain the molecular
and the wavelet characterizations of these spaces.
As applications, we find the sufficient and necessary conditions
for the invariance of those Triebel--Lizorkin-type
spaces on the integrable index and also for
the Sobolev-type embedding of all these spaces.
The main novelty exists in that these results are of wide generality,
the growth condition of growth functions is not only sufficient
but also necessary for the boundedness of almost diagonal operators
and hence this new framework of Besov--Triebel--Lizorkin-type
is optimal, some results either are new or improve the known
ones even for known matrix-weighted Besov--Triebel--Lizorkin spaces,
and, furthermore, even in the scalar-valued setting,
all the results are also new.}
\end{minipage}
\end{center}

\vspace{0.2cm}

\tableofcontents

\vspace{0.2cm}

\section{Introduction\label{s-intro}}

Throughout this article, we work in $\mathbb{R}^n$
and, unless necessary, we will not explicitly specify
this underlying space.

Around 1950, Nikol'ski\u{\i} \cite{nik51} and Besov \cite{bes59, bes61}
began to investigate Besov spaces on the Euclidean space $\mathbb{R}^n$.
Later, Triebel--Lizorkin spaces on $\mathbb{R}^n$
were independently studied by Lizorkin \cite{liz72, liz74}
and Triebel \cite{tri73} in the 1970s.
Besov--Triebel--Lizorkin (for short, BTL) spaces unify a variety of
classical function spaces, such as Lebesgue spaces,
Lipschitz spaces, Hardy spaces, and $\operatorname{BMO}$
(the space of all locally integrable functions on $\mathbb{R}^n$
with bounded mean oscillation).
We refer to the monographs \cite{tri83, tri92, tri06}
of Triebel for a systematical treatment of BTL spaces.
In 1990, Frazier and Jawerth in their seminal article
\cite{fj90} thoroughly studied
the homogeneous Triebel--Lizorkin spaces
$\dot{F}^{s}_{p, q}$, particularly the limiting case
$p=\infty$, established their well-known $\varphi$-transform
characterization via their related sequence spaces
$\dot{f}^{s}_{p, q}$. Using this $\varphi$-transform
characterization Frazier and Jawerth \cite{fj90} further gave
several properties of $\dot{F}^{s}_{p, q}$,
such as molecular and atomic characterizations,
duality, interpolation, and trace theorem.
The method used in \cite{fj90} has now become a standard and powerful approach,
which was used to study various BTL spaces in different settings
(see, for example, \cite{bow05,bow07,bow08,bh06,fr04,fr21,syy24,yy10,ysy10}).
Recently, BTL spaces were also generalized to be
associated with various operators (\cite{b20b,bd15,bd21a,
bd21b,bd21c, gkkp16}) and applied to
harmonic analysis and partial differential
equations (\cite{b20a,bbd22,h16a,h16b}).

To study the aforementioned space
$\dot{F}^{s}_{\infty,q}$, Frazier and Jawerth \cite[(5.4)]{fj90}
introduced its corresponding sequence space
$\dot{f}^{s}_{\infty,q}$. Using this and the $\varphi$-transform
characterization of $\dot{F}^{s}_{\infty,q}$, Frazier and Jawerth
\cite[Theorem 5.13]{fj90} showed that the dual space of $\dot{F}^{s}_{1,q}$
is precisely $\dot{F}^{-s}_{\infty,q'}$ and hence
gave another perspective on the well-known duality
between the Hardy space ${\rm H}^1$ and BMO obtained by Fefferman and Stein
in \cite{fs72}.

To introduce the sequence space $\dot{f}^{s}_{\infty,q}$
in \cite{fj90}, we need to first recall some concepts.
Let $\mathbb{Z}$ be the set of all integers
and $\mathcal{D}:=\{Q_{j, k}\}_{j\in\mathbb{Z}, k\in{\mathbb{Z}}^n}
:=\{2^{-j}([0, 1)^n+k)\}_{j\in\mathbb{Z}, k\in{\mathbb{Z}}^n}$
the set of all \emph{dyadic cubes} in $\mathbb{R}^n$.
For any measurable set $E\subset\mathbb{R}^n$,
let $|E|$ denote its Lebesgue measure
and $\mathbf{1}_E$ be its \emph{characteristic function}.
For any measurable set $E\subset\mathbb{R}^n$ with $|E|\in(0,\infty)$,
let $\widetilde{\mathbf{1}}_E:=|E|^{-\frac{1}{2}}\mathbf{1}_E$.
Let $s\in\mathbb{R}$, $p\in(0, \infty)$, and $q\in(0, \infty]$.
Recall that the space $\dot{f}_{\infty, q}^{s}$
is defined to be the set of all $t:=
\{t_Q\}_{Q\in\mathcal{D}}$ in $\mathbb{C}$ such that
\begin{align}\label{eq-f_3}
\|t\|_{\dot{f}_{\infty, q}^{s}}
:=\sup_{P\in\mathcal{D}}\left\{\frac{1}{|P|}
\int_P\sum_{Q\in\mathcal{D}, Q\subset P}
\left[|Q|^{-\frac{s}{n}}\left|t_Q\right|
\widetilde{\mathbf{1}}_Q(x)\right]^q \,dx\right\}^{\frac{1}{q}}
\end{align}
is finite (with the usual modification made if $q=\infty$)
(see \cite[(5.4)]{fj90}) and the space $\dot{f}^{s,\frac{1}{p}}_{p,q}$
is defined to be the set of all
$t:=\{t_Q\}_{Q\in\mathcal{D}}$ in $\mathbb{C}$ such that
\begin{align*}
\|t\|_{\dot{f}^{s,\frac{1}{p}}_{p,q}}
:=\sup_{P\in\mathcal{D}}\left\{\frac{1}{|P|}
\int_P\left(\sum_{Q\in\mathcal{D}, Q\subset P}
\left[|Q|^{-\frac{s}{n}}\left|t_Q\right|
\widetilde{\mathbf{1}}_Q(x)\right]^q\right)^{\frac{p}{q}}
\,dx\right\}^{\frac{1}{p}}
\end{align*}
is finite (with the usual modification made if $q=\infty$)
(see \cite[Definition 3.1]{yy10}).

Observe that, in \eqref{eq-f_3}, when $q\in(0,\infty)$,
$\|t\|^q_{\dot{f}^s_{\infty,q}}$ is equivalent to
the Carleson norm of the measure
$\sum_{Q\in\mathcal{D}}(|Q|^{-\frac{s}{n}-\frac{1}{2}}
|t_Q|)^q|Q| \delta_{(x_Q, \ell(Q))}$
on $\mathbb{R}^n\times(0,\infty)$, where $\delta_{(x, t)}$ is the point mass at $(x, t)\in\mathbb{R}^n\times(0,\infty)$. Furthermore, 
the \emph{invariance} of the sequence space $\dot{f}^{s,\frac{1}{p}}_{p,q}$
on the integrable index $p$ was also given by Frazier and Jawerth 
in \cite[Corollary 5.7]{fj90}, that is, for any $s\in\mathbb{R}$, $p\in(0, \infty)$, 
and $q\in(0, \infty]$, 
\begin{align}\label{eq-f=f}
\dot{f}_{\infty, q}^{s}=\dot{f}^{s,\frac{1}{p}}_{p,q}
\end{align}
with equivalent quasi-norms. Later, Bownik \cite[Theorem 3.6]{bow08} 
further extended \eqref{eq-f=f}
to Triebel--Lizorkin spaces on $\mathbb{R}^n$
associated with general expansive dilations and
the corresponding doubling measures. As pointed out 
by Frazier and Jawerth in \cite[p.\,75]{fj90},
\eqref{eq-f=f} serves an analogue of the John--Nirenberg lemma
on the sequence space level. Bownik \cite[p.\,142]{bow08}
also observed that, to compute the $\dot{f}_{\infty, q}^{s}$-norm,
sometimes \eqref{eq-f=f} can be a useful tool via the computation
of the $\dot{f}^{s,\frac{1}{p}}_{p,q}$-norm 
(see the proof of \cite[Corollary 3.7]{bow08}). Moreover, recently
Bu et al. \cite[Theorem 4.20]{bhyy5} and \cite[Theorem 12.1]{bhyy2} 
used \eqref{eq-f=f} to obtain the sharp boundedness of almost diagonal 
operators on the corresponding matrix-weighted sequence spaces.

To answer an open problem on $Q$ spaces in
\cite{dx04}, motivated by \eqref{eq-f=f},
Yang et al. \cite{yy08,yy10,ysy10}
introduced and thoroughly studied BTL spaces
on $\mathbb{R}^n$ mixed with the structure of Morrey spaces,
which are now called Besov--Triebel--Lizorkin-type (for short, BTL-type) spaces.
The finer structure of BTL-type spaces
enables them further to unify classical BTL spaces,
Morrey spaces, and $Q$ spaces, which hence
gives a positive answer to the open problem
in \cite{dx04}. For more studies on $Q$ spaces and BTL-type spaces,
we refer to \cite{ghs21,ghs23,ht23,x06,x19,yy13,yyz14,yhmsy15,yhsy15a,yhsy15b}.
Another important class of generalized BTL spaces
on $\mathbb{R}^n$, associated with Morrey spaces, was also well developed.
Indeed, to study semilinear heat equations and the Navier--Stokes equation,
Kozono and Yamazaki \cite{ky94} introduced the Besov--Morrey spaces
on $\mathbb{R}^n$, which mix the structures
of both Besov spaces and Morrey spaces.
Following this idea, Tang and Xu \cite{tx05}
investigated the Triebel--Lizorkin--Morrey spaces on $\mathbb{R}^n$.
Later, Sawano \cite{saw08,saw09,saw10a,saw10b}
and Sawano and Tanaka \cite{st07,st09} further systematically
studied Besov--Triebel--Lizorkin--Morrey spaces.
In particular, very recently Haroske et al. \cite{hl23, hlms23,hms24}
studied another generalization of BTL-type spaces,
which are associated with positive growth functions defined
on $(0, \infty)$; these BTL-type spaces are of wide generality.
All these spaces prove useful in harmonic analysis
and partial differential equations (see, for example,
\cite{hms22,hs12,hs13,ky94,lxy14,ly13,maz03,tri13,tri14}),
which naturally lead to the following question
about the invariance of matrix-weighted BTL-spaces
on the integrable index $p$.

\begin{maintheorem}\label{q1} In the matrix-weighted setting,
does there exist a natural generalization
of the aforementioned $\operatorname{BTL}$-type spaces so
that a counterpart of \eqref{eq-f=f} holds?
\end{maintheorem}

The main target of this article is to try to answer Question \ref{q1}.
Recall that, to investigate the prediction
theory of multivariate stochastic processes,
Wiener and Masani \cite[Section 4]{wm58} used
the matrix-weighted Lebesgue space $L^2(W)$.
In the study of both multivariate random stationary
processes and the invertibility of Toeplitz operators,
Treil and Volberg \cite{tv97} determined the matrix $\mathcal{A}_2$ condition
and showed that the Hilbert transform is bounded on
$L^2(W)$ over ${\mathbb R}$ if and only if $W$ satisfies the matrix
$\mathcal{A}_2$ condition.  Subsequently, based on
Bellman function arguments, Nazarov and Treil \cite{nt97}
formulated the matrix $\mathcal{A}_p$ condition for any $p\in(1, \infty)$
and showed that the Hilbert transform
is bounded on $L^{p}(W)$ over ${\mathbb R}$
if and only if $W$ satisfies the matrix $\mathcal{A}_p$ condition.
Via a method related to the classical Littlewood--Paley theory,
Volberg \cite{vol97} also obtained
the same results as those in \cite{nt97} for any $p\in(1,\infty)$.
Since then, the study of $L^p(W)$ attracts more and more attention.
In 2017, using the convex body domination, Nazarov et al. \cite{nptv17}
established the $L^2(W)$-norm inequality over $\mathbb{R}^n$
of Calder\'on--Zygmund operators with upper bound
$C[W]^{\frac{3}{2}}_{\mathcal{A}_2}$.
Furthermore, Domelevo et al. \cite{dptv24}
showed that this exponent $\frac{3}{2}$ is indeed sharp.
Recently, Bownik and Cruz-Uribe \cite{bc22}
established the Jones factorization theorem and the Rubio de Francia
extrapolation theorem for matrix $\mathcal{A}_p$
weights with $p\in(1,\infty)$. We also refer to
\cite{dhl20, dly21, gol03, llor23, llor24, nie25} for more studies on $L^p(W)$.

Another significant aspect on the study of matrix weights is to
develop the real-variable theory of matrix-weighted function spaces.
First, for any $s\in\mathbb{R}$ and $q\in(0,\infty]$,
matrix-weighted Besov spaces $\dot{B}^{s}_{p, q}(W)$ on
$\mathbb{R}^n$ were studied by Roudenko \cite{rou03, rou04}
for any $p\in(1, \infty)$ and $W\in\mathcal{A}_{p}$
and by Frazier and Roudenko \cite{fr04} for any $p\in(0, 1]$
and $W\in\mathcal{A}_{p}$. Later, for any $s\in\mathbb{R}$,
$p\in(0,\infty)$, $q\in(0,\infty]$, and $W\in\mathcal{A}_{p}$,
Frazier and Roudenko \cite{fr21} developed the
matrix-weighted Triebel--Lizorkin space
$\dot{F}^{s}_{p, q}(W)$ on $\mathbb{R}^n$ and established the
Littlewood--Paley theory of $L^p(W)$ with $p\in(1, \infty)$,
which further induces the identification of $\dot{F}^{0}_{p, 2}(W)$
and $L^p(W)$.
Recently, for any $A\in\{B, F\}$, $s\in\mathbb{R}$, $\tau\in[0,\infty)$,
$p\in(0,\infty)$, $q\in(0,\infty]$, and $W\in\mathcal{A}_{p}$,
Bu et al. \cite{bhyy1,bhyy2,bhyy3}
introduced the matrix-weighted BTL-type space
$\dot{A}^{s,\tau}_{p,q}(W)$ on $\mathbb{R}^n$
and established its real-variable theory,
such as the $\varphi$-transform characterization,
both the molecular and the wavelet characterizations, and the
boundedness of pseudo-differential, trace, and Calder\'on--Zygmund
operators; see also the survey \cite{byyz25}. We also refer to
Xu et al. \cite{bx24a,bx24b,bx24c,wgx24} and Yang et al. \cite{bcyy25,byy23,lyy24a,lyy24b,wyy23}
for more studies on function spaces
associated with matrix $\mathcal{A}_p$ weights.
On the one hand, for any $p\in(1,\infty)$,
Volberg \cite{vol97} introduced the matrix $\mathcal{A}_{p,\infty}$ class
on $\mathbb R$, which is a larger class than the matrix $\mathcal{A}_{p}$
class and corresponds to the Muckenhoupt $A_{\infty}$ class in the scalar-valued setting.
Later, for any $p\in(0,\infty)$, Bu et al. \cite{bhyy4}
obtained several equivalent characterizations
of matrix $\mathcal{A}_{p,\infty}$ weights on $\mathbb{R}^n$ and investigated
their fundamental properties such as
the self-improvement property and the reverse H\"{o}lder inequality.
It is also worth pointing out, for any $p\in(0,\infty)$ and
$W\in\mathcal{A}_{p,\infty}$, Bu et al. \cite{bhyy4} introduced
the concepts of upper and lower dimensions of $W$
and used them to obtain sharp estimates of
the corresponding reducing operators of order $p$ for $W$.
For any $A\in\{B, F\}$, $s\in\mathbb{R}$, $\tau\in[0,\infty)$,
$p\in(0,\infty)$, $q\in(0,\infty]$, and
$W\in\mathcal{A}_{p,\infty}$, Bu et al. \cite{bhyy5}
also developed a complete real-variable
theory of the inhomogeneous matrix-weighted BTL-type space
$A^{s,\tau}_{p,q}(W)$ on $\mathbb{R}^n$.

In this article, to answer the above Question \ref{q1},
we introduce generalized matrix-weighted
BTL-type spaces $\dot{A}_{p,q}^{s, \upsilon}(W)$ on $\mathbb{R}^n$,
here and thereafter,
$$(A, a)\in\{(B, b), (F, f)\},$$
$s\in\mathbb{R}$, $p\in(0, \infty)$, $q\in(0,\infty]$,
$W\in\mathcal{A}_{p,\infty}$,
and $\upsilon$ is a growth function.
We first establish the $\varphi$-transform
characterization of $\dot{A}_{p,q}^{s, \upsilon}(W)$. Then we
characterize $\dot{A}_{p,q}^{s, \upsilon}(W)$, respectively,
in terms of the Peetre-type maximal function and
the Littlewood--Paley functions. Furthermore,
after establishing the boundedness of almost diagonal
operators on $\dot{a}_{p,q}^{s, \upsilon}(W)$
(the generalized matrix-weighted BTL-type sequence spaces),
we obtain the molecular and the wavelet
characterizations of $\dot{A}_{p,q}^{s, \upsilon}(W)$.
As applications, we give the sufficient and necessary conditions for
the invariances of $\dot{f}_{p,q}^{s,\upsilon_{1/p, W}}(W)$
and $\dot{F}_{p,q}^{s,\upsilon_{1/p, W}}(W)$
on the integrable index $p$ in the matrix-weighted setting
and hence answer the above Question \ref{q1}.
Moreover, we also find the sufficient and necessary condition for
the Sobolev-type embedding of $\dot{A}_{p,q}^{s, \upsilon}(W)$.
Finally, we compare results obtained in this article
with corresponding known results.
Compared to scalar weights, the lack of the
separability between matrix weights and vector-valued functions
brings some essential difficulties when one tries to directly
establish the $\varphi$-transform characterization of
$\dot{A}_{p,q}^{s, \upsilon}(W)$ by following
the well-known approach originally created in \cite{fj90}.
We employ some ideas from \cite{fr04,fr21,rou03}
and some fundamental and important properties of
matrix $\mathcal{A}_{p, \infty}$ weights obtained in \cite{bhyy4}
to circumvent these difficulties.
More precisely, letting $\mathbb{A}$ be a sequence of reducing
operators of order $p$ for $W$, we first prove
$\dot{a}_{p,q}^{s, \upsilon}(W)=
\dot{a}_{p,q}^{s, \upsilon}(\mathbb{A})$
(the generalized averaging BTL-type sequence space)
and $\dot{A}_{p,q}^{s, \upsilon}(W)=
\dot{A}_{p,q}^{s, \upsilon}(\mathbb{A})$
(the generalized averaging BTL-type space)
and then establish the $\varphi$-transform characterization
of $\dot{A}^{s,\upsilon}_{p,q}(\mathbb{A})$.
Based on these, we finally obtain the $\varphi$-transform characterization
of $\dot{A}^{s,\upsilon}_{p,q}(W)$.
Using the molecular and the wavelet characterizations
of $\dot{A}^{s,\upsilon}_{p,q}(\mathbb{A})$, we also
obtain the boundedness of trace, pseudo-differential,
and Calder\'{o}n--Zygmund operators on $\dot{A}^{s,\upsilon}_{p,q}(W)$,
which are presented in \cite{yyz24} to limit the length of this article.

The \emph{novelty} of these results lies in the following four aspects.
Firstly, the wide generality of growth functions
guarantees that spaces $\dot{A}^{s,\upsilon}_{p,q}(W)$ include
not only matrix-weighted BTL-type spaces $\dot{A}^{s,\tau}_{p,q}(W)$
studied in \cite{bhyy1,bhyy2,bhyy3,bhyy5} and particularly
matrix-weighted BTL spaces $\dot{A}^{s}_{p,q}(W)$ studied in
\cite{fr04,fr21,rou03,rou04} (see Subsection \ref{s-fs-rela})
but also the space $\dot{F}_{p,q}^{s,\upsilon_{1/p, W}}(W)$
that is necessary for studying Question \ref{q1}.
In addition, spaces $\dot{A}^{s,\upsilon}_{p,q}(W)$
we study are associated with the matrix $\mathcal{A}_{p, \infty}$
class, which is a larger weight class than the
matrix $\mathcal{A}_{p}$ class (see \cite{bhyy4}).
All these enable all our results, such as the $\varphi$-transform
characterization, the boundedness of almost diagonal operators, and
the molecular and the wavelet characterizations, to be of wide generality.
Secondly, the growth condition of growth functions
is optimal in the following sense: If $\upsilon$ is a growth function,
then almost diagonal operators are bounded on $\dot{a}^{s,\upsilon}_{p,q}(W)$
and, conversely, if a positive function $\upsilon$ defined on $\mathcal{D}$
which is almost increasing, then the boundedness of almost diagonal operators
on $\dot{a}^{s,\upsilon}_{p,q}(W)$ guarantees that $\upsilon$ is indeed
a growth function (see Proposition \ref{prop-necessity}).
Thus, this framework of BTL-type spaces is optimal.
Thirdly, for any $s\in\mathbb{R}$, $p\in(0, \infty)$,
$q\in(0,\infty]$, any growth function $\upsilon$,
any matrix $\mathcal{A}_{p,\infty}$ weight $W$,
and any sequence $\mathbb{A}$ of positive definite matrices,
we show that $\dot{a}_{p,q}^{s, \upsilon}(W)=
\dot{a}_{p,q}^{s, \upsilon}(\mathbb{A})$ if and only if
$\mathbb{A}$ is a sequence of reducing
operators of order $p$ for $W$ (see Theorem \ref{thm-a(A)=a(W)}).
Even for classical spaces $\dot{a}_{p,q}^{s}(W)$
and $\dot{a}_{p,q}^{s}(\mathbb{A})$, this conclusion is also new.
In particular, for spaces $\dot{f}_{p,2}^{0}(W)$
and $\dot{f}_{p,2}^{0}(\mathbb{A})$, Theorem \ref{thm-a(A)=a(W)}
not only completes the corresponding results
in \cite[p.\,454, 2) of Remarks]{vol97}, but also
answers an open question therein (see Remark \ref{rmk-a(W)=a(A)} for
the details). Fourthly, we answer Question \ref{q1} by giving a
sufficient and necessary condition such that
a counterpart of \eqref{eq-f=f} holds in the matrix-weighted setting
(see Theorems \ref{thm-3=4} and \ref{thm-3=4-F}).
It is worth pointing out that we show \eqref{eq-f=f}
usually fails for Besov-type spaces
(see Proposition \ref{prop-3=4-b}), which also answers
an open question posed in \cite[p.\,464]{yy10}.

The organization of the remainder of this article is as follows.

In Section \ref{s-fs}, we first give a brief review of
matrix weights and introduce generalized
matrix-weighted BTL-type spaces $\dot{A}_{p,q}^{s, \upsilon}(W)$
(see Definition \ref{MWBTL}).
Then we recall the concept of matrix $\mathcal{A}_{p,\infty}$ weights
and introduce the concept of growth functions. Based on these,
we present the $\varphi$-transform characterization
of $\dot{A}_{p,q}^{s, \upsilon}(W)$
(see Theorem \ref{thm-phitransMWBTL}). Next,
we discuss the basic properties of growth functions
on indices and the nontriviality of function spaces related
to growth functions. Finally, we give several
examples of growth functions and clarify the relations of
$\dot{A}_{p,q}^{s, \upsilon}(W)$ with some known spaces.

In Section \ref{s-pf}, we are dedicated to proving Theorem \ref{thm-phitransMWBTL}.
To this end, we first introduce
averaging BTL-type spaces $\dot{A}_{p,q}^{s, \upsilon}(\mathbb{A})$
and their corresponding sequence spaces
$\dot{a}_{p,q}^{s, \upsilon}(\mathbb{A})$.
Then we show $\dot{a}_{p,q}^{s, \upsilon}(W)
=\dot{a}_{p,q}^{s, \upsilon}(\mathbb{A})$ and
$\dot{A}_{p,q}^{s, \upsilon}(W)
=\dot{A}_{p,q}^{s, \upsilon}(\mathbb{A})$,
where $\mathbb{A}$ is a sequence of reducing operators
of order $p$ for $W$ (see Theorems \ref{thm-a(A)=a(W)}
and \ref{thm-A(W)=A(A)}). By this and
establishing the $\varphi$-transform characterization
of $\dot{A}_{p,q}^{s, \upsilon}(\mathbb{A})$
(see Theorem \ref{thm-phitansaverMWBTL}),
we finally prove Theorem \ref{thm-phitransMWBTL}.

In Section \ref{s-ec}, we aim to
characterize $\dot{A}_{p,q}^{s, \upsilon}(W)$, respectively,
via the Peetre-type maximal function
and the Littlewood--Paley functions
(see Theorems \ref{thm-Pee-cha} and \ref{thm-G-L-cha}).
To this end, we make full use of
Proposition \ref{prop-dct-gh}, which can be regarded as
the discrete Littlewood--Paley $g^*_{\lambda}$-function
characterization of $\dot{a}^{s,\upsilon}_{p,q}$.

In Section \ref{s-cmw}, we aim to establish the
molecular and the wavelet
characterizations of $\dot{A}_{p,q}^{s, \upsilon}(W)$
by the boundedness of almost diagonal
operators on $\dot{a}_{p,q}^{s, \upsilon}(W)$.
To this end, we first establish the
boundedness of almost diagonal
operators on $\dot{a}_{p,q}^{s, \upsilon}$
(see Theorem \ref{thm-bound-ad}).
Using this result and the idea of the proof of
\cite[Theorem 4.19]{bhyy5}, we then obtain
the boundedness of almost diagonal
operators on $\dot{a}_{p,q}^{s, \upsilon}(W)$.
Finally, we establish the molecular and the wavelet
characterizations of $\dot{A}_{p,q}^{s, \upsilon}(W)$
(see Theorems \ref{moledecomp} and \ref{Dauwav decomp}).

In Section \ref{s-app}, as applications,
we give the sufficient and necessary conditions
for the invariances of $\dot{f}_{p,q}^{s,\upsilon_{1/p, W}}(W)$
and $\dot{F}_{p,q}^{s,\upsilon_{1/p, W}}(W)$ with $\upsilon_{1/p, W}$
as in \eqref{eq-tau_W} (replaced $\tau$ by $1/p$) on the
integrable index $p$, which is the counterpart of \eqref{eq-f=f}
in the matrix-weighted setting
(see Theorems \ref{thm-3=4} and \ref{thm-3=4-F})
and consequently answers Question \ref{q1}. Finally,
we give the sufficient and necessary conditions
for the Sobolev-type embedding of $\dot{A}_{p,q}^{s, \upsilon}(W)$
by working at the level of sequence spaces (see Theorem \ref{thm-sobolev-B}).

At the end of this introduction, we make some conventions on notation.
Let $\mathbb{N}:=\{1,2,\dots\}$ and $\mathbb{Z}_+:=\mathbb{N}\cup\{0\}$.
All the cubes $Q\subset\mathbb{R}^n$ in this article
are always assumed to have edges parallel to the coordinate
axes. For any cube $Q\subset\mathbb{R}^n$,
let $c_Q$ be its \emph{center}, $\ell(Q)$
be its \emph{edge length}, and $j_Q:=-\log_{2}\ell(Q)$.
For any cube $Q\subset\mathbb{R}^n$ and any $r\in(0,\infty)$,
let $rQ$ be the cube with the same center as $Q$
and the edge length $r\ell(Q)$.
Let $\varphi$ be a complex-valued function defined on $\mathbb{R}^n$.
For any $j\in\mathbb{Z}$ and $x\in\mathbb{R}^n$,
let $\widetilde{\varphi}(x):=\overline{\varphi(-x)}$
and $\varphi_j(x):=2^{jn}\varphi(2^j x)$.
For any $j\in\mathbb{Z}$, $k\in\mathbb{Z}^n$,
and $Q:=Q_{j, k}\in\mathcal{D}$, let
$x_Q:=2^{-j}k$ and, for any $x\in\mathbb{R}^n$, let
\begin{align}\label{eq-phi_Q}
\varphi_Q(x):=2^{\frac{jn}{2}}\varphi\left(2^j x-k\right)
=|Q|^{\frac{1}{2}}\varphi_j\left(x-x_Q\right).
\end{align}
For any $p,q\in\mathbb{R}$,
let $p\wedge q:=\min\{p, q\}$ and $p\vee q:=\max\{p, q\}$.
Let $\mathbf{0}$ denote the \emph{origin}
of $\mathbb{R}^n$ or $\mathbb{C}^m$.
For any measurable set $E\subset\mathbb{R}^n$
with $|E|\in(0,\infty)$ and any
measurable function $f$ on $\mathbb{R}^n$, let
$\fint_E f(x)\,dx:=\frac{1}{|E|}\int_{E}f(x)\,dx$.
For any $p\in(0,\infty]$ and any measurable set
$E\subset\mathbb{R}^n$, the \emph{Lebesgue space} $L^p(E)$
is defined to be the set of all complex-valued
measurable functions $f$ on $E$ such that
\begin{align*}
\|f\|_{L^p(E)}:=
\begin{cases}
\displaystyle{\left[\int_E|f(x)|^p\, dx\right]^{\frac{1}{p}}}
& \text{if } p \in(0, \infty),\\
\displaystyle{\mathop{\operatorname{ess\,sup}}_{x\in E}|f(x)|}
& \text{if } p=\infty
\end{cases}
\end{align*}
is finite. Based on the agreement we made at the beginning of the article,
we simply denote the space
$L^p(\mathbb{R}^n)$ by $L^p$. For any $p\in(0,\infty)$,
let $p':=\frac{p}{p-1}$ if $p\in(1, \infty)$
and let $p':=\infty$ if $p\in(0,1]$ be the \emph{conjugate index} of $p$.
For any $x\in\mathbb{R}^n$ and $r\in(0,\infty)$, let
\begin{align*}
B(x,r):=\left\{y\in\mathbb{R}^n:\ \left|x-y\right|<r\right\}.
\end{align*}
The symbol $C$ denotes a positive constant
that is independent of the main
parameters involved, but may vary from line to line.
The symbol $A\lesssim B$ means that $A\leq CB$ for
some positive constant $C$, while $A\sim B$ means $A\lesssim B\lesssim A$.
Finally, when we prove a theorem (and the like),
in its proof we always use the same symbols as in
the statement itself of that theorem (and the like).

\section{Generalized Matrix-Weighted Function Spaces
$\dot{A}_{p,q}^{s, \upsilon}(W)$\label{s-fs}}	

This section contains three subsections.
In Subsection \ref{s-fs-def},
we introduce generalized matrix-weighted BTL-type
spaces $\dot{A}_{p,q}^{s, \upsilon}(W)$
and the concept of growth functions.
Next, we give the $\varphi$-transform characterization
of $\dot{A}_{p,q}^{s, \upsilon}(W)$.
In Subsection \ref{s-fs-prop},
we discuss some fundamental properties of growth functions
on indices and the nontriviality of function spaces related
to growth functions. Finally, in Subsection \ref{s-fs-rela},
we present several examples of growth functions
and clarify the relations of $\dot{A}_{p,q}^{s, \upsilon}(W)$
with some known spaces, which further implies that the spaces
$\dot{A}_{p,q}^{s, \upsilon}(W)$ in this article are of wide generality.

\subsection{Definition of $\dot{A}_{p,q}^{s, \upsilon}(W)$}\label{s-fs-def}

We start with some basic knowledge about matrices.
In what follows, we always use $m\in\mathbb{N}$ to
denote the dimension of vectors or the order of square matrices.
Let $M_m(\mathbb{C})$ be the set of all $m\times m$
complex-valued matrices. For any $A\in M_m(\mathbb{C})$,
we call $A$ a \emph{unitary matrix} if $A^*A=I_m$,
where $A^*$ is the conjugate transpose of $A$ and
$I_m$ is the identity matrix of order $m$.
A matrix $A\in M_m(\mathbb{C})$ is
said to be \emph{positive definite}
if, for any $\vec z\in\mathbb{C}^m\setminus
\{\mathbf{0}\}$, $\vec z^*A\vec z>0$
and to be \emph{positive semidefinite} if,
for any $\vec z\in\mathbb{C}^m$,
$\vec{z}^*A\vec{z}\geq0$ (see, for example,
\cite[(7.1.1a) and (7.1.1b)]{hj13}).
For any given positive definite matrix $A\in M_m(\mathbb{C})$,
there exists a unitary matrix $U\in M_m(\mathbb{C})$ such that
\begin{align}\label{eq-W^a}
A=U\operatorname{diag}\left(\lambda_1,\ldots,\lambda_m\right)U^{*},
\end{align}
where $\{\lambda_i\}_{i=1}^m$ in $(0,\infty)$ are
all the eigenvalues of $A$ (see, for example,
\cite[Theorems 2.5.6 and 7.2.1]{hj13}).
For any $\alpha\in\mathbb{R}$, let
$A^{\alpha}:=U\operatorname{diag}
(\lambda^{\alpha}_1,\ldots,\lambda^{\alpha}_m)U^{*}$.
We should point out that $A^\alpha$
is independent of $U$ and hence
well defined (see, for example, \cite[p.\,408]{hj94}).

A \emph{scalar weight} is a locally integrable function
on $\mathbb{R}^n$
that takes values in $(0, \infty)$ almost everywhere
(see, for example, \cite[p.\,499]{gra14a}).
Let $D_m(\mathbb{C})$ be the set of all $m\times m$
positive semidefinite complex-valued matrices.
A matrix-valued function
$W:\ \mathbb{R}^n\to D_m(\mathbb{C})$ is called
a \emph{matrix weight} if, for almost every $x\in\mathbb{R}^n$,
$W(x)$ is positive definite and all the
entries of $W$ are locally integrable functions on
$\mathbb{R}^n$ (see, for example, \cite{nt97,vol97}).
It is obvious that, when $m=1$,
a matrix weight reduces to a scalar weight.
Notice that, for any matrix weight
$W:\ \mathbb{R}^n\to D_m(\mathbb{C})$
and any $\alpha\in\mathbb{R}$,
$W^{\alpha}$ is a matrix-valued function
whose entries are all measurable functions on $\mathbb{R}^n$
(see, for example, \cite[Lemma 2.3.5]{rs95}).

We next present some concepts and notation.
Let $\mathcal{S}$ be the set of
all Schwartz functions on $\mathbb{R}^n$
equipped with the well-known topology
determined by a countable family of norms
(see, for example, \cite[Proposition 8.2]{fol99})
and $\mathcal{S}'$ be the set of
all tempered distributions on $\mathbb{R}^n$
equipped with the weak-$\ast$ topology.
For any $f\in L^1$, the \emph{Fourier transform}
$\widehat{f}$ of $f$ is defined by setting,
for any $\xi\in\mathbb{R}^n$,
$\widehat{f}(\xi):=\int_{\mathbb{R}^n}f(x)
e^{-i x\cdot \xi}\,dx$, where $i=\sqrt{-1}$.
The above normalization of the Fourier transform
used in \cite[p.\,165]{fj90}, \cite[p.\,4]{fjw91},
and \cite[p.\,452]{yy10} enables us to directly
apply several results from these works.
Alternatively, if a different normalization is used,
such as one with a $2\pi$ factor in the exponent,
corresponding adjustments need to be made in various other formulas.
Furthermore, for any $f\in\mathcal{S}'$,
the \emph{Fourier transform} $\widehat{f}$
of $f$ is defined by setting,
for any $\varphi\in\mathcal{S}$,
$\langle\widehat f,\varphi\rangle
:=\langle f,\widehat{\varphi}\rangle$.
For any function $f$ on $\mathbb{R}^n$,
its \emph{support} $\operatorname{supp}f$
is defined to be closure of the set
$\{x\in\mathbb{R}^n:\,f(x)\neq0\}$ in $\mathbb{R}^n$.
Suppose that $\varphi\in\mathcal{S}$ satisfies
\begin{align}\label{cond1}
\operatorname{supp}\widehat{\varphi}\subset
\left\{\xi\in\mathbb{R}^n:\ \frac{1}{2}\leq|\xi|\leq2\right\}
\text{\ and\ }\min\left\{\left|\widehat{\varphi}(\xi)\right|:\
\frac{3}{5}\leq|\xi|\leq\frac{5}{3}\right\}>0.
\end{align}
By \cite[Lemma (6.9)]{fjw91},
there exists $\psi\in\mathcal{S}$ satisfying \eqref{cond1} such that,
for any $\xi\in\mathbb{R}^n\backslash\{\mathbf{0}\}$,
\begin{align}\label{cond3}
\sum_{j\in\mathbb{Z}}\overline{\widehat{\varphi}\left(2^j\xi\right)}
\widehat{\psi}\left(2^j\xi\right)=1.
\end{align}
Assume that $A\in\{B, F\}$ and $p,q\in(0,\infty]$.
For any sequence $\{f_j\}_{j\in\mathbb{Z}}$ of measurable
functions on $\mathbb{R}^n$, let
\begin{align}\label{eq-LA}
\|\{f_j\}_{j\in\mathbb{Z}}\|_{L\dot{A}_{p, q}}:=
\begin{cases}
\displaystyle\|\{f_j\}_{j\in\mathbb{Z}}\|_{l^q(L^p)}
:=\left(\sum_{j\in\mathbb{Z}}\|f_j\|
_{L^p}^q\right)^{\frac{1}{q}}
& \text{if } A=B, \\
\displaystyle\|\{f_j\}_{j\in\mathbb Z}\|_{L^p(l^q)}
:=\left\|\left(\sum_{j\in\mathbb{Z}}|f_j|^q
\right)^{\frac{1}{q}}\right\|_{L^p}
& \text{if } A=F
\end{cases}
\end{align}
(with the usual modification made if $q=\infty$).
In what follows, for any $j_0\in\mathbb{Z}$,
let $\mathbf{1}_{j\geq j_0}:=\mathbf{1}_{[j_0,\infty)}(j)$.
Furthermore, for any positive function $\upsilon$
defined on $\mathcal{D}$,
let $L\dot{A}_{p, q}^{\upsilon}$ be the set of
all sequences $\{f_j\}_{j\in\mathbb{Z}}$
of measurable functions on $\mathbb{R}^n$ such that
\begin{align}\label{LA_nu}
\|\{f_j\}_{j\in\mathbb{Z}}\|_{L\dot{A}_{p, q}^{\upsilon}}
:=\sup_{P\in\mathcal{D}}\frac{1}{\upsilon(P)}\|\{f_j\mathbf{1}_P
\mathbf{1}_{j\geq j_P}\}_{j\in\mathbb Z}\|_{L\dot{A}_{p, q}}<\infty.
\end{align}
As in \cite{fjw91}, let
\begin{align*}
\mathcal{S}_{\infty}:=\left\{\varphi\in\mathcal{S}
:\ \int_{\mathbb{R}^n}\varphi(x)x^{\gamma}\,dx=0
\ {\rm for\ any}\ \gamma\in{\mathbb{Z}}_{+}^n\right\},
\end{align*}
and equip $\mathcal{S}_{\infty}$ with the same topology as
$\mathcal{S}$.
We denote the dual space of $\mathcal{S}_{\infty}$
by $\mathcal{S}_{\infty}'$, equipped with the weak-$\ast$ topology.
It is well known that $\mathcal{S}_{\infty}'=\mathcal{S}'/\mathcal{P}$
as topological spaces, where $\mathcal{P}$ is
the set of all polynomials on $\mathbb{R}^n$
(see \cite[Propostion 1.1.3]{gra14b} and
\cite[Proposition 8.1]{ysy10} for more details).
For any $\vec{f}:=(f_1, \dots, f_m)^{T}
\in(\mathcal{S}'_{\infty})^m$ [or $(\mathcal{S}')^m$]
and $\varphi\in\mathcal{S}_{\infty}$ (or $\mathcal{S}$),
let $\varphi*\vec{f}:=(\varphi*f_1, \dots, \varphi*f_m)^{T}$.
We now give the definition of generalized matrix-weighted BTL-type spaces.
\begin{definition}\label{MWBTL}
Let $s\in\mathbb{R}$, $p\in(0, \infty)$,
$q\in(0, \infty]$, and $W$ be a matrix weight.
Suppose that $\upsilon$ is a positive function defined on $\mathcal{D}$
and $\varphi\in{\mathcal{S}}$ satisfies \eqref{cond1}.
The \emph{generalized matrix-weighted Besov-type space}
$\dot{B}_{p,q}^{s, \upsilon}(W,\varphi)$ and the
\emph{generalized matrix-weighted Triebel--Lizorkin-type space}
$\dot{F}_{p,q}^{s, \upsilon}(W,\varphi)$ are respectively defined to be the sets
of all $\vec{f}\in(\mathcal{S}'_{\infty})^m$ such that
\begin{align*}
\left\|\vec{f}\right\|_{\dot{A}_{p,q}^{s, \upsilon}(W, \varphi)}	
:=\left\|\left\{2^{js}\left|W^{\frac1{p}}
\left(\varphi_j*\vec{f}\right)\right|\right\}
_{j\in\mathbb{Z}}\right\|_{L\dot{A}_{p, q}^{\upsilon}}<\infty,
\end{align*}
where $A\in\{B, F\}$ and
$\|\cdot\|_{L\dot{A}_{p, q}^{\upsilon}}$ is as in \eqref{LA_nu}.
\end{definition}

To develop a real-variable theory
of $\dot{A}_{p,q}^{s,\upsilon}(W, \varphi)$,
it is natural to add some assumptions on $\upsilon$ and $W$.
We first recall that   a scalar weight $w\in A_\infty$
if and only if
\begin{align}\label{eq-w-const}
\left[w\right]_{A_{\infty}}&:=
\sup_{\operatorname{cube} Q\subset\mathbb{R}^n}\fint_Qw(x)\,dx
\exp\left(\fint_Q \log\left(\left[w(x)
\right]^{-1}\right)\, dx\right)<\infty
\end{align}
(see, for example, \cite[Definition 7.3.1 and Theorem 7.3.3]{gra14a}
for more equivalent definitions of $A_\infty$).
In the matrix-weighted setting, a natural counterpart of
scalar $A_{\infty}$ class is the
matrix $\mathcal{A}_{p,\infty}$ class,
which was originally introduced in \cite[(2.2)]{vol97}
for any $p\in(1,\infty)$.
The following equivalent definition of
matrix $\mathcal{A}_{p,\infty}$ weights
for any $p\in(0,\infty)$ was established in
\cite[Proposition 3.7]{bhyy4}.
For any $A\in M_m(\mathbb{C})$,
the \emph{operator norm} $\|A\|$ of $A$ is defined by
setting $\|A\|:=\sup_{\vec z\in\mathbb{C}^m,
|\vec z|=1}|A\vec{z}|$.

\begin{definition}
Let $p\in(0,\infty)$. A matrix weight
$W$ is called an $\mathcal{A}_{p,\infty}(\mathbb{R}^n,
\mathbb{C}^m)$-\emph{matrix weight},
denoted by $W\in\mathcal{A}_{p,\infty}(\mathbb{R}^n,
\mathbb{C}^m)$, if $W$ satisfies that,
for any cube $Q\subset\mathbb{R}^n$,
\begin{align*}
\max\left\{\log\left(\fint_Q\left\|W^{\frac{1}{p}}(x)
W^{-\frac{1}{p}}(\cdot)\right\|^p\,dx\right), 0\right\}\in L^1(Q)
\end{align*}
and
\begin{align*}
[W]_{\mathcal{A}_{p,\infty}(\mathbb{R}^n,\mathbb{C}^m)}
:=\sup_{\operatorname{cube} Q\subset\mathbb{R}^n}
\exp\left(\fint_Q\log\left(\fint_Q
\left\|W^{\frac{1}{p}}(x)W^{-\frac{1}{p}}(y)
\right\|^p\,dx\right)\,dy\right)<\infty.
\end{align*}
When no confusion arises, we simply write $W\in\mathcal{A}_{p,\infty}$.
\end{definition}

By \eqref{eq-w-const}, we find that, for any $p\in(0,\infty)$,
$\mathcal{A}_{p,\infty}(\mathbb{R}^n,\mathbb{C})=A_\infty$.
Next, we introduce the concept of growth functions.

\begin{definition}\label{def-grow-func}
Let $\delta_1, \delta_2, \omega\in\mathbb{R}$.
A positive function $\upsilon:\ \mathcal{D}\to (0, \infty)$
is called a \emph{$(\delta_1, \delta_2; \omega)$-order
growth function} if there exists a positive constant
$C$ such that, for any $Q, R\in\mathcal{D}$,	
\begin{align*}
\frac{\upsilon(Q)}{\upsilon(R)}\leq C
\left[1+\frac{|x_Q-x_R|}
{\ell(Q)\vee \ell(R)}\right]^{\omega}
\begin{cases}
\displaystyle{\left(\frac{|Q|}{|R|}\right)^{\delta_1}}
& \text{if } \ell(Q) \leq \ell(R), \\
\displaystyle{\left(\frac{|Q|}{|R|}\right)^{\delta_2}}
& \text{if } \ell(R)<\ell(Q).
\end{cases}
\end{align*}
The set of all $(\delta_1, \delta_2; \omega)$-order
growth functions is denoted by
$\mathcal{G}(\delta_1, \delta_2; \omega)$.
\end{definition}

With the above assumptions on $W$ and $\upsilon$,
we can give the $\varphi$-transform
characterization of $\dot{A}_{p,q}^{s, \upsilon}(W, \varphi)$.
To this end, we first recall the definition of
$\varphi$-transforms (see, for example,
\cite{fj90}) and introduce the sequence spaces
$\dot{a}_{p,q}^{s, \upsilon}(W)$
corresponding to $\dot{A}_{p,q}^{s, \upsilon}(W, \varphi)$.

In what follows, for any $f\in\mathcal{S}'$ (or $\mathcal{S}'_{\infty}$)
and $\varphi\in\mathcal{S}$ (or $\mathcal{S}_{\infty}$), let
$\langle f, \varphi\rangle:=f(\overline{\varphi})$,
where $f(\cdot)$ denotes the dual action.
Let $\varphi, \psi\in\mathcal{S}$ satisfy \eqref{cond1}.
The \emph{$\varphi$-transform} $S_{\varphi}$ is
defined by setting, for any $\vec{f}:=(f_1,\dots,f_m)^T
\in(\mathcal{S}'_{\infty})^m$, $S_{\varphi}\vec{f}
:=\{\langle \vec{f},\varphi_Q\rangle\}_{Q\in\mathcal{D}}
:=\{(\langle f_1, \varphi_Q\rangle,\dots,\langle f_m,
\varphi_Q\rangle)^T\}_{Q\in\mathcal{D}}$.
The \emph{inverse $\varphi$-transform} $T_{\psi}$
is defined by setting, for any $\vec{t}
:=\{\vec{t}_Q\}_{Q\in\mathcal{D}}$ in ${\mathbb{C}}^m$,
$T_{\psi}\vec{t}:=\sum_{Q\in\mathcal{D}}\vec{t}_Q\psi_{Q}$ if
this summation converges in $(\mathcal{S}'_{\infty})^m$.
Here, and thereafter, for any $j\in\mathbb{Z}$, let
$\mathcal{D}_j:=\{Q\in\mathcal{D}:\ \ell(Q)=2^{-j}\}$
be the set of all \emph{dyadic cubes in $\mathbb{R}^n$
at the $j$th level}.

\begin{definition}
Let $s\in\mathbb{R}$, $p\in(0, \infty)$,
$q\in(0, \infty]$, $W$ be a matrix weight,
and $\upsilon$ a positive function defined on $\mathcal{D}$.
The \emph{generalized matrix-weighted Besov-type sequence space}
$\dot{b}_{p,q}^{s,\upsilon}(W)$ and the \emph{generalized matrix-weighted
Triebel--Lizorkin-type sequence space}
$\dot{f}_{p,q}^{s,\upsilon}(W)$ are respectively defined to be the sets of
all $\vec{t}:=\{\vec{t}_Q\}_{Q\in\mathcal{D}}$ in ${\mathbb{C}}^m$ such that
\begin{align*}	
\left\|\vec{t}\right\|_{\dot{a}_{p,q}^{s,\upsilon}(W)}	
:=\left\|\left\{2^{js}\left|W^{\frac1{p}}
\vec{t}_j\right|\right\}_{j\in\mathbb{Z}}
\right\|_{L\dot{A}_{p, q}^{\upsilon}}<\infty,
\end{align*}
where $(A, a)\in\{(B, b), (F, f)\}$,
$\|\cdot\|_{L\dot{A}_{p, q}^{\upsilon}}$
is as in \eqref{LA_nu}, and, for any
$j\in\mathbb{Z}$ and $Q\in\mathcal{D}$,
$\widetilde{\mathbf{1}}_Q:=|Q|^{-\frac{1}{2}}\mathbf{1}_Q$
and
\begin{align}\label{vect_j}
\vec{t}_j:=\sum_{Q\in\mathcal{D}_j}
\widetilde{\mathbf{1}}_Q\vec{t}_Q.
\end{align}
\end{definition}

We now state the $\varphi$-transform characterization
of $\dot{A}_{p,q}^{s, \upsilon}(W, \varphi)$ as follows, whose proof is given
in the next section.

\begin{theorem}\label{thm-phitransMWBTL}
Let $(A, a)\in\{(B, b), (F, f)\}$, $s\in\mathbb{R}$,
$p\in(0, \infty)$, $q\in(0,\infty]$,
and $W\in\mathcal{A}_{p,\infty}$.
Assume that $\upsilon\in\mathcal{G}(\delta_1, \delta_2; \omega)$ with
\begin{align}\label{eq-delta1<0}
\delta_2\in[0, \infty),\ \delta_1\in(-\infty, \delta_2],
\text{\ and\ }\omega\in[0, n(\delta_2-\delta_1)]
\end{align}
and $\varphi, \psi\in\mathcal{S}$ both satisfy \eqref{cond1}.
Then the following statements hold.
\begin{itemize}
\item[{\rm (i)}] 	The maps $S_{\varphi}:\
\dot{A}_{p,q}^{s,\upsilon}(W, \widetilde{\varphi})
\to\dot{a}_{p,q}^{s,\upsilon}(W)$
and $T_{\psi}:\ \dot{a}_{p,q}^{s,\upsilon}(W)\to
\dot{A}_{p,q}^{s,\upsilon}(W, \varphi)$ are bounded,
where $\widetilde{\varphi}(x):=\overline{\varphi(-x)}$
for any $x\in\mathbb{R}^n$.
Moreover, if $\varphi, \psi$ further satisfy \eqref{cond3},
then $T_\psi \circ S_{\varphi}$ is the identity
on $\dot{A}_{p,q}^{s,\upsilon}(W, \widetilde{\varphi})
=\dot{A}_{p,q}^{s,\upsilon}(W, \varphi)$.
\item[{\rm (ii)}] If $\varphi^{(1)}, \varphi^{(2)}\in\mathcal{S}$
both satisfy \eqref{cond1}, then
$\dot{A}_{p,q}^{s,\upsilon}(W, \varphi^{(1)})=
\dot{A}_{p,q}^{s,\upsilon}(W, \varphi^{(2)})$
with equivalent quasi-norms.
\end{itemize}
\end{theorem}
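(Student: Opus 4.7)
The plan is to carry out exactly the four-step reduction outlined in Section~\ref{s-pf}: replace $W$ by a sequence $\mathbb{A}=\{A_Q\}_{Q\in\mathcal{D}}$ of reducing operators of order $p$ for $W$ (whose existence, sharp comparability, and reverse H\"older-type estimates, when $W\in\mathcal{A}_{p,\infty}$, are available from \cite{bhyy4}), define the \emph{averaging} BTL-type spaces $\dot{A}^{s,\upsilon}_{p,q}(\mathbb{A},\varphi)$ and sequence spaces $\dot{a}^{s,\upsilon}_{p,q}(\mathbb{A})$ by substituting on each cube $Q\in\mathcal{D}_j$ the matrix $W^{1/p}$ by the constant matrix $A_Q$, prove the $\varphi$-transform characterization first in this averaging setting (Theorem~\ref{thm-phitansaverMWBTL}), and then transfer the statement back. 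This is the natural adaptation of the approach of Frazier--Roudenko \cite{fr04,fr21} and Roudenko \cite{rou03} to the $\mathcal{A}_{p,\infty}$ framework initiated in \cite{bhyy4,bhyy5}.

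The first and most delicate step is to establish the identifications $\dot{a}^{s,\upsilon}_{p,q}(W)=\dot{a}^{s,\upsilon}_{p,q}(\mathbb{A})$ (Theorem~\ref{thm-a(A)=a(W)}) and $\dot{A}^{s,\upsilon}_{p,q}(W,\varphi)=\dot{A}^{s,\upsilon}_{p,q}(\mathbb{A},\varphi)$ (Theorem~\ref{thm-A(W)=A(A)}) with equivalent quasi-norms. The sequence-space identification is essentially immediate after integrating over each $Q\in\mathcal{D}_j$, since by definition of $A_Q$ one has $|A_Q\vec{t}_Q|^p\sim\fint_Q|W^{1/p}(x)\vec{t}_Q|^p\,dx$. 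The function-space identification, however, cannot proceed by any direct commutation, because $W^{1/p}(x)$ does not commute with convolution; instead one must control $|W^{1/p}(x)(\varphi_j*\vec{f})(x)|$ cube by cube in terms of $|A_Q(\varphi_j*\vec{f})(\cdot)|$ via the reverse H\"older inequality, the $\mathcal{A}_{p,\infty}$ self-improvement, and the upper/lower-dimension comparability of $W^{1/p}$ to $A_Q$ on suitable dyadic ancestors obtained in \cite{bhyy4}. I expect this function-space identification to be the main obstacle, since it is precisely where the lack of separability between the matrix weight and the vector-valued convolution is felt most acutely.

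Second, I would prove the $\varphi$-transform characterization in the averaging case. Here $A_Q$ is locally constant, so each boundedness statement decouples into a scalar estimate for $|A_Q(\varphi_j*\vec{f})|$ and $|A_Q\vec{t}_Q|$. For $S_\varphi$ I would use the pointwise identity $\langle\vec{f},\varphi_Q\rangle=|Q|^{1/2}(\widetilde{\varphi}_{j_Q}*\vec{f})(x_Q)$ to view $|Q|^{-1/2}|A_Q\langle\vec{f},\varphi_Q\rangle|$ as a pointwise sample of a Peetre-type maximal function of $A_Q(\widetilde{\varphi}_{j_Q}*\vec{f})$, and then apply a Fefferman--Stein type vector-valued maximal inequality at the level of the quasi-norm $\|\cdot\|_{L\dot{A}^{\upsilon}_{p,q}}$. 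For $T_\psi$ I would expand $\varphi_j*(T_\psi\vec{t})$ into the double dyadic sum and invoke the standard almost-orthogonality estimate $|(\varphi_j*\psi_Q)(x)|\lesssim 2^{-|j-j_Q|L}(1+2^{j\wedge j_Q}|x-x_Q|)^{-M}$, valid for arbitrary $L,M>0$ by the support and moment conditions~\eqref{cond1}; summing the resulting scale convolution against $\upsilon$ converges precisely because $\upsilon\in\mathcal{G}(\delta_1,\delta_2;\omega)$ with the range~\eqref{eq-delta1<0}. The identity $T_\psi\circ S_\varphi=\mathrm{id}$ under~\eqref{cond3} is then Calder\'on's reproducing formula applied componentwise in $(\mathcal{S}'_\infty)^m$.

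Combining these three ingredients yields part~(i) at the $W$-weighted level. For part~(ii), given two admissible kernels $\varphi^{(1)},\varphi^{(2)}\in\mathcal{S}$ satisfying~\eqref{cond1}, I would pick reproducing partners $\psi^{(1)},\psi^{(2)}\in\mathcal{S}$ satisfying both \eqref{cond1} and \eqref{cond3} via \cite[Lemma (6.9)]{fjw91} and then apply part~(i) to the compositions $T_{\psi^{(2)}}\circ S_{\varphi^{(1)}}$ and $T_{\psi^{(1)}}\circ S_{\varphi^{(2)}}$; since each $T_{\psi^{(i)}}\circ S_{\varphi^{(i)}}$ is the identity on its space, the two quasi-norms $\|\cdot\|_{\dot{A}^{s,\upsilon}_{p,q}(W,\varphi^{(i)})}$ bound each other, so the two spaces coincide with equivalent quasi-norms.
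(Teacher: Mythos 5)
Your proposal follows essentially the same route as the paper: reduce to a sequence $\mathbb{A}$ of reducing operators, prove $\dot{a}_{p,q}^{s,\upsilon}(W)=\dot{a}_{p,q}^{s,\upsilon}(\mathbb{A})$ and $\dot{A}_{p,q}^{s,\upsilon}(W,\varphi)=\dot{A}_{p,q}^{s,\upsilon}(\mathbb{A},\varphi)$, establish the $\varphi$-transform characterization for the averaging spaces via sup/Peetre-type control of $S_\varphi$, almost-orthogonality for $T_\psi$, and the Calder\'on reproducing formula, and then obtain (ii) by composing the transforms for two admissible kernels. This is exactly the architecture of Theorems \ref{thm-a(A)=a(W)}, \ref{thm-A(W)=A(A)}, and \ref{thm-phitansaverMWBTL} in the paper.

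One caveat: your claim that the sequence-space identification is ``essentially immediate after integrating over each $Q\in\mathcal{D}_j$'' is accurate only for $a=b$, where the $L^p$-norm at each level does decouple cube by cube. For $a=f$ the $L^p(\ell^q)$ structure does not factor this way: the direction $\|\vec t\|_{\dot f(W)}\lesssim\|\vec t\|_{\dot f(\mathbb{A})}$ requires the matrix-weighted substitute for the Fefferman--Stein vector-valued inequality (the paper's Lemma \ref{lem-E_j}(ii), resting on \cite[Corollary 5.8]{bhyy4}), and the reverse direction requires the Volberg-type good-set estimate (Lemma \ref{lem-AW-1-set}) to pass to subsets $E_Q\subset Q$ on which $\|A_QW^{-1/p}\|$ is bounded, combined with the sparse-set equivalence of the $\dot f$-quasi-norm (Lemma \ref{lem-aLA-equi}). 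Since you already invoke the relevant machinery from \cite{bhyy4} for the function-space identification, this is a localized understatement rather than a wrong turn, but as written that step of the plan would not go through for the Triebel--Lizorkin scale.
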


\begin{remark}
\begin{itemize}
\item[{\rm (i)}] Suppose that $\upsilon$ is
a positive function  on $\mathcal{D}$.
In Proposition \ref{prop-necessity},
we prove that, under a mild assumption on $\upsilon$,
the growth condition on $\upsilon$ as in
Definition \ref{def-grow-func} is necessary
for the boundedness of
almost diagonal operators on $\dot{a}_{p,q}^{s,\upsilon}(W)$.
On the other hand, after discussing the nontriviality
of function spaces related to growth functions
in the next subsection, we can justify that,
to study $\dot{A}_{p,q}^{s,\upsilon}(W,\varphi)$
and $\dot{a}_{p,q}^{s,\upsilon}(W)$, the assumptions that
$\delta_1,\delta_2,\omega$ satisfy \eqref{eq-delta1<0} and
$\upsilon\in\mathcal{G}(\delta_1, \delta_2; \omega)$
are reasonable (see Remark \ref{rmk-reason}).

\item[{\rm (ii)}] Let all the symbols be the same as in
Theorem \ref{thm-phitransMWBTL}.
By Theorem \ref{thm-phitransMWBTL}(ii), we find that
the space $\dot{A}_{p,q}^{s,\upsilon}(W, \varphi)$
is independent of the choice of $\varphi$.
Thus, we simply write  $\dot{A}_{p,q}^{s,\upsilon}(W)$ instead of $\dot{A}_{p,q}^{s,\upsilon}(W, \varphi)$.
\end{itemize}
\end{remark}

\subsection{Properties of Growth Functions $\upsilon$
and Nontriviality of $\dot{A}_{p,q}^{s,\upsilon}(W)$}\label{s-fs-prop}

We first give a proposition that determines the
essential ranges of indices for which
growth functions make sense.

\begin{proposition}\label{prop-grow-func}
Let $\delta_1,\widetilde{\delta}_1,\delta_2,
\widetilde{\delta}_2,\omega,\widetilde{\omega}\in\mathbb{R}$.		
Then the following statements hold.
\begin{itemize}
\item[{\rm (i)}] The set
$\mathcal{G}(\delta_1, \delta_2; \omega)\neq\emptyset$
if and only if $\delta_2\geq \delta_1$ and
$\omega\geq0$.
\item[{\rm (ii)}]
Assume that $\delta_2\geq \delta_1$ and $0\leq s<t<\infty$.
Then $\mathcal{G}(\delta_1, \delta_2; s)
\subset\mathcal{G}(\delta_1, \delta_2; t)$.
Moreover,
\begin{align}\label{eq-G(s)-G(t)}
\mathcal{G}(\delta_1, \delta_2; s)
=\mathcal{G}(\delta_1, \delta_2; t)
\text{\ if\ and\ only\ if\ }
s\geq n(\delta_2-\delta_1).
\end{align}
\item[{\rm (iii)}] Suppose that
$0\leq\omega\leq n(\delta_2-\delta_1)$ and
$0\leq\widetilde{\omega}\leq n(\widetilde{\delta}_2
-\widetilde{\delta}_1)$. Then
$\mathcal{G}(\delta_1, \delta_2; \omega)
=\mathcal{G}(\widetilde{\delta}_1,
\widetilde{\delta}_2; \widetilde{\omega})$
if and only if $\delta_1=\widetilde{\delta}_1,
\delta_2=\widetilde{\delta}_2$,
and $\omega=\widetilde{\omega}$.
\end{itemize}
\end{proposition}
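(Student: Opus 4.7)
The plan is to address parts (i), (ii), and (iii) in sequence, relying on (a) contradiction arguments by swapping the roles of $Q$ and $R$ and sending geometric parameters to infinity, (b) the explicit test functions $\upsilon_\alpha(Q):=|Q|^\alpha$, and (c) a bridging step through an auxiliary dyadic cube $R'\in\mathcal{D}$ of edge length $\sim|x_Q-x_R|$.

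For (i), necessity splits into two short arguments. Applying the growth condition to a pair $Q,R$ with $\ell(Q)=\ell(R)$ and then swapping the roles of $Q$ and $R$, the two estimates multiply to $1\leq C^2[1+|x_Q-x_R|/\ell(R)]^{2\omega}$; letting $|x_Q-x_R|\to\infty$ forces $\omega\geq 0$. Analogously, applying the condition to concentric nested cubes $R\subsetneq Q$ and swapping yields $1\leq C^2(|Q|/|R|)^{\delta_2-\delta_1}$, and letting $|Q|/|R|\to\infty$ forces $\delta_2\geq\delta_1$. For sufficiency, one checks directly that $\upsilon(Q):=|Q|^{(\delta_1+\delta_2)/2}$ lies in $\mathcal{G}(\delta_1,\delta_2;0)\subset\mathcal{G}(\delta_1,\delta_2;\omega)$ whenever $\delta_2\geq\delta_1$ and $\omega\geq 0$.

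For (ii), the inclusion $\mathcal{G}(\delta_1,\delta_2;s)\subset\mathcal{G}(\delta_1,\delta_2;t)$ is immediate from $[1+M]^s\leq[1+M]^t$ for $M\geq 0$ and $0\leq s\leq t$. For the reverse inclusion when $s\geq n(\delta_2-\delta_1)$, fix $\upsilon\in\mathcal{G}(\delta_1,\delta_2;t)$ and $Q,R$ with $\ell(Q)\leq\ell(R)$ and $M:=|x_Q-x_R|/\ell(R)>1$ (the case $M\leq 1$ being trivial). Choose $R'\in\mathcal{D}$ with $\ell(R')\sim|x_Q-x_R|\geq\ell(R)$ and $x_{R'}$ close to $x_Q$ (hence also to $x_R$). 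Applying the $\mathcal{G}(\delta_1,\delta_2;t)$ estimate to the pair $(Q,R')$ (via the $\delta_1$ branch) and to $(R',R)$ (via the $\delta_2$ branch)---in both cases the position factor is $O(1)$---and multiplying yields
\begin{align*}
\frac{\upsilon(Q)}{\upsilon(R)}
\lesssim \left(\frac{|Q|}{|R'|}\right)^{\delta_1}
\left(\frac{|R'|}{|R|}\right)^{\delta_2}
=\left(\frac{|Q|}{|R|}\right)^{\delta_1}
\left(\frac{|R'|}{|R|}\right)^{\delta_2-\delta_1}
\lesssim \left(\frac{|Q|}{|R|}\right)^{\delta_1}(1+M)^s,
\end{align*}
since $|R'|/|R|\sim M^n$ and $s\geq n(\delta_2-\delta_1)$; the case $\ell(R)<\ell(Q)$ is handled symmetrically. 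For strict inclusion when $s<n(\delta_2-\delta_1)$, set $t':=\min\{t,n(\delta_2-\delta_1)\}>s$ and consider the candidate $\upsilon(Q):=(\ell(Q)+|x_Q|)^{t'}|Q|^{\delta_1}$. The triangle inequality $\ell(Q)+|x_Q|\leq\ell(R)+|x_R|+|x_Q-x_R|$ handles the small-cube case $\ell(Q)\leq\ell(R)$, while the large-cube case $\ell(R)<\ell(Q)$ is treated by bounding $(\ell(Q)+|x_Q|)/(\ell(R)+|x_R|)\leq C(\ell(Q)/\ell(R))(1+M)$ and absorbing $(\ell(Q)/\ell(R))^{t'}$ into $(|Q|/|R|)^{\delta_2}$ via $t'\leq n(\delta_2-\delta_1)$. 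Testing on unit cubes $Q,R$ with $|x_Q-x_R|=N\to\infty$ then gives $\upsilon(Q)/\upsilon(R)\sim N^{t'}$, which cannot be dominated by $(1+N)^s$, so $\upsilon\notin\mathcal{G}(\delta_1,\delta_2;s)$.

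For (iii), the key observation is that $\delta_1$ and $\delta_2$ are recoverable from the class as the extremal exponents of admissible power functions: $\upsilon_\alpha(Q):=|Q|^\alpha$ lies in $\mathcal{G}(\delta_1,\delta_2;\omega)$ if and only if $\alpha\in[\delta_1,\delta_2]$, since the necessity of $\alpha\geq\delta_1$ and $\alpha\leq\delta_2$ comes from testing on concentric nested cubes (where $[1+M]^\omega=1$). Hence the equality $\mathcal{G}(\delta_1,\delta_2;\omega)=\mathcal{G}(\widetilde{\delta}_1,\widetilde{\delta}_2;\widetilde{\omega})$ forces $[\delta_1,\delta_2]=[\widetilde{\delta}_1,\widetilde{\delta}_2]$, i.e., $\delta_i=\widetilde{\delta}_i$ for $i=1,2$. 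Then, under the constraint $\omega,\widetilde{\omega}\leq n(\delta_2-\delta_1)$, part (ii) yields $\omega=\widetilde{\omega}$: otherwise the smaller of the two would be strictly less than $n(\delta_2-\delta_1)$, producing a strict inclusion that contradicts the assumed equality. The main technical obstacle will be the verification, in the large-cube case $\ell(R)<\ell(Q)$, that $\upsilon(Q)=(\ell(Q)+|x_Q|)^{t'}|Q|^{\delta_1}$ satisfies the $\mathcal{G}(\delta_1,\delta_2;t')$ condition---this is precisely the step where the restriction $t'\leq n(\delta_2-\delta_1)$ becomes essential, and it also explains why the threshold in part (ii) is exactly $n(\delta_2-\delta_1)$.
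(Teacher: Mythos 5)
Your proof is correct and follows essentially the same route as the paper's: the same swap-and-limit contradictions and power-function witnesses in (i), the same chaining through an auxiliary cube of scale $\sim|x_Q-x_R|$ for the sufficiency in (ii), a counterexample of the same form $(\ell(Q)+|x_Q|)^{r}|Q|^{\beta}$ for the necessity in (ii) (the paper takes $\beta=\delta_2-r/n$ where you take $\beta=\delta_1$, both of which check out), and the same reduction of (iii) to nested-cube tests plus part (ii). The only differences are cosmetic choices of witnesses and the use of one auxiliary cube rather than two in the chaining step.
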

\begin{proof}
We first prove the sufficiency of (i).
To this end, let $\delta_2\geq \delta_1$, $\omega\geq0$,
and, for any $Q\in\mathcal{D}$,
$\upsilon(Q):=|Q|^{\delta_1}$.
From Definition \ref{def-grow-func},
it follows that $\upsilon\in\mathcal{G}(\delta_1, \delta_1; 0)
\subset\mathcal{G}(\delta_1, \delta_2; \omega)$,
which completes the proof of the sufficiency of (i).
Next, we show the necessity of (i).
Assuming $\mathcal{G}(\delta_1, \delta_2; \omega)\neq\emptyset$,
we can pick one $\upsilon\in\mathcal{G}(\delta_1, \delta_2; \omega)$.
If $\delta_2<\delta_1$, by the growth condition of $\upsilon$,
we find that, for any $Q, R\in\mathcal{D}$ with
$x_Q=x_R=\mathbf{0}$ and $\ell(Q)\geq\ell(R)$,
\begin{align*}
1=\frac{\upsilon(Q)}{\upsilon(R)}
\frac{\upsilon(R)}{\upsilon(Q)}
\lesssim\left(\frac{|Q|}{|R|}\right)^{\delta_2-\delta_1},
\end{align*}
which induces a contradiction as $\ell(Q)\to\infty$
and further implies that $\delta_2\geq \delta_1$. If $\omega<0$,
from the growth condition of $\upsilon$ again, we infer that,
for any $Q, R\in\mathcal{D}$ with $\ell(Q)=\ell(R)$,
\begin{align*}
\left[1+\frac{|x_Q-x_R|}{\ell(Q)}\right]^{-\omega}\lesssim
\frac{\upsilon(Q)}{\upsilon(R)}\lesssim\left[1+\frac{|x_Q-x_R|}
{\ell(Q)}\right]^{\omega},
\end{align*}
which also induces a contradiction as $|x_Q-x_R|\to\infty$
and further implies that $\omega\geq0$.
This finishes the proof of the necessity of (i)
and hence (i).

We now prove (ii). By Definition \ref{def-grow-func},
we find that $\mathcal{G}(\delta_1, \delta_2; s)
\subset\mathcal{G}(\delta_1, \delta_2; t)$.
To show the necessity of \eqref{eq-G(s)-G(t)},
it suffices to prove, when $s<n(\delta_2-\delta_1)$,
$\mathcal{G}(\delta_1, \delta_2; s)
\subsetneqq\mathcal{G}(\delta_1, \delta_2; t)$.
To this end, we only need to find one
$\upsilon$ such that $\upsilon\in\mathcal{G}(\delta_1, \delta_2; t)$
but $\upsilon\notin\mathcal{G}(\delta_1, \delta_2; s)$.
For any $Q\in\mathcal{D}$, let $\upsilon(Q):=[|x_Q|+\ell(Q)]^{r}
|Q|^{\delta_2-\frac{r}{n}}$, where $r:=\min\{t, n(\delta_2-\delta_1)\}$.
Applying the construction of $\upsilon$,
the triangle inequality of $|\cdot|$,
and Definition \ref{def-grow-func},
we obtain, for any $Q, R\in\mathcal{D}$,
\begin{align*}
\frac{\upsilon(Q)}{\upsilon(R)}&=
\left[\frac{|x_{Q}|+\ell(Q)}{|x_{R}|+\ell(R)}\right]^{r}
\left(\frac{|Q|}{|R|}\right)^{\delta_2-\frac{r}{n}}
\leq\left\{\frac{|x_{R}|+[\ell(Q)\vee\ell(R)]
}{|x_{R}|+\ell(R)}+\frac{|x_{Q}-x_{R}|}{\ell(R)}\right\}^{r}
\left(\frac{|Q|}{|R|}\right)^{\delta_2-\frac{r}{n}}\\
&\leq\left[\frac{\ell(Q)\vee\ell(R)
}{\ell(R)}+\frac{|x_{Q}-x_{R}|}{\ell(R)}\right]^{r}
\left(\frac{|Q|}{|R|}\right)^{\delta_2-\frac{r}{n}}\\
&=\left[1+\frac{|x_Q-x_R|}
{\ell(Q)\vee \ell(R)}\right]^{r}
\left[\frac{\ell(Q)\vee\ell(R)}{\ell(R)}\right]^{r}	
\left(\frac{|Q|}{|R|}\right)^{\delta_2-\frac{r}{n}}\\
&=\left[1+\frac{|x_Q-x_R|}
{\ell(Q)\vee \ell(R)}\right]^{r}
\begin{cases}
\displaystyle{\left(\frac{|Q|}{|R|}
\right)^{\delta_2-\frac{r}{n}}}
& \text{if } \ell(Q) \leq \ell(R), \\
\displaystyle{\left(\frac{|Q|}{|R|}\right)^{\delta_2}}
& \text{if } \ell(R) < \ell(Q),
\end{cases}
\end{align*}
which further implies that $\upsilon\in\mathcal{G}
(\delta_2-\frac{r}{n}, \delta_2; r)
\subset\mathcal{G}(\delta_1, \delta_2; t)$. We next prove
$\upsilon\notin\mathcal{G}(\delta_1, \delta_2; s)$.
If $\upsilon\in\mathcal{G}(\delta_1, \delta_2; s)$,
using the construction and the growth condition of $\upsilon$,
we conclude that, for any $Q, R\in\mathcal{D}$ with
$\ell(Q)=\ell(R)$ and $x_R=\mathbf{0}$,
\begin{align*}
\left[1+\frac{|x_{Q}|}{\ell(R)}\right]^{r}
=\left[\frac{|x_{Q}|+\ell(Q)}{\ell(R)}\right]^{r}
=\frac{\upsilon(Q)}{\upsilon(R)}\lesssim
\left[1+\frac{|x_{Q}|}{\ell(R)}\right]^{s},
\end{align*}
which contradicts the assumption
$s<r$ when $|x_Q|$ is sufficiently large
and further implies that
$\upsilon\notin\mathcal{G}(\delta_1, \delta_2; s)$.
This finishes the proof of the necessity of \eqref{eq-G(s)-G(t)}.
To show the sufficiency of \eqref{eq-G(s)-G(t)},
it suffices to prove, when $s\geq n(\delta_2-\delta_1)$,
$\mathcal{G}(\delta_1, \delta_2; s)=
\mathcal{G}(\delta_1, \delta_2; n[\delta_2-\delta_1])$.
The inclusion $\mathcal{G}(\delta_1, \delta_2; n[\delta_2-\delta_1])
\subset\mathcal{G}(\delta_1, \delta_2; s)$
follows from Definition \ref{def-grow-func}.
We now show the converse inclusion. To this end,
for any given $\upsilon\in\mathcal{G}
(\delta_1, \delta_2; s)$ and for any $Q, R\in\mathcal{D}$,
choose $j\in\mathbb{Z}$ such that
\begin{align}\label{eq-1}
\ell(Q)+\ell(R)+|x_Q-x_R|\leq 2^{-j}<
2\left[\ell(Q)+\ell(R)+\left|x_Q-x_R\right|\right].
\end{align}
By the triangle inequality of $|\cdot|$, \eqref{eq-1},
and the basic property of dyadic cubes in $\mathbb{R}^n$,
there exist unique $Q', R'\in\mathcal{D}_j$ such that
$Q\subset Q'$, $R\subset R'$,
$|x_{Q'}-x_{R'}|\leq|x_{Q'}-x_{Q}|+|x_{Q}-x_{R}|+|x_{R'}-x_{R}|
\lesssim\ell(Q')+2^{-j}+\ell(R')\sim\ell(R')$, and
\begin{align}\label{eq-lR'}
\ell(R')=2^{-j}\sim\left[\ell(Q)+\ell(R)+|x_Q-x_R|\right]
\sim\left\{\left[\ell(Q)\vee\ell(R)\right]+|x_Q-x_R|\right\}.
\end{align}
From these and the assumption $\upsilon
\in\mathcal{G}(\delta_1, \delta_2; s)$,
we infer that, for any $Q, R\in\mathcal{D}$,
\begin{align}\label{eq-v_QR}
\frac{\upsilon(Q)}{\upsilon(R)}&
=\frac{\upsilon(Q)}{\upsilon(Q')}
\frac{\upsilon(Q')}{\upsilon(R')}
\frac{\upsilon(R')}{\upsilon(R)}
\lesssim\left(\frac{|Q|}{|Q'|}\right)^{\delta_1}
\left[1+\frac{|x_{Q'}-x_{R'}|}{\ell(R')}\right]^{s}
\left(\frac{|R'|}{|R|}\right)^{\delta_2}\\
&\lesssim\left[\frac{\ell(R')}{\ell(Q)\vee
\ell(R)}\right]^{n(\delta_2-\delta_1)}
\left(\frac{|Q|}{|Q|\vee |R|}\right)^{\delta_1}
\left(\frac{|Q|\vee|R|}{|R|}\right)^{\delta_2}\notag\\
&\sim\left[1+\frac{|x_Q-x_R|}
{\ell(Q)\vee \ell(R)}\right]^{n(\delta_2-\delta_1)}
\begin{cases}
\displaystyle{\left(\frac{|Q|}{|R|}\right)^{\delta_1}}
& \text{if } \ell(Q) \leq \ell(R), \\
\displaystyle{\left(\frac{|Q|}{|R|}\right)^{\delta_2}}
& \text{if } \ell(R) < \ell(Q)\notag,
\end{cases}
\end{align}
which further implies that
$\upsilon\in\mathcal{G}(\delta_1, \delta_2; n[\delta_2-\delta_1])$
and hence $\mathcal{G}(\delta_1, \delta_2; s)\subset
\mathcal{G}(\delta_1, \delta_2; n[\delta_2-\delta_1])$.
This finishes the proof of the sufficiency
of \eqref{eq-G(s)-G(t)} and hence (ii).

Finally, we prove (iii). The sufficiency is
obvious. We next show the necessity.
If $\delta_1<\widetilde{\delta}_1$,
for any $Q\in\mathcal{D}$,
let $\upsilon(Q):=|Q|^{\delta_1}$.
Using Definition \ref{def-grow-func}, we conclude that
$\upsilon\in\mathcal G(\delta_1,\delta_2;\omega)
\setminus\mathcal G(\widetilde{\delta}_1,
\widetilde{\delta}_2;\widetilde{\omega})$,
which induces to a contradiction.
Thus, we obtain $\delta_1\geq\widetilde{\delta}_1$.
By the symmetry, we find that $\delta_1
\leq\widetilde{\delta}_1$
and hence $\delta_1=\widetilde{\delta}_1$.
A similar argument gives $\delta_2=\widetilde{\delta}_2$.
From $\delta_1=\widetilde{\delta}_1$,
$\delta_2=\widetilde{\delta}_2$, and the just proved (ii),
it follows that $\omega=\widetilde{\omega}$,
which completes the proof of (iii) and
hence Proposition \ref{prop-grow-func}.
\end{proof}

Before presenting the nontriviality
of function spaces related to growth functions,
we first recall some estimates and properties
that are frequently used in this article.
In what follows, for any $k:=(k_1,\dots,k_n)\in\mathbb{Z}^n$,
let $\|k\|_{\infty}:=\max\{|k_1|,\dots,|k_n|\}$.
The following lemma gives some basic estimates
of growth functions; we omit the details.

\begin{lemma}\label{lem-grow-est}
Let $\delta_2\in\mathbb{R}$, $\delta_1\in(-\infty, \delta_2]$,
$\omega\in[0,n(\delta_2-\delta_1)]$, and
$\upsilon\in\mathcal{G}(\delta_1, \delta_2; \omega)$.
Then the following statements hold.
\begin{itemize}
\item[{\rm (i)}] There exists a positive constant
$C$ such that, for any $Q,P\in\mathcal{D}$ with
$Q\subset P$,
\begin{align*}
\frac{1}{C}\left(\frac{|Q|}{|P|}\right)^{\delta_2}\leq
\frac{\upsilon(Q)}{\upsilon(P)}\leq C
\left(\frac{|Q|}{|P|}\right)^{\delta_1}.
\end{align*}
Moreover, if $\delta_1\in[0,\delta_2]$, then,
for any $Q,P\in\mathcal{D}$ with $Q\subset P$,
$\upsilon(Q)\leq C\upsilon(P)$.
\item[{\rm (ii)}] For any $P\in\mathcal{D}$ and
$k\in\mathbb{Z}^n$, one has $\upsilon(P)\sim(1+|k|)^{\omega}
\upsilon(P+k\ell(P))$, where the positive
equivalence constants are independent of $P$ and $k$.
\item[{\rm (iii)}] For any $P\in\mathcal{D}$ and
$k\in\mathbb{Z}^n$ with $\|k\|_{\infty}\leq1$,
one has $\upsilon(P)\sim\upsilon(P+k\ell(P))$, where the positive
equivalence constants are independent of $P$ and $k$.
\end{itemize}
\end{lemma}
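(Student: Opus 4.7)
The plan is a direct application of the growth condition in Definition \ref{def-grow-func} in three slightly different configurations; no ingredient beyond that definition is needed.

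For part (i), the key geometric observation I will use is that when $Q,P\in\mathcal{D}$ satisfy $Q\subset P$, the corner $x_Q$ lies in the closure of $P$, so $|x_Q-x_P|\leq \sqrt{n}\,\ell(P)=\sqrt{n}[\ell(Q)\vee\ell(P)]$. This bounds the angular factor $[1+|x_Q-x_P|/(\ell(Q)\vee\ell(P))]^{\omega}$ by a constant depending only on $n$ and $\omega$. I then apply Definition \ref{def-grow-func} once to the ratio $\upsilon(Q)/\upsilon(P)$ (the case $\ell(Q)\leq \ell(P)$, which selects the $\delta_1$-exponent) to obtain the upper bound, and a second time with the roles of $Q$ and $P$ swapped (the case $\ell(P)\geq \ell(Q)$, which now selects the $\delta_2$-exponent) to obtain the lower bound. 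The \emph{moreover} clause is then immediate: when $\delta_1\in[0,\delta_2]$ and $Q\subset P$, the ratio $|Q|/|P|\leq 1$ forces $(|Q|/|P|)^{\delta_1}\leq 1$, and the upper bound collapses to $\upsilon(Q)\leq C\upsilon(P)$.

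For parts (ii) and (iii), I would write $P':=P+k\ell(P)\in\mathcal{D}$ and observe that $\ell(P')=\ell(P)$, so $|P|/|P'|=1$ kills the size factor in both branches of Definition \ref{def-grow-func}, while $|x_P-x_{P'}|/[\ell(P)\vee\ell(P')]=|k|$. Two applications of the growth condition, one for each direction of the ratio, yield $\upsilon(P)\leq C(1+|k|)^{\omega}\upsilon(P')$ and $\upsilon(P')\leq C(1+|k|)^{\omega}\upsilon(P)$, which together deliver the two-sided comparability claimed in (ii). Part (iii) then falls out as the special case $\|k\|_{\infty}\leq 1$: since $|k|\leq \sqrt{n}$, the factor $(1+|k|)^{\omega}$ is bounded by an absolute constant depending only on $n$ and $\omega$, reducing the equivalence in (ii) to $\upsilon(P)\sim\upsilon(P+k\ell(P))$.

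There is no serious obstacle to expect; the whole argument amounts to bookkeeping once the two geometric facts $x_Q\in\overline{P}$ for $Q\subset P$ and $x_{P+k\ell(P)}-x_P=k\ell(P)$ have been noted. The only point requiring minor care is correctly selecting the appropriate branch of Definition \ref{def-grow-func} ($\delta_1$ versus $\delta_2$) when running the comparison in both directions in part (i); this is pinned down unambiguously by which of the two cubes has the larger edge length, and explains why the lower bound in (i) must involve $\delta_2$ rather than $\delta_1$.
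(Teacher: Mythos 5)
The paper omits its own proof of this lemma, so there is nothing to compare routes against; your argument is the natural one. Parts (i) and (iii) are correct as you present them: for (i), the observation $|x_Q-x_P|\leq\sqrt{n}\,\ell(P)=\sqrt{n}\,[\ell(Q)\vee\ell(P)]$ when $Q\subset P$ kills the angular factor, and the two applications of Definition \ref{def-grow-func} (with the ordered pair $(Q,P)$ for the $\delta_1$-upper bound and $(P,Q)$ for the $\delta_2$-lower bound) give exactly the stated two-sided estimate; the \emph{moreover} clause and part (iii) then follow as you say.

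Part (ii) contains a real mismatch that you should not gloss over. Your two applications of the growth condition yield
$(1+|k|)^{-\omega}\,\upsilon(P+k\ell(P))\lesssim\upsilon(P)\lesssim(1+|k|)^{\omega}\,\upsilon(P+k\ell(P))$,
whereas the statement, read with the paper's convention that $A\sim B$ means $A\lesssim B\lesssim A$, asserts
$(1+|k|)^{\omega}\,\upsilon(P+k\ell(P))\lesssim\upsilon(P)\lesssim(1+|k|)^{\omega}\,\upsilon(P+k\ell(P))$.
The lower bound in the latter, namely $\upsilon(P+k\ell(P))\lesssim(1+|k|)^{-\omega}\upsilon(P)$, cannot follow from Definition \ref{def-grow-func} (which only produces upper bounds with nonnegative powers of $1+|k|$), and it is in fact false in general: take $\upsilon(Q):=|Q|^{\delta_1}$, which the paper itself places in $\mathcal{G}(\delta_1,\delta_2;\omega)$; then $\upsilon(P)=\upsilon(P+k\ell(P))$ for every $k$, and the claimed equivalence would force $(1+|k|)^{\omega}\sim1$ uniformly in $k$, impossible for $\omega>0$. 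So your sentence that the two one-sided bounds ``deliver the two-sided comparability claimed in (ii)'' is the step that fails. The honest conclusion is that you have proved the symmetric bound displayed above, which is what every application of (ii) in the paper actually uses (only the direction $\upsilon(P+k\ell(P))\lesssim(1+|k|)^{\omega}\upsilon(P)$ is invoked, in Case (2) of the proof of Proposition \ref{prop-gh}), and that the statement of (ii) as printed appears to be a misstatement; part (iii) is unaffected, since for $\|k\|_{\infty}\leq1$ the factor $(1+|k|)^{\pm\omega}$ is bounded either way.
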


We next recall some fundamental properties of
dyadic cubes in $\mathbb{R}^n$ by the following
two lemmas. The proofs of these properties are
well known or by some simple geometrical
observations and computations; we omit the details.

\begin{lemma}\label{lem-3P-sum1}
\begin{itemize}
\item[{\rm (i)}] For any $j\in\mathbb{Z}$, the cubes in  $ \mathcal{D}_j$
are mutually disjoint  and
$\mathbb{R}^n=\bigcup_{Q\in\mathcal{D}_j}Q$.
\item[{\rm (ii)}] For any $j\in\mathbb{Z}$ and $x\in\mathbb{R}^n$,
$\sum_{Q\in\mathcal{D}_j}\mathbf{1}_Q(x)=1$.
\item[{\rm (iii)}] For any $P\in\mathcal{D}$,
$3P=\bigcup_{\{k\in\mathbb{Z}^n:\ \|k\|_{\infty}\leq1\}} [P+k\ell(P)]$ and
$(3P)^\complement=\bigcup_{\{k\in\mathbb{Z}^n:\ \|k\|_{\infty}\geq2\}}[P+k\ell(P)]$.
\item[{\rm (iv)}] Let $j\in\mathbb{Z}$ and $P\in\mathcal{D}_j$.
There exist $\{Q_\eta\}_{\eta=1}^{2^n}$ in $\mathcal{D}_{j+1}$ such that
$P=\bigcup_{\eta=1}^{2^n}Q_\eta$. For any $i\in(-\infty,j]\cap\mathbb{Z}$,
there also exists a unique $P_i\in\mathcal{D}_{i}$ such that
$P\subset P_i$.
\end{itemize}
\end{lemma}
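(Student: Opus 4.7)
The plan is to derive all four items directly from the definition $\mathcal{D} = \{2^{-j}([0,1)^n + k)\}_{j \in \mathbb{Z}, k \in \mathbb{Z}^n}$, using only the tiling property that $[0,1)^n$ partitions $\mathbb{R}^n$ under integer translations (every $x \in \mathbb{R}^n$ lies in $[0,1)^n + k$ for a unique $k \in \mathbb{Z}^n$, obtained componentwise by integer part).

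For (i), I would fix $j \in \mathbb{Z}$ and transfer the tiling $\mathbb{R}^n = \bigsqcup_{k \in \mathbb{Z}^n}([0,1)^n + k)$ under the scaling $x \mapsto 2^{-j} x$ to get $\mathbb{R}^n = \bigsqcup_{k \in \mathbb{Z}^n} Q_{j,k}$; this yields both the pairwise disjointness inside $\mathcal{D}_j$ and the covering property. Item (ii) then follows immediately, since at each $x \in \mathbb{R}^n$ exactly one indicator $\mathbf{1}_Q$ with $Q \in \mathcal{D}_j$ is nonzero.

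For (iii), I would write $P = Q_{j, k_0}$, so that $P + k\ell(P) = 2^{-j}([0,1)^n + k_0 + k) = Q_{j, k_0 + k}$ for every $k \in \mathbb{Z}^n$, and use (i) to see that these translates partition $\mathbb{R}^n$ as $k$ runs over $\mathbb{Z}^n$. A direct computation, using $c_P = 2^{-j}(k_0 + (\tfrac{1}{2}, \dots, \tfrac{1}{2}))$ and $\ell(P) = 2^{-j}$, should give
\[
3P = c_P + 2^{-j}\bigl[-\tfrac{3}{2}, \tfrac{3}{2}\bigr)^n = 2^{-j}\bigl(k_0 + [-1, 2)^n\bigr) = \bigsqcup_{\|k\|_\infty \leq 1} \bigl[P + k\ell(P)\bigr],
\]
which is the first identity; the second identity then follows by taking the complement inside the partition above.

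For (iv), the subdivision $[0,1)^n = \bigsqcup_{\epsilon \in \{0,1\}^n} \tfrac{1}{2}([0,1)^n + \epsilon)$ transfers under $x \mapsto 2^{-j}(x + k_0)$ to $P = \bigsqcup_{\epsilon \in \{0,1\}^n} Q_{j+1, 2 k_0 + \epsilon}$, giving the required $2^n$ children at level $j+1$. For the ancestor part, I would fix $i \in (-\infty, j] \cap \mathbb{Z}$, pick any $x_0 \in P$, and apply (ii) at level $i$ to locate the unique $P_i \in \mathcal{D}_i$ with $x_0 \in P_i$; the standard dyadic nesting property (any two dyadic cubes are either disjoint or one contains the other, itself a consequence of (i) applied at the finer scale after iterated subdivision) then forces $P \subset P_i$, and uniqueness of $P_i$ follows from the disjointness part of (i) at level $i$. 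There is no real analytical obstacle here: every step is a direct consequence of the tiling property of integer translates, and the only mild care required is to retain the half-open convention $[0,1)^n$ so that disjointness is strict rather than merely up to sets of measure zero.
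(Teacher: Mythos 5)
Your proof is correct and is exactly the elementary argument the paper has in mind (the paper omits the proof, calling these properties well known or simple geometrical observations). Your explicit attention to the half-open convention $[0,1)^n$, which is what makes the identities in (i)--(iii) exact set equalities rather than equalities up to measure zero, is the only point of care, and you handle it properly.
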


\begin{lemma}\label{lem-xy-QR-P}
\begin{itemize}
\item[{\rm (i)}] For any $i,j\in\mathbb{Z}$, $R\in\mathcal{D}_i$,
$Q\in\mathcal{D}_j$, $x\in Q$, and $y\in R$,
$1+2^{i\wedge j}|x-y|\sim 1+2^{i\wedge j}|x_Q-x_R|$,
where the positive equivalence constants depend only on $n$.
\item[{\rm (ii)}] For any $j\in\mathbb{Z}$, $Q,R\in\mathcal{D}_j$,
$x\in Q$, and $y\in R$, $1+2^{j}|x-y|\sim 1+2^{j}|x_Q-x_R|$,
where the positive equivalence constants depend only on $n$.
\item[{\rm (iii)}]
For any $P\in\mathcal{D}$, $k\in\mathbb{Z}^n$
with $\|k\|_{\infty}\geq2$, $j\in[j_P, \infty)\cap\mathbb{Z}$,
$x\in P$, and $y\in P+k\ell(P)$,
\begin{align*}
1+2^j|x-y|\sim(1+|k|)2^{j-j_P},
\end{align*}
where the positive equivalence constants depend only on $n$.
\end{itemize}
\end{lemma}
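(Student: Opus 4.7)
The three statements are purely geometric facts about dyadic cubes, and the strategy in each case is to apply the triangle inequality together with the trivial bound that a dyadic cube at level $j$ has diameter $\sqrt{n}\,2^{-j}$ (so any two points of $Q \in \mathcal{D}_j$ satisfy $|x-x_Q|\leq\sqrt{n}\,2^{-j}$). First I would dispose of (ii), then use the same idea for (i), and finally address (iii), which carries the only real content.

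For (i), fix $Q\in\mathcal{D}_j$, $R\in\mathcal{D}_i$, $x\in Q$, and $y\in R$. The triangle inequality gives
\begin{align*}
|x_Q-x_R|-|x-x_Q|-|y-x_R|\leq|x-y|\leq|x-x_Q|+|x_Q-x_R|+|y-x_R|,
\end{align*}
and since $|x-x_Q|\leq\sqrt{n}\,2^{-j}\leq\sqrt{n}\,2^{-(i\wedge j)}$ and likewise $|y-x_R|\leq\sqrt{n}\,2^{-(i\wedge j)}$, multiplying through by $2^{i\wedge j}$ yields
\begin{align*}
2^{i\wedge j}|x_Q-x_R|-2\sqrt{n}\leq 2^{i\wedge j}|x-y|\leq 2^{i\wedge j}|x_Q-x_R|+2\sqrt{n}.
\end{align*}
Adding $1$ to both sides and using $1+(a+C)\sim 1+a$ for any fixed $C\geq 0$ and $a\geq 0$ gives the desired equivalence with constants depending only on $n$. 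Statement (ii) is the specialization $i=j$.

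For (iii), write $\ell:=\ell(P)=2^{-j_P}$ and $R:=P+k\ell(P)$, so $R\in\mathcal{D}_{j_P}$ with center $x_R=x_P+k\ell$. Using (i) applied at the common level $j_P$ (with the roles of the two cubes being $P$ and $R$), I obtain $1+2^{j_P}|x-y|\sim 1+2^{j_P}|x_P-x_R|=1+|k|$; and since $\|k\|_\infty\geq 2$ implies $|k|\geq 2$, this is comparable to $|k|\sim 1+|k|$. Thus
\begin{align*}
1+2^{j}|x-y|=1+2^{j-j_P}\cdot 2^{j_P}|x-y|\sim 1+(1+|k|)\,2^{j-j_P}.
\end{align*}
The main (mild) subtlety here is to pass from $1+c\,s$ to $c\,s$ when $c\,s\geq 1$: since $j\geq j_P$ forces $2^{j-j_P}\geq 1$ and $|k|\geq 2$ gives $(1+|k|)2^{j-j_P}\geq 3$, the ``$1+$'' is absorbed, producing $1+2^j|x-y|\sim(1+|k|)\,2^{j-j_P}$, which is the claim.

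The only step requiring any care is (iii), specifically verifying that the condition $\|k\|_\infty\geq 2$ guarantees $P$ and $R$ are far enough apart (in the metric scaled by $2^j$) that the ``$1+$'' term is negligible; the assumption $j\geq j_P$ is exactly what is needed to make this quantitative. Everything else is routine triangle-inequality bookkeeping with constants depending only on $n$.
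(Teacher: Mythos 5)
The paper gives no proof of this lemma (it is dismissed as following from ``simple geometrical observations and computations''), so there is no argument of the authors to compare against; your proof supplies exactly the kind of routine triangle-inequality verification the paper has in mind, and parts (i) and (ii) are complete and correct as written.

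In (iii) there is one under-justified link. From $1+2^{j_P}|x-y|\sim 1+|k|$ you pass directly to $1+2^{j-j_P}\cdot 2^{j_P}|x-y|\sim 1+(1+|k|)2^{j-j_P}$, but the lower bound in this step requires $2^{j_P}|x-y|\gtrsim 1+|k|$ \emph{without} the added $1$ on the left, and that does not follow formally from $1+2^{j_P}|x-y|\sim 1+|k|$ alone: if the equivalence constant $c_n$ from (ii) is small, $1+A\ge c_n(1+|k|)$ gives no positive lower bound on $A$ itself. The missing one-line observation is that $\|k\|_\infty\ge2$ forces $|x-y|\ge(\|k\|_\infty-1)\ell(P)\ge\ell(P)$, i.e. $2^{j_P}|x-y|\ge1$, whence $2^{j_P}|x-y|\ge\tfrac12\bigl(1+2^{j_P}|x-y|\bigr)\gtrsim 1+|k|$, and the rest of your chain goes through. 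Your closing paragraph does gesture at exactly this separation property, but the absorption argument you actually write out (namely $(1+|k|)2^{j-j_P}\ge3$, so the outer ``$1+$'' is negligible) only disposes of the final ``$1+$'' on the right-hand side of the claimed equivalence, not of this intermediate one. With that sentence added, the proof is complete.
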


Finally, we present the nontriviality of the above
introduced function spaces related to growth functions.

\begin{proposition}\label{prop-LA}
Let $A\in\{B, F\}$ and $p,q\in(0,\infty]$.
Then the following assertions hold.
\begin{itemize}
\item[{\rm (i)}] If $\delta_2\in(-\infty, 0)$,
$\delta_1\in(-\infty, \delta_2]$, and
$\omega\in[0,n(\delta_2-\delta_1)]$, then,
for any $\upsilon\in\mathcal{G}(\delta_1, \delta_2; \omega)$,
the space $L\dot{A}_{p, q}^{\upsilon}$ is trivial,
that is, $L\dot{A}_{p, q}^{\upsilon}$ only contains
the sequences of measurable functions on $\mathbb{R}^n$
whose each component equals to $0$ almost everywhere.
\item[{\rm (ii)}] If
\begin{align}\label{eq-delta1>0}
\delta_2\in[0, \infty),\ \delta_1\in[0,\delta_2],
\text{ and } \omega\in[0, n(\delta_2-\delta_1)],
\end{align}
then, for any $\upsilon\in\mathcal{G}(\delta_1, \delta_2; \omega)$,
the space $L\dot{A}_{p, q}^{\upsilon}$ is nontrivial.
\item[{\rm (iii)}] If $\delta_2\in[0, \infty)$, $\delta_1\in(-\infty, 0)$,
and $\omega\in[0, n(\delta_2-\delta_1)]$, then there exist
$\upsilon_1\in\mathcal{G}(\delta_1, \delta_2; \omega)$
and $\upsilon_2\in\mathcal{G}(\delta_1, \delta_2; \omega)
\backslash\mathcal{G}(0, \delta_2; n\delta_2)$
such that $L\dot{A}_{p, q}^{\upsilon_1}$ and
$L\dot{A}_{p, q}^{\upsilon_2}$ are respectively trivial and nontrivial .
\end{itemize}
\end{proposition}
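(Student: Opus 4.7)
The plan is to treat the three parts separately, exploiting how the quasi-norm $\|\cdot\|_{L\dot{A}_{p,q}^{\upsilon}}$ of \eqref{LA_nu} converts the abstract growth of $\upsilon$ into concrete $L^p$-control. For part (i), I would fix arbitrary $j_0\in\mathbb{Z}$ and $P_0\in\mathcal{D}_{j_0}$, and for every $P\in\mathcal{D}$ containing $P_0$ with $j_P\le j_0$ (which holds once $\ell(P)$ is sufficiently large) extract from \eqref{LA_nu} the inequality $\|f_{j_0}\mathbf{1}_{P_0}\|_{L^p}\le\upsilon(P)\,\|\{f_j\}_{j\in\mathbb{Z}}\|_{L\dot{A}_{p,q}^{\upsilon}}$. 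Applying Lemma \ref{lem-grow-est}(i) yields $\upsilon(P)\lesssim\upsilon(P_0)(|P|/|P_0|)^{\delta_2}\to 0$ as $|P|\to\infty$ because $\delta_2<0$; hence $f_{j_0}$ vanishes almost everywhere on $P_0$, and (i) follows by letting $P_0$ and $j_0$ range freely.

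For part (ii), I would test the quasi-norm with the single-cube sequence $f_{j_{Q_0}}:=\mathbf{1}_{Q_0}$, $f_j:=0$ for $j\neq j_{Q_0}$, where $Q_0\in\mathcal{D}$ is arbitrary. Only one scale is present, so both cases $A\in\{B,F\}$ produce the same expression; the dyadic incidence structure (cubes $P$ disjoint from $Q_0$ contribute nothing, while the factor $\mathbf{1}_{j_{Q_0}\ge j_P}$ kills scales $\ell(P)<\ell(Q_0)$) reduces the supremum to $\sup_{P\in\mathcal{D},\,P\supset Q_0}|Q_0|^{1/p}/\upsilon(P)$. Under the hypothesis $\delta_1\ge 0$, Lemma \ref{lem-grow-est}(i) delivers $\upsilon(Q_0)\lesssim\upsilon(P)$ whenever $Q_0\subset P$, so this supremum is at most $C|Q_0|^{1/p}/\upsilon(Q_0)<\infty$; taking $P=Q_0$ makes it strictly positive, providing a nontrivial element in both the $B$ and $F$ cases.

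For part (iii), I would produce both examples explicitly. For triviality, set $\upsilon_1(Q):=|Q|^{\delta_1}$, which belongs to $\mathcal{G}(\delta_1,\delta_1;0)\subset\mathcal{G}(\delta_1,\delta_2;\omega)$ by Definition \ref{def-grow-func}, and repeat the argument of (i) via $\upsilon_1(P)=|P|^{\delta_1}\to 0$ as $|P|\to\infty$, now thanks to $\delta_1<0$. For nontriviality, take $\upsilon_2(Q):=[1+\ell(Q)]^{n(\delta_2-\delta_1)}|Q|^{\delta_1}$; a short case analysis on $\ell(Q)$ versus $\ell(R)$ (with sub-cases according to whether $\ell(R)$ or $\ell(Q)$ crosses $1$ in the regime $\ell(R)<\ell(Q)$) shows $\upsilon_2\in\mathcal{G}(\delta_1,\delta_2;0)\subset\mathcal{G}(\delta_1,\delta_2;\omega)$. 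To verify $\upsilon_2\notin\mathcal{G}(0,\delta_2;n\delta_2)$, I would take concentric cubes $Q\subset R$ with $\ell(R)=1$ and $\ell(Q)\to 0^+$: then $\upsilon_2(Q)/\upsilon_2(R)\sim|Q|^{\delta_1}\to\infty$, contradicting the uniform bound $\upsilon_2(Q)/\upsilon_2(R)\lesssim 1$ imposed by membership in $\mathcal{G}(0,\delta_2;n\delta_2)$. Finally, the one-variable function $t\mapsto(1+t)^{n(\delta_2-\delta_1)}t^{n\delta_1}$ blows up as $t\to 0^+$ and behaves like $t^{n\delta_2}$ as $t\to\infty$; since $\delta_2\ge 0$, it is bounded away from $0$, so $\inf_{P\in\mathcal{D}}\upsilon_2(P)>0$ and the single-cube sequence of part (ii) again yields a finite positive quasi-norm, hence a nontrivial element.

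The main obstacle is the construction of $\upsilon_2$ in part (iii), which must simultaneously satisfy the $\mathcal{G}(\delta_1,\delta_2;\omega)$ growth condition, escape $\mathcal{G}(0,\delta_2;n\delta_2)$, and keep the quasi-norm space nontrivial. The first requirement permits $|Q|^{\delta_1}$-decay at small scales, the third forces $\upsilon_2(P)$ to stay bounded away from $0$ on expanding dyadic towers, and the compensating factor $[1+\ell(Q)]^{n(\delta_2-\delta_1)}$ is exactly what reconciles these two pulls; the exclusion from $\mathcal{G}(0,\delta_2;n\delta_2)$ then comes for free from the very small-scale blow-up that the first condition allows. All remaining steps reduce to Lemma \ref{lem-grow-est}(i) and elementary dyadic geometry.
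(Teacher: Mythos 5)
Your proof is correct and follows essentially the same route as the paper's: part (i) by letting the dyadic ancestor of a fixed cube grow and using Lemma \ref{lem-grow-est}(i) with $\delta_2<0$, part (ii) by testing with a single-cube sequence and using almost-increasingness, and part (iii) by taking a pure power $\upsilon_1$ (trivial by (i)) together with a two-regime $\upsilon_2$ that blows up at small scales (to escape $\mathcal{G}(0,\delta_2;n\delta_2)$) while staying bounded below (to keep the single-cube test element admissible). The only cosmetic difference is that your $\upsilon_2(Q)=[1+\ell(Q)]^{n(\delta_2-\delta_1)}|Q|^{\delta_1}$ is a smooth version of the paper's piecewise power function, and both are handled identically.
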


\begin{proof}
We first prove (i). Applying \eqref{LA_nu}, we obtain,
for any sequence $\{f_j\}_{j\in\mathbb{Z}}$ of measurable
functions on $\mathbb{R}^n$ whose each component equals
to $0$ almost everywhere, $\|\{f_j\}_{j\in\mathbb{Z}}
\|_{L\dot{A}_{p, q}^{\upsilon}}=0$ and hence
$\{f_j\}_{j\in\mathbb{Z}}\in L\dot{A}_{p, q}^{\upsilon}$.
Assume that $\{f_j\}_{j\in\mathbb{Z}}$
is a sequence of measurable functions on $\mathbb{R}^n$ such that
$\|\{f_j\}_{j\in\mathbb{Z}}\|_{L\dot{A}_{p, q}^{\upsilon}}<\infty$.
Given $j\in\mathbb{Z}$ and $P\in\mathcal{D}$,
for any $i\in(-\infty, j_P\wedge j]\cap\mathbb{Z}$,
let $P_i\in\mathcal{D}_i$ be as in Lemma \ref{lem-3P-sum1}(iv).
Using this, \eqref{LA_nu}, and the assumption $\|\{f_j\}_{j\in\mathbb{Z}}\|_{L\dot{A}_{p, q}^{\upsilon}}<\infty$,
we conclude that
\begin{align}\label{eq-vPi}
\frac{1}{\upsilon(P_i)}\|f_j\mathbf{1}_{P}\|_{L^p}
\leq\frac{1}{\upsilon(P_i)}\|\{f_j\mathbf{1}_{P_i}
\mathbf{1}_{j\geq i}\}_{j\in\mathbb Z}\|_{L\dot{A}_{p, q}}
\leq\|\{f_j\}_{j\in\mathbb{Z}}\|_{L\dot{A}_{p, q}^{\upsilon}}<\infty.
\end{align}
For any $i\in(-\infty, j_P\wedge j]\cap\mathbb{Z}$,
by Lemma \ref{lem-grow-est}(i) combined with
$Q$ and $P$ replaced, respectively, by $P$ and $P_i$
and with the assumption $\delta_2\in(-\infty, 0)$,
we find that
\begin{align*}
\upsilon(P_i)=\upsilon(P)
\frac{\upsilon(P_i)}{\upsilon(P)}\lesssim
\upsilon(P)2^{(j_P-i)n\delta_2}\to0
\text{\ as\ } i\to-\infty,
\end{align*}
which, together with \eqref{eq-vPi}, further implies that,
for any given $j\in\mathbb{Z}$ and $P\in\mathcal{D}$,
$\|f_j\mathbf{1}_{P}\|_{L^p}=0$ and hence,
for almost every $x\in P$, $f_j(x)=0$.
Applying this and the arbitrariness of $j\in\mathbb{Z}$
and $P\in\mathcal{D}$, we obtain, for any $j\in\mathbb{Z}$
and almost every $x\in\mathbb{R}^n$, $f_j(x)=0$.
This finishes the proof of (i).

Next, we show (ii). For any $j\in\mathbb{Z}$, let
$f_j:=\mathbf{1}_{Q_{0,\mathbf{0}}}$ if $j=0$
and let $f_j:=0$ otherwise. Using this construction, \eqref{LA_nu},
and Lemma \ref{lem-grow-est}(i) with $Q$ replaced by
$Q_{0,\mathbf{0}}$ and with the assumption
$\delta_1\in[0, \delta_2]$, we conclude that
\begin{align*}
\|\{f_j\}_{j\in\mathbb{Z}}\|_{L\dot{A}_{p, q}^{\upsilon}}
=\sup_{P\in\mathcal{D}, P\supset Q_{0,\mathbf{0}}}
\frac{1}{\upsilon(P)}\sim
\frac{1}{\upsilon(Q_{0,\mathbf{0}})}<\infty
\end{align*}
and hence $\{f_j\}_{j\in\mathbb{Z}}\in L\dot{A}_{p, q}^{\upsilon}$,
which completes the proof of (ii).

Finally, we prove (iii). To construct the desired $\upsilon_1$ and $\upsilon_2$,
suppose that $\widetilde{\delta}_2\in(\delta_1,0)$ and
$\widetilde{\delta}_1\in[\delta_1,\widetilde{\delta}_2]$.
By Proposition \ref{prop-grow-func}(i), we find that
$\mathcal{G}(\widetilde{\delta}_1, \widetilde{\delta}_2; 0)\neq\emptyset$.
From this, the just proved (i), and Definition \ref{def-grow-func},
we infer that, for any $\upsilon_1\in\mathcal{G}(\widetilde{\delta}_1,
\widetilde{\delta}_2;0)\subset\mathcal{G}(\delta_1, \delta_2; \omega)$,
$L\dot{A}_{p, q}^{\upsilon_1}$ is trivial. Let $\alpha\in(\delta_1, 0)$
and $\beta\in[0, \delta_2]$. For any $Q\in\mathcal{D}$, let
$\upsilon_2(Q):=[\ell(Q)]^{\beta}$ if $\ell(Q)\geq1$
and let $\upsilon_2(Q):=[\ell(Q)]^{\alpha}$	if
$\ell(Q)<1$. By this and Definition \ref{def-grow-func},
it is easy to verify that
$\upsilon_2\in\mathcal{G}(\alpha, \beta; 0)
\subset\mathcal{G}(\delta_1, \delta_2; \omega)$;
we omit the details.
We next show $\upsilon_2\notin\mathcal{G}(0, \delta_2; n\delta_2)$.
If $\upsilon_2\in\mathcal{G}(0, \delta_2; n\delta_2)$,
using the construction of $\upsilon_2$ and
Lemma \ref{lem-grow-est}(i), we conclude that,
for any $Q,P\in\mathcal{D}$ with $Q\subset P$, $\ell(P)\geq1$,
and $\ell(Q)<1$, $\frac{[\ell(Q)]^{\alpha}}{[\ell(P)]^{\beta}}=
\frac{\upsilon_2(Q)}{\upsilon_2(P)}\lesssim1$.
Since $\alpha\in(\delta_1, 0)$, letting $\ell(Q)\to0$,
we obtain a contradiction
and hence $\upsilon_2\notin\mathcal{G}(0, \delta_2; n\delta_2)$.
To prove that $L\dot{A}_{p, q}^{\upsilon_2}$
is nontrivial, let $\{f_j\}_{j\in\mathbb{Z}}$ be
as in the just proved (ii). From this, \eqref{LA_nu},
the definition of $\upsilon_2$,
and the assumption $\beta\in[0, \delta_2]$, we deduce that
\begin{align*}
\|\{f_j\}_{j\in\mathbb{Z}}\|_{L\dot{A}_{p, q}^{\upsilon_2}}
=\sup_{P\in\mathcal{D}, P\supset Q_{0,\mathbf{0}}}
\frac{1}{\upsilon_2(P)}=\sup_{P\in\mathcal{D}, P\supset
Q_{0,\mathbf{0}}}\frac{1}{[\ell(P)]^{\beta}}\leq 1,
\end{align*}
which further implies that $\{f_j\}_{j\in\mathbb{Z}}\in
L\dot{A}_{p, q}^{\upsilon_2}$ and hence $L\dot{A}_{p, q}^{\upsilon_2}$
is nontrivial. This finishes the proof of (iii)
and hence Proposition \ref{prop-LA}.
\end{proof}

\begin{remark}\label{rmk-reason}
By Proposition \ref{prop-grow-func},
we find that Proposition \ref{prop-LA}
covers all the ranges of indices where
growth functions are meaningful.
From this and Proposition \ref{prop-LA},
we infer that, to study
$\dot{A}_{p,q}^{s,\upsilon}(W,\varphi)$,
the assumptions that $\delta_1,\delta_2,\omega$
satisfy \eqref{eq-delta1<0} and
$\upsilon\in\mathcal{G}(\delta_1, \delta_2; \omega)$
are reasonable.
\end{remark}

\subsection{Relations of $\dot{A}_{p,q}^{s, \upsilon}(W)$
with Known Function Spaces}\label{s-fs-rela}

Before discussing the relations of $\dot{A}_{p,q}^{s, \upsilon}(W)$
with several known function spaces, we first recall
a key property of the scalar $A_\infty$ class,
which also serves as one of the motivations
for introducing the concept of growth functions.
For its proof, we refer to, for example,
\cite[(7.2.1) and Proposition 7.2.8]{gra14a}.
In what follows, for any scalar weight $w$
and any measurable set $E\subset\mathbb{R}^n$,
let $w(E):=\int_{E} w(x)\, dx$.
\begin{proposition}\label{prop-RDweight}
If $w\in A_\infty$, then there exist $p\in[1, \infty)$,
$\delta\in(0,1)$, and a positive constant $C$ such that,
for any cube $Q\subset\mathbb{R}^n$
and any measurable set $A\subset Q$,
\begin{align*}
\frac{1}{C}\left(\frac{|A|}{|Q|}\right)^{p}
\leq\frac{w(A)}{w(Q)}\leq C\left(\frac{|A|}{|Q|}\right)^\delta.
\end{align*}
\end{proposition}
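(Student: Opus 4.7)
The proposition asserts a two-sided estimate for $A_\infty$ weights. My plan would be to handle the upper and lower bounds by two separate classical arguments; the former is essentially a reverse H\"older inequality, while the latter reduces to the fact that $A_\infty=\bigcup_{p\in[1,\infty)}A_p$.

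For the upper bound, the plan is to first establish a quantitative reverse H\"older inequality: there exist $s\in(1,\infty)$ and $C_1\in(0,\infty)$, depending only on $[w]_{A_\infty}$ and $n$, such that, for every cube $Q\subset\mathbb{R}^n$,
\begin{align*}
\left(\fint_Q w(x)^{s}\,dx\right)^{\frac{1}{s}}\leq C_1\fint_Q w(x)\,dx.
\end{align*}
The usual route is to start from the $A_\infty$ condition in the form \eqref{eq-w-const} and run a Calder\'on--Zygmund stopping-time decomposition at successive heights $2^k\fint_Q w$, using that the distribution of the logarithm of $w$ is controlled (a John--Nirenberg-type inequality for $w$). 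Once the reverse H\"older inequality is known, the upper bound is immediate via H\"older's inequality: for any measurable $A\subset Q$,
\begin{align*}
\frac{w(A)}{|Q|}=\fint_Q w(x)\mathbf{1}_A(x)\,dx
\leq\left(\fint_Q w(x)^{s}\,dx\right)^{\frac{1}{s}}
\left(\frac{|A|}{|Q|}\right)^{\frac{1}{s'}}
\leq C_1\frac{w(Q)}{|Q|}\left(\frac{|A|}{|Q|}\right)^{\frac{1}{s'}},
\end{align*}
so the claimed inequality holds with $\delta:=1/s'\in(0,1)$.

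For the lower bound, the plan is to exploit the identity $A_\infty=\bigcup_{p\in[1,\infty)}A_p$: since $w\in A_\infty$, there exists $p\in[1,\infty)$ such that $w\in A_p$. If $p>1$, using H\"older's inequality with exponents $p$ and $p'$,
\begin{align*}
|A|=\int_Q \mathbf{1}_A(x)w(x)^{\frac{1}{p}}w(x)^{-\frac{1}{p}}\,dx
\leq w(A)^{\frac{1}{p}}\left(\int_Q w(x)^{-\frac{1}{p-1}}\,dx\right)^{\frac{p-1}{p}},
\end{align*}
and then invoking the $A_p$ bound
$\fint_Q w(x)\,dx\,(\fint_Q w(x)^{-\frac{1}{p-1}}\,dx)^{p-1}\leq[w]_{A_p}$
produces
$(|A|/|Q|)^{p}\leq[w]_{A_p}\,w(A)/w(Q)$, which is the asserted lower estimate.
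The case $p=1$ is handled directly from the $A_1$ condition $Mw\leq[w]_{A_1}w$ a.e., which yields $w(Q)/|Q|\leq[w]_{A_1}\operatorname{ess\,inf}_Q w$ and hence $w(A)/w(Q)\geq[w]_{A_1}^{-1}|A|/|Q|$.

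The main obstacle is the reverse H\"older step, which is the true content of the upper estimate: the existence of the exponent $s>1$ (and the constant $C_1$) has to be extracted quantitatively from the logarithmic average defining $[w]_{A_\infty}$ in \eqref{eq-w-const}. Everything else in the proof is routine application of H\"older's inequality and the $A_p$ definition. Rather than redoing this classical argument in detail here, I would simply invoke the standard references \cite[(7.2.1) and Proposition 7.2.8]{gra14a} for the reverse H\"older inequality and the inclusion $A_\infty=\bigcup_{p\in[1,\infty)}A_p$, from which both inequalities follow as sketched above.
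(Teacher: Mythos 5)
Your proposal is correct and is essentially the same as the paper's treatment: the paper gives no proof at all but simply refers to \cite[(7.2.1) and Proposition 7.2.8]{gra14a}, which are exactly the reverse H\"older inequality and the $A_\infty=\bigcup_{p}A_p$ facts you invoke, and your two H\"older-inequality computations deriving the upper and lower bounds from them are the standard (and correct) way to conclude.
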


We now present some examples of growth functions,
which naturally appear in the
study of function spaces (see, for example, \cite{hl23,sdh20,yy08,yy10,yy13,yyz14}).

\begin{example}\label{examp}
\begin{itemize}
\item[{\rm (i)}]Let $\tau\in\mathbb{R}$ and $\upsilon$
be a positive function defined on $\mathcal{D}$.
Applying Definition \ref{def-grow-func},
we conclude that, for any $Q\in\mathcal{D}$,
$\upsilon(Q)\sim{|Q|}^{\tau}$ with positive
equivalence constants independent of $Q$ if and only if
$\upsilon\in\mathcal{G}(\tau, \tau; 0)$.
\item[{\rm (ii)}]
Let $p\in(0, \infty)$ and $\mathcal{G}_p$
be the set of all nondecreasing functions
$g:\ (0, \infty)\to(0, \infty)$ such that,
for any $t_1, t_2\in(0,\infty)$ with $t_1\leq t_2$,
$g(t_1){t_1}^{-\frac{n}{p}}\geq g(t_2){t_2}^{-\frac{n}{p}}$
(see, for example, \cite[(1.2)]{nns16}
and \cite[Definition 2.7]{hl23}).
Let $g\in\mathcal{G}_p$ and, for any $Q\in\mathcal{D}$,
$\upsilon(Q):=g(\ell(Q))$. By the definition of $\mathcal{G}_p$,
one directly obtains $\upsilon\in\mathcal{G}(0, \frac1{p}; 0)$.
For more details about $\mathcal{G}_p$,
we refer to \cite[Example 2.9]{hl23} and \cite[Section 12.1.2]{sdh20}.
\item[{\rm (iii)}]
Let $w\in A_\infty$ and, for any $Q\in\mathcal{D}$,
$\upsilon(Q):=w(Q)$. Then there exist $p\in[1, \infty)$ and
$\delta\in(0,1)$ such that $\upsilon\in\mathcal{G}(\delta, p; n(p-\delta))$.
In general, let $\widetilde{\upsilon}$
be a positive function defined on the set of
all cubes in $\mathbb{R}^n$.
Assume that there exist $\delta_1, \delta_2\in\mathbb{R}$
with $\delta_2\geq\delta_1$ and a positive constant $C$
such that, for any cubes $Q, R\subset\mathbb{R}^n$,
\begin{align}\label{e11}
\frac{\widetilde{\upsilon}(Q)}{\widetilde{\upsilon}(R)}\leq
C\begin{cases}
\displaystyle{\left(\frac{|Q|}{|R|}\right)^{\delta_1}}
& \text{if } Q\subset R, \\
\displaystyle{\left(\frac{|Q|}{|R|}\right)^{\delta_2}}
& \text{if } R\subset Q.
\end{cases}
\end{align}
By the geometrical property of $\mathbb{R}^n$,
we find that, for any cubes $Q, R\subset\mathbb{R}^n$,
there exists a cube $P\subset\mathbb{R}^n$
such that $Q\cup R\subset P$ and
\begin{align}\label{eq-lP}
\ell(P)\sim\left[\ell(Q)+\ell(R)+|x_Q-x_R|\right]
\sim\left\{\left[\ell(Q)\vee\ell(R)\right]+|x_Q-x_R|\right\}.
\end{align}
Repeating an argument used in the proof of \eqref{eq-v_QR}
with $Q'$, $R'$, and \eqref{eq-lR'} replaced,
respectively, by $P$, $P$, and \eqref{eq-lP},
we conclude that, for any cubes $Q, R\subset\mathbb{R}^n$,
\begin{align}\label{eq-v}
\frac{\widetilde{\upsilon}(Q)}{\widetilde{\upsilon}(R)}
&\lesssim\left[1+\frac{|x_Q-x_R|}
{\ell(Q)\vee \ell(R)}\right]^{n(\delta_2-\delta_1)}
\begin{cases}
\displaystyle{\left(\frac{|Q|}{|R|}\right)^{\delta_1}}
& \text{if } \ell(Q) \leq \ell(R), \\
\displaystyle{\left(\frac{|Q|}{|R|}\right)^{\delta_2}}
& \text{if } \ell(R) < \ell(Q).
\end{cases}
\end{align}
For any cube $Q\subset\mathbb{R}^n$, let
$\widetilde{\upsilon}(Q):=w(Q)$.
By Proposition \ref{prop-RDweight},
we find that $\widetilde{\upsilon}$
satisfies \eqref{e11} with $\delta_1=p$ and
$\delta_2=\delta$, where $p$ and $\delta$ are the same
as in Proposition \ref{prop-RDweight}.
This, together with \eqref{eq-v} and
Definition \ref{def-grow-func}, further implies that
$\upsilon:=\widetilde{\upsilon}|_{\mathcal{D}}
\in\mathcal{G}(\delta, p; n[p-\delta])$.
\end{itemize}
\end{example}

Based on the above examples of growth functions,
we now clarify the relations of
$\dot{A}_{p,q}^{s, \upsilon}(W)$ with some known spaces.
It is worth pointing out that all the growth functions
in the following examples are $(\delta_1, \delta_2; \omega)$-order
growth functions for some $\delta_1, \delta_2, \omega$
satisfying \eqref{eq-delta1>0} and hence \eqref{eq-delta1<0}.
Thus, all the results in this article hold for the spaces
in all the following examples, in which we always suppose
that $A\in\{B, F\}$, $s\in\mathbb{R}$, $p\in(0, \infty)$,
$q\in(0,\infty]$, and $W\in\mathcal{A}_{p,\infty}$.
We start with unweighted function spaces.

\begin{example}\label{exam-BTL}
Let  $m=1$ (the scalar-valued case), $W\equiv1$, $\tau\in[0, \infty)$,
and, for any $Q\in\mathcal{D}$, $\upsilon(Q):=|Q|^\tau$.
The space $\dot{A}_{p,q}^{s, \upsilon}(W)$
is exactly the BTL-type space $\dot{A}_{p,q}^{s, \tau}$
introduced in \cite[Definition 1.1]{yy10}.
Furthermore, $\dot{A}_{p,q}^{s, 0}$
is precisely the well-known BTL space $\dot{A}_{p,q}^{s}$.
\end{example}

\begin{example}
Let $p\in(0, \infty)$ and $\varphi\in\mathcal{G}_p$, where
$\mathcal{G}_p$ is as in Example \ref{examp}(ii).
Let $m=1$, $W\equiv1$, and, for any $Q\in\mathcal{D}$,
$\upsilon(Q):=\varphi(\ell(Q))$.
The space $\dot{A}_{p,q}^{s, \upsilon}(W)$ is
exactly the homogeneous variant of $A_{p,q}^{s, \varphi}$,
which was introduced in \cite[Definition 4.1]{hl23}.
\end{example}

Next, we consider weighted function spaces.

\begin{example}
Let $m=1$, $W:=w\in A_\infty$, $\tau\in[0, \infty)$,
and, for any $Q\in\mathcal{D}$, $\upsilon(Q):=|Q|^{\tau}$.
The space  $\dot{A}_{p,q}^{s, \upsilon}(w)$ reduces
to the space $\dot{A}_{p,q}^{s, \tau}(w)$ introduced in
\cite[Definition 1]{tang13}.
In particular, the space $\dot{A}_{p,q}^{s, 0}(w)$
becomes the weighted BTL space $\dot{A}_{p, q}^{s}(w)$
introduced in \cite[p.\,583]{bui82}. Moreover, for any $Q\in\mathcal{D}$,
let $\upsilon(Q):=[w(Q)]^{\frac{1}{p}}$.
The space $\dot{F}_{p,p}^{s, \upsilon}(w)$ coincides
with the space $\dot{F}_{\infty,p}^{s}(w)$ introduced in \cite[(1.3)]{bt00}.
\end{example}

\begin{example}\label{exam-MWBTL}
For any $Q\in\mathcal{D}$, let $\upsilon(Q):=1$.
The space $\dot{B}_{p,q}^{s, \upsilon}(W)$
is precisely the matrix-weighted Besov space $\dot{B}_{p,q}^{s}(W)$
introduced in \cite[Definition 1.1]{rou03} for any $p\in(1, \infty)$
and \cite[p.\,1227, Definition]{fr04} for any $p\in(0, 1]$.
The space $\dot{F}_{p,q}^{s, \upsilon}(W)$
coincides with the matrix-weighted
Triebel--Lizorkin space $\dot{F}_{p,q}^{s}(W)$
introduced in \cite[p.\,489, (i)]{fr21}.
Moreover, Frazier and Roudenko \cite[Theorems 4.1 and 4.2]{fr21}
also proved that, for any $p\in(1, \infty)$
and any matrix $\mathcal{A}_{p}$ weight
$W$ (see, for instance, \cite[p.\,490]{fr21}
for the definition of the matrix $\mathcal{A}_p$ class),
$\dot{F}_{p, 2}^{0}(W)=L^p(W)$ with equivalent norms,
where $L^p(W)$ is the well-known matrix-weighted
Lebesgue space (see, for example, \cite[p.\,450]{vol97}).
\end{example}
\begin{example}\label{exam-MWBTLtype}
Let $\tau\in[0, \infty)$ and, for any $Q\in\mathcal{D}$,
$\upsilon(Q):=|Q|^{\tau}$.
The space $\dot{A}_{p,q}^{s, \upsilon}(W)$
coincides with the matrix-weighted BTL-type
space $\dot{A}_{p,q}^{s, \tau}(W)$
introduced in \cite[Definition 3.5]{bhyy1}.
Furthermore, the space $\dot{A}_{p,q}^{s, 0}(W)$
is exactly the matrix-weighted BTL
space $\dot{A}_{p,q}^{s}(W)$ in Example \ref{exam-MWBTL}.
\end{example}

Finally, we present a class of growth functions that lead to
new matrix-weighed BTL-type spaces.

\begin{example}\label{exam-tau_W}
Let $\tau\in[0,\infty)$ and, for any $Q\in\mathcal{D}$, let
\begin{align}\label{eq-tau_W}
\upsilon_{\tau, W}(Q):=\left[\int_{Q}\|W(x)\|\,dx\right]^{\tau}.
\end{align}
From \cite[Lemma 5.3]{bhyy4}, it follows that
$\|W\|\in A_{\infty}$. This, combined with
Example \ref{examp}(iii) and Definition \ref{def-grow-func},
further implies that
there exist $\delta_1,\delta_2,\omega$
satisfying \eqref{eq-delta1>0} such that
$\upsilon_{\tau, W}\in\mathcal{G}(\delta_1, \delta_2; \omega)$.
This new space $\dot{A}_{p,q}^{s, \upsilon_{\tau, W}}(W)$
seems more compatible with matrix weights
than the space $\dot{A}_{p,q}^{s, \tau}(W)$
in Example \ref{exam-MWBTLtype}.
Moreover, $\dot{A}_{p,q}^{s, \upsilon_{\tau, W}}(W)$
is necessary to study Question \ref{q1}
(see Theorems \ref{thm-3=4} and \ref{thm-3=4-F}).
This also serves as one of the main motivations
for us to introduce growth functions as in
Definition \ref{def-grow-func}.
\end{example}

\section{Averaging Spaces and Proof of Theorem \ref{thm-phitransMWBTL}\label{s-pf}}

In this section, we are dedicated to proving Theorem \ref{thm-phitransMWBTL}
by the following two subsections. In Subsection \ref{s-pf-e},
we first introduce averaging spaces $\dot{A}_{p,q}^{s, \upsilon}(\mathbb{A})$ along with their corresponding
sequence spaces $\dot{a}_{p,q}^{s, \upsilon}(\mathbb{A})$
and then show $\dot{A}_{p,q}^{s, \upsilon}(W)=
\dot{A}_{p,q}^{s, \upsilon}(\mathbb{A})$ and
$\dot{a}_{p,q}^{s, \upsilon}(W)=\dot{a}_{p,q}^{s, \upsilon}(\mathbb{A})$,
where $\mathbb{A}$ is a sequence of reducing operators of order $p$ for $W$.
In Subsection \ref{s-pf-pf},
by first establishing the $\varphi$-transform
characterization of $\dot{A}_{p,q}^{s, \upsilon}(\mathbb{A})$,
we then show Theorem \ref{thm-phitransMWBTL}.

\subsection{Coincidence of Matrix-Weighted Spaces
and Averaging Spaces}\label{s-pf-e}

We start with the concept of reducing operators,
which was originally introduced by Volberg
in \cite[(3.1)]{vol97} and
plays a key role in the study of matrix weights.

\begin{definition}\label{def-red-ope}
Let $p\in(0,\infty)$ and $W$ be a matrix weight.
A sequence $\{A_Q\}_{Q\in\mathcal{D}}$ of positive definite
matrices is called \emph{a sequence of reducing operators
of order $p$ for $W$} if, for any $Q\in\mathcal{D}$
and $\vec{z}\in\mathbb{C}^m$,
\begin{equation}\label{eq-red-ope}
\left|A_Q\vec z\right|
\sim\left[\fint_Q\left|W^{\frac{1}{p}}(x)\vec{z}\right|^p
\,dx\right]^{\frac{1}{p}},
\end{equation}
where the positive equivalence constants
are independent of $Q$ and $\vec{z}$.
\end{definition}

The existence of reducing operators is guaranteed
by \cite[Proposition 1.2]{gol03} for any $p\in(1, \infty)$
and \cite[p.\,1237]{fr04} for any $p\in(0, 1]$.
Observe that \eqref{eq-red-ope} shows that
there exists a relation between
matrix weights and special sequences of
positive definite matrices.
Motivated by this, we introduce the
averaging spaces, which can be shown to
coincide with matrix-weighted spaces.

\begin{definition}\label{averBTL}
Let $s\in\mathbb{R}$, $p, q\in(0,\infty]$,
and $\mathbb{A}:=\{A_Q\}_{Q\in\mathcal{D}}$
be a sequence of positive definite matrices.
Assume that $\delta_1,\delta_2,\omega$ satisfy \eqref{eq-delta1<0},
$\upsilon\in\mathcal{G}(\delta_1, \delta_2; \omega)$,
and $\varphi\in\mathcal{S}$ satisfies \eqref{cond1}.
The \emph{generalized averaging Besov-type space}
$\dot{B}_{p,q}^{s, \upsilon}(\mathbb{A},\varphi)$ and,
when $p\in(0, \infty)$, the
\emph{generalized averaging Triebel--Lizorkin-type space}
$\dot{F}_{p,q}^{s, \upsilon}(\mathbb{A}, \varphi)$
are respectively defined to be the sets of all
$\vec{f}\in(\mathcal{S}'_{\infty})^m$ such that
\begin{align*}	
\left\|\vec{f}\right\|_{\dot{A}_{p,q}^{s, \upsilon}(\mathbb{A}, \varphi)}	
:=\left\|\left\{2^{js}\left|\mathbb{A}_j
\left(\varphi_j*\vec{f}\right)\right|\right\}_{j\in\mathbb{Z}}
\right\|_{L\dot{A}_{p, q}^{\upsilon}}<\infty,
\end{align*}
where $A\in\{B, F\}$, $\|\cdot\|_{L\dot{A}_{p, q}^{\upsilon}}$ is as
in \eqref{LA_nu}, and, for any $j\in\mathbb{Z}$,		
\begin{align}\label{A_j}
\mathbb{A}_j:=\sum_{Q\in\mathcal{D}_j}\mathbf{1}_QA_Q.
\end{align}
\end{definition}

\begin{definition}\label{averBTLseq}
Let $s\in\mathbb{R}$, $p, q\in(0,\infty]$,
and $\mathbb{A}:=\{A_Q\}_{Q\in\mathcal{D}}$
be a sequence of positive definite matrices.
Suppose that $\delta_1,\delta_2,\omega$
satisfy \eqref{eq-delta1<0} and
$\upsilon\in\mathcal{G}(\delta_1, \delta_2; \omega)$.
The \emph{generalized averaging
Besov-type sequence space}
$\dot{b}_{p,q}^{s,\upsilon}(\mathbb{A})$ and,
when $p\in(0, \infty)$,
the \emph{generalized averaging
Triebel--Lizorkin-type sequence space}
$\dot{f}_{p,q}^{s,\upsilon}(\mathbb{A})$
are respectively defined to be the sets of
all $\vec{t}:=\{\vec{t}_Q\}_{Q\in\mathcal{D}}$
in ${\mathbb{C}}^m$ such that
\begin{align*}
\left\|\vec{t}\right\|
_{\dot{a}_{p,q}^{s,\upsilon}(\mathbb{A})}	
:=\left\|\left\{2^{js}\left|\mathbb{A}_j
\vec{t}_j\right|\right\}
_{j\in\mathbb{Z}}\right\|_{L\dot{A}
_{p, q}^{\upsilon}}<\infty,
\end{align*}
where $(A, a)\in\{(B, b), (F, f)\}$,
$\|\cdot\|_{L\dot{A}_{p, q}^{\upsilon}}$ is as in \eqref{LA_nu},
and, for any $j\in\mathbb{Z}$, $\mathbb{A}_j$ and $\vec{t}_j$
are as, respectively, in \eqref{A_j} and \eqref{vect_j}.
\end{definition}
\begin{remark}\label{rmk-a(A)-a}
Let all the symbols be the same as in
Definitions \ref{averBTL} and \ref{averBTLseq}.
If $\mathbb{A}:=\{I_m\}_{Q\in\mathcal{D}}$,
where $I_m$ is the identity matrix of order $m$,
we simply denote $\dot{A}_{p,q}^{s, \upsilon}(\mathbb{A}, \varphi)$
and $\dot{a}_{p,q}^{s,\upsilon}(\mathbb{A})$, respectively,
by $\dot{A}_{p,q}^{s, \upsilon}(\mathbb{C}^m, \varphi)$ and
$\dot{a}_{p,q}^{s,\upsilon}(\mathbb{C}^m)$.
Furthermore, when $m=1$, we denote
$\dot{A}_{p,q}^{s, \upsilon}(\mathbb{C}^m, \varphi)$ and
$\dot{a}_{p,q}^{s,\upsilon}(\mathbb{C}^m)$, respectively, by
$\dot{A}_{p,q}^{s, \upsilon}(\varphi)$ and
$\dot{a}_{p,q}^{s,\upsilon}$. Observe that, for any $\vec{t}:=
\{\vec{t}_Q\}_{Q\in\mathcal{D}}$ in ${\mathbb{C}}^m$,
\begin{align}\label{eq-a(A)-a}
\left\|\vec{t}\right\|
_{\dot{a}_{p,q}^{s,\upsilon}(\mathbb{A})}
=\left\|\left\{A_Q\vec{t}_Q\right\}
_{Q\in\mathcal{D}}\right\|_{\dot{a}_{p,q}^{s,\upsilon}
(\mathbb{C}^m)}=\left\|\left\{\left|A_Q\vec{t}_Q
\right|\right\}_{Q\in\mathcal{D}}
\right\|_{\dot{a}_{p,q}^{s,\upsilon}}.
\end{align}
\end{remark}

Before giving the main results of this subsection,
we first present some symbols and concepts.
For any scalar weight $w$, let
$$[w]_{A_{\infty}}^*:=\sup_{\operatorname{cube} Q\subset\mathbb{R}^n}\frac{1}{w(Q)}
\int_{Q}\mathscr{M}(w\mathbf{1}_Q)(x)\,dx$$
[see \eqref{eq-HL} for the definition of $\mathscr{M}$].
By Lebesgue's differentiation theorem, we find that,
for any scalar weight $w$, $[w]_{A_{\infty}}^*\in[1, \infty]$.
Let $p\in(0, \infty)$, $W\in
\mathcal{A}_{p,\infty}$, and
$O_m$ be the zero matrix in $M_m(\mathbb{C})$.
As pointed out in \cite[Lemma 5.3]{bhyy4},
for any $M\in M_m(\mathbb{C})\setminus \{O_m\}$,
the positive function $w_M:=\|W^{\frac{1}{p}}M\|^p
\in A_{\infty}$ and $[W]^{\operatorname{sc}}_{\mathcal{A}_{p,\infty}}:=
\sup_{M\in M_m(\mathbb{C})\setminus \{O_m\}}[w_M]_{A_{\infty}}^*
\lesssim[W]_{\mathcal{A}_{p,\infty}}$,
where the implicit positive constant is independent of $W$.
Let
\begin{align}\label{eq-r(W)}
r(W):=1+\left(2^{n+1}\left[W\right]^{\operatorname{sc}}
_{\mathcal{A}_{p,\infty}}-1\right)^{-1}.
\end{align}
We say that
a function $\upsilon:\ \mathcal{D}\to (0, \infty)$
is an \emph{almost increasing function} if
there exists a positive constant $C$ such that,
for any $Q, P\in\mathcal{D}$ with $Q\subset P$,
$\upsilon(Q)\leq C\upsilon(P)$.

The main results of this subsection are the following
two theorems. The first theorem establishes the coincidence of
$\dot{a}_{p,q}^{s, \upsilon}(W)$ and
$\dot{a}_{p,q}^{s, \upsilon}(\mathbb{A})$.
The second theorem gives the coincidence of
$\dot{A}_{p,q}^{s,\upsilon}(W, \varphi)$ and
$\dot{A}_{p,q}^{s,\upsilon}(\mathbb{A}, \varphi)$.

\begin{theorem}\label{thm-a(A)=a(W)}
Let $a\in\{b, f\}$, $s\in\mathbb{R}$,
$p\in(0, \infty)$, $q\in(0,\infty]$,
and $\upsilon$ be an almost increasing function.
Assume that $W\in \mathcal{A}_{p,\infty}$ and
$\mathbb{A}:=\{A_Q\}_{Q\in\mathcal{D}}$ is a sequence of
positive definite matrices. Then the following
statements are mutually equivalent.
\begin{itemize}
\item[{\rm (i)}] There exists some $r\in\left[p, pr(W)\right]$
such that, for any $Q\in\mathcal{D}$ and $\vec{z}\in\mathbb{C}^m$,
\begin{align}\label{eq-a}
\left|A_Q\vec{z}\right|\sim\left[\fint_{Q}
\left|W^{\frac{1}{p}}(x)\vec{z}\right|^r\, dx\right]^{\frac{1}{r}},
\end{align}
where $r(W)$ is as in \eqref{eq-r(W)} and
the positive equivalence constants are independent of
$Q$ and $\vec{z}$ but may depend on
$[W]_{\mathcal{A}_{p,\infty}}$.
\item[{\rm (ii)}] \eqref{eq-a} with $r$ replaced by $p$ holds,
that is, $\mathbb{A}$ is a sequence of reducing operators
of order $p$ for $W$.
\item[{\rm (iii)}] $\dot{a}_{p,q}^{s,\upsilon}(W)=
\dot{a}_{p,q}^{s,\upsilon}(\mathbb{A})$
with equivalent quasi-norms.
\end{itemize}
\end{theorem}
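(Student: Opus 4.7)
The plan is to establish the cyclic chain of implications $(\mathrm{ii}) \Rightarrow (\mathrm{i})$, $(\mathrm{i}) \Rightarrow (\mathrm{ii})$, $(\mathrm{ii}) \Rightarrow (\mathrm{iii})$, and $(\mathrm{iii}) \Rightarrow (\mathrm{ii})$. The first is immediate since $p \in [p, pr(W)]$. For $(\mathrm{i}) \Rightarrow (\mathrm{ii})$, the key fact is the self-improvement / reverse H\"older property of matrix $\mathcal{A}_{p,\infty}$ weights from \cite{bhyy4}: for the exponent $r(W)$ defined in \eqref{eq-r(W)}, one has
\[
\left[\fint_Q \left|W^{\frac{1}{p}}(x)\vec z\right|^{pr(W)}\,dx\right]^{\frac{1}{pr(W)}}
\lesssim \left[\fint_Q \left|W^{\frac{1}{p}}(x)\vec z\right|^p\,dx\right]^{\frac{1}{p}}
\]
uniformly in $Q$ and $\vec z$. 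Combined with the classical H\"older inequality, this gives $[\fint_Q |W^{1/p}(x)\vec z|^r\,dx]^{1/r} \sim [\fint_Q |W^{1/p}(x)\vec z|^p\,dx]^{1/p}$ for every $r \in [p, pr(W)]$, so that \eqref{eq-a} at the exponent $r$ is equivalent to \eqref{eq-red-ope}.

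For $(\mathrm{ii}) \Rightarrow (\mathrm{iii})$, I would begin with the Besov case. For each $j \in \mathbb{Z}$ and $P \in \mathcal{D}$, using the mutual disjointness of $\{Q\}_{Q\in\mathcal D_j}$ together with the fact that $Q \cap P \ne \emptyset$ and $\ell(Q) \leq \ell(P)$ force $Q \subset P$, a direct calculation combined with \eqref{eq-red-ope} yields
\[
\|\mathbf{1}_P W^{\frac{1}{p}} \vec t_j\|_{L^p}^p
= \sum_{Q \in \mathcal{D}_j,\ Q \subset P} |Q|^{-p/2} \int_Q \left|W^{\frac{1}{p}}(x) \vec t_Q\right|^p\,dx
\sim \sum_{Q \in \mathcal{D}_j,\ Q \subset P} |Q|^{1-p/2} \left|A_Q \vec t_Q\right|^p
= \|\mathbf{1}_P \mathbb{A}_j \vec t_j\|_{L^p}^p.
\]
Raising to the $q/p$ power, summing over $j \geq j_P$, and dividing by $\upsilon(P)$ before taking the supremum delivers the Besov equivalence. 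The Triebel--Lizorkin case is more delicate: $|\mathbb{A}_j(x)\vec t_j(x)|$ is piecewise constant on $\mathcal D_j$ while $|W^{1/p}(x)\vec t_j(x)|$ genuinely varies, so the comparison cannot be performed cube-by-cube inside the $L^p(\ell^q)$ norm. To circumvent this, I plan to exploit that the scalar weights $\|W^{1/p}(\cdot)\vec z\|^p$ lie in $A_\infty$ uniformly in $\vec z$ (see \cite[Lemma 5.3]{bhyy4}), which, together with the reverse H\"older property above, lets one absorb the pointwise oscillation of $|W^{1/p}(x)\vec z|$ across a dyadic cube into its $L^p$-average; this is the main technical hurdle and I would adapt the pattern of the argument from \cite[Theorem 4.19]{bhyy5}.

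Finally, $(\mathrm{iii}) \Rightarrow (\mathrm{ii})$, which is the novel direction, is handled by inserting an atomic test sequence $\vec t$ defined by $\vec t_{Q_0} := \vec z$ and $\vec t_Q := \vec 0$ for $Q \ne Q_0$, where $Q_0 \in \mathcal D$ and $\vec z \in \mathbb{C}^m$ are arbitrary. Only the single level $j = j_{Q_0}$ contributes, and only cubes $P \supset Q_0$ contribute to the supremum. Using the almost-increasing hypothesis on $\upsilon$ to obtain $\sup_{P \supset Q_0} 1/\upsilon(P) \sim 1/\upsilon(Q_0)$, both quasi-norms collapse to
\[
\|\vec t\|_{\dot a_{p,q}^{s,\upsilon}(W)} \sim \frac{2^{j_{Q_0} s}\,|Q_0|^{\frac{1}{p}-\frac{1}{2}}}{\upsilon(Q_0)}
\left[\fint_{Q_0} \left|W^{\frac{1}{p}}(x)\vec z\right|^p\,dx\right]^{\frac{1}{p}},
\qquad
\|\vec t\|_{\dot a_{p,q}^{s,\upsilon}(\mathbb A)} \sim \frac{2^{j_{Q_0} s}\,|Q_0|^{\frac{1}{p}-\frac{1}{2}}}{\upsilon(Q_0)} \left|A_{Q_0} \vec z\right|,
\]
where the common prefactor cancels. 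Since the norm equivalence in (iii) has constants independent of $\vec t$, the resulting constants in $[\fint_{Q_0} |W^{1/p}(x)\vec z|^p\,dx]^{1/p} \sim |A_{Q_0}\vec z|$ are uniform in $Q_0$ and $\vec z$, which is exactly \eqref{eq-red-ope}. The main obstacle throughout is the Triebel--Lizorkin piece of $(\mathrm{ii}) \Rightarrow (\mathrm{iii})$; the remaining implications, and in particular the novel converse $(\mathrm{iii}) \Rightarrow (\mathrm{ii})$, become transparent once the right atomic test sequences are selected.
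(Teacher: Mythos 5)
Your overall architecture coincides with the paper's: the implication (ii)$\Rightarrow$(i) is trivial, (i)$\Rightarrow$(ii) follows from the reverse H\"older inequality for matrix $\mathcal{A}_{p,\infty}$ weights together with H\"older's inequality, the Besov half of (ii)$\Rightarrow$(iii) is a cube-by-cube $L^p$ computation using \eqref{eq-red-ope}, and (iii)$\Rightarrow$(ii) is obtained by testing against single-point sequences and using the almost-increasing hypothesis to collapse the supremum over $P\supset Q_0$; your computation of the two quasi-norms of the test sequence and the cancellation of the common prefactor are exactly right.

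The genuine gap is the Triebel--Lizorkin half of (ii)$\Rightarrow$(iii), which you correctly flag as the hard part but do not actually prove. Saying that the $A_\infty$ membership of the scalarizations ``lets one absorb the pointwise oscillation into the $L^p$-average'' does not specify a mechanism, and the two inequalities in fact require two different devices. For $\|\vec t\|_{\dot f^{s,\upsilon}_{p,q}(W)}\lesssim\|\vec t\|_{\dot f^{s,\upsilon}_{p,q}(\mathbb A)}$ one writes the pointwise domination
\begin{align*}
\left|W^{\frac1p}(x)\vec t_j(x)\right|\leq\gamma_j(x)\sum_{Q\in\mathcal D_j}\widetilde{\mathbf 1}_Q(x)\left|A_Q\vec t_Q\right|,\qquad \gamma_j:=\sum_{Q\in\mathcal D_j}\mathbf 1_Q\left\|W^{\frac1p}A_Q^{-1}\right\|,
\end{align*}
and then needs the matrix-weighted Fefferman--Stein-type inequality of \cite[Corollary 5.8]{bhyy4} (Lemma \ref{lem-E_j}(ii)) to remove the factor $\gamma_j$ inside the $L^p(\ell^q)$ norm; this is a substantive vector-valued inequality, not a consequence of reverse H\"older alone. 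For the reverse inequality one instead uses Lemma \ref{lem-AW-1-set} to produce, for each $Q$, a subset $E_Q:=\{x\in Q:\ \|A_QW^{-\frac1p}(x)\|<L\}$ with $|E_Q|\geq|Q|/2$, on which $|A_Q\vec t_Q|\lesssim|W^{\frac1p}(x)\vec t_Q|$, and then invokes the Frazier--Jawerth equivalence of $\dot f$-quasi-norms over sparse subsets (Lemma \ref{lem-aLA-equi}, resting on \cite[Proposition 2.7]{fj90}) to pass from $\widetilde{\mathbf 1}_{E_Q}$ back to $\widetilde{\mathbf 1}_Q$. Neither ingredient appears in your sketch, and your proposed template, the proof of \cite[Theorem 4.19]{bhyy5}, is the wrong pattern: that argument concerns the boundedness of almost diagonal operators and is not what is adapted here. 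Until these two steps are supplied, the implication (ii)$\Rightarrow$(iii) is only established for $a=b$.
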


\begin{remark}\label{rmk-a(W)=a(A)}
\begin{itemize}
\item[{\rm (i)}] Observe that $\{|A_Q\vec{t}_Q
|\}_{Q\in\mathcal{D}}$ in \eqref{eq-a(A)-a} is a
sequence in $[0,\infty)$ and hence
Theorem \ref{thm-a(A)=a(W)}(iii) can
reduce some problems in the matrix-weighted setting
to problems in the unweighted setting
(see, for example, Theorem \ref{thm-sobolev-B} for
the study of the Sobolev-type embedding
of $\dot{A}_{p,q}^{s,\upsilon}(W)$
and \cite{fr04, rou04} for the study of the
duality of matrix-weighted Besov spaces).
To the best of our knowledge, even in the scalar-valued setting,
Theorem \ref{thm-a(A)=a(W)} is also new.
\item[{\rm (ii)}] In Theorem \ref{thm-a(A)=a(W)},
for any $Q\in\mathcal{D}$, let $\upsilon(Q):=1$.
Then spaces $\dot{a}_{p,q}^{s,\upsilon}(W)$ and
$\dot{a}_{p,q}^{s,\upsilon}(\mathbb{A})$ in
Theorem \ref{thm-a(A)=a(W)} are respectively the classical spaces
$\dot{a}_{p,q}^{s}(W)$ and $\dot{a}_{p,q}^{s}(\mathbb{A})$.
In particular, Volberg \cite[p.\,454, 2) of Remarks]{vol97}
pointed out that, when $p\in[2, \infty)$ and
Theorem \ref{thm-a(A)=a(W)}(ii) is satisfied, then
\begin{align}\label{eq-f(W)=f(A)}
\dot{f}_{p,2}^{0}(W)=\dot{f}_{p,2}^{0}(\mathbb{A})
\end{align}
with equivalent norms, that is,
Theorem \ref{thm-a(A)=a(W)}(iii) holds.
We now extend \eqref{eq-f(W)=f(A)} to any $p\in(0,\infty)$
and show that Theorem \ref{thm-a(A)=a(W)}(ii) is also necessary
to guarantee \eqref{eq-f(W)=f(A)} for any $p\in(0,\infty)$,
which also answers an open question
in \cite[p.\,454, 2) of Remarks]{vol97}.
Moreover, Volberg \cite[p.\,454, 2) of Remarks]{vol97} also
posed a question that, to make \eqref{eq-f(W)=f(A)} hold, whether
the index $p$ in Theorem \ref{thm-a(A)=a(W)}(ii) can be changed into
an index different from $p$.
We give an affirmative answer in Theorem \ref{thm-a(A)=a(W)}(i)
to this question by proving that the index $p$ in Theorem \ref{thm-a(A)=a(W)}(ii)
can be replaced by any index in $[p, pr(W)]$ with $r(W)\in (1,\infty)$.
\end{itemize}	
\end{remark}

\begin{theorem}\label{thm-A(W)=A(A)}
Let $A\in\{B, F\}$, $s\in\mathbb{R}$, $p\in(0, \infty)$,
and $q\in(0,\infty]$. Suppose that $\delta_1,\delta_2,\omega$
satisfy \eqref{eq-delta1<0},
$\upsilon\in\mathcal{G}(\delta_1, \delta_2; \omega)$,
and $\varphi\in\mathcal{S}$ satisfies \eqref{cond1}.
Assume that $W\in \mathcal{A}_{p,\infty}$ and
$\mathbb{A}:=\{A_Q\}_{Q\in\mathcal{D}}$
is a sequence of reducing operators of order $p$ for $W$.
Then $\dot{A}_{p,q}^{s,\upsilon}(W, \varphi)=
\dot{A}_{p,q}^{s,\upsilon}(\mathbb{A}, \varphi)$
with equivalent quasi-norms.
\end{theorem}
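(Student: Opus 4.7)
The plan is to establish the equivalence directly, level by level in the frequency index $j$, because at this point of the paper no $\varphi$-transform characterization of $\dot{A}_{p,q}^{s,\upsilon}(W,\varphi)$ is yet available. Write $\vec g_j:=\varphi_j*\vec f$. The target reduces, for every $P\in\mathcal{D}$, to showing that
$$\left\|\left\{2^{js}|W^{1/p}\vec g_j|\mathbf{1}_P\mathbf{1}_{j\geq j_P}\right\}_{j\in\mathbb{Z}}\right\|_{L\dot{A}_{p,q}}\ \sim\ \left\|\left\{2^{js}|\mathbb{A}_j\vec g_j|\mathbf{1}_P\mathbf{1}_{j\geq j_P}\right\}_{j\in\mathbb{Z}}\right\|_{L\dot{A}_{p,q}}$$
with implicit constants independent of $P$; dividing by $\upsilon(P)$ and taking the supremum over $P\in\mathcal{D}$ then yields the theorem. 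The growth-function hypothesis \eqref{eq-delta1<0} on $\upsilon$ is not used in the local step but is needed to guarantee that the supremum defining the $L\dot{A}_{p,q}^\upsilon$-norm is attained by a meaningful quantity (see the nontriviality discussion).

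For the key local estimate, fix $j\in\mathbb{Z}$ and $Q\in\mathcal{D}_j$. Since $\mathbb{A}_j(x)=A_Q$ for $x\in Q$, applying the defining reducing-operator relation pointwise with $\vec z=\vec g_j(x)$ and integrating over $x\in Q$ gives
$$\int_Q|A_Q\vec g_j(x)|^p\,dx\ \sim\ \int_Q\fint_Q|W^{1/p}(y)\vec g_j(x)|^p\,dy\,dx.$$
The crucial step is to convert this double integral into $\int_Q|W^{1/p}(x)\vec g_j(x)|^p\,dx$. We use that $\widehat{\vec g_j}$ is supported in the annulus $\{|\xi|\sim 2^j\}$, so that a Peetre-type maximal inequality provides, for any $r\in(0,p]$ and $\lambda>n/r$,
$$|\vec g_j(y)|\ \lesssim\ (1+2^j|x-y|)^\lambda\left[\mathscr{M}(|\vec g_j|^r)(x)\right]^{1/r}.$$
Choosing $r<p$ so that $\|W^{1/p}\|^p\in A_{p/r}$ (permitted because $\|W^{1/p}\|\in A_\infty$ by Lemma 5.3 of \cite{bhyy4} together with the self-improvement of $A_\infty$) and $\lambda$ large enough, we may interchange $x$ and $y$ in the inner integral with multiplicative losses depending only on $[W]_{\mathcal{A}_{p,\infty}}$. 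This produces the desired $Q$-local equivalence
$$\int_Q|A_Q\vec g_j(x)|^p\,dx\ \sim\ \int_Q|W^{1/p}(x)\vec g_j(x)|^p\,dx,$$
possibly after passing on one side to a fixed dilate $cQ$ in order to absorb the Peetre weight $(1+2^j|x-y|)^\lambda$.

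Summing the local equivalences over $Q\in\mathcal{D}_j$ with $Q\subset P$ and then in $j\geq j_P$ yields the $L\dot{A}_{p,q}$ equivalence displayed above. For $A=B$ this is additivity in $j$ and causes no further trouble, while for $A=F$ the interchange of $\ell^q$ and $L^p$ is handled by the Fefferman--Stein vector-valued maximal inequality for $\mathscr{M}$ on $L^{p/r}(\ell^{q/r})$, valid because $r<p\wedge q$. The principal obstacle is the interchange step in the local estimate: under only the $\mathcal{A}_{p,\infty}$ hypothesis the matrix norm $\|W^{1/p}(y)W^{-1/p}(x)\|$ is not pointwise bounded, so the swap of arguments $x\leftrightarrow y$ cannot be absorbed by a crude matrix estimate; instead one must intertwine the reducing-operator identity with the Peetre maximal bound, calibrating $r$ and $\lambda$ to the reverse H\"older exponent guaranteed by \cite{bhyy4}. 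Once the local equivalence is secured, the rest of the argument is a clean global assembly through the $L\dot{A}_{p,q}^\upsilon$-norm definition.
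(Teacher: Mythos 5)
Your plan hinges on the ``local equivalence''
\begin{align*}
\int_Q\left|A_Q\vec g_j(x)\right|^p dx\ \sim\ \int_Q\left|W^{\frac1p}(x)\vec g_j(x)\right|^p dx
\end{align*}
for each $Q\in\mathcal{D}_j$, and this is exactly where the proof breaks down. The direction $\int_Q|A_Q\vec g_j|^p\lesssim\int_Q|W^{1/p}\vec g_j|^p$ requires controlling $\|A_QW^{-1/p}(x)\|$ in $L^p$-average over $Q$, which is essentially the matrix $\mathcal{A}_p$ condition; for a general $W\in\mathcal{A}_{p,\infty}$ one only has the distributional bound of Lemma \ref{lem-AW-1-set}, i.e.\ $\|A_QW^{-1/p}\|$ is bounded on a subset $E_Q$ of proportion at least $\tfrac12$, with no integrability on the complement. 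Your proposed fix --- the scalar Peetre inequality $|\vec g_j(y)|\lesssim(1+2^j|x-y|)^\lambda[\mathscr{M}(|\vec g_j|^r)(x)]^{1/r}$ together with $\|W\|\in A_\infty$ --- cannot repair this, because the obstruction is directional: for a fixed vector $\vec z$ near the degenerate direction of $W(y)$, $|W^{1/p}(y)\vec z|$ is not comparable to $\|W^{1/p}(y)\|\,|\vec z|$, so no scalar-weight or scalar-maximal-function argument can perform the swap $x\leftrightarrow y$ inside $\fint_Q|W^{1/p}(y)\vec g_j(x)|^p\,dy$. You correctly identify this as ``the principal obstacle,'' but then assert it can be overcome by ``calibrating $r$ and $\lambda$''; that assertion is precisely the missing proof.

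The paper's resolution is structurally different and worth internalizing: it never proves a two-sided single-cube $L^p$ equivalence. Instead it (a) passes to the discrete functionals $\sup_{\mathbb{A},\varphi}(\vec f)$ and $\inf_{\mathbb{A},\varphi,\gamma}(\vec f)$ and shows via Lemmas \ref{lem-suptosum}, \ref{lem-ab-equi}, and Propositions \ref{prop-gh}--\ref{prop-dct-gh} that both compute $\|\vec f\|_{\dot A^{s,\upsilon}_{p,q}(\mathbb{A},\varphi)}$ (Lemma \ref{A(A)=a}); (b) for $W\lesssim\mathbb{A}$ uses the pointwise bound by $\gamma_j\cdot\sup$ and removes $\gamma_j$ with Lemma \ref{lem-E_j}; and (c) for $\mathbb{A}\lesssim W$ bounds the $\inf$ over a subcube $R_Q$ pointwise on the good set $E_Q\subset R_Q$ from Lemma \ref{lem-AW-1-set}, then invokes the sparse-set equivalence Lemma \ref{lem-aLA-equi} to replace $\mathbf{1}_{E_Q}$ by $\mathbf{1}_Q$. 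Note also that the far-field tails created by the Peetre weight (your ``pass to a dilate $cQ$'' step) interact with the localization $\mathbf{1}_P\mathbf{1}_{j\geq j_P}$ and are controlled in Proposition \ref{prop-gh} using the growth condition on $\upsilon$, so your remark that \eqref{eq-delta1<0} plays no role in the analytic core is not accurate either.
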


To prove Theorem \ref{thm-a(A)=a(W)}, we need some lemmas.
The following lemma is a part of \cite[Lemma 2.10]{bhyy1}.
\begin{lemma}\label{lem-red-ope}
Let $p \in(0,\infty)$, $W$ be a matrix weight,
and $\{A_Q\}_{Q\in\mathcal{D}}$
be a sequence of reducing operators of order $p$ for $W$.
Then, for any $Q\in\mathcal{D}$ and $M\in M_m(\mathbb{C})$,
\begin{align*}
\|A_QM\|\sim\left[\fint_Q\left\|W^{\frac{1}{p}}(x)
M\right\|^p\,dx\right]^{\frac{1}{p}},
\end{align*}
where the positive equivalence constants
are independent of $Q$ and $M$.
\end{lemma}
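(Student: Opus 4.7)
The plan is to bootstrap the defining vector-wise equivalence \eqref{eq-red-ope} of reducing operators up to the matrix-norm version. Since $\|A\|=\sup_{|\vec{z}|=1}|A\vec{z}|$ for any $A\in M_m(\mathbb{C})$, the idea is to apply \eqref{eq-red-ope} with $\vec{z}$ replaced by $M\vec{z}$ and then take a supremum over the unit sphere of $\mathbb{C}^m$, with the only real work lying in commuting this supremum with the $L^p$-average over $Q$.

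Concretely, I would first observe that, for each fixed $\vec{z}\in\mathbb{C}^m$, applying \eqref{eq-red-ope} to $M\vec{z}$ gives
\begin{align*}
\left|A_QM\vec{z}\right|
\sim\left[\fint_Q\left|W^{\frac{1}{p}}(x)M\vec{z}\right|^p\,dx\right]^{\frac{1}{p}}
\end{align*}
with implicit constants independent of $Q$, $M$, and $\vec{z}$. Taking the supremum over $|\vec{z}|=1$ reduces the lemma to the equivalence
\begin{align*}
\sup_{|\vec{z}|=1}\left[\fint_Q\left|W^{\frac{1}{p}}(x)M\vec{z}\right|^p\,dx\right]^{\frac{1}{p}}
\sim\left[\fint_Q\left\|W^{\frac{1}{p}}(x)M\right\|^p\,dx\right]^{\frac{1}{p}}.
\end{align*}
The ``$\lesssim$'' direction is immediate from $|W^{1/p}(x)M\vec{z}|\leq\|W^{1/p}(x)M\||\vec{z}|$ followed by pulling the supremum inside the integral.

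The ``$\gtrsim$'' direction is the only nontrivial step and will be handled by a finite-net argument on the unit sphere of $\mathbb{C}^m$. I would fix a $\tfrac{1}{2}$-net $\{\vec{z}_1,\ldots,\vec{z}_N\}$ of that sphere, whose cardinality $N$ depends only on $m$. A standard triangle-inequality argument then shows $\|A\|\leq 2\max_{1\leq j\leq N}|A\vec{z}_j|$ for every $A\in M_m(\mathbb{C})$; applying this pointwise with $A=W^{1/p}(x)M$, raising to the $p$-th power, summing the $N$ terms, and averaging over $Q$ yields
\begin{align*}
\left[\fint_Q\left\|W^{\frac{1}{p}}(x)M\right\|^p\,dx\right]^{\frac{1}{p}}
\leq 2\left[\sum_{j=1}^{N}\fint_Q\left|W^{\frac{1}{p}}(x)M\vec{z}_j\right|^p\,dx\right]^{\frac{1}{p}}
\leq 2N^{\frac{1}{p}}\sup_{|\vec{z}|=1}\left[\fint_Q\left|W^{\frac{1}{p}}(x)M\vec{z}\right|^p\,dx\right]^{\frac{1}{p}}.
\end{align*}
Combining the two directions with the pointwise equivalence above finishes the proof. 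The only subtlety I would double-check is that all the constants remain genuinely independent of $Q$ and $M$: the reducing-operator constants are so by hypothesis, the net cardinality $N$ depends only on $m$, and every estimate is homogeneous in $M$, so no hidden $M$-dependence can sneak in.
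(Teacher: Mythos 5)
Your proof is correct: the reduction of $\|A_QM\|$ to the defining vector-wise equivalence \eqref{eq-red-ope} applied to $M\vec{z}$, the trivial ``$\lesssim$'' direction, and the $\tfrac12$-net bound $\|A\|\leq 2\max_j|A\vec{z}_j|$ (with $N$ depending only on $m$, and all estimates homogeneous in $M$) together give the stated equivalence with constants independent of $Q$ and $M$. Note that the paper itself offers no proof here, simply citing \cite[Lemma 2.10]{bhyy1}; the argument given there is of the same finite-test-vector type, the only cosmetic difference being that one can dispense with the net and use the standard basis $\{\vec{e}_i\}_{i=1}^m$ via the elementary bound $\max_i|B\vec{e}_i|\leq\|B\|\leq\sum_{i=1}^m|B\vec{e}_i|$, applied pointwise to $B=W^{1/p}(x)M$ and to $B=A_QM$, which shortens your ``$\gtrsim$'' step slightly. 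Either way the proof is complete as written.
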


The next lemma can be regarded
as a suitable substitute of the
Fefferman--Stein vector-valued inequality
in the matrix-weighted setting.
In the case where $L\dot{F}_{p, q}^{\upsilon}
=L\dot{F}_{p, q}=L^p(l^q)$,
Lemma \ref{lem-E_j}(ii) originates from
\cite[Corollary 3.8]{fr21} for the
matrix $\mathcal{A}_{p}$ classes
and was later extended
to matrix $\mathcal{A}_{p, \infty}$ classes
in \cite[Corollary 5.8]{bhyy4}.

\begin{lemma}\label{lem-E_j}
Let $p\in(0, \infty)$, $q\in(0,\infty]$,
and $\upsilon$ be a positive function defined on $\mathcal{D}$.
Suppose that $W$ is a matrix weight
and $\{A_Q\}_{Q\in\mathcal{D}}$
is a sequence of reducing operators of order $p$ for $W$.
For any $j\in\mathbb{Z}$, let
\begin{align}\label{eq-gamma_j}
\gamma_j:=\sum_{Q\in\mathcal{D}_j}\mathbf{1}_Q
\left\|W^{\frac1{p}}A^{-1}_Q\right\|.
\end{align}
Then the following statements hold.
\begin{itemize}
\item[{\rm (i)}] For any $\{t_Q\}_{Q\in\mathcal{D}}$
in $\mathbb{C}$,
\begin{align}\label{eq-E_j-f_j-B}
\left\|\left\{\gamma_j\sum_{Q\in\mathcal{D}_j}\widetilde{\mathbf{1}}_Q
t_Q\right\}_{j\in\mathbb{Z}}\right\|_{L\dot{B}_{p, q}^{\upsilon}}
\sim\left\|\left\{\sum_{Q\in\mathcal{D}_j}\widetilde{\mathbf{1}}_Q
t_Q\right\}_{j\in\mathbb{Z}}\right\|_{L\dot{B}_{p, q}^{\upsilon}},
\end{align}
where $\widetilde{\mathbf{1}}_Q:=|Q|^{-\frac{1}{2}}\mathbf{1}_Q$, $\|\cdot\|_{L\dot{B}_{p, q}^{\upsilon}}$ is
as in \eqref{LA_nu}, and the positive equivalence constants
are independent of $\{t_Q\}_{Q\in\mathcal{D}}$.
\item[{\rm (ii)}] If $W\in\mathcal{A}_{p,\infty}$,
then there exists a positive constant $C$ such that,
for any $\{t_Q\}_{Q\in\mathcal{D}}$ in $\mathbb{C}$,
\begin{align}\label{eq-E_j-f_j-F}
\left\|\left\{\gamma_j\sum_{Q\in\mathcal{D}_j}
\widetilde{\mathbf{1}}_Q t_Q\right
\}_{j\in\mathbb{Z}}\right\|_{L\dot{F}_{p, q}^{\upsilon}}
\lesssim\left\|\left\{\sum_{Q\in\mathcal{D}_j}
\widetilde{\mathbf{1}}_Q t_Q\right\}_{j\in\mathbb{Z}}
\right\|_{L\dot{F}_{p, q}^{\upsilon}},
\end{align}
where $\|\cdot\|_{L\dot{F}_{p, q}^{\upsilon}}$ is
as in \eqref{LA_nu} and the positive constant is
independent of $\{t_Q\}_{Q\in\mathcal{D}}$.
\end{itemize}
\end{lemma}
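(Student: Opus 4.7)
The plan is to dispatch (i) by an elementary direct computation that exploits the pairwise disjointness of cubes in $\mathcal{D}_j$ together with the reducing-operator estimate in Lemma \ref{lem-red-ope}, and to reduce (ii) to the already-established matrix-weighted Fefferman--Stein inequality in the unweighted setting \cite[Corollary 5.8]{bhyy4} via a localization that preserves the dyadic-step structure.

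For (i), write $F_j:=\sum_{Q\in\mathcal{D}_j}\widetilde{\mathbf{1}}_Q t_Q$. In view of \eqref{LA_nu} together with the $A=B$ case of \eqref{eq-LA}, it suffices to prove that, for every $P\in\mathcal{D}$ and every $j\in\mathbb{Z}$ with $j\geq j_P$,
\[\|\gamma_j F_j\mathbf{1}_P\|_{L^p}\sim\|F_j\mathbf{1}_P\|_{L^p}\]
with equivalence constants independent of $P$, $j$, and $\{t_Q\}_{Q\in\mathcal{D}}$; then \eqref{eq-E_j-f_j-B} follows by taking the $\ell^q$-norm in $j\geq j_P$, dividing by $\upsilon(P)$, and passing to the supremum over $P\in\mathcal{D}$. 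Since $\ell(Q)\leq\ell(P)$ whenever $Q\in\mathcal{D}_j$ and $j\geq j_P$, any such $Q$ is either contained in $P$ or disjoint from $P$; hence the disjointness of cubes at level $j$ yields
\[\|\gamma_j F_j\mathbf{1}_P\|_{L^p}^p=\sum_{Q\in\mathcal{D}_j,\,Q\subset P}|Q|^{-\frac{p}{2}}|t_Q|^p\int_Q\left\|W^{\frac{1}{p}}(x)A_Q^{-1}\right\|^p\,dx.\]
Applying Lemma \ref{lem-red-ope} with $M:=A_Q^{-1}$ gives $\fint_Q\|W^{1/p}(x)A_Q^{-1}\|^p\,dx\sim\|A_QA_Q^{-1}\|^p=\|I_m\|^p=1$, so the right-hand side is equivalent to $\sum_{Q\in\mathcal{D}_j,\,Q\subset P}|Q|^{1-p/2}|t_Q|^p=\|F_j\mathbf{1}_P\|_{L^p}^p$, proving (i).

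For (ii), fix $P\in\mathcal{D}$ and define a truncated coefficient sequence $\{s_Q\}_{Q\in\mathcal{D}}$ by $s_Q:=t_Q$ if $Q\subset P$ and $s_Q:=0$ otherwise. Then $G_j:=\sum_{Q\in\mathcal{D}_j}\widetilde{\mathbf{1}}_Q s_Q=F_j\mathbf{1}_P\mathbf{1}_{j\geq j_P}$, and a pointwise computation using the disjointness of $\mathcal{D}_j$ yields $\gamma_j G_j=\gamma_j F_j\mathbf{1}_P\mathbf{1}_{j\geq j_P}$. Since $\{s_Q\}$ is still a complex sequence indexed by $\mathcal{D}$ and $W\in\mathcal{A}_{p,\infty}$, one may invoke \cite[Corollary 5.8]{bhyy4}, which is precisely the $\upsilon\equiv1$ case of \eqref{eq-E_j-f_j-F}, applied to $\{s_Q\}_{Q\in\mathcal{D}}$ to obtain
\[\left\|\left(\sum_{j\geq j_P}\left|\gamma_j F_j\mathbf{1}_P\right|^q\right)^{\frac{1}{q}}\right\|_{L^p}\lesssim\left\|\left(\sum_{j\geq j_P}\left|F_j\mathbf{1}_P\right|^q\right)^{\frac{1}{q}}\right\|_{L^p}.\]
Dividing by $\upsilon(P)$ and taking the supremum over $P\in\mathcal{D}$ yields \eqref{eq-E_j-f_j-F}.

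The only genuine analytic difficulty is absorbed into the reference \cite[Corollary 5.8]{bhyy4}, which is where the matrix $\mathcal{A}_{p,\infty}$ hypothesis is consumed through the scalar $A_\infty$ character of $w_M:=\|W^{1/p}M\|^p$ noted preceding \eqref{eq-r(W)}; the localization step itself is routine once one verifies that the truncation $s_Q:=t_Q\mathbf{1}_{Q\subset P}$ preserves the $\sum_{Q\in\mathcal{D}_j}\widetilde{\mathbf{1}}_Q s_Q$ form required by that corollary. The asymmetry between the two-sided equivalence in (i) and the merely one-sided bound in (ii) is intrinsic: the $\ell^q(L^p)$ ordering in (i) permits an $L^p$-averaging that recovers $|Q|$ from $\int_Q\|W^{1/p}A_Q^{-1}\|^p$, while the $L^p(\ell^q)$ ordering in (ii) forbids any such averaging and the multiplier $\gamma_j(x)$ admits no useful pointwise lower bound, so the direct computation of (i) cannot be reversed.
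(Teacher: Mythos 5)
Your proposal is correct and follows essentially the same route as the paper: part (i) by the direct disjoint-support computation combined with Lemma \ref{lem-red-ope} applied to $M=A_Q^{-1}$ (so that $\int_Q\|W^{1/p}(x)A_Q^{-1}\|^p\,dx\sim|Q|$), followed by taking the $\ell^q$-norm in $j\geq j_P$, dividing by $\upsilon(P)$, and taking the supremum over $P$; and part (ii) by localizing to $P$ and invoking \cite[Corollary 5.8]{bhyy4}. Your explicit truncation $s_Q:=t_Q\mathbf{1}_{Q\subset P}$ is just a slightly more careful bookkeeping of the same localization the paper performs.
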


\begin{proof}
We first prove (i). Applying \eqref{eq-gamma_j},
both (i) and (iv) of Lemma \ref{lem-3P-sum1},
and Lemma \ref{lem-red-ope}
with $M$ replaced by $A^{-1}_Q$ for any
$Q\in\mathcal{D}$, we obtain,
for any $P\in\mathcal{D}$, $j\in[j_P, \infty)\cap\mathbb{Z}$,
and $\{t_Q\}_{Q\in\mathcal{D}}$ in $\mathbb{C}$,
\begin{align*}
\left\|\gamma_j
\sum_{Q\in\mathcal{D}_j}\widetilde{\mathbf{1}}_Q
t_Q\mathbf{1}_{P}\right\|_{L^p}
&=\left[\int_{P}\left|\gamma_j(x)\right|^p
\sum_{Q\in\mathcal{D}_j}\left|\widetilde{\mathbf{1}}_Q(x)
t_Q\right|^p\,dx\right]^{\frac{1}{p}}\\
&=\left[\sum_{Q\in\mathcal{D}_j, Q\subset P}
\int_{Q}\left\|W^{\frac1{p}}(x)A^{-1}_Q\right\|^p\,dx
|t_Q|^p|Q|^{-\frac{p}{2}}\right]^{\frac{1}{p}}
\sim\left[\sum_{Q\in\mathcal{D}_j, Q\subset P}
|t_Q|^p|Q|^{-\frac{p}{2}+1}\right]^{\frac{1}{p}}\\
&=\left[\int_{P}
\sum_{Q\in\mathcal{D}_j}\left|\widetilde{\mathbf{1}}_Q(x)
t_Q\right|^p\,dx\right]^{\frac{1}{p}}
=\left\|\sum_{Q\in\mathcal{D}_j}\widetilde{\mathbf{1}}_Q
t_Q\mathbf{1}_{P}\right\|_{L^p}.
\end{align*}
By taking the $l^q$ quasi-norm on its both sides
with respect to
$j\in[j_P, \infty)\cap\mathbb{Z}$ and \eqref{eq-LA},
we find that, for any $P\in\mathcal{D}$
and $\{t_Q\}_{Q\in\mathcal{D}}$ in $\mathbb{C}$,
\begin{align*}
\left\|\left\{\gamma_j\sum_{Q\in\mathcal{D}_j}
\widetilde{\mathbf{1}}_Q
t_Q\mathbf{1}_{P}\mathbf{1}_{j\geq j_P}\right\}_{j\in\mathbb{Z}}
\right\|_{L\dot{B}_{p, q}}\sim
\left\|\left\{\sum_{Q\in\mathcal{D}_j}
\widetilde{\mathbf{1}}_Q
t_Q\mathbf{1}_{P}\mathbf{1}_{j\geq j_P}
\right\}_{j\in\mathbb{Z}}\right\|_{L\dot{B}_{p, q}}.
\end{align*}
Dividing its both sides by $\upsilon(P)$, then
taking the supremum over all $P\in\mathcal{D}$,
and using \eqref{LA_nu},
we conclude that \eqref{eq-E_j-f_j-B} holds.
This finishes the proof of (i). Next, we show (ii).
For any $P\in\mathcal{D}$ and
$\{t_Q\}_{Q\in\mathcal{D}}$ in $\mathbb{C}$,
from \cite[Corollary 5.8]{bhyy4}
with $\{f_j\}_{j\in\mathbb{Z}}$ replaced by
$\{\sum_{Q\in\mathcal{D}_j}\widetilde{\mathbf{1}}_Q
t_Q\mathbf{1}_{P}\mathbf{1}_{j\geq j_P}\}_{j\in\mathbb{Z}}$,
we infer that
\begin{align*}
\left\|\left\{\gamma_j\sum_{Q\in\mathcal{D}_j}\widetilde{\mathbf{1}}_Q
t_Q\mathbf{1}_{P}\mathbf{1}_{j\geq j_P}\right\}_{j\in\mathbb{Z}}
\right\|_{L\dot{F}_{p, q}}\lesssim
\left\|\left\{\sum_{Q\in\mathcal{D}_j}\widetilde{\mathbf{1}}_Q
t_Q\mathbf{1}_{P}\mathbf{1}_{j\geq j_P}
\right\}_{j\in\mathbb{Z}}\right\|_{L\dot{F}_{p, q}}.
\end{align*}
Dividing its both sides by $\upsilon(P)$, then
taking the supremum over all $P\in\mathcal{D}$,
and using \eqref{LA_nu}, we obtain \eqref{eq-E_j-f_j-F},
which completes the proof of (ii) and hence Lemma \ref{lem-E_j}.
\end{proof}

The following lemma gives a characterization
of the $\dot{a}_{p,q}^{s,\upsilon}$-norm  via
sequences of sparse sets.

\begin{lemma}\label{lem-aLA-equi}
Let $(A, a)\in\{(B, b), (F, f)\}$, $s\in\mathbb{R}$,
$p, q\in(0,\infty]$ ($p<\infty$ if $a=f$), and
$\upsilon$ be a positive function defined on $\mathcal{D}$.
Assume that $\varepsilon\in(0,1]$ and $\{E_Q\}_{Q\in\mathcal{D}}$
is a sequence of measurable sets with $E_Q \subset Q$
and $|E_Q|\geq\varepsilon|Q|$ for any $Q\in\mathcal{D}$.
Then, for any $t:=\{t_Q\}_{Q\in\mathcal{D}}$ in $\mathbb{C}$,
\begin{align}\label{eq-aLA-equi}
\left\|t\right\|_{\dot{a}_{p,q}^{s,\upsilon}}
\sim\left\|\left\{2^{js}\sum_{Q\in\mathcal{D}_j}
\widetilde{\mathbf{1}}_{E_Q}t_Q\right\}
_{j\in\mathbb{Z}}\right\|_{L\dot{A}_{p, q}^{\upsilon}},
\end{align}
where $\widetilde{\mathbf{1}}_{E_Q}:=|E_Q|^{-\frac{1}{2}}
\mathbf{1}_{E_Q}$, $\|\cdot\|_{L\dot{A}_{p, q}^{\upsilon}}$ is
as in \eqref{LA_nu}, and the positive equivalence
constants are independent of $t$.
\end{lemma}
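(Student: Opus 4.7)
My plan is to establish \eqref{eq-aLA-equi} by pointwise comparisons of the two sums (augmented by a fractional maximal function in the Triebel--Lizorkin case), treating the Besov and Triebel--Lizorkin cases separately.

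The direction ``$\gtrsim$'' in \eqref{eq-aLA-equi} is uniform across both cases. Since the cubes in $\mathcal{D}_j$ are pairwise disjoint (Lemma \ref{lem-3P-sum1}(i)) and $E_Q\subset Q$, at most one term contributes to either sum at each level $j$, and the bound $\widetilde{\mathbf{1}}_{E_Q}\leq\varepsilon^{-1/2}\widetilde{\mathbf{1}}_Q$ (from $|E_Q|^{-1/2}\leq\varepsilon^{-1/2}|Q|^{-1/2}$ together with $\mathbf{1}_{E_Q}\leq\mathbf{1}_Q$) yields the pointwise estimate
\[
\left|\sum_{Q\in\mathcal{D}_j}\widetilde{\mathbf{1}}_{E_Q}(x)t_Q\right|\leq\varepsilon^{-1/2}\left|\sum_{Q\in\mathcal{D}_j}\widetilde{\mathbf{1}}_Q(x)t_Q\right|
\]
for every $x\in\mathbb{R}^n$ and $j\in\mathbb{Z}$. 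Applying $\|\cdot\|_{L\dot{A}_{p,q}^{\upsilon}}$ to both sides yields the corresponding direction of \eqref{eq-aLA-equi} immediately.

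For the reverse direction, I fix $P\in\mathcal{D}$ and restrict to levels $j\geq j_P$. By the dyadic structure, any $Q\in\mathcal{D}_j$ that meets $P$ is contained in $P$, so, after multiplying by $\mathbf{1}_P\mathbf{1}_{j\geq j_P}$, both sums reduce to sums over $\{Q\in\mathcal{D}_j:\,Q\subset P\}$. In the Besov case ($a=b$), the disjointness of $\mathcal{D}_j$ gives, for $p\in(0,\infty)$,
\[
\left\|\sum_{Q\in\mathcal{D}_j,\,Q\subset P}\widetilde{\mathbf{1}}_Q t_Q\right\|_{L^p}^p=\sum_{Q\in\mathcal{D}_j,\,Q\subset P}|Q|^{1-p/2}|t_Q|^p,
\]
and the identical identity with $E_Q$ in place of $Q$ yields $|E_Q|^{1-p/2}$ in place of $|Q|^{1-p/2}$; since $\varepsilon|Q|\leq|E_Q|\leq|Q|$, the two quantities are comparable with constants depending only on $\varepsilon$ and $p$ (the case $p=\infty$ follows analogously via essential suprema). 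Taking the $l^q$ quasi-norm over $j\geq j_P$, dividing by $\upsilon(P)$, and then the supremum over $P\in\mathcal{D}$ produces the Besov case of \eqref{eq-aLA-equi}.

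The Triebel--Lizorkin case ($a=f$) is the main obstacle, because the $L^p(l^q)$-structure couples different scales through the inner $l^q$-norm and precludes a level-by-level comparison. I would overcome this via the $r$-fractional maximal function $\mathscr{M}_r(h):=[\mathscr{M}(|h|^r)]^{1/r}$, with $r\in(0,p\wedge q)$ chosen small. For any $x\in P$ and $j\geq j_P$, letting $Q_0\in\mathcal{D}_j$ be the unique dyadic cube with $x\in Q_0\subset P$, the disjointness of $\{E_Q\}_{Q\in\mathcal{D}_j}$ yields
\[
\fint_{Q_0}\left|\sum_{Q\in\mathcal{D}_j,\,Q\subset P}\widetilde{\mathbf{1}}_{E_Q}t_Q\right|^r\,dy=\frac{|E_{Q_0}|}{|Q_0|}\,|E_{Q_0}|^{-r/2}|t_{Q_0}|^r\geq\varepsilon\,|Q_0|^{-r/2}|t_{Q_0}|^r,
\]
and raising to the power $1/r$ gives the pointwise bound
\[
\left|\sum_{Q\in\mathcal{D}_j,\,Q\subset P}\widetilde{\mathbf{1}}_Q t_Q\right|(x)\leq\varepsilon^{-1/r}\,\mathscr{M}_r\!\left(\sum_{Q\in\mathcal{D}_j,\,Q\subset P}\widetilde{\mathbf{1}}_{E_Q}t_Q\right)(x)
\]
on $P$. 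Multiplying by $2^{js}$, taking the $l^q$ norm in $j\geq j_P$ and the $L^p$ norm on $P$, the classical Fefferman--Stein vector-valued maximal inequality (applicable since $p/r,q/r>1$) transfers $\mathscr{M}_r$ back to the right-hand side, which is supported in $P$. Dividing by $\upsilon(P)$ and taking the supremum over $P\in\mathcal{D}$ closes the argument. The key subtlety is the choice of $r<p\wedge q$, which accommodates the full quasi-Banach range $p,q\in(0,\infty]$ (with $p<\infty$ when $a=f$); the rest is routine dyadic bookkeeping combined with the standard vector-valued maximal inequality.
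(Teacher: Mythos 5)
Your proof is correct and takes essentially the same route as the paper's: the Besov case is handled by the same level-by-level $L^p$ identity using $\varepsilon|Q|\leq|E_Q|\leq|Q|$, and the Triebel--Lizorkin case by localizing to a dyadic cube $P$ and comparing the two sums via the maximal operator $[\mathscr{M}(|\cdot|^r)]^{1/r}$ with $r<p\wedge q$ together with the Fefferman--Stein vector-valued inequality. The only difference is that where the paper simply cites \cite[Proposition 2.7]{fj90} for the localized Triebel--Lizorkin equivalence, you reproduce that proposition's standard proof in full, so nothing is gained or lost mathematically.
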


\begin{proof}
The case where $a=b$ follows from
an argument similar to that used in the proof of Lemma \ref{lem-E_j}(i)
with $\gamma_j$ and Lemma \ref{lem-red-ope} therein
replaced, respectively, by $\sum_{Q\in\mathcal{D}_j}\mathbf{1}_{E_Q}$
and the assumption that,
for any $Q\in\mathcal{D}$, $\varepsilon|Q|\leq|E_Q|\leq|Q|$;
we omit the details. Next, we consider the case where $a=f$.
For any $P\in\mathcal{D}$ and $t:=\{t_Q\}
_{Q\in\mathcal{D}}$ in $\mathbb{C}$,
applying \cite[Proposition 2.7]{fj90} to $t$
restricted to $P$, which equals $t$
on the dyadic cubes contained in $P$ and equals $0$
otherwise, we obtain
\begin{align*}
\left\|\left\{2^{js}\left(\sum_{Q\in\mathcal{D}_j}
\widetilde{\mathbf{1}}_{Q}t_Q\right)
\mathbf{1}_{P}\mathbf{1}_{j\geq j_P}\right\}
_{j\in\mathbb{Z}}\right\|_{L\dot{F}_{p, q}}
\sim\left\|\left\{2^{js}\left(\sum_{Q\in\mathcal{D}_j}
\widetilde{\mathbf{1}}_{E_Q}t_Q\right)
\mathbf{1}_{P}\mathbf{1}_{j\geq j_P}\right\}
_{j\in\mathbb{Z}}\right\|_{L\dot{F}_{p, q}}.
\end{align*}
Dividing both sides by $\upsilon(P)$, then taking the
supremum over all $P\in\mathcal{D}$,
and using \eqref{LA_nu} and the definition of
$\|\cdot\|_{\dot{f}_{p,q}^{s,\upsilon}}$, we conclude that,
for any $t:=\{t_Q\}_{Q\in\mathcal{D}}$ in $\mathbb{C}$,
\eqref{eq-aLA-equi} holds for the case where $a=f$.
This finishes the proof of Lemma \ref{lem-aLA-equi}.
\end{proof}

In general, for any $p\in(0, \infty)$ and
any matrix weight $W$, there exists a
sequence $\mathbb{A}:=\{A_Q\}_{\operatorname{cube} Q}$ of
positive definite matrices such that, for any
cube $Q\subset\mathbb{R}^n$ and any $\vec{z}$,
\eqref{eq-red-ope} holds (see, for example,
\cite[Definition 2.7]{bhyy4} and \cite[Definition 2.8]{bhyy1}).
The sequence $\mathbb{A}$ is  called a sequence of \emph{reducing
operators of order $p$ for $W$}.
The next is precisely \cite[Corollary 3.9]{bhyy4}.

\begin{lemma}\label{lem-AW-1-set}
Let $p\in(0,\infty)$, $W\in\mathcal{A}_{p,\infty}$, and
$\{A_Q\}_{\operatorname{cube} Q}$ be a sequence of reducing operators
of order $p$ for $W$. Then there exists a positive constant
$C$ such that, for any cube $Q\subset\mathbb{R}^n$
and any $M\in(0,\infty)$,
\begin{align}\label{eq-AW-1-set}
\left|\left\{y\in Q:\ \left\|A_Q W^{-\frac{1}{p}}(y)
\right\|^p\geq e^M\right\}\right|
\leq\frac{\log(C[W]_{\mathcal{A}_{p, \infty}})}{M}|Q|.
\end{align}
\end{lemma}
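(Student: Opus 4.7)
The plan is to combine Lemma \ref{lem-red-ope} with the defining inequality for $[W]_{\mathcal{A}_{p,\infty}}$ and then apply Markov's inequality to the logarithm of $\|A_Q W^{-1/p}(y)\|^p$, reducing the matrix distributional estimate to a scalar computation.

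First, applying Lemma \ref{lem-red-ope} with $M$ replaced by $W^{-\frac{1}{p}}(y)$ for almost every $y\in\mathbb{R}^n$ yields the pointwise equivalence
$$\|A_Q W^{-\frac{1}{p}}(y)\|^p \sim \Phi_Q(y) := \fint_Q \|W^{\frac{1}{p}}(x) W^{-\frac{1}{p}}(y)\|^p \, dx,$$
with constants independent of $Q$ and $y$. Consequently there exists an absolute positive constant $C_1$ such that $\{y\in Q: \|A_Q W^{-1/p}(y)\|^p \geq e^M\}\subset \{y\in Q: \log \Phi_Q(y)\geq M-\log C_1\}$, which reduces the problem to a superlevel-set estimate for $\log\Phi_Q$.

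Next, the definition of $[W]_{\mathcal{A}_{p,\infty}}$ immediately gives $\fint_Q \log\Phi_Q(y)\,dy\leq \log[W]_{\mathcal{A}_{p,\infty}}$, and the built-in requirement $\max\{\log\Phi_Q,0\}\in L^1(Q)$ guarantees the relevant integrals are finite. To upgrade this to a quantitative bound on $\fint_Q \log^+\Phi_Q(y)\,dy$, I would decompose $\log^+=\log+\log^-$ and control the negative part as follows: by Jensen's inequality, $\log\Phi_Q(y)\geq \fint_Q \log\|W^{1/p}(x)W^{-1/p}(y)\|^p\,dx$; the matrix duality $\|B\|\|B^{-1}\|\geq 1$ applied to $B=W^{1/p}(x)W^{-1/p}(y)$ (whose inverse is $W^{1/p}(y)W^{-1/p}(x)$) then gives a pointwise lower bound on $\log\Phi_Q(y)$ in terms of $-\log\widetilde{\Phi}_Q(y)$, where $\widetilde{\Phi}_Q(y):=\fint_Q\|W^{1/p}(y)W^{-1/p}(x)\|^p\,dx$ is the swapped analogue. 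A Fubini/symmetry argument together with a second use of the $\mathcal{A}_{p,\infty}$ condition (both outer averages being over $Q$) then closes the estimate and produces
$$\fint_Q \log^+\Phi_Q(y)\,dy\leq \log(C[W]_{\mathcal{A}_{p,\infty}}).$$

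Finally, Markov's inequality applied to the nonnegative $L^1(Q)$ function $\log^+\Phi_Q$ yields, for $M>\log C_1$,
$$|\{y\in Q: \log\Phi_Q(y)\geq M-\log C_1\}| \leq \frac{1}{M-\log C_1}\int_Q \log^+\Phi_Q(y)\,dy \leq \frac{\log(C[W]_{\mathcal{A}_{p,\infty}})}{M-\log C_1}|Q|,$$
and absorbing $\log C_1$ into the constant $C$ gives the claimed estimate for large $M$; the range of small $M$ is trivial by enlarging $C$ so that the right-hand side exceeds $|Q|$. The main obstacle is the quantitative upgrade of the signed average control $\fint_Q\log\Phi_Q\leq \log[W]_{\mathcal{A}_{p,\infty}}$ (which is all the $\mathcal{A}_{p,\infty}$ definition directly supplies) to the positive-part control $\fint_Q\log^+\Phi_Q\leq \log(C[W]_{\mathcal{A}_{p,\infty}})$: closing the symmetric self-bootstrap between $\Phi_Q$ and $\widetilde{\Phi}_Q$ without losing constants is the delicate technical step, and is where the properties of matrix $\mathcal{A}_{p,\infty}$ weights developed in \cite{bhyy4} (notably the self-improvement and reverse-Hölder-type estimates) must be invoked carefully.
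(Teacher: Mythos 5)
Your overall architecture is the right one (and is essentially how such estimates are proved; the paper itself only cites \cite[Corollary 3.9]{bhyy4} rather than proving the lemma): reduce $\|A_QW^{-1/p}(y)\|^p$ to $\Phi_Q(y)$ via Lemma \ref{lem-red-ope}, bound $\fint_Q\log^+$ of it by $\log(C[W]_{\mathcal{A}_{p,\infty}})$, and finish with Chebyshev, absorbing small $M$ into $C$. However, the one step you flag as delicate — upgrading $\fint_Q\log\Phi_Q\le\log[W]_{\mathcal{A}_{p,\infty}}$ to a bound on $\fint_Q\log^+\Phi_Q$ — is genuinely not closed by the route you sketch, and it cannot be. Your bootstrap yields $\fint_Q\log^+\Phi_Q\le\log[W]_{\mathcal{A}_{p,\infty}}+\fint_Q\log^+\widetilde{\Phi}_Q$ and, by symmetry, the same inequality with the roles of $\Phi_Q$ and $\widetilde{\Phi}_Q$ exchanged; adding the two gives a tautology, so nothing is gained. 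Nor can you escape by estimating $\log^+\widetilde{\Phi}_Q\le\widetilde{\Phi}_Q$ and integrating, because $\fint_Q\fint_Q\|W^{1/p}(y)W^{-1/p}(x)\|^p\,dx\,dy$ is a full arithmetic double average, which is not controlled by $[W]_{\mathcal{A}_{p,\infty}}$ even when $m=1$ (for $w\in A_\infty$ this is $\fint_Qw\cdot\fint_Qw^{-1}$, which can be infinite).

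The correct way to control the negative part is to compare with the reducing operator itself rather than with $\widetilde{\Phi}_Q$. By submultiplicativity, $\|A_QW^{-1/p}(y)\|\,\|W^{1/p}(y)A_Q^{-1}\|\ge\|I_m\|=1$, hence
\begin{align*}
\left(\log\left\|A_QW^{-\frac1p}(y)\right\|^p\right)^{-}
\le\log^{+}\left\|W^{\frac1p}(y)A_Q^{-1}\right\|^p
\le\left\|W^{\frac1p}(y)A_Q^{-1}\right\|^p ,
\end{align*}
using $\log^+t\le t$. Averaging over $Q$ and applying Lemma \ref{lem-red-ope} with $M=A_Q^{-1}$ gives $\fint_Q\|W^{1/p}(y)A_Q^{-1}\|^p\,dy\sim\|A_QA_Q^{-1}\|^p=1$, so the negative part contributes only an absolute constant; this is the matrix analogue of the scalar normalization $\fint_Qw/w_Q=1$. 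Combining with $\fint_Q\log\|A_QW^{-1/p}(y)\|^p\,dy\le\log(C_1[W]_{\mathcal{A}_{p,\infty}})$ (your step 2 plus the equivalence constant) yields $\fint_Q\log^+\|A_QW^{-1/p}(y)\|^p\,dy\le\log(C[W]_{\mathcal{A}_{p,\infty}})$, and your Chebyshev step then goes through verbatim. With this replacement your proof is complete; as written, the central estimate is missing.
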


\begin{remark}
Volberg \cite[p.\,454, Remark]{vol97} pointed out that,
in the scalar-valued setting,   for any $p\in(0,\infty)$,
$\mathcal{A}_{p, \infty}$ reduces to $A_{\infty}$, and
\eqref{eq-AW-1-set} reveals a characteristic property of the
scalar weight $w\in A_{\infty}$, that is, the set where $w$ is much
smaller than its average is small. To see this, observe that,
in the scalar-valued setting,
$\{[\fint_{Q}w(x)\,dx]^{\frac{1}{p}}\}_{\operatorname{cube} Q}$ is
exactly a sequence of reducing operators of order $p$ for $w$.
Applying this, we conclude that \eqref{eq-AW-1-set} can be read as,
for any cube $Q\subset\mathbb{R}^n$ and any $M\in(0,\infty)$,
\begin{align*}
\left|\left\{y\in Q:\ \fint_{Q}w(x)\,dx\geq e^M w(y)\right\}\right|
\leq\frac{\log(C[w]_{A_{\infty}})}{M}|Q|,
\end{align*}
which is an equivalent definition of $w\in A_{\infty}$
(see, for example, \cite[Theorem 7.3.3]{gra14a})
and hence illustrates the aforementioned property.
\end{remark}

We now present a lemma to compute the $\dot{a}_{p,q}^{s,\upsilon}(W)$-norm
of single-pointed sequences. For any $Q,R\in\mathcal{D}$, let
\begin{align}\label{eq-e-QR}
\mathbf{1}_{Q=R}:=
\begin{cases}
1
&\text{if } Q=R, \\
0
&\text{otherwise}.
\end{cases}	
\end{align}

\begin{lemma}\label{lem-onepoint}
Let $a\in\{b, f\}$, $s\in\mathbb{R}$,
$p\in(0, \infty)$, $q\in(0,\infty]$, and $W$ be a matrix weight.
Suppose that $\upsilon$ is a positive function on $\mathcal{D}$.
Then the following three statements are mutually equivalent.
\begin{itemize}
\item[{\rm (i)}] $\upsilon$ is an almost increasing function.
\item[{\rm (ii)}] For any $Q\in\mathcal{D}$ and $\vec{z}
\in\mathbb{C}^m$, $\{\mathbf{1}_{Q=R}\vec{z}\}_{R\in\mathcal{D}}
\in\dot{a}_{p,q}^{s,\upsilon}(W)$ and
\begin{align}\label{eq-1z}
\|\{\mathbf{1}_{Q=R}\vec{z}\}_{R\in\mathcal{D}}
\|_{\dot{a}_{p,q}^{s,\upsilon}(W)}
\sim\frac{2^{j_Q(s +\frac{n}{2})}}{\upsilon(Q)}
\left[\int_{Q}\left|W^{\frac{1}{p}}(x)
\vec{z}\right|^p\, dx\right]^{\frac{1}{p}},
\end{align}
where the positive equivalence constants are
independent of $Q$ and $\vec{z}$.
\item[{\rm (iii)}] There exists
$\vec{u}\in\mathbb{C}^m\setminus\{\mathbf{0}\}$ such that,
for any $Q\in\mathcal{D}$,
$\{\mathbf{1}_{Q=R}\vec{u}\}_{R\in\mathcal{D}}
\in\dot{a}_{p,q}^{s,\upsilon}(W)$ and
\eqref{eq-1z} holds with $\vec{z}$ replaced by
$\vec{u}$.
\end{itemize}
\end{lemma}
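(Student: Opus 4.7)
The plan is to prove the circular chain (i)$\Rightarrow$(ii)$\Rightarrow$(iii)$\Rightarrow$(i). The engine of the whole argument is a direct computation of the norm of the single-pointed sequence $\vec{t}:=\{\mathbf{1}_{Q=R}\vec{z}\}_{R\in\mathcal{D}}$. First I would observe that, by \eqref{vect_j}, only $j=j_Q$ gives a nonzero contribution, with $\vec{t}_{j_Q}=\widetilde{\mathbf{1}}_Q\vec{z}$. Consequently the distinction between $l^q(L^p)$ and $L^p(l^q)$ in \eqref{eq-LA} evaporates (only one index is nonzero), and the cutoff $\mathbf{1}_P\mathbf{1}_{j\geq j_P}$ in \eqref{LA_nu} forces the supremum to run only over $P\in\mathcal{D}$ with $Q\subset P$. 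A direct substitution, together with $|Q|^{-1/2}=2^{j_Q n/2}$, yields
$$\left\|\vec{t}\right\|_{\dot{a}_{p,q}^{s,\upsilon}(W)}=\sup_{P\in\mathcal{D},\,Q\subset P}\frac{2^{j_Q(s+\frac{n}{2})}}{\upsilon(P)}\left[\int_Q\left|W^{\frac{1}{p}}(x)\vec{z}\right|^p dx\right]^{\frac{1}{p}}.$$

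With this formula in hand, (i)$\Rightarrow$(ii) is immediate: the almost-increasing property of $\upsilon$ reads $\upsilon(Q)\lesssim\upsilon(P)$ for every $P\supset Q$, so $\upsilon(P)^{-1}\lesssim\upsilon(Q)^{-1}$ and the supremum is attained, up to a constant, at $P=Q$, which is precisely \eqref{eq-1z}. The lower bound in \eqref{eq-1z} follows from choosing $P=Q$ in the supremum. The implication (ii)$\Rightarrow$(iii) is trivial: take any nonzero $\vec{u}\in\mathbb{C}^m$.

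For the return implication (iii)$\Rightarrow$(i), I would fix $Q,P\in\mathcal{D}$ with $Q\subset P$ and apply the displayed formula to $\vec{z}=\vec{u}$. Since $P$ is an admissible index in the supremum,
$$\frac{2^{j_Q(s+\frac{n}{2})}}{\upsilon(P)}\left[\int_Q\left|W^{\frac{1}{p}}(x)\vec{u}\right|^p dx\right]^{\frac{1}{p}}\leq\left\|\{\mathbf{1}_{Q=R}\vec{u}\}_{R\in\mathcal{D}}\right\|_{\dot{a}_{p,q}^{s,\upsilon}(W)},$$
while (iii) supplies the matching upper bound with $\upsilon(P)$ replaced by $\upsilon(Q)$, up to a multiplicative constant independent of $Q$. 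Dividing both sides by the common factor $2^{j_Q(s+\frac{n}{2})}[\int_Q|W^{1/p}(x)\vec{u}|^p\,dx]^{1/p}$ yields $\upsilon(Q)\lesssim\upsilon(P)$, which is exactly the almost-increasing property.

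The only point requiring a short justification is that the integral $\int_Q|W^{1/p}(x)\vec{u}|^p\,dx$ is strictly positive for every $Q\in\mathcal{D}$; otherwise the division in the last step is not legitimate. This is immediate, since $W(x)$ is positive definite for almost every $x\in\mathbb{R}^n$ and $\vec{u}\neq\mathbf{0}$, so $|W^{1/p}(x)\vec{u}|>0$ on a full-measure subset of $Q$. No real obstacle arises beyond bookkeeping; the substantive content is the clean computation of the single-point norm, which will presumably be reused elsewhere in the paper.
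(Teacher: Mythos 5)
Your proposal is correct and follows essentially the same route as the paper: both reduce everything to the explicit formula $\|\{\mathbf{1}_{Q=R}\vec{z}\}_{R}\|_{\dot{a}_{p,q}^{s,\upsilon}(W)}=\sup_{P\supset Q}\frac{2^{j_Q(s+n/2)}}{\upsilon(P)}[\int_Q|W^{1/p}(x)\vec{z}|^p\,dx]^{1/p}$ and then run the cycle (i)$\Rightarrow$(ii)$\Rightarrow$(iii)$\Rightarrow$(i), including the same final observation that $\int_Q|W^{1/p}(x)\vec{u}|^p\,dx\in(0,\infty)$ justifies the division. The only cosmetic difference is that you state the single-point norm computation once up front rather than redoing it inside each implication.
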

\begin{proof}
We first prove (i)$\ \Longrightarrow\ $(ii).
Using \eqref{eq-e-QR}, the definitions  of $\|\cdot\|_{\dot{a}_{p,q}^{s, \upsilon}(W)}$, almost increasing functions, and matrix weights,
we conclude that, for any $Q\in\mathcal{D}$ and $\vec{z}\in\mathbb{C}^m$,
\begin{align*}
&\|\{\mathbf{1}_{Q=R}\vec{z}\}_{R\in\mathcal{D}}
\|_{\dot{a}_{p,q}^{s,\upsilon}(W)}\\
&\quad=\sup_{P\in\mathcal{D}, P\supset Q}
\frac{2^{j_Q(s +\frac{n}{2})}}{\upsilon(P)}
\left[\int_{Q}\left|W^{\frac{1}{p}}(x)
\vec{z}\right|^p\, dx\right]^{\frac{1}{p}}
\sim\frac{2^{j_Q(s +\frac{n}{2})}}{\upsilon(Q)}
\left[\int_{Q}\left|W^{\frac{1}{p}}(x)
\vec{z}\right|^p\, dx\right]^{\frac{1}{p}}<\infty,
\end{align*}
which completes the proof of (i)$\ \Longrightarrow\ $(ii).

The implication (ii)$\ \Longrightarrow\ $(iii) is obvious.
We next show (iii)$\ \Longrightarrow\ $(i).
From the definition of $\|\cdot\|_{\dot{a}_{p,q}^{s, \upsilon}(W)}$
and (iii), it follows that, for any $Q,P\in\mathcal{D}$
with $Q\subset P$,
\begin{align*}
\frac{2^{j_Q(s +\frac{n}{2})}}{\upsilon(P)}
\left[\int_{Q}\left|W^{\frac{1}{p}}(x)
\vec{u}\right|^p\, dx\right]^{\frac{1}{p}}
&\leq\sup_{R\in\mathcal{D}, R\supset Q}
\frac{2^{j_Q(s +\frac{n}{2})}}{\upsilon(R)}
\left[\int_{Q}\left|W^{\frac{1}{p}}(x)
\vec{u}\right|^p\, dx\right]^{\frac{1}{p}}\\
&=\left\|\left\{\mathbf{1}_{Q=R}\vec{u}\right\}
_{R\in\mathcal{D}}\right\|_{\dot{a}_{p,q}^{s,\upsilon}(W)}
\sim\frac{2^{j_Q(s +\frac{n}{2})}}{\upsilon(Q)}
\left[\int_{Q}\left|W^{\frac{1}{p}}(x)
\vec{u}\right|^p\, dx\right]^{\frac{1}{p}},
\end{align*}
which, together with the definition of matrix weights
and the assumption $\vec{u}\neq\mathbf{0}$,
further implies that $\int_{Q}|W^{\frac{1}{p}}(x)
\vec{u}|^p\, dx\in(0,\infty)$ and hence $\upsilon(Q)\lesssim\upsilon(P)$.
Thus, $\upsilon$ is an almost increasing function.
This finishes the proof of (iii)$\ \Longrightarrow\ $(i)
and hence Lemma \ref{lem-onepoint}.
\end{proof}

\begin{remark}\label{rmk-onepoint}
Let all the symbols be the same as in Lemma \ref{lem-onepoint}.
By Lemma \ref{lem-grow-est}(i), we find that,
if $\delta_1,\delta_2,\omega$ satisfy \eqref{eq-delta1>0} and
$\upsilon\in\mathcal{G}(\delta_1, \delta_2; \omega)$,
then $\upsilon$ is an almost increasing function and hence
Lemma \ref{lem-onepoint}(ii) holds for $\dot{a}_{p,q}^{s,\upsilon}(W)$.
\end{remark}

We now show Theorem \ref{thm-a(A)=a(W)}.

\begin{proof}[Proof of Theorem \ref{thm-a(A)=a(W)}]
We first prove the equivalence (i)$\ \Longleftrightarrow\ $(ii).
Since (ii)$\ \Longrightarrow\ $(i) is trivial, it suffices to
show (i)$\ \Longrightarrow\ $(ii). For this purpose,
assume that $r\in[p, pr(W)]$ such that \eqref{eq-a} holds.
From \cite[Proposition 5.6]{bhyy4}, we infer that,
for any $Q\in\mathcal{D}$ and $\vec{z}\in\mathbb{C}^m$,
\begin{align}\label{eq-rev-hold}
\left[\fint_{Q}\left|W^{\frac{1}{p}}(x)\vec{z}
\right|^{r}\, dx\right]^{\frac{1}{r}}
\lesssim\left[\fint_{Q}\left|W^{\frac{1}{p}}(x)
\vec{z}\right|^{p}\, dx\right]^{\frac{1}{p}}.
\end{align}
Applying H\"{o}lder's inequality, we obtain
the converse estimate of \eqref{eq-rev-hold},
which, combined with \eqref{eq-a} and \eqref{eq-rev-hold},
further implies that,
for any $Q\in\mathcal{D}$ and $\vec{z}\in\mathbb{C}^m$,
\begin{align*}
\left|A_Q\vec{z}\right|\sim\left[\fint_{Q}\left|W^{\frac{1}{p}}(x)
\vec{z}\right|^{r}\, dx\right]^{\frac{1}{r}}
\sim\left[\fint_{Q}\left|W^{\frac{1}{p}}(x)
\vec{z}\right|^{p}\, dx\right]^{\frac{1}{p}}.
\end{align*}
This finishes the proof of (i)$\ \Longrightarrow\ $(ii)
and hence the equivalence (i)$\ \Longleftrightarrow\ $(ii).

Next, we prove (ii)$\ \Longrightarrow\ $(iii).
By the definitions of both $\vec{t}_j$ [see \eqref{vect_j}]
and $\gamma_j$ [see \eqref{eq-gamma_j}] and Lemma \ref{lem-3P-sum1}(ii),
we find that, for any $j\in\mathbb{Z}$,
$\vec{t}:=\{\vec{t}_Q\}_{Q\in\mathcal{D}}$
in $\mathbb{C}^m$, and $x\in\mathbb{R}^n$,
\begin{align}\label{eq-W-A}
\left|W^{\frac1{p}}(x)\vec{t}_j(x)\right|
&=\sum_{Q\in\mathcal{D}_j}\mathbf{1}_{Q}(x)\left|W^{\frac1{p}}(x)
\sum_{R\in\mathcal{D}_j}\widetilde{\mathbf{1}}_R(x)\vec{t}_R\right|
=\sum_{Q\in\mathcal{D}_j}\widetilde{\mathbf{1}}_{Q}(x)
\left|W^{\frac1{p}}(x)A^{-1}_QA_Q\vec{t}_Q\right|\\
&\leq\sum_{Q\in\mathcal{D}_j}\widetilde{\mathbf{1}}_{Q}(x)
\left\|W^{\frac1{p}}(x)A^{-1}_Q\right\|\left|A_Q\vec{t}_Q\right|
=\gamma_j(x)\sum_{Q\in\mathcal{D}_j}\widetilde{\mathbf{1}}_{Q}(x)
\left|A_Q\vec{t}_Q\right|.\nonumber
\end{align}
For any $\vec{t}:=\{\vec{t}_Q\}
_{Q\in\mathcal{D}}$ in $\mathbb{C}^m$,
using the definitions of
$\|\cdot\|_{\dot{a}_{p,q}^{s,\upsilon}(W)}$ and
$\|\cdot\|_{\dot{a}_{p,q}^{s,\upsilon}}$,
\eqref{eq-W-A}, Lemma \ref{lem-E_j}
with $\{t_Q\}_{Q\in\mathcal{D}}$
replaced by $\{2^{j_Q s}|A_Q\vec{t}_Q|\}_{Q\in\mathcal{D}}$,
and \eqref{eq-a(A)-a}, we conclude that
\begin{align}\label{eq-a(W)-a(A)}
\left\|\vec{t}\right\|_{\dot{a}_{p,q}^{s,\upsilon}(W)}
&=\left\|\left\{2^{js}\left|W^{\frac1{p}}
\vec{t}_j\right|\right\}_{j\in\mathbb{Z}}
\right\|_{L\dot{A}_{p, q}^{\upsilon}}\leq
\left\|\left\{2^{js}\gamma_j\sum_{Q\in \mathcal{D}_j}\widetilde{\mathbf{1}}_Q
\left|A_Q\vec{t}_Q\right|\right\}_{j\in\mathbb{Z}}
\right\|_{L\dot{A}_{p, q}^{\upsilon}}\\
&\lesssim\left\|\left\{2^{js}\sum_{Q\in\mathcal{D}_j}
\widetilde{\mathbf{1}}_Q\left|A_Q\vec{t}_Q
\right|\right\}_{j\in\mathbb{Z}}
\right\|_{L\dot{A}_{p, q}^{\upsilon}}
=\left\|\left\{\left|A_Q\vec{t}_Q
\right|\right\}_{Q\in\mathcal{D}}\right\|_{\dot{a}_{p,q}^{s,\upsilon}}
=\left\|\vec{t}\right\|_{\dot{a}_{p,q}^{s,\upsilon}(\mathbb{A})}.\nonumber
\end{align}
To complete the proof of (ii)$\ \Longrightarrow\ $(iii),
we only need to show the reverse estimate of \eqref{eq-a(W)-a(A)}.
By Lemma \ref{lem-AW-1-set}, we can find
$L\in(0, \infty)$ such that, for any $Q\in\mathcal{D}$,
$E_Q:=\{x\in Q:\ \|A_{Q} W^{-\frac{1}{p}}(x)\|<L\}$ satisfies that
$E_Q\subset Q$ and $\frac{|Q|}{2}\leq|E_Q|\leq|Q|$.
For any $\vec{t}:=\{\vec{t}_Q\}_{Q\in\mathcal{D}}$ in $\mathbb{C}^m$,
applying \eqref{eq-a(A)-a}, the construction of
$E_Q$ for any $Q\in\mathcal{D}$, Lemma \ref{lem-aLA-equi}
with $\{t_Q\}_{Q\in\mathcal{D}}$ replaced by
$\{|A_Q\vec{t}_Q|\}_{Q\in\mathcal{D}}$, and the definition of
$\|\cdot\|_{\dot{a}_{p,q}^{s,\upsilon}(W)}$, we obtain
\begin{align*}
\left\|\vec{t}\right\|_{\dot{a}_{p,q}^{s,\upsilon}(\mathbb{A})}
&=\left\|\left\{\left|A_Q \vec{t}_Q \right|\right\}
_{Q\in\mathcal{D}}\right\|_{\dot{a}_{p,q}^{s,\upsilon}}
\sim\left\|\left\{2^{js}\sum_{Q\in\mathcal{D}_j}\widetilde{\mathbf{1}}_{E_Q}
\left|A_QW^{-\frac{1}{p}}W^{\frac{1}{p}}\vec{t}_Q\right|\right\}
_{j\in\mathbb{Z}}\right\|_{L\dot{A}_{p, q}^{\upsilon}}\\
&\leq\left\|\left\{2^{js}\sum_{Q\in\mathcal{D}_j}\widetilde{\mathbf{1}}_{E_Q}
\left\|A_QW^{-\frac{1}{p}}\right\|\left|W^{\frac{1}{p}}\vec{t}_Q
\right|\right\}
_{j\in\mathbb{Z}}\right\|_{L\dot{A}_{p, q}^{\upsilon}}\\
&\lesssim\left\|\left\{2^{js}\sum_{Q\in\mathcal{D}_j}
\widetilde{\mathbf{1}}_{E_Q}
\left|W^{\frac{1}{p}}\vec{t}_Q\right|\right\}
_{j\in\mathbb{Z}}\right\|_{L\dot{A}_{p, q}^{\upsilon}}
\lesssim\left\|\left\{2^{js}\sum_{Q\in\mathcal{D}_j}
\widetilde{\mathbf{1}}_{Q}
\left|W^{\frac{1}{p}}\vec{t}_Q\right|\right\}
_{j\in\mathbb{Z}}\right\|_{L\dot{A}_{p, q}^{\upsilon}}
=\left\|\vec{t}\right\|_{\dot{a}_{p,q}^{s,\upsilon}(W)},
\end{align*}
which proves the reverse estimate
of \eqref{eq-a(W)-a(A)} and hence (ii)$\ \Longrightarrow\ $(iii).

Finally, we prove (iii)$\ \Longrightarrow\ $(ii).
To this end, for any $Q,R\in\mathcal{D}$,
let $\mathbf{1}_{Q=R}$ be as in \eqref{eq-e-QR}.
If (iii) holds, by the
assumption that $\upsilon$ is an almost increasing function,
Lemma \ref{lem-onepoint}, and the definition of
$\|\cdot\|_{\dot{a}_{p, q}^{s, \upsilon}(\mathbb{A})}$,
we conclude that,
for any $Q\in\mathcal{D}$ and $\vec{z}\in\mathbb{C}^m$,
\begin{align*}
\frac{2^{j_Q(s +\frac{n}{2})}}{\upsilon(Q)}
\left[\int_{Q}\left|W^{\frac{1}{p}}(x)
\vec{z}\right|^p\, dx\right]^{\frac{1}{p}}
&\sim\left\|\left\{\mathbf{1}_{Q=R}\vec{z}\right\}
_{R\in\mathcal{D}}\right\|_{\dot{a}_{p,q}^{s, \upsilon}(W)}
\sim\left\|\left\{\mathbf{1}_{Q=R}\vec{z}\right\}
_{R\in\mathcal{D}}\right\|_{\dot{a}_{p, q}^{s, \upsilon}(\mathbb{A})}\\
&=\sup_{P\in\mathcal{D}, P\supset Q}
\frac{2^{j_Q(s +\frac{n}{2})}}{\upsilon(P)}
\left[\int_{Q}\left|A_Q\vec{z}\right|^p\, dx\right]^{\frac{1}{p}}\\
&\sim\frac{2^{j_Q(s+\frac{n}{2}-\frac{n}{p})}}{\upsilon(Q)}
\left|A_Q \vec{z}\right|,
\end{align*}
which further implies that \eqref{eq-a} holds.
This finishes the proof of (iii)$\ \Longrightarrow\ $(ii)
and hence Theorem \ref{thm-a(A)=a(W)}.
\end{proof}

Notice that, in   the proof of (ii)$\ \Longrightarrow\ $(iii)
in Theorem \ref{thm-a(A)=a(W)},
there is no need to suppose that $\upsilon$ is an
almost increasing function. Thus, we obtain the following corollary.

\begin{corollary}\label{cor-a(A)=a(W)}
Let $a\in\{b, f\}$, $s\in\mathbb{R}$,
$p\in(0, \infty)$, $q\in(0,\infty]$, and
$\upsilon$ be a positive function on $\mathcal{D}$.
If $W\in\mathcal{A}_{p,\infty}$ and $\mathbb{A}$
is a sequence of reducing operators of order $p$ for $W$, then,
for any $\vec{t}:=\{\vec{t}_Q\}_{Q\in\mathcal{D}}$
in ${\mathbb{C}}^m$, $\|\vec{t}\|
_{\dot{a}_{p,q}^{s,\upsilon}(W)}\sim
\|\vec{t}\|_{\dot{a}_{p,q}^{s,\upsilon}(\mathbb{A})}$,
where the positive equivalence
constants are independent of $\vec{t}$.
\end{corollary}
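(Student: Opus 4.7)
The plan is to reuse, verbatim, the chain of estimates that established (ii)$\Longrightarrow$(iii) in Theorem \ref{thm-a(A)=a(W)}, and to verify that this chain never invokes the almost-increasing hypothesis on $\upsilon$. Since $\mathbb{A}$ is assumed to be a sequence of reducing operators of order $p$ for $W$ and $W\in\mathcal{A}_{p,\infty}$, the hypotheses of condition (ii) of Theorem \ref{thm-a(A)=a(W)} are in place; what needs to be checked is that the argument there only uses these.

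For the forward bound $\|\vec{t}\|_{\dot{a}_{p,q}^{s,\upsilon}(W)}\lesssim\|\vec{t}\|_{\dot{a}_{p,q}^{s,\upsilon}(\mathbb{A})}$, I would reproduce \eqref{eq-a(W)-a(A)}. The ingredients are: the pointwise inequality \eqref{eq-W-A}, which is purely algebraic and depends only on $\mathbb{A}$ being a sequence of reducing operators; Lemma \ref{lem-E_j}, which lets us absorb the factor $\gamma_j$ and is stated for any positive $\upsilon$ on $\mathcal{D}$ (requiring only $W\in\mathcal{A}_{p,\infty}$ in the Triebel--Lizorkin case); and the identification \eqref{eq-a(A)-a} between $\|\vec{t}\|_{\dot{a}_{p,q}^{s,\upsilon}(\mathbb{A})}$ and the scalar norm of $\{|A_Q\vec{t}_Q|\}_{Q\in\mathcal{D}}$ in $\dot{a}_{p,q}^{s,\upsilon}$.

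For the reverse bound, I would again follow the original proof: apply Lemma \ref{lem-AW-1-set} to obtain a constant $L\in(0,\infty)$ such that $E_Q:=\{x\in Q:\ \|A_Q W^{-1/p}(x)\|<L\}$ satisfies $|E_Q|\geq|Q|/2$; invoke Lemma \ref{lem-aLA-equi} with $\varepsilon=1/2$ applied to the scalar sequence $\{|A_Q\vec{t}_Q|\}_{Q\in\mathcal{D}}$; use the defining bound on $E_Q$ to pass pointwise from $|A_Q\vec{t}_Q|$ to $|W^{1/p}\vec{t}_Q|$; and finally enlarge $E_Q$ to $Q$. Both Lemma \ref{lem-AW-1-set} and Lemma \ref{lem-aLA-equi} are formulated for an arbitrary positive function $\upsilon$ on $\mathcal{D}$, so no extra structure on $\upsilon$ enters.

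The only genuine thing to inspect is where the almost-increasing hypothesis entered Theorem \ref{thm-a(A)=a(W)}. It appears exclusively in the step (iii)$\Longrightarrow$(ii), which invokes Lemma \ref{lem-onepoint} on single-pointed sequences $\{\mathbf{1}_{Q=R}\vec{z}\}_{R\in\mathcal{D}}$ to deduce the defining relation \eqref{eq-red-ope} of reducing operators from the coincidence of norms. Since the present corollary already assumes $\mathbb{A}$ is a sequence of reducing operators, this converse step is not needed, so the almost-increasing hypothesis can be omitted without any obstruction.
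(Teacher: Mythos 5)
Your proposal is correct and is precisely the paper's own argument: the paper derives this corollary by noting that the proof of (ii)$\Longrightarrow$(iii) in Theorem \ref{thm-a(A)=a(W)} nowhere uses the almost-increasing hypothesis on $\upsilon$, which is exactly the verification you carry out step by step. Your identification of where that hypothesis actually enters (only in (iii)$\Longrightarrow$(ii), via Lemma \ref{lem-onepoint}) matches the paper.
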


To prove Theorem \ref{thm-A(W)=A(A)},
we need to further establish several lemmas.
Recall that the \emph{Hardy--Littlewood maximal operator}
$\mathscr{M}$ is defined by setting, for any measurable function $f$
on $\mathbb{R}^n$ and any $x\in\mathbb{R}^n$,
\begin{align}\label{eq-HL}
\mathscr{M}(f)(x):=\sup_{\operatorname{cube}
Q\ni x}\fint_{Q}|f(x)|\, dx.
\end{align}

The following lemma is an immediate corollary
of the boundedness of $\mathscr{M}$ on
$L^p$ for any $p\in(1,\infty]$
(see, for example, \cite[Theorem 2.1.6]{gra14a})
and the well-known Fefferman--Stein vector-valued
inequality established in \cite[Theorem 1]{fs71}.
Its proof follows from the proof of \cite[Lemma 3.12]{bhyy1}
with minor modifications; we omit the details.

\begin{lemma}\label{lem-FS-ineq}
Let $A\in\{B, F\}$. Assume that, when $A=B$,
$p\in(1,\infty]$ and $q\in(0,\infty]$ and,
when $A=F$, $p\in(1, \infty)$ and $q\in(1,\infty]$.
Then there exists a positive constant $C$ such that,
for any sequence $\{f_j\}_{j\in\mathbb{Z}}$
of measurable functions on $\mathbb{R}^n$,
$\|\{\mathscr{M}(f_j)\}_{j\in\mathbb{Z}}\|_{L\dot{A}_{p, q}}
\leq C\|\{f_j\}_{j\in\mathbb{Z}}\|_{L\dot{A}_{p, q}}$,
where $\|\cdot\|_{L\dot{A}_{p, q}}$ is as in \eqref{eq-LA}.
\end{lemma}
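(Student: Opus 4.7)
The plan is to split the statement into the two cases $A=B$ and $A=F$ and reduce each to a classical maximal inequality, exactly as suggested by the statement itself. Both cases are immediate once the definition \eqref{eq-LA} is unpacked, so I expect no genuine obstacle; the entire content of the lemma lies in recognizing which classical theorem to quote.

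For the Besov case $A=B$ with $p\in(1,\infty]$ and $q\in(0,\infty]$, the definition \eqref{eq-LA} gives
$\|\{\mathscr{M}(f_j)\}_{j\in\mathbb{Z}}\|_{L\dot{B}_{p,q}}=\|\{\|\mathscr{M}(f_j)\|_{L^p}\}_{j\in\mathbb{Z}}\|_{l^q}$. I would then apply the classical $L^p$-boundedness of $\mathscr{M}$ (for $p\in(1,\infty)$ this is \cite[Theorem 2.1.6]{gra14a}; for $p=\infty$ one has the trivial bound $\|\mathscr{M}f\|_{L^{\infty}}\leq\|f\|_{L^{\infty}}$) termwise to obtain a constant $C$ independent of $j$ with $\|\mathscr{M}(f_j)\|_{L^p}\leq C\|f_j\|_{L^p}$, and then take the $l^q$-quasi-norm in $j$. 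Monotonicity of the $l^q$-quasi-norm then yields the desired estimate, with the usual modification when $q=\infty$.

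For the Triebel--Lizorkin case $A=F$ with $p\in(1,\infty)$ and $q\in(1,\infty]$, the conclusion is precisely the Fefferman--Stein vector-valued maximal inequality established in \cite[Theorem 1]{fs71}, which gives a constant $C$ depending only on $n,p,q$ such that
\begin{align*}
\left\|\left(\sum_{j\in\mathbb{Z}}\left|\mathscr{M}(f_j)\right|^q\right)^{\frac{1}{q}}\right\|_{L^p}
\leq C\left\|\left(\sum_{j\in\mathbb{Z}}|f_j|^q\right)^{\frac{1}{q}}\right\|_{L^p},
\end{align*}
with the usual modification when $q=\infty$ (in which case the statement reduces to the $L^p$-boundedness of $\mathscr{M}$ applied to the pointwise supremum). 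By \eqref{eq-LA} this is exactly $\|\{\mathscr{M}(f_j)\}_{j\in\mathbb{Z}}\|_{L\dot{F}_{p,q}}\leq C\|\{f_j\}_{j\in\mathbb{Z}}\|_{L\dot{F}_{p,q}}$, so the $F$-case is immediate.

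In summary, the proof amounts to quoting two classical results and matching them to the definition of $\|\cdot\|_{L\dot{A}_{p,q}}$; the only minor care needed is with the endpoint cases $p=\infty$ (in the Besov setting) and $q=\infty$ (in both settings), each of which is handled by the pointwise monotonicity of $\mathscr{M}$. This explains why the paper omits the details and merely records the result as a lemma for later use.
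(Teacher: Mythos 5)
Your proposal is correct and coincides with the paper's intended argument: the authors explicitly state that the lemma is an immediate corollary of the $L^p$-boundedness of $\mathscr{M}$ for $p\in(1,\infty]$ and of the Fefferman--Stein vector-valued inequality from \cite{fs71}, and they omit the details. Your handling of the endpoints $p=\infty$ and $q=\infty$ via the pointwise monotonicity of $\mathscr{M}$ is exactly the "minor modification" the paper alludes to.
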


The next lemma follows from some basic calculations;
we omit the details.
\begin{lemma}\label{lem-sum-k}
If $\lambda\in(n,\infty)$, then, for any $j\in\mathbb{Z}_+$
and $x\in\mathbb{R}^n$,
\begin{align*}
\sum_{k\in{\mathbb{Z}}^n}\left(1+|x-2^{-j}k|
\right)^{-\lambda}\sim2^{jn},
\end{align*}
where the positive equivalence constants
depend only on $\lambda$ and $n$.
\end{lemma}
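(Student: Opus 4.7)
The plan is to establish the two-sided bound $\sum_{k\in\mathbb{Z}^n}(1+|x-2^{-j}k|)^{-\lambda}\sim 2^{jn}$ by comparing the lattice sum to the integral $\int_{\mathbb{R}^n}(1+|x-y|)^{-\lambda}\,dy$. The idea is entirely standard: tile $\mathbb{R}^n$ by the (half-open) cubes $Q_k:=2^{-j}k+2^{-j}[0,1)^n$ for $k\in\mathbb{Z}^n$, note that $|Q_k|=2^{-jn}$, and use that on each $Q_k$ the quantity $1+|x-2^{-j}k|$ is equivalent to $1+|x-y|$ for all $y\in Q_k$.

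First I would verify the pointwise equivalence. For any $y\in Q_k$, the triangle inequality gives
\begin{align*}
\bigl|\,|x-y|-\left|x-2^{-j}k\right|\,\bigr|\leq\left|y-2^{-j}k\right|\leq\sqrt{n}\,2^{-j}\leq\sqrt{n},
\end{align*}
where the last inequality uses $j\in\mathbb{Z}_+$, that is, $j\geq 0$. Consequently there is a positive constant $c=c(n)$ such that, for any $k\in\mathbb{Z}^n$ and $y\in Q_k$,
\begin{align*}
c\bigl(1+|x-y|\bigr)\leq 1+\left|x-2^{-j}k\right|\leq c^{-1}\bigl(1+|x-y|\bigr).
\end{align*}
Raising to the power $-\lambda$ and averaging over $y\in Q_k$, I obtain
\begin{align*}
\left(1+\left|x-2^{-j}k\right|\right)^{-\lambda}\sim\frac{1}{|Q_k|}\int_{Q_k}\bigl(1+|x-y|\bigr)^{-\lambda}\,dy=2^{jn}\int_{Q_k}\bigl(1+|x-y|\bigr)^{-\lambda}\,dy,
\end{align*}
with equivalence constants depending only on $\lambda$ and $n$.

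Next, I would sum in $k\in\mathbb{Z}^n$ and use that $\{Q_k\}_{k\in\mathbb{Z}^n}$ is a disjoint cover of $\mathbb{R}^n$ to get
\begin{align*}
\sum_{k\in\mathbb{Z}^n}\left(1+\left|x-2^{-j}k\right|\right)^{-\lambda}\sim 2^{jn}\int_{\mathbb{R}^n}\bigl(1+|x-y|\bigr)^{-\lambda}\,dy=2^{jn}\int_{\mathbb{R}^n}\bigl(1+|z|\bigr)^{-\lambda}\,dz,
\end{align*}
where the last equality comes from the translation invariance of Lebesgue measure. Since $\lambda>n$, the integral $\int_{\mathbb{R}^n}(1+|z|)^{-\lambda}\,dz$ is a finite positive constant depending only on $\lambda$ and $n$, which yields the desired conclusion.

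The argument is essentially routine; the only subtlety to watch is the uniformity of the constant in the pointwise equivalence, which requires the hypothesis $j\geq 0$ so that $2^{-j}\leq 1$ and hence the diameter of $Q_k$ is bounded by a dimensional constant independent of $j$. I do not anticipate any genuine obstacle.
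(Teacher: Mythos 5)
Your proof is correct, and since the paper explicitly omits the proof of this lemma ("follows from some basic calculations"), your integral-comparison argument is exactly the standard computation the authors have in mind. You also correctly identify the one point where the hypothesis $j\in\mathbb{Z}_+$ is needed, namely the uniform bound $\sqrt{n}\,2^{-j}\leq\sqrt{n}$ on the cube diameters, so nothing is missing.
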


Let $(A, a)\in\{(B, b), (F, f)\}$,
$p, q\in(0, \infty]$, and
\begin{align}\label{eq-zeta}
\Gamma_{p,q}:=
\begin{cases}
\displaystyle p	
&\text{if } A=B \text{ or } a=b,\\
\displaystyle p\wedge q
&\text{if } A=F \text{ or } a=f.
\end{cases}
\end{align}
We have the following observation.

\begin{proposition}\label{prop-gh}
Let $A\in\{B, F\}$ and $p, q\in(0, \infty]$
($p<\infty$ if $A=F$). Suppose that $\delta_1,\delta_2,\omega$
satisfy \eqref{eq-delta1<0},
$\upsilon\in\mathcal{G}(\delta_1, \delta_2; \omega)$,
$r\in(0, \Gamma_{p,q})$, and $\lambda\in(n+[\omega\wedge
n(\delta_2-\frac{1}{p})_+]r,\infty)$.
Then there exists a positive constant $C$
such that, for any sequences $\{g_j\}_{j\in\mathbb{Z}}$
and $\{h_j\}_{j\in\mathbb{Z}}$ of measurable functions
on $\mathbb{R}^n$ satisfying that,
for any $j\in\mathbb{Z}$ and $x\in\mathbb{R}^n$,
\begin{align*}
\left|g_j(x)\right|^r\leq\int_{\mathbb{R}^n}
\frac{2^{jn}}{(1+2^j|x-y|)^\lambda}\left|h_j(y)\right|^r\,dy,
\end{align*}
$\|\{g_j\}_{j\in\mathbb{Z}}\|_{L\dot{A}_{p, q}^{\upsilon}}\leq C
\|\{h_j\}_{j\in\mathbb{Z}}\|_{L\dot{A}_{p, q}^{\upsilon}}$,
where $\|\cdot\|_{L\dot{A}_{p, q}^{\upsilon}}$ is as in \eqref{LA_nu}.
\end{proposition}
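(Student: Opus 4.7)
The plan is to reduce to the case $r=1$ via a pointwise substitution, then bound the $L\dot A_{p,q}^\upsilon$-norm cube by cube via a local/far splitting, handling the far part by two complementary dyadic decompositions.

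\textbf{Reduction.} Set $G_j := |g_j|^r$, $F_j := |h_j|^r$, $\widetilde p := p/r$, $\widetilde q := q/r$, and $\widetilde\upsilon := \upsilon^r$. The hypothesis becomes $G_j \leq K_j * F_j$ with $K_j(x) := 2^{jn}(1+2^j|x|)^{-\lambda}$, and a direct calculation gives $\|\{g_j\}\|_{L\dot A_{p,q}^\upsilon}^r = \|\{G_j\}\|_{L\dot A_{\widetilde p, \widetilde q}^{\widetilde\upsilon}}$ (and likewise for $h_j$). Moreover, $\widetilde\upsilon \in \mathcal{G}(r\delta_1, r\delta_2; r\omega)$ satisfies the same structural bounds, the condition $r < \Gamma_{p,q}$ forces $\widetilde p > 1$ (and $\widetilde q > 1$ when $A = F$), and the threshold on $\lambda$ rescales to $\lambda > n + \widetilde\omega \wedge n(\widetilde\delta_2 - 1/\widetilde p)_+$. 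Thus it suffices to prove the proposition for $r = 1$.

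\textbf{Local estimate.} Fix $P \in \mathcal{D}$; we will show $\upsilon(P)^{-1}\|\{g_j \mathbf{1}_P \mathbf{1}_{j \geq j_P}\}\|_{L\dot A_{p,q}} \lesssim \|\{h_j\}\|_{L\dot A_{p,q}^\upsilon}$ with the implicit constant independent of $P$. For $x \in P$ and $j \geq j_P$, split $g_j(x) \leq g_j^{\mathrm{loc}}(x) + g_j^{\mathrm{far}}(x)$, where $g_j^{\mathrm{loc}}(x) := \int_{3P} K_j(x-y) h_j(y)\,dy$. Since $\lambda > n$, $K_j$ is pointwise dominated by a translation-dilation of a fixed radial decreasing $L^1$ function, whence $g_j^{\mathrm{loc}}(x) \lesssim \mathscr{M}(h_j \mathbf{1}_{3P})(x)$. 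The Fefferman--Stein inequality (Lemma \ref{lem-FS-ineq}, valid since $\widetilde p > 1$ and, for $A = F$, $\widetilde q > 1$), the covering $3P = \bigcup_{\|k\|_\infty \leq 1}(P + k\ell(P))$ (Lemma \ref{lem-3P-sum1}(iii)), the equality $j_{P+k\ell(P)} = j_P$, and $\upsilon(P + k\ell(P)) \sim \upsilon(P)$ from Lemma \ref{lem-grow-est}(iii) together yield the desired bound for the local piece.

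\textbf{Far estimate.} The threshold $\lambda > n + \omega \wedge n(\delta_2 - 1/p)_+$ is a minimum of two quantities, which we recover by two complementary decompositions of $(3P)^c$. In the \emph{translate decomposition} $(3P)^c = \bigsqcup_{\|k\|_\infty \geq 2}(P + k\ell(P))$ (Lemma \ref{lem-3P-sum1}(iii)), Lemma \ref{lem-xy-QR-P}(iii) gives $K_j(x-y) \lesssim 2^{(j-j_P)(n-\lambda)}(1+|k|)^{-\lambda}|P|^{-1}$ on $P \times (P + k\ell(P))$; H\"older on $P + k\ell(P)$ (valid as $p > 1$), the appropriate $(x,j)$-norm, and Lemma \ref{lem-grow-est}(ii) ($\upsilon(P + k\ell(P)) \lesssim (1+|k|)^\omega \upsilon(P)$) reduce matters to the series $\sum_{\|k\|_\infty \geq 2}(1+|k|)^{\omega - \lambda}$, which converges when $\lambda > n + \omega$. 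In the \emph{annular decomposition} $(3P)^c = \bigcup_{i \geq 1}(3P_i \setminus 3P_{i-1})$, with $P_i \in \mathcal{D}$ the dyadic ancestor of $P$ of side $2^i \ell(P)$, the estimate $1+2^j|x-y| \gtrsim 2^{i+j-j_P}$ on the $i$-th shell combined with H\"older on $3P_i$ (yielding the factor $|3P_i|^{1-1/p} \sim 2^{in(1-1/p)}|P|^{1-1/p}$) and Lemma \ref{lem-grow-est}(i) ($\upsilon(P_i) \lesssim 2^{in\delta_2}\upsilon(P)$) produces the geometric series $\sum_i 2^{i(n(1-1/p)+n\delta_2-\lambda)}$, convergent precisely when $\lambda > n + n(\delta_2 - 1/p)$; this is automatic for $\delta_2 \leq 1/p$, matching the $n(\delta_2 - 1/p)_+$-side. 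Selecting the more favourable decomposition in each regime recovers the hypothesized threshold.

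\textbf{Main obstacle.} The main difficulty is the Triebel--Lizorkin case $A = F$, where the spatial norm lies outside the $l^q$-sum: naively applying H\"older inside the annular step produces a Besov-type $l^q(L^p)$-expression that, when $q \leq p$, fails to be controlled by the target $L^p(l^q)$-Morrey norm of $\{h_j\}$. The remedy is to replace $\fint_{3P_i} h_j$ by its pointwise majorant $\mathscr{M}(h_j \mathbf{1}_{3P_i})(x)$ (valid for $x \in P \subset 3P_i$), take Minkowski's inequality in $l^q$ (valid since $\widetilde q > 1$ after reduction), and then invoke Lemma \ref{lem-FS-ineq} shell-by-shell on $3P_i$; the geometric series in $i$ then closes with the growth factor $\upsilon(P_i)/\upsilon(P) \lesssim 2^{in\delta_2}$ from Lemma \ref{lem-grow-est}(i).
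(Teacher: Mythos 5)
Your main line of argument is correct and is essentially the paper's proof: the same splitting of $g_j$ into a local piece over $3P$ and a far piece over $(3P)^\complement$, the local piece handled by domination with $\mathscr{M}(|h_j|^r\mathbf{1}_{3P})$ plus Lemma \ref{lem-FS-ineq}, and the far piece handled by two complementary decompositions whose convergence conditions are exactly the two terms in the minimum $\omega\wedge n(\delta_2-\frac1p)_+$. Your global reduction to $r=1$ is a cosmetic repackaging of the paper's local ``convexification with respect to $r$,'' and your annular decomposition of $(3P)^\complement$ into shells $3P_i\setminus 3P_{i-1}$ around $P$ is a reorganization of the paper's Case~(1), which instead decomposes the kernel into annuli $B(x,2^{l-j})\setminus B(x,2^{l-1-j})$ around $x$ and exploits that the small annuli never reach $(3P)^\complement$; both yield the threshold $\lambda>n+n(\delta_2-\frac1p)_+r$, and your translate decomposition is the paper's Case~(2) verbatim.

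The one substantive problem is your ``main obstacle'' paragraph, which misdiagnoses a non-issue and proposes a remedy that, if actually used, would fail to reach the stated threshold. After H\"older on $3P_i$, the bound on the far piece $t_j^{(i)}(x)$ is \emph{constant in $x\in P$} for each $j$ and $i$: namely $t_j^{(i)}(x)\lesssim 2^{(j-j_P)(n-\lambda)}2^{i(n/p'+n\delta_2-\lambda)}|P|^{-1/p}\upsilon(P)\|\{h_j\}\|_{L\dot F_{p,q}^{\upsilon}}$, where the factor $\|h_j\|_{L^p(3P_i)}\lesssim\upsilon(P_i)\|\{h_j\}\|_{L\dot F_{p,q}^{\upsilon}}$ holds for each single $j$ because one term of the inner $\ell^q$-sum is dominated by the whole sum. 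Since the bound is constant on $P$, the $L^p(\ell^q)$ and $\ell^q(L^p)$ norms of the far piece over $P\times\{j\ge j_P\}$ agree up to the trivial factor $|P|^{1/p}$, and the $j$-sum closes geometrically from $\lambda>n$; no interchange of $L^p$ and $\ell^q$ is needed, exactly as in the paper's estimate \eqref{eq-t-v-1}. By contrast, your remedy replaces $\fint_{3P_i}|h_j|$ by $\mathscr{M}(h_j\mathbf{1}_{3P_i})(x)$ and then applies Fefferman--Stein shell by shell; this discards the factor $|3P_i|^{-1/p}\sim 2^{-in/p}|P|^{-1/p}$ that H\"older provides, so the shell series becomes $\sum_i 2^{i(n+n\delta_2-\lambda)}$ and requires $\lambda>n+n\delta_2$ rather than $\lambda>n+n(\delta_2-\frac1p)_+$ --- strictly stronger whenever $\delta_2>0$ (for instance, in the critical case $\delta_1=\delta_2=\frac1p$ relevant to Section \ref{s5.1}, it would demand $\lambda>n+\frac np$ instead of $\lambda>n$). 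Delete the obstacle paragraph and run the H\"older route uniformly for $A\in\{B,F\}$; then the proof is complete.
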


\begin{proof}
Let $P\in\mathcal{D}$. By the quasi-triangle inequality of $|\cdot|^{\frac{1}{r}}$, we find that,
for any $j\in\mathbb{Z}$ and $x\in\mathbb{R}^n$,
\begin{align}\label{eq-g-rho-t}
|g_j(x)|\lesssim\left[\int_{3P}
\frac{2^{jn}}{(1+2^j|x-y|)^\lambda}\left|h_j(y)\right|^r\,dy
\right]^{\frac{1}{r}}+\left[\int_{(3P)^{\complement}}
\cdots\right]^{\frac{1}{r}}=:\rho_j(x)+t_j(x).
\end{align}
Multiplying both sides by $\mathbf{1}_{P}\mathbf{1}
_{j\geq j_P}$, taking $\|\cdot\|_{L\dot{A}_{p,q}}$
[see \eqref{eq-LA}], and then applying the quasi-triangle inequality
of $\|\cdot\|_{L\dot{A}_{p, q}}$, we obtain
\begin{align}\label{eq-g-t}
\left\|\left\{g_j\mathbf{1}_{P}\mathbf{1}
_{j\geq j_P}\right\}_{j\in\mathbb{Z}}
\right\|_{L\dot{A}_{p, q}}\lesssim
\left\|\left\{\rho_j\mathbf{1}_{P}\mathbf{1}
_{j\geq j_P}\right\}_{j\in\mathbb{Z}}
\right\|_{L\dot{A}_{p, q}}+
\left\|\left\{t_j\mathbf{1}_{P}\mathbf{1}
_{j\geq j_P}\right\}_{j\in\mathbb{Z}}
\right\|_{L\dot{A}_{p, q}}.
\end{align}

We first estimate $\|\{\rho_j\mathbf{1}_{P}
\mathbf{1}_{j\geq j_P}\}_{j\in\mathbb{Z}}\|_{L\dot{A}_{p, q}}$.
Obviously,
for any $j\in\mathbb{Z}$ and $x\in\mathbb{R}^n$,
\begin{align}\label{eq-Rn-Ball}
\mathbf{1}_{B(x, 2^{-j})}+\sum_{l\in\mathbb{N}}
\mathbf{1}_{B(x,2^{l-j})\setminus B(x,2^{l-1-j})}=1
\end{align}
and, for any $l\in\mathbb{N}$ and $y\in
B(x,2^{l-j})\setminus B(x,2^{l-1-j})$, $1+2^j|x-y|\sim 2^l$.
From these, \eqref{eq-Rn-Ball}, the assumption $\lambda\in(n, \infty)$,
and the definitions of both $\mathscr{M}$ [see \eqref{eq-HL}]
and $\rho_j$ [see \eqref{eq-g-rho-t}],
we deduce that, for any $j\in[j_P, \infty)\cap\mathbb{Z}$ and $x\in P$,
\begin{align}\label{eq-rho}
\left[\rho_j(x)\right]^r&=\int_{\mathbb{R}^n}
\frac{2^{jn}}{(1+2^j|x-y|)^\lambda}\left|h_j(y)\right|^r\mathbf{1}_{3P}(y)\,dy\\
&=\int_{B(x, 2^{-j})}\frac{2^{jn}}{(1 + 2^j |x-y|)^\lambda}\left|h_j(y)\right|^r
\mathbf{1}_{3P}(y)\,dy+\sum_{l\in\mathbb{N}}
\int_{B(x,2^{l-j})\setminus B(x,2^{l-1-j})}\cdots\nonumber\\
&\lesssim\int_{B(x, 2^{-j})}2^{jn}\left|h_j(y)\right|^r
\mathbf{1}_{3P}(y)\,dy+\sum_{l\in\mathbb{N}}2^{-l\lambda}
\int_{B(x,2^{l-j})}2^{jn}\left|h_j(y)\right|^r\mathbf{1}_{3P}(y)\,dy\nonumber\\
&\sim\sum_{l\in\mathbb{Z}_+}2^{l(n-\lambda)}\fint_{B(x,2^{l-j})}
\left|h_j(y)\right|^r\mathbf{1}_{3P}(y)\,dy
\lesssim\mathscr{M}\left(\left|h_j\right|^r\mathbf{1}_{3P}\right)(x)\nonumber
\end{align}
and similarly
\begin{align}\label{eq-t-1}
\left[t_j(x)\right]^r\lesssim\sum_{l\in\mathbb{Z}_+}
2^{l(n-\lambda)}\fint_{B(x,2^{l-j})}\left|h_j(y)\right|^r
\mathbf{1}_{(3P)^{\complement}}(y)\,dy.
\end{align}
Using \eqref{eq-rho}, a convexification of
$\|\cdot\|_{L\dot{A}_{p, q}}$ with respect to the index $r$,
and Lemma \ref{lem-FS-ineq}
with the assumption that $r\in(0, \Gamma_{p,q})$
and with $p,q$, and $\{f_j\}_{j\in\mathbb{Z}}$ replaced, respectively,
by $\frac{p}{r}, \frac{q}{r}$, and $\{|h_j|^r\mathbf{1}_{3P}
\mathbf{1}_{j\geq j_P}\}_{j\in\mathbb{Z}}$,
we conclude that
\begin{align*}
\left\|\left\{\rho_j\mathbf{1}_{P}
\mathbf{1}_{j\geq j_P}\right\}_{j\in\mathbb{Z}}
\right\|_{L\dot{A}_{p, q}}&\lesssim\left\|\left\{
\left[\mathscr{M}\left(\left|h_j\right|^r\mathbf{1}_{3P}\right)
\right]^{\frac{1}{r}}\mathbf{1}_{P}
\mathbf{1}_{j\geq j_P}\right\}_{j\in\mathbb{Z}}
\right\|_{L\dot{A}_{p, q}}\\
&\leq\left\|\left\{\mathscr{M}\left(\left|h_j\right|^r\mathbf{1}_{3P}
\mathbf{1}_{j\geq j_P}\right)\right\}_{j\in\mathbb{Z}}
\right\|^{\frac{1}{r}}_{L\dot{A}_{\frac{p}{r}, \frac{q}{r}}}\\
&\lesssim\left\|\left\{\left|h_j\right|^r\mathbf{1}_{3P}
\mathbf{1}_{j\geq j_P}\right\}_{j\in\mathbb{Z}}
\right\|^{\frac{1}{r}}_{L\dot{A}_{\frac{p}{r}, \frac{q}{r}}}
=\left\|\left\{\left|h_j\right|\mathbf{1}_{3P}\mathbf{1}_{j\geq j_P}
\right\}_{j\in\mathbb{Z}}\right\|_{L\dot{A}_{p, q}},\nonumber
\end{align*}
which, together with Lemma \ref{lem-3P-sum1}(iii),
the quasi-triangle inequality of
$\|\cdot\|_{L\dot{A}_{p, q}}$, the definition of
$\|\cdot\|_{L\dot{A}_{p, q}^{\upsilon}}$,
and Lemma \ref{lem-grow-est}(iii), further implies that
\begin{align}\label{eq-rho-v}
\left\|\left\{\rho_j\mathbf{1}_{P}
\mathbf{1}_{j\geq j_P}\right\}_{j\in\mathbb{Z}}
\right\|_{L\dot{A}_{p, q}}
&\lesssim\sum_{k\in\mathbb{Z}^n,\|k\|_{\infty}\leq1}
\left\|\left\{\left|h_j\right|\mathbf{1}_{[P+k\ell(P)]}\mathbf{1}_{j\geq j_P}
\right\}_{j\in\mathbb{Z}}\right\|_{L\dot{A}_{p, q}}\\
&\leq\sum_{k\in\mathbb{Z}^n,\|k\|_{\infty}\leq1}
\left\|\left\{h_j\right\}_{j\in\mathbb{Z}}
\right\|_{L\dot{A}_{p, q}^{\upsilon}}\upsilon(P+k\ell(P))
\sim\left\|\left\{h_j\right\}_{j\in\mathbb{Z}}
\right\|_{L\dot{A}_{p, q}^{\upsilon}}\upsilon(P)\nonumber.
\end{align}
This gives the desired estimate of $\|\{\rho_j\mathbf{1}_{P}
\mathbf{1}_{j\geq j_P}\}_{j\in\mathbb{Z}}\|_{L\dot{A}_{p, q}}$.

Next, we estimate $\|\{t_j\mathbf{1}_{P}
\mathbf{1}_{j\geq j_P}\}_{j\in\mathbb{Z}}
\|_{L\dot{A}_{p, q}}$ by considering the following
two cases for $n(\delta_2-\frac{1}{p})_+$ and $\omega$.

\emph{Case (1)} $n(\delta_2-\frac{1}{p})_+<\omega$.
In this case, for any $i,j\in\mathbb{Z}$ and $x\in\mathbb{R}^n$,
let
\begin{align}\label{eq-f_ij}
f_{i,j}(x):=\fint_{B(x,2^{i})}\left|h_j(y)\right|^r
\mathbf{1}_{(3P)^{\complement}}(y)\,dy.
\end{align}
Let $\kappa:=\frac{p}{r}\wedge\frac{q}{r}\wedge1$.
By a convexification of
$\|\cdot\|_{L\dot{A}_{p, q}}$ with respect to the index $r$,
\eqref{eq-t-1},  \eqref{eq-f_ij}, and the triangle inequality
of $\|\cdot\|^{\kappa}_{L\dot{A}_{\frac{p}{r}, \frac{q}{r}}}$,
we find that
\begin{align}\label{eq-t-f}
\left\|\left\{t_j\mathbf{1}_{P}\mathbf{1}
_{j\geq j_P}\right\}_{j\in\mathbb{Z}}
\right\|^r_{L\dot{A}_{p, q}}
&=\left\|\left\{|t_j|^r\mathbf{1}_{P}
\mathbf{1}_{j\geq j_P}\right\}_{j\in\mathbb{Z}}
\right\|_{L\dot{A}_{\frac{p}{r}, \frac{q}{r}}}
\lesssim\left\|\left\{\sum_{l\in\mathbb{Z}_+}2^{l(n-\lambda)}
f_{l-j,j}\mathbf{1}_{P}\mathbf{1}
_{j\geq j_P}\right\}_{j\in\mathbb{Z}}
\right\|_{L\dot{A}_{\frac{p}{r}, \frac{q}{r}}}\\
&\leq\left[\sum_{l\in\mathbb{Z}_+}2^{l(n-\lambda)\kappa}
\left\|\left\{f_{l-j,j}\mathbf{1}_{P}\mathbf{1}
_{j\geq j_P}\right\}_{j\in\mathbb{Z}}
\right\|^{\kappa}_{L\dot{A}_{\frac{p}{r},
\frac{q}{r}}}\right]^{\frac{1}{\kappa}}\nonumber.
\end{align}
From \eqref{eq-f_ij} and the basic property of
$\mathbb{R}^n$, we infer that, for any $l\in\mathbb{Z}_{+}$,
$j\in(j_P+l, \infty)\cap\mathbb{Z}$, and $x\in P$,
$B(x, 2^{l-j})\subset3P$ and hence $f_{l-j,j}(x)=0$.
Applying this and the triangle inequality
of $\|\cdot\|^{\kappa}_{L\dot{A}_{\frac{p}{r}, \frac{q}{r}}}$
again, we obtain, for any $l\in\mathbb{Z}_{+}$,
\begin{align}\label{eq-f-f}
\left\|\left\{f_{l-j,j}\mathbf{1}_{P}\mathbf{1}
_{j\geq j_P}\right\}_{j\in\mathbb{Z}}
\right\|^{\kappa}_{L\dot{A}_{\frac{p}{r}, \frac{q}{r}}}
&=\left\|\left\{f_{l-j,j}\mathbf{1}_{P}\mathbf{1}_{j_P+l\geq j\geq j_P}
\right\}_{j\in\mathbb{Z}}\right\|^{\kappa}
_{L\dot{A}_{\frac{p}{r}, \frac{q}{r}}}\\
&\leq\sum_{j=j_P}^{j_P+l}\left\|\left\{f_{l-i,i}
\mathbf{1}_{P}\mathbf{1}_{i=j}\right\}_{i\in\mathbb{Z}}
\right\|^{\kappa}_{L\dot{A}_{\frac{p}{r}, \frac{q}{r}}}
=\sum_{j=j_P}^{j_P+l}\left\|
f_{l-j,j}\right\|^{\kappa}_{L^{\frac{p}{r}}(P)}\nonumber,
\end{align}
where the last equality follows from the fact that,
for any sequence $\{f_i\}_{i\in\mathbb{Z}}$ of
measurable functions on $\mathbb{R}^n$ with only
a non-zero component $f_{j}$ for some
$j\in\mathbb{Z}$, $\|\{f_i\}_{i\in\mathbb{Z}}\|
_{L\dot{A}_{\frac{p}{r}, \frac{q}{r}}}=\|f_{j}\|_{L^{\frac{p}{r}}}$.
For any $l\in\mathbb{Z}_{+}$ and $j\in\{j_P,\dots,j_P+l\}$,
let $P_{j-l}$ be as in Lemma \ref{lem-3P-sum1}(iv)
with $i$ replaced by $j-l$. Clearly, for any $x\in P$,
$B(x,2^{l-j})\subset3P_{j-l}$ and $|B(x,2^{l-j})|
\sim|P_{j-l}|$. From this, \eqref{eq-f_ij},
H\"{o}lder's inequality with the assumption
$r\in(0, \Gamma_{p,q})$, Lemma \ref{lem-3P-sum1}(iii)
with $P$ replaced by $P_{j-l}$,
and the quasi-triangle inequality of $\|\cdot\|_{L^p}$,
we infer that, for any $l\in\mathbb{Z}_{+}$,
\begin{align*}
\sum_{j=j_P}^{j_P+l}\left\|
f_{l-j,j}\right\|^{\kappa}_{L^{\frac{p}{r}}(P)}
&\leq\sum_{j=j_P}^{j_P+l}\left\|
\left[\fint_{B(\cdot, 2^{l-j})}|h_j(y)|^p
\mathbf{1}_{(3P)^{\complement}}(y)\,
dy\right]^{\frac{r}{p}}
\right\|^{\kappa}_{L^{\frac{p}{r}}(P)}\\
&\lesssim\sum_{j=j_P}^{j_P+l}
\left[\int_{3P_{j-l}}|h_j(y)|^p\,
dy\right]^{\frac{r\kappa}{p}}
2^{-(l-j+j_P)\frac{n}{p}r\kappa}\nonumber\\
&\lesssim\sum_{j=j_P}^{j_P+l}
\sum_{k\in\mathbb{Z}^n,\|k\|_{\infty}\leq1}
\left[\int_{P_{j-l}+k\ell(P_{j-l})}|h_j(y)|^p\,
dy\right]^{\frac{r\kappa}{p}}
2^{-(l-j+j_P)\frac{n}{p}r\kappa},\nonumber
\end{align*}
which, together with the definition of
$\|\cdot\|_{L\dot{A}_{p, q}^{\upsilon}}$,
Lemma \ref{lem-grow-est}(iii) with $P$ replaced by
$P_{j-l}$, and Lemma \ref{lem-grow-est}(i) with
$Q$ and $P$ replaced, respectively, by $P$ and $P_{j-l}$,
further implies that
\begin{align}\label{eq-f-h}
\sum_{j=j_P}^{j_P+l}\left\|
f_{l-j,j}\right\|^{\kappa}_{L^{\frac{p}{r}}(P)}
&\leq\left\|\left\{h_j\right\}_{j\in\mathbb{Z}}
\right\|^{r\kappa}_{L\dot{A}_{p, q}^{\upsilon}}
\sum_{j=j_P}^{j_P+l}\sum_{k\in\mathbb{Z}^n,
\|k\|_{\infty}\leq1}
[\upsilon(P_{j-l}+k\ell(P_{j-l}))]^{r\kappa}	
2^{-(l-j+j_P)\frac{n}{p}r\kappa}\\
&\sim\left\|\left\{h_j\right\}_{j\in\mathbb{Z}}
\right\|^{r\kappa}_{L\dot{A}_{p, q}^{\upsilon}}
\sum_{j=j_P}^{j_P+l}[\upsilon(P_{j-l})]^{r\kappa}
2^{-(l-j+j_P)\frac{n}{p}r\kappa}\nonumber\\
&\lesssim\left\|\left\{h_j\right\}_{j\in\mathbb{Z}}
\right\|^{r\kappa}_{L\dot{A}_{p, q}^{\upsilon}}
[\upsilon(P)]^{r\kappa}\sum_{j=j_P}^{j_P+l}	
2^{(l-j+j_P)(\delta_2-\frac{1}{p})nr\kappa}\nonumber\\
&\leq\left\|\left\{h_j\right\}_{j\in\mathbb{Z}}
\right\|^{r\kappa}_{L\dot{A}_{p, q}^{\upsilon}}
[\upsilon(P)]^{r\kappa}(1+l)2^{nl(\delta_2-\frac{1}{p})_+ r\kappa}\nonumber.
\end{align}
Combining  \eqref{eq-f-f}, \eqref{eq-f-h},  \eqref{eq-t-f}, and the assumption $\lambda\in(n+n(\delta_2-\frac{1}{p})_+r,\infty)$,
we conclude that
\begin{align}\label{eq-t-v-3}
\left\|\left\{t_j\mathbf{1}_{P}
\mathbf{1}_{j\geq j_P}\right\}_{j\in\mathbb{Z}}
\right\|_{L\dot{A}_{p, q}}&\lesssim
\left\{\sum_{l\in\mathbb{Z}_+}
2^{-l[\lambda-n-n(\delta_2-\frac{1}{p})_+ r]\kappa}
(1+l)\right\}^{\frac{1}{r\kappa}}
\left\|\left\{h_j\right\}_{j\in\mathbb{Z}}
\right\|_{L\dot{A}_{p, q}^{\upsilon}}\upsilon(P)\\
&\sim\left\|\left\{h_j\right\}_{j\in\mathbb{Z}}
\right\|_{L\dot{A}_{p, q}^{\upsilon}}\upsilon(P)\nonumber,
\end{align}
which completes the estimation of $\|\{t_j\mathbf{1}_{P}
\mathbf{1}_{j\geq j_P}\}_{j\in\mathbb{Z}}
\|_{L\dot{A}_{p, q}}$ in this case.

\emph{Case (2)} $n(\delta_2-\frac{1}{p})_+\geq\omega$.
In this case, by \eqref{eq-g-rho-t}, Lemma \ref{lem-3P-sum1}(iii),
and Lemma \ref{lem-xy-QR-P}(iii),
we find that, for any $j\in[j_P, \infty)\cap\mathbb{Z}$ and $x\in P$,
\begin{align*}
\left[t_j(x)\right]^r&=\sum_{k\in\mathbb{Z}^n, \|k\|_{\infty}\geq2}
\int_{P+k\ell(P)}\frac{2^{jn}}{(1+2^j|x-y|)^\lambda}|h_j(y)|^r\,dy\\\nonumber
&\sim2^{(j-j_P)(n-\lambda)}\sum_{k\in\mathbb{Z}^n, \|k\|_{\infty}\geq2}
(1+|k|)^{-\lambda}\fint_{P+k\ell(P)}|h_j(y)|^r\, dy.
\end{align*}
Using this, H\"{o}lder's inequality,
the definition of $\|\cdot\|_{L\dot{A}_{p, q}^{\upsilon}}$,
Lemma \ref{lem-grow-est}(ii), and Lemma \ref{lem-sum-k} together with
$\lambda\in(n+\omega r,\infty)$, $j=0$, and $x=\mathbf{0}$, we conclude that,
for any $j\in[j_P, \infty)\cap\mathbb{Z}$ and $x\in P$,
\begin{align*}
\left[t_j(x)\right]^r&\lesssim2^{(j-j_P)(n-\lambda)}|P|^{-\frac{r}{p}}
\sum_{k\in\mathbb{Z}^n, \|k\|_{\infty}\geq2}(1+|k|)^{-\lambda}
\left[\int_{P+k\ell(P)}\left|h_j(y)\right|^p\,dy
\right]^\frac{r}{p}\\
&\leq2^{(j-j_P)(n-\lambda)}|P|^{-\frac{r}{p}}
\left\|\left\{h_j\right\}_{j\in\mathbb{Z}}
\right\|^r_{L\dot{A}_{p, q}^{\upsilon}}
\sum_{k\in\mathbb{Z}^n, \|k\|_{\infty}\geq2}(1+|k|)^{-\lambda}
\left[\upsilon(P+k\ell(P))\right]^r\\
&\sim2^{(j-j_P)(n-\lambda)}|P|^{-\frac{r}{p}}
\left\|\left\{h_j\right\}_{j\in\mathbb{Z}}
\right\|^r_{L\dot{A}_{p, q}^{\upsilon}}[\upsilon(P)]^r
\sum_{k\in\mathbb{Z}^n, \|k\|_{\infty}\geq2}(1+|k|)^{-\lambda+\omega r}\\
&\sim2^{(j-j_P)(n-\lambda)}|P|^{-\frac{r}{p}}
\left\|\left\{h_j\right\}_{j\in\mathbb{Z}}
\right\|^r_{L\dot{A}_{p, q}^{\upsilon}}[\upsilon(P)]^r.	
\end{align*}
By this and the assumption
$\lambda\in(n, \infty)$, we obtain
\begin{align}\label{eq-t-v-1}
\left\|\left\{t_j\mathbf{1}_{P}\mathbf{1}
_{j\geq j_P}\right\}_{j\in\mathbb{Z}}
\right\|_{L\dot{A}_{p, q}}&\lesssim
\left[\sum_{j=j_P}^{\infty}2^{
\frac{(j-j_P)(n-\lambda)}{r}q}\right]^{\frac{1}{q}}
\left\|\left\{h_j\right\}_{j\in\mathbb{Z}}
\right\|_{L\dot{A}_{p, q}^{\upsilon}}\upsilon(P)
\sim\left\|\left\{h_j\right\}_{j\in\mathbb{Z}}
\right\|_{L\dot{A}_{p, q}^{\upsilon}}\upsilon(P).
\end{align}
This gives the desired estimate of $\|\{t_j\mathbf{1}_{P}
\mathbf{1}_{j\geq j_P}\}_{j\in\mathbb{Z}}
\|_{L\dot{A}_{p, q}}$ in this case.

Collecting the estimates \eqref{eq-rho-v},
\eqref{eq-t-v-3}, and \eqref{eq-t-v-1}
together to \eqref{eq-g-t},
we show that, for any given $P\in\mathcal{D}$,
\begin{align*}
\frac{1}{\upsilon(P)}\left\|\left\{g_j\mathbf{1}_{P}
\mathbf{1}_{j\geq j_P}\right\}_{j\in\mathbb{Z}}
\right\|_{L\dot{A}_{p, q}}
\lesssim\left\|\left\{h_j\right\}_{j\in\mathbb{Z}}
\right\|_{L\dot{A}_{p, q}^{\upsilon}},
\end{align*}
where the implicit positive constant is independent of
$\{g_j\}_{j\in\mathbb{Z}}$, $\{h_j\}_{j\in\mathbb{Z}}$,
and $P$. Taking the supremum over all $P\in\mathcal{D}$
on its left-hand side and using the definition of
$\|\cdot\|_{L\dot{A}_{p, q}^{\upsilon}}$,
we obtain $\|\{g_j\}_{j\in\mathbb{Z}}\|
_{L\dot{A}_{p, q}^{\upsilon}}\lesssim
\|\{h_j\}_{j\in\mathbb{Z}}\|_{L\dot{A}_{p, q}^{\upsilon}}$.
This finishes the proof of Proposition \ref{prop-gh}.
\end{proof}

Let $r\in(0, \infty]$ and $\lambda\in(0,\infty)$.
For any $t:=\{t_Q\}_{Q\in\mathcal{D}}$
in $\mathbb{C}$, we define the \emph{majorant sequence}
$t^{*}_{r, \lambda}:=\{t^{*}_{r, \lambda, Q}\}_{Q\in\mathcal{D}}$
of $t$ by setting, for any $Q\in\mathcal{D}$,
\begin{align}\label{eq-lamda*}
t^{*}_{r, \lambda, Q}:=
\left[\sum_{R\in\mathcal{D},\ell(R)=\ell(Q)}
\frac{|t_R|^{r}}{\{1+[\ell(R)]^{-1}
|x_Q-x_R|\}^{\lambda r}}\right]^{\frac{1}{r}}
\end{align}
with the usual modification made if $r=\infty$
(see, for example, \cite[p.\,48]{fj90}).
Observe that the majorant sequence $t^{*}_{r, \lambda}$
can be interpreted as the action of the discrete
Littlewood--Paley $g^*_{\lambda}$-function (see Definition \ref{def-LPfunc})
on $t$ or, when $r=1$, as the action of
a special almost diagonal operator (see Definition \ref{DEFadope})
on $t$. Based on this observation, we give the following proposition,
which can be regarded as the discrete Littlewood--Paley
$g^*_{\lambda}$-function characterization of sequence spaces $\dot{a}_{p,q}^{s,\upsilon}$
and hence is of independent interest.

\begin{proposition}\label{prop-dct-gh}
Let $a\in\{b, f\}$, $s\in\mathbb{R}$, and $p, q\in(0, \infty]$
($p<\infty$ if $a=f$). Assume that $\delta_1,\delta_2,\omega$
satisfy \eqref{eq-delta1<0} and
$\upsilon\in\mathcal{G}(\delta_1, \delta_2; \omega)$. If
$r\in(0, \infty]$ and $\lambda\in(\frac{n}{r\wedge\Gamma_{p,q}}
+[\omega\wedge n(\delta_2-\frac{1}{p})_+],\infty)$,
where $\Gamma_{p,q}$ is as in \eqref{eq-zeta},
then, for any $t:=\{t_Q\}_{Q\in\mathcal{D}}$
in $\mathbb{C}$, $\|t\|_{\dot{a}_{p,q}^{s,\upsilon}}
\sim\|{t}^{*}_{r, \lambda}\|_{\dot{a}_{p,q}^{s,\upsilon}}$,
where the positive equivalence
constants are independent of $t$.
\end{proposition}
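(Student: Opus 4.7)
One direction is immediate: for every $Q\in\mathcal{D}$, keeping only the $R=Q$ summand in the definition of $t^{*}_{r,\lambda,Q}$ shows $|t_Q|\leq t^{*}_{r,\lambda,Q}$, and hence $\|t\|_{\dot{a}_{p,q}^{s,\upsilon}}\leq\|t^{*}_{r,\lambda}\|_{\dot{a}_{p,q}^{s,\upsilon}}$. The substance lies in the reverse estimate, and the plan is to reduce it to the continuous analogue Proposition \ref{prop-gh} via a clean discretization.

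First I would pick an auxiliary exponent $r'$. Since on counting measure $\|\cdot\|_{l^{r}}\leq\|\cdot\|_{l^{r'}}$ whenever $r'\leq r$, applying this inside the $l^r$ sum defining the majorant gives $t^{*}_{r,\lambda,Q}\leq t^{*}_{r',\lambda,Q}$ for every $r'\in(0,r]$. Choosing $r'$ strictly less than $r\wedge\Gamma_{p,q}$ and sufficiently close to this value (any sufficiently large $r'$ works if $r\wedge\Gamma_{p,q}=\infty$) simultaneously ensures $r'\in(0,\Gamma_{p,q})$ and $\lambda>n/r'+[\omega\wedge n(\delta_2-\frac{1}{p})_+]$; the latter persists from the hypothesis $\lambda>\frac{n}{r\wedge\Gamma_{p,q}}+[\omega\wedge n(\delta_2-\frac{1}{p})_+]$ by continuity. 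Thus it suffices to dominate $\|t^{*}_{r',\lambda}\|_{\dot{a}_{p,q}^{s,\upsilon}}$ by $\|t\|_{\dot{a}_{p,q}^{s,\upsilon}}$.

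Next I would set $h_j:=2^{js}\sum_{Q\in\mathcal{D}_j}\widetilde{\mathbf{1}}_Q|t_Q|$ and $g_j:=2^{js}\sum_{Q\in\mathcal{D}_j}\widetilde{\mathbf{1}}_Q\, t^{*}_{r',\lambda,Q}$, which identifies $\|t\|_{\dot{a}_{p,q}^{s,\upsilon}}=\|\{h_j\}_{j\in\mathbb{Z}}\|_{L\dot{A}_{p,q}^{\upsilon}}$ and $\|t^{*}_{r',\lambda}\|_{\dot{a}_{p,q}^{s,\upsilon}}=\|\{g_j\}_{j\in\mathbb{Z}}\|_{L\dot{A}_{p,q}^{\upsilon}}$. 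For $x\in Q_0\in\mathcal{D}_j$, using that $h_j$ takes the constant value $2^{js}|R|^{-1/2}|t_R|$ on each $R\in\mathcal{D}_j$, rewriting $|t_R|^{r'}=2^{-jsr'}|R|^{r'/2-1}\int_R h_j(y)^{r'}\,dy$, and invoking Lemma \ref{lem-xy-QR-P}(ii) to replace $1+2^j|x_{Q_0}-x_R|$ with $1+2^j|x-y|$ for $y\in R$ (up to uniform constants), I would derive the pointwise bound
\[
g_j(x)^{r'}\sim 2^{jn}\sum_{R\in\mathcal{D}_j}\int_R\frac{h_j(y)^{r'}}{(1+2^j|x-y|)^{\lambda r'}}\,dy
=\int_{\mathbb{R}^n}\frac{2^{jn}}{(1+2^j|x-y|)^{\lambda r'}}h_j(y)^{r'}\,dy.
\]

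This is precisely the hypothesis of Proposition \ref{prop-gh} with the roles of $r$ and $\lambda$ played by $r'$ and $\lambda r'$; both admissibility conditions $r'\in(0,\Gamma_{p,q})$ and $\lambda r'>n+[\omega\wedge n(\delta_2-\frac{1}{p})_+]r'$ hold by the calibration of $r'$. Applying Proposition \ref{prop-gh} then yields $\|\{g_j\}_{j\in\mathbb{Z}}\|_{L\dot{A}_{p,q}^{\upsilon}}\lesssim\|\{h_j\}_{j\in\mathbb{Z}}\|_{L\dot{A}_{p,q}^{\upsilon}}$, which closes the reverse estimate. The principal technical point is the joint calibration of $r'$ with the pointwise comparison above; once these are arranged, Proposition \ref{prop-gh} absorbs the genuine analytic content (Fefferman--Stein vector-valued inequality, tail decomposition, interaction with $\upsilon$) and no further obstacle remains.
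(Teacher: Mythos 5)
Your proposal is correct and follows essentially the same route as the paper: the easy inequality $|t_Q|\leq t^{*}_{r,\lambda,Q}$, then the pointwise identification of $(\sum_{Q\in\mathcal{D}_j}\widetilde{\mathbf{1}}_Q t^{*}_{r',\lambda,Q})^{r'}$ with a convolution-type average of $|t_j|^{r'}$ via Lemma \ref{lem-xy-QR-P}(ii), and finally Proposition \ref{prop-gh} applied with exponents $(r',\lambda r')$. The only cosmetic difference is that you always pass to an auxiliary exponent $r'<r\wedge\Gamma_{p,q}$, whereas the paper treats the case $r<\Gamma_{p,q}$ directly with $r$ itself and only introduces an auxiliary $\rho$ (via the monotonicity $t^{*}_{r,\lambda,Q}\leq t^{*}_{\rho,\lambda,Q}$) when $r\geq\Gamma_{p,q}$.
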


\begin{proof}
Applying \eqref{eq-lamda*}, we obtain,
for any $Q\in\mathcal{D}$ and
$t:=\{t_Q\}_{Q\in\mathcal{D}}$ in $\mathbb{C}$,
$|t_Q|\leq t^{*}_{r, \lambda, Q}$ and hence
$\|t\|_{\dot{a}_{p,q}^{s,\upsilon}}\leq
\|{t}^{*}_{r, \lambda}\|_{\dot{a}_{p,q}^{s,\upsilon}}$.
Thus, to finish the proof, it suffices
to show that, for any $t:=\{t_Q\}_{Q\in\mathcal{D}}$
in $\mathbb{C}$, $\|{t}^{*}_{r, \lambda}\|_{\dot{a}_{p,q}^{s,\upsilon}}
\lesssim\|t\|_{\dot{a}_{p,q}^{s,\upsilon}}$.
We next prove this by considering the following
two cases for $\Gamma_{p,q}$ and $r$.

\emph{Case (1)} $\Gamma_{p,q}>r$. In this case, by \eqref{eq-lamda*},
the definition of $t_j$
[see \eqref{vect_j}], both (i) and (ii) of Lemma \ref{lem-3P-sum1},
and Lemma \ref{lem-xy-QR-P}(ii), we find that,
for any $j\in\mathbb{Z}$,
$t:=\{t_Q\}_{Q\in\mathcal{D}}$ in $\mathbb{C}$, and $x\in\mathbb{R}^n$,
\begin{align*}
\left|\sum_{Q\in\mathcal{D}_j}\widetilde{\mathbf{1}}_Q(x)
t^{*}_{r, \lambda, Q}\right|^r&=\sum_{Q\in\mathcal{D}_j}
\left[\widetilde{\mathbf{1}}_Q(x)\right]^r\sum_{R\in\mathcal{D}_j}
\frac{|t_R|^{r}}{\{1+[\ell(R)]^{-1}|x_Q-x_R|\}^{\lambda r}}\\
&\sim2^{jn}\sum_{Q\in\mathcal{D}_j}\mathbf{1}_Q(x)
\sum_{R\in\mathcal{D}_j}\int_{R}\frac1{(1+2^j|x-y|)^{\lambda r}}
\left|t_j(y)\right|^{r}\,dy\\
&=2^{jn}\int_{\mathbb{R}^n}\frac1{(1+2^j|x-y|)^{\lambda r}}
\left|t_j(y)\right|^{r}\,dy.
\end{align*}
For any $t:=\{t_Q\}_{Q\in\mathcal{D}}$
in $\mathbb{C}$, applying the definition of
$\|\cdot\|_{\dot{a}_{p,q}^{s,\upsilon}}$
and  Proposition \ref{prop-gh}
with the assumptions on $r$ and $\lambda$ and
with $\{g_j\}_{j\in\mathbb{Z}}$ and
$\{h_j\}_{j\in\mathbb{Z}}$ replaced, respectively, by
$\{2^{js}\sum_{Q\in\mathcal{D}_j}\widetilde{\mathbf{1}}_Q
t^{*}_{r, \lambda, Q}\}_{j\in\mathbb{Z}}$ and
$\{2^{js}t_j\}_{j\in\mathbb{Z}}$, we obtain
\begin{align*}
\left\|{t}^{*}_{r, \lambda}\right\|
_{\dot{a}_{p,q}^{s,\upsilon}}
=\left\|\left\{2^{js}\sum_{Q\in\mathcal{D}_j}
\widetilde{\mathbf{1}}_Q t^{*}_{r, \lambda,Q}
\right\}_{j\in\mathbb{Z}}\right\|
_{L\dot{A}_{p, q}^{\upsilon}}\lesssim
\left\|\left\{2^{js}t_j
\right\}_{j\in\mathbb{Z}}
\right\|_{L\dot{A}_{p, q}^{\upsilon}}=
\left\|t\right\|_{\dot{a}_{p,q}^{s,\upsilon}},
\end{align*}
which completes the proof of this case.

\emph{Case (2)} $\Gamma_{p,q}\leq r$. In this case,
the assumption on $\lambda$ is precisely $\lambda\in(\frac{n}{\Gamma_{p,q}}
+[\omega\wedge n(\delta_2-\frac{1}{p})_+],\infty)$.
Based on this, we can pick $\rho\in(0, \Gamma_{p,q})$
such that $\lambda\in(\frac{n}{\rho}+[\omega\wedge
n(\delta_2-\frac{1}{p})_+], \infty)$.
Using the monotonicity of $l^q$ on $q$,
we conclude that, for any $Q\in\mathcal{D}$ and
$t:=\{t_Q\}_{Q\in\mathcal{D}}$ in $\mathbb{C}$,
$t^{*}_{r, \lambda,Q}\leq t^{*}_{\rho, \lambda,Q}$.
From this and the just proved Case (1) with $r$
replaced by $\rho$, we infer that,
for any $t:=\{t_Q\}_{Q\in\mathcal{D}}$
in $\mathbb{C}$, $\|{t}^{*}_{r, \lambda}\|_{\dot{a}_{p,q}^{s,\upsilon}}
\leq\|{t}^{*}_{\rho, \lambda}\|_{\dot{a}_{p,q}^{s,\upsilon}}
\lesssim\|t\|_{\dot{a}_{p,q}^{s,\upsilon}}$.
This finishes the proof of this case and
hence Proposition \ref{prop-dct-gh}.
\end{proof}

To present a sharp estimate of reducing operators
established in \cite{bhyy4}, we need to recall the following concepts
introduced in \cite[Definition 6.2]{bhyy4}.

\begin{definition}
Let $p\in(0,\infty)$ and $d\in\mathbb{R}$.
A matrix weight $W$ is said to have
\emph{$\mathcal{A}_{p,\infty}$-lower dimension $d$}
if there exists a positive constant $C$ such that,
for any $t\in[1,\infty)$ and any cube $Q\subset\mathbb{R}^n$,
\begin{align*}
\exp\left(\fint_{\lambda Q}\log\left(\fint_Q
\left\|W^{\frac{1}{p}}(x)W^{-\frac{1}{p}}(y)
\right\|^p\,dx\right)\,dy\right)
\leq C{\lambda}^d.
\end{align*}
A matrix weight $W$ is said to have
\emph{$\mathcal{A}_{p,\infty}$-upper dimension $d$}
if there exists a positive constant $C$ such that,
for any $\lambda\in[1,\infty)$ and any cube $Q\subset\mathbb{R}^n$,
\begin{align*}
\exp\left(\fint_Q\log\left(\fint_{\lambda Q}
\left\|W^{\frac{1}{p}}(x)W^{-\frac{1}{p}}(y)
\right\|^p\,dx\right)\,dy\right)\leq C\lambda^d.
\end{align*}
\end{definition}

Let $p\in(0,\infty)$. Using \cite[Propositions 6.4(ii)
and 6.5(ii)]{bhyy4}, we conclude that,
for any $W\in\mathcal{A}_{p,\infty}$, there exist $d_1\in[0,n)$ and $d_2\in[0,\infty)$ such that
$W$ has $\mathcal{A}_{p,\infty}$-lower dimension $d_1$
and $\mathcal{A}_{p,\infty}$-upper dimension $d_2$.
By \cite[Proposition 6.4(i) and 6.5(i)]{bhyy4},
we find that the $\mathcal{A}_{p,\infty}$-lower and the
$\mathcal{A}_{p,\infty}$-upper dimensions are both nonnegative.
Based on these facts, for any $W\in\mathcal{A}_{p,\infty}$, let
\begin{align}\label{eq-low-dim}
d_{p,\infty}^{\mathrm{lower}}(W):=
\inf\left\{d\in[0,n):\ W\text{ has }\mathcal{A}_{p,\infty}
\text{-lower dimension } d\right\}
\end{align}
and
\begin{align}\label{eq-upp-dim}
d_{p,\infty}^{\mathrm{upper}}(W)
:=\inf\left\{d\in[0,\infty):\ W\text{ has }\mathcal{A}_{p,\infty}
\text{-upper dimension } d\right\}.
\end{align}
Furthermore, let
\begin{align*}
[\![d_{p,\infty}^{\mathrm{lower}}(W),n):=
\begin{cases}
[d_{p,\infty}^{\mathrm{lower}}(W),n)
&\text{if } W\text{ has }\mathcal{A}_{p,\infty}
\text{-lower dimension }d_{p,\infty}^{\mathrm{lower}}(W),\\
(d_{p,\infty}^{\mathrm{lower}}(W),n)
&\text{otherwise}
\end{cases}
\end{align*}
and
\begin{align*}
[\![d_{p,\infty}^{\mathrm{upper}}(W),\infty):=
\begin{cases}
[d_{p,\infty}^{\mathrm{upper}}(W),\infty)
&\text{if } W\text{ has }\mathcal{A}_{p,\infty}
\text{-upper dimension }d_{p,\infty}^{\mathrm{upper}}(W),\\
(d_{p,\infty}^{\mathrm{upper}}(W),\infty)
&\text{otherwise}.
\end{cases}
\end{align*}

For any $p\in(0,\infty)$, any $W\in\mathcal{A}_{p,\infty}$,
and any sequence $\{A_Q\}_{Q\in\mathcal{D}}$
of reducing operators of order $p$ for $W$,
the sharp estimate of $\|A_QA^{-1}_{R}\|$
was established in \cite[Lemma 6.8(i)]{bhyy4} as follows.

\begin{lemma}\label{growEST}
Let $p \in(0,\infty), W \in\mathcal{A}_{p,\infty}$,
and $\{A_Q\}_{Q\in\mathcal{D}}$
be a sequence of reducing operators of order $p$ for $W$.
If $\beta_1 \in \llbracket d_{p, \infty}^{\mathrm{lower}}(W),\infty)$
and $\beta_2\in \llbracket d_{p, \infty}^{\mathrm{upper}}(W),\infty)$,
then there exists a positive constant $C$ such that,
for any $Q, R\in\mathcal{D}$,
\begin{align}\label{eq-st-double}
\left\|A_Q A_R^{-1}\right\|^p\leq C\max
\left\{\left[\frac{\ell(R)}{\ell(Q)}\right]^{\beta_1},
\left[\frac{\ell(Q)}{\ell(R)}\right]^{\beta_2}\right\}
\left[1+\frac{|x_Q-x_R|}{\ell(Q)\vee\ell(R)}\right]^{\beta_1+\beta_2}.
\end{align}
\end{lemma}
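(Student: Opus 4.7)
The plan is to express $\|A_Q A_R^{-1}\|^p$ via Lemma \ref{lem-red-ope} as an $L^p$-average, factor it through a common enclosing cube $P$, prove two one-sided ``parent-to-child'' comparisons between reducing operators, and finally convert the containment geometry into the distance factor appearing in \eqref{eq-st-double}.

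First, I select a (not necessarily dyadic) cube $P$ with $Q \cup R \subset P$ and $\ell(P) \sim [\ell(Q) \vee \ell(R)] + |x_Q - x_R|$, and extend the reducing-operator construction to define $A_P$ by formula \eqref{eq-red-ope} on $P$. The factorization $A_Q A_R^{-1} = (A_Q A_P^{-1})(A_P A_R^{-1})$ together with sub-multiplicativity of the operator norm yields
\begin{align*}
\|A_Q A_R^{-1}\|^p \leq \|A_Q A_P^{-1}\|^p\,\|A_P A_R^{-1}\|^p.
\end{align*}

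Second, I establish the one-sided comparisons: whenever $S \subset \widetilde{S}$ are cubes,
\begin{align*}
\|A_S A_{\widetilde{S}}^{-1}\|^p \lesssim \left(\frac{\ell(\widetilde{S})}{\ell(S)}\right)^{\beta_1},\qquad \|A_{\widetilde{S}} A_S^{-1}\|^p \lesssim \left(\frac{\ell(\widetilde{S})}{\ell(S)}\right)^{\beta_2}.
\end{align*}
I sketch the proof of the second estimate; the first is symmetric. By Lemma \ref{lem-red-ope} applied with $M = A_S^{-1}$,
\begin{align*}
\|A_{\widetilde{S}} A_S^{-1}\|^p \sim \fint_{\widetilde{S}} \|W^{1/p}(x) A_S^{-1}\|^p\,dx.
\end{align*}
Using sub-multiplicativity $\|W^{1/p}(x) A_S^{-1}\|^p \leq \|W^{1/p}(x) W^{-1/p}(y)\|^p \|W^{1/p}(y) A_S^{-1}\|^p$, taking logarithms, averaging $y$ over $S$, and invoking Jensen's inequality together with the normalization $\fint_S \|W^{1/p}(y) A_S^{-1}\|^p\,dy \sim 1$ (a direct consequence of Lemma \ref{lem-red-ope}), I obtain the pointwise bound
\begin{align*}
\|W^{1/p}(x) A_S^{-1}\|^p \lesssim \exp\!\left(\fint_S \log \|W^{1/p}(x) W^{-1/p}(y)\|^p\,dy\right).
\end{align*}
Averaging over $x \in \widetilde{S}$ and matching the resulting iterated average against the defining inequality of $d_{p,\infty}^{\mathrm{upper}}(W)$ yields the exponent $\beta_2$; the first estimate is handled symmetrically via $d_{p,\infty}^{\mathrm{lower}}(W)$.

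Finally, applying these one-sided estimates to the factorization in step one (with $S = Q$, $\widetilde{S} = P$ for the first factor and $S = R$, $\widetilde{S} = P$ for the second), and combining the resulting ratios $\ell(P)/\ell(Q)$ and $\ell(P)/\ell(R)$ by separating the dominant length $\ell(Q) \vee \ell(R)$ from the distance $|x_Q - x_R|$ via our choice of $P$, I recover exactly \eqref{eq-st-double}. The main obstacle is matching the ``$\exp$-outer, $\log$-inner'' form produced by Jensen's inequality with the opposite nesting appearing in the definitions of $d_{p,\infty}^{\mathrm{upper}}(W)$ and $d_{p,\infty}^{\mathrm{lower}}(W)$; bridging this gap requires the John--Nirenberg-type reverse-Hölder inequality of Lemma \ref{lem-AW-1-set}, which upgrades logarithmic averages to ordinary ones at the cost of a bounded multiplicative constant. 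A secondary subtlety is that $\|W^{1/p}(\cdot) W^{-1/p}(\cdot)\|^p$ is not symmetric in its arguments, so the ``small cube'' and ``large cube'' assignments of the two integration variables must be tracked carefully when invoking the dimension conditions.
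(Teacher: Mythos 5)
The paper does not actually prove this lemma: it is imported verbatim as \cite[Lemma 6.8(i)]{bhyy4}, so there is no internal argument to compare against. Your overall architecture — factor $A_QA_R^{-1}=(A_QA_P^{-1})(A_PA_R^{-1})$ through an enclosing cube $P$ with $\ell(P)\sim[\ell(Q)\vee\ell(R)]+|x_Q-x_R|$, prove the two one-sided nested estimates from the lower and upper $\mathcal{A}_{p,\infty}$-dimensions, and recombine the length ratios — is sound, and your final bookkeeping does reproduce \eqref{eq-st-double} exactly.

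The gap is in your justification of the one-sided estimates. Your route produces the quantity $\fint_{\widetilde{S}}\exp(\fint_S\log\|W^{1/p}(x)W^{-1/p}(y)\|^p\,dy)\,dx$, which must be dominated by $\exp(\fint_S\log(\fint_{\widetilde{S}}\|W^{1/p}(x)W^{-1/p}(y)\|^p\,dx)\,dy)$ to match the upper-dimension condition. You correctly identify this interchange as the crux, but the tool you invoke is wrong: Lemma \ref{lem-AW-1-set} is a weak-type distributional bound on $\|A_QW^{-1/p}(\cdot)\|$ and does not ``upgrade logarithmic averages to ordinary ones'' — no reverse-Jensen inequality of that kind is available, and none is needed. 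The interchange is in fact valid by the integral form of H\"older's inequality for geometric means, $\fint_X\exp(\fint_Y\log h(x,y)\,d\mu(y))\,dx\leq\exp(\fint_Y\log(\fint_Xh(x,y)\,dx)\,d\mu(y))$ (the two-point case is Cauchy--Schwarz). Better still, the interchange can be avoided entirely by reordering: first integrate the pointwise bound $\|W^{1/p}(x)A_S^{-1}\|^p\leq\|W^{1/p}(x)W^{-1/p}(y)\|^p\|W^{1/p}(y)A_S^{-1}\|^p$ in $x$ over $\widetilde{S}$, \emph{then} take logarithms and average in $y$ over $S$; the only Jensen step then needed is $\fint_S\log\|W^{1/p}(y)A_S^{-1}\|^p\,dy\leq\log\fint_S\|W^{1/p}(y)A_S^{-1}\|^p\,dy\lesssim1$, which goes in the correct direction, and the iterated average lands exactly on the defining expression of $d_{p,\infty}^{\mathrm{upper}}(W)$ (and symmetrically of $d_{p,\infty}^{\mathrm{lower}}(W)$). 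A secondary point you should address: the dimension conditions are stated for concentric dilates $\lambda Q$, whereas your $P$ merely contains $Q$ and $R$; one must pass from $\fint_P$ to $\fint_{\lambda Q}$ (or $\fint_{\mu R}$) using $P\subset\lambda Q$ with $|P|\sim|\lambda Q|$, which is routine for the nonnegative inner averages but needs to be stated.
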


The following concepts were introduced
in \cite[Definition 1.3]{rou04} and
\cite[Definition 2.1]{fr21}.

\begin{definition}\label{def-doub-seq}
Let $\beta_1, \beta_2, \beta_3 \in [0, \infty)$ and
$\mathbb{A}:=\{A_Q\}_{Q\in\mathcal{D}}$ be a
sequence of positive definite matrices.
Then $\mathbb{A}$ is said to be
\begin{itemize}
\item [{\rm (i)}]
\emph{strongly doubling of order $(\beta_1, \beta_2)$}
if there exists a positive constant $C$ such that,
for any $Q, R\in\mathcal{D}$, \eqref{eq-st-double} holds;
\item [{\rm (ii)}]
\emph{weakly doubling of order $\beta_3$}
if there exists a positive constant $C$ such that,
for any $Q, R \in\mathcal{D}$ with $\ell(Q)=\ell(R)$,
\begin{align*}
\left\|A_Q A_R^{-1}\right\|^p\leq C
\left\{1+[\ell(R)]^{-1}|x_Q-x_R|\right\}^{\beta_3}.
\end{align*}
\end{itemize}
\end{definition}

Let $p\in(0,\infty)$, $W\in\mathcal{A}_{p,\infty}$,
and $\mathbb{A}:=\{A_Q\}_{Q\in\mathcal{D}}$ be a sequence
of reducing operators of order $p$ for $W$.
By Lemma \ref{growEST}, we find that, for any
$\beta_1 \in \llbracket d_{p, \infty}^{\mathrm{lower}}(W),\infty)$
and $\beta_2\in \llbracket d_{p, \infty}^{\mathrm{upper}}(W),\infty)$,
$\mathbb{A}$ is strongly doubling of order $(\beta_1, \beta_2)$,
where $d^{\operatorname{lower}}_{p, \infty}(W)$ and
$d^{\operatorname{ upper}}_{p, \infty}(W)$ are as, respectively, in
\eqref{eq-low-dim} and \eqref{eq-upp-dim}.
Next, we present an important technical lemma,
which in the case where $r\in(0,1]$ was given by
\cite[(2.8)]{fr21}.

\begin{lemma}\label{lem-suptosum}
Let $\varphi\in\mathcal{S}$ satisfy \eqref{cond1},
$r\in(0, \infty)$, $\lambda\in\mathbb{R}$, and $\beta\in[0,\infty)$.
If $\{A_Q\}_{Q\in\mathcal{D}}$ is weakly doubling of order $\beta$,
then there exists a positive constant $C$
such that, for any $j\in\mathbb{Z}$,
$k\in\mathbb{Z}^n$, and $\vec{f}\in(\mathcal{S}'_{\infty})^m$,
\begin{align}\label{eq-A-phi-f}
\sup_{x\in Q_{j, k}}\left|A_{Q_{j, k}}
\left(\varphi_j*\vec{f}\right)(x)\right|^{r}\leq C
\sum_{l\in{\mathbb{Z}}^n}\frac{2^{jn}}{(1+|k-l|)^{\lambda}}
\int_{Q_{j, l}}\left|A_{Q_{j, l}}
\left(\varphi_j*\vec{f}\right)(y)\right|^{r}\,dy.
\end{align}
\end{lemma}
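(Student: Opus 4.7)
The strategy is to combine a Plancherel-Polya/Peetre-type maximal inequality for bandlimited vector-valued functions with the weakly doubling hypothesis on $\{A_Q\}_{Q\in\mathcal D}$. By condition \eqref{cond1}, $\widehat{\varphi_j*\vec f}$ is supported in the annulus $\{\xi\in\mathbb R^n:2^{j-1}\leq|\xi|\leq 2^{j+1}\}$, so every component of the vector-valued function $A_{Q_{j,k}}(\varphi_j*\vec f)$ is a bandlimited function at scale $2^j$ (since $A_{Q_{j,k}}$ is a constant matrix within $Q_{j,k}$, each component is a linear combination of $\varphi_j*f_1,\ldots,\varphi_j*f_m$). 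For any unit vector $\vec v\in\mathbb C^m$, the scalar function $\vec v^*A_{Q_{j,k}}(\varphi_j*\vec f)$ is also bandlimited; applying the classical scalar Peetre maximal inequality to it, then using $|\vec v^*\vec w|\leq|\vec v||\vec w|$ to replace $|\vec v^*A_{Q_{j,k}}(\varphi_j*\vec f)(y)|$ by $|A_{Q_{j,k}}(\varphi_j*\vec f)(y)|$ on the right, and finally taking the supremum over unit $\vec v$ on the left yield, for every $r\in(0,\infty)$ and every $M>n$,
\begin{align*}
\left|A_{Q_{j,k}}(\varphi_j*\vec f)(x)\right|^r\leq C\int_{\mathbb R^n}\frac{2^{jn}}{(1+2^j|x-y|)^M}\left|A_{Q_{j,k}}(\varphi_j*\vec f)(y)\right|^r\,dy
\end{align*}
for every $x\in\mathbb R^n$. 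Taking the supremum over $x\in Q_{j,k}$ and absorbing the (bounded) factor obtained from Peetre's submultiplicative inequality $(1+2^j|x_{Q_{j,k}}-y|)\leq(1+2^j|x-x_{Q_{j,k}}|)(1+2^j|x-y|)\lesssim 1+2^j|x-y|$ replaces $x$ by $x_{Q_{j,k}}$ on the right.

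Next, I would partition $\mathbb R^n=\bigsqcup_{l\in\mathbb Z^n}Q_{j,l}$ and invoke the elementary geometric fact $1+2^j|x_{Q_{j,k}}-y|\sim 1+|k-l|$ whenever $y\in Q_{j,l}$, which is a direct consequence of Lemma \ref{lem-xy-QR-P}(ii) and the identity $|x_{Q_{j,k}}-x_{Q_{j,l}}|=2^{-j}|k-l|$. This converts the integral into a sum of the form $C\sum_{l\in\mathbb Z^n}(1+|k-l|)^{-M}2^{jn}\int_{Q_{j,l}}|A_{Q_{j,k}}(\varphi_j*\vec f)(y)|^r\,dy$. Inside each integral I factor $A_{Q_{j,k}}=(A_{Q_{j,k}}A_{Q_{j,l}}^{-1})\,A_{Q_{j,l}}$ and invoke the weakly doubling hypothesis of Definition \ref{def-doub-seq}(ii): since $\ell(Q_{j,k})=\ell(Q_{j,l})=2^{-j}$, we obtain $\|A_{Q_{j,k}}A_{Q_{j,l}}^{-1}\|^r\leq C(1+|k-l|)^{\beta r/p}$, and hence $|A_{Q_{j,k}}(\varphi_j*\vec f)(y)|^r\leq C(1+|k-l|)^{\beta r/p}|A_{Q_{j,l}}(\varphi_j*\vec f)(y)|^r$. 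Choosing $M>n$ such that $M-\beta r/p\geq\lambda$ (possible because $\lambda\in\mathbb R$ and $\beta r/p\geq 0$ are fixed) then produces the claimed estimate \eqref{eq-A-phi-f}.

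The main obstacle is the very first step, namely establishing the vector-valued Peetre-type inequality uniformly for all $r\in(0,\infty)$. The case $r\in(0,1]$, treated in \cite[(2.8)]{fr21}, is straightforward via componentwise scalar Plancherel-Polya plus the subadditivity $|\vec w|^r\leq\sum_i|w_i|^r$. The new case $r>1$ forces the unit-vector reduction outlined above, since a componentwise bound would lose the correct vector norm on the right. Once this analytic step is handled (or cited from Triebel-type references on bandlimited vector-valued functions), the remaining geometric comparison of dyadic cubes at the same generation and the algebraic bookkeeping of exponents $M$, $\lambda$, $\beta$, $p$, $r$ are routine.
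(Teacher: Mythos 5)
Your proposal is correct, and it reaches \eqref{eq-A-phi-f} by a route that differs from the paper's in where the work is done. The paper disposes of $r\in(0,1]$ by citing \cite[(2.8)]{fr21} (which already delivers the \emph{discrete} inequality with $A_{Q_{j,l}}$ on the right, i.e.\ with the weak-doubling swap built in), and then handles $r\in(1,\infty)$ entirely on the discrete side: it applies the $r=1$ case with a large exponent $\tau>\lambda\vee n$, splits the weight $(1+|k-l|)^{-\tau}$ via H\"older/Jensen over the sum in $l$ (using Lemma \ref{lem-sum-k} to control the conjugate factor), and raises to the $r$-th power. You instead first establish a \emph{continuous} vector-valued Peetre inequality with the single matrix $A_{Q_{j,k}}$ frozen on both sides, then discretize over $\{Q_{j,l}\}_{l}$ and perform the swap $A_{Q_{j,k}}\to A_{Q_{j,l}}$ explicitly, paying $(1+|k-l|)^{\beta r/p}$ and compensating by taking $M\geq\lambda+\beta r/p$. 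Both arguments are sound; yours makes the role of the weak-doubling hypothesis visible (it is hidden inside the citation in the paper's Case (1)), while the paper's is shorter because the heavy lifting is outsourced. Two small remarks. First, the ``main obstacle'' you identify for $r>1$ is not actually an obstacle: once you have the vector-valued $r=1$ continuous inequality --- which is immediate from writing $\varphi_j*\vec f=\eta_j*(\varphi_j*\vec f)$ for a Schwartz function $\eta$ with $\widehat\eta\equiv1$ on $\{|\xi|\leq2\}$ and pulling the constant matrix inside the convolution --- the case $r>1$ follows by Jensen's inequality applied to the probability measure obtained by normalizing the kernel $2^{jn}(1+2^j|x-y|)^{-M}$, so the unit-vector duality reduction, while valid, is not needed; this Jensen step is precisely the continuous analogue of the paper's discrete H\"older step. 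Second, your phrase ``$A_{Q_{j,k}}$ is a constant matrix within $Q_{j,k}$'' should read that $A_{Q_{j,k}}$ is a single fixed matrix, so that $A_{Q_{j,k}}(\varphi_j*\vec f)$ is globally bandlimited on all of $\mathbb{R}^n$, which is what your continuous inequality requires.
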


\begin{proof}
Since the right-hand side of \eqref{eq-A-phi-f}
decreases as $\lambda$ increases, to prove the present lemma,
we only need to consider the case where $\lambda\in(0, \infty)$.
To this end, we consider the following two cases for $r$.

\emph{Case (1)} $r\in(0, 1]$. In this case,
by \cite[(2.8)]{fr21} with $A$ and $A(R-r)$
replaced, respectively, by $r$ and $\lambda$,
we find that \eqref{eq-A-phi-f} holds,
which completes the proof of the present lemma
in this case.

\emph{Case (2)} $r\in(1, \infty)$.
In this case, let $\tau\in(\lambda\vee n, \infty)$.
Applying the just proved Case (1) with $r=1$,
we obtain, for any $j\in\mathbb{Z}$, $k\in\mathbb{Z}^n$,
and $\vec{f}\in(\mathcal{S}'_{\infty})^m$,
\begin{align*}
\sup_{x\in Q_{j, k}}\left|A_{Q_{j, k}}
\left(\varphi_j*\vec{f}\right)(x)\right|\lesssim
\sum_{l\in{\mathbb{Z}}^n}\frac{1}{(1+|k-l|)^{\tau}}
\fint_{Q_{j, l}}\left|A_{Q_{j, l}}
\left(\varphi_j*\vec{f}\right)(y)\right|\,dy.
\end{align*}
Using this, the assumption $\tau\in(\lambda\vee n, \infty)$,
Lemma \ref{lem-sum-k} combined with $x=0$ and $j=0$,
and H\"{o}lder's inequality,
we conclude that, for any $j\in\mathbb{Z}$, $k\in\mathbb{Z}^n$,
and $\vec{f}\in(\mathcal{S}'_{\infty})^m$,
\begin{align*}
\sup_{x\in Q_{j, k}}\left|A_{Q_{j, k}}
\left(\varphi_j*\vec{f}\right)(x)\right|
&\leq\left[\sum_{l\in{\mathbb{Z}}^n}
\frac{1}{(1+|k-l|)^{\tau}}\right]^{\frac{1}{r'}}
\left[\sum_{l\in{\mathbb{Z}}^n}
\frac{1}{(1+|k-l|)^{\tau}}
\fint_{Q_{j, l}}\left|A_{Q_{j, l}}
\left(\varphi_j*\vec{f}\right)(y)
\right|^r\,dy\right]^{\frac{1}{r}}\\
&\lesssim\left[\sum_{l\in{\mathbb{Z}}^n}
\frac{2^{jn}}{(1+|k-l|)^{\lambda}}
\int_{Q_{j, l}}\left|A_{Q_{j, l}}
\left(\varphi_j*\vec{f}\right)(y)
\right|^r\,dy\right]^{\frac{1}{r}}.
\end{align*}
This finishes the proof of \eqref{eq-A-phi-f} in this case
and hence Lemma \ref{lem-suptosum}.
\end{proof}

Observe that, applying \cite[Theorem 2.3.21]{gra14a}, we find that,
for any $f\in\mathcal{S}'$ with $\widehat{f}$ having compact support,
where the definition of the support of $\widehat{f}$ can be found
in \cite[Definition 2.3.16]{gra14a},
$f$ is an infinitely differential function
on $\mathbb{R}^n$. In what follows, for any $\vec{f}:=(f_1,\dots,f_m)^{T}
\in(\mathcal{S}')^m$ and any set $K\subset \mathbb{R}^n$,
we say that $\operatorname{supp}\widehat{{\vec{f}}}\subset K$
if, for any $i\in\{1,\dots, m\}$, $\operatorname{supp}\widehat{f_i}\subset K$.

The following lemma is a homogeneous variant of \cite[Lemma 3.15]{bhyy5},
which can be proved by a slight modification on
the proof of \cite[Lemma 3.15]{bhyy5}; we omit the details.

\begin{lemma}\label{lem-ab-equi}
Let $\beta_1, \beta_2\in[0,\infty)$ and
$\{A_Q\}_{Q\in\mathcal{D}}$
be strongly doubling of order $(\beta_1,\beta_2)$.
Suppose that $r\in(0,\infty)$, $\lambda\in(\frac{n}{r},\infty)$,
and $\gamma\in\mathbb{Z}_+$ is sufficiently large.
Then, for any $j\in\mathbb{Z}$, $\vec{f}\in(\mathcal{S}')^m$
with $\operatorname{supp}\widehat{{\vec{f}}}
\subset\{\xi\in\mathbb{R}^n:\ |\xi|\leq2^{j+1}\}$, and
$Q\in\mathcal{D}_j$, $t^*_{r, \lambda,Q}
\sim u^*_{r, \lambda,Q}$,
where the positive equivalence constants are independent of
$j, \vec{f}$, and $Q$ and, for any $Q\in\mathcal{D}$,
$t^*_{r, \lambda,Q}$ and $u^*_{r, \lambda,Q}$ are as
in \eqref{eq-lamda*} with
\begin{align*}
t:=\left\{|Q|^{\frac{1}{2}}\sup_{y\in Q}
\left|A_Q\vec{f}(y)\right|\right\}_{Q\in\mathcal{D}}\ \
\text{and}\ \ u:=\left\{{|Q|}^{\frac{1}{2}}
\max_{\genfrac{}{}{0pt}{}{P\in\mathcal{D}_{j_Q+\gamma}}{P\subset Q}}
\inf_{y\in P}\left|A_P\vec{f}(y)\right|\right\}_{Q\in\mathcal{D}}.
\end{align*}
\end{lemma}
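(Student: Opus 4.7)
The plan is to establish the two pointwise estimates $u_R\lesssim t_R$ and $t_R\lesssim u_R$ for every $R\in\mathcal{D}_j$ and then transfer them to the majorant sequences via \eqref{eq-lamda*}. I would follow the strategy of \cite[Lemma 3.15]{bhyy5} almost verbatim, adapting it to the doubly-infinite homogeneous lattice $\mathcal{D}$.

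For $u_R\lesssim t_R$, I fix $R\in\mathcal{D}_j$ and any $P\in\mathcal{D}_{j+\gamma}$ with $P\subset R$. Writing $A_P\vec{f}(y)=(A_PA_R^{-1})A_R\vec{f}(y)$ and applying the strong doubling condition \eqref{eq-st-double} together with $\ell(P)=2^{-\gamma}\ell(R)$ and $|x_P-x_R|\leq\ell(R)$, one obtains $\|A_PA_R^{-1}\|^p\lesssim 2^{\gamma\beta_1}$, so that
\begin{align*}
\inf_{y\in P}\left|A_P\vec{f}(y)\right|\lesssim 2^{\gamma\beta_1/p}\sup_{y\in R}\left|A_R\vec{f}(y)\right|.
\end{align*}
Taking the maximum over admissible $P$ gives $u_R\lesssim t_R$ with implicit constant depending only on $\gamma,\beta_1,\beta_2$, and $p$, and this transfers to $u^*_{r,\lambda,Q}\lesssim t^*_{r,\lambda,Q}$ via \eqref{eq-lamda*}.

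The harder direction $t_R\lesssim u_R$ combines a Plancherel--Polya-type oscillation estimate for bandlimited vector-valued functions with a second application of strong doubling. Since each component of $A_R\vec{f}$ is bandlimited with Fourier support in $\{\xi\in\mathbb{R}^n:\ |\xi|\leq 2^{j+1}\}$ (the matrix $A_R$ is a constant linear transformation, which preserves the Fourier support), a scalar Plancherel--Polya inequality applied componentwise yields, for $\gamma$ sufficiently large and any $P\in\mathcal{D}_{j+\gamma}$ with $P\subset R$, a uniform oscillation bound
\begin{align*}
\sup_{y\in P}\left|A_R\vec{f}(y)\right|\lesssim\inf_{y\in P}\left|A_R\vec{f}(y)\right|.
\end{align*}
Covering $R$ by the $2^{n\gamma}$ subcubes $\{P\in\mathcal{D}_{j+\gamma}:\ P\subset R\}$, I pick one $P_*\subset R$ on which $\sup_{y\in R}|A_R\vec{f}(y)|$ is essentially attained, and then use $\|A_RA_{P_*}^{-1}\|^p\lesssim 2^{\gamma\beta_2}$ (again from \eqref{eq-st-double}) to convert $\inf_{P_*}|A_R\vec{f}|$ into $\inf_{P_*}|A_{P_*}\vec{f}|$, yielding $t_R\lesssim u_R$ with constant depending on $\gamma,\beta_1,\beta_2$, and $p$.

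The main obstacle is ensuring that the Plancherel--Polya oscillation estimate is uniform in $\vec{f}$ without introducing a factor of $\|A_R\|\|A_R^{-1}\|$, which is not controlled by strong doubling. The resolution, as in \cite[Lemma 3.15]{bhyy5}, is to apply the scalar Plancherel--Polya inequality directly to the bandlimited coordinates of $A_R\vec{f}$ (so $A_R$ enters only as a fixed linear combination of bandlimited functions, not as an operator norm) and then pass to $|A_R\vec{f}|=(\sum_i|(A_R\vec{f})_i|^2)^{1/2}$ via the elementary bound $||A_R\vec{f}(x)|-|A_R\vec{f}(y)||\leq|A_R\vec{f}(x)-A_R\vec{f}(y)|$. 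The only modifications relative to \cite[Lemma 3.15]{bhyy5} are notational: the dyadic index $j$ ranges over $\mathbb{Z}$ instead of $\mathbb{Z}_+$ and $\mathcal{D}$ is the full homogeneous lattice, neither of which affects the structure of the Plancherel--Polya and strong-doubling arguments.
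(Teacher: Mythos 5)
The first half of your argument (the bound $u_R\lesssim t_R$ via $\|A_PA_R^{-1}\|^p\lesssim 2^{\gamma\beta_1}$) is correct, and so is the observation that multiplication by the constant matrix $A_R$ preserves the Fourier support. The problem is the second half. The ``uniform oscillation bound''
\begin{align*}
\sup_{y\in P}\left|A_R\vec{f}(y)\right|\lesssim\inf_{y\in P}\left|A_R\vec{f}(y)\right|
\quad\text{for every }P\in\mathcal{D}_{j+\gamma}\text{ with }P\subset R
\end{align*}
is false: a Plancherel--Polya (Bernstein-type) estimate for a function $g$ bandlimited to $\{|\xi|\leq 2^{j+1}\}$ only gives the \emph{additive} bound $\sup_{P}|g|\leq\inf_{P}|g|+C2^{-\gamma}\,g^{**}_j(x_P)$, where $g^{**}_j$ is a Peetre-type maximal function involving values of $g$ far from $P$; it never gives a multiplicative comparison. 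Indeed, any $P$ straddling a zero of $|A_R\vec{f}|$ (take $m=1$, $A_Q\equiv 1$, $\vec f(x)=\sin(2^jx_1)$) has $\inf_P=0$ and $\sup_P>0$. Consequently your pointwise claim $t_R\lesssim u_R$ is not established: on the subcube $P_*$ where the supremum over $R$ is nearly attained you only get $\inf_{P_*}|A_R\vec f|\geq\tfrac12\sup_R|A_R\vec f|-C2^{-\gamma}(A_R\vec f)^{**}_j$, and the error term is \emph{not} dominated by $\sup_R|A_R\vec f|$ when $|A_R\vec f|$ is small on $R$ but large at distance $O(2^{-j})$ from $R$.

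This is precisely why the lemma is stated for the majorant sequences $t^*_{r,\lambda,Q}$, $u^*_{r,\lambda,Q}$ rather than for the entries $t_Q$, $u_Q$, and why the hypotheses $\lambda>\frac{n}{r}$ and ``$\gamma$ sufficiently large'' appear: the correct argument (the one in \cite[Lemma 3.15]{bhyy5}, going back to \cite[Lemma A.4]{fj90}) feeds the additive error $C2^{-\gamma}(A_R\vec f)^{**}_j$ into the weighted sum over all cubes at level $j$ defining $t^*_{r,\lambda,Q}$, controls the resulting cross terms by $C2^{-\gamma}t^*_{r,\lambda,Q}$ using $\lambda>\frac{n}{r}$ together with the strong doubling bound on $\|A_RA_{R'}^{-1}\|$ for same-level cubes, and then \emph{absorbs} this term into the left-hand side by taking $\gamma$ large (after an a priori finiteness reduction). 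Your pointwise route bypasses all of this, which is a structural sign it cannot work: if $t_R\sim u_R$ held pointwise, the lemma would be true for every $\lambda>0$ and every $\gamma$, with no role for the majorant operation. To repair the proof you need to run the absorption argument at the level of $t^*_{r,\lambda,Q}$ rather than entrywise.
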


Let $\varphi\in\mathcal{S}_{\infty}$ and
$\mathbb{A}:=\{A_Q\}_{Q\in\mathcal{D}}$
be a sequence of positive definite matrices.
For any $\gamma\in\mathbb{Z}_+$
and $\vec{f}\in(\mathcal{S}'_{\infty})^m$, let
\begin{align}\label{supf}
\sup_{\mathbb{A}, \varphi}\left(\vec{f}\right):=
\left\{\sup_{\mathbb{A}, \varphi, Q}\left(\vec{f}\right)\right\}_{Q\in\mathcal{D}}
:=\left\{|Q|^{\frac{1}{2}}\sup_{y\in Q}
\left|A_Q\left(\varphi_{j_Q}*\vec{f}\right)(y)\right|\right\}_{Q\in\mathcal{D}}
\end{align}
and
\begin{align}\label{inff}
\inf_{\mathbb{A}, \varphi, \gamma}\left(\vec{f}\right)
&:=\left\{\inf_{\mathbb{A}, \varphi, Q, \gamma}
\left(\vec{f}\right)\right\}_{Q\in\mathcal{D}}:=\left\{|Q|^{\frac{1}{2}}
\max_{\genfrac{}{}{0pt}{}{P\in\mathcal{D}_{j_Q+\gamma}}{P\subset Q}}
\inf_{y\in P}\left|A_P\left(\varphi_{j_Q}*\vec{f}\right)(y)
\right|\right\}_{Q\in\mathcal{D}}.
\end{align}	

Based on Lemma \ref{lem-ab-equi}, we establish the following equivalences.

\begin{lemma}\label{A(A)=a}
Let $(A, a)\in\{(B, b), (F, f)\}$, $s\in\mathbb{R}$,
and $p, q\in(0, \infty]$ ($p<\infty$ if $A=F$).
Assume that $\delta_1,\delta_2,\omega$ satisfy \eqref{eq-delta1<0},
$\upsilon\in\mathcal{G}(\delta_1, \delta_2; \omega)$,
and $\varphi\in\mathcal{S}$ satisfies \eqref{cond1}.
Suppose that $\beta_1, \beta_2\in[0,\infty)$,
$\mathbb{A}:=\{A_Q\}_{Q\in\mathcal{D}}$
is strongly doubling of order $(\beta_1,\beta_2)$,
and $\gamma\in\mathbb{Z}_+$ is sufficiently large
as in Lemma \ref{lem-ab-equi}. Then,
for any $\vec{f}\in(\mathcal{S}'_{\infty})^m$,
$\|\vec{f}\|_{\dot{A}_{p,q}^{s,\upsilon}
(\mathbb{A}, \varphi)}\sim\|\sup_{\mathbb{A}, \varphi}
(\vec{f})\|_{\dot{a}_{p,q}^{s,\upsilon}}\sim
\|\inf_{\mathbb{A}, \varphi, \gamma}
(\vec{f})\|_{\dot{a}_{p,q}^{s,\upsilon}}$,
where all the positive equivalence constants are
independent of $\vec{f}$.
\end{lemma}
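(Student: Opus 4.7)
The plan is to split the three-way equivalence into two separate pieces. The first, $\|\vec{f}\|_{\dot{A}_{p,q}^{s,\upsilon}(\mathbb{A},\varphi)}\sim\|\sup_{\mathbb{A},\varphi}(\vec{f})\|_{\dot{a}_{p,q}^{s,\upsilon}}$, will follow from Lemma~\ref{lem-suptosum} combined with Proposition~\ref{prop-gh}; the second, $\|\sup_{\mathbb{A},\varphi}(\vec{f})\|_{\dot{a}_{p,q}^{s,\upsilon}}\sim\|\inf_{\mathbb{A},\varphi,\gamma}(\vec{f})\|_{\dot{a}_{p,q}^{s,\upsilon}}$, will be pulled through the $g_\lambda^{*}$-majorant by invoking Lemma~\ref{lem-ab-equi} and Proposition~\ref{prop-dct-gh}. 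At the outset I would fix $r\in(0,\Gamma_{p,q})$ and then choose $\lambda$ larger than the maximum of the thresholds $n/r$, $n+[\omega\wedge n(\delta_2-1/p)_+]r$, and $\frac{n}{r\wedge\Gamma_{p,q}}+[\omega\wedge n(\delta_2-1/p)_+]$, so that every subsequent invocation of Lemmas~\ref{lem-suptosum} and~\ref{lem-ab-equi} and Propositions~\ref{prop-gh} and~\ref{prop-dct-gh} has its hypotheses verified in one stroke.

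For the first equivalence, the easy direction is immediate: for $x\in Q\in\mathcal{D}_j$ the definition \eqref{A_j} gives $|\mathbb{A}_j(\varphi_j*\vec{f})(x)|=|A_Q(\varphi_j*\vec{f})(x)|\leq|Q|^{-1/2}\sup_{\mathbb{A},\varphi,Q}(\vec{f})$, so the pointwise bound $|\mathbb{A}_j(\varphi_j*\vec{f})|\leq\sum_{Q\in\mathcal{D}_j}\widetilde{\mathbf{1}}_Q\sup_{\mathbb{A},\varphi,Q}(\vec{f})$ translates into $\|\vec{f}\|_{\dot{A}_{p,q}^{s,\upsilon}(\mathbb{A},\varphi)}\leq\|\sup_{\mathbb{A},\varphi}(\vec{f})\|_{\dot{a}_{p,q}^{s,\upsilon}}$ by the monotonicity of $\|\cdot\|_{L\dot{A}_{p,q}^{\upsilon}}$. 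For the reverse, I would set $g_j:=\sum_{Q\in\mathcal{D}_j}\widetilde{\mathbf{1}}_Q\sup_{\mathbb{A},\varphi,Q}(\vec{f})$ and $h_j:=|\mathbb{A}_j(\varphi_j*\vec{f})|$. Noting that strong doubling of order $(\beta_1,\beta_2)$ implies weak doubling of order $\beta_1+\beta_2$, Lemma~\ref{lem-suptosum} produces, for $x\in Q_{j,k}$,
\[
|g_j(x)|^r=\sup_{y\in Q_{j,k}}|A_{Q_{j,k}}(\varphi_j*\vec{f})(y)|^r\lesssim\sum_{l\in\mathbb{Z}^n}\frac{2^{jn}}{(1+|k-l|)^\lambda}\int_{Q_{j,l}}|h_j(y)|^r\,dy,
\]
and Lemma~\ref{lem-xy-QR-P}(ii) converts $1+|k-l|$ into $1+2^j|x-y|$ for $y\in Q_{j,l}$. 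This places the estimate into the precise convolutional form required by Proposition~\ref{prop-gh}, which yields $\|\{2^{js}g_j\}_j\|_{L\dot{A}_{p,q}^{\upsilon}}\lesssim\|\{2^{js}h_j\}_j\|_{L\dot{A}_{p,q}^{\upsilon}}$, i.e., $\|\sup_{\mathbb{A},\varphi}(\vec{f})\|_{\dot{a}_{p,q}^{s,\upsilon}}\lesssim\|\vec{f}\|_{\dot{A}_{p,q}^{s,\upsilon}(\mathbb{A},\varphi)}$.

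For the sup/inf half, my plan is to apply Lemma~\ref{lem-ab-equi} one scale at a time with $\vec{f}$ there replaced by $\vec{F}_j:=\varphi_j*\vec{f}$, whose Fourier transform is supported in $\{|\xi|\leq 2^{j+1}\}$ by \eqref{cond1}. A direct comparison with \eqref{supf} and \eqref{inff} shows that, for $R\in\mathcal{D}_j$, the sequences produced by Lemma~\ref{lem-ab-equi} equal $t_R(\vec{F}_j)=\sup_{\mathbb{A},\varphi,R}(\vec{f})$ and $u_R(\vec{F}_j)=\inf_{\mathbb{A},\varphi,R,\gamma}(\vec{f})$. Because the star-majorant \eqref{eq-lamda*} at $Q\in\mathcal{D}_j$ only couples cubes at the same scale, running this identification for every $j$ upgrades to the global equivalence $[\sup_{\mathbb{A},\varphi}(\vec{f})]^{*}_{r,\lambda,Q}\sim[\inf_{\mathbb{A},\varphi,\gamma}(\vec{f})]^{*}_{r,\lambda,Q}$ for every $Q\in\mathcal{D}$. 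Combining this with the trivial pointwise inequality $\inf_{\mathbb{A},\varphi,Q,\gamma}(\vec{f})\leq\sup_{\mathbb{A},\varphi,Q}(\vec{f})$ and two applications of Proposition~\ref{prop-dct-gh} closes the chain
\[
\|\inf_{\mathbb{A},\varphi,\gamma}(\vec{f})\|_{\dot{a}_{p,q}^{s,\upsilon}}\leq\|\sup_{\mathbb{A},\varphi}(\vec{f})\|_{\dot{a}_{p,q}^{s,\upsilon}}\sim\|[\sup_{\mathbb{A},\varphi}(\vec{f})]^{*}_{r,\lambda}\|_{\dot{a}_{p,q}^{s,\upsilon}}\sim\|[\inf_{\mathbb{A},\varphi,\gamma}(\vec{f})]^{*}_{r,\lambda}\|_{\dot{a}_{p,q}^{s,\upsilon}}\sim\|\inf_{\mathbb{A},\varphi,\gamma}(\vec{f})\|_{\dot{a}_{p,q}^{s,\upsilon}}.
\]
The substantive difficulty sits inside Lemma~\ref{lem-ab-equi} itself, which uses strong doubling of order $(\beta_1,\beta_2)$ to force the reducing operators attached to sub-cubes $P\in\mathcal{D}_{j+\gamma}$ inside $Q\in\mathcal{D}_j$ to remain comparable to $A_Q$ once $\gamma$ is sufficiently large; beyond that, the only remaining task is parameter bookkeeping to verify that the single choice of $(r,\lambda)$ fixed at the start meets every threshold simultaneously.
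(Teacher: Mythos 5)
Your proposal is correct and follows essentially the same route as the paper: the easy pointwise bound plus Lemma~\ref{lem-suptosum} and Proposition~\ref{prop-gh} for the first equivalence, and Lemma~\ref{lem-ab-equi} applied scale-by-scale to $\varphi_j*\vec{f}$ followed by two applications of Proposition~\ref{prop-dct-gh} for the second. The only cosmetic quibbles are that the inequality $\inf_{\mathbb{A},\varphi,Q,\gamma}(\vec{f})\leq\sup_{\mathbb{A},\varphi,Q}(\vec{f})$ is not literally pointwise trivial (the two quantities involve different reducing operators $A_P$ versus $A_Q$) but is also not needed since the chain of equivalences via the star-majorants already closes the loop, and that the kernel exponent in Proposition~\ref{prop-gh} and the star-majorant parameter in Proposition~\ref{prop-dct-gh} differ by a factor of $r$, which your "single $\lambda$" bookkeeping should keep separate as the paper does with $\lambda$ versus $\lambda/r$.
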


\begin{proof}
We begin with proving the first equivalence in the present lemma.
By \eqref{A_j}, \eqref{supf}, and the definitions of
$\|\cdot\|_{\dot{A}_{p,q}^{s,\upsilon}(\mathbb{A}, \varphi)}$
and $\|\cdot\|_{\dot{a}_{p,q}^{s,\upsilon}}$, we find that,
for any $\vec{f}\in(\mathcal{S}'_{\infty})^m$,
\begin{align}\label{eq-f-sup}
\left\|\vec{f}\right\|_{\dot{A}_{p,q}^{s,\upsilon}
(\mathbb{A}, \varphi)}
&=\left\|\left\{2^{js}\left|\mathbb{A}_j
\left(\varphi_j*\vec{f}\right)\right|\right\}_{j\in\mathbb{Z}}
\right\|_{L\dot{A}_{p, q}^{\upsilon}}
=\left\|\left\{2^{js}\left|\sum_{Q\in\mathcal{D}_j}\mathbf{1}_QA_Q
\left(\varphi_j*\vec{f}\right)\right|\right\}_{j\in\mathbb{Z}}
\right\|_{L\dot{A}_{p, q}^{\upsilon}}\\
&\leq\left\|\left\{2^{js}\sum_{Q\in\mathcal{D}_j}
\widetilde{\mathbf{1}}_Q\sup_{\mathbb{A}, \varphi, Q}
\left(\vec{f}\right)\right\}_{j\in\mathbb{Z}}
\right\|_{L\dot{A}_{p, q}^{\upsilon}}
=\left\|\sup_{\mathbb{A}, \varphi}
\left(\vec{f}\right)\right\|_{\dot{a}_{p,q}^{s,\upsilon}}.\nonumber
\end{align}

Next, we show the reverse estimate of \eqref{eq-f-sup}.
To this end, let $r\in(0, \Gamma_{p,q})$ and
$\lambda\in(n+[\omega\wedge n(\delta_2-\frac{1}{p})_+]r,\infty)$,
where $\Gamma_{p,q}$ is as in \eqref{eq-zeta}.
Using \eqref{supf}, Lemma \ref{lem-suptosum},
the definition of $\mathbb{A}_j$ [see \eqref{A_j}], and
Lemmas \ref{lem-3P-sum1}(i) and \ref{lem-xy-QR-P}(ii),
we conclude that, for any $j\in\mathbb{Z}$,
$\vec{f}\in(\mathcal{S}'_{\infty})^m$, and $x\in\mathbb{R}^n$,
\begin{align}\label{eq-sup-A_j}
\left|\sum_{Q\in\mathcal{D}_j}\widetilde{\mathbf{1}}_{Q}(x)
\sup_{\mathbb{A},\varphi, Q}\left(\vec{f}\right)\right|^{r}
&=\sum_{Q\in\mathcal{D}_j}\mathbf{1}_{Q}(x)
\sup_{y\in Q}\left|A_{Q}\left(\varphi_j
*\vec{f}\right)(y)\right|^{r}\\
&\lesssim\sum_{Q\in\mathcal{D}_j}\mathbf{1}_{Q}(x)
\sum_{R\in\mathcal{D}_j}
\frac{2^{jn}}{(1+2^j|x_Q-x_R|)^{\lambda}}\int_{R}\left|A_{R}
\left(\varphi_j*\vec{f}\right)(y)\right|^{r}\,dy\nonumber\\
&\sim\sum_{R\in\mathcal{D}_j}
\int_{R}\frac{2^{jn}}{(1+2^j|x-y|)^{\lambda}}\left|\mathbb{A}_j(y)
\left(\varphi_j*\vec{f}\right)(y)\right|^{r}\,dy\nonumber\\
&=\int_{\mathbb{R}^n}\frac{2^{jn}}{(1+2^j|x-y|)^{\lambda}}
\left|\mathbb{A}_j(y)\left(\varphi_j*\vec{f}\right)(y)\right|^{r}\,dy.\nonumber
\end{align}
For any $\vec{f}\in(\mathcal{S}'_{\infty})^m$,
by \eqref{eq-sup-A_j}, the definitions of
$\|\cdot\|_{\dot{a}_{p,q}^{s,\upsilon}}$ and
$\|\cdot\|_{\dot{A}_{p,q}^{s,\upsilon}(\mathbb{A}, \varphi)}$,
and Proposition \ref{prop-gh} with $\{g_j\}_{j\in\mathbb{Z}}$ and $\{h_j\}_{j\in\mathbb{Z}}$
replaced, respectively, by $\{2^{js}\sum_{Q\in\mathcal{D}_j}
\widetilde{\mathbf{1}}_Q\sup_{\mathbb{A}, \varphi,Q}
(\vec{f})\}_{j\in\mathbb{Z}}$ and $\{2^{js}|\mathbb{A}_j
(\varphi_j*\vec{f})|\}_{j\in\mathbb{Z}}$ and with the
aforementioned assumptions on $r$ and $\lambda$, we find that
\begin{align*}
\left\|\sup_{\mathbb{A}, \varphi}
\left(\vec{f}\right)\right\|
_{\dot{a}_{p,q}^{s,\upsilon}}
&=\left\|\left\{2^{js}\sum_{Q\in\mathcal{D}_j}
\widetilde{\mathbf{1}}_Q\sup_{\mathbb{A}, \varphi,Q}
\left(\vec{f}\right)\right\}_{j\in\mathbb{Z}}
\right\|_{L\dot{A}_{p, q}^{\upsilon}}\\
&\lesssim\left\|\left\{2^{js}\left|\mathbb{A}_j
\left(\varphi_j*\vec{f}\right)\right|
\right\}_{j\in\mathbb{Z}}
\right\|_{L\dot{A}_{p, q}^{\upsilon}}=
\left\|\vec{f}\right\|_{\dot{A}_{p,q}^{s,
\upsilon}(\mathbb{A}, \varphi)},
\end{align*}
which completes the proof of reverse estimate of \eqref{eq-f-sup}
and hence the first equivalence.

Finally, we prove the second equivalence in the present lemma.
For any $\vec{f}\in(\mathcal{S}'_{\infty})^m$
and $Q\in\mathcal{D}$, applying Lemma \ref{lem-ab-equi}
with $\vec{f}$ replaced by $\varphi_{j_Q}*\vec{f}$
and with the assumptions on $r$ and $\lambda$, we obtain
\begin{align*}
\left[\sup_{\mathbb{A}, \varphi}
\left(\vec{f}\right)\right]_{r, \frac{\lambda}{r},Q}^{*}
\sim\left[\inf_{\mathbb{A},\varphi,\gamma}
\left(\vec{f}\right)\right]_{r, \frac{\lambda}{r},Q}^{*},
\end{align*}
which, together with Proposition \ref{prop-dct-gh}
on $\sup_{\mathbb{A}, \varphi}(\vec{f})$
and $\inf_{\mathbb{A}, \varphi, \gamma}
(\vec{f})$ with the aforementioned assumptions on $r$ and $\lambda$ again,
further implies that
\begin{align*}
\left\|\sup_{\mathbb{A}, \varphi}
\left(\vec{f}\right)\right\|
_{\dot{a}_{p,q}^{s,\upsilon}}
\sim\left\|\left[\sup_{\mathbb{A}, \varphi}
\left(\vec{f}\right)\right]_{r, \frac{\lambda}{r}}^{*}\right\|
_{\dot{a}_{p,q}^{s,\upsilon}}\sim
\left\|\left[\inf_{\mathbb{A},\varphi,\gamma}
\left(\vec{f}\right)\right]_{r, \frac{\lambda}{r}}^{*}\right\|
_{\dot{a}_{p,q}^{s,\upsilon}}\sim
\left\|\inf_{\mathbb{A}, \varphi, \gamma}
\left(\vec{f}\right)\right\|
_{\dot{a}_{p,q}^{s,\upsilon}}.
\end{align*}
This finishes the proof of the second equivalence
and hence Lemma \ref{A(A)=a}.
\end{proof}

\begin{remark}\label{rem-equi-seq}
By checking the proof of Lemma \ref{A(A)=a} very
carefully, we find that, when $\mathbb{A}$ is only weakly doubling,
the first equivalence in Lemma \ref{A(A)=a} in this case also holds.
\end{remark}

Now, we give the proof of Theorem \ref{thm-A(W)=A(A)}.

\begin{proof}[Proof of Theorem \ref{thm-A(W)=A(A)}]
We first prove that, for any $\vec{f}\in (\mathcal{S}'_{\infty})^m$,
$\|\vec{f}\|_{\dot{A}_{p,q}^{s,\upsilon}(W, \varphi)}
\lesssim\|\vec{f}\|_{\dot{A}_{p,q}^{s,\upsilon}(\mathbb{A}, \varphi)}$.
For this purpose, applying Lemmas \ref{growEST} and \ref{A(A)=a},
we only need to show that, for any $\vec{f}\in (\mathcal{S}'_{\infty})^m$,
\begin{align}\label{eq-f-supf}
\left\|\vec{f}\right\|_{\dot{A}_{p,q}^{s,\upsilon}(W, \varphi)}
\lesssim\left\|\sup_{\mathbb{A}, \varphi}\left(\vec{f}\right)\right\|
_{\dot{a}_{p,q}^{s,\upsilon}},
\end{align}
where $\sup_{\mathbb{A}, \varphi}(\vec{f})$
is as in \eqref{supf}. By Lemma \ref{lem-3P-sum1}(ii),
the definition of $\gamma_j$ [see \eqref{eq-gamma_j}],
and \eqref{supf}, we find that, for any $j\in\mathbb{Z}$,
$\vec{f}\in(\mathcal{S}'_{\infty})^m$, and $x\in\mathbb{R}^n$,
\begin{align}\label{eq-ineqau-equa}
\left|W^{\frac1{p}}(x)\left(\varphi_j*\vec{f}\right)(x)\right|
&=\sum_{Q\in\mathcal{D}_j}\mathbf{1}_Q(x)\left|W^{\frac1{p}}(x)
A^{-1}_QA_Q\left(\varphi_j*\vec{f}\right)(x)\right|\\
&\leq\sum_{Q\in\mathcal{D}_j}\mathbf{1}_{Q}(x)
\left\|W^{\frac1{p}}(x)A^{-1}_Q\right\|\left|A_Q
\left(\varphi_j*\vec{f}\right)(x)\right|\nonumber\\
&\leq\gamma_j(x)\sum_{Q\in\mathcal{D}_j}\widetilde{\mathbf{1}}_{Q}(x)
\sup_{\mathbb{A}, \varphi, Q}\left(\vec{f}\right).\nonumber
\end{align}
For any $\vec{f}\in (\mathcal{S}'_{\infty})^m$,
from \eqref{eq-ineqau-equa}, the definitions of
$\|\cdot\|_{\dot{A}_{p,q}^{s,\upsilon}(W, \varphi)}$
and $\|\cdot\|_{\dot{a}_{p,q}^{s,\upsilon}}$,
and Lemma \ref{lem-E_j} with $\{t_Q\}_{Q\in\mathcal{D}}$
replaced by $\{2^{j_Q s}\sup_{\mathbb{A},
\varphi,Q}(\vec{f})\}_{Q\in\mathcal{D}}$, we infer that
\begin{align*}
\left\|\vec{f}\right\|_{\dot{A}_{p,q}^{s,\upsilon}(W, \varphi)}
&=\left\|\left\{2^{js}\left|W^{\frac1{p}}
\left(\varphi_j*\vec{f}\right)\right|\right\}_{j\in\mathbb{Z}}
\right\|_{L\dot{A}_{p, q}^{\upsilon}}
\leq\left\|\left\{2^{js}\gamma_j\sum_{Q\in\mathcal{D}_j}\widetilde{\mathbf{1}}_{Q}
\sup_{\mathbb{A}, \varphi, Q}\left(\vec{f}\right)\right\}_{j\in\mathbb{Z}}
\right\|_{L\dot{A}_{p, q}^{\upsilon}}\\
&\lesssim\left\|\left\{2^{js}\sum_{Q\in\mathcal{D}_j}
\widetilde{\mathbf{1}}_{Q}\sup_{\mathbb{A}, \varphi, Q}
\left(\vec{f}\right)\right\}_{j\in\mathbb{Z}}
\right\|_{L\dot{A}_{p, q}^{\upsilon}}
=\left\|\sup_{\mathbb{A}, \varphi}\left(\vec{f}\right)\right\|
_{\dot{a}_{p,q}^{s,\upsilon}},
\end{align*}
which further implies that \eqref{eq-f-supf} holds.

Next, we prove that,
for any $\vec{f}\in (\mathcal{S}'_{\infty})^m$,
$\|\vec{f}\|_{\dot{A}_{p,q}^{s,\upsilon}(\mathbb{A}, \varphi)}
\lesssim\|\vec{f}\|_{\dot{A}_{p,q}^{s,\upsilon}(W, \varphi)}$.
To this end, using Lemma \ref{A(A)=a},
we only need to show, for any $\vec{f}\in(\mathcal{S}'_{\infty})^m$,
\begin{align}\label{eq-inf-f}
\left\|\inf_{\mathbb{A}, \varphi, \gamma}
\left(\vec{f}\right)\right\|_{\dot{a}_{p,q}^{s,\upsilon}}\lesssim
\left\|\vec{f}\right\|_{\dot{A}_{p,q}^{s,\upsilon}(W, \varphi)},
\end{align}
where both $\gamma\in\mathbb{Z}_+$ and
$\inf_{\mathbb{A}, \varphi, \gamma}(\vec{f})$
are as in Lemma \ref{A(A)=a}.	
To obtain \eqref{eq-inf-f}, for any $Q\in\mathcal{D}$ and
$\vec{f}\in (\mathcal{S}'_{\infty})^m$, from \eqref{inff},
we infer that there exists $R_Q\in\mathcal{D}_{j_Q+\gamma}$
satisfying $R_Q\subset Q$ and
\begin{align}\label{eq-inf}
\inf_{\mathbb{A}, \varphi, Q, \gamma}\left(\vec{f}\right)
=|Q|^{\frac{1}{2}}\inf_{y\in R_Q}\left|A_{R_Q}
\left(\varphi_{j_Q}*\vec{f}\right)(y)\right|.
\end{align}
Applying Lemma \ref{lem-AW-1-set}, we conclude that there exists
$L\in(0, \infty)$ such that, for any $Q\in\mathcal{D}$,
$$E_Q:=\left\{x\in R_Q:\ \left\|A_{R_Q} W^{-\frac{1}{p}}(x)\right\|<L\right\}$$ satisfies that
\begin{align}\label{eq-E_Q-R_Q-Q}
E_Q\subset R_Q\subset Q\text{\ and\ }
|E_Q|\geq\frac{|R_Q|}{2}=2^{-(\gamma n+1)}|Q|.
\end{align}
By \eqref{eq-inf} and the above choice of $E_Q$ for any $Q\in\mathcal{D}$,
we find that, for any $Q\in\mathcal{D}$,
$\vec{f}\in(\mathcal{S}'_{\infty})^m$, and $x\in E_Q$,
\begin{align*}
\inf_{\mathbb{A}, \varphi, Q, \gamma}\left(\vec{f}\right)
&\leq|Q|^{\frac{1}{2}}\left|A_{R_Q}
W^{-\frac{1}{p}}(x)W^{\frac{1}{p}}(x)
\left(\varphi_{j_Q}*\vec{f}\right)(x)\right|\\
&\leq|Q|^{\frac{1}{2}}\left\|A_{R_Q} W^{-\frac{1}{p}}(x)\right\|
\left|W^{\frac{1}{p}}(x)\left(\varphi_{j_Q}*\vec{f}\right)(x)\right|
\lesssim|Q|^{\frac{1}{2}}\left|W^{\frac{1}{p}}(x)
\left(\varphi_{j_Q}*\vec{f}\right)(x)\right|,
\end{align*}
which, together with \eqref{eq-E_Q-R_Q-Q},
Lemma \ref{lem-3P-sum1}(ii), Lemma \ref{lem-aLA-equi}
with $t$ replaced by $\inf_{\mathbb{A}, \varphi, \gamma}(\vec{f})$,
and the definition of $\|\cdot\|_{\dot{A}_{p,q}^{s,\upsilon}(W, \varphi)}$,
further implies that, for any $\vec{f}\in (\mathcal{S}'_{\infty})^m$,
\begin{align*}
\left\|\inf_{\mathbb{A}, \varphi, \gamma}\left(\vec{f}\right)
\right\|_{\dot{a}_{p,q}^{s,\upsilon}}
&\sim\left\|\left\{2^{js}\sum_{Q\in\mathcal{D}_j}
\widetilde{\mathbf{1}}_{E_Q}\inf_{\mathbb{A},
\varphi, Q, \gamma}\left(\vec{f}\right)\right\}
_{j\in\mathbb{Z}}\right\|_{L\dot{A}_{p, q}^{\upsilon}}
\lesssim\left\|\left\{2^{js}\sum_{Q\in\mathcal{D}_j}\mathbf{1}_Q
\left|W^{\frac{1}{p}}\left(\varphi_{j}*\vec{f}\right)\right|
\right\}_{j\in\mathbb{Z}}\right\|_{L\dot{A}_{p, q}^{\upsilon}}\\
&=\left\|\left\{2^{js}\left|W^{\frac{1}{p}}
\left(\varphi_{j}*\vec{f}\right)\right|\right\}_{j\in\mathbb{Z}}
\right\|_{L\dot{A}_{p, q}^{\upsilon}}
=\left\|\vec{f}\right\|_{\dot{A}_{p,q}^{s,\upsilon}(W, \varphi)},
\end{align*}
which completes the proof of \eqref{eq-inf-f}
and hence Theorem \ref{thm-A(W)=A(A)}.
\end{proof}

\subsection{Proof of Theorem \ref{thm-phitransMWBTL}}\label{s-pf-pf}

Observe that, in Subsection \ref{s-pf-e},
we obtain $\dot{A}_{p,q}^{s, \upsilon}(W)=
\dot{A}_{p,q}^{s, \upsilon}(\mathbb{A})$ and
$\dot{a}_{p,q}^{s, \upsilon}(W)=\dot{a}_{p,q}^{s, \upsilon}(\mathbb{A})$,
where $\mathbb{A}$ is a sequence of reducing operators
of order $p$ for $W$. If we can establish
the $\varphi$-transform characterization of $\dot{A}_{p,q}^{s,\upsilon}(\mathbb{A})$
for any strongly doubling sequence $\mathbb{A}$,
then Theorem \ref{thm-phitransMWBTL} [that is, the $\varphi$-transform
characterization of $\dot{A}_{p,q}^{s, \upsilon}(W)$] naturally holds.
Based on this idea, we present the following
first main result of this subsection, which
gives the $\varphi$-transform characterization of
$\dot{A}_{p,q}^{s, \upsilon}(\mathbb{A})$.

\begin{theorem}\label{thm-phitansaverMWBTL}
Let $(A, a)\in\{(B, b), (F, f)\}$, $s\in\mathbb{R}$,
and $p, q\in(0,\infty]$ ($p<\infty$ if $A=F$).
Assume that $\delta_1,\delta_2,\omega$ satisfy \eqref{eq-delta1<0},
$\upsilon\in\mathcal{G}(\delta_1, \delta_2; \omega)$, and
$\varphi, \psi\in\mathcal{S}$ satisfy \eqref{cond1}.
Suppose that $\beta_1, \beta_2\in[0,\infty)$ and
$\mathbb{A}$ is strongly doubling of order
$(\beta_1,\beta_2)$. Then the following statements hold.
\begin{itemize}
\item[{\rm (i)}] The maps $S_{\varphi}:\
\dot{A}_{p,q}^{s,\upsilon}(\mathbb{A}, \widetilde{\varphi})
\to\dot{a}_{p,q}^{s,\upsilon}(\mathbb{A})$
and $T_{\psi}:\ \dot{a}_{p,q}^{s,\upsilon}(\mathbb{A})\to
\dot{A}_{p,q}^{s,\upsilon}(\mathbb{A}, \varphi)$ are bounded,
where $\widetilde{\varphi}(x):=\overline{\varphi(-x)}$
for any $x\in\mathbb{R}^n$.
Moreover, if $\varphi, \psi$ further satisfy \eqref{cond3},
then $T_\psi \circ S_{\varphi}$ is the identity
on $\dot{A}_{p,q}^{s,\upsilon}(\mathbb{A}, \widetilde{\varphi})
=\dot{A}_{p,q}^{s,\upsilon}(\mathbb{A}, \varphi)$.
\item[{\rm (ii)}] If $\varphi^{(1)}, \varphi^{(2)}\in\mathcal{S}$
both satisfy \eqref{cond1}, then
$\dot{A}_{p,q}^{s,\upsilon}(\mathbb{A},
\varphi^{(1)})=\dot{A}_{p,q}^{s,\upsilon}(\mathbb{A}, \varphi^{(2)})$
with quasi-norms.
\end{itemize}
\end{theorem}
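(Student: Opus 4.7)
The plan is to follow the standard Frazier--Jawerth template, but to route all the hard estimates through the averaging/majorant machinery developed in Subsection~\ref{s-pf-e} (especially Lemma~\ref{A(A)=a}, Proposition~\ref{prop-dct-gh}, and Lemma~\ref{lem-ab-equi}), which isolates the role of the growth function $\upsilon$ and of the strong doubling of $\mathbb{A}$.

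First I would handle the boundedness of $S_{\varphi}\colon \dot{A}_{p,q}^{s,\upsilon}(\mathbb{A},\widetilde{\varphi})\to\dot{a}_{p,q}^{s,\upsilon}(\mathbb{A})$. The key observation is that for any $Q\in\mathcal{D}$ and any $\vec f\in(\mathcal{S}'_{\infty})^m$,
$\langle\vec f,\varphi_Q\rangle=|Q|^{1/2}(\widetilde{\varphi}_{j_Q}*\vec f)(x_Q)$, so that $|A_Q\langle\vec f,\varphi_Q\rangle|\le\sup_{\mathbb{A},\widetilde{\varphi},Q}(\vec f)$ pointwise in the cube index, with $\sup_{\mathbb{A},\widetilde{\varphi},Q}$ as in \eqref{supf}. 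Combining this with \eqref{eq-a(A)-a} and the first equivalence in Lemma~\ref{A(A)=a} gives $\|S_{\varphi}\vec f\|_{\dot{a}_{p,q}^{s,\upsilon}(\mathbb{A})}\lesssim \|\sup_{\mathbb{A},\widetilde{\varphi}}(\vec f)\|_{\dot{a}_{p,q}^{s,\upsilon}}\sim\|\vec f\|_{\dot{A}_{p,q}^{s,\upsilon}(\mathbb{A},\widetilde{\varphi})}$.

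For the synthesis map $T_{\psi}$, I would first verify that $T_{\psi}\vec t=\sum_{Q\in\mathcal{D}}\vec t_Q\psi_Q$ converges in $(\mathcal{S}'_{\infty})^m$ whenever $\vec t\in\dot{a}_{p,q}^{s,\upsilon}(\mathbb{A})$: writing the action against $\eta\in\mathcal{S}_\infty$ as $\sum_{j\in\mathbb{Z}}\sum_{Q\in\mathcal{D}_j}\vec t_Q\langle\psi_Q,\eta\rangle$, I would use the rapid decay estimates $|\langle\psi_Q,\eta\rangle|\lesssim |Q|^{1/2}2^{-j|M|}(1+|x_Q|)^{-N}$ (valid for any $M,N$ because $\psi\in\mathcal{S}_\infty$ and $\eta\in\mathcal{S}_\infty$), combined with the strong doubling bound \eqref{eq-st-double} on $\|A_Q^{-1}\|$ and the growth bound on $\upsilon$ from Lemma~\ref{lem-grow-est}, to dominate absolutely by a convergent series. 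For the actual norm bound, the support condition \eqref{cond1} gives $\widehat{\varphi_j}\widehat{\psi_Q}=0$ unless $|j-j_Q|\le1$, so $\varphi_j*T_{\psi}\vec t=\sum_{|j-j_Q|\le1}\vec t_Q(\varphi_j*\psi_Q)$. A standard molecule-type pointwise estimate (using $\psi,\varphi\in\mathcal{S}$) yields
\begin{align*}
\left|\mathbb{A}_j(x)\left(\varphi_j*T_{\psi}\vec t\right)(x)\right|
\lesssim \sum_{i=j-1}^{j+1}\sum_{Q\in\mathcal{D}_i}\frac{2^{jn/2}\|A_{Q_j(x)}A_Q^{-1}\||A_Q\vec t_Q|}{(1+2^j|x-x_Q|)^{\Lambda}},
\end{align*}
where $Q_j(x)$ is the unique cube in $\mathcal{D}_j$ containing $x$ and $\Lambda$ can be chosen as large as needed; the strong doubling hypothesis controls $\|A_{Q_j(x)}A_Q^{-1}\|$ by a polynomial factor in $1+2^j|x-x_Q|$, which can be absorbed into $\Lambda$. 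Recognizing the resulting majorant sequence as (a constant times) $\{|A_Q\vec t_Q|\}^{*}_{r,\lambda/r}$ in the sense of \eqref{eq-lamda*}, I invoke Proposition~\ref{prop-gh} and Proposition~\ref{prop-dct-gh} (with $r<\Gamma_{p,q}$ and $\lambda$ large enough depending on $\omega$, $\delta_2$, and $1/p$) to conclude $\|T_{\psi}\vec t\|_{\dot{A}_{p,q}^{s,\upsilon}(\mathbb{A},\varphi)}\lesssim\|\vec t\|_{\dot{a}_{p,q}^{s,\upsilon}(\mathbb{A})}$.

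The identity $T_\psi\circ S_\varphi=\mathrm{Id}$ when \eqref{cond3} holds follows from the Calder\'on reproducing formula $\vec f=\sum_{j\in\mathbb{Z}}\widetilde{\varphi}_j*\psi_j*\vec f$ in $(\mathcal{S}'_{\infty})^m$ (see \cite[(12.4)]{fjw91}-type arguments, which are available since $S_{\varphi}\vec f\in\dot{a}_{p,q}^{s,\upsilon}(\mathbb{A})$ has already been shown to make $T_\psi S_\varphi\vec f$ converge). The coincidence $\dot{A}_{p,q}^{s,\upsilon}(\mathbb{A},\widetilde{\varphi})=\dot{A}_{p,q}^{s,\upsilon}(\mathbb{A},\varphi)$ inside (i) is immediate because $\widetilde{\varphi}$ again satisfies \eqref{cond1}, and interchanging the roles of $\varphi$ and $\widetilde{\varphi}$ gives a matching two-sided bound. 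For part~(ii), given two analyzers $\varphi^{(1)},\varphi^{(2)}$, I pick synthesizers $\psi^{(1)},\psi^{(2)}$ so that each pair satisfies \eqref{cond3}, and then use the factorization $\mathrm{Id}=T_{\psi^{(2)}}\circ S_{\varphi^{(2)}}$ to write, for any $\vec f\in\dot{A}_{p,q}^{s,\upsilon}(\mathbb{A},\varphi^{(1)})$,
$\|\vec f\|_{\dot{A}_{p,q}^{s,\upsilon}(\mathbb{A},\varphi^{(2)})}=\|T_{\psi^{(2)}}S_{\varphi^{(2)}}\vec f\|_{\dot{A}_{p,q}^{s,\upsilon}(\mathbb{A},\varphi^{(2)})}\lesssim \|S_{\varphi^{(2)}}\vec f\|_{\dot{a}_{p,q}^{s,\upsilon}(\mathbb{A})}\lesssim \|\vec f\|_{\dot{A}_{p,q}^{s,\upsilon}(\mathbb{A},\varphi^{(1)})}$, where the last step comes from applying the pointwise bound for $S_{\varphi^{(2)}}$ and then the reverse estimate via Lemma~\ref{A(A)=a} to replace the analyzer; symmetry finishes~(ii).

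The main obstacle will be the $T_{\psi}$ half: one must simultaneously (a) justify $(\mathcal{S}'_{\infty})^m$-convergence against a growth factor $\upsilon$ that is permitted to inflate polynomially in $|Q|$ and in $|x_Q|$, and (b) absorb the two-sided strong-doubling factor $\|A_{Q_j(x)}A_Q^{-1}\|$ coming from the averaging operators into the Peetre-type denominators, so that Propositions~\ref{prop-gh} and~\ref{prop-dct-gh} become applicable. Both issues are handled by choosing $\lambda$ large enough (depending on $\omega$, $\delta_1,\delta_2$, $\beta_1,\beta_2$, and $1/p$), but the bookkeeping of which parameter to pick first is the delicate part; everything else is a routine adaptation of the Frazier--Jawerth template to the averaging framework.
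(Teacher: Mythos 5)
Your proposal follows essentially the same route as the paper's proof: the same pointwise bound $|A_Q\langle\vec f,\varphi_Q\rangle|\le\sup_{\mathbb{A},\widetilde{\varphi},Q}(\vec f)$ combined with Lemma \ref{A(A)=a} for $S_{\varphi}$; the same well-definedness argument (decay of $\langle\psi_Q,\phi\rangle$, strong doubling of $\mathbb{A}$, growth of $\upsilon$), frequency-support reduction to $|i-j|\le 1$, absorption of $\|A_QA_R^{-1}\|$ into the Peetre denominator, and the majorant-sequence/Proposition \ref{prop-dct-gh} step for $T_{\psi}$; and the Calder\'on reproducing formula plus the factorization $T_{\psi^{(2)}}\circ S_{\varphi^{(2)}}=\mathrm{Id}$ for the identity and for (ii). The one slip is in your final chain for (ii): the inequality $\|S_{\varphi^{(2)}}\vec f\|_{\dot{a}_{p,q}^{s,\upsilon}(\mathbb{A})}\lesssim\|\vec f\|_{\dot{A}_{p,q}^{s,\upsilon}(\mathbb{A},\varphi^{(1)})}$ is not given by (i) (which controls $S_{\varphi^{(2)}}$ only by the $\widetilde{\varphi^{(2)}}$-norm), and Lemma \ref{A(A)=a} does not change the analyzer, so as written the step is circular; the fix is to run the factorization with the $\varphi^{(1)}$-norm on the left, namely $\|\vec f\|_{\dot{A}_{p,q}^{s,\upsilon}(\mathbb{A},\varphi^{(1)})}=\|T_{\psi^{(2)}}\circ S_{\varphi^{(2)}}\vec f\|_{\dot{A}_{p,q}^{s,\upsilon}(\mathbb{A},\varphi^{(1)})}\lesssim\|S_{\varphi^{(2)}}\vec f\|_{\dot{a}_{p,q}^{s,\upsilon}(\mathbb{A})}\lesssim\|\vec f\|_{\dot{A}_{p,q}^{s,\upsilon}(\mathbb{A},\widetilde{\varphi^{(2)}})}\sim\|\vec f\|_{\dot{A}_{p,q}^{s,\upsilon}(\mathbb{A},\varphi^{(2)})}$, exactly as the paper does.
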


\begin{remark}
Let all the symbols be the same
as in Theorem \ref{thm-phitansaverMWBTL}.
From Theorem \ref{thm-phitansaverMWBTL}(ii), we infer that
the space $\dot{A}_{p,q}^{s,\upsilon}(\mathbb{A}, \varphi)$
is independent of the choice of $\varphi$.
Hence, we can simply write $\dot{A}_{p,q}^{s,\upsilon}(\mathbb{A})$ instead of $\dot{A}_{p,q}^{s,\upsilon}(\mathbb{A}, \varphi)$.
\end{remark}

Before proving Theorem \ref{thm-phitansaverMWBTL},
we first show that, in Theorem \ref{thm-phitansaverMWBTL},
the operator $T_{\psi}$ is well-defined. To this end,
for any $\phi\in\mathcal{S}$ and $N\in\mathbb{N}$, let
\begin{align}\label{eq-S_N}
\|\phi\|_{\mathcal{S}_N}:=
\sup_{\gamma\in\mathbb{Z}^n_{+},\, |\gamma|\leq N}
\sup_{x\in\mathbb{R}^n}|\partial^{\gamma}
\phi(x)|(1+|x|)^{n+N+|\gamma|},
\end{align}
where, for any multi-index $\gamma:=(\gamma_1,\dots,\gamma_n)
\in\mathbb{Z}^n_{+}$, $|\gamma|:=\sum_{i=1}^{n}|\gamma_i|$.
The following result was established in \cite[Corollary 3.32]{bhyy1}.
\begin{lemma}\label{lem-psi-phi}
Let $\psi,\phi\in\mathcal{S}_{\infty}$.
If $N\in\mathbb{N}$, then, for any $Q, R\in\mathcal{D}$,
\begin{align*}
\left|\langle\psi_Q, \phi_R\rangle\right|
\lesssim\left\|\psi\right\|_{\mathcal{S}_{N+1}}
\left\|\phi\right\|_{\mathcal{S}_{N+1}}\left[\min
\left\{\left[\frac{\ell(R)}{\ell(Q)}\right],
\left[\frac{\ell(Q)}{\ell(R)}\right]\right\}
\right]^{N+\frac{n}{2}}\left[1+
\frac{|x_Q-x_R|}{\ell(Q)\vee\ell(R)}\right]^{-(N+n)},
\end{align*}
where the implicit positive constant is independent of $Q$ and $R$
and where $\|\cdot\|_{\mathcal{S}_{N+1}}$ is as in \eqref{eq-S_N}.
\end{lemma}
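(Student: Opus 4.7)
The plan is to establish this classical almost-orthogonality estimate via a Taylor-cancellation argument, exploiting the vanishing moments of $\phi$ (or $\psi$) guaranteed by membership in $\mathcal{S}_\infty$. By the symmetry $\langle \psi_Q, \phi_R\rangle = \overline{\langle \phi_R, \psi_Q\rangle}$ and the symmetry of the right-hand side in $(\psi,Q)$ and $(\phi,R)$, I may assume without loss of generality that $\ell(R)\leq \ell(Q)$, i.e.\ $j_R\geq j_Q$, so that $\min\{\ell(R)/\ell(Q),\ell(Q)/\ell(R)\}=\ell(R)/\ell(Q)$ and $\ell(Q)\vee\ell(R)=\ell(Q)=2^{-j_Q}$. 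The other case is then obtained by interchanging the roles of $\psi,\phi$ and $Q,R$.

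After the change of variables $y=2^{j_R}(x-x_R)$, the pairing takes the form
\begin{align*}
\langle \psi_Q,\phi_R\rangle
=2^{(j_Q-j_R)n/2}\int_{\mathbb{R}^n}
g(y)\,\overline{\phi(y)}\,dy,\qquad
g(y):=\psi\!\left(2^{j_Q}(x_R-x_Q)+2^{j_Q-j_R}y\right).
\end{align*}
The key step is to invoke the vanishing moments $\int y^{\gamma}\phi(y)\,dy=0$ for all $\gamma\in\mathbb{Z}_+^n$ coming from $\phi\in\mathcal{S}_\infty$ to subtract the degree-$N$ Taylor polynomial of $g$ at the origin inside the integral. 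The integral remainder then involves a sum of terms $\int_0^1(1-t)^{N}\int y^{\gamma}(\partial^{\gamma}g)(ty)\overline{\phi(y)}\,dy\,dt$ with $|\gamma|=N+1$, and each application of $\partial^{\gamma}$ to $g$ produces the chain-rule factor $2^{(j_Q-j_R)|\gamma|}=(\ell(R)/\ell(Q))^{N+1}$. Together with the prefactor $2^{(j_Q-j_R)n/2}=(\ell(R)/\ell(Q))^{n/2}$, this already yields the claimed scale gain $(\ell(R)/\ell(Q))^{N+n/2}$ (in fact with one extra power to spare).

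It remains to control the spatial factor. To this end I estimate $|\partial^{\gamma}\psi|$ and $|\phi|$ using the Schwartz norms $\|\psi\|_{\mathcal{S}_{N+1}}$ and $\|\phi\|_{\mathcal{S}_{N+1}}$ defined in \eqref{eq-S_N}, which give the pointwise bounds
\begin{align*}
|\partial^{\gamma}\psi(z)|\leq \|\psi\|_{\mathcal{S}_{N+1}}(1+|z|)^{-(n+N+|\gamma|)},\qquad
|\phi(y)|\leq \|\phi\|_{\mathcal{S}_{N+1}}(1+|y|)^{-(n+N+1)}.
\end{align*}
Setting $z=2^{j_Q}(x_R-x_Q)+2^{j_Q-j_R}ty$, I split the $y$-integral into the regions $\{|y|\leq c\cdot 2^{j_R-j_Q}(1+2^{j_Q}|x_Q-x_R|)\}$ and its complement. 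On the first region, $|z|\gtrsim 2^{j_Q}|x_Q-x_R|$, so the decay of $\partial^{\gamma}\psi$ gives directly the factor $(1+2^{j_Q}|x_Q-x_R|)^{-(N+n)}$ (with plenty to spare on $\gamma$), while the polynomial weight $|y|^{N+1}$ is absorbed by the decay of $\phi$. On the complementary region, one uses the stronger decay of $\phi$ at large $|y|$ to compensate for loss in $\psi$. A standard Peetre-type inequality $(1+|a+b|)^{-M}\lesssim (1+|a|)^{-M}(1+|b|)^{M}$ streamlines the bookkeeping.

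The main obstacle will be the last step: carefully partitioning the $y$-integration and combining the two Schwartz decays to produce exactly the spatial factor $[1+|x_Q-x_R|/(\ell(Q)\vee\ell(R))]^{-(N+n)}$ without losing more than one power from the scale gain. Once this is done in the case $\ell(R)\leq\ell(Q)$, the general case follows by the symmetry argument outlined above, and the bound depends linearly on each of $\|\psi\|_{\mathcal{S}_{N+1}}$ and $\|\phi\|_{\mathcal{S}_{N+1}}$ as required.
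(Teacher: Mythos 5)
First, note that the paper does not actually prove this lemma: it is quoted from \cite[Corollary 3.32]{bhyy1}, so you are being compared with the standard argument rather than an in-paper proof. Your setup is the standard one and is correct as far as it goes: the symmetry reduction to $\ell(R)\le\ell(Q)$, the change of variables giving $\langle\psi_Q,\phi_R\rangle=2^{(j_Q-j_R)n/2}\int g(y)\overline{\phi(y)}\,dy$ with $g(y)=\psi(a+\epsilon y)$, $a:=2^{j_Q}(x_R-x_Q)$, $\epsilon:=2^{j_Q-j_R}\le 1$, the use of the vanishing moments of $\phi$ to pass to the order-$N$ Taylor remainder, and the resulting scale gain $\epsilon^{N+1+n/2}$ are all fine.

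The genuine gap is in your treatment of the complementary (far) region. With the norm \eqref{eq-S_N}, $\|\phi\|_{\mathcal{S}_{N+1}}$ only gives $|\phi(y)|\lesssim(1+|y|)^{-(n+N+1)}$, so the remainder integrand carries $|y|^{N+1}|\phi(y)|\lesssim(1+|y|)^{-n}$: the decay of $\phi$ beats the Taylor weight by exactly $n$ powers and no more. Consequently an absolute-value estimate of the \emph{global} remainder cannot produce the factor $[1+|x_Q-x_R|/\ell(Q)]^{-(N+n)}$. Concretely, on the set $\{y:\ |a+\epsilon t y|\le 1\}$ (a ball of radius $\sim(\epsilon t)^{-1}$ around $-a/(\epsilon t)$, which lies in your far region for most $t$ when $|a|\ge1$) the factor $(\partial^\gamma\psi)(a+\epsilon ty)$ provides no decay in $a$, while $|y|^{N+1}|\phi(y)|\sim(\epsilon t/|a|)^{n}$ there; multiplying by the volume $\sim(\epsilon t)^{-n}$ one gets a contribution of order $\epsilon^{N+1+n/2}|a|^{-n}$, which for fixed $\epsilon$ and $|a|\to\infty$ is not dominated by $\epsilon^{N+n/2}(1+|a|)^{-(N+n)}$; the spare power of $\epsilon$ cannot buy decay in $a$. (Your near region also yields a logarithm $\log\bigl(2+\epsilon^{-1}(1+|a|)\bigr)$ rather than a constant, but that is harmless because of the surpluses you correctly note.) The missing idea is structural: do \emph{not} keep the Taylor subtraction on the far region. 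Write the pairing as the subtracted integral over $\{|y|\le c\,\epsilon^{-1}(1+|a|)\}$ plus, over the complement, the two separate terms $\int g\,\overline{\phi}$ and $-\int P_N\,\overline{\phi}$ (legitimate since $\int P_N\overline{\phi}=0$): the first is estimated using the decay of both $\psi$ and $\phi$ without any $|y|^{N+1}$ weight, and the second using the decay of $\partial^\gamma\psi$ evaluated at $a$ together with the smallness of the tail $\int_{|y|>c\epsilon^{-1}(1+|a|)}|y|^{|\gamma|}|\phi(y)|\,dy$ for $|\gamma|\le N$. This three-piece decomposition yields both the gain $\epsilon^{N+n/2}$ and the decay $(1+|a|)^{-(N+n)}$. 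Alternatively, within the paper's own toolkit the lemma follows in two lines from Lemma \ref{lem-yy08} (with $M=N$), via the identity $\langle\psi_Q,\phi_R\rangle=|Q|^{\frac12}|R|^{\frac12}\bigl(\psi_{j_Q}*\widetilde{\phi}_{j_R}\bigr)(x_R-x_Q)$, or from the molecular estimate of Lemma \ref{lem-MGH}; either route sidesteps the borderline integrability problem that defeats the global-remainder strategy.
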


The next lemma shows that,
in Theorem \ref{thm-phitansaverMWBTL},
the operator $T_{\psi}$ is well-defined.

\begin{lemma}\label{well-define}
Let $a\in\{b, f\}$, $s\in\mathbb{R}$, and
$p, q\in(0,\infty]$ ($p<\infty$ if $a=f$).
Assume that $\delta_1,\delta_2,\omega$ satisfy \eqref{eq-delta1<0},
$\upsilon\in\mathcal{G}(\delta_1, \delta_2; \omega)$,
and $\psi\in\mathcal{S}$ satisfies \eqref{cond1}.
Suppose that $\beta_1, \beta_2\in[0,\infty)$ and
$\mathbb{A}:=\{A_Q\}_{Q\in\mathcal{D}}$
is strongly doubling of order
$(\beta_1,\beta_2)$. Then, for any $\vec{t}
:=\{\vec{t}_Q\}_{Q\in\mathcal{D}}
\in\dot{a}_{p,q}^{s,\upsilon}(\mathbb{A})$,
$\sum_{Q\in\mathcal{D}}\vec{t}_Q\psi_Q$
converges in $(\mathcal{S}'_{\infty})^m$.
More precisely, if
\begin{align*}
N\in\left(\max\left\{\beta_2-n\delta_1
-\frac{n}{p}-s,\
\beta_1+n\delta_2-\frac{n}{p}+s,\
\beta_1+\beta_2+\omega\right\},\infty
\right)\cap\mathbb{N},
\end{align*}
then there exists a positive constant $C$ such that,
for any $\phi\in\mathcal{S}_{\infty}$,
\begin{align*}
\sum_{Q\in\mathcal{D}}\left|\vec{t}_Q\right|
\left|\langle\psi_Q, \phi\rangle\right|
\leq C\left\|\vec{t}\right\|
_{\dot{a}_{p,q}^{s,\upsilon}(\mathbb{A})}
\left\|\psi\right\|_{\mathcal{S}_{N+1}}
\left\|\phi\right\|_{\mathcal{S}_{N+1}},
\end{align*}
where $\|\cdot\|_{\mathcal{S}_{N+1}}$ is as in \eqref{eq-S_N}.
\end{lemma}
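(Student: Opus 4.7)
The plan is to reduce the convergence of $\sum_{Q\in\mathcal{D}}\vec{t}_Q\psi_Q$ in $(\mathcal{S}'_\infty)^m$ to proving the absolute estimate on $\sum_Q|\vec{t}_Q||\langle\psi_Q,\phi\rangle|$ for every test function $\phi\in\mathcal{S}_\infty$; the latter at once gives both the existence of the series in $(\mathcal{S}'_\infty)^m$ and the stated inequality. I would carry out four estimation steps.

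First, I would apply Lemma \ref{lem-psi-phi} with reference cube $R=Q_{0,\mathbf{0}}$, so that $\phi_R=\phi$ and $\min\{\ell(R)/\ell(Q),\ell(Q)/\ell(R)\}=2^{-|j_Q|}$. This yields
\begin{align*}
|\langle\psi_Q,\phi\rangle|\lesssim\|\psi\|_{\mathcal{S}_{N+1}}\|\phi\|_{\mathcal{S}_{N+1}}\,2^{-|j_Q|(N+\frac{n}{2})}\left[1+\tfrac{|x_Q|}{\ell(Q)\vee1}\right]^{-(N+n)}.
\end{align*}
Second, bound $|\vec{t}_Q|\leq\|A_Q^{-1}\||A_Q\vec{t}_Q|$ and use the strong doubling hypothesis (Definition \ref{def-doub-seq}(i)), again with reference cube $Q_{0,\mathbf{0}}$, to estimate $\|A_Q^{-1}\|$ by $\max\{2^{-j_Q\beta_1/p},2^{j_Q\beta_2/p}\}\cdot[1+|x_Q|/(\ell(Q)\vee1)]^{(\beta_1+\beta_2)/p}$, up to a constant depending only on $\mathbb{A}$ and $Q_{0,\mathbf{0}}$.

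Third, I would exploit the one-point embedding obtained by testing \eqref{LA_nu} against the single dyadic cube $P=Q$: this gives, uniformly in the Besov and Triebel--Lizorkin cases,
\begin{align*}
|A_Q\vec{t}_Q|\lesssim\upsilon(Q)\,2^{-j_Q(s+\frac{n}{p}-\frac{n}{2})}\|\vec{t}\|_{\dot{a}^{s,\upsilon}_{p,q}(\mathbb{A})}.
\end{align*}
I would then dominate $\upsilon(Q_{j,k})$ by $(1+|k|)^{\omega}\,\upsilon(Q_{j,\mathbf{0}})$ via Lemma \ref{lem-grow-est}(ii), and $\upsilon(Q_{j,\mathbf{0}})$ by a power of $2^{-j}$ (using $\delta_1$ for $j\geq0$ and $\delta_2$ for $j<0$) via Lemma \ref{lem-grow-est}(i), all times $\upsilon(Q_{0,\mathbf{0}})$.

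Fourth, combine these estimates and sum over $j\in\mathbb{Z}$ and $k\in\mathbb{Z}^n$, splitting at $j=0$. For each fixed $j$, the lateral sum $\sum_k(1+|k|)^{\omega}[1+|x_{Q_{j,k}}|/(\ell(Q_{j,k})\vee1)]^{-(N+n)+(\beta_1+\beta_2)/p}$ is controlled by dichotomizing $|k|\leq2^{|j|}$ versus $|k|>2^{|j|}$, yielding a factor of $2^{|j|(n+\omega)}$ that converges precisely when $N>\beta_1+\beta_2+\omega$. The two remaining geometric series in $j\geq0$ and $j<0$ then converge exactly under the first two threshold conditions on $N$ listed in the hypothesis. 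The main obstacle is this exponent bookkeeping in Step 4: one must simultaneously balance the Schwartz decay $2^{-|j|(N+n/2)}$ against the strong doubling growth, the one-point embedding factor $\upsilon(Q)\,2^{-j(s+n/p-n/2)}$, and the lateral polynomial tail $(1+|k|)^{\omega+(\beta_1+\beta_2)/p}$, in such a way that each of the three hypothesized lower bounds on $N$ corresponds to exactly one summability threshold.
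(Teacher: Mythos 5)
Your plan is the same as the paper's: bound $|\langle\psi_Q,\phi\rangle|$ by Lemma \ref{lem-psi-phi} against the reference cube $Q_{0,\mathbf{0}}$, bound $|\vec t_Q|\le\|A_Q^{-1}\|\,|A_Q\vec t_Q|$ using strong doubling against $A_{Q_{0,\mathbf{0}}}$, extract $|A_Q\vec t_Q|\lesssim\upsilon(Q)\,|Q|^{\frac{s}{n}+\frac12-\frac1p}\|\vec t\|_{\dot a^{s,\upsilon}_{p,q}(\mathbb{A})}$ by testing the quasi-norm on the single cube $Q$, bound $\upsilon(Q)$ by the growth condition, and sum over $j$ and $k$. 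Two bookkeeping points would need repair before the argument reaches the stated thresholds on $N$. First, in Step 3 your one-point factor should be $2^{-j_Q(s+\frac n2-\frac np)}=|Q|^{\frac sn+\frac12-\frac1p}$, not $2^{-j_Q(s+\frac np-\frac n2)}$. Second, and more substantively, routing the translation growth of $\upsilon$ through Lemma \ref{lem-grow-est}(ii) as $(1+|k|)^{\omega}$ is lossy for small cubes: for $j\ge0$ the direct growth estimate against $Q_{0,\mathbf{0}}$ gives only $[1+2^{-j}|k|]^{\omega}=[1+|x_Q|/(\ell(Q)\vee1)]^{\omega}$, which is smaller than $(1+|k|)^{\omega}$ by a factor of order $2^{j\omega}$. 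Your lateral sum then produces $2^{j(n+\omega)}$ instead of $2^{jn}$, so the $j\ge0$ series would demand $N$ larger by $\omega$ than the first listed threshold. The paper avoids this by keeping all three $[1+|x_Q|/(\ell(Q)\vee1)]$-powers (from $\upsilon$, from $\|A_Q^{-1}\|$, and from the pairing) merged into a single exponent $-\theta$ with $\theta:=N+n-(\beta_1+\beta_2)-\omega>n$ and then invoking Lemma \ref{lem-sum-k}, which yields exactly $2^{jn}$ for $j\ge0$ and $O(1)$ for $j<0$; note also that your uniform factor $2^{|j|(n+\omega)}$ is wrong for $j<0$, where the lateral sum is $O(1)$ because $\ell(Q)\vee1=\ell(Q)$ there. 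With these corrections the exponent balance closes and your argument becomes the paper's.
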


\begin{proof}
By the definition of
$\|\cdot\|_{\dot{a}_{p,q}^{s,\upsilon}(\mathbb{A})}$,
we obtain, for any $Q\in\mathcal{D}$ and $\vec{t}
:=\{\vec{t}_Q\}_{Q\in\mathcal{D}}
\in\dot{a}_{p,q}^{s,\upsilon}(\mathbb{A})$,
\begin{align}\label{eq-welldefine-1}
\left|\vec{t}_Q\right|\leq
\left\|A^{-1}_Q\right\|\left|A_Q\vec{t}_Q\right|
\leq\left\|A^{-1}_Q\right\|
|Q|^{\frac{s}{n}+\frac{1}{2}-\frac{1}{p}}\upsilon(Q)
\left\|\vec{t}\right\|
_{\dot{a}_{p,q}^{s,\upsilon}(\mathbb{A})}.
\end{align}
Notice that, by the growth condition of $\upsilon$,
for any $Q\in\mathcal{D}$,
\begin{align}\label{eq-welldefine-2}
\upsilon(Q)=\upsilon(Q_{0,\mathbf{0}})
\frac{\upsilon(Q)}{\upsilon(Q_{0,\mathbf{0}})}
\lesssim\max\left\{|Q|^{\delta_1},
|Q|^{\delta_2}\right\}\left[1+\frac{|x_Q|}
{\ell(Q)\vee 1}\right]^{\omega}.
\end{align}
Using the assumption that $\mathbb{A}$ is strongly
doubling of order $(\beta_1,\beta_2)$, we conclude that,
for any $Q\in\mathcal{D}$,
\begin{align}\label{eq-welldefine-3}
\left\|A^{-1}_Q\right\|\leq\left\|
A^{-1}_{Q_{0, \mathbf{0}}}\right\|
\left\|A_{Q_{0, \mathbf{0}}}A^{-1}_Q\right\|
\lesssim\max\left\{\left[\ell(Q)\right]^{\beta_1},
\left[\ell(Q)\right]^{-\beta_2}\right\}
\left[1+\frac{|x_Q|}{\ell(Q)\vee 1}
\right]^{\beta_1+\beta_2}.
\end{align}
From \eqref{eq-phi_Q} and Lemma \ref{lem-psi-phi}, it follows that,
for any $Q\in\mathcal{D}$ and $\phi\in\mathcal{S}_{\infty}$,
\begin{align}\label{eq-welldefine-4}
\left|\langle\psi_Q, \phi\rangle\right|
=\left|\langle\psi_Q, \phi_{Q_{0, \mathbf{0}}}
\rangle\right|&\lesssim\left\|\psi\right\|_{\mathcal{S}_{N+1}}
\left\|\phi\right\|_{\mathcal{S}_{N+1}}\left[\min
\left\{[\ell(Q)]^{-1},\ell(Q)\right\}
\right]^{N+\frac{n}{2}}\\
&\quad\times\left[1+\frac{|x_Q|}{\ell(Q)\vee 1}\right]^{-(N+n)}.\nonumber
\end{align}
Let $\theta:=[N+n-(\beta_1+\beta_2)-\omega]\in(n,\infty)$.
Applying the above four estimates
\eqref{eq-welldefine-1}, \eqref{eq-welldefine-2},
\eqref{eq-welldefine-3}, and \eqref{eq-welldefine-4},
we conclude that, for any $\vec{t}
:=\{\vec{t}_Q\}_{Q\in\mathcal{D}}
\in\dot{a}_{p,q}^{s,\upsilon}(\mathbb{A})$
and $\phi\in\mathcal{S}_{\infty}$,
\begin{align}\label{eq-well-1}
\sum_{Q\in\mathcal{D}}\left|\vec{t}_Q\right|
\left|\langle\psi_Q, \phi\rangle\right|
&\lesssim\left\|\vec{t}\right\|
_{\dot{a}_{p,q}^{s,\upsilon}(\mathbb{A})}
\left\|\psi\right\|_{\mathcal{S}_{N+1}}
\left\|\phi\right\|_{\mathcal{S}_{N+1}}\\
&\quad\times\sum_{Q\in\mathcal{D}}
\left[1+\frac{|x_Q|}{\ell(Q)\vee 1}
\right]^{-\theta}
\begin{cases}
[\ell(Q)]^{s+n-\frac{n}{p}+N-\beta_2+n\delta_1}
& \text{if } \ell(Q)\leq1,\\
[\ell(Q)]^{s-\frac{n}{p}-N+\beta_1+n\delta_2}
& \text{if }
\ell(Q)>1.\nonumber
\end{cases}
\end{align}
Denote the summation on the right-hand side
of \eqref{eq-well-1} by $\Omega$. By the fact
$\mathcal{D}=\{Q_{j,k}:\,j\in\mathbb{Z},k\in{\mathbb{Z}}^n\}$
and Lemma \ref{lem-sum-k} combined with
$x=0$ and $\theta\in(n,\infty)$, we find that
\begin{align}\label{eq-well-2}
&\Omega=\sum_{j\in\mathbb{Z}}\sum_{k\in{\mathbb{Z}}^n}
\left[1+\frac{|x_{Q_{j,k}}|}{\ell(Q_{j,k})\vee 1}
\right]^{-\theta}
\begin{cases}
\left[\ell\left(Q_{j,k}\right)\right]^{s+n-\frac{n}{p}+N-\beta_2+n\delta_1}
& \text{if } j\geq0,\\
\left[\ell\left(Q_{j,k}\right)\right]^{s-\frac{n}{p}-N+\beta_1+n\delta_2}
& \text{if }
j<0\\
\end{cases}\\
&\quad=\sum_{j=0}^{\infty}
2^{-j(s+n-\frac{n}{p}+N-\beta_2+n\delta_1)}
\sum_{k\in{\mathbb{Z}}^n}\left(1+2^{-j}|k|
\right)^{-\theta}+\sum_{j=-\infty}^{-1}
2^{-j(s-\frac{n}{p}
-N+\beta_1+n\delta_2)}\sum_{k\in{\mathbb{Z}}^n}
(1+|k|)^{-\theta}\nonumber\\
&\quad\sim\sum_{j=0}^{\infty}2^{-j(s-\frac{n}{p}+N
-\beta_2+n\delta_1)}+\sum_{j=-\infty}^{-1}
2^{-j(s-\frac{n}{p}-N+\beta_1+n\delta_2)}\sim1,\nonumber
\end{align}
where the last equivalence follows from the choice of $N$
in the present lemma, which guarantees that both two summations
in the penultimate equivalence converge.
From \eqref{eq-well-1} and \eqref{eq-well-2},
we deduce that, for any $\vec{t}
:=\{\vec{t}_Q\}_{Q\in\mathcal{D}}
\in\dot{a}_{p,q}^{s,\upsilon}(\mathbb{A})$
and $\phi\in\mathcal{S}_{\infty}$,
\begin{align*}
\sum_{Q\in\mathcal{D}}\left|\vec{t}_Q\right|
\left|\langle\psi_Q, \phi\rangle\right|
\lesssim\left\|\vec{t}\right\|
_{\dot{a}_{p,q}^{s,\upsilon}(\mathbb{A})}
\left\|\psi\right\|_{\mathcal{S}_{N+1}}
\left\|\phi\right\|_{\mathcal{S}_{N+1}}.
\end{align*}
This finishes the proof of Lemma \ref{well-define}.
\end{proof}

We also need the following Calder\'{o}n reproducing
formulae which can be found in \cite[Lemma 2.1]{yy10}.

\begin{lemma}\label{lem-Cdreproform}
Let $\varphi, \psi\in\mathcal{S}$ satisfy \eqref{cond3}
such that both $\operatorname{supp}\widehat{\varphi}$
and $\operatorname{supp}\widehat{\psi}$
are compact and bounded away from the origin.
Then, for any $f\in\mathcal{S}_\infty$,
\begin{align}\label{eq-Cdreproform}
f=\sum_{j\in\mathbb{Z}}2^{-jn}\sum_{k\in\mathbb{Z}^n}
\left(\widetilde{\varphi}_j*f\right)\left(2^{-j}k\right)
\psi_j\left(\cdot-2^{-j}k\right)
=\sum_{Q\in\mathcal{D}}\left\langle f,\varphi_Q\right\rangle\psi_Q
\end{align}
in $\mathcal{S}_\infty$, where $\widetilde{\varphi}(x):=\overline{\varphi(-x)}$
for any $x\in\mathbb{R}^n$. Moreover, for any $f\in\mathcal{S}'_\infty$,
\eqref{eq-Cdreproform} also holds in $\mathcal{S}'_\infty$.
\end{lemma}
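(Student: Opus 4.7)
The plan is to reduce the lemma to two standard ingredients: the continuous Calderón reproducing formula in $\mathcal{S}_\infty$ (resp.\ $\mathcal{S}'_\infty$), and a scale-by-scale discretization that replaces each convolution by a Riemann sum on the dyadic grid of step $2^{-j}$. I begin by checking that the two series in \eqref{eq-Cdreproform} are literally equal: for $Q=Q_{j,k}$ one has $|Q|=2^{-jn}$, $x_Q=2^{-j}k$, and, by \eqref{eq-phi_Q} together with the definition $\langle f,\varphi_Q\rangle=\int f\,\overline{\varphi_Q}$, the identity
\begin{align*}
\langle f,\varphi_Q\rangle\,\psi_Q(\cdot)
= |Q|^{\frac12}\bigl(\widetilde{\varphi}_j*f\bigr)(x_Q)\cdot|Q|^{\frac12}\psi_j(\cdot-x_Q)
= 2^{-jn}\bigl(\widetilde{\varphi}_j*f\bigr)\bigl(2^{-j}k\bigr)\psi_j\bigl(\cdot-2^{-j}k\bigr),
\end{align*}
so the $Q$-sum is exactly the iterated $(j,k)$-sum. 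Thus it suffices to establish the first equality in \eqref{eq-Cdreproform}.

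Next I carry out the continuous reproducing step. Applying the Fourier transform to \eqref{cond3} and using $\widehat{\widetilde{\varphi}_j}(\xi)=\overline{\widehat{\varphi}(2^{-j}\xi)}$, one formally writes, for any $\xi\in\mathbb{R}^n\setminus\{\mathbf{0}\}$,
\begin{align*}
\widehat{f}(\xi)=\sum_{j\in\mathbb{Z}}\overline{\widehat{\varphi}(2^j\xi)}\widehat{\psi}(2^j\xi)\widehat{f}(\xi),
\end{align*}
which inverts to $f=\sum_{j\in\mathbb{Z}}\widetilde{\varphi}_j*\psi_j*f$. For $f\in\mathcal{S}_\infty$, convergence in $\mathcal{S}_\infty$ is justified by the standard seminorm estimates: the vanishing of all moments of $f$ forces $\widetilde{\varphi}_j*f$ to decay like $2^{-jN}$ as $j\to+\infty$ (Taylor expansion of $\widetilde{\varphi}_j$ paired with moment cancellation of $f$) and like $2^{jN}$ times a fast-decay factor as $j\to-\infty$ (moment cancellation of $\widetilde{\varphi}$ against the smooth $f$), for every $N\in\mathbb{N}$, with analogous control on derivatives. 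This gives absolute convergence of the partial sums in every $\mathcal{S}$-seminorm. The $\mathcal{S}'_\infty$ version then follows by duality: for $\phi\in\mathcal{S}_\infty$, apply the already-proved identity to $\phi$ and pair with $f$.

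The third and decisive step is the scale-$j$ discretization: for each $j\in\mathbb{Z}$, I claim
\begin{align*}
\bigl(\widetilde{\varphi}_j*\psi_j*f\bigr)(x)
=2^{-jn}\sum_{k\in\mathbb{Z}^n}\bigl(\widetilde{\varphi}_j*f\bigr)\bigl(2^{-j}k\bigr)\,\psi_j\bigl(x-2^{-j}k\bigr).
\end{align*}
On the Fourier side, setting $g_j:=\widetilde{\varphi}_j*f$, Poisson summation gives $2^{-jn}\sum_k g_j(2^{-j}k)e^{-i2^{-j}k\cdot\xi}=\sum_{\ell\in\mathbb{Z}^n}\widehat{g_j}(\xi+2^{j+1}\pi\ell)$; the hypothesis \eqref{cond1} forces $\operatorname{supp}\widehat{g_j}\subset\{|\xi|\le 2^{j+1}\}$ and $\operatorname{supp}\widehat{\psi_j}\subset\{|\xi|\le 2^{j+1}\}$, so on the support of $\widehat{\psi_j}$ only the $\ell=\mathbf{0}$ term survives and the displayed identity follows by multiplication with $\widehat{\psi_j}$ and inverse Fourier transform.

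The main obstacle is the convergence of the discretized double sum in $\mathcal{S}_\infty$ (and of the partial $j$-sums in the continuous formula). I will handle this by combining the bounds above with the estimate $|(\widetilde{\varphi}_j*f)(2^{-j}k)|\lesssim_{L,N}2^{-j|L|}(1+|2^{-j}k|)^{-N}\cdot \min\{2^{jM},2^{-jM}\}$ for arbitrary $L,N,M$, which comes from the vanishing moments of $\widetilde{\varphi}$ and of $f$. Summing $2^{-jn}\sum_k$ of these bounds against the Schwartz decay of $\psi_j(x-2^{-j}k)$ and its derivatives produces geometric factors in $j$ together with convergent $k$-sums, giving absolute convergence in $\mathcal{S}$ and, since both sides have vanishing moments, in $\mathcal{S}_\infty$ as well. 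The extension to $f\in\mathcal{S}'_\infty$ is then immediate: pair both sides with an arbitrary $\phi\in\mathcal{S}_\infty$ and apply the already-proved identity to $\phi$ in the form $\overline{\phi}=\sum_Q\langle\overline{\phi},\psi_Q\rangle\varphi_Q$, yielding $\langle f,\overline{\phi}\rangle=\sum_Q\langle f,\varphi_Q\rangle\langle\psi_Q,\overline{\phi}\rangle$, which is the desired distributional formula.
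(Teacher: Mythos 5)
The paper does not prove this lemma; it quotes it verbatim from \cite[Lemma 2.1]{yy10} (ultimately from Frazier--Jawerth/\cite{fjw91}). Your argument is precisely the classical proof underlying that citation: reduce the dyadic sum to the iterated $(j,k)$-sum via $\langle f,\varphi_Q\rangle=|Q|^{1/2}(\widetilde{\varphi}_{j_Q}*f)(x_Q)$, establish the continuous identity $f=\sum_j\widetilde{\varphi}_j*\psi_j*f$ from \eqref{cond3}, discretize each scale by Poisson summation using the band-limitation of $\widetilde{\varphi}_j*f$ and $\psi_j$ (the aliasing terms vanish because $2^{j+1}\pi-2^{j+1}>2^{j+1}$), and pass to $\mathcal{S}'_\infty$ by duality. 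This is correct in structure and in all essential estimates.

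One small slip worth fixing: in your convergence discussion the two decay mechanisms for $\widetilde{\varphi}_j*f$ are attributed to the wrong regimes of $j$. For $j\to+\infty$ one Taylor-expands $f(x-2^{-j}z)$ and uses the vanishing moments of $\widetilde{\varphi}$ (giving the factor $2^{-j(N+1)}$ from the remainder); for $j\to-\infty$ one Taylor-expands $\widetilde{\varphi}_j(x-\cdot)$ and uses the vanishing moments of $f\in\mathcal{S}_\infty$ (giving $2^{j(n+N+1)}$ from the derivatives of $\widetilde{\varphi}_j$). Your parentheticals state the opposite pairing in each case; the conclusion and the rest of the argument are unaffected.
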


We next recall the estimate established in \cite[Lemma 2.2]{yy08}.
\begin{lemma}\label{lem-yy08}
For any $M \in \mathbb{N}$, there exists a positive constant $C$,
depending on $M$ and $n$, such that, for any $\varphi$,
$\psi\in\mathcal{S}_{\infty}$, $i, j\in\mathbb{Z}$, and $x\in\mathbb{R}^n$,
\begin{align*}
\left|\varphi_j * \psi_i(x)\right|\leq
C\|\varphi\|_{\mathcal{S}_{M+1}}
\|\psi\|_{\mathcal{\mathcal{S}}_{M+1}}
\frac{2^{-(i\vee j) M}}{[2^{-(i\wedge j)}+|x|]^{n+M}},
\end{align*}
where $\|\cdot\|_{\mathcal{\mathcal{S}}_{M+1}}$ is as in \eqref{eq-S_N}
and $\varphi_j(x):=2^{jn} \varphi(2^j x)$ and
$\psi_i(x):=2^{i n}\psi(2^i x)$ for any $x\in\mathbb{R}^n$.
\end{lemma}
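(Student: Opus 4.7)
The plan is to prove this standard almost-orthogonality estimate by exploiting the vanishing moments of one of the functions combined with the Schwartz decay of both, converting the convolution into a Taylor remainder. Specifically, by the commutativity $\varphi_j*\psi_i=\psi_i*\varphi_j$ and the fact that $\varphi,\psi$ play symmetric roles (both in $\mathcal{S}_\infty$), I can first reduce to the case $i\leq j$, so that $i\wedge j=i$ and $i\vee j=j$. The target decay factor $2^{-jM}$ must then come from ``stealing'' $M$ derivatives off $\psi_i$ using the vanishing moments of $\varphi$.

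The key step is to fix $x\in\mathbb{R}^n$ and let $T_{M-1}(y)$ denote the order-$(M-1)$ Taylor polynomial of $y\mapsto \psi_i(x-y)$ around $y=\mathbf{0}$. Since $\varphi\in\mathcal{S}_\infty$ annihilates every polynomial, $\int\varphi_j(y)T_{M-1}(y)\,dy=0$, so
\begin{align*}
\varphi_j*\psi_i(x)=\int_{\mathbb{R}^n}\varphi_j(y)\bigl[\psi_i(x-y)-T_{M-1}(y)\bigr]\,dy,
\end{align*}
and Taylor's theorem with integral remainder gives
\begin{align*}
\bigl|\psi_i(x-y)-T_{M-1}(y)\bigr|
\lesssim |y|^M\sup_{|\gamma|=M,\,\theta\in[0,1]}|\partial^{\gamma}\psi_i(x-\theta y)|.
\end{align*}
From the definition of $\|\cdot\|_{\mathcal{S}_{M+1}}$ in \eqref{eq-S_N} and the scaling $\psi_i(w)=2^{in}\psi(2^iw)$, one has, for $|\gamma|\leq M+1$,
\begin{align*}
|\partial^{\gamma}\psi_i(w)|
\leq \|\psi\|_{\mathcal{S}_{M+1}}\frac{2^{i(n+|\gamma|)}}{(1+2^i|w|)^{n+M+1+|\gamma|}},
\qquad
|\varphi_j(y)|
\leq \|\varphi\|_{\mathcal{S}_{M+1}}\frac{2^{jn}}{(1+2^j|y|)^{n+M+1}}.
\end{align*}

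I then split the $y$-integral at the radius $\rho:=(2^{-i}+|x|)/2$. On the inner region $\{|y|\leq\rho\}$, a short case analysis ($|x|\geq 2\cdot 2^{-i}$ versus $|x|<2\cdot 2^{-i}$) yields $(1+2^i|x-\theta y|)^{-(n+M)}\lesssim 2^{-i(n+M)}(2^{-i}+|x|)^{-(n+M)}$ uniformly in $\theta\in[0,1]$, so the inner contribution is bounded by
\begin{align*}
C\|\varphi\|_{\mathcal{S}_{M+1}}\|\psi\|_{\mathcal{S}_{M+1}}\frac{1}{(2^{-i}+|x|)^{n+M}}
\int_{\mathbb{R}^n}|y|^M\frac{2^{jn}\,dy}{(1+2^j|y|)^{n+M+1}},
\end{align*}
and the change of variables $z=2^jy$ converts the last integral into the desired $2^{-jM}$. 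On the outer region $\{|y|>\rho\}$ the Taylor cancellation is wasteful; instead I bound $|\psi_i(x-y)-T_{M-1}(y)|$ by $|\psi_i(x-y)|+|T_{M-1}(y)|$ and use directly the Schwartz decay of each, together with the inequality $|y|\gtrsim 2^{-i}+|x|$ on this region. Since $j\geq i$, the denominator $(1+2^j|y|)^{n+M+1}$ supplies both a factor $2^{-jM}$ and the spatial factor $|y|^{-(n+M)}\lesssim (2^{-i}+|x|)^{-(n+M)}$, absorbing the polynomial $T_{M-1}$ term as well.

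The main obstacle will be the bookkeeping in the inner-region casework, ensuring that the two regimes ``$|x|$ small compared with $2^{-i}$'' and ``$|x|$ large'' each produce the single denominator $(2^{-i}+|x|)^{n+M}$ with the correct power of $2^i$, while the integral over $y$ simultaneously delivers the sharp gain $2^{-jM}$ and no spurious logarithms; a secondary technicality is handling the Taylor polynomial $T_{M-1}$ on the outer region without losing the exponent $n+M$, which is done by estimating each $|\partial^{\gamma}\psi_i(x)|$ for $|\gamma|<M$ via the $\mathcal{S}_{M+1}$-seminorm and using $|y|\gtrsim 2^{-i}+|x|\geq |x|$ to trade any surviving powers of $|x|$ for powers of $|y|$ against $\varphi_j$'s decay.
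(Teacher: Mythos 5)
Your argument is correct: the reduction to $i\le j$ by symmetry, the subtraction of the order-$(M-1)$ Taylor polynomial of $\psi_i(x-\cdot)$ using the vanishing moments of $\varphi$, and the split of the $y$-integral at $\rho=(2^{-i}+|x|)/2$ all go through, with every implicit constant depending only on $M$ and $n$ (the inner-region bound $1+2^i|x-\theta y|\ge 2^i(2^{-i}+|x|)/2$ in fact needs no case analysis). The paper itself gives no proof and simply cites \cite[Lemma 2.2]{yy08}, whose proof is exactly this standard vanishing-moments/Taylor-remainder argument, so your route coincides with the intended one.
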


We now give the proof of Theorem \ref{thm-phitansaverMWBTL}.

\begin{proof}[Proof of Theorem \ref{thm-phitansaverMWBTL}]
Let $\mathbb{A}:=\{A_Q\}_{Q\in\mathcal{D}}$
be strongly doubling of order $(\beta_1,\beta_2)$.
To prove (i), we first establish the boundedness of $S_{\varphi}:\
\dot{A}_{p,q}^{s,\upsilon}(\mathbb{A}, \widetilde{\varphi})
\to\dot{a}_{p,q}^{s,\upsilon}(\mathbb{A})$.
Using the definition of $S_{\varphi}$,
\cite[Theorem 2.3.20]{gra14a}, and \eqref{supf},
we conclude that, for any $Q\in\mathcal{D}$ and
$\vec{f}\in\dot{A}_{p,q}^{s,\upsilon}
(\mathbb{A},\widetilde{\varphi})$,
\begin{align*}
\left|A_Q\left(S_{\varphi}\vec{f}\right)_Q\right|=
\left|A_Q\left\langle\vec{f},\varphi_Q\right
\rangle\right|=|Q|^{\frac{1}{2}}\left|A_Q
\left(\widetilde{\varphi}_{j_Q}*\vec{f}\right)(x_Q)
\right|\leq\sup_{\mathbb{A},
\widetilde{\varphi}, Q}\left(\vec{f}\right),
\end{align*}
From this, \eqref{eq-a(A)-a},
and Lemma \ref{A(A)=a}, it follows that,
for any $\vec{f}\in\dot{A}_{p,q}^{s,\upsilon}
(\mathbb{A},\widetilde{\varphi})$,
\begin{align*}
\left\|\left\{S_{\varphi}\vec{f}\right\}_{Q\in\mathcal{D}}\right\|
_{\dot{a}_{p,q}^{s,\upsilon}(\mathbb{A})}
=\left\|\left\{\left|A_Q\left(S_{\varphi}\vec{f}\right)_Q
\right|\right\}_{Q\in\mathcal{D}}\right\|
_{\dot{a}_{p,q}^{s,\upsilon}}\leq\left\|\sup_{\mathbb{A},
\widetilde{\varphi}}\left(\vec{f}\right)\right\|
_{\dot{a}_{p,q}^{s,\upsilon}}\sim
\left\|\vec{f}\right\|_{\dot{A}_{p,q}^{s,\upsilon}
(\mathbb{A}, \widetilde{\varphi})},
\end{align*}
which implies the boundedness of $S_{\varphi}$.

Next, we show the boundedness of $T_{\psi}:\
\dot{a}_{p,q}^{s,\upsilon}(\mathbb{A})\to
\dot{A}_{p,q}^{s,\upsilon}(\mathbb{A}, \varphi)$.
To this end,  by the definition of $T_\psi$ and
Lemma \ref{well-define}, we find that,
for any $\vec{t}:=\{\vec{t}_R\}_{R\in\mathcal{D}}
\in\dot{a}_{p,q}^{s,\upsilon}(\mathbb{A})$,
$T_\psi\vec{t}=\sum_{R\in\mathcal{D}}\vec{t}_R\psi_{R}
\in(\mathcal{S}'_{\infty})^m$. Applying this, \eqref{eq-phi_Q},
and the fact that $\varphi, \psi\in\mathcal{S}$
satisfy \eqref{cond1}, we conclude that,
for any $j\in\mathbb{Z}$ and $\vec{t}
:=\{\vec{t}_R\}_{R\in\mathcal{D}}
\in\dot{a}_{p,q}^{s,\upsilon}(\mathbb{A})$,
\begin{align}\label{eq-g-j}
g_j:&=\sum_{Q\in\mathcal{D}_j}\mathbf{1}_Q
\left|A_Q\left[\varphi_j*\left(T_\psi\vec{t}
\right)\right]\right|\\
&=\sum_{Q\in\mathcal{D}_j}\mathbf{1}_Q
\left|A_Q\left(\varphi_j*\sum_{R\in\mathcal{D}}
\vec{t}_R\psi_{R}\right)\right|
=\sum_{Q\in\mathcal{D}_j}\mathbf{1}_Q\left|\sum_{i=j-1}^{j+1}
\sum_{R\in\mathcal{D}_i} A_Q\vec{t}_R
\left(\varphi_j*\psi_R\right)\right|\nonumber\\
&\leq\sum_{Q\in\mathcal{D}_j}\mathbf{1}_Q\sum_{i=j-1}^{j+1}
\sum_{R\in\mathcal{D}_i}\left\|A_Q A_R^{-1}\right\|
\left|A_R\vec{t}_R\right|\left|\left(\varphi_j *\psi_R\right)\right|\nonumber.
\end{align}
From the assumption that $\mathbb{A}$
is strongly doubling of order $(\beta_1,\beta_2)$
[and hence satisfies \eqref{eq-st-double}],
we infer that, for any $j\in\mathbb{Z}$,
$i\in\{j-1,j,j+1\}$, $Q\in\mathcal{D}_j$,
and $R\in\mathcal{D}_i$, $\ell(Q)\sim\ell(R)$ and hence
\begin{align}\label{eq-AQAr}
\left\|A_Q A_R^{-1}\right\|
&\lesssim\max\left\{
\left[\frac{\ell(R)}{\ell(Q)}\right]^{\beta_1},
\left[\frac{\ell(Q)}{\ell(R)}\right]^{\beta_2}\right\}
\left[1+\frac{|x_Q-x_R|}{\ell(R)\vee\ell(Q)}\right]^{\beta_1+\beta_2}\\
&\sim\left\{1+[\ell(R)]^{-1}|x_Q-x_R|\right\}^{\beta_1+\beta_2}.\nonumber
\end{align}
Let $M\in(\frac{n}{1\wedge\Gamma_{p,q}}-n+\omega+\beta_1+\beta_2,
\infty)\cap\mathbb{N}$.
By \eqref{eq-phi_Q} and Lemma \ref{lem-yy08},
we find that, for any $j\in\mathbb{Z}$, $i\in\{j-1,j,j+1\}$,
$R\in\mathcal{D}_i$, and $x\in\mathbb{R}^n$,
\begin{align}\label{eq-phi-R}
\left|\left(\varphi_j * \psi_R\right)(x)\right|
&=|R|^{\frac{1}{2}}\left|\left(\varphi_j * \psi_i\right)\left(x-x_R\right)\right|
\lesssim|R|^{\frac{1}{2}}\left\|\varphi\right\|_{\mathcal{S}_{M+1}}
\left\|\psi\right\|_{\mathcal{S}_{M+1}}\frac{ 2^{-(i \vee j) M}}
{[2^{-(i \wedge j)}+|x-x_R|]^{n+M}}\\
&\sim|R|^{-\frac{1}{2}}\frac{1}
{\{1+[\ell(R)]^{-1}|x-x_R|\}^{n+M}}\nonumber.
\end{align}
Let $\eta:=n+M-\beta_1-\beta_2$. Combining \eqref{eq-AQAr}
and \eqref{eq-phi-R} with \eqref{eq-g-j} and then applying
Lemma \ref{lem-xy-QR-P}(i) together with $y=x_R$
and with the fact that $\ell(Q)\sim \ell(R)$,
we obtain, for any $j\in\mathbb{Z}$,
$\vec{t}:=\{\vec{t}_R\}_{R\in\mathcal{D}}
\in\dot{a}_{p,q}^{s,\upsilon}(\mathbb{A})$, and $x\in\mathbb{R}^n$,
\begin{align*}
g_j(x)&\lesssim\sum_{Q\in\mathcal{D}_j}\mathbf{1}_Q(x)
\sum_{i=j-1}^{j+1} 2^{\frac{in}{2}}\sum_{R\in\mathcal{D}_i}
\frac{\{1+[\ell(R)]^{-1}|x_Q-x_R|
\}^{\beta_1+\beta_2}}{\{1+[\ell(R)]^{-1}|x-x_R|\}^{n+M}}
\left|A_R\vec{t}_R\right|\\
&\sim\sum_{Q\in\mathcal{D}_j}\mathbf{1}_Q(x)
\sum_{i=j-1}^{j+1}2^{\frac{in}{2}}
\sum_{R\in\mathcal{D}_i}\frac{|A_R\vec{t}_R|}
{\{1+[\ell(R)]^{-1}|x-x_R|\}^{\eta}},\nonumber
\end{align*}	
which, together with Lemma \ref{lem-3P-sum1}(i),
Lemma \ref{lem-xy-QR-P}(ii) with $y=x_R$, and \eqref{eq-lamda*},
further implies that, for any $x\in\mathbb{R}^n$ and $i\in\mathbb{Z}$,
there exists a unique $Q(x,i)\in\mathcal{D}_i$ such that $x\in Q(x,i)$
and hence $1+[\ell(R)]^{-1}|x-x_R|\sim1+[\ell(R)]^{-1}|x_{Q(x,i)}-x_R|$
for any $R\in\mathcal{D}_i$ and consequently
\begin{align}\label{eq-g-lambda}
g_j(x)&\lesssim\sum_{i=j-1}^{j+1} 2^{\frac{in}{2}}
\sum_{R\in\mathcal{D}_i}\frac{|A_R\vec{t}_R|}
{\{1+[\ell(R)]^{-1}|x_{Q(x,i)}-x_R|\}^{\eta}}
=\sum_{i=j-1}^{j+1} 2^{\frac{in}{2}}
|t|^{*}_{1, \eta,Q(x,i)}\\
&=\sum_{i=j-1}^{j+1}\sum_{Q\in\mathcal{D}_{i}}
\widetilde{\mathbf{1}}_Q(x)
|t|^{*}_{1, \eta,Q}
=\sum_{i=-1}^{1}\sum_{Q\in\mathcal{D}_{j+i}}
\widetilde{\mathbf{1}}_Q(x)
|t|^{*}_{1, \eta,Q},\nonumber
\end{align}
where $|t|:=\{|A_R\vec{t}_R|\}_{R\in\mathcal{D}}$
and, for any $Q\in\mathcal{D}$,
$|t|^{*}_{1, \eta,Q}$ is as in \eqref{eq-lamda*}.
For any $\vec{t}:=\{\vec{t}_R\}_{R\in\mathcal{D}}
\in\dot{a}_{p,q}^{s,\upsilon}(\mathbb{A})$,
using \eqref{eq-g-lambda}, the definitions
of $\|\cdot\|_{L\dot{A}_{p, q}^{\upsilon}}, |t|$,
and $\|\cdot\|_{\dot{a}_{p,q}^{s,\upsilon}}$,
the quasi-triangle inequality of
$\|\cdot\|_{L\dot{A}_{p, q}^{\upsilon}}$,
the growth condition of $\upsilon$, Proposition \ref{prop-dct-gh}
with the assumption $\eta\in(\frac{n}
{1\wedge\Gamma_{p,q}}+\omega, \infty)$ and with $t$
replaced by $|t|$, and \eqref{eq-a(A)-a},
we conclude that
\begin{align*}
\left\|T_\psi\vec{t}\right\|
_{\dot{A}_{p,q}^{s,\upsilon}(\mathbb{A}, \varphi)}
:&=\left\|\left\{2^{js}g_j\right\}_{j\in\mathbb{Z}}
\right\|_{L\dot{A}_{p, q}^{\upsilon}}
\lesssim\left\|\left\{2^{js}\sum_{i=-1}^{1}
\sum_{Q\in\mathcal{D}_{j+i}}\widetilde{\mathbf{1}}_Q
|t|^{*}_{1, \eta,Q}\right\}_{j\in\mathbb{Z}}
\right\|_{L\dot{A}_{p, q}^{\upsilon}}\\
&\lesssim\sum_{i=-1}^{1}\left\|\left\{2^{js}
\sum_{Q\in\mathcal{D}_{j+i}}\widetilde{\mathbf{1}}_Q
|t|^{*}_{1, \eta, Q}\right\}_{j\in\mathbb{Z}}
\right\|_{L\dot{A}_{p, q}^{\upsilon}}\sim\left\|\left\{2^{js}
\sum_{Q\in\mathcal{D}_{j}}\widetilde{\mathbf{1}}_Q
|t|^{*}_{1, \eta,Q}\right\}_{j\in\mathbb{Z}}\right\|_{L\dot{A}_{p, q}^{\upsilon}}\\
&=\left\|\,\left|t\right|^*_{1, \eta}\,\right\|_{\dot{a}_{p,q}^{s,\upsilon}}
\sim\left\|\,\left|t\right|\,\right\|_{\dot{a}_{p,q}^{s,\upsilon}}
=\left\|\left\{\left|A_Q\vec{t}_Q\right|\right\}_{Q\in\mathcal{D}}\right\|_{\dot{a}_{p,q}^{s,\upsilon}}
=\left\|\vec{t}\right\|_{\dot{a}_{p,q}^{s,\upsilon}(\mathbb{A})},			
\end{align*}
which implies the boundedness of $T_{\psi}$.
Moreover, if $\varphi, \psi$ further satisfy \eqref{cond3},
from Lemma \ref{lem-Cdreproform}, it follows that
$T_\psi \circ S_{\varphi}$ is the identity on
$\mathcal{S}'_\infty$. Applying the just proved boundedness of
both $S_{\varphi}$ and $T_\psi$, we obtain the identity
$T_\psi \circ S_{\varphi}$ is bounded from
$\dot{A}_{p,q}^{s,\upsilon}(\mathbb{A}, \widetilde{\varphi})$
to $\dot{A}_{p,q}^{s,\upsilon}(\mathbb{A}, \varphi)$.
By the symmetry, the identity
$T_\psi \circ S_{\varphi}$ is also bounded from
$\dot{A}_{p,q}^{s,\upsilon}(\mathbb{A}, \varphi)$
to $\dot{A}_{p,q}^{s,\upsilon}(\mathbb{A}, \widetilde{\varphi})$.
Based on these, we conclude that $\dot{A}_{p,q}^{s,\upsilon}
(\mathbb{A}, \widetilde{\varphi})
=\dot{A}_{p,q}^{s,\upsilon}(\mathbb{A}, \varphi)$
with equivalent quasi-norms, which completes the proof of (i).

Finally, we prove (ii). For any $i\in\{1, 2\}$, there exists $\psi^{(i)}\in\mathcal{S}$ satisfying \eqref{cond1}
such that $\varphi^{(i)}, \psi^{(i)}$
satisfy \eqref{cond3} (see, for example, \cite[Lemma (6.9)]{fjw91}). From this, Lemma \ref{lem-Cdreproform},
and the just proved (i), we deduce that,
for any $\vec{f}\in({\mathcal{S}}'_{\infty})^m$,
\begin{align*}
\left\|\vec{f}\right\|_{\dot{A}_{p,q}^{s,\upsilon}
(\mathbb{A}, \varphi^{(1)})}=\left\|T_{\psi^{(2)}}
\circ S_{\varphi^{(2)}}\vec{f}\right\|
_{\dot{A}_{p,q}^{s,\upsilon}
(\mathbb{A}, \varphi^{(1)})}\lesssim\left\|
S_{\varphi^{(2)}}\vec{f}\right\|
_{\dot{a}_{p,q}^{s,\upsilon}
(\mathbb{A})}\lesssim\left\|\vec{f}\right\|
_{\dot{A}_{p,q}^{s,\upsilon}
(\mathbb{A}, \widetilde{\varphi^{(2)}})}
\sim\left\|\vec{f}\right\|
_{\dot{A}_{p,q}^{s,\upsilon}
(\mathbb{A}, \varphi^{(2)})}.
\end{align*}
By the symmetry, we also obtain
$\|\vec{f}\|_{\dot{A}_{p,q}^{s,\upsilon}
(\mathbb{A}, \varphi^{(2)})}\lesssim
\|\vec{f}\|_{\dot{A}_{p,q}^{s,\upsilon}
(\mathbb{A}, \varphi^{(1)})}$.
This finishes the proof of (ii) and hence
Theorem \ref{thm-phitansaverMWBTL}.
\end{proof}

The following conclusion can be proved by using
Theorem \ref{thm-phitansaverMWBTL}
and a standard argument (see, for example,
\cite[Proposition 3.13]{syy24}); we omit the details.

\begin{corollary}\label{cor-completeA(A)}
Let $A\in\{B, F\}$, $s\in\mathbb{R}$, and
$p, q\in(0,\infty]$ ($p<\infty$ if $A=F$).
Assume that $\delta_1,\delta_2,\omega$ satisfy \eqref{eq-delta1<0} and
$\upsilon\in\mathcal{G}(\delta_1, \delta_2; \omega)$.
Suppose that $\beta_1, \beta_2\in[0,\infty)$ and
$\mathbb{A}:=\{A_Q\}_{Q\in\mathcal{D}}$
is strongly doubling of order $(\beta_1,\beta_2)$.
Then $\dot{A}_{p,q}^{s,\upsilon}(\mathbb{A})$
equipped with $\|\cdot\|_{\dot{A}_{p,q}^{s,\upsilon}(\mathbb{A})}$
is a complete quasi-normed space.	
\end{corollary}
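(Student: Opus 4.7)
The plan is to verify the quasi-norm axioms first and then establish completeness by lifting it from the sequence space $\dot{a}_{p,q}^{s,\upsilon}(\mathbb{A})$ via the $\varphi$-transform characterization in Theorem \ref{thm-phitansaverMWBTL}. Pick $\varphi,\psi\in\mathcal{S}$ satisfying both \eqref{cond1} and \eqref{cond3}, which exist by \cite[Lemma (6.9)]{fjw91}.

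First I will check that $\|\cdot\|_{\dot{A}_{p,q}^{s,\upsilon}(\mathbb{A})}$ is a quasi-norm on $\dot{A}_{p,q}^{s,\upsilon}(\mathbb{A})$. Positive homogeneity is immediate, and the quasi-triangle inequality follows at once from the quasi-triangle inequality of $\|\cdot\|_{L\dot{A}_{p,q}^{\upsilon}}$, which in turn is inherited from the quasi-triangle inequalities of $\ell^{q}(L^{p})$ or $L^{p}(\ell^{q})$. For the separation axiom, if $\|\vec{f}\|_{\dot{A}_{p,q}^{s,\upsilon}(\mathbb{A})}=0$, then $\mathbb{A}_{j}(\varphi_{j}*\vec{f})=0$ almost everywhere for every $j\in\mathbb{Z}$; since each $A_{Q}$ is positive definite this forces $\varphi_{j}*\vec{f}=0$ for every $j$, and then the Calder\'on reproducing formula in Lemma \ref{lem-Cdreproform} yields $\vec{f}=\mathbf{0}$ in $(\mathcal{S}'_{\infty})^{m}$.

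Next I will establish completeness of the sequence space $\dot{a}_{p,q}^{s,\upsilon}(\mathbb{A})$. Given a Cauchy sequence $\{\vec{t}^{(k)}\}_{k\in\mathbb{N}}$ in $\dot{a}_{p,q}^{s,\upsilon}(\mathbb{A})$, for each fixed $Q\in\mathcal{D}$ I test with the single-pointed sequence argument underlying Lemma \ref{lem-onepoint} applied to $\mathbb{A}$ (so that the Remark \ref{rmk-onepoint} analogue holds, using Lemma \ref{lem-grow-est}(i) to see that $\upsilon$ is almost increasing); this gives $|A_{Q}(\vec{t}^{(k)}_{Q}-\vec{t}^{(l)}_{Q})|\lesssim\upsilon(Q)\,|Q|^{-s/n-1/2+1/p}\|\vec{t}^{(k)}-\vec{t}^{(l)}\|_{\dot{a}_{p,q}^{s,\upsilon}(\mathbb{A})}$, so $\{\vec{t}^{(k)}_{Q}\}_{k\in\mathbb{N}}$ is Cauchy in $\mathbb{C}^{m}$. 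Let $\vec{t}_{Q}$ be its limit and set $\vec{t}:=\{\vec{t}_{Q}\}_{Q\in\mathcal{D}}$. A Fatou-type argument, which is legitimate because $\|\cdot\|_{L\dot{A}_{p,q}^{\upsilon}}$ is defined as a supremum of countable integrals of nonnegative quantities (and one can apply the standard Fatou lemma inside $L^{p}$ or $L^{p}(\ell^{q})$), gives $\|\vec{t}-\vec{t}^{(k)}\|_{\dot{a}_{p,q}^{s,\upsilon}(\mathbb{A})}\leq\liminf_{l\to\infty}\|\vec{t}^{(l)}-\vec{t}^{(k)}\|_{\dot{a}_{p,q}^{s,\upsilon}(\mathbb{A})}$; letting $k\to\infty$ proves $\vec{t}^{(k)}\to\vec{t}$ in $\dot{a}_{p,q}^{s,\upsilon}(\mathbb{A})$.

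Finally I lift completeness to $\dot{A}_{p,q}^{s,\upsilon}(\mathbb{A})$. Let $\{\vec{f}^{(k)}\}_{k\in\mathbb{N}}$ be Cauchy in $\dot{A}_{p,q}^{s,\upsilon}(\mathbb{A})=\dot{A}_{p,q}^{s,\upsilon}(\mathbb{A},\widetilde{\varphi})$. By the boundedness of $S_{\varphi}$ in Theorem \ref{thm-phitansaverMWBTL}(i), the sequence $\{S_{\varphi}\vec{f}^{(k)}\}_{k\in\mathbb{N}}$ is Cauchy in $\dot{a}_{p,q}^{s,\upsilon}(\mathbb{A})$ and hence converges to some $\vec{t}$ by the previous paragraph. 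Set $\vec{f}:=T_{\psi}\vec{t}$, which lies in $\dot{A}_{p,q}^{s,\upsilon}(\mathbb{A})$ by the boundedness of $T_{\psi}$. Since $T_{\psi}\circ S_{\varphi}$ is the identity on $\dot{A}_{p,q}^{s,\upsilon}(\mathbb{A})$, we have $\vec{f}^{(k)}=T_{\psi}S_{\varphi}\vec{f}^{(k)}$, and boundedness of $T_{\psi}$ yields
\begin{align*}
\left\|\vec{f}^{(k)}-\vec{f}\right\|_{\dot{A}_{p,q}^{s,\upsilon}(\mathbb{A})}
=\left\|T_{\psi}\left(S_{\varphi}\vec{f}^{(k)}-\vec{t}\right)\right\|_{\dot{A}_{p,q}^{s,\upsilon}(\mathbb{A})}
\lesssim\left\|S_{\varphi}\vec{f}^{(k)}-\vec{t}\right\|_{\dot{a}_{p,q}^{s,\upsilon}(\mathbb{A})}\to0
\end{align*}
as $k\to\infty$, which gives the desired completeness. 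The main technical point is the Fatou-type step for the sequence space, but it is routine because $L\dot{A}_{p,q}^{\upsilon}$ is built from $\ell^{q}$- and $L^{p}$-norms for which the classical Fatou lemma applies inside each fixed testing cube $P\in\mathcal{D}$ before taking the supremum over $P$.
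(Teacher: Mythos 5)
Your proof is correct and follows exactly the route the paper indicates for this corollary (the paper omits the details, citing Theorem \ref{thm-phitansaverMWBTL} and the standard argument of \cite[Proposition 3.13]{syy24}): verify the quasi-norm axioms, prove completeness of the sequence space $\dot{a}_{p,q}^{s,\upsilon}(\mathbb{A})$, and lift it via the bounded maps $S_{\varphi}$ and $T_{\psi}$ together with $T_{\psi}\circ S_{\varphi}=\mathrm{id}$. One small correction: under \eqref{eq-delta1<0} the index $\delta_1$ may be negative, so Lemma \ref{lem-grow-est}(i) does \emph{not} yield that $\upsilon$ is almost increasing; however, the coordinatewise estimate you actually use, $|A_Q\vec{t}_Q|\leq\upsilon(Q)\,2^{-j_Q(s+\frac{n}{2}-\frac{n}{p})}\|\vec{t}\|_{\dot{a}_{p,q}^{s,\upsilon}(\mathbb{A})}$, follows directly from testing the quasi-norm with $P=Q$ and needs no almost-increasing assumption, so the argument stands.
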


Finally, we show Theorem \ref{thm-phitransMWBTL}.
\begin{proof}[Proof of Theorem \ref{thm-phitransMWBTL}]
We first prove (i). To do this,
let $\mathbb{A}:=\{A_Q\}_{Q\in\mathcal{D}}$
be a sequence of reducing operators of order $p$ for $W$.
Next, we show the boundedness of $S_{\varphi}:\
\dot{A}_{p,q}^{s,\upsilon}(W, \widetilde{\varphi})
\to\dot{a}_{p,q}^{s,\upsilon}(W)$.
By Theorems \ref{thm-phitansaverMWBTL}(i) and
\ref{thm-A(W)=A(A)} and Corollary \ref{cor-a(A)=a(W)},
we find that, for any $\vec{f}\in\dot{A}_{p,q}^{s,\upsilon}
(W,\widetilde{\varphi})$,
\begin{align*}
\left\|\left\{S_{\varphi}\vec{f}\right\}_{Q\in\mathcal{D}}\right\|
_{\dot{a}_{p,q}^{s,\upsilon}(W)}\sim\left\|
\left\{S_{\varphi}\vec{f}\right\}_{Q\in\mathcal{D}}\right\|
_{\dot{a}_{p,q}^{s,\upsilon}(\mathbb{A})}
\lesssim\left\|\vec{f}\right\|_{\dot{A}_{p,q}^{s,\upsilon}
(\mathbb{A},\widetilde{\varphi})}\sim\left\|\vec{f}\right\|
_{\dot{A}_{p,q}^{s,\upsilon}(W, \widetilde{\varphi})},
\end{align*}
which implies the boundedness of $S_{\varphi}$.
Then we prove the boundedness of
$T_{\psi}:\ \dot{a}_{p,q}^{s,\upsilon}(W)\to
\dot{A}_{p,q}^{s,\upsilon}(W, \varphi)$.
Applying Theorems \ref{thm-phitansaverMWBTL}(i)
and \ref{thm-A(W)=A(A)} and Corollary \ref{cor-a(A)=a(W)} again,
we conclude that, for any $\vec{t}\in\dot{a}
_{p,q}^{s,\upsilon}(W)$,
\begin{align*}
\left\|T_\psi\vec{t}\right\|
_{\dot{A}_{p,q}^{s,\upsilon}(W, \varphi)}
\sim\left\|T_\psi\vec{t}\right\|
_{\dot{A}_{p,q}^{s,\upsilon}(\mathbb{A}, \varphi)}
\lesssim\left\|\vec{t}\right\|_{\dot{a}_{p,q}^{s,\upsilon}
(\mathbb{A})}\sim\left\|\vec{t}\right\|
_{\dot{a}_{p,q}^{s,\upsilon}(W)},			
\end{align*}
which establishes the boundedness of $T_{\psi}$.
Moreover, if $\varphi, \psi$ further satisfy \eqref{cond3},
from Lemma \ref{lem-Cdreproform} and Theorems \ref{thm-phitansaverMWBTL}(i)
and \ref{thm-A(W)=A(A)}, it follows that $T_\psi \circ S_{\varphi}$
is the identity on $\dot{A}_{p,q}^{s,\upsilon}(W, \widetilde{\varphi})
=\dot{A}_{p,q}^{s,\upsilon}(\mathbb{A}, \widetilde{\varphi})
=\dot{A}_{p,q}^{s,\upsilon}(\mathbb{A}, \varphi)
=\dot{A}_{p,q}^{s,\upsilon}(W, \varphi)$, which completes the proof of (i).
We next show (ii). From Theorems \ref{thm-A(W)=A(A)} and
\ref{thm-phitansaverMWBTL}(ii), we infer that
$\dot{A}_{p,q}^{s,\upsilon}(W, \varphi^{(1)})=\dot{A}_{p,q}^{s,\upsilon}(\mathbb{A}, \varphi^{(1)})=\dot{A}_{p,q}^{s,\upsilon}(\mathbb{A}, \varphi^{(2)})=\dot{A}_{p,q}^{s,\upsilon}(W, \varphi^{(2)})$
all with equivalent quasi-norms. This finishes the proof of (ii) and
hence Theorem \ref{thm-phitransMWBTL}.
\end{proof}

The following result can be proved by Corollary
\ref{cor-completeA(A)} and Theorem \ref{thm-A(W)=A(A)};
we omit the details.

\begin{corollary}
Let $A\in\{B, F\}$, $s\in\mathbb{R}$, $p\in(0, \infty)$,
$q\in(0,\infty]$, and $W\in\mathcal{A}_{p,\infty}$.
Assume that $\delta_1,\delta_2,\omega$ satisfy
\eqref{eq-delta1<0} and
$\upsilon\in\mathcal{G}(\delta_1, \delta_2; \omega)$.
Then $\dot{A}_{p,q}^{s,\upsilon}(W)$
equipped with $\|\cdot\|_{\dot{A}_{p,q}^{s,\upsilon}(W)}$
is a complete quasi-normed space.	
\end{corollary}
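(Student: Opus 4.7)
The plan is to transfer completeness from the averaging space to the matrix-weighted space via the coincidence established in Theorem \ref{thm-A(W)=A(A)}. First, since $W \in \mathcal{A}_{p,\infty}$, I would fix a sequence $\mathbb{A}:=\{A_Q\}_{Q\in\mathcal{D}}$ of reducing operators of order $p$ for $W$ (whose existence is noted after Definition \ref{def-red-ope}). Using the dimension bounds recalled in the excerpt, I can pick $\beta_1\in\llbracket d_{p,\infty}^{\mathrm{lower}}(W),\infty)$ and $\beta_2\in\llbracket d_{p,\infty}^{\mathrm{upper}}(W),\infty)$ and invoke Lemma \ref{growEST} to conclude that $\mathbb{A}$ is strongly doubling of order $(\beta_1,\beta_2)$.

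Next, I would apply Corollary \ref{cor-completeA(A)} with this $\mathbb{A}$ (the hypotheses on $\delta_1,\delta_2,\omega,\upsilon$ carry over verbatim) to obtain that $\dot{A}_{p,q}^{s,\upsilon}(\mathbb{A})$ equipped with $\|\cdot\|_{\dot{A}_{p,q}^{s,\upsilon}(\mathbb{A})}$ is a complete quasi-normed space. Then Theorem \ref{thm-A(W)=A(A)} furnishes the set-theoretic identity $\dot{A}_{p,q}^{s,\upsilon}(W,\varphi)=\dot{A}_{p,q}^{s,\upsilon}(\mathbb{A},\varphi)$ together with the two-sided quasi-norm equivalence $\|\vec{f}\|_{\dot{A}_{p,q}^{s,\upsilon}(W)}\sim\|\vec{f}\|_{\dot{A}_{p,q}^{s,\upsilon}(\mathbb{A})}$ for every $\vec{f}\in(\mathcal{S}'_{\infty})^m$.

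Finally, I would invoke the elementary principle that completeness is preserved under an equivalent quasi-norm on the same underlying set: any Cauchy sequence $\{\vec{f}^{(k)}\}_{k\in\mathbb{N}}$ in $(\dot{A}_{p,q}^{s,\upsilon}(W),\|\cdot\|_{\dot{A}_{p,q}^{s,\upsilon}(W)})$ is, by the quasi-norm equivalence, also Cauchy in $(\dot{A}_{p,q}^{s,\upsilon}(\mathbb{A}),\|\cdot\|_{\dot{A}_{p,q}^{s,\upsilon}(\mathbb{A})})$, hence converges there to some $\vec{f}$, and the same equivalence then forces $\vec{f}^{(k)}\to\vec{f}$ in $\dot{A}_{p,q}^{s,\upsilon}(W)$. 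That $\|\cdot\|_{\dot{A}_{p,q}^{s,\upsilon}(W)}$ is itself a quasi-norm (positive definiteness, homogeneity, and the quasi-triangle inequality inherited from $\|\cdot\|_{L\dot{A}_{p,q}^{\upsilon}}$) is immediate from Definition \ref{MWBTL}. Since every step reduces to an already-proved statement, I do not anticipate any genuine obstacle; the only point requiring minor care is making sure the equivalence constants in Theorem \ref{thm-A(W)=A(A)} do not depend on $\vec{f}$, which is exactly what that theorem asserts, so transferring Cauchy sequences and their limits proceeds without incident.
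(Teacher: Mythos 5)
Your proposal is correct and follows exactly the route the paper intends: the paper itself states that this corollary "can be proved by Corollary \ref{cor-completeA(A)} and Theorem \ref{thm-A(W)=A(A)}" and omits the details, which are precisely the ones you supply (reducing operators exist, Lemma \ref{growEST} makes them strongly doubling so Corollary \ref{cor-completeA(A)} applies, and completeness transfers across the uniform quasi-norm equivalence of Theorem \ref{thm-A(W)=A(A)}). No gaps.
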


\section{Peetre-Type Maximal Function
and \\ Littlewood--Paley Function Characterizations
of $\dot{A}^{s,\upsilon}_{p,q}(W)$\label{s-ec}}

In this section, we aim to obtain equivalent characterizations
of $\dot{A}^{s,\upsilon}_{p,q}(W)$, respectively,
in terms of the Peetre-type maximal functions (Subsection \ref{s-ec-pee})
and the Littlewood--Paley functions (Subsection \ref{s-ec-g}).
To this end, we make full use of the discrete
Littlewood--Paley $g^*_{\lambda}$-function
characterization of $\dot{a}^{s,\upsilon}_{p,q}$ in
Proposition \ref{prop-dct-gh}.

\subsection{Peetre-Type Maximal Function Characterization\label{s-ec-pee}}

To establish the Peetre-type maximal function
characterization of $\dot{A}^{s,\upsilon}_{p,q}(W)$,
we first present two Peetre-type maximal functions
in the matrix-weighted setting
introduced in \cite[(3.1)]{wyy23} and \cite[(3.4)]{lyy24a}.
Let $p,\eta\in(0,\infty)$, $\varphi\in\mathcal{S}_{\infty}$,
$W\in\mathcal{A}_{p,\infty}$, and $\mathbb{A}:=\{A_Q\}
_{Q\in\mathcal{D}}$ be a sequence of positive
definite matrices. For any $j\in\mathbb{Z}$ and
$\vec{f}\in(\mathcal{S}'_{\infty})^m$,
the \emph{matrix-weighted Peetre-type maximal function}
$(\varphi_j^*\vec{f})_{W,p,\eta}$
and the \emph{averaging Peetre-type maximal function}
$(\varphi_j^*\vec{f})_{\mathbb{A},\eta}$ are defined,
respectively, by setting, for any $x\in\mathbb{R}^n$,
\begin{align}\label{eq-def-pee}
&\left(\varphi_j^* \vec{f}\right)_{W,p,\eta}(x):=
\sup_{y\in\mathbb{R}^n}\frac{|W^{\frac{1}{p}}(x)
(\varphi_j *\vec{f})(y)|}{(1+2^j|x-y|)^{\eta}}\ \text{\ and\ }\\
&\left(\varphi_j^* \vec{f}\right)_{\mathbb{A},\eta}(x)
:=\sum_{Q\in\mathcal{D}_j}\mathbf{1}_Q(x)\sup_{y\in\mathbb{R}^n}
\frac{|A_Q(\varphi_j*\vec{f})(y)|}{(1+2^j|x-y|)^{\eta}}.\nonumber
\end{align}

The following lemma gives a useful
equality related to averaging Peetre-type maximal functions.
\begin{lemma}\label{lem-phi-A}
Let $\eta\in(0,\infty)$, $\varphi\in\mathcal{S}_{\infty}$,
and $\mathbb{A}:=\{A_Q\}
_{Q\in\mathcal{D}}$ be a sequence of positive
definite matrices. For any $j\in\mathbb{Z}$,
$\vec{f}\in(\mathcal{S}'_{\infty})^m$, and $x\in\mathbb{R}^n$,
\begin{align*}
\left(\varphi_j^*\vec{f}\right)_{\mathbb{A},\eta}(x)
\sim\sum_{Q\in\mathcal{D}_j}\mathbf{1}_Q(x)
\left(\varphi_j^*\vec{f}\right)_{\mathbb{A},\eta}(x_Q),
\end{align*}
where the positive equivalence constants are independent
of $j,\vec{f}$, and $x$.
\end{lemma}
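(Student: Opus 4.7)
The approach is direct and rests on the observation that, for any given $x\in\mathbb{R}^n$, by Lemma \ref{lem-3P-sum1}(i) there is exactly one $Q\in\mathcal{D}_j$ with $x\in Q$, so the sum on either side collapses to a single term. Thus the claimed equivalence reduces to showing
\begin{align*}
\sup_{y\in\mathbb{R}^n}\frac{\left|A_Q(\varphi_j*\vec{f})(y)\right|}{(1+2^j|x-y|)^{\eta}}
\sim\sup_{y\in\mathbb{R}^n}\frac{\left|A_Q(\varphi_j*\vec{f})(y)\right|}{(1+2^j|x_Q-y|)^{\eta}}
\end{align*}
for this unique $Q$, with positive equivalence constants depending only on $n$ and $\eta$. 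Since the numerators agree, it is enough to prove that, uniformly in $y\in\mathbb{R}^n$, $1+2^j|x-y|\sim 1+2^j|x_Q-y|$.

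To establish this last pointwise equivalence, the plan is to partition $\mathbb{R}^n$ using the dyadic tiling at scale $2^{-j}$: again by Lemma \ref{lem-3P-sum1}(i), every $y\in\mathbb{R}^n$ lies in a unique $R\in\mathcal{D}_j$. Recall that $x_Q=2^{-j}k$ for $Q=Q_{j,k}$, so $x_Q\in Q$. Applying Lemma \ref{lem-xy-QR-P}(ii) with the cube $Q$ and the points $x,y$ (first) and with $Q$ and $x_Q,y$ (second), we obtain
\begin{align*}
1+2^j|x-y|\sim 1+2^j|x_Q-x_R|\sim 1+2^j|x_Q-y|,
\end{align*}
with positive equivalence constants depending only on $n$. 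Raising to the $\eta$-th power, taking reciprocals, and then taking the supremum over $y\in\mathbb{R}^n$ yields the desired comparison of the two suprema, and multiplying by $\mathbf{1}_Q(x)$ and summing over $Q\in\mathcal{D}_j$ completes the proof.

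I do not foresee any real obstacle here; the argument is essentially bookkeeping given Lemmas \ref{lem-3P-sum1}(i) and \ref{lem-xy-QR-P}(ii). The only mild point is to remember that $x_Q$ genuinely belongs to $Q$ (so that Lemma \ref{lem-xy-QR-P}(ii) applies with $x_Q$ in the role of a point of $Q$), which follows at once from the convention $x_Q=2^{-j}k$ and $Q_{j,k}=2^{-j}([0,1)^n+k)$.
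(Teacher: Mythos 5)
Your proof is correct and follows essentially the same route as the paper, which likewise collapses the sum to the unique $Q\ni x$ and applies Lemma \ref{lem-xy-QR-P}(ii) twice (once with $x\in Q$, once with $x_Q\in Q$) to compare $1+2^j|x-y|$ with $1+2^j|x_Q-y|$ uniformly in $y$. Your added remark that $x_Q\in Q$ is exactly the point needed to justify the second application and the final collapse of $(\varphi_j^*\vec f)_{\mathbb{A},\eta}(x_Q)$ to a single term.
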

\begin{proof}
By \eqref{eq-def-pee} and by Lemma \ref{lem-xy-QR-P}(ii) twice,
we find that, for any $j\in\mathbb{Z}$,
$\vec{f}\in(\mathcal{S}'_{\infty})^m$, and $x\in\mathbb{R}^n$,
\begin{align*}
\left(\varphi_j^*\vec{f}\right)_{\mathbb{A},\eta}(x)
\sim\sum_{Q\in\mathcal{D}_j}\mathbf{1}_Q(x)
\sup_{y\in\mathbb{R}^n} \frac{|A_Q(\varphi_j
*\vec{f})(y)|}{(1+2^j|x_Q-y|)^{\eta}}
=\sum_{Q\in\mathcal{D}_j}\mathbf{1}_Q(x)
\left(\varphi_j^*\vec{f}\right)_{\mathbb{A},\eta}(x_Q),
\end{align*}
which completes the proof of Lemma \ref{lem-phi-A}.
\end{proof}

In the following definition, we first recall
the concepts of doubling matrix weights
and doubling exponents (see, for example, \cite[p.\,1230]{fr04}).
Moreover, we also introduce the concept of weakly doubling exponents.

\begin{definition}\label{def-double-W}
Let $p\in(0, \infty)$.
\begin{itemize}
\item[{\rm (i)}] A matrix weight $W$ is
called a \emph{doubling matrix weight of order $p$}
if there exists a positive constant $C$ such that,
for any cube $Q\subset\mathbb{R}^n$ and any $\vec{z}\in\mathbb{C}^m$,
\begin{align}\label{eq-double}
\int_{2Q}\left|W^{\frac{1}{p}}(x)\vec{z}\right|^p\, dx
\leq C \int_{Q}\left|W^{\frac{1}{p}}(x)\vec{z}\right|^p\, dx.
\end{align}
The \emph{doubling exponent} $\beta_{p}(W)$ of
$W$ of order $p$ is defined by setting
\begin{align*}
\beta_{p}(W):=\min\left\{\beta\in(0, \infty):\
\eqref{eq-double}\text{\ holds\ with\ }C=2^{\beta}\right\}.
\end{align*}
\item[{\rm (ii)}] Let $W\in \mathcal{A}_{p,\infty}$ and
$\mathbb{A}$ be a sequence of reducing operators
of order $p$ for $W$. The \emph{weakly doubling exponent}
$\alpha_{p}(W)$ of $W$ of order $p$
is defined by setting
\begin{align}\label{eq-beta-p-weak}
\alpha_{p}(W):=\inf\left\{\beta\in[0, \infty):\
\mathbb{A}{\rm\ is\ weakly\ doubling\ of\ order}\ \beta\right\}.
\end{align}
\end{itemize}
\end{definition}

\begin{remark}
In Definition \ref{def-double-W}(ii),
using \cite[Proposition 6.1]{bhyy4}, we conclude that
\eqref{eq-beta-p-weak} is independent of the choice of
$\mathbb{A}$ and hence well-defined.
\end{remark}

We now establish the Peetre-type maximal
function characterization
of $\dot{A}^{s,\upsilon}_{p,q}(W)$.

\begin{theorem}\label{thm-Pee-cha}
Let $A\in\{B, F\}$, $s\in\mathbb{R}$,
$p\in(0, \infty)$, $q\in(0,\infty]$,
$W\in \mathcal{A}_{p,\infty}$, and
$\mathbb{A}:=\{A_Q\}_{Q\in\mathcal{D}}$
be a sequence of reducing operators of order $p$ for $W$.
Suppose that $\delta_1,\delta_2,\omega$ satisfy \eqref{eq-delta1<0},
$\upsilon\in\mathcal{G}(\delta_1, \delta_2; \omega)$,
and $\varphi\in\mathcal{S}$ satisfies \eqref{cond1}.
If $\eta\in(\frac{n}{\Gamma_{p,q}}+\alpha_{p}(W)+
[\omega\wedge n(\delta_2-\frac{1}{p})_+], \infty)$,
where $\Gamma_{p,q}$ and $\alpha_{p}(W)$
are as, respectively, in \eqref{eq-zeta} and \eqref{eq-beta-p-weak},
then, for any $\vec{f}\in(\mathcal{S}'_{\infty})^m$,
\begin{align*}
\left\|\vec{f}\right\|_{\dot{A}^{s,\upsilon}_{p,q}(W)}
\sim\left\|\left\{2^{js}\left(\varphi_j^* \vec{f}
\right)_{W,p,\eta}\right\}_{j\in\mathbb{Z}}
\right\|_{L\dot{A}_{p, q}^{\upsilon}}
\sim\left\|\left\{2^{js}\left(\varphi_j^* \vec{f}\right)
_{\mathbb{A},\eta}\right\}_{j\in\mathbb{Z}}
\right\|_{L\dot{A}_{p, q}^{\upsilon}},
\end{align*}
where all the positive equivalence constants are
independent of $\vec{f}$.
\end{theorem}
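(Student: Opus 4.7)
The plan is to reduce the matrix-weighted problem to the corresponding averaging one, and then compare the two Peetre-type maximal functions at the level of sequence spaces. First I would invoke Theorem \ref{thm-A(W)=A(A)} and Corollary \ref{cor-a(A)=a(W)} to replace $\dot{A}_{p,q}^{s,\upsilon}(W)$ and $\dot{a}_{p,q}^{s,\upsilon}(W)$ by their averaging counterparts based on $\mathbb{A}$, and then use Lemma \ref{A(A)=a} to identify $\|\vec{f}\|_{\dot{A}_{p,q}^{s,\upsilon}(\mathbb{A})}$ with $\|\sup_{\mathbb{A},\varphi}(\vec{f})\|_{\dot{a}_{p,q}^{s,\upsilon}}$. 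Once these reductions are in place, the theorem will follow from the two equivalences
\begin{align*}
\left\|\sup_{\mathbb{A},\varphi}\left(\vec{f}\right)\right\|_{\dot{a}_{p,q}^{s,\upsilon}}
&\sim \left\|\left\{2^{js}\left(\varphi_j^*\vec{f}\right)_{\mathbb{A},\eta}\right\}_{j\in\mathbb{Z}}\right\|_{L\dot{A}_{p, q}^{\upsilon}},\\
\left\|\left\{2^{js}\left(\varphi_j^*\vec{f}\right)_{W,p,\eta}\right\}_{j\in\mathbb{Z}}\right\|_{L\dot{A}_{p, q}^{\upsilon}}
&\sim \left\|\left\{2^{js}\left(\varphi_j^*\vec{f}\right)_{\mathbb{A},\eta}\right\}_{j\in\mathbb{Z}}\right\|_{L\dot{A}_{p, q}^{\upsilon}}.
\end{align*}

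For the first equivalence, the lower bound is immediate: when $x,y$ lie in a common $Q\in\mathcal{D}_j$, the denominator $(1+2^j|x-y|)^{\eta}$ is bounded, so $\sup_{y\in Q}|A_Q(\varphi_j*\vec{f})(y)|\lesssim(\varphi_j^*\vec{f})_{\mathbb{A},\eta}(x)$ pointwise. For the upper bound, the key is to exploit the weak doubling of $\mathbb{A}$ from Definition \ref{def-double-W}(ii): for any small $\epsilon>0$, $\|A_QA_R^{-1}\|\lesssim(1+2^j|x_Q-x_R|)^{(\alpha_p(W)+\epsilon)/p}$ for all $Q,R\in\mathcal{D}_j$. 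Splitting $\sup_{y\in\mathbb{R}^n}$ as $\sup_{R\in\mathcal{D}_j}\sup_{y\in R}$, bounding $|A_Q(\varphi_j*\vec{f})(y)|\leq\|A_QA_R^{-1}\||A_R(\varphi_j*\vec{f})(y)|$, and replacing $1+2^j|x-y|$ by $1+2^j|x_Q-x_R|$ via Lemma \ref{lem-xy-QR-P}(ii), I expect the pointwise bound, for $x\in Q$,
\begin{align*}
\left(\varphi_j^*\vec{f}\right)_{\mathbb{A},\eta}(x)\lesssim 2^{jn/2}\left[\sup_{\mathbb{A},\varphi}\left(\vec{f}\right)\right]^{*}_{\infty,\lambda,Q}
\end{align*}
with $\lambda=\eta-(\alpha_p(W)+\epsilon)/p$, in the notation of \eqref{eq-lamda*}. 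Multiplying by $2^{js}$, taking the $L\dot{A}_{p,q}^{\upsilon}$-norm, and invoking Proposition \ref{prop-dct-gh} with $r=\infty$ will then finish this step, provided the hypothesis on $\eta$ yields $\lambda>\frac{n}{\Gamma_{p,q}}+[\omega\wedge n(\delta_2-\tfrac{1}{p})_+]$ for sufficiently small $\epsilon$.

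For the second equivalence, the direction $\lesssim$ rests on the pointwise bound $(\varphi_j^*\vec{f})_{W,p,\eta}(x)\leq\|W^{\frac{1}{p}}(x)A_Q^{-1}\|(\varphi_j^*\vec{f})_{\mathbb{A},\eta}(x)=\gamma_j(x)(\varphi_j^*\vec{f})_{\mathbb{A},\eta}(x)$ for $x\in Q\in\mathcal{D}_j$, where $\gamma_j$ is the sequence in Lemma \ref{lem-E_j}. Using Lemma \ref{lem-phi-A} to write $(\varphi_j^*\vec{f})_{\mathbb{A},\eta}$ as a sum $\sum_{Q\in\mathcal{D}_j}\widetilde{\mathbf{1}}_Q\cdot|Q|^{\frac{1}{2}}(\varphi_j^*\vec{f})_{\mathbb{A},\eta}(x_Q)$, Lemma \ref{lem-E_j} absorbs the factor $\gamma_j$. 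For the reverse direction $\gtrsim$, Lemma \ref{lem-AW-1-set} produces subsets $E_Q\subset Q$ with $|E_Q|\geq|Q|/2$ on which $\|A_QW^{-\frac{1}{p}}\|$ is uniformly bounded, giving $(\varphi_j^*\vec{f})_{\mathbb{A},\eta}(x)\lesssim(\varphi_j^*\vec{f})_{W,p,\eta}(x)$ for $x\in E_Q$. Combining this with Lemmas \ref{lem-phi-A} and \ref{lem-aLA-equi} (for the sparse family $\{E_Q\}$) transfers the pointwise estimate on $E_Q$ to the required norm inequality.

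The main obstacle is the sharp exponent bookkeeping in the first equivalence: the weak doubling inevitably introduces a loss from $\eta$, and the resulting $\lambda$ must still exceed the threshold in Proposition \ref{prop-dct-gh}. This is exactly where all quantitative information—the growth parameters $\delta_1,\delta_2,\omega$, the weak doubling exponent $\alpha_p(W)$, and the index encoded in $\Gamma_{p,q}$—must be balanced, and the precise lower bound on $\eta$ in the hypothesis is tailored to make this balance work. A secondary concern is the careful interplay between the sparse sets $E_Q$ from Lemma \ref{lem-AW-1-set} and the quasi-constancy of $(\varphi_j^*\vec{f})_{\mathbb{A},\eta}$ on dyadic cubes from Lemma \ref{lem-phi-A}, which must be coordinated precisely to invoke Lemma \ref{lem-aLA-equi}.
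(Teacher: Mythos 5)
Your proposal is correct and, in all of its substantive estimates, coincides with the paper's proof: the direction $\|(\varphi_j^*\vec f)_{W,p,\eta}\|\lesssim\|(\varphi_j^*\vec f)_{\mathbb{A},\eta}\|$ via $\gamma_j$, Lemma \ref{lem-phi-A}, and Lemma \ref{lem-E_j}, and the direction $\|(\varphi_j^*\vec f)_{\mathbb{A},\eta}\|\lesssim\|\sup_{\mathbb{A},\varphi}(\vec f)\|_{\dot{a}^{s,\upsilon}_{p,q}}$ via weak doubling, the majorant sequence $[\cdot]^*_{\infty,\eta-\beta}$, and Proposition \ref{prop-dct-gh} are exactly the paper's two nontrivial steps. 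The only structural difference is that the paper closes a cyclic chain of three one-sided inequalities, using the trivial pointwise bound $|W^{1/p}(x)(\varphi_j*\vec f)(x)|\leq(\varphi_j^*\vec f)_{W,p,\eta}(x)$ to return to $\|\vec f\|_{\dot A^{s,\upsilon}_{p,q}(W)}$, whereas you prove two genuine two-sided equivalences; this forces you to also establish $\|(\varphi_j^*\vec f)_{\mathbb{A},\eta}\|\lesssim\|(\varphi_j^*\vec f)_{W,p,\eta}\|$ via Lemma \ref{lem-AW-1-set} and Lemma \ref{lem-aLA-equi}. That extra step is valid (it mirrors the proof of \eqref{eq-inf-f} in Theorem \ref{thm-A(W)=A(A)}), but it is avoidable: the cyclic arrangement makes it redundant.

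One quantitative caveat: you quote the weak doubling bound as $\|A_QA_R^{-1}\|\lesssim(1+2^j|x_Q-x_R|)^{(\alpha_p(W)+\epsilon)/p}$, which is the literal consequence of Definition \ref{def-doub-seq}(ii) (where the $p$-th power appears on the left). With that reading, your loss is $\lambda=\eta-(\alpha_p(W)+\epsilon)/p$, and for $p<1$ the stated hypothesis $\eta>\frac{n}{\Gamma_{p,q}}+\alpha_p(W)+[\omega\wedge n(\delta_2-\frac1p)_+]$ would not guarantee that $\lambda$ clears the threshold of Proposition \ref{prop-dct-gh}. The reading consistent with the theorem's hypothesis and with the paper's own use of weak doubling in this proof and in Theorem \ref{thm-G-L-cha} is $\|A_QA_R^{-1}\|\lesssim(1+2^j|x_Q-x_R|)^{\beta}$ with $\beta\in(\alpha_p(W),\infty)$, i.e., without the division by $p$; adopt that convention and your bookkeeping closes exactly as intended.
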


\begin{proof}
To prove the present theorem, it suffices to show,
for any $\vec{f}\in(\mathcal{S}'_{\infty})^m$,
\begin{align}\label{eq-thm-pee-cha}
\left\|\vec{f}\right\|_{\dot{A}^{s,\upsilon}_{p,q}(W)}
\lesssim\left\|\left\{2^{js}\left(\varphi_j^* \vec{f}
\right)_{W,p,\eta}\right\}_{j\in\mathbb{Z}}
\right\|_{L\dot{A}_{p, q}^{\upsilon}}
\lesssim\left\|\left\{2^{js}\left(\varphi_j^* \vec{f}\right)
_{\mathbb{A},\eta}\right\}_{j\in\mathbb{Z}}
\right\|_{L\dot{A}_{p, q}^{\upsilon}}
\lesssim\left\|\vec{f}\right\|_{\dot{A}^{s,\upsilon}_{p,q}(W)}.
\end{align}
We begin with proving the first inequality in \eqref{eq-thm-pee-cha}.
Using \eqref{eq-def-pee} and the definition of
$\|\cdot\|_{\dot{A}^{s,\upsilon}_{p,q}(W)}$, we conclude that,
for any $j\in\mathbb{Z}$, $\vec{f}\in(\mathcal{S}'_{\infty})^m$,
and $x\in\mathbb{R}^n$,
\begin{align*}
\left|W^{\frac{1}{p}}(x)\left(\varphi_j *\vec{f}\right)(x)\right|
\leq\sup_{y\in\mathbb{R}^n}\frac{|W^{\frac{1}{p}}(x)
(\varphi_j *\vec{f})(y)|}{(1+2^j|x-y|)^{\eta}}
=\left(\varphi_j^* \vec{f}\right)_{W,p,\eta}(x)
\end{align*}
and hence
\begin{align*}
\left\|\vec{f}\right\|_{\dot{A}^{s,\upsilon}_{p,q}(W)}
=\left\|\left\{2^{js}\left|W^{\frac{1}{p}}\left(\varphi_j *\vec{f}
\right)\right|\right\}_{j\in\mathbb{Z}}
\right\|_{L\dot{A}_{p, q}^{\upsilon}}
\leq\left\|\left\{2^{js}\left(\varphi_j^*
\vec{f}\right)_{W,p,\eta}\right\}_{j\in\mathbb{Z}}
\right\|_{L\dot{A}_{p, q}^{\upsilon}},
\end{align*}
which implies that the first inequality holds.

Next, we prove the second inequality in \eqref{eq-thm-pee-cha}.
Applying \eqref{eq-def-pee}, Lemma \ref{lem-3P-sum1}(ii),
the definition of $\gamma_j$ [see \eqref{eq-gamma_j}],
and Lemma \ref{lem-phi-A}, we obtain, for any $j\in\mathbb{Z}$,
$\vec{f}\in(\mathcal{S}'_{\infty})^m$, and $x\in\mathbb{R}^n$,
\begin{align}\label{eq-W*-gamma}
\left(\varphi_j^* \vec{f}\right)_{W,p,\eta}(x)
&=\sum_{Q\in\mathcal{D}_j}\mathbf{1}_{Q}(x)\sup_{y\in\mathbb{R}^n}
\frac{|W^{\frac{1}{p}}(x)A_{Q}^{-1}A_{Q}
(\varphi_j*\vec{f})(y)|}{(1+2^j|x-y|)^{\eta}}\\
&\leq\sum_{Q\in\mathcal{D}_j}\mathbf{1}_{Q}(x)\left\|W^{\frac{1}{p}}(x)
A_{Q}^{-1}\right\|\sup_{y\in\mathbb{R}^n}
\frac{|A_Q(\varphi_j*\vec{f})(y)|}
{(1+2^j|x-y|)^{\eta}}\nonumber\\
&=\gamma_j(x)\left(\varphi_j^{*}\vec{f}\right)_{\mathbb{A},\eta}(x)\sim
\gamma_j(x)\sum_{Q\in\mathcal{D}_j}\mathbf{1}_Q(x)
\left(\varphi_j^*\vec{f}\right)_{\mathbb{A},\eta}(x_Q).\nonumber
\end{align}
For any $\vec{f}\in(\mathcal{S}'_{\infty})^m$,
from \eqref{eq-W*-gamma},
Lemma \ref{lem-E_j} with $\{t_Q\}_{Q\in\mathcal{D}}$
replaced by $\{2^{j_Q(s-\frac{n}{2})}(\varphi_{j_Q}^*\vec{f}
)_{\mathbb{A},\eta}(x_Q)\}_{Q\in\mathcal{D}}$,
and Lemma \ref{lem-phi-A} again, we deduce that
\begin{align*}
\left\|\left\{2^{js}\left(\varphi_j^*
\vec{f}\right)_{W,p,\eta}\right\}_{j\in\mathbb{Z}}
\right\|_{L\dot{A}_{p, q}^{\upsilon}}
&\lesssim\left\|\left\{2^{js}\gamma_j\sum_{Q\in\mathcal{D}_j}\mathbf{1}_Q
\left(\varphi_j^*\vec{f}\right)_{\mathbb{A},\eta}(x_Q)
\right\}_{j\in\mathbb{Z}}\right\|_{L\dot{A}_{p, q}^{\upsilon}}\\
&\lesssim\left\|\left\{2^{js}\sum_{Q\in\mathcal{D}_j}\mathbf{1}_Q
\left(\varphi_j^*\vec{f}\right)_{\mathbb{A},\eta}(x_Q)
\right\}_{j\in\mathbb{Z}}\right\|_{L\dot{A}_{p, q}^{\upsilon}}
\sim\left\|\left\{2^{js}\left(\varphi_j^* \vec{f}\right)
_{\mathbb{A},\eta}\right\}_{j\in\mathbb{Z}}
\right\|_{L\dot{A}_{p, q}^{\upsilon}}.
\end{align*}
This finishes the proof of the second inequality.

Finally, we show the last inequality in \eqref{eq-thm-pee-cha}.
Based on Theorem \ref{thm-A(W)=A(A)},
to prove the last inequality in \eqref{eq-thm-pee-cha},
we only need to show, for any $\vec{f}\in(\mathcal{S}'_{\infty})^m$,
\begin{align}\label{eq-phi-f}
\left\|\left\{2^{js}\left(\varphi_j^* \vec{f}\right)
_{\mathbb{A},\eta}\right\}_{j\in\mathbb{Z}}
\right\|_{L\dot{A}_{p, q}^{\upsilon}}
\lesssim\left\|\vec{f}\right\|_{\dot{A}^{s,\upsilon}
_{p,q}(\mathbb{A})}.
\end{align}
To do this, by the assumption that
$\eta\in(\frac{n}{\Gamma_{p,q}}+\alpha_{p}(W)+
[\omega\wedge n(\delta_2-\frac{1}{p})_+],\infty)$,
we pick $\beta\in(\alpha_{p}(W),\infty)$
such that $\eta\in(\frac{n}{\Gamma_{p,q}}+\beta+
[\omega\wedge n(\delta_2-\frac{1}{p})_+],\infty)$.
Applying the fact that $\beta\in(\alpha_{p}(W),\infty)$
and \eqref{eq-beta-p-weak}, we obtain $\mathbb{A}$
is weakly doubling of order $\beta$. Using this,
Lemma \ref{lem-phi-A}, \eqref{eq-lamda*}, \eqref{supf},
and Lemma \ref{lem-xy-QR-P}(ii) with $x=x_Q$,
we conclude that, for any $j\in\mathbb{Z}$, $Q\in\mathcal{D}_j$,
$\vec{f}\in(\mathcal{S}'_{\infty})^m$, and $x\in Q$,	
\begin{align}\label{eq-phi-sup}
\left(\varphi_j^* \vec{f}\right)_{\mathbb{A},\eta}(x)
&\sim\sup_{y\in\mathbb{R}^n}\frac{|A_{Q}A^{-1}_{R}A_{R}(\varphi_j
*\vec{f})(y)|}{(1+2^j|x_Q-y|)^{\eta}}\leq\sup_{R\in\mathcal{D}_j}
\sup_{y\in R}\frac{\|A_{Q}A^{-1}_{R}\|\,|A_{R}(\varphi_j*\vec{f})(y)|}
{(1+2^j|x_Q-y|)^{\eta}}\\
&\lesssim2^{\frac{jn}{2}}\sup_{R\in\mathcal{D}_j}
\frac{\sup_{\mathbb{A},\varphi,Q}
(\vec{f})}{(1+2^j|x_Q-x_R|)^{\eta-\beta}}
=2^{\frac{jn}{2}}\left[\sup_{\mathbb{A}, \varphi}
\left(\vec{f}\right)\right]_{\infty, (\eta-\beta),Q}^{*},\nonumber
\end{align}
where $\sup_{\mathbb{A}, \varphi}(\vec{f})$
is as in \eqref{supf}.
For any $\vec{f}\in(\mathcal{S}'_{\infty})^m$,	
by \eqref{eq-phi-sup}, the definition of
$\|\cdot\|_{\dot{a}_{p,q}^{s,\upsilon}}$,
Proposition \ref{prop-dct-gh} combined with
$\eta-\beta\in(\frac{n}{\Gamma_{p,q}}+
[\omega\wedge n(\delta_2-\frac{1}{p})_+],\infty)$
and with $t$ replaced by $\sup_{\mathbb{A}, \varphi}(\vec{f})$,
and Lemma \ref{A(A)=a}, we find that
\begin{align*}
\left\|\left\{2^{js}\left(\varphi_j^* \vec{f}\right)
_{\mathbb{A},\eta}\right\}_{j\in\mathbb{Z}}\right\|
_{L\dot{A}_{p, q}^{\upsilon}}&\lesssim
\left\|\left\{2^{js}\sum_{Q\in\mathcal{D}_j}
\widetilde{\mathbf{1}}_{Q}\left[\sup_{\mathbb{A},\varphi}
\left(\vec{f}\right)\right]_{\infty,(\eta-\beta),Q}^{*}
\right\}_{j\in\mathbb{Z}}\right\|
_{L\dot{A}_{p, q}^{\upsilon}}\\
&=\left\|\left[\sup_{\mathbb{A}, \varphi}
\left(\vec{f}\right)\right]_{\infty, (\eta-\beta)}^{*}\right\|
_{\dot{a}_{p,q}^{s,\upsilon}}\sim
\left\|\sup_{\mathbb{A}, \varphi}\left(\vec{f}\right)\right\|
_{\dot{a}_{p,q}^{s,\upsilon}}\sim
\left\|\vec{f}\right\|_{\dot{A}^{s,\upsilon}_{p,q}(\mathbb{A})}.
\end{align*}
This finishes the proof of \eqref{eq-phi-f}
and hence Theorem \ref{thm-Pee-cha}.
\end{proof}

\begin{remark}\label{rmk-pee}
The proof of Theorem \ref{thm-Pee-cha} depends on the
use of averaging spaces and Proposition \ref{prop-dct-gh}.
To the best of our knowledge,
even in the scalar-valued setting, this approach is new.
Recall that, using the rescaled maximal operator
$[\mathcal{M}(|\cdot|^r)]^{\frac{1}{r}}$ for some $r\in(0, \infty)$
and the weighted Fefferman--Stein vector-valued inequality,
where $\mathcal{M}$ is as in \eqref{eq-HL},
Bui \cite[Theorem 2.2]{bui82}  obtained the
Peetre-type maximal function characterization of $\dot{A}^s_{p,q}(w)$ with the scalar weight $w\in A_{\infty}$,
where $A\in\{B, F\}$, $s\in\mathbb{R}$,
$p\in(0, \infty)$, and $q\in(0, \infty]$.
Very recently, Kakaroumpas and Soler i Gibert \cite{ks24}
established the matrix-weighted Fefferman--Stein
vector-valued maximal inequality for
any $p,q\in(1,\infty)$ and $W\in\mathcal{A}_p$.
By this and Calder\'{o}n's reproducing formulae,
one can give another proof of Theorem \ref{thm-Pee-cha}
for any $p,q\in(1,\infty)$ and $W\in\mathcal{A}_p$.
However, in the matrix-weighted setting,
since matrix weights and vector-valued functions are inseparable,
the appropriate substitute of the rescaled maximal operator
$[\mathcal{M}(|\cdot|^r)]^{\frac{1}{r}}$
for some $r\in(0, \infty)$ is still unavailable.
Based on this reason, one can not use
the matrix-weighted Fefferman--Stein
vector-valued maximal inequality to deal with the general case
where $p,q\in(0,\infty)$ and $W\in \mathcal{A}_{p,\infty}$,
as in Theorem \ref{thm-Pee-cha}. Therefore,
the new approach used in the proof of Theorem \ref{thm-Pee-cha} seems necessary.
\end{remark}

\subsection{Littlewood--Paley Function Characterization\label{s-ec-g}}	

We  begin with giving the definitions of  the Lusin area function sequence
and the Littlewood--Paley $g^{*}_{\lambda}$-function  sequence
in the matrix-weighted setting.

\begin{definition}\label{def-LPfunc}
Let $p, \alpha, \lambda\in(0,\infty)$,
$r\in(0,\infty]$, $\varphi\in\mathcal{S}_{\infty}$,
and $W\in \mathcal{A}_{p,\infty}$.
For any $\vec{f}\in(\mathcal{S}'_{\infty})^m$,
the \emph{matrix-weighted Lusin area function sequence
$S_{W, p,\varphi, \alpha}^{r}(\vec{f})$} on $\mathbb{R}^n$
and the \emph{matrix-weighted Littlewood--Paley
$g^{*}_{\lambda}$-function  sequence $g_{W, p, \varphi, r ,\lambda}^{*}(\vec{f})$}
on $\mathbb{R}^n$  are defined, respectively, by setting
\begin{align}\label{eq-LAF}
S_{W, p,\varphi, \alpha}^{r}\left(\vec{f}\right):=
\left\{\left[S_{W, p,\varphi, \alpha}^{r}\left(\vec{f}\right)\right]
_j(\cdot)\right\}_{j\in\mathbb{Z}}
:=\left\{\left[\fint_{B(\cdot,\alpha2^{-j})}
\left|W^{\frac{1}{p}}(\cdot)\left(\varphi_j
*\vec{f}\right)(y)\right|^r\,dy\right]^{\frac{1}{r}}
\right\}_{j\in\mathbb{Z}}
\end{align}
and
\begin{align}\label{eq-LPg}
g_{W, p, \varphi, r ,\lambda}^{*}\left(\vec{f}\right):=
\left\{\left[g_{W, p, \varphi, r ,\lambda}^{*}
\left(\vec{f}\right)\right]_j(\cdot)\right\}_{j\in\mathbb{Z}}
:=\left\{\left[\int_{\mathbb{R}^n}
\frac{2^{jn}|W^{\frac{1}{p}}(\cdot)
(\varphi_j *\vec{f})(y)|^r}{(1+2^j|\cdot-y|
)^{\lambda r}}\,dy\right]^{\frac{1}{r}}
\right\}_{j\in\mathbb{Z}}
\end{align}
(with the usual modification made if $r=\infty$).
\end{definition}

We next establish the Littlewood--Paley function characterization
of $\dot{A}^{s,\upsilon}_{p,q}(W)$.
\begin{theorem}\label{thm-G-L-cha}
Let $A\in\{B, F\}$, $s\in\mathbb{R}$,
$p, \alpha\in(0, \infty)$, $q,r\in(0,\infty]$,
and $W\in\mathcal{A}_{p,\infty}$.
Assume that $\delta_1,\delta_2,\omega$ satisfy \eqref{eq-delta1<0},
$\upsilon\in\mathcal{G}(\delta_1, \delta_2; \omega)$,
and $\varphi\in\mathcal{S}$ satisfies \eqref{cond1}.
If $\lambda\in(\frac{n}{r\wedge\Gamma_{p,q}}
+\alpha_{p}(W)+[\omega\wedge n
(\delta_2-\frac{1}{p})_+], \infty)$,
where $\Gamma_{p,q}$ and $\alpha_{p}(W)$
are as, respectively, in \eqref{eq-zeta} and \eqref{eq-beta-p-weak},
then, for any $\vec{f}\in(\mathcal{S}'_{\infty})^m$,
\begin{align*}
\left\|\vec{f}\right\|_{\dot{A}^{s,\upsilon}_{p,q}(W)}
\sim\left\|\left\{2^{js}\left[S_{W, p,\varphi, \alpha}^{r}
\left(\vec{f}\right)\right]_j\right\}_{j\in\mathbb{Z}}
\right\|_{L\dot{A}_{p, q}^{\upsilon}}
\sim\left\|\left\{2^{js}\left[g_{W, p, \varphi, r ,\lambda}^{*}
\left(\vec{f}\right)\right]_j\right\}_{j\in\mathbb{Z}}
\right\|_{L\dot{A}_{p, q}^{\upsilon}},
\end{align*}
where all the positive equivalence constants
are independent of $\vec{f}$.
\end{theorem}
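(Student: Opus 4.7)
The plan is to establish the three-way equivalence by first proving a pointwise lower chain and then closing the loop through Proposition~\ref{prop-dct-gh} (and its continuous analog Proposition~\ref{prop-gh}) in the averaging framework. For the lower chain, observe that, for each fixed $x$, the function $y \mapsto W^{\frac{1}{p}}(x)(\varphi_j*\vec{f})(y)$ has Fourier spectrum contained in $B(\mathbf{0}, 2^{j+1})$, since multiplication by the constant matrix $W^{\frac{1}{p}}(x)$ does not alter the spectrum of $\varphi_j*\vec{f}$. The Plancherel--Polya inequality then yields
\begin{align*}
\left|W^{\frac{1}{p}}(x)(\varphi_j*\vec{f})(x)\right|^r \lesssim \fint_{B(x,\alpha 2^{-j})}\left|W^{\frac{1}{p}}(x)(\varphi_j*\vec{f})(y)\right|^r\,dy = \left[S^r_{W,p,\varphi,\alpha}(\vec{f})\right]_j(x)^r,
\end{align*}
and restricting the integral defining $g^*_{W,p,\varphi,r,\lambda}(\vec{f})$ to the ball $B(x,\alpha 2^{-j})$, where $(1+2^j|x-y|)^{-\lambda r}$ is bounded below, immediately gives $[S^r_{W,p,\varphi,\alpha}(\vec{f})]_j(x) \lesssim [g^*_{W,p,\varphi,r,\lambda}(\vec{f})]_j(x)$. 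Taking $L\dot{A}^{\upsilon}_{p,q}$-norms and invoking the definition of $\|\vec{f}\|_{\dot{A}^{s,\upsilon}_{p,q}(W)}$ produces the two lower bounds in the equivalence.

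To close the loop with $\|\{2^{js}[g^*_{W,p,\varphi,r,\lambda}(\vec{f})]_j\}_{j\in\mathbb{Z}}\|_{L\dot{A}^{\upsilon}_{p,q}} \lesssim \|\vec{f}\|_{\dot{A}^{s,\upsilon}_{p,q}(W)}$, I would fix a sequence $\mathbb{A}:=\{A_Q\}_{Q\in\mathcal{D}}$ of reducing operators of order $p$ for $W$ and pass to the averaging framework. The pointwise estimate $|W^{\frac{1}{p}}(x)(\varphi_j*\vec{f})(y)| \leq \gamma_j(x)|\mathbb{A}_j(x)(\varphi_j*\vec{f})(y)|$ (with $\gamma_j$ as in \eqref{eq-gamma_j}) yields $[g^*_{W,p,\varphi,r,\lambda}(\vec{f})]_j(x) \leq \gamma_j(x)G_j(x)$, where $G_j(x)$ denotes the averaging $g^*_{\lambda}$-function, obtained from \eqref{eq-LPg} by replacing $W^{\frac{1}{p}}(\cdot)$ with $\mathbb{A}_j(\cdot)$. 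Since $\mathbb{A}_j$ is constant on each $Q \in \mathcal{D}_j$, a routine estimate shows $G_j(x) \sim G_j(x_Q)$ for $x \in Q$, so that $\gamma_j G_j$ can be recast in the form $\gamma_j \sum_{Q \in \mathcal{D}_j}\widetilde{\mathbf{1}}_Q t_Q$, to which Lemma~\ref{lem-E_j} applies to give $\|\{2^{js}\gamma_j G_j\}\|_{L\dot{A}^\upsilon_{p,q}}\lesssim \|\{2^{js}G_j\}\|_{L\dot{A}^\upsilon_{p,q}}$. It thus remains to bound $\|\{2^{js}G_j\}\|_{L\dot{A}^\upsilon_{p,q}}$ by $\|\vec{f}\|_{\dot{A}^{s,\upsilon}_{p,q}(\mathbb{A})}$, which equals $\|\vec{f}\|_{\dot{A}^{s,\upsilon}_{p,q}(W)}$ by Theorem~\ref{thm-A(W)=A(A)}. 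Using the weak doubling of $\mathbb{A}$ (see \eqref{eq-beta-p-weak})---namely $\|A_QA_R^{-1}\| \lesssim (1+2^j|x-y|)^\beta$ for any $\beta > \alpha_p(W)$ and $x\in Q,y\in R$ with $Q,R\in\mathcal{D}_j$---one converts $\mathbb{A}_j(x)$ to $\mathbb{A}_j(y)$ inside $G_j(x)$ to obtain
\begin{align*}
G_j(x)^r \lesssim \int_{\mathbb{R}^n}\frac{2^{jn}|\mathbb{A}_j(y)(\varphi_j*\vec{f})(y)|^r}{(1+2^j|x-y|)^{(\lambda-\beta)r}}\,dy.
\end{align*}
When $r \in(0, \Gamma_{p,q})$, Proposition~\ref{prop-gh} applies directly, and Lemma~\ref{A(A)=a} closes the argument.

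The main obstacle is the case $r\geq \Gamma_{p,q}$, where Proposition~\ref{prop-gh} cannot be applied directly since its hypothesis requires the exponent to lie in $(0, \Gamma_{p,q})$. To handle this, I would interpolate via the averaging Peetre-type maximal function $(\varphi_j^*\vec{f})_{\mathbb{A},\mu}$: for $\tilde{r}\in(0, \Gamma_{p,q})$ and $\mu$ just above $\frac{n}{\Gamma_{p,q}}+\alpha_p(W)+[\omega\wedge n(\delta_2-\frac{1}{p})_+]$, decompose $|\mathbb{A}_j(x)(\varphi_j*\vec{f})(y)|^r = |\mathbb{A}_j(x)(\varphi_j*\vec{f})(y)|^{r-\tilde{r}} \cdot |\mathbb{A}_j(x)(\varphi_j*\vec{f})(y)|^{\tilde{r}}$ and bound the first factor by $(1+2^j|x-y|)^{\mu(r-\tilde{r})}[(\varphi_j^*\vec{f})_{\mathbb{A},\mu}(x)]^{r-\tilde{r}}$ directly from the definition of the Peetre maximal function. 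Setting $\nu := \lambda r/\tilde{r} - \mu(r/\tilde{r}-1)$ and applying the arithmetic--geometric mean inequality produces the pointwise bound $G_j(x) \lesssim (\varphi_j^*\vec{f})_{\mathbb{A},\mu}(x) + G'_j(x)$, where $G'_j(x)$ is the averaging $g^*_{\tilde{r},\nu}$-function; the first summand is controlled by Theorem~\ref{thm-Pee-cha}, and the second by Proposition~\ref{prop-gh} with exponent $\tilde{r}<\Gamma_{p,q}$. Letting $\tilde{r}\uparrow\Gamma_{p,q}$ and $\mu$ approach its lower threshold recovers exactly the sharp range $\lambda > \frac{n}{r\wedge\Gamma_{p,q}}+\alpha_p(W)+[\omega\wedge n(\delta_2-\frac{1}{p})_+]$; a naive pointwise estimate $[g^*_\lambda]_j \lesssim (\varphi_j^*\vec{f})_{W,p,\eta}$ would instead force the strictly stronger condition $\lambda > n/r + n/\Gamma_{p,q} + \alpha_p(W) + \cdots$ whenever $\min(n/r, n/\Gamma_{p,q}) > 0$. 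This sharpening is precisely the role played by Proposition~\ref{prop-dct-gh} (and its counterpart Proposition~\ref{prop-gh}) in the overall scheme.
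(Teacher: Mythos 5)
Your treatment of the inequality $\|\{2^{js}[g^{*}_{W,p,\varphi,r,\lambda}(\vec f)]_j\}_{j}\|_{L\dot A^{\upsilon}_{p,q}}\lesssim\|\vec f\|_{\dot A^{s,\upsilon}_{p,q}(W)}$ is essentially sound and close to the paper's: the paper discretizes via the cube-wise suprema $\sup_{\mathbb A,\varphi}(\vec f)$ and invokes Proposition~\ref{prop-dct-gh}, whose Case~(2) already absorbs $r\geq\Gamma_{p,q}$ by monotonicity of $\ell^{r}$-norms, whereas your Peetre-maximal-function interpolation is a workable continuous substitute for that monotonicity. The pointwise bound $[S^{r}_{W,p,\varphi,\alpha}(\vec f)]_j\lesssim[g^{*}_{W,p,\varphi,r,\lambda}(\vec f)]_j$ is also correct and is exactly the paper's first step.

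The fatal problem is your first step. The pointwise ``Plancherel--Polya'' inequality you invoke,
\begin{align*}
|g(x)|^{r}\lesssim 2^{jn}\int_{B(x,\alpha 2^{-j})}|g(y)|^{r}\,dy
\qquad\text{for }g\text{ with }\operatorname{supp}\widehat g\subset B(\mathbf 0,2^{j+1}),
\end{align*}
is false: the correct Plancherel--Polya/Peetre estimate necessarily carries a polynomially decaying tail over \emph{all} of $\mathbb{R}^n$ (this is precisely Lemma~\ref{lem-suptosum} with $A_Q\equiv I_m$), and the purely local version cannot hold. Already for $n=1$, $r=2$, $j=0$: if $|g(0)|^{2}\leq C\int_{-\alpha}^{\alpha}|g|^{2}$ held for all $g\in L^{2}(\mathbb{R})$ with $\operatorname{supp}\widehat g\subset[-1,1]$, then by the Riesz representation applied to the (injective) restriction map there would exist $G\in L^{2}$ supported in $[-\alpha,\alpha]$ with $u(0)=\int u\overline{G}$ for all such $u$, hence $\widehat G\equiv 1$ a.e.\ on $[-1,1]$; since $\widehat G$ is entire, this forces $\widehat G\equiv 1$, i.e.\ $G=\delta_{0}$, a contradiction. (Extremizers can be written down in terms of prolate spheroidal wave functions: they equal $1$ at the origin, have arbitrarily small $L^{2}$ mass on $(-\alpha,\alpha)$, and compensate with huge mass far away.) Consequently your one-line derivation of $\|\vec f\|_{\dot A^{s,\upsilon}_{p,q}(W)}\lesssim\|\{2^{js}[S^{r}_{W,p,\varphi,\alpha}(\vec f)]_j\}_{j}\|_{L\dot A^{\upsilon}_{p,q}}$ collapses, and this is exactly the inequality for which the paper deploys its most delicate argument: Lemma~\ref{lem-suptosum} (with tails), a refinement to subcubes $P\in\mathcal{D}_{j+\eta}$, the reverse H\"older-type bound \eqref{eq-u} for $\|A_{P}W^{-\frac1p}(z)\|^{u}$, and a double average in $y$ and $z$ over $P$ arranged so that the matrix $W^{\frac1p}(z)$ is never frozen at the single point where the band-limited function is evaluated. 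You would need to replace your pointwise claim by an argument of this type; the tail terms it produces are then handled by Proposition~\ref{prop-gh}, which is why the hypotheses on $\lambda$ and $\Gamma_{p,q}$ enter the way they do.
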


\begin{proof}
To prove the present theorem, it suffices to show that,
for any $\vec{f}\in(\mathcal{S}'_{\infty})^m$,
\begin{align}\label{eq-thm-G-L-cha}
\left\|\left\{2^{js}\left[S_{W,p,\varphi,\alpha}^{r}
\left(\vec{f}\right)\right]_j\right\}_{j\in\mathbb{Z}}
\right\|_{L\dot{A}_{p, q}^{\upsilon}}
&\lesssim\left\|\left\{2^{js}\left[g_{W, p, \varphi, r ,\lambda}^{*}
\left(\vec{f}\right)\right]_j\right\}_{j\in\mathbb{Z}}
\right\|_{L\dot{A}_{p, q}^{\upsilon}}\\
&\lesssim\left\|\vec{f}\right\|_{\dot{A}^{s,\upsilon}_{p,q}(W)}
\lesssim\left\|\left\{2^{js}\left[S_{W,p,\varphi, \alpha}^{r}
\left(\vec{f}\right)\right]_j\right\}_{j\in\mathbb{Z}}
\right\|_{L\dot{A}_{p, q}^{\upsilon}}.\nonumber
\end{align}
We begin with proving the first inequality in \eqref{eq-thm-G-L-cha}.
Notice that, for any $j\in\mathbb{Z}$, $x\in\mathbb{R}^n$,
and $y\in B(x,\alpha2^{-j})$, $1+2^j|x-y|<1+\alpha$
and $|B(x,\alpha2^{-j})|\sim2^{-jn}$.
By this, \eqref{eq-LAF}, and \eqref{eq-LPg}, we find that,
for any $j\in\mathbb{Z}$,
$\vec{f}\in(\mathcal{S}'_{\infty})^m$, and $x\in\mathbb{R}^n$,
\begin{align*}
\left[S_{W, p, \varphi, \alpha}^{r}
\left(\vec{f}\right)\right]_j(x)
&\lesssim\left[\int_{B(x,\alpha2^{-j})}\frac{2^{jn}|W^{\frac{1}{p}}(x)
(\varphi_j *\vec{f})(y)|^r}{(1+2^j|x-y|)^{\lambda r}}
\,dy\right]^{\frac{1}{r}}\\
&\leq\left[\int_{\mathbb{R}^n}\frac{2^{jn}|W^{\frac{1}{p}}(x)
(\varphi_j *\vec{f})(y)|^r}{(1+2^j|x-y|)^{\lambda r}}\,dy\right]^{\frac{1}{r}}
=\left[g_{W, p, \varphi, r ,\lambda}^{*}\left(\vec{f}\right)\right]_j(x)
\end{align*}	
and hence
\begin{align*}
\left\|\left\{2^{js}\left[S_{W, p, \varphi, \alpha}^{r}
\left(\vec{f}\right)\right]_j\right\}_{j\in\mathbb{Z}}
\right\|_{L\dot{A}_{p, q}^{\upsilon}}
&\lesssim\left\|\left\{2^{js}
\left[g_{W, p, \varphi, r ,\lambda}^{*}
\left(\vec{f}\right)\right]_j\right\}_{j\in\mathbb{Z}}
\right\|_{L\dot{A}_{p, q}^{\upsilon}},
\end{align*}
which implies that the first inequality in \eqref{eq-thm-G-L-cha} holds.

Next, we prove the second inequality in \eqref{eq-thm-G-L-cha}.
To this end, let $\mathbb{A}:=\{A_Q\}_{Q\in\mathcal{D}}$ be
a sequence of reducing operators of order $p$ for $W$.
Applying Lemma \ref{A(A)=a}, Remark \ref{rem-equi-seq},
and Corollary \ref{cor-a(A)=a(W)},
to show the second inequality in \eqref{eq-thm-G-L-cha},
we only need to prove that, for any $\vec{f}\in(\mathcal{S}'_{\infty})^m$,
\begin{align}\label{eq-g-sup}
\left\|\left\{2^{js}\left[g_{W, p, \varphi, r ,\lambda}^{*}
\left(\vec{f}\right)\right]_j\right\}_{j\in\mathbb{Z}}
\right\|_{L\dot{A}_{p, q}^{\upsilon}}
\lesssim\left\|\sup_{\mathbb{A}, \varphi}
\left(\vec{f}\right)\right\|_{\dot{a}_{p,q}^{s,\upsilon}},
\end{align}	
where $\sup_{\mathbb{A}, \varphi}(\vec{f})$
is as in \eqref{supf}. By the assumption
$\lambda\in(\frac{n}{r\wedge\Gamma_{p,q}}
+\alpha_{p}(W)+[\omega\wedge n
(\delta_2-\frac{1}{p})_+],\infty)$, we
can pick $\beta\in(\alpha_{p}(W),\infty)$
such that $\lambda\in(\frac{n}{r\wedge\Gamma_{p,q}}
+\beta+[\omega\wedge n(\delta_2-\frac{1}{p})_+],\infty)$.
From $\beta\in(\alpha_{p}(W),\infty)$ and \eqref{eq-beta-p-weak},
it follows that $\mathbb{A}$ is weakly doubling of order $\beta$.
To obtain \eqref{eq-g-sup}, using \eqref{eq-LPg} and
Lemma \ref{lem-3P-sum1}(i), we conclude that, for any $j\in\mathbb{Z}$,
$Q\in\mathcal{D}_j$, $\vec{f}\in(\mathcal{S}'_{\infty})^m$, and $x\in Q$,
\begin{align*}
\left[g_{W,p,\varphi, r,\lambda}^{*}
\left(\vec{f}\right)\right]_j(x)
&=\left[\int_{\mathbb{R}^n}\frac{2^{jn}
|W^{\frac{1}{p}}(x)(\varphi_j *\vec{f})(y)|^r}
{(1+2^j|x-y|)^{\lambda r}}\,dy\right]^{\frac{1}{r}}\\
&=\left[\sum_{R\in\mathcal{D}_j}
\int_{R}\frac{2^{jn}|W^{\frac{1}{p}}(x)A_{Q}^{-1}A_Q
A^{-1}_{R}A_R(\varphi_j *\vec{f})(y)|^r}
{(1+2^j|x-y|)^{\lambda r}}\,dy\right]^{\frac{1}{r}}\\
&\leq\left\|W^{\frac{1}{p}}(x)A_{Q}^{-1}\right\|
\left[\sum_{R\in\mathcal{D}_j}\int_{R}\frac{2^{jn}
\|A_QA^{-1}_{R}\|^{r}|A_R(\varphi_j *\vec{f})(y)|^r}
{(1+2^j|x-y|)^{\lambda r}}\,dy\right]^{\frac{1}{r}},
\end{align*}
which, together with the just proved conclusion that $\mathbb{A}$
is weakly doubling of order $\beta$,
the definition of $\gamma_j$ [see \eqref{eq-gamma_j}], Lemma \ref{lem-xy-QR-P}(ii), \eqref{supf}, and \eqref{eq-lamda*}
with $\lambda$ replaced by $\sup_{\mathbb{A}, \varphi}(\vec{f})$,
further implies that
\begin{align}\label{eq-g-sup-1}
\left[g_{W,p,\varphi, r,\lambda}^{*}\left(\vec{f}\right)\right]_j(x)
&\lesssim\gamma_j(x)\left[\sum_{R\in\mathcal{D}_j}
\frac{\sup_{y\in R}|A_{R}(\varphi_j*\vec{f})(y)|^r}
{(1+[\ell(R)]^{-1}|x_Q-x_R|)^{(\lambda-\beta)r}}\right]^{\frac{1}{r}}\\
&=\gamma_j(x)2^{\frac{jn}{2}}\left[\sum_{R\in\mathcal{D}_j}
\frac{|\sup_{\mathbb{A}, \varphi, R}(\vec{f})|^{r}}
{(1+[\ell(R)]^{-1}|x_Q-x_R|)^{(\lambda-\beta)r}}
\right]^{\frac{1}{r}}\nonumber\\
&=\gamma_j(x)2^{\frac{jn}{2}}\left[\sup_{\mathbb{A}, \varphi}
\left(\vec{f}\right)\right]_{r, (\lambda-\beta), Q}^{*}.\nonumber
\end{align}

Applying \eqref{eq-g-sup-1}, Lemma \ref{lem-E_j}
with $\{t_Q\}_{Q\in\mathcal{D}}$ replaced by
$\{2^{j_Q s}[\sup_{\mathbb{A}, \varphi}(\vec{f})]_{r,
(\lambda-\beta), Q}^{*}\}_{Q\in\mathcal{D}}$,
the definition of $\|\cdot\|_{\dot{a}_{p,q}^{s,\upsilon}}$,
and Proposition \ref{prop-dct-gh} combined with
$\lambda-\beta\in(\frac{n}{r\wedge\Gamma_{p,q}}+
[\omega\wedge n(\delta_2-\frac{1}{p})_+],\infty)$
and with $t$ replaced by
$\sup_{\mathbb{A}, \varphi}(\vec{f})$, we obtain,
for any $\vec{f}\in(\mathcal{S}'_{\infty})^m$,
\begin{align*}
\left\|\left\{2^{js}\left[g_{W,p,\varphi, r,\lambda}^{*}
\left(\vec{f}\right)\right]_j\right\}_{j\in\mathbb{Z}}
\right\|_{L\dot{A}_{p, q}^{\upsilon}}
&\lesssim\left\|\left\{2^{js}\gamma_j\sum_{Q\in\mathcal{D}_j}
\widetilde{\mathbf{1}}_{Q}\left[\sup_{\mathbb{A}, \varphi}
\left(\vec{f}\right)\right]_{r, (\lambda-\beta), Q}^{*}\right\}_{j\in\mathbb{Z}}
\right\|_{L\dot{A}_{p, q}^{\upsilon}}\\
&\lesssim\left\|\left\{2^{js}\sum_{Q\in\mathcal{D}_j}
\widetilde{\mathbf{1}}_{Q}\left[\sup_{\mathbb{A}, \varphi}
\left(\vec{f}\right)\right]_{r, (\lambda-\beta), Q}^{*}
\right\}_{j\in\mathbb{Z}}\right\|_{L\dot{A}_{p, q}^{\upsilon}}\\
&=\left\|\left[\sup_{\mathbb{A}, \varphi}
\left(\vec{f}\right)\right]_{r, (\lambda-\beta)}^{*}\right\|
_{\dot{a}_{p,q}^{s,\upsilon}}\sim
\left\|{\sup_{\mathbb{A}, \varphi}
\left(\vec{f}\right)}\right\|
_{\dot{a}_{p,q}^{s,\upsilon}},
\end{align*}
which completes the proof of \eqref{eq-g-sup} and
hence the second inequality in \eqref{eq-thm-G-L-cha}.

Finally, we prove the last inequality in \eqref{eq-thm-G-L-cha}.
To this end, using Theorem \ref{thm-A(W)=A(A)}, we only need to show
that, for any $\vec{f}\in(\mathcal{S}'_{\infty})^m$,
\begin{align}\label{eq-f-S}
\left\|\vec{f}\right\|_{\dot{A}^{s,\upsilon}_{p,q}(\mathbb{A})}
\lesssim\left\|\left\{2^{js}\left[S_{W, p, \varphi, \alpha}^{r}
\left(\vec{f}\right)\right]_j\right\}_{j\in\mathbb{Z}}
\right\|_{L\dot{A}_{p, q}^{\upsilon}}.
\end{align}
By \cite[Proposition 4.1]{bhyy4},
we find that there exists some $u\in(0,\infty)$ such that
\begin{align}\label{eq-u}
\sup_{Q\in\mathcal{D}}\fint_{Q}
\left\|A_Q W^{-\frac{1}{p}}(x)\right\|^{u}dx<\infty.
\end{align}
Pick $\gamma\in(0, \min\{u, r\})$,
$\lambda\in(0, \infty)$, and $\eta\in\mathbb{N}$
such that $\frac{\gamma u}{u-\gamma}\in(0, \Gamma_{p,q})$,
$\lambda\in(n+\frac{\gamma u}{u-\gamma}\omega,\infty)$,
and $\alpha\in(\sqrt{n}2^{-\eta},\infty)$.
From Lemma \ref{lem-3P-sum1}(ii),
the definition of $\mathbb{A}_j$ [see \eqref{A_j}],
Lemma \ref{lem-suptosum}, and Lemma \ref{lem-xy-QR-P}(ii)
with $y=x_R$, we infer that, for any $j\in\mathbb{Z}$,
$\vec{f}\in(\mathcal{S}'_{\infty})^m$, and $x\in\mathbb{R}^n$,
\begin{align}\label{eq-Ajgamma}
\left|\mathbb{A}_j(x)\left(\varphi_j*\vec{f}\right)(x)\right|^{\gamma}
&=\sum_{Q\in\mathcal{D}_j}\mathbf{1}_Q(x)
\left|\mathbb{A}_j(x)\left(\varphi_j*\vec{f}\right)(x)\right|^{\gamma}
=\sum_{Q\in\mathcal{D}_j}\mathbf{1}_Q(x)
\left|A_Q\left(\varphi_j*\vec{f}\right)(x)\right|^{\gamma}\\
&\lesssim\sum_{Q\in\mathcal{D}_j}\mathbf{1}_Q(x)\sum_{R\in\mathcal{D}_j}
\frac{2^{jn}}{(1+2^{j}|x_Q-x_R|)^{\lambda}}\int_{R}
\left|A_{R}\left(\varphi_j*\vec{f}\right)(y)\right|^{\gamma}\,dy\nonumber\\
&\sim\sum_{R\in\mathcal{D}_j}\frac{2^{jn}}{(1+2^{j}|x-x_R|)^{\lambda}}
\int_{R}\left|A_{R}\left(\varphi_j*\vec{f}\right)(y)\right|^{\gamma}\,dy\nonumber.
\end{align}
By \eqref{eq-Ajgamma} and Lemmas \ref{lem-3P-sum1}(iv)
and \ref{growEST} and by Lemma \ref{lem-xy-QR-P}(i) twice, we find that,
for any $j\in\mathbb{Z}$, $\vec{f}\in(\mathcal{S}'_{\infty})^m$,
and $x\in\mathbb{R}^n$,
\begin{align}\label{eq-Ajgamma-1}
\left|\mathbb{A}_j(x)\left(\varphi_j*\vec{f}\right)(x)\right|^{\gamma}
&\lesssim\sum_{R\in\mathcal{D}_j}\sum_{P\in\mathcal{D}_{j+\eta},
P\subset R}\frac{2^{jn}}{(1+2^{j}|x-x_R|)^{\lambda}}\int_{P}
\left|A_{R}A^{-1}_{P}A_{P}\left(\varphi_j*
\vec{f}\right)(y)\right|^{\gamma}\,dy\\
&\leq\sum_{R\in\mathcal{D}_j}\sum_{P\in\mathcal{D}_{j+\eta},
P\subset R}\frac{2^{jn}\|A_{R}A^{-1}_{P}
\|}{(1+2^{j}|x-x_R|)^{\lambda}}\int_{P}
\left|A_{P}\left(\varphi_j*\vec{f}\right)(y)\right|^{\gamma}\,dy\nonumber\\
&\lesssim\sum_{P\in\mathcal{D}_{j+\eta}}\frac{2^{jn}}
{(1+2^{j}|x-x_P|)^{\lambda}}\int_{P}
\left|A_{P}\left(\varphi_j*\vec{f}\right)(y)\right|^{\gamma}\,dy.\nonumber
\end{align}
Applying the choice $\alpha\in(\sqrt{n}2^{-\eta},\infty)$,
we obtain, for any $j\in\mathbb{Z}$, $P\in\mathcal{D}_{j+\eta}$,
and $z\in P$, $P\subset B(z, \alpha2^{-j})$
and $|P|\sim|B(z, \alpha2^{-j})|$. Using this,
we conclude that, for any $j\in\mathbb{Z}$,
$\vec{f}\in(\mathcal{S}'_{\infty})^m$,
and $P\in\mathcal{D}_{j+\eta}$,
\begin{align*}
\int_{P}\left|A_{P}\left(\varphi_j*\vec{f}\right)(y)\right|^{\gamma}\,dy
&=\fint_{P}\int_{P}\left|A_{P}W^{-\frac{1}{p}}(z)
W^{\frac{1}{p}}(z)\left(\varphi_j*\vec{f}\right)(y)
\right|^{\gamma}\,dy\,dz\\
&\leq\fint_{P}\int_{P}\left\|A_PW^{-\frac{1}{p}}(z)
\right\|^{\gamma}\left|W^{\frac{1}{p}}(z)\left(\varphi_j*\vec{f}
\right)(y)\right|^{\gamma}\,dy\,dz\nonumber\\
&=\int_{P}\left\|A_PW^{-\frac{1}{p}}(z)\right\|^{\gamma}
\fint_{P}\left|W^{\frac{1}{p}}(z)\left(\varphi_j*\vec{f}
\right)(y)\right|^{\gamma}\,dy\,dz\nonumber\\
&\lesssim\int_{P}\left\|A_PW^{-\frac{1}{p}}(z)\right\|^{\gamma}
\fint_{B(z, \alpha2^{-j})}\left|W^{\frac{1}{p}}(z)\left(\varphi_j*\vec{f}
\right)(y)\right|^{\gamma}\,dy\,dz,\nonumber
\end{align*}
which, together with H\"{o}lder's inequality, \eqref{eq-LAF},
and \eqref{eq-u}, further implies that
\begin{align}\label{eq-A-S-1}
\int_{P}\left|A_{P}\left(\varphi_j*\vec{f}\right)(y)
\right|^{\gamma}\,dy&\lesssim\left[\int_{P}\left\|A_{P}
W^{-\frac{1}{p}}(z)\right\|^{u}\,dz\right]^{\frac{\gamma}{u}}\\
&\quad\times\left\{\int_{P}\left[\fint_{B(z, \alpha2^{-j})}
\left|W^{\frac{1}{p}}(z)\left(\varphi_j*\vec{f}\right)(y)
\right|^{\gamma}\,dy\right]^{\frac{u}{u-\gamma}}\,dz
\right\}^{\frac{u-\gamma}{u}}\nonumber\\
&\lesssim2^{-jn\frac{\gamma}{u}}
\left(\int_{P}\left\{\left[S_{W, p, \varphi, \alpha}^{r}
\left(\vec{f}\right)\right]_j(z)\right\}^{\frac{\gamma u}{u-\gamma}}
\,dz\right)^{\frac{u-\gamma}{u}}.\nonumber
\end{align}
If $\lambda\in(n,\infty)$, by Lemma \ref{lem-sum-k}
with $j=0$, we find that,
for any $j\in\mathbb{Z}$ and $x\in\mathbb{R}^n$,
\begin{align}\label{eq-sum-P}
\sum_{P\in\mathcal{D}_{j+\eta}}
\frac{1}{(1+2^{j}|x-x_P|)^{\lambda}}
\sim\sum_{P\in\mathcal{D}_{j+\eta}}
\frac{1}{(1+2^{j+\eta}|x-x_P|)^{\lambda}}
&=\sum_{k\in\mathbb{Z}^n}
\frac{1}{(1+|2^{j+\eta}x-k|)^{\lambda}}\sim1.
\end{align}
Combining \eqref{eq-A-S-1} with \eqref{eq-Ajgamma-1}
and applying H\"{o}lder's inequality, \eqref{eq-sum-P}
combined with the assumption $\lambda\in(n+\frac{\gamma u}{u-\gamma}\omega,
\infty)$,  Lemma \ref{lem-xy-QR-P}(ii) twice,
and Lemma \ref{lem-3P-sum1}(i) with $j$ replaced by $j+\eta$,
we conclude that, for any $j\in\mathbb{Z}$, $\vec{f}\in(\mathcal{S}'_{\infty})^m$, and $x\in \mathbb{R}^n$,
\begin{align*}
\left|\mathbb{A}_j(x)\left(\varphi_j*\vec{f}\right)(x)\right|^{\gamma}
&\lesssim\sum_{P\in\mathcal{D}_{j+\eta}}
\frac{2^{jn\frac{u-\gamma}{u}}}{{(1+2^{j}|x-x_P|)^{\lambda}}}
\left(\int_{P}\left\{\left[S_{W, p, \varphi, \alpha}^{r}\left(\vec{f}\right)\right]_j(y)
\right\}^{\frac{\gamma u}{u-\gamma}}
\,dy\right)^{\frac{u-\gamma}{u}}\\
&\leq\left[\sum_{P\in\mathcal{D}_{j+\eta}}
\frac{1}{(1+2^{j}|x-x_P|)^{\lambda}}\right]^{\frac{\gamma}{u}}\\
&\quad\times\left[\sum_{P\in\mathcal{D}_{j+\eta}}
\frac{2^{jn}}{(1+2^{j}|x-x_P|)^{\lambda}}
\int_{P}\left\{\left[S_{W, p, \varphi, \alpha}^{r}
\left(\vec{f}\right)\right]_j(y)\right\}^{\frac{\gamma u}
{u-\gamma}}\,dy\right]^{\frac{u-\gamma}{u}}\\
&\sim\left[\sum_{P\in\mathcal{D}_{j+\eta}}\int_{P}
\frac{2^{jn}}{(1+2^{j}|x-y|)^{\lambda}}
\left\{\left[S_{W, p, \varphi, \alpha}^{r}
\left(\vec{f}\right)\right]_j(y)\right\}^{\frac{\gamma u}{u-\gamma}}
\,dy\right]^{\frac{u-\gamma}{u}}\\
&=\left[\int_{\mathbb{R}^n}\frac{2^{jn}}
{(1+2^j|x-y|)^{\lambda}}\left\{\left[S_{W, p, \varphi, \alpha}^{r}
\left(\vec{f}\right)\right]_j(y)
\right\}^{\frac{\gamma u}{u-\gamma}}
\,dy\right]^{\frac{u-\gamma}{u}}.
\end{align*}
For any $\vec{f}\in(\mathcal{S}'_{\infty})^m$, from
the definition of $\|\cdot\|_{\dot{A}^{s,\upsilon}_{p,q}(\mathbb{A})}$,
the above estimate, and Proposition \ref{prop-gh} combined with
assumptions $\frac{\gamma u}{u-\gamma}\in(0, \Gamma_{p,q})$ and
$\lambda\in(n+\frac{\gamma u}{u-\gamma}\omega,\infty)$
and with $\{g_j\}_{j\in\mathbb{Z}}$ and $\{h_j\}_{j\in\mathbb{Z}}$
replaced, respectively, by $\{2^{js}|\mathbb{A}_j(\varphi_j*\vec{f})|\}_{j\in\mathbb{Z}}$
and $\{2^{js}[S_{W, p, \varphi, \alpha}^{r}
(\vec{f})]_j\}_{j\in\mathbb{Z}}$, we infer that
\begin{align*}
\left\|\vec{f}\right\|_{\dot{A}^{s,\upsilon}_{p,q}(\mathbb{A})}
=\left\|\left\{2^{js}\left|\mathbb{A}_j\left(\varphi_j*\vec{f}\right)\right|
\right\}_{j\in\mathbb{Z}}\right\|_{L\dot{A}_{p, q}^{\upsilon}}
\lesssim\left\|\left\{2^{js}\left[S_{W, p, \varphi, \alpha}^{r}
\left(\vec{f}\right)\right]_j\right\}_{j\in\mathbb{Z}}
\right\|_{L\dot{A}_{p, q}^{\upsilon}},
\end{align*}
which completes the proof of the last inequality
in \eqref{eq-f-S} and hence Theorem \ref{thm-G-L-cha}.
\end{proof}

\section{Molecular and Wavelet Characterizations
of $\dot{A}_{p,q}^{s, \upsilon}(W)$\label{s-cmw}}

This section contains two subsections.
In Subsection \ref{s-cmw-ad}, we aim to establish the
the boundedness of almost diagonal operators on
$\dot{a}_{p,q}^{s,\upsilon}(W)$. It is well known that
the molecular and the wavelet characterizations of
function spaces can be reduced to
the boundedness of almost diagonal operators on
their corresponding sequence spaces via $\varphi$-transform characterizations
(see, for example, \cite{bow05, bow07, bh06, bhyy2,
bhyy3, bhyy5, fj90, syy24}). Based on this idea and the boundedness
of almost diagonal operators on $\dot{a}_{p,q}^{s,\upsilon}(W)$,
in Subsection \ref{s-cmw-mw}, we finally
obtain the molecular and the
wavelet characterizations of $\dot{A}_{p,q}^{s,\upsilon}(W)$.

\subsection{Boundedness of Almost Diagonal Operators
on $\dot{a}_{p,q}^{s, \upsilon}(W)$\label{s-cmw-ad}}

We start with some notions.
Let $U:=\{u_{Q,R}\}_{Q,R\in\mathcal{D}}$ in $\mathbb{C}$.
For any sequence $\vec{t}:=\{\vec{t}_R\}_{R\in\mathcal{D}}$
in $\mathbb{C}^m$, we define $U\vec{t}:=\{(U\vec{t})_Q\}_{Q\in\mathcal{D}}$
by setting, for any $Q\in\mathcal{D}$,
$(U\vec{t})_Q:=\sum_{R\in\mathcal{D}}u_{Q,R}\vec{t}_R$
if this summation is absolutely convergent.
Next, we recall the concept of almost diagonal operators
introduced in \cite[Definition 4.1]{bhyy2}, which is
a slight generalization of the traditional one in \cite[(3.1)]{fj90}.

\begin{definition}\label{DEFadope}
Let $D,E,F\in\mathbb{R}$. The infinite matrix $U^{DEF}:=
\{u^{DEF}_{Q,R}\}_{Q, R\in\mathcal{D}}$ is defined by setting,
for any $Q, R\in\mathcal{D}$,
\begin{align}\label{DEFmatrix}
u^{DEF}_{Q,R}:=\left[1+\frac{|x_Q-x_R|}
{\ell(Q)\vee \ell(R)}\right]^{-D}
\begin{cases}
\displaystyle{\left[\frac{\ell(Q)}{\ell(R)}\right]^E}
& \text{if } \ell(Q)\leq\ell(R), \\
\displaystyle{\left[\frac{\ell(R)}{\ell(Q)}\right]^F}
& \text{if } \ell(R)<\ell(Q).
\end{cases}
\end{align}
An infinite matrix $U:=\{u_{Q,R}\}_{Q,R\in\mathcal{D}}$
in $\mathbb{C}$ is said to be $(D, E, F)$-\emph{almost diagonal}
if there exists a positive constant $C$ such that,
for any $Q, R\in\mathcal{D}$,
$|u_{Q,R}| \leq C u^{DEF}_{Q,R}$.
\end{definition}

To establish the boundedness of
almost diagonal operators on $\dot{a}_{p,q}^{s,\upsilon}(W)$,
we first need to prove the boundedness of
almost diagonal operators on $\dot{a}_{p,q}^{s,\upsilon}$
as follows. In the special case where $\dot{a}_{p,q}^{s,\upsilon}=\dot{a}_{p,q}^{s,\tau}$,
Theorem \ref{thm-bound-ad} coincides with \cite[Theorem 4.4(ii)]{bhyy2}
which is sharp and consequently in this sense
Theorem \ref{thm-bound-ad} is also sharp
[see Remark \ref{rmk-com-ad}(i) for more details].

\begin{theorem}\label{thm-bound-ad}
Let $a\in\{b, f\}$, $s\in\mathbb{R}$, $p, q\in(0, \infty]$
($p<\infty$ if $a=f$), and $D,E,F\in\mathbb{R}$. Suppose that $\delta_1,\delta_2,\omega$ satisfy \eqref{eq-delta1<0} and
$\upsilon\in\mathcal{G}(\delta_1, \delta_2; \omega)$. If
\begin{align*}
D>J_{\dot{a}_{p,q}^{s,\upsilon}}+\left[\omega\wedge
n\left(\delta_2-\frac{1}{p}\right)_+\right],\
E>\frac{n}{2}+s+n\left(\delta_2-\frac{1}{p}\right)_{+},
\text{ and }F>J_{\dot{a}_{p,q}^{s,\upsilon}}-\frac{n}{2}-s-n
\left(\delta_1-\frac{1}{p}\right)_+,
\end{align*}
where
\begin{align}\label{J_nu}
J_{\dot{a}_{p,q}^{s,\upsilon}}:=
\begin{cases}
n
&{\rm if}\ \delta_1>\frac1{p}
\ {\rm or}\ (\delta_1, q)=(\frac1{p}, \infty)
\ (\text{``supercritical case''}),\\
\displaystyle\frac{n}{\min \{1, q\}}	
&{\rm if}\ a=f,\  \delta_1=\delta_2=\frac1{p}, \text{ and } q<\infty
\ (\text{``critical case''}),\\
\displaystyle\frac{n}{1\wedge\Gamma_{p,q}}
&{\rm if}\ \delta_1<\frac1{p},\
{\rm or\ if}\ a=b,\ \delta_1=\delta_2=\frac1{p}, \text{ and } q<\infty,\\
&{\rm or\ if}\ \delta_2>\delta_1=\frac1{p} \text{ and } q<\infty
\ (\text{``subcritical case''}).
\end{cases}
\end{align}

then any $(D, E, F)$-almost diagonal operator
is bounded on $\dot{a}_{p,q}^{s,\upsilon}$.
\end{theorem}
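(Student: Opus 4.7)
The plan is to adapt the now-standard strategy for proving boundedness of almost diagonal operators on sequence spaces (as originally carried out in \cite{fj90} and developed further in \cite{bhyy2}) to the present generalized growth-function setting, using Proposition \ref{prop-dct-gh} as the central technical device converting discrete Peetre-type majorants back into sequence norms.

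First, I would split $U=U_{0}+U_{1}$ into the ``upper'' part supported on pairs $(Q,R)$ with $\ell(Q)\leq\ell(R)$ and the ``lower'' part on pairs with $\ell(R)<\ell(Q)$, and organize each part dyadically in $i:=|j_Q-j_R|$. For the lower part $U_{1}$, I would use the definition of $(D,E,F)$-almost diagonality to estimate, for fixed $Q\in\mathcal{D}$,
\[
\bigl|(U_{1}t)_{Q}\bigr|\lesssim\sum_{i=1}^{\infty}2^{-iF}
\sum_{R\in\mathcal{D}_{j_{Q}+i}}\frac{|t_{R}|}{[1+2^{j_{Q}}|x_{Q}-x_{R}|]^{D}},
\]
then rewrite the inner sum, using Lemma \ref{lem-xy-QR-P}(i) together with $\ell(R)=2^{-i}\ell(Q)$, so that it becomes comparable, up to the factor $2^{in/2}$ coming from $\widetilde{\mathbf{1}}_{Q}$, to the Peetre-type majorant $t^{*}_{r,D}$ from \eqref{eq-lamda*} at the finer scale $j_{Q}+i$, for $r$ chosen in $(0,\Gamma_{p,q})$ so that $D$ exceeds the threshold from Proposition \ref{prop-dct-gh}. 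Proposition \ref{prop-dct-gh} will then control each scale-wise majorant by $\|t\|_{\dot{a}_{p,q}^{s,\upsilon}}$, and summation of the resulting geometric series in $i$ is convergent exactly under the stated hypothesis on $F$ (taking into account the factor $2^{is}$ produced when passing $2^{js}$ through the scale shift).

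For the upper part $U_{0}$, I would similarly bound
\[
\bigl|(U_{0}t)_{Q}\bigr|\lesssim\sum_{i=0}^{\infty}2^{-iE}
\sum_{R\in\mathcal{D}_{j_{Q}-i}}\frac{|t_{R}|}{[1+2^{j_{Q}-i}|x_{Q}-x_{R}|]^{D}},
\]
and then, for each test cube $P\in\mathcal{D}$ appearing in the supremum defining $\|\cdot\|_{L\dot{A}_{p,q}^{\upsilon}}$, split the inner sum according to whether $R\subset 3P$ or $R\subset(3P)^{\complement}$, as in the proof of Proposition \ref{prop-gh}. For the local piece one applies Lemmas \ref{lem-3P-sum1}, \ref{lem-xy-QR-P}, and \ref{lem-sum-k}, together with the growth inequality from Definition \ref{def-grow-func}, to get the correct balance between the decay $2^{-iE}$ and the possible growth $2^{in(\delta_{2}-1/p)_{+}}$ of $\upsilon(P_{j_{Q}-i})/\upsilon(P)$; convergence forces exactly the stated lower bound on $E$. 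For the far-away piece, the factor $[1+|x_{Q}-x_{R}|/\ell(R)]^{-D}$ combined with Lemma \ref{lem-sum-k} produces a tail $\sum_{k}(1+|k|)^{-D+\omega}$ whose summability forces the stated lower bound $D>J_{\dot{a}_{p,q}^{s,\upsilon}}+[\omega\wedge n(\delta_{2}-1/p)_{+}]$.

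The main obstacle will be unifying the three cases of $J_{\dot{a}_{p,q}^{s,\upsilon}}$ in \eqref{J_nu}. In the subcritical case, $r$ can be chosen arbitrarily small in $(0,\Gamma_{p,q})$, so the Peetre-type majorant machinery yields the threshold $n/(1\wedge\Gamma_{p,q})$ directly. In the supercritical case $\delta_{1}>1/p$ (or $(\delta_{1},q)=(1/p,\infty)$), the growth of $\upsilon$ on small cubes absorbs the Fefferman--Stein factor and an $\ell^{1}$ triangle inequality over scales, rather than a convexified Fefferman--Stein inequality, suffices to reach the sharp threshold $J_{\dot{a}_{p,q}^{s,\upsilon}}=n$. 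The genuinely critical case $\delta_{1}=\delta_{2}=1/p$, $a=f$, $q<\infty$ is the most delicate: there $r$ must be chosen equal to $q\wedge 1$ and the $\ell^{q}$-triangle inequality applied together with the monotonicity of $t^{*}_{r,\lambda}$ in $r$, paralleling (but going beyond) the power-weight argument of \cite[Theorem 4.4(ii)]{bhyy2}, to yield the sharp threshold $n/\min\{1,q\}$.
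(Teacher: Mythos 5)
The subcritical part of your plan is sound and close in substance to the paper's Lemma \ref{lem-adbound}: decomposing by relative scale, reducing the inner sums to convolution-type majorants, and invoking Proposition \ref{prop-gh} (or its discrete counterpart Proposition \ref{prop-dct-gh}) with $r\in(0,\Gamma_{p,q})$ does produce the threshold $n/(1\wedge\Gamma_{p,q})$, and your accounting of $2^{-iE}$ against the growth $\upsilon(P_{j_Q-i})/\upsilon(P)\lesssim 2^{in(\delta_2-1/p)}$ matches Steps (1)--(3) of that proof. (The paper splits relative to the test cube $P$ into three pieces rather than your two-plus-$3P$ splitting, but this is a cosmetic difference.)

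The genuine gap is in the critical and supercritical cases. In the critical case ($a=f$, $\delta_1=\delta_2=1/p$, $q<\infty$) you propose taking $r=q\wedge 1$ in the majorant machinery, but Proposition \ref{prop-dct-gh} requires $\lambda>\frac{n}{r\wedge\Gamma_{p,q}}+[\cdots]$ and the underlying convexified Fefferman--Stein inequality forces $r<\Gamma_{p,q}=p\wedge q$; hence when $p<1\wedge q$ the best threshold this route can yield is $n/(p\wedge q)$, strictly worse than the claimed $n/\min\{1,q\}$. The sharp critical exponent is not reached by a cleverer choice of $r$: it requires the $p$-invariance $\dot f^{s,1/p}_{p,q}=\dot f^{s}_{\infty,q}$ of \eqref{eq-f=f}, which is exactly what Lemma \ref{lem-ident}(ii) supplies in the paper, after which one quotes the Frazier--Jawerth boundedness on $\dot f^{s}_{\infty,q}$. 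Likewise, the supercritical threshold $J=n$ rests on the identification $\|t\|_{\dot a^{s,\upsilon}_{p,q}}\sim\|\{t_Q/\upsilon(Q)\}_{Q\in\mathcal D}\|_{\dot f^{s-n/p}_{\infty,\infty}}$ of Lemma \ref{lem-ident}(i), which converts the norm into a weighted supremum so that an $\ell^1$ summation in $R$ (applied to the conjugated matrix $\{u_{Q,R}\upsilon(R)/\upsilon(Q)\}$, which is $(D-\omega,E-n\delta_2,F+n\delta_1)$-almost diagonal) suffices. Your remark that the growth of $\upsilon$ ``absorbs the Fefferman--Stein factor'' gestures at this, but you neither state nor prove such an identification, and without it the $\ell^1$ argument does not interact correctly with the $L^p(\ell^q)$ or $\ell^q(L^p)$ structure for small $p,q$. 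To close the proof you need these two identification lemmas (or equivalents), which is precisely how the paper disposes of the non-subcritical cases.
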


The following lemma proves
Theorem \ref{thm-bound-ad} in the subcritical case.
The key idea of its proof is to control almost
diagonal operators in terms of the discrete Littlewood--Paley
$g^*_{\lambda}$-function as in \eqref{eq-lamda*},
whose boundedness on $\dot{a}_{p,q}^{s,\upsilon}$
is exactly established in Proposition \ref{prop-gh}.

\begin{lemma}\label{lem-adbound}
Let $a\in\{b, f\}$, $s\in\mathbb{R}$, and
$p, q\in(0, \infty]$ ($p<\infty$ if $a=f$),
and $D,E,F\in\mathbb{R}$.
Assume that $\delta_1,\delta_2,\omega$ satisfy \eqref{eq-delta1<0}
and $\upsilon\in\mathcal{G}(\delta_1, \delta_2; \omega)$. If
\begin{align*}
D>\frac{n}{1\wedge\Gamma_{p,q}}+\left[\omega\wedge n
\left(\delta_2-\frac{1}{p}\right)_+\right],\ E>\frac{n}{2}+s+
n\left(\delta_2-\frac{1}{p}\right)_{+},\text{ and }
F>\frac{n}{1\wedge\Gamma_{p,q}}-\frac{n}{2}-s,
\end{align*}
where $\Gamma_{p,q}$ is as in \eqref{eq-zeta},	
then any $(D, E, F)$-almost diagonal operator
is bounded on $\dot{a}_{p,q}^{s,\upsilon}$.
\end{lemma}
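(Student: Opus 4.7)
The plan is to pointwise dominate $U\vec{t}$ by majorant sequences $|t|^{*}_{r,\lambda}$ of $\vec{t}$ and then invoke Proposition~\ref{prop-dct-gh}. I would first split $U=\sum_{i\in\mathbb{Z}}U^{(i)}$ by the level gap $i:=j_R-j_Q$, setting $(U^{(i)}\vec{t})_Q:=\sum_{R\in\mathcal{D}_{j_Q+i}}u_{Q,R}t_R$; expanding $u_{Q,R}$ using \eqref{DEFmatrix} gives, for any $Q\in\mathcal{D}_j$, the pointwise bound
\begin{align*}
|(U^{(i)}\vec{t})_Q|\lesssim
\begin{cases}
\displaystyle 2^{iE}\sum_{R\in\mathcal{D}_{j+i}}\frac{|t_R|}{(1+2^{j+i}|x_Q-x_R|)^{D}} & \text{if } i\le 0,\\[1mm]
\displaystyle 2^{-iF}\sum_{R\in\mathcal{D}_{j+i}}\frac{|t_R|}{(1+2^{j}|x_Q-x_R|)^{D}} & \text{if } i>0.
\end{cases}
\end{align*}

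For $i\le 0$, letting $Q^{\uparrow}\in\mathcal{D}_{j+i}$ denote the unique dyadic ancestor of $Q$, the triangle inequality together with $|x_Q-x_{Q^{\uparrow}}|\le\ell(Q^{\uparrow})$ yields $(1+2^{j+i}|x_Q-x_R|)^{-D}\lesssim(1+2^{j+i}|x_{Q^{\uparrow}}-x_R|)^{-D}$, so $|(U^{(i)}\vec{t})_Q|\lesssim 2^{iE}\,|t|^{*}_{1,D,Q^{\uparrow}}$, recognized via \eqref{eq-lamda*} as the majorant at level $j+i$. For $i>0$, the coarser-scale denominator $(1+2^{j}|x_Q-x_R|)^{-D}$ is constant (up to equivalence) as $R$ ranges over the fine cubes contained in a single coarse ancestor $\widetilde{R}\in\mathcal{D}_j$, and grouping $R$'s accordingly produces $|(U^{(i)}\vec{t})_Q|\lesssim 2^{-iF}\,(T^{(i)})^{*}_{1,D,Q}$, where $T^{(i)}_{\widetilde{R}}:=\sum_{R\in\mathcal{D}_{j+i},\,R\subset\widetilde{R}}|t_R|$. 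The merit of this rearrangement is that the right-hand side is now a same-level majorant acting on the aggregated sequence $T^{(i)}$.

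The proof then reduces to norm bookkeeping via Proposition~\ref{prop-dct-gh}, whose hypothesis on $\lambda=D$ is built into the assumption $D>n/(1\wedge\Gamma_{p,q})+[\omega\wedge n(\delta_2-1/p)_+]$. For the layer $i\le 0$, combining the pointwise bound, the identity $\sum_{Q\in\mathcal{D}_j,\,Q\subset\widetilde{Q}}\widetilde{\mathbf{1}}_Q(x)=2^{-in/2}\widetilde{\mathbf{1}}_{\widetilde{Q}}(x)$ for $\widetilde{Q}\in\mathcal{D}_{j+i}$, and re-indexing $j\mapsto j+i$ in the $L\dot{A}_{p,q}^{\upsilon}$-norm yields a prefactor $2^{i(E-n/2-s)}$ times $\||t|^{*}_{1,D}\|_{\dot{a}_{p,q}^{s,\upsilon}}\lesssim\|\vec{t}\|_{\dot{a}_{p,q}^{s,\upsilon}}$. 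For the layer $i>0$, the identity $T^{(i)}_Q=2^{(j+i)n/2}\int_{Q}\sum_{R\in\mathcal{D}_{j+i}}|t_R|\widetilde{\mathbf{1}}_R(y)\,dy$ dominates $T^{(i)}$ by $2^{in/2}$ times an averaging of the level-$(j+i)$ slice of $\vec{t}$, and Proposition~\ref{prop-gh} (applied at scale $2^{-(j+i)}$ with exponent $r\in(0,\Gamma_{p,q})$ and a sufficiently large $\lambda$) delivers a strictly negative exponent in $i$ under the hypothesis $F>n/(1\wedge\Gamma_{p,q})-n/2-s$. Summing the resulting geometric series over $i\in\mathbb{Z}$ produces $\|U\vec{t}\|_{\dot{a}_{p,q}^{s,\upsilon}}\lesssim\|\vec{t}\|_{\dot{a}_{p,q}^{s,\upsilon}}$.

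The main technical obstacle is the interaction between the filtration $\mathbf{1}_{j\ge j_P}$ in \eqref{LA_nu} and the index shift $j\mapsto j+i$. When $i\ge 0$, the shift only tightens the filter and is harmless; but when $i<0$, extra scales $j'\in\{j_P+i,\dots,j_P-1\}$ coarser than the testing cube $P$ intrude. These are absorbed by replacing $P$ with its dyadic ancestor of side $2^{|i|}\ell(P)$ and comparing $\upsilon$ on the two via Lemma~\ref{lem-grow-est}(i), at a quantitative cost of at most $2^{|i|n(\delta_2-1/p)_+}$. This is precisely why the hypotheses on $D$ and $E$ carry the additional summands $\omega\wedge n(\delta_2-1/p)_+$ and $n(\delta_2-1/p)_+$, respectively; once absorbed into the $i$-prefactor, the geometric series in $i$ remains strictly summable by the sharp inequalities on $E$ and $F$, completing the plan.
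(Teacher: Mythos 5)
Your level-gap decomposition, the pointwise domination by majorant sequences, and the appeal to Propositions \ref{prop-gh} and \ref{prop-dct-gh} are sound, and the route is a legitimate reorganization of the paper's proof: the paper instead reduces to $s=0$, splits the sum relative to the testing cube $P$ into three regimes ($\ell(R)>\ell(P)$, $\ell(Q)\le\ell(R)\le\ell(P)$, $\ell(R)<\ell(Q)$), proves the case $\Gamma_{p,q}>1$ directly, and recovers $\Gamma_{p,q}\le1$ by a convexification of the quasi-norm; your use of the $r=1$ majorant with $\lambda=D$ lets you treat all $\Gamma_{p,q}$ at once and merge the first two regimes, which is a genuine simplification of the bookkeeping. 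The count for the fine-scale layer $i>0$ is also consistent: the $\ell^1\to\ell^r$ aggregation of the entries $|t_R|$ into $T^{(i)}$ is exactly what produces the $n/(1\wedge\Gamma_{p,q})$ in the condition on $F$, and you correctly invoke $r\in(0,\Gamma_{p,q})$ there.

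There is, however, one genuine gap, at the single most delicate point of the lemma. For the intruding coarse levels $j'\in\{j_P+i,\dots,j_P-1\}$ you claim a cost of ``at most $2^{|i|n(\delta_2-1/p)_+}$'' obtained by ``replacing $P$ with its dyadic ancestor \dots and comparing $\upsilon$ on the two via Lemma~\ref{lem-grow-est}(i)''. That mechanism alone gives only $\upsilon(P_{j'})/\upsilon(P)\lesssim 2^{(j_P-j')n\delta_2}$, i.e.\ a cost of $2^{|i|n\delta_2}$, and with that cost the geometric series over $i\le 0$ would require $E>\frac{n}{2}+s+n\delta_2$, which is strictly stronger than the hypothesis $E>\frac{n}{2}+s+n(\delta_2-\frac1p)_+$ whenever $\delta_2>0$. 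The missing ingredient is the gain of $2^{(j'-j_P)n/p}$ coming from the fact that, for $j'<j_P$, the level-$j'$ slice of your majorant restricted to $P$ is a constant multiple of $\mathbf{1}_P$ (only the single ancestor cube $P_{j'}$ meets $P$), so that $\|\cdot\|_{L^p(P)}=(|P|/|P_{j'}|)^{1/p}\|\cdot\|_{L^p(P_{j'})}$ before one tests the $\upsilon$-normalized norm \eqref{LA_nu} at $P_{j'}$; this is precisely the estimate $\|h_i\|_{L^p(P)}\sim 2^{(i-j_P)n/p}\|h_i\|_{L^p(P_i)}$ in the paper's treatment of the regime $\ell(R)>\ell(P)$. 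The observation is available in your setup and the gap is fillable, but as written the stated mechanism does not deliver the claimed exponent, and the sharp condition on $E$ (hence the lemma) would not follow.
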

\begin{proof}
We first point out that, to prove the present theorem,
it is enough to consider the case where $s=0$.
Indeed, suppose that the present theorem holds
for $\dot{a}_{p,q}^{0,\upsilon}$. Then, for any
$s\in\mathbb{R}$ and $t:=\{t\}_{R\in\mathcal{D}}$
in $\mathbb{C}$, let $\widetilde{t}
:=\{[\ell(R)]^{-s}t_R\}_{R\in\mathcal{D}}$.
Observe that, for any $(D, E, F)$-almost diagonal
operator $U:=\{u_{Q,R}\}_{Q, R\in\mathcal{D}}$,
$\widetilde{U}:=\{\widetilde{u}_{Q,R}\}_{Q, R\in\mathcal{D}}
:=\{u_{Q,R}[{\ell(R)}/{\ell(Q)}]^{s}
\}_{Q, R\in\mathcal{D}}$ is $(D, E-s, F+s)$-almost diagonal.
By the fact that $\|t\|_{\dot{a}_{p,q}^{s,\upsilon}}
=\|\widetilde{t}\|_{\dot{a}_{p,q}^{0,\upsilon}}$
for any $t\in\dot{a}_{p,q}^{s,\upsilon}$,
the definition of $\widetilde{U}$,
and the assumption that the present theorem holds
for $\dot{a}_{p,q}^{0,\upsilon}$ and hence $\widetilde{U}$
is bounded on $\dot{a}_{p,q}^{0,\upsilon}$, we find that,
for any $t\in\dot{a}_{p,q}^{s,\upsilon}$,
\begin{align*}
\|Ut\|_{\dot{a}_{p,q}^{s,\upsilon}}=
\left\|\widetilde{\left(Ut\right)}\right\|_{\dot{a}_{p,q}^{0,\upsilon}}
=\left\|\widetilde{U}\,\widetilde{t}\right\|_{\dot{a}_{p,q}^{0,\upsilon}}
\lesssim\left\|\widetilde{t}\right\|_{\dot{a}_{p,q}^{0,\upsilon}}
=\left\|t\right\|_{\dot{a}_{p,q}^{s,\upsilon}},
\end{align*}
which implies the boundedness of $U$ on $\dot{a}_{p,q}^{s,\upsilon}$.

Based on the above discussion and Definition \ref{DEFadope},
to prove the present theorem, we only need to show,
for any $t:=\{t_R\}
_{R\in\mathcal{D}}\in\dot{a}_{p,q}^{0,\upsilon}$,
\begin{align}\label{eq-|B|}
\left\|U^{DEF}|t|\right\|_{\dot{a}_{p,q}^{0,\upsilon}}
\lesssim\|t\|_{\dot{a}_{p,q}^{0,\upsilon}},
\end{align}
where $U^{DEF}$ is as in \eqref{DEFmatrix}
and $|t|:=\{|t_R|\}_{R\in\mathcal{D}}$.
We first establish \eqref{eq-|B|} for the case where
$\Gamma_{p,q}>1$ with $\Gamma_{p,q}$ as in \eqref{eq-zeta}.
From Definition \ref{DEFadope}, the definitions of both $\|\cdot\|_{\dot{a}_{p,q}^{0,\upsilon}}$
and $\|\cdot\|_{L\dot{A}^{\upsilon}_{p,q}}$,
and the quasi-triangle inequality
of $\|\cdot\|_{L\dot{A}_{p,q}}$, we infer that,
for any $t:=\{t_R\}_{R\in\mathcal{D}}
\in\dot{a}_{p,q}^{0,\upsilon}$,
\begin{align}\label{eq-I-II-III}
\left\|U^{DEF}|t|\right\|_{\dot{a}_{p,q}^{0,\upsilon}}
&=\left\|\left\{\sum_{Q\in\mathcal{D}_j}\widetilde{\mathbf{1}}_Q
\sum_{R\in\mathcal{D}}u_{Q,R}^{DEF}|t_{R}|
\right\}_{j\in\mathbb Z}\right\|_{L\dot{A}^{\upsilon}_{p,q}}\\
&=\sup_{P\in\mathcal{D}} \frac{1}{\upsilon(P)}
\left\|\left\{\sum_{Q\in\mathcal{D}_j}\widetilde{\mathbf{1}}_Q
\sum_{R\in\mathcal{D}}u_{Q,R}^{DEF}|t_{R}|
\mathbf{1}_P \mathbf{1}_{j\geq j_P}
\right\}_{j\in\mathbb Z}\right\|_{L\dot{A}_{p,q}}\nonumber\\
&\lesssim\sup_{P\in\mathcal{D}} \frac{1}{\upsilon(P)}
\left\|\left\{\sum_{Q\in\mathcal{D}_j}\widetilde{\mathbf{1}}_Q
\sum_{\genfrac{}{}{0pt}{}{R\in\mathcal{D}}{\ell(R)>\ell(P)}}
u_{Q,R}^{DEF}|t_{R}|
\mathbf{1}_P \mathbf{1}_{j\geq j_P}
\right\}_{j\in\mathbb Z}\right\|_{L\dot{A}_{p, q}}\nonumber\\
&\quad+\sup_{P\in\mathcal{D}}\frac{1}{\upsilon(P)}
\left\|\left\{\sum_{Q\in\mathcal{D}_j}\widetilde{\mathbf{1}}_Q
\sum_{\genfrac{}{}{0pt}{}{R\in\mathcal{D}}
{\ell(Q)\leq\ell(R)\leq\ell(P)}}u_{Q,R}^{DEF}|t_{R}|
\mathbf{1}_P \mathbf{1}_{j\geq j_P}\right\}_{j\in\mathbb Z}
\right\|_{L\dot{A}_{p, q}}\nonumber\\
&\quad+\sup_{P\in\mathcal{D}}\frac{1}{\upsilon(P)}
\left\|\left\{\sum_{Q\in\mathcal{D}_j}\widetilde{\mathbf{1}}_Q
\sum_{\genfrac{}{}{0pt}{}{R\in\mathcal{D}}{\ell(R)<\ell(Q)}}
u_{Q,R}^{DEF}|t_{R}|\mathbf{1}_P \mathbf{1}_{j\geq j_P}
\right\}_{j\in\mathbb Z}\right\|_{L\dot{A}_{p, q}}\nonumber\\
&=:\operatorname{I}+\operatorname{II}+\operatorname{III}.\nonumber
\end{align}

Next, we estimate I, II, and III, respectively, by Steps (1), (2), and (3).

\emph{Step (1)}
For any $i\in\mathbb{Z}$, $t:=\{t_R\}_{R\in\mathcal{D}}$
in $\mathbb{C}$, and $x\in\mathbb{R}^n$, let
\begin{align}\label{eq-h_i}
h_i(x):=\int_{\mathbb{R}^n}\frac{2^{in}
|t_i(y)|}{(1+2^i|x-y|)^D}\,dy,
\end{align}
where, for any $i\in\mathbb{Z}$,
$t_i$ is as in \eqref{vect_j}.
By this, the fact that $\mathcal{D}=\bigcup_{j\in\mathbb{Z}}\mathcal{D}_j$,
\eqref{DEFmatrix}, \eqref{vect_j},
Lemma \ref{lem-3P-sum1}(i) with $j$ replaced by $i$,
and Lemma \ref{lem-xy-QR-P}(i) together with
the fact $\ell(Q)\leq\ell(R)$, we find that, for any
$Q, P\in\mathcal{D}$ with $Q\subset P$,
$t:=\{t_R\}_{R\in\mathcal{D}}$ in $\mathbb{C}$, and $x\in Q$,
\begin{align}\label{eq-u-h}
\sum_{\genfrac{}{}{0pt}{}{R\in\mathcal{D}}{\ell(R)>\ell(P)}}
u_{Q,R}^{DEF}|t_{R}|
&=\sum_{i=-\infty}^{j_P-1}
\sum_{R\in\mathcal{D}_i}\left[\frac{\ell(Q)}{\ell(R)}\right]^E
\frac{|t_R|}{(1+[\ell(R)]^{-1}|x_Q-x_R|)^{D}}\\
&=\sum_{i=-\infty}^{j_P-1}2^{(i-j_Q)E}\sum_{R\in\mathcal{D}_i}
\frac{|t_R|}{(1+2^i|x_{Q}-x_R|)^{D}}\nonumber\\
&\sim\sum_{i=-\infty}^{j_P-1}2^{(i-j_Q)E}2^{-\frac{in}{2}}
\sum_{R\in\mathcal{D}_i}\int_{R}\frac{2^{in}
|t_i(y)|}{(1+2^i|x-y|)^D}\,dy\nonumber\\
&\sim\sum_{i=-\infty}^{j_P-1}2^{(i-j_Q)E}2^{-\frac{in}{2}}
\int_{\mathbb{R}^n}\frac{2^{in}
|t_i(y)|}{(1+2^i|x-y|)^D}\,dy
=\sum_{i=-\infty}^{j_P-1}2^{(i-j_Q)E}2^{-\frac{in}{2}}h_i(x).\nonumber
\end{align}
Applying this, the definition of
$\|\cdot\|_{L\dot{A}_{p,q}}$, and
the triangle inequality of $\|\cdot\|^{r}_{L\dot{A}_{p,q}}$
with $r:=p\wedge q\wedge1$, we obtain,
for any $P\in\mathcal{D}$ and
$t:=\{t_R\}_{R\in\mathcal{D}}$ in $\mathbb{C}$,
\begin{align*}
&\left\|\left\{\sum_{Q\in\mathcal{D}_j}\widetilde{\mathbf{1}}_Q
\sum_{\genfrac{}{}{0pt}{}{R\in\mathcal{D}}{\ell(R)>\ell(P)}}
u_{Q,R}^{DEF}|t_{R}|\mathbf{1}_P
\mathbf{1}_{j\geq j_P}\right\}_{j\in\mathbb{Z}}\right\|_{L\dot{A}_{p,q}}\\
&\quad\sim\left\|\left\{\sum_{i=-\infty}^{j_P-1}
2^{(i-j)(E-\frac{n}{2})}h_i\mathbf{1}_P \mathbf{1}_{j\geq j_P}
\right\}_{j\in\mathbb Z}\right\|_{L\dot{A}_{p, q}}\\
&\quad\leq\left[\sum_{i=-\infty}^{j_P-1}2^{(i-j_P)(E-\frac{n}{2})r}
\left\|\left\{h_i\mathbf{1}_P \mathbf{1}_{j\geq j_P}
\right\}_{j\in\mathbb{Z}}\right\|^r_{L\dot{A}_{p, q}}\right]^{\frac{1}{r}}
=\left[\sum_{i=-\infty}^{j_P-1}2^{(i-j_P)(E-\frac{n}{2})r}
\left\|h_i\right\|^r_{L^p(P)}\right]^{\frac{1}{r}}=:\Omega.
\end{align*}

To estimate $\Omega$,
for any $P\in\mathcal{D}$ and $i\in(-\infty, j_P-1]\cap\mathbb{Z}$,
let $P_i\in\mathcal{D}_i$ be as in Lemma \ref{lem-3P-sum1}(iv).
From Lemma \ref{lem-xy-QR-P}(ii) twice and \eqref{eq-h_i},
it follows that,  for any $P\in\mathcal{D}$, $i\in(-\infty, j_P-1]\cap\mathbb{Z}$, $x\in P$, and $y\in\mathbb{R}^n$,
$(1+2^i|x-y|)^D\sim(1+2^i|x_{P_i}-y|)^D$ and hence $h_i(x)\sim h_i(x_{P_i})$.
By this, the definitions of both $\|\cdot\|_{L\dot{A}_{p, q}}$
and $\|\cdot\|_{L\dot{A}^{\upsilon}_{p, q}}$, and
Lemma \ref{lem-grow-est}(i) with $Q$ and $P$
replaced, respectively, by $P$ and $P_i$,
we find that, for any $P\in\mathcal{D}$ and
$i\in(-\infty, j_P-1]\cap\mathbb{Z}$,
\begin{align*}
\left\|h_i\right\|_{L^p(P)}\sim
\left|h_i(x_{P_i})\right||P|^{\frac{1}{p}}
=2^{(i-j_P)\frac{n}{p}}\left|h_i(x_{P_i})
\right||P_i|^{\frac{1}{p}}\sim
2^{(i-j_P)\frac{n}{p}}\left\|h_i\right\|_{L^p(P_i)}
\end{align*}
and hence
\begin{align*}
\left\|h_i\right\|_{L^p(P)}
&\sim2^{(i-j_P)\frac{n}{p}}\left\|h_i\right\|_{L^p(P_i)}
\leq2^{(i-j_P)\frac{n}{p}}\left\|
\left\{h_j\mathbf{1}_{P_i}\mathbf{1}_{j\geq i}
\right\}_{j\in\mathbb{Z}}\right\|_{L\dot{A}_{p, q}}\\
&\leq2^{(i-j_P)\frac{n}{p}}\left\|\left\{h_j\right\}_{j\in\mathbb{Z}}
\right\|_{L\dot{A}^{\upsilon}_{p, q}}\upsilon(P_i)
\lesssim2^{(i-j_P)(\frac{n}{p}-n\delta_2)}
\left\|\left\{h_j\right\}_{j\in\mathbb{Z}}
\right\|_{L\dot{A}^{\upsilon}_{p, q}}\upsilon(P).
\end{align*}
Using this, the assumption that
$E>\frac{n}{2}+n(\delta_2-\frac{1}{p})_{+}$,
and the definition of $\|\cdot\|_{\dot{a}_{p, q}^{0,\upsilon}}$,
we conclude that
\begin{align*}
&\Omega\lesssim\left[\sum_{i=-\infty}^{j_P-1}
2^{(i-j_P)(E-\frac{n}{2}+\frac{n}{p}-n\delta_2)r}
\right]^{\frac{1}{r}}\left\|\left\{h_j\right\}_{j\in\mathbb{Z}}
\right\|_{L\dot{A}^{\upsilon}_{p, q}}\upsilon(P)\\
&\quad\sim\left\|\left\{h_j\right\}_{j\in\mathbb{Z}}
\right\|_{L\dot{A}^{\upsilon}_{p, q}}\upsilon(P)
\lesssim\upsilon(P)
\left\|\left\{t_j\right\}_{j\in\mathbb{Z}}
\right\|_{L\dot{A}^{\upsilon}_{p, q}}=
\upsilon(P)\left\|t\right\|_{\dot{a}_{p, q}^{0,\upsilon}},
\end{align*}
which further implies that $\operatorname{I}\lesssim
\left\|t\right\|_{\dot{a}_{p, q}^{0,\upsilon}}$.
This gives the desired estimate of $\operatorname{I}$.

\emph{Step (2)}
Applying the same argument as that used to prove \eqref{eq-u-h},
we obtain, for any $Q, P\in\mathcal{D}$ with $Q\subset P$,
$t:=\{t_R\}_{R\in\mathcal{D}}$ in $\mathbb{C}$, and $x\in Q$,
\begin{align*}
\sum_{\genfrac{}{}{0pt}{}{R\in\mathcal{D}}{\ell(Q)\leq\ell(R)\leq\ell(P)}}
u_{Q,R}^{DEF}|t_{R}|
\sim\sum_{i=j_P}^{j_Q}2^{(i-j_Q)E}2^{-\frac{in}{2}}h_i(x),
\end{align*}
where $h_i$ is as in \eqref{eq-h_i}.
By this, we find that,
for any $P\in\mathcal{D}$ and
$t:=\{t_R\}_{R\in\mathcal{D}}
\in\dot{a}_{p, q}^{0,\upsilon}$,
\begin{align*}
&\left\|\left\{\sum_{Q\in\mathcal{D}_j}\widetilde{\mathbf{1}}_Q
\sum_{\genfrac{}{}{0pt}{}{R\in\mathcal{D}}
{\ell(Q)\leq\ell(R)\leq\ell(P)}}
u_{Q,R}^{DEF}|t_{R}|\mathbf{1}_P
\mathbf{1}_{j\geq j_P}\right\}_{j\in\mathbb{Z}}\right\|_{L\dot{A}_{p,q}}\\
&\quad\sim\left\|\left\{\sum_{i=j_P}^{j}2^{(i-j)(E-\frac{n}{2})}
h_i\mathbf{1}_P \mathbf{1}_{j\geq j_P}
\right\}_{j\in\mathbb Z}\right\|_{L\dot{A}_{p, q}}
=\left\|\left\{\sum_{l=0}^{j-j_P}2^{-l(E-\frac{n}{2})}
h_{j-l}\mathbf{1}_P \mathbf{1}_{j\geq j_P}
\right\}_{j\in\mathbb{Z}}\right\|_{L\dot{A}_{p, q}}\\
&\quad=\left\|\left\{\sum_{l=0}^{\infty}2^{-l(E-\frac{n}{2})}
h_{j-l}\mathbf{1}_P \mathbf{1}_{j\geq j_P+l}
\right\}_{j\in\mathbb{Z}}\right\|_{L\dot{A}_{p, q}}=:\Lambda.
\end{align*}
This, combined with \eqref{eq-I-II-III},
the triangle inequality of $\|\cdot\|^{r}_{L\dot{A}_{p,q}}$,
the definitions of $\|\cdot\|_{L\dot{A}^{\upsilon}_{p, q}}$,
$\|\cdot\|_{L\dot{A}_{p, q}}$, and
$\|\cdot\|_{\dot{a}_{p, q}^{0,\upsilon}}$,
the assumption $E-\frac{n}{2}>0$,
and Proposition \ref{prop-gh} together with
$\Gamma_{p,q}>1$ and $D>n+[\omega\wedge n(\delta_2-\frac{1}{p})_+]$
and with $\{g_j\}_{j\in\mathbb{Z}}$ and
$\{h_j\}_{j\in\mathbb{Z}}$ replaced, respectively, by
$\{h_j\}_{j\in\mathbb{Z}}$ and $\{t_j\}_{j\in\mathbb{Z}}$,
further implies that
\begin{align*}
\Lambda&\leq\left[\sum_{l=0}^{\infty}
2^{-l(E-\frac{n}{2})r}\left\|\left\{h_{j-l}
\mathbf{1}_P \mathbf{1}_{j\geq j_P+l}\right\}_{j\in\mathbb Z}
\right\|^{r}_{L\dot{A}_{p, q}}\right]^{\frac{1}{r}}
=\left[\sum_{l=0}^{\infty}2^{-l(E-\frac{n}{2})r}
\left\|\left\{h_j\mathbf{1}_P\mathbf{1}_{j\geq j_P}
\right\}_{j\in\mathbb{Z}}\right\|^{r}_{L\dot{A}_{p, q}}\right]^{\frac{1}{r}}\\
&\sim\left\|\left\{h_j\mathbf{1}_P\mathbf{1}_{j\geq j_P}
\right\}_{j\in\mathbb{Z}}\right\|_{L\dot{A}_{p, q}}
\leq\upsilon(P)\left\|\left\{h_j\right\}_{j\in\mathbb{Z}}
\right\|_{L\dot{A}^{\upsilon}_{p, q}}\lesssim\upsilon(P)
\left\|\left\{t_j\right\}_{j\in\mathbb{Z}}
\right\|_{L\dot{A}^{\upsilon}_{p, q}}=
\upsilon(P)\left\|t\right\|_{\dot{a}_{p, q}^{0,\upsilon}}
\end{align*}
and hence $\operatorname{II}\lesssim\left\|t\right\|_{\dot{a}_{p, q}^{0,\upsilon}}$,
which establishes the desired estimate of II.

\emph{Step (3)}	For any
$i, j\in\mathbb{Z}$, $t:=\{t_R\}_{R\in\mathcal{D}}$
in $\mathbb{C}$, and $x\in\mathbb{R}^n$, let
\begin{align*}
g_{i, j}(x):=\int_{\mathbb{R}^n}\frac{2^{j n}
|t_i(y)|}{(1+2^{j}|x-y|)^D}\,dy,
\end{align*}
where, for any $i\in\mathbb{Z}$,
$t_i$ is as in \eqref{vect_j}.
From this, the fact that $\mathcal{D}=\bigcup_{j\in\mathbb{Z}}\mathcal{D}_j$,
\eqref{DEFmatrix}, \eqref{vect_j},
Lemma \ref{lem-3P-sum1}(i) with $j$ replaced by $i$,
and Lemma \ref{lem-xy-QR-P}(i) with the fact $\ell(R)<\ell(Q)$
in the following calculation, it follows that,
for any $Q\in\mathcal{D}$, $t:=\{t_R\}_{R\in\mathcal{D}}$
in $\mathbb{C}$, and $x\in Q$,
\begin{align*}
\sum_{\genfrac{}{}{0pt}{}{R\in\mathcal{D}}{\ell(R)<\ell(Q)}}
u_{Q,R}^{DEF}|t_{R}|
&=\sum_{i=j_Q+1}^{\infty}\sum_{R\in\mathcal{D}_i}
\left[\frac{\ell(R)}{\ell(Q)}\right]^{F}
\frac{|t_{R}|}{(1+[\ell(Q)]^{-1}|x_Q-x_R|)^{D}}\nonumber\\
&=\sum_{i=j_Q+1}^{\infty}2^{(j_Q-i)F}\sum_{R\in\mathcal{D}_i}
\frac{|t_R|}{(1+2^{j_Q}|x_{Q}-x_R|)^{D}}\nonumber\\
&\sim\sum_{i=j_Q+1}^{\infty}2^{(j_Q-i)(F-n)}2^{-\frac{in}{2}}
\sum_{R\in\mathcal{D}_i}\int_{R}\frac{2^{j_Q n}|t_i(y)|}{(1+2^{j_Q}|x-y|)^D}\,dy\nonumber\\
&=\sum_{i=j_Q+1}^{\infty}2^{(j_Q-i)(F-n)}2^{-\frac{in}{2}}
\int_{\mathbb{R}^n}\frac{2^{j_Q n}|t_i(y)|}{(1+2^{j_Q}|x-y|)^D}\,dy\\
&=\sum_{i=j_Q+1}^{\infty}2^{(j_Q-i)(F-n)}2^{-\frac{in}{2}}g_{i, j_Q}(x)\nonumber.
\end{align*}
This, together with \eqref{eq-I-II-III},
the definitions of $\|\cdot\|_{L\dot{A}^{\upsilon}_{p, q}}$,
$\|\cdot\|_{L\dot{A}_{p, q}}$, and
$\|\cdot\|_{\dot{a}_{p, q}^{0,\upsilon}}$,
the assumption that $F-\frac{n}{2}>0$,
and Proposition \ref{prop-gh} combined with
$\Gamma_{p,q}>1$ and $D>n+[\omega\wedge n(\delta_2-\frac{1}{p})_+]$
and with $\{g_j\}_{j\in\mathbb{Z}}$ and
$\{h_j\}_{j\in\mathbb{Z}}$ replaced, respectively, by
$\{g_{j+l, j}\}_{j\in\mathbb{Z}}$ and
$\{t_{j+l}\}_{j\in\mathbb{Z}}$,
further implies that, for any $P\in\mathcal{D}$
and $t:=\{t_R\}_{R\in\mathcal{D}}
\in\dot{a}_{p, q}^{0,\upsilon}$,
\begin{align*}
&\left\|\left\{\sum_{Q\in\mathcal{D}_j}\widetilde{\mathbf{1}}_Q
\sum_{\genfrac{}{}{0pt}{}{R\in\mathcal{D}}
{\ell(R)<\ell(Q)}}u_{Q,R}^{DEF}|t_{R}|\mathbf{1}_P
\mathbf{1}_{j\geq j_P}\right\}_{j\in\mathbb Z}
\right\|_{L\dot{A}_{p, q}}\\
&\quad\sim\left\|\left\{\sum_{i=j+1}^{\infty}2^{(j-i)(F-\frac{n}{2})}
g_{i, j}\mathbf{1}_P \mathbf{1}_{j\geq j_P}\right\}
_{j\in\mathbb Z}\right\|_{L\dot{A}_{p, q}}
=\left\|\left\{\sum_{l=1}^{\infty}2^{-l(F-\frac{n}{2})}
g_{j+l, j}\mathbf{1}_P \mathbf{1}_{j\geq j_P}
\right\}_{j\in\mathbb Z}\right\|_{L\dot{A}_{p, q}}\\
&\quad\leq\left[\sum_{l=1}^{\infty}2^{-l(F-\frac{n}{2})r}
\left\|\left\{g_{j+l, j}\mathbf{1}_P \mathbf{1}_{j\geq j_P}
\right\}_{j\in\mathbb{Z}}\right\|^{r}
_{L\dot{A}_{p, q}}\right]^{\frac{1}{r}}
\leq\upsilon(P)\left[\sum_{l=1}^{\infty}2^{-l(F-\frac{n}{2})r}
\left\|\left\{g_{j+l, j}\right\}_{j\in\mathbb{Z}}\right\|^{r}
_{L\dot{A}^{\upsilon}_{p, q}}\right]^{\frac{1}{r}}\\
&\quad\lesssim\upsilon(P)\left[\sum_{l=1}^{\infty}2^{-l(F-\frac{n}{2})r}
\left\|\left\{t_{j+l}\right\}_{j\in\mathbb{Z}}\right\|^{r}
_{L\dot{A}^{\upsilon}_{p, q}}\right]^{\frac{1}{r}}
\leq\upsilon(P)\left[\sum_{l=1}^{\infty}2^{-l(F-\frac{n}{2})r}
\left\|\left\{t_{j}\right\}_{j\in\mathbb{Z}}\right\|^{r}
_{L\dot{A}^{\upsilon}_{p, q}}\right]^{\frac{1}{r}}\\
&\quad\sim\upsilon(P)\left\|\left\{t_j\right\}_{j\in\mathbb Z}
\right\|_{L\dot{A}^{\upsilon}_{p, q}}=\upsilon(P)
\left\|t\right\|_{\dot{a}_{p, q}^{0,\upsilon}},
\end{align*}
and hence $\operatorname{III}\lesssim\left\|t\right\|
_{\dot{a}_{p, q}^{0,\upsilon}}$, which gives the desired estimate of III.

Combining \eqref{eq-I-II-III} and the estimates in Steps (1), (2), and (3)
together, we conclude that \eqref{eq-|B|} holds
for the case where $\Gamma_{p,q}>1$. To complete the proof of the present
theorem, it suffices to show \eqref{eq-|B|}
for the case where $\Gamma_{p,q}\leq1$. In this case,
let $U:=\{u_{Q,R}\}_{Q, R\in\mathcal{D}}$
be a $(D, E, F)$-almost diagonal
operator. Fix some $\gamma\in(0, \Gamma_{p,q})$
such that $\widetilde{U}:=\{\widetilde{u}_{Q,R}\}_{Q, R\in\mathcal{D}}:=
\{(|Q|/|R|)^{\frac{1-\gamma}{2}}|u_{Q,R}|^{\gamma}\}_{Q, R\in\mathcal{D}}$
is a $(\gamma D,\gamma E+\frac{n}{2}-\frac{\gamma n}{2},
\gamma F-\frac{n}{2}+\frac{\gamma n}{2})$-almost diagonal
operator satisfying all the hypotheses of the present theorem
for $\dot{a}_{p/\gamma, q/\gamma}^{0, {\upsilon}^{\gamma}}$,
where ${\upsilon}^{\gamma}$ is defined by setting,
for any $Q\in\mathcal{D}$, ${\upsilon}^{\gamma}(Q)
:=[\upsilon(Q)]^{\gamma}$.
For any $t:=\{t_R\}_{R\in\mathcal{D}}
\in\dot{a}_{p, q}^{0, \upsilon}$, let $\widetilde{t}
:=\{\widetilde{t}_R\}_{R\in\mathcal{D}}
:=\{|R|^{\frac{1-\gamma}{2}}|t_R|^{\gamma}\}_{R\in\mathcal{D}}$.
Applying these constructions, the definitions of $U$,
$\|\cdot\|_{\dot{a}_{p, q}^{0, \upsilon}}$, and its convexified version
$\|\cdot\|_{\dot{a}_{p/\gamma, q/\gamma}^{0, {\upsilon}^{\gamma}}}$
with respect to the index $\gamma$,
the monotonicity of the sequence space $l^q$ on $q$,
and \eqref{eq-|B|} for the case where $\Gamma_{p,q}>1$,
we obtain, for any $t:=\{t_R\}_{R\in\mathcal{D}}
\in\dot{a}_{p, q}^{0, \upsilon}$,
\begin{align*}
\|Ut\|_{\dot{a}_{p, q}^{0, \upsilon}}
&=\left\|\left\{\left|\sum_{R\in\mathcal{D}}u_{Q,R}
t_R\right|\right\}_{Q\in\mathcal{D}}
\right\|_{\dot{a}_{p, q}^{0, \upsilon}}
\leq\left\|\left\{|Q|^{\frac{1-\gamma}{2}}
\sum_{R\in\mathcal{D}}\left|u_{Q,R}
t_R\right|^{\gamma}\right\}_{Q\in\mathcal{D}}
\right\|^{\frac{1}{\gamma}}_{\dot{a}_{p/\gamma, q/\gamma}^{0, {\upsilon}^{\gamma}}}\\
&=\left\|\left\{\sum_{R\in\mathcal{D}}
\widetilde{u}_{Q,R}\widetilde{t}_R\right\}_{Q\in\mathcal{D}}
\right\|^{\frac{1}{\gamma}}_{\dot{a}_{p/\gamma, q/\gamma}^{0, {\upsilon}^{\gamma}}}=\left\|\widetilde{U}\,
\widetilde{t}\right\|^{\frac{1}{\gamma}}
_{\dot{a}_{p/\gamma, q/\gamma}^{0, {\upsilon}^{\gamma}}}
\lesssim\left\|\widetilde{t}\right\|^{\frac{1}{\gamma}}
_{\dot{a}_{p/\gamma, q/\gamma}^{0, {\upsilon}^{\gamma}}}
=\|t\|_{\dot{a}_{p, q}^{0, \upsilon}}.
\end{align*}
This finishes the proof of the case where $\Gamma_{p,q}\leq1$
and hence Lemma \ref{lem-adbound}.
\end{proof}

By restricting the indices of growth functions,
we next establish the equivalence between
$\dot{a}_{p,q}^{s,\upsilon}$ and
$\dot{f}_{\infty,q}^{s}$ in \eqref{eq-f_3},
which can be used to improve Lemma \ref{lem-adbound}
and finally obtain Theorem \ref{thm-bound-ad}.

\begin{lemma}\label{lem-ident}
Let $a\in\{b, f\}$, $s\in\mathbb{R}$,
and $p\in(0, \infty]$ ($p<\infty$ if $a=f$).
Assume that $\delta_1,\delta_2,\omega$ satisfy \eqref{eq-delta1<0}
and $\upsilon\in\mathcal{G}(\delta_1, \delta_2; \omega)$.
Then the following statements hold.
\begin{itemize}
\item[{\rm (i)}] If $q\in(0,\infty)$ and $\delta_1\in(1/p, \infty)$,
or $q=\infty$ and $\delta_1\in[1/p, \infty)$, then,
for any $t:=\{t_Q\}_{Q\in\mathcal{D}}$ in $\mathbb{C}$, $\|t\|_{\dot{a}_{p,q}^{s,\upsilon}}
\sim\|\{\frac{t_Q}{\upsilon(Q)}\}
_{Q\in\mathcal{D}}\|_{\dot{f}_{\infty,\infty}^{s-\frac{n}{p}}}$,
where the positive equivalence constants are independent of $t$.
\item[{\rm (ii)}] If $q\in(0,\infty]$ and
$\delta_1=\delta_2=1/p$, then
$\dot{f}_{p,q}^{s,\upsilon}=\dot{f}_{\infty,q}^{s}$
with equivalent quasi-norms.
\end{itemize}
\end{lemma}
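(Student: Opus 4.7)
The plan is to prove (i) by establishing both directions of the equivalence and to derive (ii) from the classical Frazier--Jawerth identity \eqref{eq-f=f} after identifying $\upsilon$ explicitly. I first observe that, with the usual modification for $q=\infty$, one has the explicit formula
\begin{align*}
\left\|\left\{t_Q/\upsilon(Q)\right\}_{Q\in\mathcal{D}}
\right\|_{\dot{f}_{\infty,\infty}^{s-n/p}}
=\sup_{Q\in\mathcal{D}}|Q|^{-s/n+1/p-1/2}
\frac{|t_Q|}{\upsilon(Q)},
\end{align*}
which matches the single-cube norm computed in Lemma \ref{lem-onepoint}.

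To prove the direction $\|\{t_Q/\upsilon(Q)\}\|_{\dot{f}_{\infty,\infty}^{s-n/p}}\lesssim\|t\|_{\dot{a}_{p,q}^{s,\upsilon}}$ in (i), I would fix $Q_0\in\mathcal{D}$ and consider the truncated sequence $t|_{Q_0}:=\{\mathbf{1}_{R=Q_0}t_R\}_{R\in\mathcal{D}}$. Since $\|\cdot\|_{\dot{a}_{p,q}^{s,\upsilon}}$ depends only on $\{|t_R|\}$ and the $L\dot{A}_{p,q}^{\upsilon}$ quasi-norm is monotone on sequences of non-negative functions, $\|t|_{Q_0}\|_{\dot{a}_{p,q}^{s,\upsilon}}\leq\|t\|_{\dot{a}_{p,q}^{s,\upsilon}}$. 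Since $\delta_1\geq 1/p>0$, Lemma \ref{lem-grow-est}(i) implies $\upsilon$ is almost increasing; computing $\|t|_{Q_0}\|_{\dot{a}_{p,q}^{s,\upsilon}}$ directly (only cubes $P\supset Q_0$ contribute and the supremum is saturated at $P=Q_0$) yields $\|t|_{Q_0}\|_{\dot{a}_{p,q}^{s,\upsilon}}\sim\frac{|t_{Q_0}|}{\upsilon(Q_0)}|Q_0|^{-s/n+1/p-1/2}$. Taking the supremum over $Q_0$ yields the desired inequality.

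For the reverse inequality, let $M:=\|\{t_Q/\upsilon(Q)\}\|_{\dot{f}_{\infty,\infty}^{s-n/p}}$, so that $|t_Q|\leq M\upsilon(Q)|Q|^{s/n-1/p+1/2}$ for every $Q\in\mathcal{D}$. For a fixed $P\in\mathcal{D}$, I would substitute this pointwise bound into the inner quasi-norm defining $\|t\|_{\dot{a}_{p,q}^{s,\upsilon}}$ and invoke $\upsilon(Q)\lesssim\upsilon(P)(|Q|/|P|)^{\delta_1}$ for $Q\subset P$ from Lemma \ref{lem-grow-est}(i). In the Triebel--Lizorkin case $a=f$, for each $x\in P$ the chain of ancestors $\{Q^{(j)}\ni x:\ j\geq j_P\}$ reduces the integrand at $x$ to a single geometric series in $j$ with ratio $2^{n(1/p-\delta_1)q}$; in the Besov case $a=b$, summing over the $2^{(j-j_P)n}$ cubes of $\mathcal{D}_j$ inside $P$ produces the same geometric factor. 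This series converges precisely when $\delta_1>1/p$, and when $\delta_1=1/p$ with $q=\infty$ the $\ell^q$-sum becomes a supremum that remains bounded, which is exactly the hypothesis. A direct computation shows that the resulting expression is bounded by $CM\upsilon(P)$, and taking the supremum over $P$ yields $\|t\|_{\dot{a}_{p,q}^{s,\upsilon}}\lesssim M$.

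For (ii), by Proposition \ref{prop-grow-func}(ii) with $\delta_1=\delta_2=1/p$ (so $n(\delta_2-\delta_1)=0$), one has $\mathcal{G}(1/p,1/p;\omega)=\mathcal{G}(1/p,1/p;0)$ for every admissible $\omega\geq0$. Example \ref{examp}(i) then identifies any such $\upsilon$ as $\upsilon(Q)\sim|Q|^{1/p}$. Consequently, $\dot{f}_{p,q}^{s,\upsilon}$ coincides with $\dot{f}_{p,q}^{s,1/p}$ with equivalent quasi-norms, and the classical Frazier--Jawerth identity \eqref{eq-f=f} gives $\dot{f}_{p,q}^{s,1/p}=\dot{f}_{\infty,q}^s$. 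The main obstacle lies in the borderline case $\delta_1=1/p$, $q=\infty$ of (i), where the geometric summation is marginal and must be replaced by a supremum argument; additional care is required to track constants uniformly in $P\in\mathcal{D}$ and to correctly use the growth condition when $\delta_2>\delta_1$ (so that the distance-dependent factor with exponent $\omega$ may be non-trivial).
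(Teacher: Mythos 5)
Your proposal is correct and follows essentially the same route as the paper: the easy direction by testing on single-cube sequences (the paper writes this inequality directly), the reverse direction by substituting the pointwise bound $|t_Q|\leq M\upsilon(Q)|Q|^{s/n-1/p+1/2}$, invoking Lemma \ref{lem-grow-est}(i), and summing the resulting geometric series (which is exactly the paper's reduction to \eqref{eq-P=1}), and part (ii) via Example \ref{examp}(i) and \eqref{eq-f=f}. The only nitpick is the phrase ``$\delta_1\geq 1/p>0$'' in the case $p=\infty$ (where $1/p=0$); the almost-increasing property you need only requires $\delta_1\in[0,\delta_2]$, which still holds.
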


\begin{proof}
We first prove (i). By the definitions of
$\|\cdot\|_{\dot{f}_{\infty,\infty}^{s-\frac{n}{p}}}$ [see \eqref{eq-f_3}]
and $\|\cdot\|_{\dot{a}_{p,q}^{s,\upsilon}}$, we find that,
for any $t:=\{t_Q\}_{Q\in\mathcal{D}}$ in $\mathbb{C}$,
\begin{align}\label{eq-u-lambda}
\left\|\left\{\frac{t_Q}{\upsilon(Q)}\right\}
_{Q\in\mathcal{D}}\right\|_{\dot{f}_{\infty,\infty}^{s-\frac{n}{p}}}
=\sup_{P\in\mathcal{D}}\frac{|P|^{-\frac{s}{n}+\frac{1}{p}
-\frac{1}{2}}|t_P|}{\upsilon(P)}\leq
\|t\|_{\dot{a}_{p,q}^{s,\upsilon}}.
\end{align}
Next, we establish the reverse estimate of \eqref{eq-u-lambda}.
To achieve this, from the definition of
$\|\cdot\|_{\dot{a}_{p,q}^{s,\upsilon}}$,
the first equality in \eqref{eq-u-lambda}, and
Lemma \ref{lem-grow-est}(i), we infer that,
for any $t:=\{t_Q\}_{Q\in\mathcal{D}}$ in $\mathbb{C}$,
\begin{align}\label{eq-t-t/v}
\|t\|_{\dot{a}_{p,q}^{s,\upsilon}}
&=\sup_{P\in\mathcal{D}}\frac{1}{\upsilon(P)}
\left\|\left\{\sum_{Q\in\mathcal{D}_j}\widetilde{\mathbf{1}}_Q
|Q|^{-\frac{s}{n}}|t_Q|\mathbf{1}_P
\mathbf{1}_{j\geq j_P}\right\}_{j\in\mathbb Z}\right\|_{L\dot{A}_{p, q}}\\
&\leq\left\|\left\{\frac{t_Q}{\upsilon(Q)}\right\}
_{Q\in\mathcal{D}}\right\|_{\dot{f}_{\infty,\infty}^{s-\frac{n}{p}}}
\sup_{P\in\mathcal{D}}\left\|\left\{\sum_{Q\in\mathcal{D}_j}
\mathbf{1}_Q|Q|^{-\frac{1}{p}}\frac{\upsilon(Q)}{\upsilon(P)}
\mathbf{1}_P \mathbf{1}_{j\geq j_P}
\right\}_{j\in\mathbb Z}\right\|_{L\dot{A}_{p, q}}\nonumber\\
&\lesssim\left\|\left\{\frac{t_Q}{\upsilon(Q)}\right\}
_{Q\in\mathcal{D}}\right\|_{\dot{f}_{\infty,\infty}^{s-\frac{n}{p}}}
\sup_{P\in\mathcal{D}}\left\|\left\{2^{j\frac{n}{p}}2^{(j_P-j)n \delta_1}
\mathbf{1}_P \mathbf{1}_{j\geq j_P}\right\}
_{j\in\mathbb Z}\right\|_{L\dot{A}_{p, q}}.\nonumber
\end{align}
Clearly, by \eqref{eq-t-t/v}, to prove the reverse inequality of
\eqref{eq-u-lambda}, it suffices to show that,
under the assumption on $\delta_1$ in (i), for any $P\in\mathcal{D}$,
\begin{align}\label{eq-P=1}
\left\|\left\{2^{(j_P-j)\delta_1}2^{j_P\frac{n}{p}}
\mathbf{1}_P\mathbf{1}_{j\geq j_P}
\right\}_{j\in\mathbb Z}\right\|_{L\dot{A}_{p, q}}\sim1.
\end{align}
We only give the proof of \eqref{eq-P=1}
in the case where $A=F$ because the proof of the
case where $A=B$ is similar.
From \eqref{eq-LA} and the assumption
on $\delta_1$ in (i), we deduce that, for any $P\in\mathcal{D}$,
\begin{align*}
\left\|\left\{2^{(j_P-j)\delta_1}2^{j_P\frac{n}{p}}
\mathbf{1}_P\mathbf{1}_{j\geq j_P}
\right\}_{j\in\mathbb Z}\right\|_{L\dot{F}_{p, q}}
&=\left\|\left[\sum_{j=j_P}^{\infty}
2^{(j_P-j)\delta_1 q}2^{j_P\frac{n}{p}q}\right]^{\frac{1}{q}}
\mathbf{1}_P\right\|_{L^p}\sim\left\|2^{j_P\frac{n}{p}}
\mathbf{1}_P\right\|_{L^p}=1.
\end{align*}
This finishes the proof of \eqref{eq-P=1} in the case where
$A=F$ and hence (i).

Next, we prove (ii). By Example \ref{examp}(i),
we find that, under the assumptions of (ii),
for any $Q\in\mathcal{D}$, $\upsilon(Q)\sim|Q|^{\frac{1}{p}}$.
Applying this and \eqref{eq-f=f}, we obtain
$\dot{f}_{p,q}^{s,\upsilon}=\dot{f}^{s,\frac{1}{p}}_{p,q}=
\dot{f}_{\infty,q}^{s}$ all with equivalent quasi-norms,
which completes the proof of (ii) and hence Lemma \ref{lem-ident}.
\end{proof}

\begin{remark}
In Lemma \ref{lem-ident},
let $\tau\in[0,\infty)$ and, for any $Q\in\mathcal{D}$,
$\upsilon(Q):=|Q|^{\tau}$. From Example \ref{examp}(i),
it follows that $\upsilon\in\mathcal{G}(\tau,\tau;0)$.
In this case, (i) and (ii) of Lemma \ref{lem-ident}
reduce, respectively, to \cite[Theorem 1]{yy13}
and \cite[Corollary 5.7]{fj90}.
\end{remark}

Combining Lemmas \ref{lem-adbound} and \ref{lem-ident},
we can give the proof of Theorem \ref{thm-bound-ad}.

\begin{proof}[Proof of Theorem \ref{thm-bound-ad}]
We prove the present theorem  by considering,
respectively, supercritical, critical,
and subcritical cases as in \eqref{J_nu}.

For the supercritical case,
let $U:=\{u_{Q,R}\}_{Q, R\in\mathcal{D}}$
be a $(D,E,F)$-almost diagonal operator with
\begin{align*}
D>n+\omega,\
E>\frac{n}{2}+s+n\left(\delta_2-\frac{1}{p}\right),
\text{\ and\ }F>\frac{n}{2}-s-n\left(\delta_1-\frac{1}{p}\right).
\end{align*}
Then we define $\widetilde{U}:=\{\widetilde{u}_{Q,R}
\}_{Q, R\in\mathcal{D}}$ by setting, for any $Q, R\in\mathcal{D}$,
$\widetilde{u}_{Q,R}:=u_{Q,R}[\upsilon(R)/\upsilon(Q)]$.
By this construction, the growth condition of $\upsilon$,
and Definition \ref{DEFadope}, we find that $\widetilde{U}$
is $(D-\omega,E-n \delta_2,F+n \delta_1)$-almost diagonal.
Using this, Lemma \ref{lem-ident}(i), and the boundedness of
$\widetilde{U}$ on $\dot{f}_{\infty,\infty}^{s-\frac{n}{p}}$
(see, for example, \cite[p.\,81]{fj90}), we conclude that,
for any $t:=\{t_R\}_{R\in\mathcal{D}}
\in\dot{a}_{p,q}^{s,\upsilon}$,
\begin{align*}
\|Ut\|_{\dot{a}_{p,q}^{s,\upsilon}}
\sim\left\|\left\{\frac{(Ut)_Q}{\upsilon(Q)}
\right\}_{Q\in\mathcal{D}}\right\|_
{\dot{f}_{\infty,\infty}^{s-\frac{n}{p}}}
=\left\|\widetilde{U}\left(\left\{\frac{t_R}{\upsilon(R)}
\right\}_{R\in\mathcal{D}}\right)\right\|_
{\dot{f}_{\infty,\infty}^{s-\frac{n}{p}}}
\lesssim\left\|\left\{\frac{t_R}{\upsilon(R)}
\right\}_{R\in\mathcal{D}}\right\|_
{\dot{f}_{\infty,\infty}^{s-\frac{n}{p}}}
\sim\|t\|_{\dot{a}_{p,q}^{s,\upsilon}},
\end{align*}
which implies the boundedness of $U$ on $\dot{a}_{p,q}^{s,\upsilon}$
and hence the present theorem in this case.

The critical case directly follows from
Lemma \ref{lem-ident}(ii) and
the boundedness of $\widetilde{U}$ on
$\dot{f}_{\infty,q}^{s}$
(see, for example, \cite[p.\,81]{fj90});
we omit the details.

In the subcritical case, the present
theorem is precisely Lemma \ref{lem-adbound}.
This finishes the proof of the subcritical
case and hence Theorem \ref{thm-bound-ad}.
\end{proof}

Next, we establish the boundedness
of almost diagonal operators
on $\dot{a}_{p,q}^{s,\upsilon}(W)$.

\begin{theorem}\label{a(W)adopebound}
Let $a\in\{b, f\}$, $s\in\mathbb{R}$, $p\in(0,\infty)$,
$q\in(0, \infty]$, $W\in \mathcal{A}_{p,\infty}$, and
$D,E,F\in\mathbb{R}$. Suppose that $\delta_1,\delta_2,
\omega$ satisfy \eqref{eq-delta1<0}
and $\upsilon\in\mathcal{G}(\delta_1, \delta_2; \omega)$. Let
\begin{align*}
&\Delta:=\left[\delta_2-\frac1{p}+
\frac{d^{\operatorname{lower}}_{p, \infty}(W)}{np}\right]_{+},
\ D_{\dot{a}_{p,q}^{s,\upsilon}(W)}:=
J_{\dot{a}_{p,q}^{s,\upsilon}}+
\left[n\Delta\wedge\left(\omega+
\frac{d^{\operatorname{lower}}_{p, \infty}(W)}{p}\right)\right]
+\frac{d^{\operatorname{upper}}_{p, \infty}(W)}{p},\\
&E_{\dot{a}_{p,q}^{s,\upsilon}(W)}:=
\frac{n}{2}+s+n\Delta,\text{\ and\ }
F_{\dot{a}_{p,q}^{s,\upsilon}(W)}:=
J_{\dot{a}_{p,q}^{s,\upsilon}}
-\frac{n}{2}-s-n\left(\delta_1-\frac{1}{p}\right)_+
+\frac{d^{\operatorname{upper}}_{p, \infty}(W)}{p},
\end{align*}
where $J_{\dot{a}_{p,q}^{s,\upsilon}},
d^{\operatorname{lower}}_{p, \infty}(W)$,
and $d^{\operatorname{ upper}}_{p, \infty}(W)$ are
as, respectively, in \eqref{J_nu}, \eqref{eq-low-dim},
and \eqref{eq-upp-dim}. If
\begin{align}\label{DEFa(W)adope}
\displaystyle D>D_{\dot{a}_{p,q}^{s,\upsilon}(W)},\
\displaystyle E>E_{\dot{a}_{p,q}^{s,\upsilon}(W)},\
{\rm and}\
\displaystyle F>F_{\dot{a}_{p,q}^{s,\upsilon}(W)},	
\end{align}	
then any $(D, E, F)$-almost diagonal operator
is bounded on $\dot{a}_{p,q}^{s,\upsilon}(W)$.
\end{theorem}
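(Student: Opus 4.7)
The plan is to reduce the matrix-weighted estimate to an unweighted one through the averaging sequence spaces, and then appeal to Theorem \ref{thm-bound-ad}. Concretely, fix a sequence $\mathbb{A}:=\{A_Q\}_{Q\in\mathcal{D}}$ of reducing operators of order $p$ for $W$. By Corollary \ref{cor-a(A)=a(W)} it suffices to prove that every $(D,E,F)$-almost diagonal operator $U:=\{u_{Q,R}\}_{Q,R\in\mathcal{D}}$ with indices satisfying \eqref{DEFa(W)adope} is bounded on $\dot{a}_{p,q}^{s,\upsilon}(\mathbb{A})$. For any $\vec{t}:=\{\vec{t}_R\}_{R\in\mathcal{D}}$ and any $Q\in\mathcal{D}$, inserting $I_m=A_R^{-1}A_R$ and applying the triangle inequality gives
\[
\left|A_Q(U\vec{t})_Q\right|
\leq\sum_{R\in\mathcal{D}}|u_{Q,R}|\,\|A_QA_R^{-1}\|\,|A_R\vec{t}_R|.
\]
Thus, by \eqref{eq-a(A)-a}, the desired matrix-weighted boundedness reduces to showing that the scalar operator $\widetilde{U}:=\{\widetilde{u}_{Q,R}\}_{Q,R\in\mathcal{D}}$ with kernel $\widetilde{u}_{Q,R}:=|u_{Q,R}|\,\|A_QA_R^{-1}\|$ is bounded on the unweighted sequence space $\dot{a}_{p,q}^{s,\upsilon}$.

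Next, I would pick $\beta_1\in\llbracket d_{p,\infty}^{\mathrm{lower}}(W),\infty)$ and $\beta_2\in\llbracket d_{p,\infty}^{\mathrm{upper}}(W),\infty)$, and apply Lemma \ref{growEST} to estimate
\[
\|A_QA_R^{-1}\|\lesssim
\max\left\{\left[\frac{\ell(R)}{\ell(Q)}\right]^{\beta_1/p},
\left[\frac{\ell(Q)}{\ell(R)}\right]^{\beta_2/p}\right\}
\left[1+\frac{|x_Q-x_R|}{\ell(Q)\vee\ell(R)}\right]^{(\beta_1+\beta_2)/p}.
\]
Combining this with Definition \ref{DEFadope} and comparing with \eqref{DEFmatrix}, I see that $\widetilde{U}$ is a $(D-(\beta_1+\beta_2)/p,\,E-\beta_1/p,\,F-\beta_2/p)$-almost diagonal operator. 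Letting $\beta_1\downarrow d_{p,\infty}^{\mathrm{lower}}(W)$ and $\beta_2\downarrow d_{p,\infty}^{\mathrm{upper}}(W)$, the assumption \eqref{DEFa(W)adope} should translate to exactly the hypotheses needed in Theorem \ref{thm-bound-ad} applied to $\widetilde{U}$, producing the bound
\[
\left\|\widetilde{U}\bigl(\{|A_R\vec{t}_R|\}_{R\in\mathcal{D}}\bigr)\right\|_{\dot{a}_{p,q}^{s,\upsilon}}
\lesssim\left\|\{|A_R\vec{t}_R|\}_{R\in\mathcal{D}}\right\|_{\dot{a}_{p,q}^{s,\upsilon}}=\left\|\vec{t}\right\|_{\dot{a}_{p,q}^{s,\upsilon}(\mathbb{A})},
\]
which is the desired conclusion.

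The main obstacle is not the reduction itself, but obtaining the \emph{sharp} shape of the thresholds $D_{\dot{a}_{p,q}^{s,\upsilon}(W)}$, $E_{\dot{a}_{p,q}^{s,\upsilon}(W)}$, and $F_{\dot{a}_{p,q}^{s,\upsilon}(W)}$ in \eqref{DEFa(W)adope}. The naive application sketched above produces the term $n(\delta_2-\tfrac{1}{p})_+ + d_{p,\infty}^{\mathrm{lower}}(W)/p$, whereas the statement has the strictly smaller quantity $n\Delta$ and, in the $D$-threshold, the minimum $[n\Delta\wedge(\omega+d_{p,\infty}^{\mathrm{lower}}(W)/p)]$. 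To recover this sharpness I would follow the strategy of \cite[Theorem 4.19]{bhyy5}: split the summation $\sum_R$ according to the three regimes $\ell(R)>\ell(Q_{\text{ambient}})$, $\ell(Q)\leq\ell(R)\leq\ell(Q_{\text{ambient}})$, and $\ell(R)<\ell(Q)$ (as in the proof of Lemma \ref{lem-adbound}), and within each regime bound the tail using the growth condition on $\upsilon$ from Definition \ref{def-grow-func} rather than absorbing everything into the $D$-factor. In this way, the interplay between the weight dimensions $d_{p,\infty}^{\mathrm{lower}/\mathrm{upper}}(W)$ and the growth exponents $(\delta_1,\delta_2,\omega)$ is exploited at the level of the quasi-norm estimate and gives the sharp thresholds stated in the theorem.
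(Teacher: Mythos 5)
Your reduction and the argument it supports are exactly the paper's proof of the case $\delta_2\geq\frac1p$: pass to $\dot{a}_{p,q}^{s,\upsilon}(\mathbb{A})$ via Corollary \ref{cor-a(A)=a(W)}, decouple $|A_Q(U\vec t)_Q|\leq\sum_R|u_{Q,R}|\,\|A_QA_R^{-1}\|\,|A_R\vec t_R|$, absorb $\|A_QA_R^{-1}\|$ into the almost diagonal kernel via Lemma \ref{growEST}, and invoke Theorem \ref{thm-bound-ad}. In that regime $n\Delta=n(\delta_2-\frac1p)+\frac{d^{\operatorname{lower}}_{p,\infty}(W)}{p}$ and $n\Delta\wedge(\omega+\frac{d^{\operatorname{lower}}_{p,\infty}(W)}{p})=[\omega\wedge n(\delta_2-\frac1p)]+\frac{d^{\operatorname{lower}}_{p,\infty}(W)}{p}$, so the thresholds you obtain coincide with \eqref{DEFa(W)adope}, and this part is complete.

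The genuine gap is the case $\delta_2<\frac1p$, where $n\Delta=[n\delta_2-\frac np+\frac{d^{\operatorname{lower}}_{p,\infty}(W)}{p}]_+$ can be strictly smaller than $n(\delta_2-\frac1p)_++\frac{d^{\operatorname{lower}}_{p,\infty}(W)}{p}=\frac{d^{\operatorname{lower}}_{p,\infty}(W)}{p}$, and your proposed remedy would not produce it. You suggest splitting $\sum_R$ into three scale regimes and ``bounding the tail using the growth condition on $\upsilon$ rather than absorbing everything into the $D$-factor,'' but that describes the unweighted subcritical argument of Lemma \ref{lem-adbound}; once you have already performed the pointwise decoupling through $\|A_QA_R^{-1}\|$, the factor $[\ell(R)/\ell(Q)]^{\beta_1/p}$ is in the kernel and no subsequent regrouping of the sum recovers the cancellation between $-\frac np$ and $\frac{\beta_1}{p}$. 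The paper's proof of this case avoids the decoupling altogether: it starts from the ball-average estimate \eqref{e8} (quoted from \cite[Lemma 4.8]{bhyy5}), which keeps $W^{\frac1p}(x)$ attached to the evaluation point, and only then introduces reducing operators at the scale of the enlarged cube $P_0\supset P$ with $\ell(P_0)=2^{k_++l-i}\ell(P)$. The term ${\rm J}_i$ in \eqref{e9} then carries the factor $2^{(i-j_P-k_+-l)(\frac np-\frac{\beta_1}{p})}$ coming from the reverse H\"older/reducing-operator comparison, and it is the combination of this factor with $\upsilon(P_0)/\upsilon(P)\lesssim 2^{(j_P-j_{P_0})n\delta_2}$ (Lemma \ref{lem-grow-est}(i)) that yields the exponent $n\delta_2-\frac np+\frac{\beta_1}{p}$, whose positive part is $n\Delta$. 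Without this change of where the matrix weight is estimated, the sharp thresholds in \eqref{DEFa(W)adope} are out of reach.
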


\begin{proof}
We prove the present theorem by considering the
following two cases for $\delta_2$ and $p$.

\emph{Case (1)} $\frac{1}{p}\leq\delta_2$.
In this case, we find that the conditions
on $D,E$, and $F$ are exactly
\begin{align}\label{eq-DEF-1}
&D>J_{\dot{a}_{p,q}^{s,\upsilon}}+
\left[\omega\wedge n\left(\delta_2-\frac{1}{p}\right)\right]+
\frac{d^{\operatorname{lower}}_{p, \infty}(W)
+d^{\operatorname{upper}}_{p, \infty}(W)}{p},\\
&E>\frac{n}{2}+s+n\left(\delta_2-\frac{1}{p}\right)
+\frac{d^{\operatorname{lower}}_{p, \infty}(W)}{p},
\text{ and }F>J_{\dot{a}_{p,q}^{s,\upsilon}}-\frac{n}{2}-s-
n\left(\delta_1-\frac{1}{p}\right)
+\frac{d^{\operatorname{upper}}_{p, \infty}(W)}{p}.\nonumber
\end{align}
Let $\mathbb{A}:=\{A_Q\}_{Q\in\mathcal{D}}$
be a sequence of reducing operators of order $p$ for $W$.
By Corollary \ref{cor-a(A)=a(W)}, we find that,
to prove the present corollary, it suffices to show
any $(D, E, F)$-almost diagonal operator
is bounded on $\dot{a}_{p,q}^{s,\upsilon}(\mathbb{A})$.
Using \eqref{eq-DEF-1}, we can choose $\beta_1\in[\![d_{p,\infty}^{\mathrm{lower}}(W),\infty)$
and $\beta_2\in[\![d_{p,\infty}^{\mathrm{upper}}(W),\infty)$ such that
$D>J_{\dot{a}_{p,q}^{s,\upsilon}}+
[\omega\wedge n(\delta_2-\frac{1}{p})_+]+
\beta_1+\beta_2$, $E>\frac{n}{2}+s+n(\delta_2-\frac{1}{p})_{+}
+\beta_1$, and $F>J_{\dot{a}_{p,q}^{s,\upsilon}}-\frac{n}{2}-s-
n(\delta_1-\frac{1}{p})_+ +\beta_2$.
Let $U:=\{{u}_{Q,R}\}_{Q, R\in\mathcal{D}}$
be a $(D,E,F)$-almost diagonal operator.
We now define $\widetilde{U}
:=\{\widetilde{u}_{Q,R}\}_{Q,R\in\mathcal{D}}$
by setting, for any $Q,R\in\mathcal{D}$,
$\widetilde{u}_{Q,R}:=\{|{u}_{Q,R}|\|{A}_QA_R^{-1}
\|\}_{Q,R\in\mathcal{D}}$. From this,
Lemma \ref{growEST}, and Definition \ref{DEFadope},
we infer that $\widetilde{U}$ is
$(D-\beta_1-\beta_2,E-\beta_1,F-\beta_2)$-almost diagonal.
Applying this, the definition of $\widetilde{U}$,
Theorem \ref{thm-bound-ad}, and \eqref{eq-a(A)-a},
we obtain, for any $\vec{t}:=\{\vec{t}_R\}_{R\in\mathcal{D}}
\in\dot{a}_{p,q}^{s,\upsilon}(\mathbb{A})$,
\begin{align}\label{eq-aaaa}
\left\|\widetilde{U}\left(\left\{\left|A_R\vec{t}_R
\right|\right\}_{R\in\mathcal{D}}\right)
\right\|_{\dot{a}_{p,q}^{s,\upsilon}}
\lesssim\left\|\left\{\left|A_R
\vec{t}_R\right|\right\}_{R\in\mathcal{D}}
\right\|_{\dot{a}_{p,q}^{s,\upsilon}}=\left\|\vec{t}
\right\|_{\dot{a}_{p,q}^{s,\upsilon}(\mathbb{A})}<\infty
\end{align}
and hence, for any $Q\in\mathcal{D}$,
\begin{align}\label{eq-absoconver}
\sum_{R\in\mathcal{D}}\widetilde{u}_{Q,R}
\left|A_R \vec{t}_R\right|=
\left[\widetilde{U}\left(\left\{\left|A_R\vec{t}_R
\right|\right\}_{R\in\mathcal{D}}\right)\right]_Q
<\infty.
\end{align}
By \eqref{eq-absoconver} and the definition of
$\{\widetilde{u}_{Q,R}\}_{Q,R\in\mathcal{D}}$,
we find that, for any $Q\in\mathcal{D}$ and $\vec{t}:=\{\vec{t}_R\}_{R\in\mathcal{D}}
\in\dot{a}_{p,q}^{s,\upsilon}(\mathbb{A})$,
\begin{align*}
\sum_{R\in\mathcal{D}}\left|u_{Q,R}\vec{t}_R\right|
&=\sum_{R\in\mathcal{D}}|u_{Q,R}|\left|A^{-1}_Q
A _QA_R^{-1}A_R\vec{t}_R\right|\\
&\leq\sum_{R\in\mathcal{D}}|u_{Q,R}|\left\|A^{-1}_Q \right\|
\left\|A _QA_R^{-1}\right\|\left|
A_R\vec{t}_R\right|=
\left\|A^{-1}_Q \right\|\sum_{R\in\mathcal{D}}
\widetilde{u}_{Q,R}\left|A_R\vec{t}_R\right|<\infty.
\end{align*}
This, together with the construction of
$\{\widetilde{u}_{Q,R}\}_{Q,R\in\mathcal{D}}$,
the definition of $U$, \eqref{eq-a(A)-a},
\eqref{eq-absoconver}, and \eqref{eq-aaaa},
further implies that, for any $Q\in\mathcal{D}$ and
$\vec{t}:=\{\vec{t}_R\}_{R\in\mathcal{D}}
\in\dot{a}_{p,q}^{s,\upsilon}(\mathbb{A})$,
\begin{align*}
\left|A_Q \left(U\vec{t}\right)_Q\right|
&=\left|\sum_{R\in\mathcal{D}}u_{Q,R}A_Q
\vec{t}_R\right|\leq
\sum_{R\in\mathcal{D}}\left|u_{Q,R}\right|
\left\|A_QA^{-1}_R\right\|
\left|A_R \vec{t}_R\right|\\
&=\sum_{R\in\mathcal{D}}\widetilde{u}_{Q,R}
\left|A_R \vec{t}_R\right|
=\left[\widetilde{U}\left(\left\{\left|A_R\vec{t}_R
\right|\right\}_{R\in\mathcal{D}}\right)\right]_Q
\end{align*}
and hence
\begin{align*}
\left\|U\vec{t}\right\|_{\dot{a}_{p,q}^{s,\upsilon}(\mathbb{A})}
&=\left\|\left\{\left|A_Q\left(U\vec{t}\right)_Q\right|
\right\}_{Q\in\mathcal{D}}\right\|_{\dot{a}_{p,q}^{s,\upsilon}}
\leq\left\|\widetilde{U}\left(\left\{\left|A_R\vec{t}_R
\right|\right\}_{R\in\mathcal{D}}\right)
\right\|_{\dot{a}_{p,q}^{s,\upsilon}}\lesssim\left\|\vec{t}
\right\|_{\dot{a}_{p,q}^{s,\upsilon}(\mathbb{A})}.
\end{align*}
This gives the boundedness of $U$ on
$\dot{a}_{p,q}^{s,\upsilon}(\mathbb{A})$ and hence
finishes the proof of this case.

\emph{Case (2)} $\delta_2<\frac{1}{p}$.
In this case, we borrow some ideas from the proof
of \cite[Theorem 4.19]{bhyy5}. For brevity, we only present
some key estimates and necessary modifications.
By the argument used in the proof of Lemma \ref{lem-adbound},
we find that, to show the present theorem in this case,
it suffices to consider the case where $s=0$. To this end, let
$\beta_1\in[\![d_{p,\infty}^{\mathrm{lower}}(W),\infty)$
and $\beta_2\in[\![d_{p,\infty}^{\mathrm{upper}}(W),\infty)$
satisfy
\begin{align*}
D>\frac{n}{1\wedge\Gamma_{p,q}}+\widetilde{\Delta}+\frac{\beta_2}{p},\
E>\frac{n}{2}+\widetilde{\Delta},\text{ and }
F>\frac{n}{1\wedge\Gamma_{p,q}}-\frac{n}{2}+\frac{\beta_2}{p},
\end{align*}
where $\widetilde{\Delta}:=(n\delta_2-\frac{n}{p}+\frac{\beta_1}{p})_{+}$.
Assume that $r:=p \wedge q \wedge 1$ and
\begin{align}\label{e10}
\begin{cases}
\varepsilon:=1\wedge\Gamma_{p,q}& \text {when}\ a=b,\\
\varepsilon \in\left(0, 1\wedge\Gamma_{p,q}\right) & \text {when}\ a=f.
\end{cases}
\end{align}
Let $U:=\{{u}_{Q,R}\}_{Q, R\in\mathcal{D}}$
be a $(D,E,F)$-almost diagonal operator.
From \cite[Lemma 4.8]{bhyy5}, it follows that,
for any $P\in\mathcal{D}$ and $\vec{t}\in\dot{a}_{p,q}^{0,\upsilon}(W)$,
\begin{align}\label{e8}
&\left\|\left\{\left|W^{\frac{1}{p}}
\left(U\vec{t}\right)_j\right|
\mathbf{1}_P\mathbf{1}_{j\geq j_P}
\right\}_{j\in\mathbb{Z}}\right\|_{L\dot{A}_{p, q}}^r\\
&\quad\lesssim\sum_{k\in\mathbb{Z}}\sum_{l\in\mathbb{Z}_+}
\left[2^{-(E-\frac{n}{2}) k_{-}}2^{-k_{+}
(F+\frac{n}{2}-\frac{n}{\varepsilon})}
2^{-(D-\frac{n}{\varepsilon})l}\right]^r\nonumber\\
&\quad\quad\times\left\|\left\{\left[
\fint_{B(\cdot, 2^{l+k_{+}-i})}\left| W^{\frac{1}{p}}(\cdot)
\mathbf{1}_P(\cdot)\vec{t}_i(y)\right|^\varepsilon
\mathbf{1}_{i\geq j_P+k}\,dy\right]^{\frac{1}{\varepsilon}}
\right\}_{i\in\mathbb{Z}}\right\|^r_{L\dot{A}_{p, q}},\nonumber
\end{align}
where, for any $i\in\mathbb{Z}$, $\vec{t}_i$
is as in \eqref{vect_j}.
To estimate its right-hand side, let
$\mathbb{A}:=\{A_Q\}_{Q\in\mathcal{D}}$
be a sequence of reducing operators of order $p$ for $W$.
By Lemma \ref{lem-3P-sum1}(iv),
we find that, for any $k\in\mathbb{Z}$, $l\in\mathbb{Z}_+$,
$P\in\mathcal{D}$, and $i\in\{j_P+k, \ldots, j_P+k_{+}+l\}$,
\begin{align}\label{e12}
j_P+k_{+}+l-i \in\left[0, k_{-}+l\right]
\end{align}
and there exists unique $P_0\in\mathcal{D}_{-k_+-\ell+i}$
such that $P\subset P_0$. Applying some arguments similar
to those used in the proof of
\cite[Theorem 4.19]{bhyy5}, we conclude that,
for any $k\in\mathbb{Z}$, $l\in\mathbb{Z}_+$,
$P\in\mathcal{D}$, and
$\vec{t}\in\dot{a}_{p,q}^{0,\upsilon}(W)$,
\begin{align}\label{e9}
&\left\|\left\{\left[\fint_{B(\cdot, 2^{l+k_+-i})}
\left|W^{\frac{1}{p}}(\cdot) \mathbf{1}_P(\cdot)\vec{t}_i(y)
\right|^{\varepsilon}\mathbf{1}_{i\geq j_P+k}\,dy
\right]^{\frac{1}{\varepsilon}}\right\}_{i\in\mathbb{Z}}
\right\|_{L\dot{A}_{p, q}}^{r} \\\nonumber
&\quad\lesssim2^{(l+k_{+})\frac{\beta_2}{p}r}
\left\|\left\{\left|W^{\frac{1}{p}}
\vec{t}_i\right|\mathbf{1}_{3P}\mathbf{1}_{i\geq j_P}
\right\}_{i\in\mathbb{Z}}\right\|^r_{L\dot{A}_{p, q}}\\\nonumber
&\quad\quad+\sum_{i=j_P+k}^{j_P+k_{+}+l}
2^{\left(i-j_P-k_{+}-l\right)(\frac{n}{p}-\frac{\beta_1}{p})r}
2^{\left(k_{+}+l\right)\frac{\beta_2}{p}r}
\left[\int_{3P_0}\left|\mathbb{A}_i(y)
\vec{t}_i(y)\right|^p\,dy\right]^{\frac{r}{p}},\\
&\quad=:(\mathrm{I})^{r}+\sum_{i=j_P+k}^{j_P+k_{+}+l}
\left({\rm J}_i\right)^{r}.\nonumber
\end{align}

We first deal with I.
Applying the quasi-triangle inequality of
$\|\cdot\|_{L\dot{A}_{p, q}}$,
Lemmas \ref{lem-3P-sum1}(iii) and \ref{lem-grow-est}(iii),
and the definition of
$\|\cdot\|_{\dot{a}_{p,q}^{0,\upsilon}(W)}$, we obtain
\begin{align}\label{eq-I}
\mathrm{I}&\lesssim2^{(l+k_{+})\frac{\beta_2}{p}}
\sum_{h\in\mathbb{Z}^n,\, \|h\|_{\infty}\leq1}
\left\|\left\{\left|W^{\frac{1}{p}}
\vec{t}_i\right|\mathbf{1}_{P+h\ell(P)}\mathbf{1}_{i\geq j_P}
\right\}_{i\in\mathbb{Z}}\right\|_{L\dot{A}_{p, q}}\\
&\leq2^{(l+k_{+})\frac{\beta_2}{p}}
\sum_{h\in\mathbb{Z}^n,\, \|h\|_{\infty}\leq1}\upsilon(P+h\ell(P))
\left\|\vec{t}\right\|_{\dot{a}_{p,q}^{0,\upsilon}(W)}
\sim2^{\left(l+k_{+}\right)\frac{\beta_2}{p}}\upsilon(P)
\left\|\vec{t}\right\|_{\dot{a}_{p,q}^{0,\upsilon}(W)}\notag.
\end{align}
This establishes the desired estimate of I.

To estimate ${\rm J}_i$, from Corollary \ref{cor-a(A)=a(W)},
\eqref{e12}, Lemmas \ref{lem-3P-sum1}(iii) and \ref{lem-grow-est}(iii)
with $P$ replaced by $P_0$, Lemma \ref{lem-grow-est}(i) with
$Q$ and $P$ replaced, respectively, by $P$ and $P_0$,
and the definitions of both $\widetilde{\Delta}$
and $\|\cdot\|_{\dot{a}_{p,q}^{0,\upsilon}(\mathbb{A})}$,
it follows that, for any $k\in\mathbb{Z}$, $l\in\mathbb{Z}_+$,
$P\in\mathcal{D}$, and $i\in\{j_P+k, \ldots, j_P+k_{+}+l\}$,
\begin{align}\label{eq-J}
{\rm J}_i&\lesssim 2^{(i-j_P-k_{+}-l)(\frac{n}{p}
-\frac{\beta_1}{p})} 2^{(k_{+}+l)\frac{\beta_2}{p}}
\left[\sum_{h\in\mathbb{Z}^n,\, \|h\|_{\infty}\leq1}
\int_{P_0+h\ell(P_0)}\left|\mathbb{A}_i(y)\vec{t}_i(y)
\right|^p\,dy\right]^{\frac{1}{p}}\\
&\leq 2^{(i-j_P-k_{+}-l)(\frac{n}{p}
-\frac{\beta_1}{p})} 2^{(k_{+}+l)\frac{\beta_2}{p}}
\left\{\sum_{h\in\mathbb{Z}^n,\, \|h\|_{\infty}\leq1}
[\upsilon(P_0+h\ell(P_0))]^p\right\}^{\frac{1}{p}}
\left\|\vec{t}\right\|
_{\dot{a}_{p,q}^{0,\upsilon}(\mathbb{A})}\nonumber\\
&\sim 2^{(i-j_P-k_{+}-l)(\frac{n}{p}
-\frac{\beta_1}{p})} 2^{(k_{+}+l)
\frac{\beta_2}{p}}\upsilon\left(P_0\right)
\left\|\vec{t}\right\|
_{\dot{a}_{p,q}^{0,\upsilon}(\mathbb{A})}\nonumber\\
&\lesssim2^{(j_P+k_{+}+l-i)
(n\delta_2-\frac{n}{p}+\frac{\beta_1}{p})}
2^{(k_{+}+l)\frac{\beta_2}{p}}\upsilon(P)
\left\|\vec{t}\right\|_{\dot{a}_{p,q}^{0,\upsilon}(W)}\notag\\
& \leq 2^{(k_{-}+l)\widetilde{\Delta}}
2^{(k_{+}+l)\frac{\beta_2}{p}}\upsilon(P)
\left\|\vec{t}\right\|_{\dot{a}_{p,q}^{0,\upsilon}(W)}
=2^{k_{-}\widetilde{\Delta}}2^{l(\widetilde{\Delta}+\frac{\beta_2}{p})}
2^{k_{+}\frac{\beta_2}{p}}\upsilon(P)
\left\|\vec{t}\right\|_{\dot{a}_{p,q}^{0,\upsilon}(W)}.\notag
\end{align}
Combining both \eqref{eq-I} and \eqref{eq-J} with \eqref{e9},
we find that, for any $k\in\mathbb{Z}$, $l\in\mathbb{Z}_+$,
$P\in\mathcal{D}$, and $\vec{t}\in\dot{a}_{p,q}^{0,\upsilon}(W)$,
\begin{align}\label{eq-Ji}
&\left\|\left\{\left[\fint_{B(\cdot, 2^{l+k_+-i})}
\left|W^{\frac{1}{p}}(\cdot) \mathbf{1}_P(\cdot)\vec{t}_i(y)
\right|^{\varepsilon}\mathbf{1}_{i\geq j_P+k}\,dy
\right]^{\frac{1}{\varepsilon}}\right\}_{i\in\mathbb{Z}}
\right\|_{L\dot{A}_{p, q}}^{r}\\
&\quad\lesssim(\mathrm{I})^{r}+\sum_{i=j_P+k}^{j_P+k_{+}+l}
\left(J_i\right)^{r}\lesssim\left(2+k_{-}+l\right)
2^{k_{-}\widetilde{\Delta}r}
2^{l(\widetilde{\Delta}+\frac{\beta_2}{p})r}
2^{k_{+}\frac{\beta_2}{p}r}\left[\upsilon(P)\right]^r\left\|\vec{t}\right\|^r
_{\dot{a}_{p,q}^{0,\upsilon}(W)}.\nonumber
\end{align}
Inserting \eqref{eq-Ji} into \eqref{e8} and using the definition of
$\|\cdot\|_{\dot{a}_{p,q}^{0,\upsilon}(W)}$, we obtain,
for any $\vec{t}\in\dot{a}_{p,q}^{0,\upsilon}(W)$,
\begin{align}\label{eq-sum-k-l}
\left\|U\vec{t}\right\|_{\dot{a}_{p,q}^{0,\upsilon}(W)}^r
&=\sup_{P\in\mathcal{D}}\frac{1}{[\upsilon(P)]^r}
\left\|\left\{\left|H_j\left(U\vec{t}\right)_j\right|
\right\}_{j\in\mathbb{Z}}\right\|^r_{L\dot{A}_{p, q}}\\
&\lesssim\sum_{k\in\mathbb{Z}}\sum_{l\in\mathbb{Z}_+}
\left(2+k_{-}+l\right)2^{-k_{-}(E-\frac{n}{2}
-\widetilde{\Delta})r} 2^{-k_{+}
(F+\frac{n}{2}-\frac{n}{\varepsilon}
-\frac{\beta_2}{p})r}2^{-l(D-\frac{n}{\varepsilon}
-\widetilde{\Delta}-\frac{\beta_2}{p})r}\left\|\vec{t}
\right\|^r_{\dot{a}_{p,q}^{0,\upsilon}(W)}.\nonumber
\end{align}

Finally, it remains to determine the conditions on
$D,E,F$ such that the right-hand side of \eqref{eq-sum-k-l} converges.
Obviously, we find that the right-hand side of \eqref{eq-sum-k-l}
converges if and only if
\begin{align}\label{convergecond1}
D>\frac{n}{\varepsilon}
+\widetilde{\Delta} +\frac{\beta_2}{p},\
E>\frac{n}{2}+\widetilde{\Delta},\text{ and }
F>-\frac{n}{2}+\frac{n}{\varepsilon}+\frac{\beta_2}{p}.
\end{align}
Notice that we can choose $\varepsilon$ in \eqref{e10}
to be sufficiently close to $1\wedge\Gamma_{p,q}$
such that \eqref{convergecond1} is satisfied.
Thus, we conclude that, for any $\vec{t}
\in\dot{a}_{p,q}^{0,\upsilon}(W)$, $\|U\vec{t}\|
_{\dot{a}_{p,q}^{0,\upsilon}(W)}\lesssim
\|\vec{t}\|_{\dot{a}_{p,q}^{0,\upsilon}(W)}$.
This finishes the proof of this case and hence
Theorem \ref{a(W)adopebound}.
\end{proof}

Motivated by Theorem \ref{a(W)adopebound}, we
introduce the following concept of
$\dot{a}_{p,q}^{s,\upsilon}(W)$-almost diagonal matrices.
\begin{definition}\label{a(W)adope}
Let $a\in\{b, f\}$, $s\in\mathbb{R}$, $p\in(0,\infty)$,
$q\in(0, \infty]$, and $W\in \mathcal{A}_{p,\infty}$.
Suppose that $\delta_1,\delta_2,\omega$ satisfy
\eqref{eq-delta1<0} and
$\upsilon\in\mathcal{G}(\delta_1, \delta_2; \omega)$.
An infinite matrix $U:=\{u_{Q,R}\}
_{Q,R\in\mathcal{D}}$ in $\mathbb{C}$ is said to be
$\dot{a}_{p,q}^{s,\upsilon}(W)$-\emph{almost diagonal}
if it is $(D, E, F)$-almost diagonal with
$D, E, F$ satisfying \eqref{DEFa(W)adope}.
\end{definition}

Next, we prove that the
class of matrices in Definition \ref{a(W)adope}
is closed under compositions.

\begin{proposition}\label{prop-compadope}
Let $a\in\{b, f\}$, $s\in\mathbb{R}$, $p\in(0,\infty)$,
$q\in(0, \infty]$, and $W\in \mathcal{A}_{p,\infty}$.
Assume that $\delta_1,\delta_2,\omega$ satisfy \eqref{eq-delta1<0} and
$\upsilon\in\mathcal{G}(\delta_1, \delta_2; \omega)$.
Suppose that infinite matrices $u^{(1)}:=\{u^{(1)}_{Q,R}\}
_{Q,R\in\mathcal{D}}$ and $u^{(2)}
:=\{u^{(2)}_{Q,R}\}_{Q,R\in\mathcal{D}}$ are both
$\dot{a}_{p,q}^{s,\upsilon}(W)$-almost diagonal.
Then the infinite matrix
$u:=u^{(1)}\circ u^{(2)}:=\{\sum_{P\in\mathcal{D}}u^{(1)}_{Q,P}
u^{(2)}_{P,R}\}_{Q,R\in\mathcal{D}}$
is also $\dot{a}_{p,q}^{s,\upsilon}(W)$-almost diagonal.
\end{proposition}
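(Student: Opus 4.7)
\medskip

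\noindent\textbf{Proof proposal for Proposition \ref{prop-compadope}.}
The plan is to establish the pointwise bound
$|u_{Q,R}|\lesssim u^{D'E'F'}_{Q,R}$ for every $Q,R\in\mathcal{D}$,
where $(D',E',F')$ still satisfies \eqref{DEFa(W)adope}.
By hypothesis there exist $D,E,F$ obeying \eqref{DEFa(W)adope} and
a constant $C$ such that $|u^{(i)}_{Q,R}|\leq C\,u^{DEF}_{Q,R}$ for $i\in\{1,2\}$
and all $Q,R\in\mathcal{D}$. Since the inequalities in \eqref{DEFa(W)adope} are
strict, I may pick $D'\!<\!D,\,E'\!<\!E,\,F'\!<\!F$ so that
$(D',E',F')$ still lies in the admissible range; the small slack
$D-D',E-E',F-F'$ will absorb the logarithmic losses that typically appear
at the boundary levels of the summation. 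It thus suffices to prove
\[
\sum_{P\in\mathcal{D}}u^{DEF}_{Q,P}\,u^{DEF}_{P,R}
\lesssim u^{D'E'F'}_{Q,R}\qquad\text{for every }Q,R\in\mathcal{D}.
\]

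The first key step is the following triangle-type inequality for dyadic cubes,
which can be checked by a routine case analysis:
\[
1+\frac{|x_Q-x_R|}{\ell(Q)\vee\ell(R)}
\lesssim\left[1+\frac{|x_Q-x_P|}{\ell(Q)\vee\ell(P)}\right]
\left[1+\frac{|x_P-x_R|}{\ell(P)\vee\ell(R)}\right],
\]
which will let me peel off the desired $D'$-exponent of
$[1+|x_Q-x_R|/(\ell(Q)\vee\ell(R))]$ from the product of the two
distance factors in $u^{DEF}_{Q,P}u^{DEF}_{P,R}$, leaving a residual factor of
power $D-D'$ that I will consume with the spatial summation.
Without loss of generality I will assume $\ell(Q)\leq\ell(R)$
(the opposite case is symmetric by transposing $u^{(1)}$ and $u^{(2)}$);
the sum over $P\in\mathcal{D}$ is then decomposed by
$j:=j_P$ into three regimes:
(a)~$j\geq j_Q$ (so $\ell(P)\leq\ell(Q)\leq\ell(R)$);
(b)~$j_R\leq j<j_Q$ (so $\ell(Q)<\ell(P)\leq\ell(R)$);
(c)~$j<j_R$ (so $\ell(P)>\ell(R)$).
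In each regime the corresponding branch of \eqref{DEFmatrix} is substituted;
for the spatial sum at each fixed level $j$, the residual factor
$[1+|x_Q-x_P|/(\ell(Q)\vee\ell(P))]^{-(D-D')}
[1+|x_P-x_R|/(\ell(P)\vee\ell(R))]^{-(D-D')}$ is summed over
$P\in\mathcal{D}_j$ using Lemma \ref{lem-sum-k} (applicable since $D-D'>n$
after shrinking $D'$ slightly more if necessary, which is allowed by
\eqref{DEFa(W)adope} whose lower bound on $D$ exceeds $n$), yielding a
harmless constant times the desired distance factor to the power $D'$.

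The remaining one-dimensional sum over $j$ is a geometric series with ratio
controlled by $E$ in regime (a), by $\min\{E,F\}$ with the scale-ratio factor
$\ell(Q)/\ell(R)$ in regime (b), and by $F$ in regime (c); in each case the
series converges and collapses to $(\ell(Q)/\ell(R))^{E'}$ because
$E>0$ and $F>0$ under \eqref{DEFa(W)adope}, giving exactly
$u^{D'E'F'}_{Q,R}$ on the right-hand side. The main obstacle will be the
boundary summation in regime (b): when $E+F$ is close to zero the
inner geometric sum over $j_R\leq j<j_Q$ degrades into a logarithmic factor
$\log_2[\ell(R)/\ell(Q)]$, and this logarithm must be absorbed into the
small gap between $E$ and $E'$ (equivalently $F$ and $F'$). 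Fortunately
\eqref{DEFa(W)adope} forces $E+F>n/(1\wedge\Gamma_{p,q})+n\Delta>0$
strictly, so such a degradation cannot occur at the dominating exponent,
and the slack $\min\{E-E',F-F'\}>0$ comfortably dominates any logarithmic
remainder. Once the pointwise bound is verified, Definition \ref{a(W)adope}
is satisfied with parameters $(D',E',F')$, proving the proposition.
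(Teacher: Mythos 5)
Your overall strategy (prove the pointwise bound $\sum_P u^{DEF}_{Q,P}u^{DEF}_{P,R}\lesssim u^{D'E'F'}_{Q,R}$ directly by splitting the $P$-sum into three scale regimes) is a legitimate route, and it is essentially what underlies the result the paper cites; the paper itself simply normalizes the parameters (WLOG $D_1=D_2$, $E_1\neq E_2$, $F_1\neq F_2$, $E_1+F_2>D_1$, $E_2+F_1>D_1$) and then invokes the composition argument of \cite[Theorem D.2]{fj90}, which yields a $(D_1,\min\{E_1,E_2\},\min\{F_1,F_2\})$-almost diagonal matrix with \emph{no loss in the exponent $D$}. That last point is where your proposal has a genuine gap.

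Concretely, your spatial summation step fails. You peel off the target factor $[1+|x_Q-x_R|/(\ell(Q)\vee\ell(R))]^{-D'}$ via the triangle-type inequality and then propose to sum the residual factors of power $D-D'$ over $P\in\mathcal{D}_j$ using Lemma \ref{lem-sum-k}, which requires $D-D'>n$. But for the composed matrix to be $\dot{a}^{s,\upsilon}_{p,q}(W)$-almost diagonal you also need $D'>D_{\dot{a}^{s,\upsilon}_{p,q}(W)}$, and by \eqref{J_nu} one always has $D_{\dot{a}^{s,\upsilon}_{p,q}(W)}\geq n$; hence your method needs $D>D'+n>D_{\dot{a}^{s,\upsilon}_{p,q}(W)}+n$, whereas the hypothesis only supplies some $D>D_{\dot{a}^{s,\upsilon}_{p,q}(W)}$, possibly arbitrarily close to the threshold. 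The fact that "the lower bound on $D$ exceeds $n$" does not help: it makes $D'$ large too. The standard fix (and the one implicit in \cite[Theorem D.2]{fj90}) is to split the level-$j$ sum according to whether $|x_Q-x_P|\geq|x_Q-x_R|/2$ or $|x_P-x_R|\geq|x_Q-x_R|/2$, extract the target distance factor with its \emph{full} power $D$ from the dominant term, and sum the other factor with its full power $D>n$; no exponent is sacrificed. Two secondary points: the logarithmic degeneration in your regime (b) occurs precisely because you compose $u^{DEF}$ with itself (equal intermediate exponents make the geometric sum in $j_P$ constant), not because "$E+F$ is close to zero" — the paper's normalization $E_1\neq E_2$, $F_1\neq F_2$ is exactly the device that removes it, and your $E'<E$ trick also works; but your claim that the remaining scale sums converge "because $E>0$ and $F>0$" is too quick — the actual convergence conditions in regimes (a) and (c) couple the scale exponents with the $2^{jn}$ factor coming from the spatial sum and amount to inequalities of the type $E+F>D$, which the paper secures through its WLOG reduction and which you would need to verify from \eqref{DEFa(W)adope}.
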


\begin{proof}
Since $u^{(1)}$ and $u^{(2)}$ are
$\dot{a}_{p,q}^{s,\upsilon}(W)$-almost diagonal,
there exist $D_1, E_1, F_1, D_2, E_2,F_2$
satisfying \eqref{DEFa(W)adope} such that $u^{(1)}$
is $(D_1, E_1, F_1)$-almost diagonal
and $u^{(2)}$ is $(D_2, E_2, F_2)$-almost diagonal.
Without loss of generality, by \eqref{DEFa(W)adope},
we may assume that $D_1=D_2$, $E_1\neq E_2$, $F_1\neq F_2$,
$E_1+F_2>D_1$, and $E_2+F_1>D_1$.
Repeating the argument used in the proof of
\cite[Theorem D.2]{fj90} with $J+\beta$, $\frac{n+\gamma_1}{2}$,
$\frac{n+\gamma_2}{2}$, $\frac{n+\gamma_1}{2}+J-n$, and
$\frac{n+\gamma_2}{2}+J-n$ replaced, respectively,
by $D_1,E_1,E_2,F_1$, and $F_2$, we conclude that
$U^{D_1E_1F_1}\circ U^{D_2E_2F_2}$
is $(D_1, \min\{E_1, E_2\}, \min\{F_1, F_2\})$-almost diagonal.
From this, Definitions \ref{DEFadope} and \ref{a(W)adope},
the assumptions that $u^{(1)}$ is $(D_1, E_1, F_1)$-almost diagonal
and $u^{(2)}$ is $(D_2, E_2, F_2)$-almost diagonal,
and the above choice of $D_1, E_1, F_1, D_2, E_2, F_2$,
we deduce that, for any $Q, R\in\mathcal{D}$,
\begin{align*}
\left|\sum_{P\in\mathcal{D}}u^{(1)}_{Q,P}
u^{(2)}_{P,R}\right|\leq	\sum_{P\in\mathcal{D}}
\left|u^{(1)}_{Q,P}u^{(2)}_{P,R}\right|
\lesssim\sum_{P\in\mathcal{D}}u^{D_1E_1F_1}_{Q,P}
u^{D_2E_2F_2}_{P,R}\lesssim
u^{D_1\min\{E_1, E_2\}\min\{F_1, F_2\}}_{Q,R},
\end{align*}
which further implies that $u$ is
$(D_1,\min\{E_1, E_2\},\min\{F_1, F_2\})$-almost diagonal
and hence $\dot{a}_{p,q}^{s,\upsilon}(W)$-almost diagonal.
This finishes the proof of Proposition \ref{prop-compadope}.
\end{proof}

At the end of this subsection, we prove that,
if $\upsilon$ is an almost increasing function,
then the boundedness of almost diagonal operators
on $\dot{a}_{p,q}^{s,\upsilon}(W)$
guarantees that $\upsilon$ is a growth function.

\begin{proposition}\label{prop-necessity}
Let $a\in\{b, f\}$, $s, D, E, F\in\mathbb{R}$,
$p\in(0, \infty)$, $q\in(0,\infty]$, and $W \in\mathcal{A}_{p,\infty}$. 	
Suppose that $\upsilon$ is an almost increasing function,
$\beta_1 \in \llbracket d_{p, \infty}^{\mathrm{lower}}(W),\infty)$,
and $\beta_2\in \llbracket d_{p, \infty}^{\mathrm{upper}}(W),\infty)$,
where $d^{\operatorname{lower}}_{p, \infty}(W)$ and
$d^{\operatorname{ upper}}_{p, \infty}(W)$ are as, respectively, in
\eqref{eq-low-dim} and \eqref{eq-upp-dim}.
If any $(D, E, F)$-almost diagonal operator
is bounded on $\dot{a}_{p,q}^{s,\upsilon}(W)$,
then there exists a positive constant $C$ such that,
for any $Q, R\in\mathcal{D}$,
\begin{align}\label{eq-QR-necessity}
\frac{\upsilon(Q)}{\upsilon(R)}\leq C
\left[1+\frac{|x_Q-x_R|}
{\ell(Q)\vee \ell(R)}\right]^{D+\frac{\beta_1+\beta_2}{p}}
\begin{cases}
\displaystyle{\left(\frac{|Q|}{|R|}\right)^{-\frac{s}{n}-
\frac{1}{2}+\frac{1}{p}-\frac{F}{n}-\frac{\beta_1}{np}}}
& \text{if } \ell(Q) \leq \ell(R), \\
\displaystyle{\left(\frac{|Q|}{|R|}\right)^{-\frac{s}{n}-
\frac{1}{2}+\frac{1}{p}+\frac{E}{n}+\frac{\beta_2}{np}}}
& \text{if } \ell(R)<\ell(Q),
\end{cases}
\end{align}
that is, $\upsilon\in\mathcal{G}
(-\frac{s}{n}-\frac{1}{2}+\frac{1}{p}-\frac{F}{n}
-\frac{\beta_1}{np}, -\frac{s}{n}-\frac{1}{2}+\frac{1}{p}
+\frac{E}{n}+\frac{\beta_2}{np}; D+\frac{\beta_1+\beta_2}{p})$.
\end{proposition}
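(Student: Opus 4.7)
The plan is to test the boundedness hypothesis on rank-one $(D,E,F)$-almost diagonal matrices, applied to single-point sequences whose norms are explicitly computable via Lemma~\ref{lem-onepoint}.

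First, fix arbitrary $Q,R\in\mathcal{D}$ and $\vec{z}\in\mathbb{C}^m\setminus\{\mathbf{0}\}$, and let $U:=\{u_{P,P'}\}_{P,P'\in\mathcal{D}}$ be the infinite matrix whose only nonzero entry is $u_{R,Q}:=u^{DEF}_{R,Q}$, with $u^{DEF}_{R,Q}$ as in \eqref{DEFmatrix}. Trivially $U$ is $(D,E,F)$-almost diagonal. Applying $U$ to the test sequence $\vec{t}:=\{\mathbf{1}_{P=Q}\vec{z}\}_{P\in\mathcal{D}}$ yields $U\vec{t}=\{u^{DEF}_{R,Q}\mathbf{1}_{P=R}\vec{z}\}_{P\in\mathcal{D}}$, itself a single-point sequence. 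Since $\upsilon$ is almost increasing, Lemma~\ref{lem-onepoint}(ii) provides the single-point norm formula
\begin{align*}
\left\|\{\mathbf{1}_{P=P_0}\vec{w}\}_{P\in\mathcal{D}}\right\|_{\dot{a}_{p,q}^{s,\upsilon}(W)}
\sim\frac{2^{j_{P_0}(s+\frac{n}{2})}}{\upsilon(P_0)}
\left[\int_{P_0}\left|W^{\frac{1}{p}}(x)\vec{w}\right|^p\,dx\right]^{\frac{1}{p}},
\end{align*}
which, inserted into the boundedness inequality $\|U\vec{t}\|_{\dot{a}_{p,q}^{s,\upsilon}(W)}\lesssim\|\vec{t}\|_{\dot{a}_{p,q}^{s,\upsilon}(W)}$, yields after rearrangement the key bound
\begin{align*}
\frac{\upsilon(Q)}{\upsilon(R)}\lesssim \left|u^{DEF}_{R,Q}\right|^{-1}
\cdot 2^{(j_Q-j_R)(s+\frac{n}{2})}
\cdot\frac{\bigl[\int_Q|W^{\frac{1}{p}}(x)\vec{z}|^p\,dx\bigr]^{\frac{1}{p}}}{\bigl[\int_R|W^{\frac{1}{p}}(x)\vec{z}|^p\,dx\bigr]^{\frac{1}{p}}}.
\end{align*}

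Next, I would optimize over $\vec{z}\neq\mathbf{0}$. Let $\mathbb{A}:=\{A_P\}_{P\in\mathcal{D}}$ be a sequence of reducing operators of order $p$ for $W$, which exists since $W\in\mathcal{A}_{p,\infty}$. From \eqref{eq-red-ope} one has $[\int_P|W^{\frac{1}{p}}(x)\vec{z}|^p\,dx]^{\frac{1}{p}}\sim|A_P\vec{z}||P|^{\frac{1}{p}}$, and the standard linear-algebra identity $\inf_{\vec{z}\neq\mathbf{0}}|A_Q\vec{z}|/|A_R\vec{z}|=\|A_RA_Q^{-1}\|^{-1}$ (proved by the substitution $\vec{y}=A_R\vec{z}$), combined with $\|A_RA_Q^{-1}\|^{-1}\leq\|A_QA_R^{-1}\|$ (from $\|M\|\,\|M^{-1}\|\geq 1$) and Lemma~\ref{growEST}, gives
\begin{align*}
\|A_RA_Q^{-1}\|^{-1}\lesssim\left[1+\frac{|x_Q-x_R|}{\ell(Q)\vee\ell(R)}\right]^{\frac{\beta_1+\beta_2}{p}}
\max\left\{\left[\frac{\ell(R)}{\ell(Q)}\right]^{\frac{\beta_1}{p}},\left[\frac{\ell(Q)}{\ell(R)}\right]^{\frac{\beta_2}{p}}\right\}.
\end{align*}

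Finally, I would split into the cases $\ell(Q)\leq\ell(R)$ and $\ell(R)<\ell(Q)$, substitute the explicit form of $u^{DEF}_{R,Q}$ from \eqref{DEFmatrix}, and multiply the resulting factors together with $2^{(j_Q-j_R)(s+n/2)}$, $|Q|^{\frac{1}{p}}|R|^{-\frac{1}{p}}$, and the weight-dimension bound above; elementary bookkeeping then recovers \eqref{eq-QR-necessity}. The hard part will be this last step: because the case split in $u^{DEF}_{R,Q}$ is governed by $\ell(R)$ vs $\ell(Q)$ while the weight-dimension factor is governed by $\ell(Q)$ vs $\ell(R)$, the indices $E,\beta_1$ (respectively $F,\beta_2$) end up paired in a way that inverts the naive reading, so care is needed to verify that the exponents collapse precisely to $-\frac{s}{n}-\frac{1}{2}+\frac{1}{p}-\frac{F}{n}-\frac{\beta_1}{np}$ when $\ell(Q)\leq\ell(R)$ and to $-\frac{s}{n}-\frac{1}{2}+\frac{1}{p}+\frac{E}{n}+\frac{\beta_2}{np}$ when $\ell(R)<\ell(Q)$.
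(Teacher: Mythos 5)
Your proposal is correct and follows essentially the same route as the paper: test the boundedness hypothesis on single-point sequences, use the single-point norm formula from Lemma~\ref{lem-onepoint} (valid since $\upsilon$ is almost increasing), extremize over the direction vector to produce the operator norm $\|A_QA_R^{-1}\|$, and conclude via Lemma~\ref{growEST} and the exponent bookkeeping you describe (which I checked does close up to exactly \eqref{eq-QR-necessity}). The only cosmetic difference is that the paper passes to $\dot{a}_{p,q}^{s,\upsilon}(\mathbb{A})$ via Corollary~\ref{cor-a(A)=a(W)} and chooses the test vector $\vec{t}_Q=A_R^{-1}\vec{e}$ so that a supremum over $\vec{e}$ yields $\|A_QA_R^{-1}\|$ directly, whereas you take an infimum over $\vec{z}$ to get $\|A_RA_Q^{-1}\|^{-1}$ and then bound it by $\|A_QA_R^{-1}\|$; both land on the same application of Lemma~\ref{growEST}.
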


\begin{proof}
Let $\mathbb{A}:=\{A_Q\}_{Q\in\mathcal{D}}$
be a sequence of reducing operators of order $p$ for $W$.
For any given $Q, R\in\mathcal{D}$ and for any
$\vec{e}\in\mathbb{C}^m$ with $|\vec{e}|=1$,
we define the sequence $\vec{t}
:=\{\vec{t}_P\}_{P\in\mathcal{D}}$
by setting, for any $P\in\mathcal{D}$,
$\vec{t}_P:=A^{-1}_{R}\vec{e}$ if $P=Q$
and $\vec{t}_P:=\mathbf{0}$ otherwise.
From Corollary \ref{cor-a(A)=a(W)}, it follows that
$\dot{a}_{p,q}^{s,\upsilon}(W)=\dot{a}_{p,q}^{s,\upsilon}(\mathbb{A})$
with equivalent quasi-norms. By this and the
assumption that the $(D, E, F)$-almost diagonal operator $U^{DEF}$
is bounded on $\dot{a}_{p,q}^{s,\upsilon}(W)$,
we find that $U^{DEF}$ is also bounded on
$\dot{a}_{p,q}^{s,\upsilon}(\mathbb{A})$.
This, together with Definition \ref{DEFadope}, the definition of
$\|\cdot\|_{\dot{a}_{p,q}^{s,\upsilon}(\mathbb{A})}$, and
the assumption that $\upsilon$ is an almost increasing function,
further implies that, for any $\vec{e}\in\mathbb{C}^m$ with $|\vec{e}|=1$,
\begin{align*}
\frac{|R|^{-\frac{s}{n}-\frac{1}{2}+\frac{1}{p}}}
{\upsilon(R)}u_{R,Q}^{D E F}
&\leq\left\|U^{D E F}\vec{t}\right\|
_{\dot{a}_{p,q}^{s,\upsilon}(\mathbb{A})}
\lesssim\left\|\vec{t}\right\|
_{\dot{a}_{p,q}^{s,\upsilon}(\mathbb{A})}\\
&=\sup_{P\in\mathcal{D}, P\supset Q}
\frac{|Q|^{-\frac{s}{n}-\frac{1}{2}
+\frac{1}{p}}}{\upsilon(P)}\left|A_QA^{-1}_{R}\vec{e}\right|
\sim\frac{|Q|^{-\frac{s}{n}-\frac{1}{2}
+\frac{1}{p}}}{\upsilon(Q)}\left|A_Q A^{-1}_{R}\vec{e}\right|,
\end{align*}
where all the implicit positive constants are
independent of $Q, R$, and $\vec{e}$. Taking the supremum over all $\vec{e}\in\mathbb{C}^m$ with $|\vec{e}|=1$
on its right-hand side and applying the definition of
operator norms, we obtain, for any $Q, R\in\mathcal{D}$,
\begin{align*}
\frac{|R|^{-\frac{s}{n}-\frac{1}{2}+\frac{1}{p}}}
{\upsilon(R)}u_{R,Q}^{D E F}
\lesssim\frac{|Q|^{-\frac{s}{n}-\frac{1}{2}+\frac{1}{p}}}
{\upsilon(Q)}\sup_{\vec z\in\mathbb{C}^m, |\vec z|=1}
\left|A_Q A^{-1}_{R}\vec{e}\right|
=\frac{|Q|^{-\frac{s}{n}-\frac{1}{2}
+\frac{1}{p}}}{\upsilon(Q)}\left\|A_Q A^{-1}_{R}\right\|.
\end{align*}
By this, \eqref{DEFmatrix}, and Lemma \ref{growEST},
we find that \eqref{eq-QR-necessity} holds.
This finishes the proof of Proposition \ref{prop-necessity}.
\end{proof}

\begin{remark}
Using Proposition \ref{prop-necessity}, we conclude that,
under the mild assumption that $\upsilon$ is an
almost increasing function, the growth condition
on $\upsilon$ as in Definition \ref{def-grow-func}
is necessary for the boundedness of
almost diagonal operators on $\dot{a}_{p,q}^{s,\upsilon}(W)$.
\end{remark}

\subsection{Molecular and Wavelet Characterizations
of $\dot{A}_{p,q}^{s, \upsilon}(W)$\label{s-cmw-mw}}

Based on Theorems \ref{thm-phitransMWBTL}
and \ref{a(W)adopebound}, we aim to
establish the molecular and the wavelet
characterizations of $\dot{A}^{s,\upsilon}_{p,q}(W)$.
To begin with, we give some notation.
For any $r\in\mathbb{R}$, let
\begin{align}\label{eq-ceil}
\begin{cases}
\lceil\!\lceil r\rceil\!\rceil:=\min\{k\in\mathbb Z:\ k>r\},\
\lceil r\rceil:=\min\{k\in\mathbb Z:\ k\geq r\},\\
\lfloor\!\lfloor r\rfloor\!\rfloor:=\max\{k\in\mathbb Z:\ k< r\},\
\lfloor r\rfloor:=\max\{k\in\mathbb Z:\  k\leq r\}
\end{cases}
\end{align}
and
\begin{align}\label{eq-r**}
r^{**}:=r-\lfloor\!\lfloor r\rfloor\!\rfloor\in(0,1].
\end{align}
For any $K\in[0,\infty)$ and $x\in\mathbb{R}^n$, let
$u_K(x):=(1+|x|)^{-K}$.

Next, we recall the concept of smooth molecules
introduced in \cite[Definition 3.4]{bhyy3},
which is a slight generalization of the conventional
one as in \cite[(3.7)-(3.10)]{fj90}.

\begin{definition}\label{def-mole}
Let $K,M\in[0,\infty)$, $L,N\in\mathbb{R}$, and
$Q\in\mathcal{D}$. A function $m_Q$ on $\mathbb{R}^n$
is called a \emph{(smooth) $(K,L,M,N)$-molecule supported near $Q$}
if, for any $x,y\in\mathbb{R}^n$, it satisfies
\begin{itemize}
\item[{\rm (i)}] $|m_Q(x)|\leq(u_{K})_Q(x)$;
\item[{\rm (ii)}] $\int_{\mathbb R^n} m_Q(x)x^\gamma\,dx=0
\text{ if }\gamma\in\mathbb{Z}_+^n$ and $|\gamma|\leq L$;
\item[{\rm (iii)}] $|\partial^\gamma m_Q(x)|
\leq[\ell(Q)]^{-|\gamma|}(u_{M})_Q(x)\text{ if }
\gamma\in\mathbb{Z}_+^n$ and $|\gamma|<N$;
\item[{\rm (iv)}]
$$\left|\partial^\gamma m_Q(x)-\partial^\gamma m_Q(y)\right|
\leq\left[\ell(Q)\right]^{-|\gamma|}\left[\frac{|x-y|}{\ell(Q)}\right]^{N^{**}}
\sup_{|z|\leq|x-y|}(u_{M})_Q(x+z)$$
if $\gamma\in\mathbb{Z}_+^n$ and $|\gamma|=\lfloor\!\lfloor N\rfloor\!\rfloor$,
where $\lfloor\!\lfloor N\rfloor\!\rfloor$ and $N^{**}$
are as, respectively, in \eqref{eq-ceil} and \eqref{eq-r**}
and $(u_{M})_Q$ is as in \eqref{eq-phi_Q} with $\varphi$
replaced by $u_{M}$.
\end{itemize}
For brevity, we also call $m_Q$ a $(K, L, M, N)$-molecule.
\end{definition}

The following lemma is precisely \cite[Lemma 3.7]{bhyy3}.

\begin{lemma}\label{lem-MGH}
Let $K_m, K_g, M_m, M_g\in(n,\infty)$, $L_m, L_g, N_m, N_g\in\mathbb{R}$,
and $Q, R\in\mathcal{D}$, and let $m_Q$ be a $(K_m, L_m, M_m, N_m)$-molecule
and $g_R$ a $(K_g, L_g, M_g, N_g)$-molecule.
Then, for any $\alpha \in(0, \infty)$,
there exists a positive constant $C$, independent of
$Q$ and $R$, such that $|\langle m_Q, g_R
\rangle|\leq C u_{Q,R}^{MGH}$,
where $\langle\cdot, \cdot\rangle$ is the usual inner product
in $L^2$, $u_{Q,R}^{MGH}$ is as in \eqref{DEFmatrix},
$M:=K_m \wedge M_m \wedge K_g \wedge M_g$,
$G:=\frac{n}{2}+[N_g\wedge\lceil\!\lceil L_m\rceil\!
\rceil\wedge(K_m-n-\alpha)]_{+}$,
and
$H:=\frac{n}{2}+[N_m \wedge\lceil\!\lceil L_g\rceil\!\rceil
\wedge(K_g-n-\alpha)]_{+}$.
\end{lemma}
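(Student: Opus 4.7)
The plan is to reduce, by the evident symmetry between the two molecules, to the case $\ell(Q)\leq\ell(R)$, in which the claimed bound $|\langle m_Q,g_R\rangle|\lesssim u_{Q,R}^{MGH}$ becomes
\begin{align*}
|\langle m_Q,g_R\rangle|\lesssim\left[1+\frac{|x_Q-x_R|}{\ell(R)}\right]^{-M}
\left[\frac{\ell(Q)}{\ell(R)}\right]^{G}.
\end{align*}
Here the exponent $G=\frac{n}{2}+[N_g\wedge\lceil\!\lceil L_m\rceil\!\rceil\wedge(K_m-n-\alpha)]_+$ is the one that should be produced, since $\ell(Q)\vee\ell(R)=\ell(R)$. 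Matching the symmetric case $\ell(R)<\ell(Q)$ would give $H$ by the same argument with the roles of $m_Q$ and $g_R$ swapped; hence only the first case needs to be treated.

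First, I would split the argument according to whether the two cubes are well separated or nearly coincident. In the well-separated regime $|x_Q-x_R|\gtrsim\ell(R)$, only the size estimates in Definition \ref{def-mole}(i) are used; writing $|\langle m_Q,g_R\rangle|\leq\int(u_{K_m})_Q|(u_{K_g})_R|\,dx$ and splitting $\mathbb{R}^n$ dyadically around $x_Q$ and $x_R$ yields the decay $(1+|x_Q-x_R|/\ell(R))^{-M}$ with $M=K_m\wedge M_m\wedge K_g\wedge M_g$, and the normalization factors supply precisely the trivial scale ratio $[\ell(Q)/\ell(R)]^{n/2}$. This settles the sub-case where the bracket in $G$ equals $0$, and is also the correct bound whenever $|x_Q-x_R|\gtrsim\ell(R)$, since each of the three mechanisms below produces at most the additional harmless factor $[\ell(Q)/\ell(R)]^{[\cdots]_+}$ that is already no larger than $1$.

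Second, in the nearly coincident regime $|x_Q-x_R|\lesssim\ell(R)$, I would extract the extra gain $[\ell(Q)/\ell(R)]^{[N_g\wedge\lceil\!\lceil L_m\rceil\!\rceil\wedge(K_m-n-\alpha)]_+}$ by choosing, among the three mechanisms encoded in the minimum, the one that is actually available:
\begin{itemize}
\item[(a)] When $N_g\wedge\lceil\!\lceil L_m\rceil\!\rceil$ is the binding quantity, set $L:=\lfloor\!\lfloor N_g\rfloor\!\rfloor\wedge\lceil\!\lceil L_m\rceil\!\rceil{-}1$, subtract from $g_R$ its Taylor polynomial of degree $L$ at $x_Q$ (this costs nothing by Definition \ref{def-mole}(ii) applied to $m_Q$), and estimate the remainder using \ref{def-mole}(iii)--(iv) on $g_R$; this produces the factor $[\ell(Q)/\ell(R)]^{L+N_g^{**}}$ together with the decay $(1+|x_Q-x_R|/\ell(R))^{-M_g}$.
\item[(b)] When the moments of $m_Q$ are very strong but the smoothness of $g_R$ is weak, i.e.\ $(K_m-n-\alpha)$ is the binding quantity, I would absorb the excess moment cancellation by pairing Definition \ref{def-mole}(i) for $m_Q$ with \ref{def-mole}(iii) of $g_R$ for $|\gamma|=0$ and using $\int(u_{K_m})_Q(x)\,dx\lesssim[\ell(Q)]^n$; the parameter $\alpha\in(0,\infty)$ appears as a slack absorbed in the convolution bound $(u_{K_m})_Q*(u_{M_g})_R$ to keep the integral convergent while still producing the desired decay exponent $M$.
\end{itemize}
Taking the best of the three options $N_g$, $\lceil\!\lceil L_m\rceil\!\rceil$, $K_m-n-\alpha$, and adding the normalization $\frac{n}{2}$, yields exactly $G$.

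The main obstacle will be the bookkeeping in (a): when $N_g\notin\mathbb{Z}$, the fractional part $N_g^{**}$ enters through \ref{def-mole}(iv), so the Taylor remainder needs a dyadic annular decomposition of $|x-x_Q|$ relative to $\ell(R)$, coupled with the tail estimate on $m_Q$ provided by \ref{def-mole}(i). Similarly, when $L_m\notin\mathbb{Z}$, one has to be careful that $\lceil\!\lceil L_m\rceil\!\rceil$ appears, not $\lfloor\!\lfloor L_m\rfloor\!\rfloor$, because the vanishing moments $\int m_Q x^\gamma\,dx=0$ hold for all $|\gamma|\leq L_m$, hence for all $|\gamma|\leq\lceil\!\lceil L_m\rceil\!\rceil-1$. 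Once these two points are handled uniformly in $Q,R$ via the elementary convolution estimate
\begin{align*}
(u_K)_Q*(u_{K'})_R(x)\lesssim[\ell(Q)]^n\left[1+\frac{|x-x_R|}{\ell(R)}\right]^{-(K\wedge K')}\quad(\ell(Q)\leq\ell(R),\ K,K'>n),
\end{align*}
the remaining manipulations are routine and produce the constant $C$ claimed; throughout, the dependence on $\alpha$ is absorbed in $C$, which is why $\alpha\in(0,\infty)$ is a free parameter in the statement.
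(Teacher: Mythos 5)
The paper gives no proof of this lemma; it is quoted verbatim from \cite[Lemma 3.7]{bhyy3}, so your proposal can only be judged on its own terms. Your overall strategy (symmetry reduction to $\ell(Q)\leq\ell(R)$, the convolution estimate for the size terms, and Taylor/moment subtraction for the scale gain) is the standard and correct one, but there is a genuine logical gap in how you split into regimes. In the ``well-separated'' regime $|x_Q-x_R|\gtrsim\ell(R)$ you claim that the size-only estimate, which yields $(1+|x_Q-x_R|/\ell(R))^{-M}[\ell(Q)/\ell(R)]^{n/2}$, ``is also the correct bound,'' on the grounds that the extra factor $[\ell(Q)/\ell(R)]^{[\cdots]_+}$ is ``no larger than $1$.'' This is backwards: since $\ell(Q)/\ell(R)\leq 1$ and the bracket is nonnegative, the target bound $[\ell(Q)/\ell(R)]^{G}$ with $G=\frac{n}{2}+[\cdots]_+$ is \emph{smaller} than $[\ell(Q)/\ell(R)]^{n/2}$; the factor you are dismissing is precisely the gain the lemma asserts and must be produced, not a harmless loss. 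Take $\ell(Q)=2^{-j}$, $\ell(R)=1$, $|x_Q-x_R|=10$: your bound is $\sim 2^{-jn/2}$ while the claimed bound is $\sim 2^{-jG}$, which is strictly stronger for large $j$ whenever the bracket is positive. The Taylor/moment-cancellation argument therefore has to be run for all positions of $Q$ relative to $R$, not only in the ``nearly coincident'' regime; the case split is unnecessary and, as used, invalidates the proof in the separated case.

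A secondary, fixable issue: the cap $K_m-n-\alpha$ in $G$ does not enter as an independent ``mechanism (b)'' but as a constraint inside the Taylor-remainder computation of your (a) — the order of the Taylor polynomial of $g_R$ that you may subtract is limited not only by $N_g$ and $\lceil\!\lceil L_m\rceil\!\rceil$ but also by the requirement that $\int (u_{K_m})_Q(x)\,|x-x_Q|^{L+N_g^{**}}\,dx$ converge, which forces $L+N_g^{**}<K_m-n$ and is where the slack $\alpha$ is spent. Your choice $L:=\lfloor\!\lfloor N_g\rfloor\!\rfloor\wedge(\lceil\!\lceil L_m\rceil\!\rceil-1)$ ignores this constraint, so when $K_m-n-\alpha$ is the binding quantity your integral in (a) may diverge; you need to lower the Taylor order (or truncate the integration region) accordingly. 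Once the case split is removed and the Taylor order is chosen subject to all three constraints simultaneously, the remaining bookkeeping you describe is the right one.
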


Motivated by the above lemma,
we introduce two kinds of sequences
of molecules for $\dot{A}_{p,q}^{s,\upsilon}(W)$.

\begin{definition}\label{def-anasynmole}
Let $(A, a)\in\{(B, b), (F, f)\}$, $s\in\mathbb{R}$,
$p\in(0,\infty)$, $q\in(0, \infty]$, and $W\in \mathcal{A}_{p,\infty}$.
Assume that $\delta_1,\delta_2,\omega$ satisfy \eqref{eq-delta1<0},
$\upsilon\in\mathcal{G}(\delta_1, \delta_2; \omega)$,
and $D_{\dot{a}_{p,q}^{s,\upsilon}(W)}, E_{\dot{a}_{p,q}^{s,\upsilon}(W)},
F_{\dot{a}_{p,q}^{s,\upsilon}(W)}$ are as in Theorem \ref{a(W)adopebound}.

We call $m:=\{m_Q\}_{Q\in\mathcal{D}}$ a \emph{family of analysis molecules}
for $\dot{A}_{p,q}^{s,\upsilon}(W)$ if there exist
\begin{align*}
K_m>D_{\dot{a}_{p,q}^{s,\upsilon}(W)}\vee
\left[E_{\dot{a}_{p,q}^{s,\upsilon}(W)}+\frac{n}{2}\right],
\ L_m \geq E_{\dot{a}_{p,q}^{s,\upsilon}(W)}-\frac{n}{2},
\ M_m>D_{\dot{a}_{p,q}^{s,\upsilon}(W)},
\text{ and } N_m>F_{\dot{a}_{p,q}^{s,\upsilon}(W)}-\frac{n}{2}	
\end{align*}
such that, for any $Q\in\mathcal{D}$,
$m_Q$ is a $(K_m, L_m, M_m, N_m)$-molecule.

We call $g:=\{g_Q\}_{Q\in\mathcal{D}}$ a
\emph{family of synthesis molecules}
for $\dot{A}_{p,q}^{s,\upsilon}(W)$ if there exist
\begin{align*}
K_g>D_{\dot{a}_{p,q}^{s,\upsilon}(W)}\vee
\left[F_{\dot{a}_{p,q}^{s,\upsilon}(W)}+\frac{n}{2}\right],
\ L_g\geq F_{\dot{a}_{p,q}^{s,\upsilon}(W)}-\frac{n}{2},
\ M_g>D_{\dot{a}_{p,q}^{s,\upsilon}(W)},
\text{ and } N_g>E_{\dot{a}_{p,q}^{s,\upsilon}(W)}-\frac{n}{2}
\end{align*}
such that, for any $Q\in\mathcal{D}$,
$g_Q$ is a $(K_g, L_g, M_g, N_g)$-molecule.

In particular, for any $Q\in\mathcal{D}$, $m_Q$ (resp. $g_Q$)
is called an \emph{analysis} (resp. a \emph{synthesis}) \emph{molecule}
for $\dot{A}_{p,q}^{s,\upsilon}(W)$.
\end{definition}

\begin{remark}\label{rmk-mole-phi}
Let all the symbols be the same as in
Definition \ref{def-anasynmole}. For any
$\varphi\in\mathcal{S}_{\infty}$,
it is easy to verify that there exists a positive
constant $C$ such that $\{C\varphi_{Q}\}_{Q\in\mathcal{D}}$
is a family of both analysis and synthesis molecules
for $\dot{A}_{p,q}^{s,\upsilon}(W)$, where,
for any $Q\in\mathcal{D}$, $\varphi_{Q}$
is as in \eqref{eq-phi_Q};
we omit the details.
\end{remark}

The following lemma directly follows from Definition \ref{def-anasynmole},
Lemma \ref{lem-MGH}, Theorem \ref{a(W)adopebound},
and Proposition \ref{prop-compadope}; we omit the details.

\begin{lemma}\label{anasynmole}
Let $(A, a)\in\{(B, b), (F, f)\}$, $s\in\mathbb{R}$, $p\in(0, \infty)$,
$q\in(0,\infty]$,  $W\in\mathcal{A}_{p, \infty}$,
and $\varphi, \psi\in\mathcal{S}$ satisfy \eqref{cond1} and \eqref{cond3}.
Suppose that $\delta_1,\delta_2,\omega$ satisfy \eqref{eq-delta1<0} and
$\upsilon\in\mathcal{G}(\delta_1, \delta_2; \omega)$.
Assume that, for any $i\in\{1,2\}$, $\{m_Q^{(i)}\}_{Q\in\mathcal{D}}$ and
$\{g_R^{(i)}\}_{R \in \mathcal{D}}$ are respectively families
of analysis and synthesis molecules for $\dot{A}_{p,q}^{s,\upsilon}(W)$.
Then the following statements hold.
\begin{itemize}
\item[{\rm (i)}] For any $i \in\{1,2\}$,
$\{\langle m_Q^{(i)}, g_R^{(i)}\rangle\}_{Q, R\in\mathcal{D}}$
is $\dot{a}_{p,q}^{s,\upsilon}(W)$-almost diagonal.
\item[{\rm (ii)}] If $\vec{\lambda}:=\{\vec{\lambda}_P
\}_{P\in\mathcal{D}} \in \dot{a}_{p,q}^{s,\upsilon}(W)$,
then, for any $Q\in\mathcal{D}$,
$\vec{t}_Q:=\sum_{P\in\mathcal{D}}\sum_{R\in\mathcal{D}}
\langle m_Q^{(1)}, g_R^{(1)}\rangle
\langle m_R^{(2)}, g_P^{(2)}\rangle\vec{\lambda}_P$
converges absolutely and $\|\{\vec{t}_Q\}
_{Q\in\mathcal{D}}\|_{\dot{a}_{p,q}^{s,\upsilon}(W)}\lesssim
\|\vec{\lambda}\|_{\dot{a}_{p,q}^{s,\upsilon}(W)}$,
where the positive constant is independent of $\vec{\lambda}$.
\end{itemize}
\end{lemma}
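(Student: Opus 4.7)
My plan is to deduce both parts directly from the assembled toolkit; (i) is a parameter-tracking computation using Lemma \ref{lem-MGH}, and (ii) is a composition-plus-boundedness argument.

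For (i), I fix $i\in\{1,2\}$ and apply Lemma \ref{lem-MGH} to the pair $(m_Q^{(i)}, g_R^{(i)})$. This gives, for any $\alpha\in(0,\infty)$ of my choosing, the bound $|\langle m_Q^{(i)}, g_R^{(i)}\rangle|\lesssim u_{Q,R}^{MGH}$ with $M:=K_m\wedge M_m\wedge K_g\wedge M_g$, $G:=n/2+[N_g\wedge\lceil\!\lceil L_m\rceil\!\rceil\wedge(K_m-n-\alpha)]_+$, and $H:=n/2+[N_m\wedge\lceil\!\lceil L_g\rceil\!\rceil\wedge(K_g-n-\alpha)]_+$. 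To see that $\{\langle m_Q^{(i)}, g_R^{(i)}\rangle\}_{Q,R\in\mathcal{D}}$ is $\dot{a}_{p,q}^{s,\upsilon}(W)$-almost diagonal in the sense of Definition \ref{a(W)adope}, I need $M>D_{\dot{a}_{p,q}^{s,\upsilon}(W)}$, $G>E_{\dot{a}_{p,q}^{s,\upsilon}(W)}$, and $H>F_{\dot{a}_{p,q}^{s,\upsilon}(W)}$. The first is immediate from the requirements on $K_m, M_m, K_g, M_g$ in Definition \ref{def-anasynmole}. For $G$, each of $N_g$, $\lceil\!\lceil L_m\rceil\!\rceil$, and $K_m-n$ strictly exceeds $E_{\dot{a}_{p,q}^{s,\upsilon}(W)}-n/2$ (the middle term because \eqref{eq-ceil} upgrades the non-strict bound $L_m\geq E_{\dot{a}_{p,q}^{s,\upsilon}(W)}-n/2$ to a strict one), so I pick $\alpha>0$ small enough that $K_m-n-\alpha$ still does. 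The bound $H>F_{\dot{a}_{p,q}^{s,\upsilon}(W)}$ is symmetric, swapping the roles of $m$ and $g$.

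For (ii), applying (i) twice shows that $u^{(1)}:=\{\langle m_Q^{(1)}, g_R^{(1)}\rangle\}_{Q,R\in\mathcal{D}}$ and $u^{(2)}:=\{\langle m_R^{(2)}, g_P^{(2)}\rangle\}_{R,P\in\mathcal{D}}$ are both $\dot{a}_{p,q}^{s,\upsilon}(W)$-almost diagonal. Proposition \ref{prop-compadope} then yields that the composition $u:=u^{(1)}\circ u^{(2)}$ is $\dot{a}_{p,q}^{s,\upsilon}(W)$-almost diagonal, and its proof already establishes absolute convergence of $\sum_{R}\langle m_Q^{(1)}, g_R^{(1)}\rangle\langle m_R^{(2)}, g_P^{(2)}\rangle$ with an almost diagonal dominating matrix $u_{Q,P}^{\widetilde{D}\widetilde{E}\widetilde{F}}$ whose parameters satisfy \eqref{DEFa(W)adope}. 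Applying Theorem \ref{a(W)adopebound} (or, after reduction to the averaging level via Corollary \ref{cor-a(A)=a(W)}, its scalar form Theorem \ref{thm-bound-ad}) to the matrix $\{u_{Q,P}^{\widetilde{D}\widetilde{E}\widetilde{F}}\}$ acting on the nonnegative sequence $\{|A_P\vec{\lambda}_P|\}_{P\in\mathcal{D}}\in\dot{a}_{p,q}^{s,\upsilon}$, I obtain finiteness of $\sum_{P\in\mathcal{D}}u_{Q,P}^{\widetilde{D}\widetilde{E}\widetilde{F}}|\vec{\lambda}_P|$ for each $Q\in\mathcal{D}$, which by Fubini legitimizes the double series defining $\vec{t}_Q$ component-by-component and identifies it with $(u\vec{\lambda})_Q$. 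Theorem \ref{a(W)adopebound} applied to $u$ itself then delivers $\|\{\vec{t}_Q\}_{Q\in\mathcal{D}}\|_{\dot{a}_{p,q}^{s,\upsilon}(W)}\lesssim\|\vec{\lambda}\|_{\dot{a}_{p,q}^{s,\upsilon}(W)}$.

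The only nontrivial point is the parameter bookkeeping in (i): the mix of strict and non-strict inequalities in Definition \ref{def-anasynmole} forces me to exploit the ceiling $\lceil\!\lceil L_m\rceil\!\rceil$ to gain strict slack from the non-strict bound on $L_m$, and to pick the free $\alpha$ small enough to preserve the strictness through the term $K_m-n-\alpha$. Arranging all three arguments of the min in $G$ (respectively $H$) to simultaneously clear the target threshold for a single $\alpha$ is the main obstacle; once that is settled, Proposition \ref{prop-compadope} closes the almost diagonal class under composition and Theorem \ref{a(W)adopebound} supplies the norm bound, so Lemma \ref{anasynmole} drops out.
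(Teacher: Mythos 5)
Your proposal is correct and follows exactly the route the paper intends: the paper omits the proof, stating only that the lemma ``directly follows from Definition \ref{def-anasynmole}, Lemma \ref{lem-MGH}, Theorem \ref{a(W)adopebound}, and Proposition \ref{prop-compadope}'', and your argument fills in the details using precisely these four ingredients, including the key bookkeeping point that $\lceil\!\lceil L_m\rceil\!\rceil>L_m\geq E_{\dot{a}_{p,q}^{s,\upsilon}(W)}-\frac{n}{2}$ upgrades the non-strict bound to a strict one and that $\alpha$ may be taken small enough for $K_m-n-\alpha$ to clear the same threshold.
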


In general, for any $\vec{f}\in\dot{A}_{p,q}^{s,\upsilon}(W)$
and $Q\in\mathcal{D}$, an analysis molecule $m_Q$ for
$\dot{A}_{p,q}^{s,\upsilon}(W)$
may not belong to $\mathcal{S}_{\infty}$ and hence
the conventional definition of $\langle\vec{f}, m_Q\rangle$
regarding $\vec{f}\in(\mathcal{S}'_{\infty})^m$ may fail.
However, the following technical lemma
gives a suitable way to define $\langle\vec{f}, m_Q\rangle$.

\begin{lemma}\label{wdfmole}
Let $A\in\{B, F\}$, $s\in\mathbb{R}$, $p\in(0, \infty)$,
$q\in(0,\infty]$, and $W\in\mathcal{A}_{p, \infty}$.
Suppose that $\delta_1,\delta_2,\omega$ satisfy \eqref{eq-delta1<0},
$\upsilon\in\mathcal{G}(\delta_1, \delta_2; \omega)$, and
$\varphi, \psi\in\mathcal{S}$ satisfy \eqref{cond1} and \eqref{cond3}.
If $\{m_Q\}_{Q\in\mathcal{D}}$ is a family of analysis molecules
for $\dot{A}_{p,q}^{s,\upsilon}(W)$, then, for
any $\vec{f}\in\dot{A}_{p,q}^{s,\upsilon}(W)$ and $Q\in\mathcal{D}$,
\begin{align}\label{eq-f-m}
\left\langle\vec{f}, m_Q\right\rangle_*
:=\sum_{R\in\mathcal{D}}\left\langle \psi_R,
m_Q\right\rangle\left\langle\vec{f},\varphi_R\right\rangle	
\end{align}
converges absolutely and its value is independent of
the choice of $\varphi$ and $\psi$.
\end{lemma}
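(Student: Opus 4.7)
The plan is to decouple the two assertions of the lemma using the real-variable machinery already set up. The convergence assertion will follow from combining the $\varphi$-transform characterization (Theorem \ref{thm-phitransMWBTL}) with the almost diagonality of molecular pairings (Lemma \ref{anasynmole}), while the independence assertion will arise from a double application of the Calder\'on reproducing formula (Lemma \ref{lem-Cdreproform}) together with a careful summation swap.

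For absolute convergence, I would first use Remark \ref{rmk-mole-phi} to produce a constant $c>0$ such that $\{c\psi_R\}_{R\in\mathcal{D}}$ is a family of synthesis molecules for $\dot{A}_{p,q}^{s,\upsilon}(W)$. Applying Lemma \ref{anasynmole}(i) with $m_Q^{(1)}:=m_Q$ and $g_R^{(1)}:=c\psi_R$ then shows that $\{\langle m_Q,\psi_R\rangle\}_{Q,R\in\mathcal{D}}$ is (up to a scalar) $\dot{a}_{p,q}^{s,\upsilon}(W)$-almost diagonal. Combined with $\{\langle\vec{f},\varphi_R\rangle\}_{R\in\mathcal{D}}\in\dot{a}_{p,q}^{s,\upsilon}(W)$ from Theorem \ref{thm-phitransMWBTL}(i), the entrywise absolute convergence of $\sum_R \langle\psi_R, m_Q\rangle \langle\vec{f},\varphi_R\rangle$ for every $Q\in\mathcal{D}$ is then obtained exactly as in the estimate \eqref{eq-absoconver} inside the proof of Theorem \ref{a(W)adopebound}.

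For independence, let $(\varphi^{(i)},\psi^{(i)})$, $i\in\{1,2\}$, be two admissible pairs. The strategy is to insert into the $(\varphi^{(1)},\psi^{(1)})$-sum the following two identities: first,
\begin{equation*}
\langle\vec{f},\varphi^{(1)}_R\rangle=\sum_{S\in\mathcal{D}}\langle\vec{f},\varphi^{(2)}_S\rangle\langle\psi^{(2)}_S,\varphi^{(1)}_R\rangle,
\end{equation*}
obtained by pairing the Calder\'on expansion $\vec{f}=\sum_S\langle\vec{f},\varphi^{(2)}_S\rangle\psi^{(2)}_S$ in $(\mathcal{S}'_\infty)^m$ with $\varphi^{(1)}_R\in\mathcal{S}_\infty$; and second,
\begin{equation*}
\langle\psi^{(2)}_S,m_Q\rangle=\sum_{R\in\mathcal{D}}\langle\psi^{(2)}_S,\varphi^{(1)}_R\rangle\langle\psi^{(1)}_R,m_Q\rangle,
\end{equation*}
obtained by applying the continuous linear functional $\phi\mapsto\langle\phi,m_Q\rangle$ on $\mathcal{S}_\infty$ (continuity is immediate from the molecular decay $|m_Q|\lesssim(1+|\cdot-x_Q|)^{-K_m}$ with $K_m>n$) to the Calder\'on expansion $\psi^{(2)}_S=\sum_R\langle\psi^{(2)}_S,\varphi^{(1)}_R\rangle\psi^{(1)}_R$ in $\mathcal{S}_\infty$. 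Substituting the first identity into the $(\varphi^{(1)},\psi^{(1)})$-sum, interchanging the order of summation, and invoking the second identity then collapses the expression into the $(\varphi^{(2)},\psi^{(2)})$-sum.

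The main obstacle I foresee is justifying the Fubini-type swap in the last step. To handle it, I would estimate the triple sum
\begin{equation*}
\sum_{R,S\in\mathcal{D}}\bigl|\langle\psi^{(1)}_R,m_Q\rangle\bigr|\,\bigl|\langle\psi^{(2)}_S,\varphi^{(1)}_R\rangle\bigr|\,\bigl|\langle\vec{f},\varphi^{(2)}_S\rangle\bigr|
\end{equation*}
by recognizing it as the $Q$-entry of the composition of two $\dot{a}_{p,q}^{s,\upsilon}(W)$-almost diagonal matrices acting on the sequence $S_{\varphi^{(2)}}\vec{f}\in\dot{a}_{p,q}^{s,\upsilon}(W)$; the almost diagonality of both factors follows from Lemma \ref{anasynmole}(i) in conjunction with Remark \ref{rmk-mole-phi}, and the absolute convergence of the composed action is precisely the content of Lemma \ref{anasynmole}(ii). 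Once this bound is established, Fubini's theorem legitimizes the interchange and completes the proof.
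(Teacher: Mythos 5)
Your proposal is correct and follows essentially the same route as the paper's proof: absolute convergence via Remark \ref{rmk-mole-phi}, Lemma \ref{anasynmole}(i), Theorem \ref{thm-phitransMWBTL}, and Theorem \ref{a(W)adopebound}, and independence via the Calder\'on reproducing formula (Lemma \ref{lem-Cdreproform}) together with a Fubini swap justified by Lemma \ref{anasynmole}(ii). Your explicit remark on the continuity of $\phi\mapsto\langle\phi,m_Q\rangle$ on $\mathcal{S}_\infty$, used to pair the $\mathcal{S}_\infty$-convergent Calder\'on expansion of $\psi^{(2)}_S$ with $m_Q$ termwise, makes explicit a step the paper leaves implicit, but the argument is the same.
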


\begin{proof}
We first prove that, for any $\vec{f}\in\dot{A}_{p,q}^{s,\upsilon}(W)$
and $Q\in\mathcal{D}$, \eqref{eq-f-m} converges absolutely.
It follows from Remark \ref{rmk-mole-phi} and
Lemma \ref{anasynmole}(i) that $\{\langle m_Q,
\psi_R\rangle\}_{Q,R\in\mathcal{D}}$
is $\dot{a}_{p,q}^{s,\upsilon}(W)$-almost diagonal.
By this and Theorems \ref{a(W)adopebound}
and \ref{thm-phitransMWBTL}, we find that
$\{\langle\vec{f},\varphi_R\rangle\}_{R\in\mathcal{D}}
\in\dot{a}_{p,q}^{s,\upsilon}(W)$
and hence the summation in \eqref{eq-f-m}
converges absolutely. To verify that \eqref{eq-f-m} is
independent of the choice of $\varphi$ and $\psi$,
assume that another pair $\Phi, \Psi\in\mathcal{S}$
also satisfy \eqref{cond1} and \eqref{cond3}.
Applying \eqref{eq-f-m}, Remark \ref{rmk-mole-phi},
Theorem \ref{thm-phitransMWBTL}, Lemma \ref{lem-Cdreproform},
and Fubini's theorem [Lemma \ref{anasynmole}(ii)
guarantees the absolute convergence of
the following double summations], we obtain,
for any $\vec{f}\in\dot{A}_{p,q}^{s,\upsilon}(W)$
and $Q\in\mathcal{D}$,
\begin{align*}
\left\langle\vec{f}, m_Q\right\rangle_*
&=\sum_{R\in\mathcal{D}}\left\langle \psi_R,
m_Q\right\rangle\left\langle\vec{f},\varphi_R\right\rangle
=\sum_{R\in\mathcal{D}}\sum_{P\in\mathcal{D}}
\left\langle \psi_R,m_Q\right\rangle\left\langle\Psi_P,
\varphi_R\right\rangle\left\langle\vec{f},
\Phi_P\right\rangle\\
&=\sum_{P\in\mathcal{D}}\sum_{R\in\mathcal{D}}
\left\langle \psi_R,m_Q\right\rangle\left\langle\Psi_P,
\varphi_R\right\rangle\left\langle\vec{f},
\Phi_P\right\rangle=\sum_{P\in\mathcal{D}}
\left\langle\Psi_P, m_Q\right\rangle\left\langle\vec{f},
\Phi_P\right\rangle,
\end{align*}
which further implies that \eqref{eq-f-m} is independent
of the choice of $\varphi$ and $\psi$ and hence
completes the proof of Lemma \ref{wdfmole}.
\end{proof}

We now give the molecular characterization
of $\dot{A}_{p,q}^{s,\upsilon}(W)$.

\begin{theorem}\label{moledecomp}
Let $(A, a)\in\{(B, b), (F, f)\}$, $s\in\mathbb{R}$, $p\in(0, \infty)$,
$q\in(0,\infty]$, and $W\in \mathcal{A}_{p,\infty}$.
Suppose that $\delta_1,\delta_2,\omega$ satisfy \eqref{eq-delta1<0} and
$\upsilon\in\mathcal{G}(\delta_1, \delta_2; \omega)$.
Then the following statements hold.
\begin{itemize}
\item[{\rm (i)}]
If $\{m_Q\}_{Q\in\mathcal{D}}$ is a family of analysis molecules
for $\dot{A}^{s,\upsilon}_{p,q}(W)$,
then there exists a positive constant $C$ such that,
for any $\vec{f}\in \dot{A}^{s,\upsilon}_{p,q}(W)$,
$\|\{\langle\vec f,m_Q\rangle_*\}_{Q\in\mathcal{D}}
\|_{\dot{a}^{s,\upsilon}_{p,q}(W)}
\leq C\|\vec{f}\|_{\dot{A}^{s,\upsilon}_{p,q}(W)}$,
where $\langle\cdot, \cdot\rangle_*$ is as in \eqref{eq-f-m}.
\item[{\rm (ii)}]
If $\{g_Q\}_{Q\in\mathcal{D}}$ is a family of synthesis molecules
for $\dot{A}^{s,\upsilon}_{p,q}(W)$,
then, for any $\vec{t}:=\{\vec{t}_Q\}_{Q\in\mathcal{D}}
\in \dot{a}^{s,\upsilon}_{p,q}(W)$,
$\vec{f}:=\sum_{Q\in\mathcal{D}}\vec{t}_Q
g_Q \in (\mathcal{S}'_{\infty})^m$
and there exists a positive constant $C$,
independent of $\vec{\lambda}$, such that
$\|\vec f\|_{\dot{A}^{s,\upsilon}_{p,q}(W)}
\leq C\|\vec{\lambda}\|_{\dot{a}^{s,\upsilon}_{p,q}(W)}$.
\end{itemize}
\end{theorem}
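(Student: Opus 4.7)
\noindent\textbf{Proof proposal for Theorem \ref{moledecomp}.}
The plan is to reduce the molecular characterization to the already-proven $\varphi$-transform characterization (Theorem \ref{thm-phitransMWBTL}) together with the boundedness of almost diagonal operators on $\dot{a}^{s,\upsilon}_{p,q}(W)$ (Theorem \ref{a(W)adopebound}). The bridging observation, recorded in Lemma \ref{anasynmole}(i), is that the pairing matrix between any family of analysis molecules and any family of synthesis molecules is $\dot{a}^{s,\upsilon}_{p,q}(W)$-almost diagonal, and that by Remark \ref{rmk-mole-phi} the system $\{C\varphi_Q\}_{Q\in\mathcal{D}}$ is simultaneously a family of analysis and of synthesis molecules.

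\emph{Proof of (i).} Fix $\varphi,\psi\in\mathcal{S}$ satisfying \eqref{cond1} and \eqref{cond3}. By Lemma \ref{wdfmole} the pairing $\langle\vec{f},m_Q\rangle_*$ is well defined and
\[
\left\langle\vec{f},m_Q\right\rangle_{*}
=\sum_{R\in\mathcal{D}}\left\langle \psi_R,m_Q\right\rangle\left\langle\vec{f},\varphi_R\right\rangle.
\]
Thus $\{\langle\vec{f},m_Q\rangle_*\}_{Q\in\mathcal{D}}=U\,S_\varphi\vec{f}$, where $U:=\{\langle\psi_R,m_Q\rangle\}_{Q,R\in\mathcal{D}}$. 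Since $\{C\psi_R\}_{R\in\mathcal{D}}$ is a family of synthesis molecules and $\{m_Q\}_{Q\in\mathcal{D}}$ is a family of analysis molecules, Lemma \ref{anasynmole}(i) shows that $U$ is $\dot{a}^{s,\upsilon}_{p,q}(W)$-almost diagonal. Combining Theorem \ref{a(W)adopebound} with the boundedness of $S_\varphi:\,\dot{A}^{s,\upsilon}_{p,q}(W)\to\dot{a}^{s,\upsilon}_{p,q}(W)$ from Theorem \ref{thm-phitransMWBTL}(i) yields
\[
\left\|\left\{\left\langle\vec{f},m_Q\right\rangle_{*}\right\}_{Q\in\mathcal{D}}\right\|_{\dot{a}^{s,\upsilon}_{p,q}(W)}
\lesssim\left\|S_\varphi\vec{f}\right\|_{\dot{a}^{s,\upsilon}_{p,q}(W)}\lesssim\left\|\vec{f}\right\|_{\dot{A}^{s,\upsilon}_{p,q}(W)},
\]
which is the claim.

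\emph{Proof of (ii).} Two things need to be shown: (a) the series $\vec{f}:=\sum_{Q\in\mathcal{D}}\vec{t}_Q g_Q$ converges in $(\mathcal{S}'_\infty)^m$, and (b) the quasi-norm estimate. For (b), once convergence is established, for any $\varphi,\psi\in\mathcal{S}$ satisfying \eqref{cond1} and \eqref{cond3} one gets $\langle\vec{f},\varphi_R\rangle=\sum_{Q\in\mathcal{D}}\vec{t}_Q\langle g_Q,\varphi_R\rangle$, so $S_\varphi\vec{f}=V\vec{t}$ with $V:=\{\langle g_Q,\varphi_R\rangle\}_{R,Q\in\mathcal{D}}$. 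By Remark \ref{rmk-mole-phi} each $C\varphi_R$ is an analysis molecule, so Lemma \ref{anasynmole}(i) makes $V$ into an $\dot{a}^{s,\upsilon}_{p,q}(W)$-almost diagonal matrix; Theorem \ref{a(W)adopebound} together with Theorem \ref{thm-phitransMWBTL}(i) then gives
\[
\left\|\vec{f}\right\|_{\dot{A}^{s,\upsilon}_{p,q}(W)}\sim\left\|S_\varphi\vec{f}\right\|_{\dot{a}^{s,\upsilon}_{p,q}(W)}=\left\|V\vec{t}\right\|_{\dot{a}^{s,\upsilon}_{p,q}(W)}\lesssim\left\|\vec{t}\right\|_{\dot{a}^{s,\upsilon}_{p,q}(W)}.
\]

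\emph{Main obstacle.} The delicate part is (a), namely establishing convergence of $\sum_Q\vec{t}_Q g_Q$ in $(\mathcal{S}'_\infty)^m$, as the $g_Q$ need not lie in $\mathcal{S}_\infty$ and one cannot directly imitate Lemma \ref{well-define}. The strategy is: given $\phi\in\mathcal{S}_\infty$, observe that for a suitable constant $c_\phi>0$ (controllable by a finite Schwartz seminorm of $\phi$) the function $c_\phi\phi$ is an analysis molecule supported near $Q_{0,\mathbf{0}}$, so Lemma \ref{lem-MGH} yields $|\langle g_Q,\phi\rangle|\lesssim\|\phi\|_{\mathcal{S}_{N+1}}\,u^{MGH}_{Q,Q_{0,\mathbf{0}}}$ for appropriate $M,G,H$ determined by the molecular parameters of $\{g_Q\}$. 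Combining this with the crude pointwise bound
\[
\left|\vec{t}_Q\right|\lesssim\left\|A_Q^{-1}\right\|\,|Q|^{\frac{s}{n}+\frac{1}{2}-\frac{1}{p}}\upsilon(Q)\left\|\vec{t}\right\|_{\dot{a}^{s,\upsilon}_{p,q}(\mathbb{A})}\sim\left\|A_Q^{-1}\right\|\,|Q|^{\frac{s}{n}+\frac{1}{2}-\frac{1}{p}}\upsilon(Q)\left\|\vec{t}\right\|_{\dot{a}^{s,\upsilon}_{p,q}(W)}
\]
(from Corollary \ref{cor-a(A)=a(W)} applied to a reducing-operator sequence $\mathbb{A}$), the polynomial-in-$\ell(Q)$ growth of $\|A_Q^{-1}\|$ from Lemma \ref{growEST}, the growth bound \eqref{eq-welldefine-2} on $\upsilon$, and the rapid decay of $u^{MGH}_{Q,Q_{0,\mathbf{0}}}$, one reduces to a geometric series argument exactly like that at the end of the proof of Lemma \ref{well-define}; the defining range of $(K_g,L_g,M_g,N_g)$ in Definition \ref{def-anasynmole} is precisely tuned so that $M,G,H$ are large enough to make this series summable. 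This gives absolute convergence of $\sum_Q\vec{t}_Q\langle g_Q,\phi\rangle$ with bound $\lesssim\|\phi\|_{\mathcal{S}_{N+1}}\|\vec{t}\|_{\dot{a}^{s,\upsilon}_{p,q}(W)}$, hence continuity of the partial sums and convergence in $(\mathcal{S}'_\infty)^m$. Once this technical step is in place, the remainder of the proof is the almost diagonal reduction outlined above.
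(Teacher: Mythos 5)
Your proof of part (i) and of the quasi-norm estimate in part (ii) follows the paper's argument essentially verbatim: both reduce to the fact that $\{\langle\psi_R,m_Q\rangle\}_{Q,R}$ and $\{\langle g_Q,\varphi_R\rangle\}_{R,Q}$ are $\dot{a}^{s,\upsilon}_{p,q}(W)$-almost diagonal (Remark \ref{rmk-mole-phi} plus Lemma \ref{anasynmole}(i)), combined with Theorems \ref{thm-phitransMWBTL} and \ref{a(W)adopebound}. Those parts are fine.

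The gap is in your step (a), the convergence of $\sum_{Q}\vec{t}_Q g_Q$ in $(\mathcal{S}'_\infty)^m$. You propose to bound $|\langle g_Q,\phi\rangle|$ by $u^{MGH}_{Q_{0,\mathbf{0}},Q}$ via Lemma \ref{lem-MGH} and then to use the \emph{crude pointwise bound} $|\vec{t}_Q|\lesssim\|A_Q^{-1}\|\,|Q|^{\frac{s}{n}+\frac12-\frac1p}\upsilon(Q)\|\vec{t}\|_{\dot a^{s,\upsilon}_{p,q}(W)}$ and a geometric series as in Lemma \ref{well-define}. This fails: in Lemma \ref{well-define} the decay parameter $N$ of $\langle\psi_Q,\phi\rangle$ can be taken arbitrarily large because $\psi\in\mathcal{S}_\infty$, whereas for a synthesis molecule the exponent $H$ produced by Lemma \ref{lem-MGH} is capped at roughly $\frac n2+\lceil\!\lceil L_g\rceil\!\rceil\wedge(K_g-n-\alpha)$, and Definition \ref{def-anasynmole} only forces $L_g\geq F_{\dot a^{s,\upsilon}_{p,q}(W)}-\frac n2$ and $K_g>D_{\dot a^{s,\upsilon}_{p,q}(W)}\vee[F_{\dot a^{s,\upsilon}_{p,q}(W)}+\frac n2]$. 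Concretely, take $m=1$, $W\equiv1$, $\upsilon\equiv1$, $s=0$, $p=q=2$, $a=f$, $n\geq3$: then $F_{\dot a}=\frac n2$, a synthesis family with $L_g=0$ and $K_g$ slightly above $n$ gives $H$ only slightly above $\frac n2+1$, while your small-cube series $\sum_{j\geq0}2^{-j(s+\frac n2-\frac np+H-n)}$ requires $H>\frac n2+\frac np-s=n$. So the series you write down genuinely diverges even though the theorem is true. The thresholds in Definition \ref{def-anasynmole} are calibrated for boundedness on the \emph{sequence space}, which exploits summability across cubes encoded in $\|\vec t\|_{\dot a^{s,\upsilon}_{p,q}(W)}$; they are not strong enough to survive replacing $\vec t$ by its pointwise (single-cube) envelope.

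The paper avoids this by never pairing $g_Q$ directly against $\phi$ with the crude bound. Instead it expands $g_Q=\sum_R\langle g_Q,\varphi_R\rangle\psi_R$ via Lemma \ref{lem-Cdreproform}, so that $\langle\vec f,\phi\rangle=\sum_Q\sum_R\vec t_Q\langle g_Q,\varphi_R\rangle\langle\psi_R,\phi\rangle$; the absolute convergence of this double sum is exactly Lemma \ref{anasynmole}(ii) (with $\phi$ a constant multiple of an analysis molecule), which ultimately rests on the row-wise absolute convergence built into the proof of Theorem \ref{a(W)adopebound} (see \eqref{eq-absoconver}) and hence uses the full sequence-space norm of $\vec t$. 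After Fubini one gets $\vec f=T_\psi\vec u$ with $\vec u:=\{\sum_Q\vec t_Q\langle g_Q,\varphi_R\rangle\}_R\in\dot a^{s,\upsilon}_{p,q}(W)$, and the convergence of $T_\psi\vec u$ in $(\mathcal{S}'_\infty)^m$ is supplied by Lemma \ref{well-define}, where the crude bound \emph{is} legitimate because the $\psi_R$ are Schwartz. You should replace your step (a) by this argument (or, equivalently, observe that $\{\langle\phi_R,g_Q\rangle\}_{R,Q}$ applied to $\vec t$ has absolutely convergent rows by Theorem \ref{a(W)adopebound}, and take the row $R=Q_{0,\mathbf{0}}$).
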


\begin{proof}
We first prove (i). To this end, let $\varphi, \psi\in\mathcal{S}$
satisfy \eqref{cond1} and \eqref{cond3}.
By Remark \ref{rmk-mole-phi} and Lemma \ref{anasynmole},
we find that $\{\langle\psi_R, m_Q\rangle\}_{Q, R\in \mathcal{D}}$ is
$\dot{a}_{p,q}^{s,\upsilon}(W)$-almost diagonal.
From this, \eqref{eq-f-m}, and Theorems
\ref{thm-phitransMWBTL} and \ref{a(W)adopebound}, it follows that,
for any $\vec{f}\in \dot{A}^{s,\upsilon}_{p,q}(W)$,
\begin{align*}
\left\|\left\{\left\langle\vec f, m_Q\right\rangle_*
\right\}_{Q\in\mathcal{D}}\right\|_{\dot{a}^{s,\upsilon}_{p,q}(W)}
&=\left\|\left\{\sum_{R\in\mathcal{D}}\left\langle \psi_R,
m_Q\right\rangle\left\langle\vec{f},\varphi_R\right\rangle	\right\}_{Q\in\mathcal{D}}\right\|_{\dot{a}^{s,\upsilon}_{p,q}(W)}\\
&\lesssim\left\|\left\{\left\langle\vec f, \varphi_R
\right\rangle\right\}_{R\in\mathcal{D}}\right\|
_{\dot{a}^{s,\upsilon}_{p,q}(W)}\lesssim
\left\|\vec{f}\right\|_{\dot{A}^{s,\upsilon}_{p,q}(W)},
\end{align*}
which completes the proof of (i).

Next, we show (ii). To do this, we prove that,
for any $\vec{t}:=\{\vec{t}_Q\}_{Q\in\mathcal{D}}
\in\dot{a}^{s,\upsilon}_{p,q}(W)$,
$\vec f:=\sum_{Q\in\mathcal{D}}\vec{t}_Qg_Q$
is well defined in $(\mathcal{S}'_{\infty})^m$
and $\|\vec{f}\|_{\dot{A}^{s,\upsilon}_{p,q}(W)}
\lesssim\|\vec{t}\|_{\dot{a}^{s,\upsilon}_{p,q}(W)}$.
Let $\vec{t}:=\{\vec{t}_Q\}_{Q\in\mathcal{D}}
\in \dot{a}^{s,\upsilon}_{p,q}(W)$.
From Lemma \ref{lem-Cdreproform}, we infer
that, for any $\phi\in\mathcal{S}_{\infty}$,
\begin{align}\label{eq-f-phi}
\left\langle\vec{f}, \phi\right\rangle:=
\sum_{Q\in\mathcal{D}}\vec{t}_Q
\left\langle g_Q, \phi\right\rangle=
\sum_{Q\in\mathcal{D}}\sum_{R\in\mathcal{D}}
\vec{t}_Q\left\langle g_Q,\varphi_R\right\rangle
\left\langle \psi_R,\phi\right\rangle.
\end{align}
By Remark \ref{rmk-mole-phi}, Theorem \ref{a(W)adopebound},
and Lemma \ref{anasynmole}, we find that
$$\vec{u}:=\{\vec{u}_R\}_{R\in\mathcal{D}}
:=\left\{\sum_{Q\in\mathcal{D}}\vec{t}_Q\langle
g_Q, \varphi_R\rangle\right\}_{R\in\mathcal{D}}
\in\dot{a}^{s,\upsilon}_{p,q}(W)$$
and the right-hand side of \eqref{eq-f-phi} converges absolutely
[because $\phi=\phi_{Q_{0,\mathbf{0}}}$ is a
constant multiple of an analysis molecule
for $\dot{A}_{p,q}^{s,\upsilon}(W)$].
This, combined with \eqref{eq-f-phi}, Theorem \ref{thm-phitransMWBTL},
Fubini's theorem, and the definition of $\vec{t}$,
further implies that $\sum_{R\in\mathcal{D}}\vec{u}_R
\psi_R$ converges in $(\mathcal{S}'_{\infty})^m$ and hence,
for any $\phi\in\mathcal{S}_{\infty}$,
\begin{align*}
\left\langle\vec{f}, \phi\right\rangle
=\sum_{R\in\mathcal{D}}\sum_{Q\in\mathcal{D}}
\vec{t}_Q\left\langle g_Q,\varphi_R\right\rangle
\left\langle \psi_R,\phi\right\rangle
=\sum_{R\in\mathcal{D}}\vec{u}_R
\left\langle \psi_R,\phi\right\rangle
=\left\langle\sum_{R\in\mathcal{D}}\vec{u}_R
\psi_R,\phi\right\rangle.
\end{align*}
Using this and Theorems \ref{thm-phitransMWBTL}
and \ref{a(W)adopebound}, we conclude that
$\vec{f}=\sum_{R\in\mathcal{D}}\vec{u}_R
\psi_R$ in $(\mathcal{S}'_{\infty})^m$ and
\begin{align*}
\left\|\vec{f}\right\|_{\dot{A}^{s,\upsilon}_{p,q}(W)}
=\left\|\sum_{R\in\mathcal{D}}\vec{u}_R\psi_R
\right\|_{\dot{A}^{s,\upsilon}_{p,q}(W)}
\lesssim\left\|\vec{u}\right\|_{\dot{a}^{s,\upsilon}_{p,q}(W)}
\lesssim\left\|\vec{t}\right\|_{\dot{a}^{s,\upsilon}_{p,q}(W)},
\end{align*}
which completes the proof of (ii) and hence Theorem \ref{moledecomp}.
\end{proof}

Based on Theorem \ref{moledecomp}, we
next establish the wavelet characterization
of $\dot{A}_{p,q}^{s,\upsilon}(W)$ in terms of Daubechies wavelets,
which is crucial for obtaining the sufficient and necessary condition
on the Sobolev-type embedding of $\dot{A}_{p,q}^{s,\upsilon}(W)$
in the next section. To this end,
we first present the concept of Daubechies wavelets
(see, for example, \cite{dau88} and \cite[Sections 3.8 and 3.9]{mey92}).
In what follows, for any $k\in\mathbb{N}$,
let $C^{k}$ be the set of all
$k$ times continuously differentiable functions on $\mathbb{R}^n$.

\begin{definition}
Let $k\in\mathbb{N}$. A family of real-valued functions
$\{\theta^{(\lambda)}\}_{\lambda=1}^{2^n-1}$ in
$C^{k}$  with bounded supports
are called \emph{Daubechies wavelets} of class $C^{k}$
if $\{\theta^{(\lambda)}_Q :\ \lambda\in\{1,\ldots,2^n-1\},
\ Q\in\mathcal{D}\}$ is an orthonormal basis of $L^2$.
\end{definition}

Assume that $k\in\mathbb{N}$ and
$\{\theta^{(\lambda)}\}_{\lambda=1}^{2^n-1}$ are
Daubechies wavelets of class $C^{k}$.
From \cite[Corollary 5.5.2]{dau92}, it follows that,
for any $\lambda\in\{1,\ldots,2^n-1\}$ and
$\gamma\in\mathbb{Z}_+^n$ with $|\gamma|\leq k$,
\begin{align}\label{eq-wav-cancel}
\int_{\mathbb{R}^n}\theta^{(\lambda)}(x)x^{\gamma}\,dx=0.
\end{align}
In the following lemma, we establish the relation
between Daubechies wavelets and smooth molecules.

\begin{lemma}\label{lem-Dauwavtomole}
Let $(A, a)\in\{(B, b), (F, f)\}$, $s\in\mathbb{R}$,
$p\in(0,\infty)$, $q\in(0, \infty]$,
and $W\in \mathcal{A}_{p,\infty}$.
Suppose that $\delta_1,\delta_2,\omega$
satisfy \eqref{eq-delta1<0} and
$\upsilon\in\mathcal{G}(\delta_1, \delta_2; \omega)$.
Let $k\in\mathbb{N}$ satisfy
\begin{align}\label{eq-k}
k>\max\left\{E_{\dot{a}_{p,q}^{s,\upsilon}(W)}-\frac{n}{2},
F_{\dot{a}_{p,q}^{s,\upsilon}(W)}-\frac{n}{2}\right\},
\end{align}
where $E_{\dot{a}_{p,q}^{s,\upsilon}(W)}$ and
$F_{\dot{a}_{p,q}^{s,\upsilon}(W)}$
are as in Theorem \ref{a(W)adopebound}.
If $\{\theta^{(\lambda)}\}_{\lambda=1}^{2^n-1}$
are Daubechies wavelets of class $C^{k}$,
then there exists a positive constant $C$ such that,
for any $\lambda\in\{1,\ldots,2^n-1\}$,
$\{C\theta^{(\lambda)}_Q\}_{Q\in\mathcal{D}}$ is
a family of both analysis and synthesis molecules
for $\dot{A}_{p,q}^{s,\upsilon}(W)$.		
\end{lemma}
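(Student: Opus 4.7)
The plan is to select specific molecule parameters $(K_m,L_m,M_m,N_m)$ and $(K_g,L_g,M_g,N_g)$ that fall within the admissible ranges of Definition \ref{def-anasynmole}, and then to verify Definition \ref{def-mole}(i)--(iv) for the rescaled Daubechies wavelets $C\theta_Q^{(\lambda)}$ using three standard features of the wavelets: compact support, $C^k$-regularity, and the vanishing moments \eqref{eq-wav-cancel} up to order $k$. The hypothesis \eqref{eq-k} is engineered precisely so that the parameter intervals required by Definition \ref{def-anasynmole} overlap with the smoothness budget $k$ supplied by the wavelet construction.

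First I would fix $R_0\in(0,\infty)$ with $\operatorname{supp}\theta^{(\lambda)}\subset B(\mathbf{0},R_0)$, so that $\operatorname{supp}\theta_Q^{(\lambda)}\subset B(x_Q,R_0\ell(Q))$ for every $Q=Q_{j,\kappa}\in\mathcal{D}$. On this support one has $1+|2^j x-\kappa|\leq 1+R_0$, whence for any $K\in[0,\infty)$,
\begin{align*}
\left|\theta_Q^{(\lambda)}(x)\right|\leq\left\|\theta^{(\lambda)}\right\|_{\infty}2^{jn/2}\mathbf{1}_{B(x_Q,R_0\ell(Q))}(x)
\leq\left\|\theta^{(\lambda)}\right\|_{\infty}(1+R_0)^K(u_K)_Q(x),
\end{align*}
which delivers Definition \ref{def-mole}(i) after absorbing the excess factors into $C$. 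The same argument applied to $\partial^\gamma\theta^{(\lambda)}$ for $|\gamma|\leq k$ produces Definition \ref{def-mole}(iii) with arbitrary $M$ for all $|\gamma|<N$ whenever $N\leq k+1$. For condition (ii), the substitution $u=2^j x-\kappa$ expresses $\int\theta_Q^{(\lambda)}(x)x^\gamma\,dx$ as a finite linear combination of moments $\int\theta^{(\lambda)}(u)u^\beta\,du$ with $|\beta|\leq|\gamma|$, and these vanish by \eqref{eq-wav-cancel} whenever $|\gamma|\leq k$.

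The only subtle point is Definition \ref{def-mole}(iv), which demands weighted H\"older continuity of $\partial^\gamma\theta_Q^{(\lambda)}$ with exponent $N^{**}\in(0,1]$ for $|\gamma|=\lfloor\!\lfloor N\rfloor\!\rfloor$. I plan to enforce $N\leq k$, which forces $\lfloor\!\lfloor N\rfloor\!\rfloor\leq k-1$. Since $\theta^{(\lambda)}\in C^k$ has compact support, $\partial^\gamma\theta^{(\lambda)}$ is Lipschitz on $\mathbb{R}^n$ for $|\gamma|\leq k-1$, which provides H\"older continuity of any exponent in $(0,1]$; combining this Lipschitz estimate with the compact-support bound from the first step upgrades it to the weighted form of (iv) with arbitrary $M$.

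Finally, I take $K_m, M_m>\max\{D_{\dot{a}_{p,q}^{s,\upsilon}(W)}, E_{\dot{a}_{p,q}^{s,\upsilon}(W)}+\frac{n}{2}\}$, $L_m\in[E_{\dot{a}_{p,q}^{s,\upsilon}(W)}-\frac{n}{2},k]$, and $N_m\in(F_{\dot{a}_{p,q}^{s,\upsilon}(W)}-\frac{n}{2},k]$; all three intervals are nonempty thanks to \eqref{eq-k}. The synthesis parameters are obtained by the dual choice (swapping the roles of $E_{\dot{a}_{p,q}^{s,\upsilon}(W)}$ and $F_{\dot{a}_{p,q}^{s,\upsilon}(W)}$). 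A single constant $C$, depending on $\theta^{(\lambda)}$, $R_0$, $k$, and the selected parameters but not on $Q$, then makes $\{C\theta_Q^{(\lambda)}\}_{Q\in\mathcal{D}}$ a family of both analysis and synthesis molecules for $\dot{A}_{p,q}^{s,\upsilon}(W)$. The main obstacle is therefore purely combinatorial bookkeeping among the thresholds of Definition \ref{def-anasynmole} and the smoothness exponent $k$; once \eqref{eq-k} is in force, every inequality can be satisfied simultaneously.
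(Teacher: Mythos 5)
Your proposal is correct and fills in exactly the "simple calculations" that the paper omits: it uses the same three ingredients (compact support, $C^k$-regularity giving Lipschitz continuity of the derivatives up to order $k-1$, and the vanishing moments \eqref{eq-wav-cancel}) to verify Definition \ref{def-mole}(i)--(iv), and the parameter bookkeeping with $L_m\in[E_{\dot{a}_{p,q}^{s,\upsilon}(W)}-\frac{n}{2},k]$, $N_m\in(F_{\dot{a}_{p,q}^{s,\upsilon}(W)}-\frac{n}{2},k]$ (and the dual choice for synthesis molecules) is precisely what \eqref{eq-k} is designed to permit. No gaps; this matches the paper's intended argument.
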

\begin{proof}
Notice that, for any $\lambda\in\{1,\ldots,2^n-1\}$,
$\theta^{(\lambda)}$ has bounded support and
satisfies \eqref{eq-wav-cancel}. This, combined
with \eqref{eq-k}, Definition \ref{def-anasynmole},
and simple calculations,
further implies the present lemma; we omit the details.
This finishes the proof of Lemma \ref{lem-Dauwavtomole}.
\end{proof}

Finally, we establish the wavelet characterization
of $A^{s,\upsilon}_{p,q}(W)$ via Daubechies wavelets.

\begin{theorem}\label{Dauwav decomp}
Let $(A, a)\in\{(B, b), (F, f)\}$, $s\in\mathbb{R}$,
$p\in(0,\infty)$, $q\in(0, \infty]$, and $W\in\mathcal{A}_{p,\infty}$.
Assume that $\delta_1,\delta_2,\omega$ satisfy \eqref{eq-delta1<0} and
$\upsilon\in\mathcal{G}(\delta_1, \delta_2; \omega)$.
If $k\in\mathbb{N}$ satisfies \eqref{eq-k} and $\{\theta^{(\lambda)}\}_{\lambda=1}^{2^n-1}$ are
Daubechies wavelets of class $C^{k}$, then,
for any $\vec{f}\in\dot{A}^{s,\upsilon}_{p,q}(W)$,
\begin{align}\label{Dauwav decomp eq}
\vec f=\sum_{\lambda=1}^{2^n-1}\sum_{Q\in\mathcal{D}}
\left\langle\vec f,\theta^{(\lambda)}_Q\right\rangle_*\theta^{(\lambda)}_Q
\end{align}
in $(\mathcal{S}'_{\infty})^m$ and
$\|\vec f\|_{\dot{A}^{s,\upsilon}_{p,q}(W)}
\sim\sum_{\lambda=1}^{2^n-1}\|\{\langle
\vec f,\theta^{(\lambda)}_Q \rangle_*\}_{Q\in\mathcal{D}}\|
_{\dot{a}^{s,\upsilon}_{p,q}(W)}$,
where the positive equivalence constants are independent of $\vec{f}$
and $\langle\cdot, \cdot\rangle_*$ is as in \eqref{eq-f-m}.
\end{theorem}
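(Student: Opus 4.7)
The plan is to reduce Theorem \ref{Dauwav decomp} to the molecular characterization by invoking Lemma \ref{lem-Dauwavtomole}, which supplies, for each $\lambda \in \{1,\dots,2^n-1\}$, a constant $C_\lambda>0$ so that $\{C_\lambda \theta^{(\lambda)}_Q\}_{Q\in\mathcal{D}}$ is simultaneously a family of analysis and of synthesis molecules for $\dot{A}^{s,\upsilon}_{p,q}(W)$. With this in hand the ``$\gtrsim$'' half of the norm equivalence is immediate: applying Theorem \ref{moledecomp}(i) to each analysis family and using the linearity of $\langle\cdot,\cdot\rangle_*$ in the second slot yields
$$\|\{\langle\vec{f},\theta^{(\lambda)}_Q\rangle_*\}_{Q\in\mathcal{D}}\|_{\dot{a}^{s,\upsilon}_{p,q}(W)}\lesssim\|\vec{f}\|_{\dot{A}^{s,\upsilon}_{p,q}(W)}$$
for every $\vec{f}\in\dot{A}^{s,\upsilon}_{p,q}(W)$, and summation over $\lambda$ completes the bound. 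In particular, each coefficient sequence lies in $\dot{a}^{s,\upsilon}_{p,q}(W)$, so Theorem \ref{moledecomp}(ii) applied to the corresponding synthesis families guarantees convergence of the right-hand side of \eqref{Dauwav decomp eq} in $(\mathcal{S}'_\infty)^m$ to some $\vec{g}$ obeying $\|\vec{g}\|_{\dot{A}^{s,\upsilon}_{p,q}(W)}\lesssim\sum_\lambda\|\{\langle\vec{f},\theta^{(\lambda)}_Q\rangle_*\}_Q\|_{\dot{a}^{s,\upsilon}_{p,q}(W)}$.

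The remaining and central task is to identify $\vec{g}$ with $\vec{f}$ in $(\mathcal{S}'_\infty)^m$; this simultaneously yields \eqref{Dauwav decomp eq} and the ``$\lesssim$'' half of the norm equivalence. I would fix $\varphi,\psi\in\mathcal{S}$ satisfying \eqref{cond1} and \eqref{cond3} and any $\phi\in\mathcal{S}_\infty$, unfold the definition \eqref{eq-f-m} of $\langle\vec{f},\theta^{(\lambda)}_Q\rangle_*$, and aim to write
$$\langle\vec{g},\phi\rangle=\sum_{\lambda=1}^{2^n-1}\sum_{Q\in\mathcal{D}}\sum_{R\in\mathcal{D}}\langle\psi_R,\theta^{(\lambda)}_Q\rangle\,\langle\vec{f},\varphi_R\rangle\,\langle\theta^{(\lambda)}_Q,\phi\rangle.$$
Once the triple sum is known to converge absolutely, Fubini's theorem together with the $L^2$-orthonormal-basis property of $\{\theta^{(\lambda)}_Q\}$ (applied to the pair $\psi_R,\phi\in L^2$) would collapse the $\lambda$-$Q$ sum to $\langle\psi_R,\phi\rangle$, reducing the expression to $\sum_R\langle\vec{f},\varphi_R\rangle\langle\psi_R,\phi\rangle=\langle\vec{f},\phi\rangle$ by Lemma \ref{lem-Cdreproform} and Theorem \ref{thm-phitransMWBTL}.

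The main obstacle is thus justifying the absolute convergence of this triple sum. The idea is to invoke Lemma \ref{lem-MGH} twice, once to the pair $(\psi_R,\theta^{(\lambda)}_Q)$ and once to $(\theta^{(\lambda)}_Q,\phi)$ (noting that $\phi=\phi_{Q_{0,\mathbf{0}}}\in\mathcal{S}_\infty$ is, up to a constant, a molecule supported near $Q_{0,\mathbf{0}}$), so that each of $|\langle\psi_R,\theta^{(\lambda)}_Q\rangle|$ and $|\langle\theta^{(\lambda)}_Q,\phi\rangle|$ is dominated by a matrix entry of the type \eqref{DEFmatrix} in the relevant almost-diagonal class. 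Proposition \ref{prop-compadope} then converts the summation over $Q$ into an $\dot{a}^{s,\upsilon}_{p,q}(W)$-almost diagonal kernel acting on $\{\langle\vec{f},\varphi_R\rangle\}_R$; combined with $\{\langle\vec{f},\varphi_R\rangle\}_R\in\dot{a}^{s,\upsilon}_{p,q}(W)$ (Theorem \ref{thm-phitransMWBTL}) and the growth condition \eqref{eq-delta1<0} on $\upsilon$, Theorem \ref{a(W)adopebound} would deliver the required absolute summability for the fixed test function $\phi$, thereby legitimizing the swap and completing the proof.
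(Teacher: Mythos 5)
Your proposal is correct and follows essentially the same route as the paper: both arguments rest on Lemma \ref{lem-Dauwavtomole} to view the wavelets as analysis/synthesis molecules, obtain the coefficient bound from Theorem \ref{moledecomp}(i), justify the Fubini swap in the triple sum via the almost-diagonal machinery (your explicit use of Lemma \ref{lem-MGH}, Proposition \ref{prop-compadope}, and Theorem \ref{a(W)adopebound} is exactly what the paper's citation of Lemma \ref{anasynmole}(ii) packages), and then collapse the $\lambda$--$Q$ sum by the $L^2$-orthonormality of the wavelet system together with Lemma \ref{lem-Cdreproform}. The only cosmetic difference is the direction of the identification (you synthesize $\vec g$ and show $\vec g=\vec f$, whereas the paper expands $\langle\vec f,\phi\rangle$ directly), which changes nothing of substance.
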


\begin{proof}
To prove the present theorem,
let $\varphi, \psi\in\mathcal{S}$ satisfy
\eqref{cond1} and \eqref{cond3}. By the assumption that	
$\{\theta^{(\lambda)}_Q :\ \lambda\in\{1,\ldots, 2^n-1\},
\ Q\in\mathcal{D}\}$ is an orthonormal basis of $L^2$
and Lemma \ref{lem-Cdreproform},
we find that, for any $\vec{f}\in\dot{A}^{s,\upsilon}_{p,q}(W)$
and $\phi\in\mathcal{S}_{\infty}$,
\begin{align}\label{eq-f-wavelet}
\left\langle\vec{f},\phi\right\rangle=\sum_{R\in\mathcal{D}}
\left\langle\vec f, \varphi_R\right\rangle\left\langle\psi_R,\phi
\right\rangle=\sum_{R\in\mathcal{D}}\left\langle\vec f,
\varphi_R\right\rangle\sum_{\lambda=1}^{2^n-1}\sum_{Q\in\mathcal{D}}
\left\langle\psi_R, \theta^{(\lambda)}_Q\right\rangle
\left\langle\theta^{(\lambda)}_Q,\phi\right\rangle.
\end{align}
Applying Remark \ref{rmk-mole-phi}, Theorem \ref{thm-phitransMWBTL},
and Lemmas \ref{lem-Dauwavtomole} and \ref{anasynmole}(ii),
we conclude that the summations in the right-hand side of
\eqref{eq-f-wavelet} converge absolutely
[because $\phi=\phi_{Q_{0,\mathbf{0}}}$ is a
constant multiple of an analysis molecule
for $\dot{A}_{p,q}^{s,\upsilon}(W)$].
This, combined with \eqref{eq-f-wavelet},
Fubini's theorem, and \eqref{eq-f-m},
further implies that, for any $\vec{f}\in
\dot{A}^{s,\upsilon}_{p,q}(W)$ and $\phi\in\mathcal{S}_{\infty}$,
\begin{align*}
\left\langle\vec{f},\phi\right\rangle=
\sum_{\lambda=1}^{2^n-1}\sum_{Q\in\mathcal{D}}
\sum_{R\in\mathcal{D}}\left\langle\vec f, \varphi_R\right\rangle
\left\langle\psi_R,\theta^{(\lambda)}_Q
\right\rangle\left\langle\theta^{(\lambda)}_Q,\phi\right\rangle
=\sum_{\lambda=1}^{2^n-1}\sum_{Q\in\mathcal{D}}
\left\langle\vec f,\theta^{(\lambda)}_Q\right\rangle_*
\left\langle\theta^{(\lambda)}_Q,\phi\right\rangle
\end{align*}
and hence \eqref{Dauwav decomp eq} holds
in $(\mathcal{S}'_{\infty})^m$.
From Lemma \ref{lem-Dauwavtomole} and
Theorem \ref{moledecomp}(i), it follows that,
for any $\lambda\in\{1,\dots,2^n-1\}$ and
$\vec{f}\in\dot{A}^{s,\upsilon}_{p,q}(W)$,
$\|\{\langle\vec f, \theta^{(\lambda)}_Q\rangle_*
\}_{Q\in\mathcal{D}}\|_{\dot{a}^{s,\upsilon}_{p,q}(W)}
\lesssim\|\vec{f}\|_{\dot{A}^{s,\upsilon}_{p,q}(W)}$
and hence
\begin{align}\label{eq-wavelet-norm}
\sum_{\lambda=1}^{2^n-1}\left\|\left\{\left\langle\vec f,
\theta^{(\lambda)}_Q\right\rangle_*\right\}_{Q\in\mathcal{D}}
\right\|_{\dot{a}^{s,\upsilon}_{p,q}(W)}
\lesssim\left\|\vec{f}\right\|_{\dot{A}^{s,\upsilon}_{p,q}(W)}.
\end{align}

Finally, we show the reverse estimate of \eqref{eq-wavelet-norm}.
To achieve this, by the just proved \eqref{Dauwav decomp eq}
and \eqref{eq-wavelet-norm}, Lemma \ref{lem-Dauwavtomole},
Theorem \ref{moledecomp}(ii), and the quasi-triangle inequality of
$\|\cdot\|_{\dot{A}^{s,\upsilon}_{p,q}(W)}$, we find that,
for any $\lambda\in\{1,\dots,2^n-1\}$ and
$\vec{f}\in\dot{A}^{s,\upsilon}_{p,q}(W)$,
$\sum_{Q\in\mathcal{D}}\langle\vec f,
\theta^{(\lambda)}_Q\rangle_*\theta^{(\lambda)}_Q
\in\dot{A}^{s,\upsilon}_{p,q}(W)$ and
\begin{align*}
\left\|\vec{f}\right\|_{\dot{A}^{s,\upsilon}_{p,q}(W)}
\lesssim\sum_{\lambda=1}^{2^n-1}\left\|\sum_{Q\in\mathcal{D}}
\left\langle\vec f,\theta^{(\lambda)}_Q\right\rangle_*
\theta^{(\lambda)}_Q\right\|_{\dot{A}^{s,\upsilon}_{p,q}(W)}
\lesssim\sum_{\lambda=1}^{2^n-1}\left\|\left\{\left\langle\vec f,
\theta^{(\lambda)}_Q\right\rangle_*\right\}_{Q\in\mathcal{D}}
\right\|_{\dot{a}^{s,\upsilon}_{p,q}(W)}.
\end{align*}
This finishes the proof of the reverse estimate of
\eqref{eq-wavelet-norm} and hence Theorem \ref{Dauwav decomp}.
\end{proof}

\begin{remark}
Let all the symbols be the same as in Theorem \ref{Dauwav decomp}.
For any $\lambda\in\{1,\dots,2^n-1\}$, $Q\in\mathcal{D}$, and
$\vec{f}:=(f_1,\dots,f_m)^{T}\in(L^2)^{m}$,
it is well known that $\langle\vec{f}, \theta^{(\lambda)}_Q\rangle_*
=(\langle f_1, \theta^{(\lambda)}_Q\rangle,\dots,
\langle f_m, \theta^{(\lambda)}_Q\rangle)^{T}$,
where $\langle\cdot,\cdot\rangle$ denotes the conventional
inner product in $L^2$ (see, for example, \cite[Remark 8.5]{syy24} or \cite[Remark 4.9]{bhyy3}).
\end{remark}

\section{Applications\label{s-app}}

This section contains two subsections.
In Subsection \ref{s5.1}, we find the
sufficient and necessary conditions
for the invariances of $\dot{f}_{p,q}^{s,\upsilon_{1/p, W}}(W)$
and $\dot{F}_{p,q}^{s,\upsilon_{1/p, W}}(W)$
on the integrable index $p$, which answers
Question \ref{q1} in the introduction.
Moreover, we also show these invariances generally do not hold
for $\dot{b}_{p,q}^{s,\frac{1}{p}}$ and $\dot{B}_{p,q}^{s,\frac{1}{p}}$,
which also answers an open question posed in \cite[p.\,464]{yy10}.
In Subsection \ref{s5.2},  we establish the Sobolev-type embedding of $\dot{A}^{s,\upsilon}_{p,q}(W)$.

\subsection{Invariances of $\dot{f}_{p,q}^{s, \upsilon_{1/p, W}}(W)$
and $\dot{F}_{p,q}^{s,\upsilon_{1/p, W}}(W)$
on Integrable Index $p$\label{s5.1}}

To give the scalar-weighted version of \eqref{eq-f=f},
we begin with recalling the concept of doubling
weights. A scalar weight $w$ is said to be \emph{doubling}
if there exists a positive constant $C$ such that,
for any $x\in\mathbb{R}^n$ and $r\in(0,\infty)$,
$w(B(x, 2r))\leq C w(B(x,r))$.
Suppose that $s\in\mathbb{R}$, $p\in(0, \infty)$,
$q\in(0, \infty]$, and $w$ is a scalar weight.
In \cite[(2.16)]{bow08},
the space $\dot{f}_{\infty, q}^{s}(w)$
is defined to be the set of all
$t:=\{t_Q\}_{Q\in\mathcal{D}}$ in $\mathbb{C}$ such that
\begin{align}\label{eq-f_3-w}
\|t\|_{\dot{f}_{\infty, q}^{s}(w)}
:=\sup_{P\in\mathcal{D}}\left\{\frac{1}{w(P)}
\int_P\sum_{Q\in\mathcal{D}, Q\subset P}
\left[|Q|^{-\frac{s}{n}}\left|t_Q\right|
\widetilde{\mathbf{1}}_Q(x)\right]^q w(x)\,dx\right\}^{\frac{1}{q}}
\end{align}
is finite (with the usual modification made if $q=\infty$).
In \cite[Definition 2.4]{lbyy12},
the space $\dot{f}^{s,\frac{1}{p}}_{p,q}(w)$
is defined to be the set of all
$t:=\{t_Q\}_{Q\in\mathcal{D}}$ in $\mathbb{C}$ such that
\begin{align}\label{eq-f_4-w}
\|t\|_{\dot{f}^{s,\frac{1}{p}}_{p,q}(w)}
:=\sup_{P\in\mathcal{D}}\left\{\frac{1}{w(P)}
\int_P\left(\sum_{Q\in\mathcal{D}, Q\subset P}
\left[|Q|^{-\frac{s}{n}}\left|t_Q\right|
\widetilde{\mathbf{1}}_Q(x)\right]^q\right)^{\frac{p}{q}}
w(x)\,dx\right\}^{\frac{1}{p}}
\end{align}
is finite (with the usual modification made if $q=\infty$).
Clearly, when $q\in(0, \infty)$,
the space $\dot{f}_{\infty, q}^{s}(w)$ is exactly
$\dot{f}^{s,\frac{1}{q}}_{q,q}(w)$.
Moreover, the space $\dot{f}_{\infty, \infty}^{s}(w)$
should be interpreted as
$\dot{b}_{\infty, \infty}^{s}$
as in Definition \ref{averBTLseq}.

The following lemma is precisely a particular
case of \cite[Theorem 3.6]{bow08} in which
Bownik studied the function spaces based on $\mathbb{R}^n$
associated with general expansive dilations and
corresponding doubling measures (see \cite{bow05,bow07,bow08,bh06}
for more studies of function spaces in this setting).

\begin{lemma}\label{lem-infty-3}
Let $s\in\mathbb{R}$, $p\in(0, \infty)$,
$q\in(0, \infty]$, and $w$ be a scalar doubling weight.
Then $\dot{f}_{\infty, q}^{s}(w)=\dot{f}^{s,\frac{1}{p}}_{p,q}(w)$
with equivalent quasi-norms.
\end{lemma}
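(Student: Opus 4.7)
The plan is to derive this lemma as a direct specialization of Bownik's \cite[Theorem 3.6]{bow08}, which establishes the $p$-invariance in the much broader setting of Triebel--Lizorkin-type sequence spaces associated with general expansive dilations and doubling measures on $\mathbb{R}^n$. The specialization needed is to take Bownik's dilation matrix to be $2I_n$, in which case his anisotropic dyadic cubes reduce to the standard dyadic cubes $\mathcal{D}$, and to take his doubling measure to be $d\mu := w(x)\,dx$; the hypothesis that $w$ is a scalar doubling weight is precisely the requirement that $\mu$ be a doubling Borel measure. With these identifications, the definitions \eqref{eq-f_3-w} and \eqref{eq-f_4-w} coincide with Bownik's sequence-space definitions with his Morrey-type parameter equal to $1/p$, once one rewrites $\frac{1}{w(P)}\int_P(\cdot)\,w\,dx$ as $\frac{1}{\mu(P)}\int_P(\cdot)\,d\mu$. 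Invoking \cite[Theorem 3.6]{bow08} then yields the claimed identity and the equivalence of quasi-norms.

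If a self-contained proof were desired, I would follow the Frazier--Jawerth strategy from \cite[Corollary 5.7]{fj90}. The easy half comes from Jensen's inequality applied pointwise in the dyadic cube $P$ to the weighted average $\frac{1}{w(P)}\int_P(\cdot)\,w\,dx$: when $p\leq q$ the convexity of $u\mapsto u^{q/p}$ yields $\|t\|_{\dot f^{s,1/p}_{p,q}(w)}\leq\|t\|_{\dot f^s_{\infty,q}(w)}$, while when $p\geq q$ the concavity of $u\mapsto u^{q/p}$ gives the reverse inequality. The other half is a John--Nirenberg-type self-improvement: fix a dyadic cube $P_0$ and set $G(x):=\sum_{Q\in\mathcal{D},\,Q\subset P_0}[|Q|^{-s/n}|t_Q|\widetilde{\mathbf{1}}_Q(x)]^q$; perform a Calder\'on--Zygmund stopping-time decomposition of the dyadic level sets $\{x\in P_0:G(x)>\lambda\}$ into maximal dyadic subcubes, and use the Carleson-type additivity of $G$ over nested dyadic subcubes, together with the doubling of $w$, to bound the $w$-measure of each level set geometrically in $\lambda$; this produces both a good-$\lambda$ inequality and the desired reverse inequalities in both ranges of $p$ and $q$.

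The principal obstacle in a direct proof is the doubling-weight bookkeeping inside the stopping-time step: the comparison between $w$ of a maximal subcube and $w$ of its parent has to be controlled by the doubling constant of $w$, and one must carefully separate the contributions to $G(x)$ coming from cubes strictly containing, respectively strictly contained in, each stopping cube, since only the latter enjoy the Carleson-type self-similarity that drives the iteration. This is precisely the technical content that Bownik handles abstractly in \cite{bow08}, so for the present paper the cleanest route is simply to invoke \cite[Theorem 3.6]{bow08} after verifying the notational match described in the first paragraph.
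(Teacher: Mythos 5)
Your proposal is correct and matches the paper exactly: the paper also justifies this lemma purely by observing that it is a particular case of \cite[Theorem 3.6]{bow08}, specialized to the standard dyadic structure and the doubling measure $w(x)\,dx$. The additional Frazier--Jawerth-style self-contained sketch you include is not needed for the paper's argument but is a reasonable outline of what a direct proof would require.
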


Observe that Lemma \ref{lem-infty-3} establishes the equivalence
between \eqref{eq-f_3-w} and \eqref{eq-f_4-w}.
We aim to find the counterpart of
Lemma \ref{lem-infty-3} in the matrix-weighted setting.
To this end, we first present some concepts and notation.
Recall that two matrix weights $W$ and $V$
are said to be \emph{equivalent}
(see, for example, \cite[p.\, 272]{tv97}),
denoted by $W\sim V$, if there exists a
positive constant $C$ such that, for almost every
$x\in\mathbb{R}^n$ and any $\vec{e}\in\mathbb{C}^m$,
\begin{align*}
\frac{1}{C}\left(V(x)\vec{e}, \vec{e}\right)
\leq\left(W(x)\vec{e}, \vec{e}\right)
\leq C\left(V(x)\vec{e}, \vec{e}\right),
\end{align*}
where $(\cdot,\cdot)$ is the conventional inner product in $\mathbb{C}^m$.
For any matrix weight $W$ and any $x\in\mathbb{R}^n$,
let $E_W(x):=\lambda_{\mathrm{max}}(x)=\|W(x)\|$
and $e_W(x):=\lambda_{\mathrm{min}}(x)$,
where $\lambda_{\mathrm{max}}(x)$ and
$\lambda_{\mathrm{min}}(x)$ are respectively
the maximal and the minimal eigenvalues of $W(x)$.
We next give a lemma to characterize the
equivalence of matrix weights.

\begin{lemma}\label{lem-W=I_m}
If $W$ is a matrix weight,
then the following three statements are
mutually equivalent.
\begin{itemize}
\item[{\rm (i)}] $W\sim E_W I_m$, where
$I_m$ is the identity matrix of order $m$.
\item[{\rm (ii)}] For almost every $x\in\mathbb{R}^n$,
\begin{align}\label{cond-3=4}
E_W(x)\sim e_W(x),
\end{align}
where the positive equivalence constants
are independent of $x$.
\item[{\rm (iii)}] $W\sim e_W I_m$.
\end{itemize}
\end{lemma}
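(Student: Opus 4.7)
The plan is to reduce everything to the Rayleigh--Ritz characterization of extremal eigenvalues. For any positive definite matrix $W(x)$ with $e_W(x)$ and $E_W(x)$ denoting its minimal and maximal eigenvalues, the spectral theorem yields
\begin{align*}
e_W(x)|\vec{e}|^2\leq\left(W(x)\vec{e},\vec{e}\right)\leq E_W(x)|\vec{e}|^2
\qquad\text{for every }\vec{e}\in\mathbb{C}^m,
\end{align*}
with both bounds attained by appropriate eigenvectors. In particular, $e_W(x)\leq E_W(x)$ always, and
\begin{align*}
e_W(x)=\inf_{|\vec{e}|=1}\left(W(x)\vec{e},\vec{e}\right),\qquad
E_W(x)=\sup_{|\vec{e}|=1}\left(W(x)\vec{e},\vec{e}\right).
\end{align*}

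First I would prove (i)$\Longrightarrow$(ii). Assume $W\sim E_W I_m$, so there exists $C\in(0,\infty)$ with
$C^{-1}E_W(x)|\vec{e}|^2\leq(W(x)\vec{e},\vec{e})$ for almost every $x$ and every $\vec{e}\in\mathbb{C}^m$. Taking the infimum over unit vectors $\vec{e}$ gives $e_W(x)\geq C^{-1}E_W(x)$ almost everywhere, which together with the universal bound $e_W(x)\leq E_W(x)$ yields \eqref{cond-3=4}.

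Next I would prove (ii)$\Longrightarrow$(i) and (ii)$\Longrightarrow$(iii) in a single stroke. Assuming \eqref{cond-3=4}, the Rayleigh bounds give, for almost every $x$ and any $\vec{e}\in\mathbb{C}^m$,
\begin{align*}
\left(W(x)\vec{e},\vec{e}\right)\geq e_W(x)|\vec{e}|^2\gtrsim E_W(x)|\vec{e}|^2,
\end{align*}
which combined with the upper Rayleigh bound $(W(x)\vec{e},\vec{e})\leq E_W(x)|\vec{e}|^2$ proves (i); the proof of (iii) is identical with the roles of upper and lower bounds exchanged (now the upper Rayleigh bound together with $E_W\lesssim e_W$ supplies the nontrivial inequality and the lower bound is automatic).

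Finally I would close the loop by proving (iii)$\Longrightarrow$(ii). Assuming $W\sim e_W I_m$, there is $C\in(0,\infty)$ with $(W(x)\vec{e},\vec{e})\leq C e_W(x)|\vec{e}|^2$, and taking the supremum over unit vectors yields $E_W(x)\leq C e_W(x)$, which together with $e_W(x)\leq E_W(x)$ gives \eqref{cond-3=4}. There is no genuine obstacle here; the only minor point to be careful about is that the various implicit null sets (where $W$ fails to be positive definite, or where $E_W\sim e_W$ fails) form a single null set, which is immediate from the countable combination of the exceptional sets appearing in the three equivalences.
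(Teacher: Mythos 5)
Your proposal is correct and follows essentially the same route as the paper: both arguments rest on the Rayleigh quotient characterizations $e_W(x)=\min_{|\vec e|=1}(W(x)\vec e,\vec e)$ and $E_W(x)=\max_{|\vec e|=1}(W(x)\vec e,\vec e)$, proving the four implications (ii)$\Rightarrow$(i), (ii)$\Rightarrow$(iii), (i)$\Rightarrow$(ii), and (iii)$\Rightarrow$(ii). Your closing remark about consolidating the exceptional null sets is a harmless extra precaution.
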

\begin{proof}
We first prove (ii) $\Longrightarrow$ (i)
and (ii) $\Longrightarrow$ (iii). If (ii) holds,
by \eqref{eq-W^a}, we find that, for almost every
$x\in\mathbb{R}^n$ and any $\vec{e}\in\mathbb{C}^m$,
\begin{align*}
e_W(x)\left|\vec{e}\right|^2
\leq\left(W(x)\vec{e}, \vec{e}\right)
\leq E_W(x)\left|\vec{e}\right|^2,
\end{align*}
which, together with \eqref{cond-3=4}, further implies that
\begin{align*}
\left(e_W(x)I_m\vec{e}, \vec{e}\right)
=e_W(x)\left|\vec{e}\right|^2\sim
\left(W(x)\vec{e}, \vec{e}\right)
\sim E_W(x)\left|\vec{e}\right|^2=
\left(E_W(x)I_m\vec{e}, \vec{e}\right).
\end{align*}
Thus, both (i) and (iii) hold.

Next, we show (i) $\Longrightarrow$ (ii).
Applying \eqref{eq-W^a}, we obtain,
for almost every $x\in\mathbb{R}^n$,
\begin{align}\label{eq-maxengi}
e_W(x)=\min_{\vec{e}\in\mathbb{C}^m\setminus\{\mathbf{0}\}}
\frac{(W(x)\vec{e}, \vec{e})}{|\vec{e}|^2},
\end{align}
which, combined with the assumption that $W\sim E_W I_m$,
further implies \eqref{cond-3=4}.
This finishes the proof of (i) $\Longrightarrow$ (ii).

Finally, we prove (iii) $\Longrightarrow$ (ii).
It suffices to repeat the argument used in the proof
of (i) $\Longrightarrow$ (ii) with \eqref{eq-maxengi}
replaced by the fact that, for almost every $x\in\mathbb{R}^n$,
\begin{align*}
E_W(x)=\max_{\vec{e}\in\mathbb{C}^m\setminus\{\mathbf{0}\}}
\frac{(W(x)\vec{e}, \vec{e})}{|\vec{e}|^2}.
\end{align*}
This finishes the proof of (iii) $\Longrightarrow$ (ii)
and hence Lemma \ref{lem-W=I_m}.
\end{proof}

The following lemma
follows from the spectral theorem
(see, for example, \cite[Theorem 2.5.6]{hj13});
we omit the details.
\begin{lemma}\label{lem-Ttheta}
Let $W$ be a matrix weight. Then, for any $x\in\mathbb{R}^n$,
$0\leq e_W(x)\leq E_W(x)<\infty$. Moreover,
for any $\alpha\in(0, \infty)$, $x\in\mathbb{R}^n$,
and $\vec{z}\in\mathbb{C}^m$, $[e_W(x)]^{\alpha}|\vec{z}|\leq|W^{\alpha}(x)\vec{z}|
\leq[E_W(x)]^{\alpha}|\vec{z}|$.
\end{lemma}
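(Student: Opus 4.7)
The plan is to reduce both assertions to the spectral theorem for self-adjoint matrices. For the first claim, I would first observe that, since $W$ takes values in $D_m(\mathbb{C})$, the matrix $W(x)$ is positive semidefinite at \emph{every} $x \in \mathbb{R}^n$ (the a.e.-definiteness in the definition of matrix weight only strengthens this). Consequently all eigenvalues of $W(x)$ are nonnegative, which yields $e_W(x) \geq 0$; the bound $E_W(x) < \infty$ is simply the fact that the operator norm of a fixed finite-order matrix with finite entries is finite; and $e_W(x) \leq E_W(x)$ is the tautology $\lambda_{\min} \leq \lambda_{\max}$.

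For the second assertion, I would invoke the spectral decomposition \eqref{eq-W^a} to write $W(x) = U \operatorname{diag}(\lambda_1, \ldots, \lambda_m) U^{*}$ with $U$ unitary and $\lambda_i \in [e_W(x), E_W(x)]$ for every $i \in \{1, \ldots, m\}$. Passing to the orthogonal change of coordinates $\vec{w} := U^{*} \vec{z}$, which preserves the Euclidean norm because $U$ is unitary, the identity $W^{\alpha}(x) \vec{z} = U \operatorname{diag}(\lambda_1^{\alpha}, \ldots, \lambda_m^{\alpha}) \vec{w}$ reduces the claim to the componentwise estimate
\begin{align*}
[e_W(x)]^{\alpha} |w_i| \leq \lambda_i^{\alpha} |w_i| \leq [E_W(x)]^{\alpha} |w_i|,
\end{align*}
which is immediate from the monotonicity of $t \mapsto t^{\alpha}$ on $[0, \infty)$. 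Squaring, summing over $i$, using $|\vec{w}| = |\vec{z}|$, and taking square roots would deliver the stated two-sided bound.

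There is no real obstacle here; the only minor point requiring care is that the definition of $A^{\alpha}$ given after \eqref{eq-W^a} is stated for \emph{positive definite} $A$, whereas $W(x)$ is only guaranteed to be positive semidefinite at every $x$. For $\alpha \in (0, \infty)$, however, the convention $0^{\alpha} := 0$ makes $U \operatorname{diag}(\lambda_1^{\alpha}, \ldots, \lambda_m^{\alpha}) U^{*}$ unambiguous and consistent with the positive definite case, so the argument goes through pointwise at every $x \in \mathbb{R}^n$ as the statement demands. Hence the proof reduces to the two short computations above and I will simply omit the routine verification.
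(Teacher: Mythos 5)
Your proposal is correct and follows exactly the route the paper intends: the paper omits the proof and simply cites the spectral theorem (\cite[Theorem 2.5.6]{hj13}), which is precisely the diagonalization-plus-monotonicity argument you carry out. Your additional remark on extending the definition of $W^{\alpha}(x)$ to the (possibly non-definite) semidefinite case via the convention $0^{\alpha}:=0$ is a sensible point of care, consistent with the statement being asserted for every $x\in\mathbb{R}^n$.
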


We next extends Lemma \ref{lem-infty-3}
to the matrix-weighted setting and hence answers Question \ref{q1}.

\begin{theorem}\label{thm-3=4}
Let $s\in\mathbb{R}$, $p\in(0, \infty)$,
and $W$ be a matrix weight with $E_W$
being a scalar doubling weight.
Then the following statements hold.
\begin{itemize}
\item[{\rm (i)}] If $q\in(0, \infty)$
with $p\neq q$, then
\begin{align}\label{eq-f_q=f_p}
\dot{f}_{q,q}^{s, \upsilon_{1/q, W}}(W)
=\dot{f}_{p,q}^{s, \upsilon_{1/p, W}}(W)
\end{align}
with equivalent quasi-norms if and only if
$W\sim E_W I_m$, where $I_m$ is the identity matrix of order $m$
and $\upsilon_{1/q, W}$ and $\upsilon_{1/p, W}$
are growth functions as in \eqref{eq-tau_W}.
\item[{\rm (ii)}]
\begin{align}\label{eq-f_q=b_p}
\dot{b}_{\infty, \infty}^{s}(\mathbb{C}^m)
=\dot{f}_{p,\infty}^{s, \upsilon_{1/p, W}}(W)
\end{align}
with equivalent quasi-norms if and only if
$W\sim E_W I_m$, where $\dot{b}_{\infty,
\infty}^{s}(\mathbb{C}^m)$ is as in Remark \ref{rmk-a(A)-a}.
\end{itemize}
\end{theorem}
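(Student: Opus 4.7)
My plan separates the sufficiency (``$W\sim E_W I_m$ implies the stated identifications'') from the necessity, with both parts (i) and (ii) following the same two-step scheme: reduce the matrix-weighted norms to scalar weighted norms with weight $E_W$, then invoke Lemma \ref{lem-infty-3}. For the sufficiency, the hypothesis $W\sim E_W I_m$ combined with Lemma \ref{lem-W=I_m} forces every eigenvalue of $W(x)$ to be comparable to $E_W(x)$; the spectral calculus then yields $|W^{1/r}(x)\vec z|\sim [E_W(x)]^{1/r}|\vec z|$ for any $r\in(0,\infty)$, a.e. $x\in\mathbb{R}^n$, and every $\vec z\in\mathbb{C}^m$. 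Substituting this pointwise equivalence into Definition \ref{MWBTL} and using $\upsilon_{1/r,W}(P)=[E_W(P)]^{1/r}$ collapses the matrix-weighted norm onto the scalar weighted sequence norm,
\[
\left\|\vec f\right\|_{\dot F^{s,\upsilon_{1/r,W}}_{r,q}(W)}\sim\left\|\{|\vec t_Q|\}_{Q\in\mathcal{D}}\right\|_{\dot f^{s,1/r}_{r,q}(E_W)},
\]
where $\vec t=S_\varphi\vec f$ and $\dot f^{s,1/r}_{r,q}(E_W)$ is the space defined in \eqref{eq-f_4-w}. When $r=q$ the outer and inner indices match and this coincides with $\dot f^s_{\infty,q}(E_W)$; Lemma \ref{lem-infty-3}, applicable because $E_W$ is doubling, converts it into $\dot f^{s,1/p}_{p,q}(E_W)$, giving \eqref{eq-f_q=f_p}. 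In part (ii), $q=\infty$ makes the scalar weighted norm independent of $E_W$ (the outer supremum in \eqref{eq-f_3-w} absorbs the weight), which reproduces $\dot b^s_{\infty,\infty}(\mathbb{C}^m)$.

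For the necessity, my strategy is to test the assumed norm equivalence against single-pointed sequences and then take a Lebesgue-differentiation limit. Example \ref{exam-tau_W} shows $\upsilon_{\tau,W}$ is almost increasing by Lemma \ref{lem-grow-est}(i), so Lemma \ref{lem-onepoint} evaluates, for any $Q_0\in\mathcal{D}$, $\vec z\in\mathbb{C}^m\setminus\{\mathbf 0\}$, and $r\in\{p,q\}$,
\[
\left\|\{\mathbf 1_{Q=Q_0}\vec z\}_{Q\in\mathcal{D}}\right\|_{\dot f^{s,\upsilon_{1/r,W}}_{r,q}(W)}\sim\frac{2^{j_{Q_0}(s+n/2)}}{[E_W(Q_0)]^{1/r}}\left[\int_{Q_0}|W^{1/r}(x)\vec z|^r\,dx\right]^{1/r},
\]
while the analogous norm on $\dot b^s_{\infty,\infty}(\mathbb{C}^m)$ in part (ii) equals $2^{j_{Q_0}(s+n/2)}|\vec z|$. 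Plugging these into \eqref{eq-f_q=f_p} and \eqref{eq-f_q=b_p} and shrinking $Q_0$ to a Lebesgue point $x$ (applying the differentiation theorem to both $y\mapsto|W^{1/r}(y)\vec z|^r$ and $y\mapsto E_W(y)$) produces the pointwise equivalences
\[
\frac{|W^{1/p}(x)\vec z|}{[E_W(x)]^{1/p}}\sim\frac{|W^{1/q}(x)\vec z|}{[E_W(x)]^{1/q}}\ \text{[part (i)]},\qquad |W^{1/p}(x)\vec z|\sim[E_W(x)]^{1/p}|\vec z|\ \text{[part (ii)]}
\]
for a.e. $x$ and every $\vec z\in\mathbb{C}^m$.

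The final step extracts $W\sim E_W I_m$ from this pointwise data. I fix such an $x$, diagonalize $W(x)=U(x)\operatorname{diag}(\lambda_1(x),\ldots,\lambda_m(x))U^*(x)$, and test with $\vec z=U(x)\vec e_i$; the displayed estimates reduce to $[\lambda_i(x)/E_W(x)]^{1/p-1/q}\sim 1$ in part (i) (recall $p\neq q$) and $[\lambda_i(x)/E_W(x)]^{1/p}\sim 1$ in part (ii). Since $\lambda_i(x)/E_W(x)\in(0,1]$ and the exponent $1/p-1/q$ (resp. $1/p$) is nonzero, I conclude $\lambda_i(x)\sim E_W(x)$ for every $i$, so Lemma \ref{lem-W=I_m} gives $W\sim E_W I_m$. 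The main obstacle I foresee is that taking the Lebesgue-differentiation limit simultaneously for every $\vec z$ requires uniformity of the exceptional null set in $\vec z$. I would resolve this by first restricting to a countable dense subset $D\subset\mathbb{C}^m$, where the exceptional sets aggregate to a single null set, and then using the continuity of $\vec z\mapsto|W^{1/r}(x)\vec z|$ and of $\vec z\mapsto[\int_{Q_0}|W^{1/r}(y)\vec z|^r\,dy]^{1/r}$ to extend the pointwise conclusion to every $\vec z\in\mathbb{C}^m$.
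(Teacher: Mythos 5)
Your proposal is correct and follows essentially the same route as the paper: sufficiency by using Lemma \ref{lem-W=I_m} and Lemma \ref{lem-Ttheta} to reduce the matrix-weighted sequence norms to the scalar weighted norms \eqref{eq-f_3-w} and \eqref{eq-f_4-w} with weight $E_W$ and then invoking Bownik's Lemma \ref{lem-infty-3}, and necessity by testing single-pointed sequences via Lemma \ref{lem-onepoint}, passing to a pointwise identity by Lebesgue differentiation (with the same countable-dense-subset fix for the null sets), and exploiting $p\neq q$ on eigenvectors to force $e_W\sim E_W$. The only cosmetic slip is that Theorem \ref{thm-3=4} concerns the sequence spaces $\dot f$ directly, so the detour through $\vec f$, $S_\varphi$, and $\dot F$ in your sufficiency display is unnecessary (and would need extra hypotheses on $W$); the pointwise weight equivalence already collapses the sequence norms as in the paper.
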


\begin{proof}		
We first prove the sufficiency of (i).
To this end, applying Lemma \ref{lem-W=I_m} with
the assumption $W\sim E_W I_m$
and Lemma \ref{lem-Ttheta}, we obtain,
for almost every $x\in\mathbb{R}^n$
and any $\vec{z}\in\mathbb{C}^{m}$,
\begin{align}\label{eq-W-lamda}
\left|W^{\frac{1}{p}}(x)\vec{z}\right|\sim
\left[E_W(x)\right]^{\frac{1}{p}}\left|\vec{z}\right|
\text{\ and\ }\left|W^{\frac{1}{q}}(x)\vec{z}\right|
\sim\left[E_W(x)\right]^{\frac{1}{q}}\left|\vec{z}\right|.
\end{align}
From \eqref{eq-W-lamda}, \eqref{eq-f_3-w},
\eqref{eq-f_4-w}, and the definitions of
$\|\cdot\|_{\dot{f}_{q,q}^{s,\upsilon_{1/q, W}}(W)}$
and $\|\cdot\|_{\dot{f}_{p,q}^{s, \upsilon_{1/p, W}}(W)}$,
it follows that, for any $\vec{t}:=
\{\vec{t}_Q\}_{Q\in\mathcal{D}}$ in $\mathbb{C}^m$,
\begin{align*}
\left\|\vec{t}\right\|_{\dot{f}_{q,q}^{s,\upsilon_{1/q, W}}(W)}
\sim\left\|\,\left|\vec{t}\right|
\,\right\|_{\dot{f}_{\infty, q}^{s}(E_W)}
\text{\ and\ }
\left\|\vec{t}\right\|_{\dot{f}_{p,q}^{s, \upsilon_{1/p, W}}(W)}
\sim\left\|\,\left|\vec{t}\right|\,
\right\|_{\dot{f}^{s,\frac{1}{p}}_{p,q}(E_W)},
\end{align*}
where $|\vec{t}|:=\{|\vec{t}_Q|\}_{Q\in\mathcal{D}}$.
Using this and Lemma \ref{lem-infty-3} together with
the assumption that the scalar weight $E_W$ is doubling, we conclude that,
for any $\vec{t}:=\{\vec{t}_Q\}
_{Q\in\mathcal{D}}$ in $\mathbb{C}^m$,
\begin{align*}
\left\|\vec{t}\right\|_{\dot{f}_{q,q}^{s,\upsilon_{1/q, W}}(W)}
\sim\left\|\,\left|\vec{t}\right|\,\right\|_{\dot{f}_{\infty, q}^{s}(E_W)}
\sim\left\|\,\left|\vec{t}\right|\,
\right\|_{\dot{f}^{s,\frac{1}{p}}_{p,q}(E_W)}
\sim\left\|\vec{t}\right\|_{\dot{f}_{p,q}^{s, \upsilon_{1/p, W}}(W)}	
\end{align*}
and hence \eqref{eq-f_q=f_p} holds.
This finishes the proof of the sufficiency of (i).

Next, we show the necessity of (i). To do this,
for any $Q,R\in\mathcal{D}$,
let $\mathbf{1}_{Q=R}$ be as in \eqref{eq-e-QR}.
If \eqref{eq-f_q=f_p} holds, by Lemma \ref{lem-onepoint},
Remark \ref{rmk-onepoint}, and Example \ref{exam-tau_W},
we find that, for any $Q\in\mathcal{D}$ and
$\vec{z}\in\mathbb{C}^{m}$,
\begin{align}\label{eq-t-t}
2^{j_Q(s+\frac{n}{2})}\left[\frac{
\fint_Q |W^{\frac1q}(x)\vec z|^q\,dx}
{\fint_Q E_W(x)\,dx}\right]^{\frac1q}
&\sim\left\|\left\{\mathbf{1}_{Q=R}\vec{z}\right\}_{R\in\mathcal{D}}
\right\|_{\dot{f}_{q,q}^{s, \upsilon_{1/q, W}}(W)}\\
&\sim\left\|\left\{\mathbf{1}_{Q=R}\vec{z}\right\}_{R\in\mathcal{D}}
\right\|_{\dot{f}_{p,q}^{s, \upsilon_{1/p, W}}(W)}
\sim2^{j_Q(s+\frac{n}{2})}\left[\frac{
\fint_Q|W^{\frac1p}(x)\vec z|^p\,dx}
{\fint_Q E_W(x)\,dx}\right]^{\frac1p},\nonumber
\end{align}
where the positive equivalence constants are
independent of $Q$ and $\vec{z}$. Fixing some
$\vec{z}\in\mathbb{C}^{m}$ and applying
Lebesgue's differentiation theorem
on the both sides of \eqref{eq-t-t},
we obtain, for almost every $x\in\mathbb{R}^n$,
\begin{align}\label{cond-W-w}
\left[\frac{|W^{\frac1{q}}(x)\vec z|^q}
{E_W(x)}\right]^{\frac1{q}}
\sim\left[\frac{|W^{\frac1{p}}(x)\vec z|^p}
{E_W(x)}\right]^{\frac1{p}},
\end{align}
where the positive equivalence constants
are independent of $x$ and $\vec{z}$.
This, combined with the definition of matrix weights and
the facts that $\mathbb{C}^{m}$ has a countable dense subset
and any matrix $M\in M_m(\mathbb{C})$ is bounded on $\mathbb{C}^{m}$,
further implies that, for almost every $x\in\mathbb{R}^n$
and any $\vec{z}\in\mathbb{C}^{m}$,
$W(x)$ is positive definite, $e_W(x)\in(0,\infty)$,
and \eqref{cond-W-w} holds. By \eqref{eq-W^a},
for any $x\in\mathbb{R}^n$ satisfying
$W(x)$ is positive definite,
we can choose some $\vec{z}\in\mathbb{C}^{m}$
such that $|W^{\frac1{q}}(x)\vec{z}|^q
=|W^{\frac1{p}}(x)\vec z|^p=
\lambda_{\mathrm{min}}(x)=e_W(x)$.
From this, \eqref{cond-W-w}, the assumption $p\neq q$,
and Lemma \ref{lem-W=I_m}, we infer that,
for almost every $x\in\mathbb{R}^n$,
$E_W(x)\sim e_W(x)$ and hence $W\sim E_W I_m$.
This finishes the proof of the necessity of (i).

The sufficiency of (ii) follows from the same argument as that used in
the proof of the sufficiency of (i) with $\dot{f}_{q,q}^{s,\upsilon_{1/q, W}}(W),
\dot{f}_{\infty, q}^{s}(E_W), \dot{f}_{p,q}^{s, \upsilon_{1/p, W}}(W)$,
and $\dot{f}^{s,\frac{1}{p}}_{p,q}(E_W)$ therein replaced,
respectively, by $\dot{b}_{\infty, \infty}^{s}(\mathbb{C}^m),
\dot{f}_{\infty, \infty}^{s}(E_W)$,
$\dot{f}_{p,\infty}^{s, \upsilon_{1/p, W}}(W)$,
and $\dot{f}^{s,\frac{1}{p}}_{p,\infty}(E_W)$;
we omit the details.

Finally, we show the necessity of (ii).
For any $Q,R\in\mathcal{D}$,
let $\mathbf{1}_{Q=R}$ be as in \eqref{eq-e-QR}.
If \eqref{eq-f_q=b_p} holds, applying the definition
of $\|\cdot\|_{\dot{b}_{\infty, \infty}^{s}(\mathbb{C}^m)}$,
Lemma \ref{lem-onepoint}, Remark \ref{rmk-onepoint},
and Example \ref{exam-tau_W},
we obtain, for any $Q\in\mathcal{D}$ and $\vec{z}\in\mathbb{C}^{m}$,
\begin{align}\label{eq-z-t}
2^{j_Q(s+\frac{n}{2})}|\vec{z}|
&\sim\left\|\left\{\mathbf{1}_{Q=R}\vec{z}
\right\}_{R\in\mathcal{D}}\right\|
_{\dot{b}_{\infty, \infty}^{s}(\mathbb{C}^m)}
\sim\left\|\left\{\mathbf{1}_{Q=R}\vec{z}
\right\}_{R\in\mathcal{D}}\right\|
_{\dot{f}_{p,\infty}^{s, \upsilon_{1/p, W}}(W)}\\
&\sim2^{j_Q(s+\frac{n}{2})}\left[\frac{
\fint_Q|W^{\frac1p}(x)\vec z|^p\,dx}
{\fint_Q E_W(x)\,dx}\right]^{\frac1p},\nonumber
\end{align}
where the positive equivalence constants are
independent of $Q$ and $\vec{z}$. Fixing some
$\vec{z}\in\mathbb{C}^{m}$ and using
Lebesgue's differentiation theorem
on the right-hand side of \eqref{eq-z-t},
we obtain, for almost every $x\in\mathbb{R}^n$,
\begin{align}\label{cond-z-w}
|\vec{z}|\sim\left[\frac{|W^{\frac1{p}}(x)
\vec{z}|^p}{E_W(x)}\right]^{\frac1{p}},
\end{align}
where the positive equivalence constants
are independent of $x$ and $\vec{z}$.
By the same argument used to prove the necessity of (i),
we find that, for almost every $x\in\mathbb{R}^n$
and any $\vec{z}\in\mathbb{C}^{m}$, \eqref{cond-z-w}
holds and hence, for almost every $x\in\mathbb{R}^n$,
$E_W(x)\sim e_W(x)$. From this and Lemma \ref{lem-W=I_m},
we infer that $W\sim E_W I_m$.
This finishes the proof of the necessity of (ii)
and hence Theorem \ref{thm-3=4}.
\end{proof}			

To further discuss Theorem \ref{thm-3=4}, we
recall that a scalar weight $w\in A_1$ if
\begin{align*}
[w]_{A_1}:=\sup_{\operatorname{cube} Q\subset\mathbb{R}^n}
\fint_Qw(x)\,dx\left\|w^{-1}\right\|_{L^\infty(Q)}<\infty;
\end{align*}
(see, for example, \cite[Definition 7.1.1]{gra14a}).

\begin{remark}
\begin{itemize}
\item[{\rm (i)}] When $m=1$, \eqref{cond-3=4}
is naturally satisfied. In this case, Theorem \ref{thm-3=4}
coincides with Lemma \ref{lem-infty-3}.
Furthermore, when $m=1$ and $W\equiv1$,
Theorem \ref{thm-3=4} reduces to \eqref{eq-f=f}.
\item[{\rm (ii)}] We next give an example
to show that, even in some simple cases,
$W\sim E_W I_m$ does not hold.
To this end, for any $x\in\mathbb{R}^n$, let					
\begin{align*}
W(x):=\left(\begin{array}{cc}
|x|^\alpha & 0 \\
0 & |x|^\beta
\end{array}\right),
\end{align*}
where $-n<\alpha<\beta\leq0$. It is well known that $|\cdot|^{\alpha},
|\cdot|^{\beta}\in A_1$ (see, for example, \cite[Example 7.1.7]{gra14a}).
Next, we claim that $W\in\bigcap_{r\in(0, \infty)}\mathcal{A}_{r, \infty}$.
To see this, let $r\in(0,\infty)$. By Jensen's inequality,
we find that, for any cube $Q\subset\mathbb{R}^n$,
\begin{align*}
&\exp\left(\fint_Q\log\left(\fint_{Q}\left\|W^{\frac{1}{r}}(x)
W^{-\frac{1}{r}}(y)\right\|^r\,dx\right)\,dy\right)\\
&\quad\leq\fint_Q\fint_{Q}\left\|W^{\frac{1}{r}}(x)
W^{-\frac{1}{r}}(y)\right\|^r\,dx\,dy
\sim\fint_Q\fint_{Q}
|x|^{\alpha}|y|^{-\alpha}\,dx\,dy+\fint_Q\fint_{Q}
|x|^{\beta}|y|^{-\beta}\,dx\,dy\\
&\quad\leq\left\|\,|\cdot|^{-\alpha}\,\right\|_{L^{\infty}(Q)}
\fint_{Q}|x|^{\alpha}\,dx+\left\|\,|\cdot|^{-\beta}\,\right\|_{L^{\infty}(Q)}
\fint_{Q}|x|^{\beta}\,dx\leq\left[|\cdot|^{\alpha}\right]_{A_1}
+\left[|\cdot|^{\beta}\right]_{A_1},
\end{align*}
which further implies that
\begin{align*}
[W]_{\mathcal{A}_{r,\infty}}=\sup_{\operatorname{cube} Q\subset\mathbb{R}^n}
\exp\left(\fint_Q\log\left(\fint_{Q}\left\|W^{\frac{1}{r}}(x)
W^{-\frac{1}{r}}(y)\right\|^r\,dx\right)\,dy\right)
\lesssim\left[|\cdot|^{\alpha}\right]_{A_1}
+\left[|\cdot|^{\beta}\right]_{A_1}<\infty
\end{align*}
and hence $W\in\mathcal{A}_{r, \infty}$.
From \cite[Lemma 5.3]{bhyy4} and \cite[Proposition 7.2.8]{gra14a},
it follows that $E_W=\|W\|\in A_{\infty}$
and hence it is a scalar doubling weight.
However, using the construction of $W$,
we conclude that, for any $x\in\mathbb{R}^n$ with $|x|>1$,
$e_W(x)=|x|^{\alpha}$ and $E_W(x)=|x|^{\beta}$ and hence
\eqref{cond-3=4} does not hold. By these and
Lemma \ref{lem-W=I_m}, we find that $W\sim E_W I_m$
does not hold. This phenomenon indicates that,
as the range dimension increases (from 1 to $m$),
matrix-weighted function spaces have distinctive properties
compared to scalar-weighted function spaces.			
\end{itemize}				
\end{remark}

The following theorem is a corollary of Theorem \ref{thm-3=4}.

\begin{theorem}\label{thm-3=4-F}
Let $s\in\mathbb{R}$ and $p\in(0, \infty)$.
Then the following statements hold.
\begin{itemize}
\item[{\rm (i)}] If $q\in(0, \infty)$
with $p\neq q$ and if $W\in\mathcal{A}_{p\wedge q, \infty}$, then
$\dot{F}_{q,q}^{s, \upsilon_{1/q, W}}(W)
=\dot{F}_{p,q}^{s, \upsilon_{1/p, W}}(W)$
with equivalent quasi-norms if and only if $W\sim E_W I_m$,
where $\upsilon_{1/q, W}$ and $\upsilon_{1/p, W}$ are
growth functions as in \eqref{eq-tau_W}.
\item[{\rm (ii)}] If $W\in\mathcal{A}_{p, \infty}$,
$\dot{B}_{\infty, \infty}^{s}(\mathbb{C}^m)
=\dot{F}_{p,\infty}^{s, \upsilon_{1/p, W}}(W)$
with equivalent quasi-norms if and only if
$W\sim E_W I_m$, where $\dot{B}_{\infty,
\infty}^{s}(\mathbb{C}^m)$ is as in Remark \ref{rmk-a(A)-a}.
\end{itemize}
\end{theorem}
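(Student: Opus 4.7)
The plan is to reduce Theorem \ref{thm-3=4-F} to Theorem \ref{thm-3=4} by transferring the equivalence between function spaces to the corresponding sequence spaces via the characterizations established earlier. First I note that both sets of hypotheses guarantee the scalar doubling property needed to apply Theorem \ref{thm-3=4}: by \cite[Lemma 5.3]{bhyy4} invoked in Example \ref{exam-tau_W}, any $W\in\mathcal{A}_{r,\infty}$ satisfies $E_W=\|W\|\in A_\infty$, and $A_\infty$ weights are automatically doubling. Thus under $W\in\mathcal{A}_{p\wedge q,\infty}$ in (i), or $W\in\mathcal{A}_{p,\infty}$ in (ii), the doubling hypothesis of Theorem \ref{thm-3=4} is satisfied, and the relevant $\varphi$-transform characterizations (Theorem \ref{thm-phitransMWBTL} for $\dot{F}_{p,q}^{s,\upsilon_{1/p,W}}(W)$ and Theorem \ref{thm-phitansaverMWBTL} with $\mathbb{A}=\{I_m\}_{Q}$ for $\dot{B}_{\infty,\infty}^{s}(\mathbb{C}^m)$) are available.

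For the sufficiency direction in both (i) and (ii), I would assume $W\sim E_WI_m$, invoke Theorem \ref{thm-3=4} to obtain the sequence-space identity with equivalent quasi-norms, and then lift it to the function space level. Concretely, choose $\varphi,\psi\in\mathcal{S}$ satisfying \eqref{cond1} and \eqref{cond3}; for any $\vec{f}$ in either function space the chain $\|\vec{f}\|_{X_1}\sim\|S_\varphi\vec{f}\|_{x_1}\sim\|S_\varphi\vec{f}\|_{x_2}\sim\|\vec{f}\|_{X_2}$ (using the already proven sequence-space equivalence) combined with $\vec{f}=T_\psi S_\varphi\vec{f}$ and the boundedness of $T_\psi$ on both sides gives the desired identity of function spaces with equivalent quasi-norms.

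For the necessity, which is the more delicate direction, I would extract the sequence-space identity from the function-space identity via the Daubechies wavelet characterization (Theorem \ref{Dauwav decomp}). Pick $k\in\mathbb{N}$ large enough for \eqref{eq-k} to hold for both function spaces simultaneously, and let $\{\theta^{(\lambda)}\}_{\lambda=1}^{2^n-1}$ be Daubechies wavelets of class $C^k$. Given any sequence $\vec{t}:=\{\vec{t}_Q\}_{Q\in\mathcal{D}}$ in the sequence space $x_1$, set $\vec{f}:=\sum_{Q\in\mathcal{D}}\vec{t}_Q\theta^{(1)}_Q$, which converges in $(\mathcal{S}'_\infty)^m$ by Theorem \ref{moledecomp}(ii) (together with Lemma \ref{lem-Dauwavtomole}). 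Using the $L^2$-orthonormality $\langle\theta^{(1)}_Q,\theta^{(\lambda)}_R\rangle=\delta_{\lambda,1}\delta_{Q,R}$ together with the molecular pairing \eqref{eq-f-m}, one computes $\langle\vec{f},\theta^{(\lambda)}_R\rangle_*=\delta_{\lambda,1}\vec{t}_R$, so Theorem \ref{Dauwav decomp} yields $\|\vec{f}\|_{X_i}\sim\|\vec{t}\|_{x_i}$ for $i=1,2$. The assumed function-space identity $X_1=X_2$ therefore forces the sequence-space identity $x_1=x_2$ with equivalent quasi-norms, and Theorem \ref{thm-3=4} concludes $W\sim E_WI_m$. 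Part (ii) is identical in structure, with $\dot{b}_{\infty,\infty}^{s}(\mathbb{C}^m)$ in place of $\dot{f}_{q,q}^{s,\upsilon_{1/q,W}}(W)$.

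The main obstacle will be the careful verification of the orthogonality identity $\langle\vec{f},\theta^{(\lambda)}_R\rangle_*=\delta_{\lambda,1}\vec{t}_R$: unpacking \eqref{eq-f-m} gives a double sum $\sum_P\langle\psi_P,\theta^{(\lambda)}_R\rangle\sum_Q\vec{t}_Q\langle\theta^{(1)}_Q,\varphi_P\rangle$, and one must justify swapping the order of summation and then recognize $\sum_P\langle\psi_P,\theta^{(\lambda)}_R\rangle\langle\theta^{(1)}_Q,\varphi_P\rangle=\langle\theta^{(1)}_Q,\theta^{(\lambda)}_R\rangle$ via a reproducing identity applied to the smooth molecule $\theta^{(1)}_Q$ (which is not in $\mathcal{S}_\infty$, only $C^k$). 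Both steps are controlled by the absolute convergence and almost-diagonal estimates of Lemmas \ref{anasynmole}(ii) and \ref{wdfmole}, but the bookkeeping must be done explicitly so that the final identity is valid for every $\vec{t}\in x_1$ (and symmetrically for $\vec{t}\in x_2$), which is what the necessity argument requires.
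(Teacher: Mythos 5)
Your proposal is correct and follows essentially the same route as the paper: reduce both statements to their sequence-space counterparts via the wavelet characterization (Theorem \ref{Dauwav decomp}), observe that $E_W=\|W\|\in A_\infty$ and hence is a scalar doubling weight, and then invoke Theorem \ref{thm-3=4}. The one detail to patch is that Theorem \ref{Dauwav decomp} is stated only for $p\in(0,\infty)$, so it does not directly cover $\dot{B}_{\infty,\infty}^{s}(\mathbb{C}^m)$ in part (ii); your $\varphi$-transform substitute handles sufficiency, but the necessity step there still needs an orthonormal (wavelet) characterization at $p=\infty$, which the paper supplies by citing the known unweighted Daubechies-wavelet characterization of $\dot{B}_{\infty,\infty}^{s}$ from Triebel's book.
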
	

\begin{proof}
From Example \ref{exam-tau_W}, it follows that
all the results for $\dot{A}_{p,q}^{s, \upsilon}(W)$
established in this article also hold for both
$\dot{F}_{q,q}^{s, \upsilon_{1/q, W}}(W)$ and
$\dot{F}_{p,q}^{s, \upsilon_{1/p, W}}(W)$.
Recall that \cite[Remark 3.49]{tri14} also gives the wavelet characterization
of $\dot{B}_{\infty, \infty}^{s}(\mathbb{C}^m)$ via Daubechies wavelets.
By this and Theorem \ref{Dauwav decomp}, we find that,
to prove the present theorem, it suffices to show the corresponding results
at the level of sequence spaces. Applying the definition of
$E_W$ and \cite[Lemma 5.3]{bhyy4}, we obtain
$E_W=\|W\|\in A_{\infty}$. Using this and \cite[Proposition 7.2.8]{gra14a},
we conclude that $E_W$ is a scalar doubling weight.
This, together with Theorem \ref{thm-3=4}, further implies that
Theorem \ref{thm-3=4-F} holds.
\end{proof}

\begin{remark}
In the proof of Theorem \ref{thm-3=4-F}, to use the
wavelet characterizations
of $\dot{F}_{q,q}^{s, \upsilon_{1/q, W}}(W)$ and
$\dot{F}_{p,q}^{s, \upsilon_{1/p, W}}(W)$
as in Theorem \ref{Dauwav decomp},
we need stronger assumptions on $W$ than Theorem \ref{thm-3=4}.
It is still unclear whether the wavelet characterization
as in Theorem \ref{Dauwav decomp} holds
for both $\dot{F}_{q,q}^{s, \upsilon_{1/q, W}}(W)$ and
$\dot{F}_{p,q}^{s, \upsilon_{1/p, W}}(W)$ with $W$
as in Theorem \ref{thm-3=4}.
\end{remark}

Based on the above discussions, a natural question
is whether Lemma \ref{lem-infty-3} holds
for Besov-type spaces.  It is worth pointing out that,
even for unweighted Besov-type spaces,
Lemma \ref{lem-infty-3} generally does not hold.
To be precise, we have the following proposition,
which also answers an open question posed in \cite[p.\,464]{yy10}.

\begin{proposition}\label{prop-3=4-b}
Let $s\in\mathbb{R}$ and $p\in(0, \infty)$.
Then the following statements hold.
\begin{itemize}
\item[{\rm (i)}]
$\dot{B}_{\infty, \infty}^{s}=\dot{B}_{p,\infty}^{s, \frac{1}{p}}$
with equivalent quasi-norms.
\item[{\rm (ii)}] If $q\in(p, \infty)$,
then $\dot{B}_{q,q}^{s, \frac{1}{q}}
\subsetneqq\dot{B}_{p,q}^{s, \frac{1}{p}}$;
if $q\in(0, p)$, then $\dot{B}_{p,q}^{s, \frac{1}{p}}
\subsetneqq\dot{B}_{q,q}^{s, \frac{1}{q}}$.
\end{itemize}
\end{proposition}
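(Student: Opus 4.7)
Proof plan. The overall strategy is to pass from function spaces to sequence spaces via Theorem \ref{thm-phitransMWBTL} (or the wavelet characterization of Theorem \ref{Dauwav decomp}), combined with the standard wavelet characterization of $\dot B^s_{\infty,\infty}$, and then to argue combinatorially using dyadic averages. Throughout, I set $m=1$, $W\equiv 1$, and take $\upsilon(Q):=|Q|^{1/p}$ or $|Q|^{1/q}$ as appropriate.

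For part (i), Theorem \ref{thm-phitransMWBTL} reduces the claim to the sequence-space identity $\dot b^s_{\infty,\infty}=\dot b^{s,1/p}_{p,\infty}$. Lemma \ref{lem-ident}(i) applies with $\delta_1=1/p$ and $q=\infty$ (the borderline condition $\delta_1\in[1/p,\infty)$ is met) and yields
\[
\|t\|_{\dot b^{s,1/p}_{p,\infty}}\sim\bigl\|\{t_Q/|Q|^{1/p}\}\bigr\|_{\dot f^{s-n/p}_{\infty,\infty}}\sim\sup_{P\in\mathcal{D}}|P|^{-s/n-1/2}|t_P|.
\]
A direct evaluation of $\|t\|_{\dot b^s_{\infty,\infty}}$ from Definition \ref{averBTLseq} (using $\upsilon\equiv 1$ and letting $|P|\to\infty$ to drop the constraint $j\geq j_P$) gives exactly the same expression, so the two sequence spaces coincide, and the function-space equality follows.

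For part (ii), introduce $u_Q:=2^{j_Q(s+n/2)}|t_Q|$ and the dyadic average $\mathbb{E}_P^j[u^r]:=|P|^{-1}\sum_{Q\in\mathcal{D}_j,\,Q\subset P}u_Q^r|Q|$. A short computation rewrites the two norms as
\[
\|t\|^q_{\dot b^{s,1/p}_{p,q}}=\sup_{P\in\mathcal{D}}\sum_{j\geq j_P}\bigl(\mathbb{E}_P^j[u^p]\bigr)^{q/p},\qquad \|t\|^q_{\dot b^{s,1/q}_{q,q}}=\sup_{P\in\mathcal{D}}\sum_{j\geq j_P}\mathbb{E}_P^j[u^q].
\]
Since $x\mapsto x^{q/p}$ is convex when $q>p$ and concave when $q<p$, Jensen's inequality applied pointwise in $j,P$ immediately yields the non-strict inclusions $\dot b^{s,1/q}_{q,q}\subseteq\dot b^{s,1/p}_{p,q}$ for $q>p$ and the reverse for $q<p$; these lift to the function-space level via Theorem \ref{Dauwav decomp}. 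For the strict inclusion when $q>p$, I would construct a test sequence $t$ whose support is a carefully chosen, non-completely-nested family of dyadic cubes with amplitude $a_j$ on $N_j$ evenly distributed cubes at each level $j$. Because $u$ is non-constant on these cubes, Jensen's inequality is strict at each level by a quantitative factor growing in $j$; by tuning $\{a_j\}$ and $\{N_j\}$ one can arrange the $\dot b^{s,1/p}_{p,q}$-series to converge while the $\dot b^{s,1/q}_{q,q}$-series diverges. The case $q<p$ is handled by a symmetric construction.

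The main obstacle is the construction of the counterexample for strict inclusion. The supremum over $P\in\mathcal{D}$ in $\|t\|_{\dot b^{s,1/p}_{p,q}}$ carries the factor $|P|^{-q/p}$, which blows up as $|P|$ shrinks, so one must place the support so that every $P$ either intersects enough support cubes to absorb that prefactor through the density term $N_j^P$, or intersects so few that the resulting single-scale contributions remain uniformly controlled. Balancing the amplitudes $a_j$ against the geometric distribution of the support at every scale — while still retaining the strict Jensen gain needed to force the $\dot b^{s,1/q}_{q,q}$-norm to be infinite — is the technical heart of the argument.
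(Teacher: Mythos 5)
Part (i) and the non-strict inclusions in (ii) are fine: your use of Lemma \ref{lem-ident}(i) with $\upsilon(Q)=|Q|^{1/p}\in\mathcal{G}(\tfrac1p,\tfrac1p;0)$ and $q=\infty$ is a legitimate in-paper substitute for the citation the paper uses, and your Jensen reformulation of the two norms via $\mathbb{E}_P^j[u^r]$ is exactly equivalent to the H\"older argument in the paper. The genuine gap is the strict inclusion: you describe the properties a counterexample should have and explicitly defer its construction, calling it ``the technical heart of the argument.'' But that construction \emph{is} the content of part (ii); without it you have only proved $\dot{B}_{q,q}^{s,1/q}\subseteq\dot{B}_{p,q}^{s,1/p}$, not the properness. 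Moreover, your stated mechanism (``strict Jensen gain by a quantitative factor growing in $j$, tuned via amplitudes $a_j$'') is not how the example actually works: one keeps the amplitude \emph{constant} and thins the \emph{support}, so the gap between $\mathbb{E}[u^p]^{q/p}$ and $\mathbb{E}[u^q]$ comes purely from the density of the support, namely $\rho^{q/p}$ versus $\rho$ for a level-$j$ density $\rho_j\to0$.

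Concretely (with $s=0$, reducing to sequences by Theorem \ref{Dauwav decomp}), set $t_{Q_{j,k}}:=|Q_{j,k}|^{1/2}$ when $j\in\mathbb{Z}_+$ and $(j+1)\mid k_1$, and $t_{Q_{j,k}}:=0$ otherwise; then $u_Q\in\{0,1\}$ and the support density at level $j$ inside any $P\in\mathcal{D}$ is $\lceil 2^{j-j_P}/(j+1)\rceil\,2^{j_P-j}\le \tfrac1{j+1}+2^{j_P-j}$. Hence $\sup_P\sum_{j\ge j_P}\mathbb{E}_P^j[u^q]\ge\sum_{j\ge0}\tfrac1{j+1}=\infty$ (take $P=Q_{0,\mathbf{0}}$), while $\sum_{j\ge j_P\vee0}\bigl(\tfrac1{j+1}+2^{j_P-j}\bigr)^{q/p}\lesssim\sum_{j\ge0}(j+1)^{-q/p}<\infty$ uniformly in $P$ because $q/p>1$. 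This also disposes of the balancing issue you raise at the end: since $\mathbb{E}_P^j[u^p]\le\sup_Q u_Q^p=1$ is an \emph{average}, the prefactor $|P|^{-q/p}$ never blows up for bounded amplitudes, and the only thing to check is uniform convergence of the $j$-sum, which the geometric term $2^{(j_P-j)q/p}$ and the harmonic-type term handle. Without this (or an equivalent) explicit example, the proof of (ii) is incomplete.
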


\begin{proof}
Notice that (i) is exactly a particular case of \cite[Theorem 1(ii)]{yy13};
we omit the details. We next prove (ii) in the case where $q\in(p, \infty)$.
For simplicity, we may assume that $s=0$.
By Theorem \ref{Dauwav decomp}, it suffices to show
$\dot{b}_{q,q}^{0, \frac{1}{q}}\subsetneqq\dot{b}_{p,q}^{0, \frac{1}{p}}$.
From H\"{o}lder's inequality, we infer that
$\dot{b}_{q,q}^{0, \frac{1}{q}}\subset
\dot{b}_{p,q}^{0, \frac{1}{p}}$.
We now prove $\dot{b}_{q,q}^{0, \frac{1}{q}}
\subsetneqq\dot{b}_{p,q}^{0, \frac{1}{p}}$
by considering the following sequence
$t:=\{t_{Q}\}_{Q\in\mathcal{D}}$
defined by setting, for any $j\in\mathbb{Z}$
and $k:=(k_1,\dots,k_n)\in\mathbb{Z}^n$,
$$
t_{Q_{j,k}}:=
\begin{cases}
|Q_{j,k}|^{\frac{1}{2}}
& \text{if }
j\in\mathbb{Z}_+\text{ and }
\frac{k_1}{1+j}\in\mathbb{Z},\\
0
& \text{otherwise}.
\end{cases}
$$
We next claim that $\|t\|_{\dot{b}_{q,q}^{0, \frac{1}{q}}}=\infty$
and $\|t\|_{\dot{b}_{p,q}^{0, \frac{1}{p}}}<\infty$, which
further implies that $t\in\dot{b}_{p,q}^{0, \frac{1}{p}}
\backslash\dot{b}_{q,q}^{0, \frac{1}{q}}$
and hence $\dot{b}_{q,q}^{0, \frac{1}{q}}
\subsetneqq\dot{b}_{p,q}^{0, \frac{1}{p}}$.
To this end, using some basic calculations and
the definitions of $t$ and
$\|\cdot\|_{\dot{b}_{q,q}^{0, \frac{1}{q}}}$, we conclude that
\begin{align*}
\|t\|_{\dot{b}_{q,q}^{0, \frac{1}{q}}}
&=\sup_{P\in\mathcal{D}}\left\{\frac{1}{|P|}
\sum_{Q\in\mathcal{D}, Q\subset P}
\left[\left(\left|t_{Q}\right|
\left|Q\right|^{-\frac{1}{2}}\right)^q
|Q|\right]\right\}^{\frac{1}{q}}\\
&\geq\left\{\frac{1}{|Q_{0,\mathbf{0}}|}
\sum_{Q\in\mathcal{D}, Q\subset Q_{0,\mathbf{0}}}
\left[\left(\left|t_{Q}\right|
\left|Q\right|^{-\frac{1}{2}}\right)^q
|Q|\right]\right\}^{\frac{1}{q}}
\geq\left\{\sum_{j=0}^{\infty}\frac{1}{j+1}
\right\}^{\frac{1}{q}}=\infty.
\end{align*}
We now estimate $\|t\|_{\dot{b}_{p,q}^{0, \frac{1}{p}}}$
as follows. By some basic calculations,
the assumption that $q\in(p, \infty)$,
and the definition of $t$ again,
we find that, for any $P\in\mathcal{D}$,
\begin{align}\label{eq-P}
\left\{\sum_{j=j_P}^{\infty}
\left[\sum_{Q\in\mathcal{D}_j, Q\subset P}
\left(\left|t_{Q}\right|
\left|Q\right|^{-\frac{1}{2}}\right)^p
\frac{|Q|}{|P|}\right]^{\frac{q}{p}}\right\}^{\frac{1}{q}}
&\leq\left\{\sum_{j=j_P\vee0}^{\infty}
\left(\left\lceil\frac{2^{j-j_P}}{j+1}\right
\rceil2^{j_P-j}\right)^{\frac{q}{p}}\right\}^{\frac{1}{q}}\\
&\lesssim\left\{\sum_{j=0}^{\infty}\left(\frac{1}{j+1}
\right)^{\frac{q}{p}}\right\}^{\frac{1}{q}}\sim1.\nonumber
\end{align}
Taking the supremum over all $P\in\mathcal{D}$ on the
both sides of \eqref{eq-P} and applying the definition of
$\|\cdot\|_{\dot{b}_{p,q}^{0, \frac{1}{p}}}$, we obtain
$\|t\|_{\dot{b}_{p,q}^{0, \frac{1}{p}}}<\infty$,
which completes the proof of the above claim
and hence the proof of the case where $q\in(p, \infty)$.
The case where $q\in(0, p)$ follows from a similar argument;
we omit the details. This finishes the proof of (ii)
and hence Proposition \ref{prop-3=4-b}.
\end{proof}

\subsection{Sobolev-Type Embedding\label{s5.2}}

In this subsection we give the sufficient and
necessary condition for the Sobolev-type embedding
of $\dot{A}_{p,q}^{s, \upsilon}(W)$.
Let the symbol ``$\hookrightarrow$'' stand for the
continuous embedding. We start with some basic
embeddings of $\dot{A}_{p,q}^{s, \upsilon}(W)$.

\begin{proposition}\label{prop-embed}
Let $A\in\{B, F\}$, $s\in\mathbb{R}$, $p\in(0, \infty)$,
$q, q_1, q_2\in(0,\infty]$, and $W\in\mathcal{A}_{p, \infty}$.
Suppose that $\delta_1,\delta_2,\omega$ satisfy \eqref{eq-delta1<0} and
$\upsilon\in\mathcal{G}(\delta_1, \delta_2; \omega)$.
Then the following statements hold.
\begin{itemize}
\item[{\rm (i)}] If $q_1\leq q_2$, then
$\dot{A}_{p,q_1}^{s, \upsilon}(W)
\hookrightarrow\dot{A}_{p,q_2}^{s, \upsilon}(W)$.
\item[{\rm (ii)}] $\dot{B}_{p,\min\{p, q\}}^{s, \upsilon}(W)
\hookrightarrow\dot{F}_{p,q}^{s, \upsilon}(W)
\hookrightarrow\dot{B}_{p,\max\{p, q\}}^{s, \upsilon}(W)$.
\end{itemize}	
\end{proposition}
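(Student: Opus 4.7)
The plan is to reduce both embeddings to the corresponding classical embeddings between the sequence-valued Lebesgue-type spaces $\ell^q(L^p)$ and $L^p(\ell^q)$, applied \emph{uniformly in} $P\in\mathcal{D}$ to the localized sequences $\{f_j\mathbf{1}_P\mathbf{1}_{j\geq j_P}\}_{j\in\mathbb{Z}}$ that appear inside the supremum defining $\|\cdot\|_{L\dot{A}_{p,q}^{\upsilon}}$ in \eqref{LA_nu}. Since by Definition \ref{MWBTL} we have
\begin{align*}
\left\|\vec{f}\right\|_{\dot{A}_{p,q}^{s,\upsilon}(W)}
=\left\|\left\{2^{js}\left|W^{\frac{1}{p}}\left(\varphi_j*\vec{f}\right)\right|\right\}_{j\in\mathbb{Z}}\right\|_{L\dot{A}_{p,q}^{\upsilon}},
\end{align*}
any inequality $\|\{g_j\}_{j\in\mathbb{Z}}\|_{L\dot{A}_{p,q_2}}\leq C\|\{g_j\}_{j\in\mathbb{Z}}\|_{L\dot{A}_{p,q_1}}$ with constant $C$ independent of the sequence transfers, after dividing by $\upsilon(P)$ and taking the supremum over $P\in\mathcal{D}$, to an embedding between the corresponding $\dot{A}_{p,q_i}^{s,\upsilon}(W)$-spaces. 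The same reduction works for the $\dot{B}\hookrightarrow\dot{F}\hookrightarrow\dot{B}$ chain.

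For (i), I would invoke the monotonicity of the $\ell^q$ quasi-norm: for $q_1\leq q_2$, $\|\cdot\|_{\ell^{q_2}}\leq\|\cdot\|_{\ell^{q_1}}$. Applied inside \eqref{eq-LA}, this yields $\|\{g_j\}_{j\in\mathbb{Z}}\|_{L\dot{A}_{p,q_2}}\leq\|\{g_j\}_{j\in\mathbb{Z}}\|_{L\dot{A}_{p,q_1}}$ in both the Besov case (sum over $j$ of $\|g_j\|_{L^p}^{q}$) and the Triebel--Lizorkin case (the inner $\ell^q$ norm pointwise). Applying this with $g_j:=2^{js}|W^{1/p}(\varphi_j*\vec{f})|\mathbf{1}_P\mathbf{1}_{j\geq j_P}$ and using \eqref{LA_nu} finishes (i).

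For (ii), I would split into the two cases $p\leq q$ and $q\leq p$ and in each case apply the generalized Minkowski inequality together with the monotonicity from (i). When $p\leq q$: in the inner step one uses $\ell^p\hookrightarrow\ell^q$ to obtain $L^p(\ell^p)\hookrightarrow L^p(\ell^q)$, giving $\dot{B}_{p,p}^{s,\upsilon}(W)\hookrightarrow\dot{F}_{p,q}^{s,\upsilon}(W)$, and Minkowski's integral inequality applied to $\|\cdot\|_{L^p(\ell^q)}$ (interchanging the $\ell^q$ and $L^p$ norms at the cost of increasing the resulting norm) gives $L^p(\ell^q)\hookrightarrow\ell^q(L^p)$, hence $\dot{F}_{p,q}^{s,\upsilon}(W)\hookrightarrow\dot{B}_{p,q}^{s,\upsilon}(W)$. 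The case $q\leq p$ is symmetric: Minkowski yields $\ell^q(L^p)\hookrightarrow L^p(\ell^q)$ and $\ell^q\hookrightarrow\ell^p$ yields $L^p(\ell^q)\hookrightarrow L^p(\ell^p)$, producing $\dot{B}_{p,q}^{s,\upsilon}(W)\hookrightarrow\dot{F}_{p,q}^{s,\upsilon}(W)\hookrightarrow\dot{B}_{p,p}^{s,\upsilon}(W)$. Applying each of these inequalities with the localized sequence $\{2^{js}|W^{1/p}(\varphi_j*\vec{f})|\mathbf{1}_P\mathbf{1}_{j\geq j_P}\}_{j\in\mathbb{Z}}$ and then dividing by $\upsilon(P)$ and taking the supremum over $P\in\mathcal{D}$ transports the embeddings to the $\dot{A}_{p,q}^{s,\upsilon}(W)$-level.

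There is no real obstacle here; the only point that requires care is verifying that the implicit constants in the classical $\ell^q$-monotonicity and Minkowski embeddings are uniform in $P$, which is immediate from their statement, so that the supremum over $P\in\mathcal{D}$ and the weight $1/\upsilon(P)$ in the definition \eqref{LA_nu} of $\|\cdot\|_{L\dot{A}_{p,q}^{\upsilon}}$ pose no issue. No appeal to the matrix-weight structure, to $\varphi$-transform characterizations, or to growth functions beyond their bare definition is needed.
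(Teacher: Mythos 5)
Your proposal is correct and follows essentially the same route as the paper: part (i) by monotonicity of $\ell^q$ applied uniformly to the localized sequences, and part (ii) by splitting into the cases $p\leq q$ and $q\leq p$ and combining $\ell^q$-monotonicity with the generalized Minkowski inequality (together with the implicit identification $\dot{B}_{p,p}^{s,\upsilon}(W)=\dot{F}_{p,p}^{s,\upsilon}(W)$). Your explicit remark that the constants are uniform in $P\in\mathcal{D}$, so that the supremum in \eqref{LA_nu} causes no trouble, is the only point needing care and you handle it correctly.
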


\begin{proof}
The proof of (i) follows from the monotonicity on $q$ of
the sequence space $l^q$; we omit the details.

Next, we prove (ii). To this end,
assume first $0<p \leq q\leq\infty$. In this case,
by the monotonicity on $q$ of the sequence space $l^q$ again,
we find that the first embedding in (ii) holds.
Applying (generalized) Minkowski's inequality,
we obtain the second embedding in (ii), which completes the proof of (ii)
in the case where $0<p \leq q\leq\infty$. The case where $0<q<p<\infty$
follows from the above argument by exchanging the application of
the monotonicity on $q$ of the sequence space $l^q$ with
the application of (generalized) Minkowski's inequality;
we omit the details. This finishes the proof of (ii)
and hence Proposition \ref{prop-embed}.
\end{proof}

\begin{proposition}\label{prop-SAS'}
Let $A\in\{B, F\}$, $s\in\mathbb{R}$, $p\in(0, \infty)$,
$q\in(0,\infty]$, and $W\in \mathcal{A}_{p,\infty}$.
Suppose that $\delta_1,\delta_2,\omega$
satisfy \eqref{eq-delta1>0} and
$\upsilon\in\mathcal{G}(\delta_1, \delta_2; \omega)$.
Then $(\mathcal{S}_{\infty})^m\hookrightarrow
\dot{A}^{s,\upsilon}_{p,q}(W)
\hookrightarrow(\mathcal{S}'_{\infty})^m$.
Moreover, there exist $N\in\mathbb{N}$
and a positive constant $C$ such that,
for any $\vec{f}:=(f_1, f_2, \dots, f_m)^{T}
\in(\mathcal{S}_{\infty})^m$,
\begin{align*}
\left\|\vec{f}\right\|_{\dot{A}^{s,\upsilon}_{p,q}(W)}
\leq C\sum_{i=1}^{m}\|f_i\|_{\mathcal{S}_N},
\end{align*}
where $\|\cdot\|_{\mathcal{S}_N}$ is as in \eqref{eq-S_N}.
\end{proposition}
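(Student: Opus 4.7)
The plan is to deduce both continuous embeddings from the $\varphi$-transform characterization (Theorem \ref{thm-phitransMWBTL}) after replacing $W$ by a sequence $\mathbb{A}:=\{A_Q\}_{Q\in\mathcal{D}}$ of reducing operators of order $p$ for $W$ via Theorem \ref{thm-A(W)=A(A)} and Corollary \ref{cor-a(A)=a(W)}. Throughout, we fix $\varphi,\psi\in\mathcal{S}$ satisfying \eqref{cond1} and \eqref{cond3}. Note that under \eqref{eq-delta1>0} one has $\delta_1,\delta_2\geq0$ and $\omega\in[0,n(\delta_2-\delta_1)]$, so $\upsilon$ is in particular almost increasing by Lemma \ref{lem-grow-est}(i), which allows us freely to use Lemma \ref{lem-onepoint} and the results of Section \ref{s-pf}.

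For the embedding $\dot{A}^{s,\upsilon}_{p,q}(W)\hookrightarrow(\mathcal{S}'_\infty)^m$, given $\vec f\in\dot{A}^{s,\upsilon}_{p,q}(W)$ and $\vec{\phi}:=(\phi_1,\ldots,\phi_m)^T\in(\mathcal{S}_\infty)^m$, Lemma \ref{lem-Cdreproform} applied componentwise yields
$$\left\langle\vec f,\vec\phi\right\rangle=\sum_{i=1}^m\sum_{Q\in\mathcal{D}}\left(\vec t_Q\right)_i\left\langle\psi_Q,\phi_i\right\rangle,\qquad \vec t_Q:=\left\langle\vec f,\varphi_Q\right\rangle.$$
By Theorem \ref{thm-phitransMWBTL} and Corollary \ref{cor-a(A)=a(W)}, $\{\vec t_Q\}_Q\in\dot a^{s,\upsilon}_{p,q}(\mathbb{A})$ with quasi-norm bounded by $\|\vec f\|_{\dot A^{s,\upsilon}_{p,q}(W)}$. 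Using $|(\vec t_Q)_i|\leq|\vec t_Q|$ and Lemma \ref{well-define} applied to each $\phi_i$ then gives, for some $N\in\mathbb{N}$ depending only on the parameters,
$$\left|\left\langle\vec f,\vec\phi\right\rangle\right|\lesssim\left\|\vec f\right\|_{\dot A^{s,\upsilon}_{p,q}(W)}\left\|\psi\right\|_{\mathcal S_{N+1}}\sum_{i=1}^m\left\|\phi_i\right\|_{\mathcal S_{N+1}},$$
which establishes the continuity of the embedding into $(\mathcal{S}'_\infty)^m$.

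For the embedding $(\mathcal{S}_\infty)^m\hookrightarrow\dot{A}^{s,\upsilon}_{p,q}(W)$ together with the stated seminorm estimate, by Theorem \ref{thm-phitransMWBTL}(i), Corollary \ref{cor-a(A)=a(W)}, and \eqref{eq-a(A)-a}, it suffices to control $\|\{|A_Q\langle\vec f,\varphi_Q\rangle|\}_Q\|_{\dot a^{s,\upsilon}_{p,q}}$ in terms of the Schwartz seminorms of $f_1,\ldots,f_m$. Since $(f_i)_{Q_{0,\mathbf 0}}=f_i$, Lemma \ref{lem-psi-phi} gives, for every sufficiently large $N\in\mathbb{N}$ and every $Q\in\mathcal{D}$,
$$\left|\left\langle f_i,\varphi_Q\right\rangle\right|\lesssim\|f_i\|_{\mathcal S_{N+1}}\left[\min\left\{\ell(Q),[\ell(Q)]^{-1}\right\}\right]^{N+\frac{n}{2}}\left[1+\frac{|x_Q|}{\ell(Q)\vee1}\right]^{-(N+n)},$$
while Lemma \ref{growEST} applied with $R=Q_{0,\mathbf 0}$ yields, for some $\beta_1\in[\![d^{\mathrm{lower}}_{p,\infty}(W),\infty)$ and $\beta_2\in[\![d^{\mathrm{upper}}_{p,\infty}(W),\infty)$,
$$\|A_Q\|\lesssim\max\left\{[\ell(Q)]^{-\beta_1/p},[\ell(Q)]^{\beta_2/p}\right\}\left[1+\frac{|x_Q|}{\ell(Q)\vee1}\right]^{(\beta_1+\beta_2)/p}.$$
Multiplying these two estimates bounds $|A_Q\langle\vec f,\varphi_Q\rangle|$ by $\sum_i\|f_i\|_{\mathcal{S}_{N+1}}$ times a weight in $(\ell(Q),x_Q)$ that can be made arbitrarily small by choosing $N$ large, after which the supremum over $P\in\mathcal{D}$ defining $\|\cdot\|_{\dot a^{s,\upsilon}_{p,q}}$ is estimated by splitting the sum over $Q\subset P$ according to $\ell(Q)/\ell(P)$, using the growth bound on $\upsilon(P)$ from Lemma \ref{lem-grow-est} and the summation estimate in Lemma \ref{lem-sum-k}.

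The main obstacle will be the bookkeeping in this final step. For each $P\in\mathcal{D}$ we must balance the rapid decay of the coefficients against the polynomial growth of $\|A_Q\|$, the normalizing factor $\upsilon(P)^{-1}$ (which can be large when $\ell(P)$ is small, in view of \eqref{eq-delta1>0}), and the $L\dot{A}_{p,q}$-geometry. Nevertheless, since \eqref{eq-delta1>0} forces $\upsilon(Q)/\upsilon(P)\lesssim(|Q|/|P|)^{\delta_1}$ for $Q\subset P$ with $\delta_1\geq0$, and since the decay exponent $N$ in the coefficient estimate is free, for $N$ large enough (depending on $s,\,n,\,\beta_1,\,\beta_2,\,\delta_1,\,\delta_2,\,\omega,\,p,\,q$) every series that arises converges and yields the claimed bound $\|\vec f\|_{\dot A^{s,\upsilon}_{p,q}(W)}\leq C\sum_{i=1}^m\|f_i\|_{\mathcal{S}_N}$, completing the proposition.
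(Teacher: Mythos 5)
Your argument is sound and, for the harder embedding $(\mathcal{S}_\infty)^m\hookrightarrow\dot{A}^{s,\upsilon}_{p,q}(W)$, it takes a genuinely different route from the paper. The paper observes that $[\sqrt{n}\,\|f_i\|_{\mathcal{S}_N}]^{-1}f_i$ is itself a synthesis molecule supported near $Q_{0,\mathbf{0}}$, writes $\vec f=\sum_{i=1}^m\vec e_i f_i$, and then invokes Theorem \ref{moledecomp}(ii) applied to the one-point sequences $\{\mathbf{1}_{Q_{0,\mathbf{0}}=R}\vec e_i\}_{R\in\mathcal{D}}$, whose $\dot a^{s,\upsilon}_{p,q}(W)$-norms are computed exactly by Lemma \ref{lem-onepoint} and Remark \ref{rmk-onepoint}; all of the summation is thereby outsourced to the already-established almost-diagonal/molecular machinery. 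You instead estimate the $\varphi$-transform coefficients directly via Lemma \ref{lem-psi-phi} and Lemma \ref{growEST} and then sum by hand. Your reduction is legitimate (by Lemma \ref{lem-Cdreproform} one has $\vec f=T_\psi S_\varphi\vec f$ in $(\mathcal{S}'_\infty)^m$, so boundedness of $T_\psi$ converts a bound on $\|S_\varphi\vec f\|_{\dot a^{s,\upsilon}_{p,q}(\mathbb{A})}$ into the desired estimate), and the ingredients you list for the final summation are the right ones; note, though, that this "bookkeeping" is precisely the crux and is only asserted in your write-up. It does close — it is the mirror image of the computation \eqref{eq-well-1}--\eqref{eq-well-2} in the proof of Lemma \ref{well-define}, with the lower bound $1/\upsilon(P)\lesssim[1+|x_P|/(\ell(P)\vee1)]^{\omega}\max\{|P|^{-\delta_1},|P|^{-\delta_2}\}$ (from the growth condition against $Q_{0,\mathbf{0}}$) replacing the upper bound \eqref{eq-welldefine-2} — but a complete proof must display it, and the paper's molecular shortcut is exactly what lets one avoid doing so. For the embedding into $(\mathcal{S}'_\infty)^m$ your argument coincides with the paper's (Lemma \ref{well-define} plus Theorem \ref{thm-phitansaverMWBTL} and Corollary \ref{cor-a(A)=a(W)}), merely supplying the details the paper omits.
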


\begin{proof}
We begin with proving the first embedding.
Let $N\in\mathbb{N}$ satisfy $$N>\max\left\{D_{\dot{a}_{p,q}^{s,\upsilon}(W)}-n,
E_{\dot{a}_{p,q}^{s,\upsilon}(W)}-\frac{n}{2},
F_{\dot{a}_{p,q}^{s,\upsilon}(W)}-\frac{n}{2}\right\},$$
where $D_{\dot{a}_{p,q}^{s,\upsilon}(W)},
E_{\dot{a}_{p,q}^{s,\upsilon}(W)}$,
and $F_{\dot{a}_{p,q}^{s,\upsilon}(W)}$
are as in Theorem \ref{a(W)adopebound},
By the definition of $\mathcal{S}_{\infty}$
and Definitions \ref{def-mole} and \ref{def-anasynmole},
it is not hard to verify that,
for any $\vec{f}:=(f_1, f_2, \dots, f_m)^{T}
\in(\mathcal{S}_{\infty})^m$ and $i\in\{1,\dots,m\}$,
$[\sqrt{n}\|f_i\|_{\mathcal{S}_N}]^{-1}f_i$
is an $(n+N,N,n+N,N)$ molecule supported near
$Q_{0,\mathbf{0}}$ and hence a synthesis
for $\dot{A}_{p,q}^{s,\upsilon}(W)$. Observe that,
for any $\vec{f}:=(f_1, f_2, \dots, f_m)^{T}
\in(\mathcal{S}_{\infty})^m$,
$\vec{f}=\sum_{i=1}^{m}\vec{e}_if_i$,
where, for any $i\in\{1,\dots,m\}$,
$\vec{e}_i$ denotes the vector $(0,\dots,1,\dots,0)^{T}$
with 1 in the $i$th entry and 0 elsewhere.
Let $\mathbf{1}_{Q_{0,\mathbf{0}}=R}$
be as in \eqref{eq-e-QR} with $Q$ replaced by $Q_{0,\mathbf{0}}$.
From Lemma \ref{lem-onepoint} and Remark \ref{rmk-onepoint},
it follows that, for any $i\in\{1,\dots,m\}$,
$\{\mathbf{1}_{Q_{0,\mathbf{0}}=R}\vec{e}_i\}_{R\in\mathcal{D}}
\in\dot{a}^{s,\upsilon}_{p,q}(W)$.
Combining the above arguments, the quasi-triangle inequality
of $\|\cdot\|_{\dot{A}^{s,\upsilon}_{p,q}(W)}$,
and Theorem \ref{moledecomp}(ii),
we conclude that, for any $\vec{f}:=(f_1, f_2, \dots, f_m)^{T}
\in(\mathcal{S}_{\infty})^m$,
\begin{align*}
\left\|\vec{f}\right\|_{\dot{A}^{s,\upsilon}_{p,q}(W)}
&=\left\|\sum_{i=1}^{m}\vec{e}_if_i
\right\|_{\dot{A}^{s,\upsilon}_{p,q}(W)}
\lesssim\sum_{i=1}^{m}\left\|\vec{e}_if_i
\right\|_{\dot{A}^{s,\upsilon}_{p,q}(W)}\\
&\lesssim\sum_{i=1}^{m}\|f_i\|_{\mathcal{S}_N}
\left\|\left\{\mathbf{1}_{Q_{0,\mathbf{0}}=R}
\vec{e}_i\right\}_{R\in\mathcal{D}}
\right\|_{\dot{a}^{s,\upsilon}_{p,q}(W)}
\sim\sum_{i=1}^{m}\|f_i\|_{\mathcal{S}_N},
\end{align*}
which implies that the first embedding holds.
The second embedding directly follows from Lemma \ref{well-define},
Theorem \ref{thm-phitansaverMWBTL}, and Corollary \ref{cor-a(A)=a(W)};
we omit the details. This finishes the proof of Proposition \ref{prop-SAS'}.
\end{proof}

We next establish the Sobolev-type embedding of
$\dot{A}_{p,q}^{s, \upsilon}(W)$.
The key idea in the following proof is the application of
Corollary \ref{cor-a(A)=a(W)}, which gives the
equivalence between matrix-weighted spaces and
averaging spaces.

\begin{theorem}\label{thm-sobolev-B}
Let $s_0, s_1\in\mathbb{R}$, $p_0, p_1\in(0, \infty)$,
$q\in(0, \infty]$, $W_0\in\mathcal{A}_{p_0, \infty}$,
and $W_1\in\mathcal{A}_{p_1, \infty}$.
Assume that $\delta_1,\delta_2,\omega$
satisfy \eqref{eq-delta1>0} and
$\upsilon\in\mathcal{G}(\delta_1, \delta_2; \omega)$.
Then the following statements hold.
\begin{itemize}
\item[{\rm (i)}] If $p_0\leq p_1$, then
$\dot{B}_{p_0,q}^{s_0, \upsilon}(W_0)
\hookrightarrow\dot{B}_{p_1,q}^{s_1, \upsilon}(W_1)$ if
and only if there exists a positive constant $C$
such that, for any $Q\in\mathcal{D}$ and $\vec{z}\in\mathbb{C}^m$,
\begin{align}\label{eq-sobolev-B}
2^{j_Q s_1}\left\|w_{1,\vec{z}}\right\|_{L^{p_1}(Q)}
\leq C2^{j_Q s_0}\left\|w_{0,\vec{z}}\right\|_{L^{p_0}(Q)},
\end{align}
where, for any $i\in\{0, 1\}$, $\vec{z}\in\mathbb{C}^m$,
and $x\in\mathbb{R}^n$,
$w_{i,\vec{z}}(x):=|W_{i}^{\frac{1}{p_i}}(x)\vec{z}|$.
\item[{\rm (ii)}] If $p_0< p_1$, then
$\dot{F}_{p_0, \infty}^{s_0, \upsilon}(W_0)
\hookrightarrow\dot{F}_{p_1,q}^{s_1, \upsilon}(W_1)$
if and only if \eqref{eq-sobolev-B} holds.
\end{itemize}
\end{theorem}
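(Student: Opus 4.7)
The overall strategy is to pass from the function spaces to their sequence-space counterparts via the $\varphi$-transform characterization (Theorem \ref{thm-phitransMWBTL}). Because $S_\varphi$ and $T_\psi$ are bounded inverses of each other, the embedding $\dot{A}_{p_0,q}^{s_0,\upsilon}(W_0)\hookrightarrow\dot{A}_{p_1,q}^{s_1,\upsilon}(W_1)$ is equivalent to the sequence-space embedding $\dot{a}_{p_0,q}^{s_0,\upsilon}(W_0)\hookrightarrow\dot{a}_{p_1,q}^{s_1,\upsilon}(W_1)$. For the necessity of \eqref{eq-sobolev-B} in both (i) and (ii), I would test this sequence embedding on the one-point sequence $\{\mathbf{1}_{R=Q}\vec{z}\}_{R\in\mathcal{D}}$ for arbitrary $Q\in\mathcal{D}$ and $\vec{z}\in\mathbb{C}^m$. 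By Remark \ref{rmk-onepoint}, the hypothesis \eqref{eq-delta1>0} guarantees that $\upsilon$ is almost increasing, so Lemma \ref{lem-onepoint}(ii) evaluates the norms of this one-point sequence on both sides as constant multiples of the two sides of \eqref{eq-sobolev-B}, and the embedding inequality yields \eqref{eq-sobolev-B} directly.

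For the sufficiency in (i), I would work directly at the $\dot{b}$-level. Fix $P\in\mathcal{D}$ and $j\geq j_P$. Expanding the $L^{p_1}$-integral of $W_1^{1/p_1}\vec{t}_j\mathbf{1}_P$ as a disjoint sum over $Q\in\mathcal{D}_j$ with $Q\subset P$ and applying \eqref{eq-sobolev-B} with $\vec{z}=\vec{t}_Q$ gives
\begin{align*}
2^{js_1}\left\|W_1^{1/p_1}\vec{t}_j\mathbf{1}_P\right\|_{L^{p_1}}
\le C\left(\sum_{Q\in\mathcal{D}_j,Q\subset P}|Q|^{-p_1/2}
\left[2^{js_0}\|w_{0,\vec{t}_Q}\|_{L^{p_0}(Q)}\right]^{p_1}\right)^{1/p_1}.
\end{align*}
Writing $a_Q:=|Q|^{-1/2}2^{js_0}\|w_{0,\vec{t}_Q}\|_{L^{p_0}(Q)}$, the right-hand side is $C\|\{a_Q\}\|_{\ell^{p_1}}$, which by the elementary inclusion $\ell^{p_0}\hookrightarrow\ell^{p_1}$ (here is where $p_0\le p_1$ enters) is dominated by $C\|\{a_Q\}\|_{\ell^{p_0}}=C\cdot 2^{js_0}\|W_0^{1/p_0}\vec{t}_j\mathbf{1}_P\|_{L^{p_0}}$. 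Taking the $\ell^q$-norm in $j\ge j_P$, dividing by $\upsilon(P)$, and taking the supremum over $P\in\mathcal{D}$ delivers the required estimate.

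For the sufficiency in (ii), the key reduction is through reducing operators. Let $\mathbb{A}^{(i)}:=\{A^{(i)}_Q\}_{Q\in\mathcal{D}}$ be a sequence of reducing operators of order $p_i$ for $W_i$, $i\in\{0,1\}$. By Corollary \ref{cor-a(A)=a(W)}, the norms $\|\vec{t}\|_{\dot{f}_{p_i,q}^{s_i,\upsilon}(W_i)}$ are equivalent to the norms of the nonnegative scalar sequences $\{|A^{(i)}_Q\vec{t}_Q|\}_{Q\in\mathcal{D}}$ in the unweighted space $\dot{f}_{p_i,q}^{s_i,\upsilon}$. Using \eqref{eq-red-ope} to rewrite \eqref{eq-sobolev-B} with $\vec{z}=\vec{t}_Q$ in terms of the reducing operators, one obtains the pointwise dominance
\begin{align*}
\left|A^{(1)}_Q\vec{t}_Q\right|\le C\,2^{j_Q[(s_0-n/p_0)-(s_1-n/p_1)]}\left|A^{(0)}_Q\vec{t}_Q\right|,\qquad Q\in\mathcal{D}.
\end{align*}
Substituting this bound and absorbing the dyadic factor into the smoothness index reduces the embedding to the \emph{scalar} Jawerth--Franke-type embedding for the unweighted sequence spaces with growth function:
\begin{align*}
\dot{f}_{p_0,\infty}^{s_0,\upsilon}\hookrightarrow\dot{f}_{p_1,q}^{s_0-n/p_0+n/p_1,\upsilon},\qquad p_0<p_1,\ q\in(0,\infty].
\end{align*}

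The main obstacle is establishing this last scalar embedding. In the Lebesgue case $\upsilon\equiv 1$ it is the classical Jawerth--Franke embedding, and in the Morrey case $\upsilon(Q)=|Q|^\tau$ it is known from the BTL-type literature; however, for a general growth function satisfying \eqref{eq-delta1>0} we have not encountered a direct reference, so I would prove it ourselves by the standard strategy: characterize both sides through the Peetre-type maximal function (as in Theorem \ref{thm-Pee-cha}, unweighted case, via the discrete $g^*_\lambda$-characterization in Proposition \ref{prop-dct-gh}), then carry out the Jawerth--Franke pointwise argument on individual dyadic annuli and sum up, using Lemma \ref{lem-grow-est} to control the growth-function factors when passing between cubes of different sizes. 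The critical issue is that the growth condition on $\upsilon$ must be exactly compatible with the Sobolev scaling $s_0-n/p_0=s_1-n/p_1+\text{shift}$; the hypothesis \eqref{eq-delta1>0} together with the definition of $\mathcal{G}(\delta_1,\delta_2;\omega)$ provides just enough flexibility for these estimates to close.
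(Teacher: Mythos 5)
Your overall architecture coincides with the paper's: necessity in both (i) and (ii) via one-point sequences and Lemma \ref{lem-onepoint}, sufficiency of (i) via termwise application of \eqref{eq-sobolev-B} followed by $\ell^{p_0}\hookrightarrow\ell^{p_1}$, and sufficiency of (ii) via reducing operators and Corollary \ref{cor-a(A)=a(W)}, converting \eqref{eq-sobolev-B} into the pointwise bound $2^{j_Q(s_1-n/p_1)}|A^{(1)}_Q\vec z|\lesssim 2^{j_Q(s_0-n/p_0)}|A^{(0)}_Q\vec z|$. However, there are two gaps. First, the reduction to sequence spaces: $S_\varphi$ and $T_\psi$ are not ``bounded inverses of each other'' in the sense you need. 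One has $T_\psi\circ S_\varphi=\mathrm{id}$ on the function spaces but not $S_\varphi\circ T_\psi=\mathrm{id}$ on the sequence spaces, so the retract structure yields only the implication ``sequence embedding $\Rightarrow$ function embedding.'' Your necessity argument requires the converse implication, which the $\varphi$-transform alone does not provide (testing the function embedding on $\vec z\psi_Q$ gives no lower bound for $\|\vec z\psi_Q\|_{\dot A^{s_1,\upsilon}_{p_1,q}(W_1)}$ in terms of the one-point sequence norm). This is exactly why the paper reduces via the wavelet characterization, Theorem \ref{Dauwav decomp}: since the Daubechies system is an orthonormal basis, the coefficient functionals recover $\vec t$ exactly and the function-space and sequence-space embeddings become genuinely equivalent.

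Second, and more seriously, the heart of part (ii) --- the embedding $\dot f^{s_0,\upsilon}_{p_0,\infty}(\mathbb{A}^{(0)})\hookrightarrow\dot f^{s_1,\upsilon}_{p_1,q}(\mathbb{A}^{(1)})$ under \eqref{eq-sobolev-A}, equivalently the scalar growth-function Jawerth--Franke embedding you isolate --- is left unproved, and the route you sketch (Peetre-type maximal function characterizations plus ``the standard pointwise argument on dyadic annuli'') is not the mechanism that makes this work at the sequence level. The paper's argument is a distribution-function/layer-cake computation: normalize $\|\vec t\|_{\dot f^{s_0,\upsilon}_{p_0,\infty}(\mathbb{A}^{(0)})}=1$, and for each $P\in\mathcal{D}$ and each height $\lambda$ choose a $\lambda$-dependent frequency cutoff $L$ with $2^{nL/p_1}\sim\lambda/\upsilon(P)$; the low frequencies $j\le L$ are bounded pointwise by $\lambda/2$ using the termwise estimate $|Q|^{-s_0/n-1/2+1/p_0}|A^{(0)}_Q\vec t_Q|\lesssim\upsilon(P)$ (here the almost-increasing property from \eqref{eq-delta1>0} enters), while the tail $j>L$ is controlled by a geometric factor times the sup-function $H_P$; comparing distribution functions and integrating in $\lambda$ with the substitution $\lambda\mapsto\lambda^{p_1/p_0}$ converts the weak $L^{p_0}$-type information into the $L^{p_1}$ bound. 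Without this $\lambda$-dependent splitting (or an equivalent interpolation/rearrangement device), the strict inequality $p_0<p_1$ cannot be exploited, so your proposal as written does not close the most substantive step of the proof.
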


\begin{proof}
By Theorems \ref{Dauwav decomp}, we find that, to prove
the present theorem, it suffices to show the corresponding
results for the related sequence spaces.
We first prove the necessity of both (i) and (ii).
To this end, for any $Q,R\in\mathcal{D}$,
let $\mathbf{1}_{Q=R}$ be as in \eqref{eq-e-QR}.
Applying Lemma \ref{lem-onepoint} and Remark \ref{rmk-onepoint},
we obtain, for any $Q\in\mathcal{D}$ and $\vec{z}\in\mathbb{C}^{m}$,
\begin{align*}
\left\|\left\{\mathbf{1}_{Q=R}\vec{z}\right\}_{R\in\mathcal{D}}
\right\|_{\dot{b}_{p_0,q}^{s_0, \upsilon}(W_0)}
\sim\frac{2^{j_Q(s_0+\frac{n}{2})}}{\upsilon(Q)}
\left\|w_{0,\vec{z}}\right\|_{L^{p_0}(Q)}\sim
\left\|\left\{\mathbf{1}_{Q=R}\vec{z}\right\}_{R\in\mathcal{D}}
\right\|_{\dot{f}_{p_0, \infty}^{s_0, \upsilon}(W_0)}
\end{align*}
and
\begin{align*}
\left\|\left\{\mathbf{1}_{Q=R}\vec{z}\right\}_{R\in\mathcal{D}}
\right\|_{\dot{b}_{p_1,q}^{s_1, \upsilon}(W_1)}
\sim\frac{2^{j_Q(s_1+\frac{n}{2})}}{\upsilon(Q)}
\left\|w_{1,\vec{z}}\right\|_{L^{p_1}(Q)}\sim
\left\|\left\{\mathbf{1}_{Q=R}\vec{z}\right\}_{R\in\mathcal{D}}
\right\|_{\dot{f}_{p_1,q}^{s_1, \upsilon}(W_1)},
\end{align*}
Using these and the fact $\upsilon(Q)\in(0,\infty)$,
we conclude that, if the embedding in (i) or (ii) holds,
then \eqref{eq-sobolev-B} holds. This finishes the proof
of the necessity of both (i) and (ii).

Next, we prove the sufficiency of (i). To achieve this, from
the definitions of $\|\cdot\|_{\dot{b}_{p_1,q}^{s_1, \upsilon}(W_1)}$
and $\|\cdot\|_{\dot{b}_{p_0,q}^{s_0, \upsilon}(W_0)}$,
\eqref{eq-sobolev-B}, and the monotonicity on $q$ of the sequence space $l^q$,
we infer that, for any $\vec{t}:=\{\vec{t}_Q\}_{Q\in\mathcal{D}}
\in\dot{b}_{p_0,q}^{s_0, \upsilon}(W_0)$,
\begin{align*}
\left\|\vec{t}\right\|_{\dot{b}_{p_1,q}^{s_1, \upsilon}(W_1)}
&=\sup_{P\in\mathcal{D}}\frac{1}{\upsilon(P)}
\left\{\sum_{j=j_P}^{\infty}2^{j(s_1+\frac{n}{2})q}
\left[\sum_{Q\in\mathcal{D}_j, Q\subset P}
\left\|w_{1,\vec{t}_Q}\right\|^{p_1}_{L^{p_1}(Q)}
\right]^{\frac{q}{p_1}}\right\}^{\frac{1}{q}}\\
&\leq\sup_{P\in\mathcal{D}}\frac{1}{\upsilon(P)}
\left\{\sum_{j=j_P}^{\infty}2^{j(s_0+\frac{n}{2})q}
\left[\sum_{Q\in\mathcal{D}_j, Q\subset P}
\left\|w_{0,\vec{t}_Q}\right\|^{p_0}_{L^{p_0}(Q)}
\right]^{\frac{q}{p_0}}\right\}^{\frac{1}{q}}
=\left\|\vec{t}\right\|_{\dot{b}_{p_0,q}^{s_0, \upsilon}(W_0)},
\end{align*}
which further implies that $\dot{b}_{p_0,q}^{s_0, \upsilon}(W_0)
\hookrightarrow\dot{b}_{p_1,q}^{s_1, \upsilon}(W_1)$ and hence
completes the proof of the sufficiency of (i).

Finally, we show the sufficiency of (ii).
To this end,  let $\mathbb{A}^{(0)}
:=\{A_{Q}^{(0)}\}_{Q\in\mathcal{D}}$
and $\mathbb{A}^{(1)}:=\{A_{Q}^{(1)}\}_{Q\in\mathcal{D}}$
be sequences of reducing operators of orders
$p_0$ and $p_1$, respectively, for $W_0$ and $W_1$.
Applying Definition \ref{def-red-ope}, we find that
the condition \eqref{eq-sobolev-B} is equivalent to that,
for any $Q\in\mathcal{D}$ and $\vec{z}\in\mathbb{C}^m$,
\begin{align}\label{eq-sobolev-A}
2^{j_Q(s_1-\frac{n}{p_1})}\left|A_{Q}^{(1)}\vec{z}\right|
\lesssim2^{j_Q(s_0-\frac{n}{p_0})}\left|A_{Q}^{(0)}\vec{z}\right|.
\end{align}
By Corollary \ref{cor-a(A)=a(W)}, we obtain,
to prove $\dot{f}_{p_0, \infty}^{s_0, \upsilon}(W_0)
\hookrightarrow\dot{f}_{p_1,q}^{s_1, \upsilon}(W_1)$
under the assumption \eqref{eq-sobolev-B},
it suffices to show that, under the assumption \eqref{eq-sobolev-A},
$\dot{f}_{p_0, \infty}^{s_0, \upsilon}(\mathbb{A}^{(0)})\hookrightarrow
\dot{f}_{p_1, q}^{s_1, \upsilon}(\mathbb{A}^{(1)})$ holds.
For this purpose, let $\vec{t}:=\{\vec{t}_{Q}\}_{Q\in\mathcal{D}}\in
\dot{f}_{p_0, \infty}^{s_0, \upsilon}(\mathbb{A}^{(0)})$ with
$\|\vec{t}\|_{\dot{f}_{p_0, \infty}^{s_0, \upsilon}(\mathbb{A}^{(0)})}=1$.
Notice that, for any given $P\in\mathcal{D}$
and for any $\lambda\in(0, \infty)$,
there exists $L\in\mathbb{Z}$ such that
\begin{align}\label{eq-L}
2^{\frac{n}{p_1}L}\leq\frac{\lambda}
{2\upsilon(P)}<2^{\frac{n}{p_1}(L+1)}.
\end{align}
Suppose first that $L\in[j_P, \infty)\cap\mathbb{Z}$.
From the quasi-triangle inequality
of $l^q$ and \eqref{eq-sobolev-A}, we deduce that,
for any $x\in P$,
\begin{align}\label{eq-G_P}
G_P(x):&=\left\{\sum_{j=j_P}^{\infty} 2^{js_1 q}
\left[\sum_{Q\in\mathcal{D}_j, Q\subset P}
\widetilde{\mathbf{1}}_Q(x)\left|A_{Q}^{(1)}\vec{t}_Q\right|
\right]^q\right\}^{\frac{1}{q}}\\
&\lesssim\left\{\sum_{j=j_P}^{L}2^{js_1 q}
\left[\sum_{Q\in\mathcal{D}_j, Q\subset P}
\widetilde{\mathbf{1}}_Q(x)\left|A_{Q}^{(1)}\vec{t}_Q\right|
\right]^q\right\}^{\frac{1}{q}}
+\left\{\sum_{j=L+1}^{\infty} \cdots\right\}^{\frac{1}{q}}\nonumber\\
&\lesssim\left\{\sum_{j=j_P}^{L}
2^{j(s_0-\frac{n}{p_0}+\frac{n}{p_1})q}
\left[\sum_{Q\in\mathcal{D}_j, Q\subset P}
\widetilde{\mathbf{1}}_Q(x)\left|A_{Q}^{(0)}\vec{t}_Q\right|
\right]^q\right\}^{\frac{1}{q}}\nonumber
+\left\{\sum_{j=L+1}^{\infty}
\cdots\right\}^{\frac{1}{q}}\nonumber
=:\operatorname{I}(x)+\operatorname{II}(x).\nonumber
\end{align}
Let $K_1$ be the implicit positive constant in \eqref{eq-G_P}
and notice that $K_1$ is independent of $P$, $\lambda$, and $x$.
Using the definition of $\|\cdot\|_{\dot{f}_{p_0,
\infty}^{s_0, \upsilon}(\mathbb{A}^{(0)})}$,
the assumption $\|\vec{t}\|_{\dot{f}_{p_0, \infty
}^{s_0, \upsilon}(\mathbb{A}^{(0)})}=1$,
and Lemma \ref{lem-grow-est}(i) together with
the assumption $\delta_1\in[0, \infty)$,
we conclude that, for any $Q\in\mathcal{D}$ with $Q\subset P$,
\begin{align}\label{eq-est-f}
|Q|^{-\frac{s_0}{n}-\frac{1}{2}+\frac{1}{p_0}}
\left|A_{Q}^{(0)}\vec{t}_Q\right|\leq\upsilon(Q)
\left\|\vec{t}\right\|_{\dot{f}_{p_0,
\infty}^{s_0, \upsilon}(\mathbb{A}^{(0)})}
=\upsilon(Q)\lesssim\upsilon(P).
\end{align}
Applying \eqref{eq-est-f} and \eqref{eq-L},
we obtain, for any $x\in P$,
\begin{align}\label{eq-I-I}
\operatorname{I}(x)\lesssim\upsilon(P)\left(\sum_{j=j_P}^{L}
2^{j\frac{n}{p_1}q}\right)^{\frac{1}{q}}
\sim\upsilon(P)2^{\frac{n}{p_1}L}\leq\frac{\lambda}{2}.
\end{align}
Let $K_2$ be the implicit positive constant in \eqref{eq-I-I},
which is independent of $P$, $\lambda$, and $x$.
For any $x\in P$, let
\begin{align*}
H_P(x):=\left[\sup_{Q\in\mathcal{D}, Q\subset P}|Q|^{-\frac{s_0}{n}}
\left|A_{Q}^{(0)}\vec{t}_Q\right|\widetilde{\mathbf{1}}_Q(x)\right].
\end{align*}
By this, the assumption that $p_0<p_1$, and \eqref{eq-L},
we find that, for any $x\in P$,
\begin{align}\label{eq-II}
\operatorname{II}(x)\leq\left[\sum_{j=L+1}^{\infty}
2^{j(-\frac{n}{p_0}+\frac{n}{p_1})q}\right]^{\frac{1}{q}}H_P(x)
\sim2^{(L+1)(-\frac{n}{p_0}+\frac{n}{p_1})}H_P(x)
\lesssim\lambda^{(1-\frac{p_1}{p_0})}
[\upsilon(P)]^{-(1-\frac{p_1}{p_0})}H_P(x),
\end{align}
where the implicit positive constant in \eqref{eq-II},
independent of $P$, $\lambda$, and $x$, is denoted by $K_3$.
From \eqref{eq-G_P}, \eqref{eq-I-I}, \eqref{eq-II},
and the obvious fact that $\{x\in P:\ G_P(x)>K_1K_2\lambda\}$
is the union of both
$\{x\in P:\ \operatorname{I}(x)>K_2\frac{\lambda}{2}\}$
and $\{x\in P:\ \operatorname{II}(x)>K_2\frac{\lambda}{2}\}$,
it follows that
\begin{align}\label{eq-dist}
\left|\left\{x\in P:\ G_P(x)>K_1K_2\lambda\right\}\right|
&\leq\left|\left\{x\in P:\ \operatorname{I}(x)>
K_2\frac{\lambda}{2}\right\}\right|+\left|
\left\{x\in P:\ \operatorname{II}(x)>K_2
\frac{\lambda}{2}\right\}\right|\\\nonumber
&\leq\left|\left\{x\in P:\ H_P(x)>\frac{K_2}{2K_3}
[\upsilon(P)]^{(1-\frac{p_1}{p_0})}
\lambda^{\frac{p_1}{p_0}}\right\}\right|\nonumber.
\end{align}
When $L\in(-\infty, j_P)\cap\mathbb{Z}$,
using \eqref{eq-G_P} and \eqref{eq-II},
we conclude that, for any $x\in P$,
$G_P(x)\leq K_1\operatorname{II}(x)$ and hence
\begin{align*}
\left|\left\{x\in P:\ G_P(x)>K_1K_2\lambda\right\}\right|
&\leq\left|\left\{x\in P:\ \operatorname{II}(x)>
K_2\frac{\lambda}{2}\right\}\right|\\
&\leq\left|\left\{x\in P:\ H_P(x)>\frac{K_2}{2K_3}
[\upsilon(P)]^{(1-\frac{p_1}{p_0})}
\lambda^{\frac{p_1}{p_0}}\right\}\right|,
\end{align*}
which further implies that \eqref{eq-dist} in this case
also holds. Applying \eqref{eq-dist}, the layer-cake formula
(see, for example, \cite[Proposition 1.1.4]{gra14a}),
and a change of variables, we obtain, for any $P\in\mathcal{D}$,
\begin{align*}
\frac{1}{[\upsilon(P)]^{p_1}}\left\|G_P\right\|^{p_1}_{L^{p_1}(P)}
&=\frac{1}{[\upsilon(P)]^{p_1}}\int_0^{\infty}
\lambda^{p_1-1}\left|\left\{x\in P:\ G_P(x)
>\lambda\right\}\right|\, d\lambda\\
&\sim\frac{1}{[\upsilon(P)]^{p_1}}\int_0^{\infty}
\lambda^{p_1-1}\left|\left\{x\in P:\ G_P(x)>K_1K_2
\lambda\right\}\right|\, d\lambda\\
&\leq\frac{1}{[\upsilon(P)]^{p_1}}
\int_0^{\infty}\lambda^{p_1-1}
\left|\left\{x\in P:\ H_P(x)>\frac{K_2}{2K_3}
[\upsilon(P)]^{(1-\frac{p_1}{p_0})}
\lambda^{\frac{p_1}{p_0}}\right\}\right|\, d\lambda\\
&\sim\frac{1}{[\upsilon(P)]^{p_0}}
\int_0^{\infty}\lambda^{p_0-1}\left|
\left\{x\in P:\ H_P(x)>\lambda\right\}\right|\, d\lambda
=\frac{1}{[\upsilon(P)]^{p_0}}\left\|H_P\right\|^{p_0}_{L^{p_0}(P)},
\end{align*}
where all the positive equivalence constants are independent of $P$.
Taking the supremum over all $P\in\mathcal{D}$
on its both sides and using the definitions of both
$\|\cdot\|_{\dot{f}_{p_1,q}^{s_1, \upsilon}(\mathbb{A}^{(1)})}$
and $\|\cdot\|_{\dot{f}_{p_0,\infty}^{s_0, \upsilon}(\mathbb{A}^{(0)})}$,
we conclude that $\|\vec{t}\|^{p_1}_{\dot{f}_{p_1,
q}^{s_1, \upsilon}(\mathbb{A}^{(1)})}
\lesssim\|\vec{t}\|^{p_0}_{\dot{f}_{p_0,
\infty}^{s_0, \upsilon}(\mathbb{A}^{(0)})}$.
This, together with the assumption $\|\vec{t}\|_{\dot{f}_{p_0,
\infty}^{s_0, \upsilon}(\mathbb{A}^{(0)})}=1$,
further implies $\dot{f}_{p_0, \infty}^{s_0,\upsilon}
(\mathbb{A}^{(0)})\hookrightarrow\dot{f}_{p_1, q}^{s_1,
\upsilon}(\mathbb{A}^{(1)})$. This finishes the
proof of the sufficiency of (ii) and hence
Theorem \ref{thm-sobolev-B}.
\end{proof}

\begin{remark}
In Theorem \ref{thm-sobolev-B}, let $\tau\in[0, \infty)$ and,
for any $Q\in\mathcal{D}$, $\upsilon(Q):=|Q|^{\tau}$.
Then the spaces in Theorem \ref{thm-sobolev-B}
are matrix-weighted BTL-type spaces as in
\cite{bhyy1,bhyy2,bhyy3,bhyy5}.
Even for these spaces, Theorem \ref{thm-sobolev-B}
is completely new. In particular, when $\tau=0$,
the spaces in Theorem \ref{thm-sobolev-B}
reduce to classical matrix-weighted BTL spaces
as in \cite{fr04,fr21,rou03}. In this case,
Theorem \ref{thm-sobolev-B} is also new.
\end{remark}

In the scalar-weighted setting, an important case of
Theorem \ref{thm-sobolev-B} is as follows.

\begin{corollary}\label{cor-sobolev-w}
Let $0<p_0<p_1<\infty$, $d\in(0, \infty)$,
$s_0, s_1\in\mathbb{R}$ with $s_0-\frac{d}{p_0}
=s_1-\frac{d}{p_1}$, $q\in(0, \infty]$,
and the scalar weight $w\in A_{\infty}$.
Assume that $\delta_1,\delta_2,\omega$
satisfy \eqref{eq-delta1>0} and $\upsilon\in\mathcal{G}(\delta_1,
\delta_2; \omega)$. Then the following statements are mutually equivalent.
\begin{itemize}
\item[{\rm (i)}] $\dot{B}_{p_0,q}^{s_0, \upsilon}(w)
\hookrightarrow\dot{B}_{p_1,q}^{s_1, \upsilon}(w)$.
\item[{\rm (ii)}] $\dot{F}_{p_0, \infty}^{s_0, \upsilon}(w)
\hookrightarrow\dot{F}_{p_1,q}^{s_1, \upsilon}(w)$.
\item[{\rm (iii)}] There exists a positive constant $C$
such that, for any $Q\in\mathcal{D}$,
\begin{align}\label{eq-sobolev-w-d}
w(Q)\geq C 2^{-j_Q d}.	
\end{align}
\end{itemize}
\end{corollary}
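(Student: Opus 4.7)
The plan is to derive the equivalences (i) $\Longleftrightarrow$ (iii) and (ii) $\Longleftrightarrow$ (iii) directly from Theorem \ref{thm-sobolev-B} by specializing to the scalar-valued setting, and then to show that the resulting condition \eqref{eq-sobolev-B} collapses exactly to the single-cube estimate \eqref{eq-sobolev-w-d} under the scaling relation $s_0-d/p_0=s_1-d/p_1$.

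First I would note that since $w\in A_\infty$ and $m=1$, the paper's earlier observation that $\mathcal{A}_{p,\infty}(\mathbb R^n,\mathbb C)=A_\infty$ for every $p\in(0,\infty)$ yields $w\in\mathcal A_{p_0,\infty}\cap\mathcal A_{p_1,\infty}$, so Theorem \ref{thm-sobolev-B} applies with $W_0=W_1=w$. In this scalar situation the functions $w_{0,\vec z}$ and $w_{1,\vec z}$ of Theorem \ref{thm-sobolev-B} reduce to $w^{1/p_0}|\vec z|$ and $w^{1/p_1}|\vec z|$ respectively, and the test inequality \eqref{eq-sobolev-B} becomes
\begin{align*}
2^{j_Qs_1}\left[w(Q)\right]^{\frac1{p_1}}\leq C\,2^{j_Qs_0}\left[w(Q)\right]^{\frac1{p_0}}
\end{align*}
for every $Q\in\mathcal D$ (the scalar $|\vec z|$ factors cancel).

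Next I would rewrite the preceding inequality as $[w(Q)]^{\frac1{p_0}-\frac1{p_1}}\geq C^{-1}2^{j_Q(s_1-s_0)}$. Since $p_0<p_1$ the exponent $\frac{1}{p_0}-\frac{1}{p_1}$ is positive, and by the hypothesis $s_0-d/p_0=s_1-d/p_1$ one has $s_1-s_0=-d(\frac1{p_0}-\frac1{p_1})$. Raising both sides to the positive power $(\frac1{p_0}-\frac1{p_1})^{-1}$ therefore gives exactly $w(Q)\geq C'2^{-j_Qd}$, which is \eqref{eq-sobolev-w-d}; the steps are reversible, so \eqref{eq-sobolev-B} is equivalent to \eqref{eq-sobolev-w-d}.

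Combining this reduction with Theorem \ref{thm-sobolev-B}(i) yields (i) $\Longleftrightarrow$ (iii), and with Theorem \ref{thm-sobolev-B}(ii) yields (ii) $\Longleftrightarrow$ (iii), which completes the proof. No genuine obstacle arises here: the whole argument is a bookkeeping translation of the abstract condition \eqref{eq-sobolev-B} into the purely geometric condition \eqref{eq-sobolev-w-d}, the only subtlety being the correct handling of the signs and the use of the dimensional scaling $s_0-d/p_0=s_1-d/p_1$ to eliminate $s_0,s_1,p_0,p_1$ in favor of the single exponent $d$.
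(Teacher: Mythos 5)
Your proposal is correct and follows exactly the paper's own route: specialize Theorem \ref{thm-sobolev-B} to $W_0=W_1=w$ with $m=1$ and check that, under $s_0-\frac{d}{p_0}=s_1-\frac{d}{p_1}$, condition \eqref{eq-sobolev-B} reduces to \eqref{eq-sobolev-w-d}. The only difference is that you spell out the elementary rearrangement (cancelling $|\vec z|$ and raising to the power $(\frac{1}{p_0}-\frac{1}{p_1})^{-1}>0$) that the paper leaves implicit.
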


\begin{proof}
By the assumption $s_0-\frac{d}{p_0}=s_1-\frac{d}{p_1}$
and Theorem \ref{thm-sobolev-B} with $W_0=W_1=w\in A_{\infty}$,
we find that the condition \eqref{eq-sobolev-B} is equivalent to
the condition \eqref{eq-sobolev-w-d} and hence (i), (ii), and (iii) of
Corollary \ref{cor-sobolev-w} are mutually equivalent.
This finishes the proof of Corollary \ref{cor-sobolev-w}.
\end{proof}

\begin{remark}
\begin{itemize}
\item[{\rm (i)}] In \eqref{eq-sobolev-w-d} of
Corollary \ref{cor-sobolev-w}, if $d=n$,
then \eqref{eq-sobolev-w-d}
is equivalent to the condition that,
for almost every $x\in\mathbb{R}^n$,
\begin{align}\label{eq-sobolev-w-n}
w(x)\geq C.	
\end{align}
Indeed, when $d=n$,
\eqref{eq-sobolev-w-d} is equivalent to the condition that,
for any $Q\in\mathcal{D}$, $w(Q)\geq C|Q|$.
Applying this and Lebesgue's differentiation theorem,
we conclude that \eqref{eq-sobolev-w-d} is
equivalent to \eqref{eq-sobolev-w-n}.

Next, we present a class of examples satisfying \eqref{eq-sobolev-w-n}.
For any scalar weight $w\in A_{\infty}$ and any $E\in(0, \infty)$,
let $w_E:=\max\{w, E\}$. By \eqref{eq-w-const} and some basic
calculations, we find that
\begin{align*}
\left[w_E\right]_{A_{\infty}}
&\leq\sup_{\operatorname{cube} Q\subset\mathbb{R}^n}\left[\fint_Qw(x)\,dx+E\right]
\min\left\{\exp\left(\fint_Q \log\left(\left[w(x)
\right]^{-1}\right)\, dx\right), \frac{1}{E}\right\}\\
&\leq[w]_{A_{\infty}}+1<\infty,
\end{align*}
which further implies that $w_E\in A_{\infty}$.
Obviously, $w_E$ satisfies \eqref{eq-sobolev-w-n}
with $C$ replaced by $E$.

\item[{\rm (ii)}] Condition \eqref{eq-sobolev-w-d} is
called the \emph{lower bound condition}, which often appears
in the study of weighted Sobolev-type embeddings
(see, for example, \cite{bui82,hhhlp21,sal02}).
\end{itemize}
\end{remark}

The following conclusion gives
the Sobolev-type embedding of $\dot{A}_{p, q}^{s, \tau}$.
\begin{corollary}\label{cor-sobolev-1}
Let $s_0, s_1\in\mathbb{R}$, $0<p_0<p_1<\infty$,
$q\in(0, \infty]$, and $\tau\in[0, \infty)$.
Then the following statements are mutually equivalent.
\begin{itemize}
\item[{\rm (i)}] $\dot{B}_{p_0,q}^{s_0, \tau}
\hookrightarrow\dot{B}_{p_1,q}^{s_1, \tau}$.
\item[{\rm (ii)}] $\dot{F}_{p_0, \infty}^{s_0, \tau}
\hookrightarrow\dot{F}_{p_1,q}^{s_1, \tau}$.
\item[{\rm (iii)}] $s_0-\frac{n}{p_0}=s_1-\frac{n}{p_1}$.
\end{itemize}
\end{corollary}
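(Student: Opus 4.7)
The plan is to deduce Corollary \ref{cor-sobolev-1} directly from Theorem \ref{thm-sobolev-B} by specializing to the scalar, unweighted setting. Concretely, I would take $m=1$, $W_0\equiv W_1\equiv 1$, and, for any $Q\in\mathcal{D}$, $\upsilon(Q):=|Q|^\tau$. The constant matrix weight $W\equiv 1$ belongs to $\mathcal{A}_{p,\infty}$ for every $p\in(0,\infty)$, and by Example \ref{examp}(i) we have $\upsilon\in\mathcal{G}(\tau,\tau;0)$, so that $\upsilon$ satisfies \eqref{eq-delta1>0}. Thus all the hypotheses of Theorem \ref{thm-sobolev-B} are met, and moreover, as noted in Example \ref{exam-BTL}, the resulting spaces are exactly $\dot{B}_{p_0,q}^{s_0,\tau}$, $\dot{B}_{p_1,q}^{s_1,\tau}$, $\dot{F}_{p_0,\infty}^{s_0,\tau}$, and $\dot{F}_{p_1,q}^{s_1,\tau}$.

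By Theorem \ref{thm-sobolev-B}, both (i) and (ii) are equivalent to the single condition \eqref{eq-sobolev-B}, which in our setting I would simplify as follows. For any $Q\in\mathcal{D}$ and $z\in\mathbb{C}$, we have $w_{i,z}(x)=|z|$, so that $\|w_{i,z}\|_{L^{p_i}(Q)}=|z|\,|Q|^{1/p_i}=|z|\,2^{-j_Q n/p_i}$ for $i\in\{0,1\}$. Hence \eqref{eq-sobolev-B} becomes
\begin{align*}
2^{j_Q s_1}|z|\,2^{-j_Q n/p_1}\leq C\,2^{j_Q s_0}|z|\,2^{-j_Q n/p_0},
\end{align*}
that is, $2^{j_Q[(s_1-n/p_1)-(s_0-n/p_0)]}\leq C$ uniformly in $Q\in\mathcal{D}$.

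Since $j_Q$ ranges over all of $\mathbb{Z}$ as $Q$ varies over $\mathcal{D}$, such a uniform bound is possible if and only if the exponent vanishes, i.e.\ $s_0-\frac{n}{p_0}=s_1-\frac{n}{p_1}$, which is precisely (iii). Combining this with the equivalence (i)$\Longleftrightarrow$\eqref{eq-sobolev-B}$\Longleftrightarrow$(ii) from Theorem \ref{thm-sobolev-B} yields the mutual equivalence of (i), (ii), and (iii). There is no real obstacle here beyond correctly tracking the reduction and verifying that the scaling condition on $j_Q$ forces the exponent to vanish; the only point worth double-checking is the applicability of Theorem \ref{thm-sobolev-B}(ii), which requires the strict inequality $p_0<p_1$, matching exactly the hypothesis of the corollary.
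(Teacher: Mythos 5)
Your proposal is correct and follows essentially the same route as the paper: specialize Theorem \ref{thm-sobolev-B} to $m=1$, $W_0=W_1\equiv1$, $\upsilon(Q)=|Q|^\tau$, reduce \eqref{eq-sobolev-B} to the uniform bound $2^{j_Q[(s_1-\frac{n}{p_1})-(s_0-\frac{n}{p_0})]}\lesssim1$, and conclude from the arbitrariness of $j_Q\in\mathbb{Z}$ that the exponent must vanish. The additional verifications you include (that $W\equiv1\in\mathcal{A}_{p,\infty}$, that $\upsilon\in\mathcal{G}(\tau,\tau;0)$ satisfies \eqref{eq-delta1>0}, and that $p_0<p_1$ is needed for part (ii)) are all accurate and consistent with the paper's argument.
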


\begin{proof}
To prove the present corollary, applying Theorem \ref{thm-sobolev-B}
with $m=1$ and $W_0=W_1\equiv1$,
we only need to show condition \eqref{eq-sobolev-B} is equivalent to
$s_0-\frac{n}{p_0}=s_1-\frac{n}{p_1}$. In Corollary \ref{cor-sobolev-1},
\eqref{eq-sobolev-B} is precisely the condition that,
for any $Q\in\mathcal{D}$,
$2^{j_Q(s_1-s_0-\frac{n}{p_1}+\frac{n}{p_0})}\lesssim1$.
By the arbitrariness of $j_Q\in\mathbb{Z}$, we find that
\eqref{eq-sobolev-B} is equivalent to
$s_0-\frac{n}{p_0}=s_1-\frac{n}{p_1}$.
This finishes the proof of Corollary \ref{cor-sobolev-1}.
\end{proof}

\subsection{Comparison with Known Results}

Finally, we compare the results obtained
in this article with corresponding known ones.
To begin with, we clarify the
relation of spaces $\dot{A}_{p,q}^{s, \upsilon}(W)$
with spaces introduced in \cite{lbyy12}. Recall that,
to study the duality of weighted anisotropic
Besov--Triebel–Lizorkin spaces,
Li et al. \cite[Definitions 2.4 and 2.5]{lbyy12}
also introduced weighted Besov--Triebel--Lizorkin-type spaces
on $\mathbb{R}^n$ associated with general expansive dilations.
The spaces in \cite[Definitions 2.4 and 2.5]{lbyy12}
defined on $\mathbb{R}^n$ associated with the
standard dilation are as follows.

\begin{definition}\label{def-lbyy-BTLtype}
Let $s\in\mathbb{R}$, $p,q\in(0, \infty]$, $\tau\in[0, \infty)$,
the scalar weight $w\in A_{\infty}$, and $\varphi\in{\mathcal{S}}$
satisfy \eqref{cond1}.
\begin{itemize}
\item[{\rm (i)}] The \emph{weighted Besov-type space} $\widetilde{\dot{B}_{p,q}^{s,\tau}}(w)$
is defined to be the set of all $f \in \mathcal{S}_{\infty}^{\prime}$
such that
\begin{align*}
\|f\|_{\widetilde{\dot{B}_{p,q}^{s,\tau}}(w)}
:=\left(\sum_{j \in \mathbb{Z}}\left[\int_{\mathbb{R}^n}
\left\{\sum_{Q\in \mathcal{D}_j}2^{js}
\left|\varphi_j * f(x)\right| \frac{|Q| }{[w(Q)]^\tau}
\mathbf{1}_Q(x)\right\}^p w(x)\,d x\right]^{\frac{q}{p}}\right)^{\frac{1}{q}}<\infty.
\end{align*}
The \emph{weighted Besov-type sequence space} $\widetilde{\dot{b}_{p,q}^{s,\tau}}(w)$
is defined to be the set of all
$t:=\{t_Q\}_{Q\in\mathcal{D}}$ in $\mathbb{C}$ such that
\begin{align*}
\|t\|_{\widetilde{\dot{b}_{p,q}^{s,\tau}}(w)}
:=\left(\sum_{j \in \mathbb{Z}}\left[\int_{\mathbb{R}^n}
\left\{\sum_{Q\in \mathcal{D}_j}2^{js}
\left|t_Q\right| \frac{|Q| }{[w(Q)]^\tau}
\widetilde{\mathbf{1}}_Q(x)\right\}^p w(x)\,d x\right]^{\frac{q}{p}}\right)^{\frac{1}{q}}<\infty
\end{align*}
(with the usual modification made if $p=\infty$ or $q=\infty$).
\item[{\rm (ii)}] If $p\in(0, \infty)$, the \emph{weighted Triebel--Lizorkin-type space}
$\widetilde{\dot{F}_{p,q}^{s,\tau}}(w)$ is defined to be the set of
all $f \in \mathcal{S}_{\infty}^{\prime}$ such that
\begin{align*}
\|f\|_{\widetilde{\dot{F}_{p,q}^{s,\tau}}(w)}
:=\sup_{P\in\mathcal{D}} \frac{1}{[w(P)]^\tau}
\left[\int_P\left(\sum_{Q \in\mathcal{D}}
\left[|Q|^{-\frac{s}{n}}\left|\varphi_{j_Q}*f(x)\right|
\frac{|Q|}{w(Q)}\mathbf{1}_Q(x)\right]^q
\right)^{\frac{p}{q}} w(x)\,dx\right]^{\frac{1}{p}}<\infty.
\end{align*}
If $p\in(0, \infty)$, the
\emph{weighted Triebel--Lizorkin-type sequence space}
$\widetilde{\dot{f}_{p,q}^{s,\tau}}(w)$ is defined to be the set of
all $t:=\{t_Q\}_{Q\in\mathcal{D}}$ in $\mathbb{C}$ such that
\begin{align*}
\|t\|_{\widetilde{\dot{f}_{p,q}^{s,\tau}}(w)}
:=\sup_{P\in\mathcal{D}} \frac{1}{[w(P)]^\tau}
\left[\int_P\left(\sum_{Q \in\mathcal{D}}
\left[|Q|^{-\frac{s}{n}}\left|t_Q\right|
\frac{|Q|}{w(Q)}\widetilde{\mathbf{1}}_Q \right]^q
\right)^{\frac{p}{q}} w(x)\,d x\right]^{\frac{1}{p}}<\infty
\end{align*}
(with the usual modification made if $q=\infty$).
Moreover,
the \emph{weighted Triebel--Lizorkin-type space}
$\widetilde{\dot{F}_{\infty, \infty}^{s,\tau}}(w)$
is defined to be the set of
all $f\in\mathcal{S}_{\infty}^{\prime}$ such that
\begin{align*}
\|f\|_{\widetilde{\dot{F}_{\infty, \infty}^{s,\tau}}(w)}
:=\sup_{P\in\mathcal{D}} \frac{1}{[w(P)]^\tau}
\sup_{j\geq j_P}2^{js}\left\|\sum_{Q\in\mathcal{D}_j, Q\subset P}\left|\varphi_j *f\right|\frac{|Q|}{w(Q)}\mathbf{1}_Q
\right\|_{L^{\infty}(P)}<\infty.
\end{align*}
The \emph{weighted Triebel--Lizorkin-type sequence space} $\widetilde{\dot{f}_{\infty, \infty}^{s,\tau}}(w)$
is defined to be the set of all
$t:=\{t_Q\}_{Q\in\mathcal{D}}$ in $\mathbb{C}$ such that
\begin{align*}
\|t\|_{\widetilde{\dot{f}_{\infty, \infty}^{s,\tau}}(w)}
:=\sup_{P\in\mathcal{D}} \frac{1}{[w(P)]^\tau}
\sup_{Q\in\mathcal{D}, Q\subset P}|Q|^{-\frac{s}{n}-\frac{1}{2}}
|t_Q|\frac{|Q|}{w(Q)}<\infty.
\end{align*}
\end{itemize}
\end{definition}
\begin{remark}
Let all the symbols be the same as in Definition
\ref{def-lbyy-BTLtype} and Example \ref{exam-BTL}. The spaces
$\widetilde{\dot{B}_{p,q}^{s,\tau}}(w)$ and
$\widetilde{\dot{F}_{p,q}^{s,\tau}}(w)$ can be respectively
regarded as the weighted variants of the Besov-type space
$\dot{B}_{p,q}^{s,\tau}$ and the Triebel--Lizorkin-type space
$\dot{F}_{p,q}^{s,\tau}$ as in Example \ref{exam-BTL}.
\end{remark}

In \cite[Theorems 2.1 and 2.2]{lbyy12}, Li et al. proved
that the dual spaces of weighted Besov--Triebel--Lizorkin spaces
can be expressed in terms of the spaces in Definition
\ref{def-lbyy-BTLtype} as follows.

\begin{proposition}\label{prop-lbyy-dual}
Let $s\in\mathbb{R}$, $p,q\in(0, \infty)$,
$\tau_0=\frac{1}{p}+\frac{1}{q'}-1$, $\tau_1=\max\{\frac{1}{p},1\}$,
and the scalar weight $w\in A_{\infty}$. Then
\begin{align*}
\left[\dot{B}_{p,q}^{s}(w)\right]^*
=\widetilde{\dot{B}^{-s,\tau_1}_{p',q'}}(w)
\end{align*}
and
\begin{align*}
\left[\dot{F}_{p,q}^{s}(w)\right]^*
=\begin{cases}
\displaystyle\widetilde{\dot{F}_{q', q'}^{-s, \tau_0}}(w)
& \text{ if } p \in(0,1], \\
\displaystyle\widetilde{\dot{F}_{p', q'}^{-s, 0}}(w)
& \text{ if } p \in(1, \infty),
\end{cases}
\end{align*}
where $[\dot{B}_{p,q}^{s}(w)]^*$ and $[\dot{F}_{p,q}^{s}(w)]^*$
are respectively the dual spaces of $\dot{B}_{p,q}^{s}(w)$
and $\dot{F}_{p,q}^{s}(w)$.
\end{proposition}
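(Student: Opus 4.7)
The plan is to reduce both dualities to their sequence-space counterparts, so we may exploit the concrete quasi-Banach structure of $\dot{b}_{p,q}^{s}(w)$, $\dot{f}_{p,q}^{s}(w)$, and their tildes. First I would invoke Theorem \ref{thm-phitransMWBTL} (specialized to $m=1$, $W=w\in A_\infty$) to obtain a bounded retract $S_\varphi:\dot{B}_{p,q}^s(w)\to\dot{b}_{p,q}^s(w)$ with left inverse $T_\psi$, and similarly for $\dot{F}$. An analogous $\varphi$-transform characterization for the tilde spaces is routine since the integrands that define them are still pointwise dyadic sums of $\varphi_j\ast f$, and the only new ingredient is to track the new growth factor $|Q|/[w(Q)]^\tau$ through the almost-diagonal machinery of Theorem \ref{a(W)adopebound}. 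This reduces everything to proving $[\dot{b}_{p,q}^s(w)]^*\simeq\widetilde{\dot{b}_{p',q'}^{-s,\tau_1}}(w)$ and the two $\dot{f}$-analogues at the sequence level.

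For the Besov case, observe that for any $j$ the dyadic cubes in $\mathcal{D}_j$ are disjoint, so $\|\sum_{Q\in\mathcal{D}_j}t_Q\widetilde{\mathbf{1}}_Q\|_{L^p(w)}^p=\sum_{Q\in\mathcal{D}_j}|t_Q|^p|Q|^{-p/2}w(Q)$. Combining $\ell^q$-$\ell^{q'}$-duality in $j$ with, in the range $p\in(1,\infty)$, the weighted $L^p$-$L^{p'}$-duality on $\mathbb{R}^n$ (which yields $\tau_1=1$ because the pairing absorbs one factor $w(Q)/|Q|$), and, in the range $p\in(0,1]$, the $\ell^p$-$\ell^{\infty}$-duality at each level (which yields $\tau_1=1/p$), gives the stated $\tau_1=\max\{1/p,1\}$ together with the pairing realising the isomorphism. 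The Triebel--Lizorkin case with $p\in(1,\infty)$ is immediate: the norm $\|t\|_{\dot{f}_{p,q}^s(w)}$ is an $L^p(w)(\ell^q)$-norm on the functions $\{\sum_{Q\in\mathcal{D}_j}2^{js}t_Q\widetilde{\mathbf{1}}_Q\}_j$, and the standard $L^p(w)(\ell^q)$-$L^{p'}(w)(\ell^{q'})$-duality yields $\widetilde{\dot{f}_{p',q'}^{-s,0}}(w)$ (the $\tau=0$ instance coincides with the unperturbed $\dot{f}_{p',q'}^{-s}(w)$).

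The delicate range is $p\in(0,1]$ for $\dot{F}$. I would follow the approach of Frazier--Jawerth to $(\dot{F}_{1,q}^s)^*=\dot{F}_{\infty,q'}^{-s}$ in scalar form, replacing scalar Carleson measures on $\mathbb{R}^n\times(0,\infty)$ with the weighted Carleson-type quantities that are exactly encoded by the tilde $\widetilde{\dot{f}_{q',q'}^{-s,\tau_0}}(w)$-norm with $\tau_0=1/p+1/q'-1$. One direction — that every $s\in\widetilde{\dot{f}_{q',q'}^{-s,\tau_0}}(w)$ induces a bounded functional on $\dot{f}_{p,q}^s(w)$ via $\langle t,s\rangle=\sum_Q t_Q\overline{s_Q}$ — reduces, after stratification over dyadic parents $P$, to a weighted H\"older inequality combined with the Bownik invariance $\dot{f}_{\infty,q'}^{-s}(w)=\dot{f}_{q',q'}^{-s,1/q'}(w)$ recorded in Lemma \ref{lem-infty-3}; the exponent $\tau_0$ then emerges from balancing $w(P)^{-\tau_0}$ against the $p$-subadditivity of the $\dot{f}_{p,q}^s(w)$-piece over $P$. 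The other direction requires constructing, from an arbitrary bounded linear functional, a representing sequence in the tilde space. I would do this via the atomic/molecular decomposition from Theorem \ref{moledecomp}: test the functional against smooth synthesis molecules supported near each $Q$, then prove the resulting coefficient sequence lies in the tilde space by using the discrete $g^*_\lambda$-function characterization in Proposition \ref{prop-dct-gh} and a stopping-time argument that extracts the Carleson--BMO-type control dictated by $\tau_0$.

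The main obstacle is precisely this last step for $p\in(0,1]$: verifying the Carleson condition encoded in $\tau_0=1/p+1/q'-1$ for the sequence produced by pairing the functional against molecules. The difficulty is twofold. First, $\dot{f}_{p,q}^s(w)$ is only a quasi-Banach space, so standard Hahn--Banach extension and Riesz representation arguments must be replaced by an explicit reconstruction that respects the lack of local convexity; this is handled by inserting the retract through the sequence space and appealing to the molecular boundedness to control the test functionals uniformly. Second, the weight $w\in A_\infty$ is not necessarily in $A_p$, so one cannot rely on $L^p(w)$-maximal inequalities without invoking $A_\infty$-doubling and the reverse H\"older inequality. These are available through Proposition \ref{prop-RDweight} and yield the Carleson-measure estimate with the precise exponent $\tau_0$, completing the identification.
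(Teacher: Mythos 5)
This proposition is not proved in the paper at all: it is quoted verbatim from \cite[Theorems 2.1 and 2.2]{lbyy12} as a known result, so there is no in-paper argument to compare yours against. Judged on its own terms, your overall architecture is the standard one (and is essentially the route of the cited source, which builds on Frazier--Jawerth and Bownik): reduce to sequence spaces by the $\varphi$-transform, compute the Besov dual level by level, use $L^{p}(w)(\ell^q)$--$L^{p'}(w)(\ell^{q'})$ duality for $\dot F$ with $p>1$, and a Frazier--Jawerth-type Carleson argument for $\dot F$ with $p\in(0,1]$. Your exponent bookkeeping for $\tau_1=\max\{1/p,1\}$ in the Besov case checks out. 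One small but genuine error: the $\tau=0$ instance $\widetilde{\dot{F}_{p',q'}^{-s,0}}(w)$ does \emph{not} coincide with the unperturbed $\dot{f}_{p',q'}^{-s}(w)$ of the same coefficient sequence; by the paper's own Proposition \ref{prop-lbyy-compare}(iii) it equals the averaging space $\dot{F}_{p',q'}^{-s}(\mathbb{A}^{-1}_{w,p})$, i.e.\ the coefficients carry the extra factor $|Q|/w(Q)$ coming from the choice of (unweighted) duality pairing. Conflating the two would make the $p>1$ Triebel--Lizorkin case come out wrong unless you track the pairing convention explicitly.

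The substantive gap is the case $p\in(0,1]$ for $\dot F$, which is the only genuinely hard part of the statement and which your proposal only gestures at. The boundedness direction (a sequence in $\widetilde{\dot{f}_{q',q'}^{-s,\tau_0}}(w)$ induces a functional) and, much more seriously, the representation direction (every functional is so induced, with the Carleson-type control quantified by $\tau_0=\frac1p+\frac1{q'}-1$) are deferred to ``a weighted H\"older inequality,'' ``a stopping-time argument,'' and the reverse H\"older inequality, without indicating how the exponent $\tau_0$ actually emerges or how the weighted tent-space/Carleson-measure duality is set up when $w\in A_\infty$ need not belong to any $A_r$ with $r<\infty$ controlled a priori. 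Since this step is the entire content of the theorem in that range, the proposal as written is an outline rather than a proof there. (A secondary, stylistic issue: you invoke Theorem \ref{moledecomp} and Proposition \ref{prop-dct-gh} of the present paper to prove a result that the paper itself imports as input; a self-contained proof should instead rely on the scalar-weighted molecular decomposition and the Frazier--Jawerth/Bownik machinery, as \cite{lbyy12} does.)
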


By Definition \ref{def-red-ope} and Lemma \ref{growEST},
we find that, for any $p\in(0,\infty)$ and
any scalar weight $w\in A_{\infty}$,
$\mathbb{A}_{w,p}:=\{[\fint_{Q}w(x)\,dx
]^{\frac{1}{p}}\}_{Q\in\mathcal{D}}$
is a sequence of reducing operators of order $p$
for $w$ and hence strongly doubling of order
$(\beta_1,\beta_2)$ for some $\beta_1,\beta_2\in[0, \infty)$.
Obviously, $\mathbb{A}^{-1}_{w,p}:=\{[\frac{|Q|}{w(Q)}
]^{\frac{1}{p}}\}_{Q\in\mathcal{D}}$ is
strongly doubling of order $(\beta_2,\beta_1)$.
Based on these facts, we next
show that the weighted Besov--Triebel--Lizorkin-type
spaces in Proposition \ref{prop-lbyy-dual}
are exactly averaging spaces as in Definition \ref{averBTL}.

\begin{proposition}\label{prop-lbyy-compare}
Let $s\in\mathbb{R}$, $p,q\in(0, \infty)$,
$\tau_0=\frac{1}{p}+\frac{1}{q'}-1$,
$\tau_1=\max\{\frac{1}{p},1\}$,
and the scalar weight $w\in A_{\infty}$.
Then the following statements hold.
\begin{itemize}
\item[{\rm (i)}] $\widetilde{\dot{B}^{-s,\tau_1}_{p',q'}}(w)=
\dot{B}^{-s+n(\frac{1}{p}-1)_+}_{p',q'}(\mathbb{A}^{-1}_{w,p})$
with equivalent norms.
\item[{\rm (ii)}] If $p \in(0,1]$, then
\begin{align*}
\widetilde{\dot{F}_{q', q'}^{-s, \tau_0}}(w)=\begin{cases}
\displaystyle\dot{B}_{q', q'}^{-s, \upsilon}(\mathbb{A}^{-1}_{w,1})
& \text{ if } q\in(0,1], \\
\displaystyle\dot{B}_{q', q'}^{-s, \upsilon}(\mathbb{A}^{-1}_{w,q})
& \text{ if } q\in(1, \infty)
\end{cases}
\end{align*}
all with equivalent norms, where, for any $Q\in\mathcal{D}$, $\upsilon(Q):=[w(Q)]^{\tau_0}$.
\item[{\rm (iii)}] If $p \in(1, \infty)$, then $\widetilde{\dot{F}_{p', q'}^{-s, 0}}(w)=\dot{F}_{p', q'}^{-s}(\mathbb{A}^{-1}_{w,p})$
with equivalent norms.
\end{itemize}
\end{proposition}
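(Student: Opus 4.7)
The plan is to reduce each of (i), (ii), (iii) to an explicit per-cube identity of weights, handling the passage from function space to sequence space via the $\varphi$-transform characterization (Theorem \ref{thm-phitansaverMWBTL}) on the averaging side, and an analogous characterization for the $\widetilde{\dot{B}}$, $\widetilde{\dot{F}}$ spaces that follows from \cite{lbyy12}. Since on a fixed level $j\in\mathbb{Z}$ the family $\{\mathbf{1}_Q\}_{Q\in\mathcal{D}_j}$ is a disjoint decomposition of $\mathbb{R}^n$, the relevant $p'$-th power of the sum $\sum_{Q\in\mathcal{D}_j}g_Q\mathbf{1}_Q$ is simply $\sum_{Q\in\mathcal{D}_j}|g_Q|^{p'}\mathbf{1}_Q$, so at the sequence level the comparison reduces to arithmetic in $|Q|$ and $w(Q)$. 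The only nontrivial analytic input needed to pass back from function to sequence space is the following $A_\infty$-mean-value property: for $w\in A_\infty$, $r\in(0,\infty)$, $j\in\mathbb{Z}$, $Q\in\mathcal{D}_j$, and any $f$ with $\operatorname{supp}\widehat{f}\subset\{|\xi|\leq 2^{j+1}\}$ (which is the case for $\varphi_j*f$),
\begin{align*}
\int_Q |f(x)|^r w(x)\,dx \sim \frac{w(Q)}{|Q|}\int_Q |f(x)|^r\,dx,
\end{align*}
with constants depending only on $r$ and $[w]_{A_\infty}$. This follows from the Nikol'ski\u\i-type inequality $\sup_Q|f|^r\sim\inf_Q|f|^r\sim\fint_Q|f|^r$ combined with the doubling of $w$.

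For (i), I would pair the above per-cube equivalence (with $r=p'$ when $p>1$, and with the obvious $L^\infty$ variant when $p\leq 1$) against the sequence norm of $\dot{b}^{-s+n(1/p-1)_+}_{p',q'}(\mathbb{A}^{-1}_{w,p})$; the algebra then amounts to verifying the identity
\begin{align*}
\left[\frac{|Q|}{[w(Q)]^{\tau_1}}\right]^{p'}\frac{w(Q)}{|Q|} \sim 2^{j\,n(1/p-1)_+\,p'}\left[\frac{|Q|}{w(Q)}\right]^{p'/p},
\end{align*}
which, using $p'-1-p'/p=0$ and splitting into the cases $p>1$ (where $\tau_1=1$ and the $2^{jn(1/p-1)_+ p'}$ factor is trivial) and $p\in(0,1]$ (where $\tau_1=1/p$ and $p'=\infty$), collapses to $1\sim 1$. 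For (ii), I would exploit that $\widetilde{\dot{F}^{-s,\tau_0}_{q',q'}}(w)$ with equal upper indices $p'=q'=$ (same index) is a Besov-type object (the mixed-norm order is immaterial); the subcase $q\in(0,1]$ forces $q'=\infty$, which makes the integration in the defining expression collapse to a supremum and the natural reducing operator to be of order $1$, while $q\in(1,\infty)$ gives a genuine $L^{q'}$ integration matched by a reducing operator of order $q$. Case (iii) is the cleanest: with $p\in(1,\infty)$ and $\tau_0=0$, the expression for $\widetilde{\dot{F}^{-s,0}_{p',q'}}(w)$ is just $\|\{2^{-js}\sum_Q|\varphi_j*f|[|Q|/w(Q)]\mathbf{1}_Q\}_j\|_{L^{p'}(l^{q'})(w)}$, and an application of the per-cube mean-value property converts the inner $L^{p'}(w)$-factor into an unweighted $L^{p'}$-factor with weight $[|Q|/w(Q)]^{1/p}$, which is exactly the averaging operator $\mathbb{A}^{-1}_{w,p}$.

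The main obstacle will be the bookkeeping when $p'=\infty$ (i.e., $p\in(0,1]$) in (i) and when $q'=\infty$ (i.e., $q\in(0,1]$) in (ii): in those regimes the $L^{p'}(w)$ or $L^{q'}(w)$ norm degenerates to an essential supremum, and the per-cube equivalence of Step~1 must be replaced by its $L^\infty$-variant $\|f\|_{L^\infty(Q)}\sim\inf_Q|f|$, valid under the same Fourier-support hypothesis; additionally one must check that the supremum over $Q\in\mathcal{D}_j$ interacts with the spatial supremum in the way required by Definition~\ref{averBTL}. Once this is in place, the computations for all three statements are parallel, and invoking Theorem~\ref{thm-phitansaverMWBTL} (for the averaging side) together with the counterpart $\varphi$-transform characterization of $\widetilde{\dot{B}}$ and $\widetilde{\dot{F}}$ from \cite{lbyy12} yields the desired equalities with equivalent norms.
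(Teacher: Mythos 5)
Your overall architecture (reduce to sequence spaces via the two $\varphi$-transform characterizations, then do per-cube arithmetic in $|Q|$ and $w(Q)$) matches the paper, and your exponent computation $\bigl[\tfrac{|Q|}{[w(Q)]^{\tau_1}}\bigr]^{p'}\tfrac{w(Q)}{|Q|}=\bigl[\tfrac{|Q|}{w(Q)}\bigr]^{p'/p}$ for $p>1$ (using $p'-1=p'/p$) is exactly the algebra the paper performs. The gap is in the analytic bridge you designate as "the only nontrivial input." The claimed chain $\sup_Q|f|^r\sim\inf_Q|f|^r\sim\fint_Q|f|^r$ for $f$ with $\operatorname{supp}\widehat f\subset\{|\xi|\le 2^{j+1}\}$ is false: such an $f$ (for instance $\varphi_j*g$) can vanish at points of $Q$, so $\inf_Q|f|=0$ while $\sup_Q|f|>0$; and even the one-sided single-cube Nikol'ski\u{\i} bound $\sup_Q|f|^r\lesssim\fint_Q|f|^r$ does not hold with a uniform constant (the correct Plancherel--P\'olya statement carries the off-diagonal tail $\sum_l(1+|k-l|)^{-\lambda}\fint_{Q_{j,l}}|f|^r$, as in Lemma \ref{lem-suptosum}). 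Consequently your per-cube equivalence $\int_Q|f|^rw\,dx\sim\tfrac{w(Q)}{|Q|}\int_Q|f|^r\,dx$ "with constants depending only on $r$ and $[w]_{A_\infty}$" is unproved and, as a two-sided single-cube statement over all band-limited $f$, not available.

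The fix, which is what the paper does, is to never invoke such a property at the function level: after the $\varphi$-transform reductions one compares \emph{sequence} norms, where the entries are constants on dyadic cubes, so $\int_Q|t_Q|^rw\,dx=|t_Q|^rw(Q)$ is an exact identity. For the Besov-scale cases ((i), and (ii) where $p'=q'$ forces a $b$-type norm) this plus the reducing-operator identity $A_{w,r}(Q)=[\fint_Qw]^{1/r}$ and Corollary \ref{cor-a(A)=a(W)} finishes the computation. For case (iii), where $p'\ne q'$ in general, the passage from $\dot f^{-s}_{p',q'}(w)$ to $\dot f^{-s}_{p',q'}(\mathbb{A}_{w,p'})$ is \emph{not} a per-cube triviality because the $\ell^{q'}$-sum sits inside the $L^{p'}(w)$-integral; the paper handles it with Corollary \ref{cor-a(A)=a(W)}, whose $f$-scale proof rests on the weighted Fefferman--Stein-type inequality (Lemma \ref{lem-E_j}(ii), i.e.\ \cite[Corollary 5.8]{bhyy4}) in one direction and Lemma \ref{lem-AW-1-set} in the other. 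Your proposal does not supply either ingredient, so as written it does not close.
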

\begin{proof}
Recall that \cite[Lemma 3.1]{lbyy12} gives the
$\varphi$-transform characterizations of
$\widetilde{\dot{B}^{-s,\tau_1}_{p',q'}}(w)$,
$\widetilde{\dot{F}_{q', q'}^{-s, \tau_0}}(w)$,
and $\widetilde{\dot{F}_{p', q'}^{-s, 0}}(w)$ via their
corresponding sequence spaces as in Definition \ref{def-lbyy-BTLtype}.
By this and Theorem \ref{thm-phitansaverMWBTL}, to prove the
present proposition, we only need to show the
corresponding results at the level of sequence spaces.
We first prove (i) by considering the following
two cases for $p$. If $p\in(0,1]$, using the definition of
$\|\cdot\|_{\widetilde{\dot{b}^{-s,\tau_1}_{p',q'}}(w)}$,
\eqref{eq-a(A)-a}, and basic calculations,
we conclude that, for any $t:=\{t_Q\}
_{Q\in\mathcal{D}}$ in $\mathbb{C}$,
\begin{align*}
\|t\|_{\widetilde{\dot{b}^{-s,\tau_1}_{p',q'}}(w)}
&=\left\|\left\{\frac{|Q|}{[w(Q)]^{\frac{1}{p}}}t_Q\right\}
_{Q\in\mathcal{D}}\right\|_{\dot{b}^{-s}_{p',q'}}
=\left\|\left\{\frac{|Q|^{\frac{1}{p}-(\frac{1}{p}-1)}}{[w(Q)]^{\frac{1}{p}}}
t_Q\right\}_{Q\in\mathcal{D}}\right\|_{\dot{b}^{-s}_{p',q'}}
=\left\|t\right\|_{\dot{b}^{-s+n(\frac{1}{p}-1)}_{p',q'}
(\mathbb{A}^{-1}_{w,p})}.
\end{align*}
This finishes the proof of (i) in this case. If $p\in(1, \infty)$,
from the definition of
$\|\cdot\|_{\widetilde{\dot{b}^{-s,\tau_1}_{p',q'}}(w)}$,
the fact that $\mathbb{A}_{w,p'}$ is a sequence of
reducing operators of order $p'$ for $w$,
Corollary \ref{cor-a(A)=a(W)}, and \eqref{eq-a(A)-a},
we infer that, for any $t:=\{t_Q\}
_{Q\in\mathcal{D}}$ in $\mathbb{C}$,
\begin{align*}
\|t\|_{\widetilde{\dot{b}^{-s,\tau_1}_{p',q'}}(w)}
&=\left\|\left\{\frac{|Q|}{w(Q)}t_Q\right\}
_{Q\in\mathcal{D}}\right\|_{\dot{b}^{-s}_{p',q'}(w)}\sim
\left\|\left\{\frac{|Q|}{w(Q)}t_Q\right\}
_{Q\in\mathcal{D}}\right\|_{\dot{b}^{-s}_{p',q'}(\mathbb{A}_{w,p'})}\\
&=\left\|\left\{\left[\frac{|Q|}
{w(Q)}\right]^{\frac{1}{p}}t_Q\right\}
_{Q\in\mathcal{D}}\right\|_{\dot{b}^{-s}_{p',q'}}
=\left\|t\right\|_{\dot{b}^{-s}_{p',q'}
(\mathbb{A}^{-1}_{w,p})},
\end{align*}
which completes the proof of (i) in this case and hence (i).

Next, we show (ii) by considering the following
two cases for $q$. If $q\in(0,1]$, in this case,
applying the definition of $\|\cdot\|_{\widetilde{\dot{f}^{-s,\tau_0}_{q',q'}}(w)}$
and \eqref{eq-a(A)-a},
we obtain, for any $t:=\{t_Q\}
_{Q\in\mathcal{D}}$ in $\mathbb{C}$,
\begin{align*}
\|t\|_{\widetilde{\dot{f}^{-s,\tau_0}_{q',q'}}(w)}
&=\left\|\left\{\frac{|Q|}{w(Q)}t_Q\right\}
_{Q\in\mathcal{D}}\right\|_{\dot{b}^{-s, \upsilon}_{q',q'}}
=\left\|t\right\|_{\dot{b}^{-s, \upsilon}_{q',q'}(\mathbb{A}^{-1}_{w,1})}.	
\end{align*}
This finishes the proof of (ii) in this case.
If $q\in(1, \infty)$, by the definition of
$\|\cdot\|_{\widetilde{\dot{f}^{-s,\tau_0}_{q',q'}}(w)}$,
the fact that $\mathbb{A}_{w,q'}$ is a sequence of
reducing operators of order $q'$ for $w$,
Corollary \ref{cor-a(A)=a(W)}, and \eqref{eq-a(A)-a},
we find that, for any $t:=\{t_Q\}
_{Q\in\mathcal{D}}$ in $\mathbb{C}$,
\begin{align*}
\|t\|_{\widetilde{\dot{f}^{-s,\tau_0}_{q',q'}}(w)}
&=\left\|\left\{\frac{|Q|}{w(Q)}t_Q\right\}
_{Q\in\mathcal{D}}\right\|_{\dot{b}^{-s, \upsilon}_{q',q'}(w)}\sim
\left\|\left\{\frac{|Q|}{w(Q)}t_Q\right\}
_{Q\in\mathcal{D}}\right\|_{\dot{b}^{-s, \upsilon}_{q',q'}(\mathbb{A}_{w,q'})}\\
&=\left\|\left\{\left[\frac{|Q|}{w(Q)}\right]^{\frac{1}{q}}t_Q\right\}
_{Q\in\mathcal{D}}\right\|_{\dot{b}^{-s, \upsilon}_{q',q'}}
=\left\|t\right\|_{\dot{b}^{-s, \upsilon}_{q',q'}(\mathbb{A}^{-1}_{w,q})},
\end{align*}
which completes the proof of (ii) in this case and hence (ii).

Finally, we prove (iii). Using the definition of
$\|\cdot\|_{\widetilde{\dot{f}^{-s,0}_{p',q'}}(w)}$,
the fact that $\mathbb{A}_{w,p'}$ is a sequence of
reducing operators of order $p'$ for $w$,
Corollary \ref{cor-a(A)=a(W)}, and \eqref{eq-a(A)-a},
we conclude that, for any $t:=\{t_Q\}
_{Q\in\mathcal{D}}$ in $\mathbb{C}$,
\begin{align*}
\|t\|_{\widetilde{\dot{f}^{-s,0}_{p',q'}}(w)}
&=\left\|\left\{\frac{|Q|}{w(Q)}t_Q\right\}
_{Q\in\mathcal{D}}\right\|_{\dot{f}^{-s}_{p',q'}(w)}\sim
\left\|\left\{\frac{|Q|}{w(Q)}t_Q\right\}
_{Q\in\mathcal{D}}\right\|_{\dot{f}^{-s}_{p',q'}(\mathbb{A}_{w,p'})}\\
&=\left\|\left\{\left[\frac{|Q|}{w(Q)}\right]^{\frac{1}{p}}t_Q\right\}
_{Q\in\mathcal{D}}\right\|_{\dot{f}^{-s}_{p',q'}}
=\left\|t\right\|_{\dot{f}^{-s}_{p',q'}(\mathbb{A}^{-1}_{w,p})}.
\end{align*}
This finishes the proof of (iii) and
hence Proposition  \ref{prop-lbyy-compare}.
\end{proof}

\begin{remark}
In Proposition \ref{prop-lbyy-compare}, if $p=q$, (i) and (ii)
were obtained in \cite[Corollary 2.2]{lbyy12}.
\end{remark}

Next, we compare the Peetre-type maximal function
characterization of $\dot{A}^{s,\upsilon}_{p,q}(W)$
established in Theorem \ref{thm-Pee-cha}
with some known results. To this end,
we first show the relation of \eqref{eq-beta-p-weak}
with some known indices. Let $p\in(0, \infty)$,
$W\in\mathcal{A}_{p,\infty}$, and $\beta_{p}(W)$ and $\alpha_{p}(W)$
be as in Definition \ref{def-double-W}.
From \cite[Lemma 2.2]{fr21} and the definition of
$\alpha_{p}(W)$ [see \eqref{eq-beta-p-weak}],
we deduce that
\begin{align}\label{eq-alpha-beta}
\alpha_{p}(W)\leq\frac{\beta_{p}(W)}{p}.
\end{align}
Using Lemma \ref{growEST} and the definition of
$\alpha_{p}(W)$ again, we conclude that
\begin{align}\label{eq-beta-low-upp}
\alpha_{p}(W)\leq
\frac{d^{\operatorname{lower}}_{p, \infty}(W)
+d^{\operatorname{upper}}_{p, \infty}(W)}{p},
\end{align}
where $d^{\operatorname{lower}}_{p, \infty}(W)$
and $d^{\operatorname{ upper}}_{p, \infty}(W)$
are as, respectively, in \eqref{eq-low-dim} and \eqref{eq-upp-dim}.
In particular, if $W\equiv1$, it follows from Definitions \ref{def-red-ope}
and \ref{def-doub-seq} that $\{I_m\}_{Q\in\mathcal{D}}$
is a sequence of reducing operators of order $p$ for $W$ and is weakly
doubling of order $0$, where $I_m$ is the identity matrix of order $m$.
Applying this and \eqref{eq-beta-p-weak}, we obtain, if $W\equiv1$,
then $\alpha_{p}(W)=0$. For any scalar weight $w\in A_{\infty}$,
let
\begin{align*}
r_w:=\inf\left\{r\in[1, \infty):\ w\in A_{r}\right\}
\end{align*}
be the well-known \emph{critical index} (see, for example,
\cite[Definitions 7.1.1 and 7.1.3]{gra14a}
for the definition of the scalar $A_r$ class of Muckenhoupt
for any $r\in[1, \infty)$). Let $w\in A_{\infty}$.
By \eqref{eq-low-dim} and \cite[Theorem 4.28(i)]{bhyy5},
we find that $d^{\operatorname{lower}}_{p, \infty}(w)<n$ and
$d^{\operatorname{upper}}_{p, \infty}(w)\leq n(r_w-1)$.
Using this and \eqref{eq-beta-low-upp}, we conclude that
\begin{align}\label{eq-beta-rw}
\alpha_{p}(w)\leq
\frac{d^{\operatorname{lower}}_{p, \infty}(w)
+d^{\operatorname{upper}}_{p, \infty}(w)}{p}
<\frac{n+d^{\operatorname{upper}}_{p, \infty}(w)}{p}
\leq n\frac{r_w}{p}.
\end{align}

\begin{remark} Let all the symbols be the same as in Theorem \ref{thm-Pee-cha}.
\begin{itemize}
\item[{\rm (i)}]
Let $\tau\in[0, \infty)$ and, for any $Q\in\mathcal{D}$,
$\upsilon(Q):=|Q|^{\tau}$. By Example \ref{examp}(i), we find that
$\upsilon\in\mathcal{G}(\tau,\tau;0)$.
Then the space $\dot{A}^{s,\upsilon}_{p,q}(W)$ reduces
to the matrix-weighted BTL-type space
$\dot{A}^{s,\tau}_{p, q}(W)$. In this case,
Theorem \ref{thm-Pee-cha} is also new.
In particular, $\dot{F}^{s,0}_{p,q}(W)$ is precisely
the matrix-weighted Triebel--Lizorkin space
$\dot{F}^{s}_{p, q}(W)$. In this case,
Theorem \ref{thm-Pee-cha} improves \cite[Theorem 3.1]{wyy23}
in which only the case where
$W$ is a matrix $\mathcal{A}_p$ weight was showed
(see, for example, \cite[p.\,490]{fr21}
for the definition of the matrix $\mathcal{A}_p$ class).
It is well known that, for any $p\in(0, \infty)$,
$\mathcal{A}_{p, \infty}\supsetneqq\mathcal{A}_p$
(see, for example, \cite[Proposition 4.2]{bhyy4}).
Moreover, by \eqref{eq-alpha-beta},
we find that the restriction on $\eta$ in Theorem \ref{thm-Pee-cha},
namely $\eta\in(\frac{n}{p\wedge q}+
\alpha_{p}(W),\infty)$,
is also better than the corresponding one in \cite[Theorem 3.1]{wyy23}
that $\eta\in(\frac{n}{1\wedge p\wedge q}
+\frac{\beta_{p}(W)}{p},\infty)$.
\item[{\rm (ii)}]
When $m=1$ and $W:=w\in A_{\infty}$, the space
$\dot{A}^{s,0}_{p,q}(W)$ reduces to the classical
weighted BTL space $\dot{A}^{s}_{p, q}(w)$.
Compared to \cite[Theorem 2.2]{bui82},
the restriction on $\eta$ in Theorem \ref{thm-Pee-cha},
namely $\eta\in(\frac{n}{\Gamma_{p,q}}
+\alpha_{p}(w),\infty)$,
is different from the corresponding one in \cite[Theorem 2.2]{bui82}
that $\eta\in(\frac{nr_w}{p}\vee\frac{n}{q},\infty)$ if $A=F$ and
$\eta\in(\frac{nr_w}{p},\infty)$ if $A=B$
(see Remark \ref{rmk-pee} for the reason).
When $w\equiv1$, where $r_w=1$ and $\alpha_{p}(w)=0$,
the restriction on $\eta$ in \cite[Theorem 2.2]{bui82}
coincides with the one in Theorem \ref{thm-Pee-cha}.
We need to point out that Theorem \ref{thm-Pee-cha}
and \cite[Theorem 2.2]{bui82} have their own advantages
on the the restriction on $\eta$.
For example, let $A=B$, $\alpha\in(1,\infty)$,
and, for any $x:=(x_1,x_2)\in\mathbb{R}^2$,
$w_{\alpha}(x):=|x_1||x_2|^{\alpha}$.
From \cite[(i) and (v) of Lemma 4.30]{bhyy5}, it follows
that $w_{\alpha}\in A_{\infty}$
and $d^{\operatorname{upper}}_{p, \infty}(w_{\alpha})
=r_{w_{\alpha}}=1+\alpha$. If $\alpha>3$,
by \eqref{eq-beta-rw}, we obtain
\begin{align*}
\frac{2}{p}+\alpha_{p}(w_{\alpha})<
\frac{4+d^{\operatorname{upper}}_{p, \infty}(w_{\alpha})}{p}
=\frac{5+\alpha}{p}<\frac{2+2\alpha}{p}=\frac{2r_{w_{\alpha}}}{p},
\end{align*}
which implies that the restriction on $\eta$ in
Theorem \ref{thm-Pee-cha} is better than
the corresponding one in \cite[Theorem 2.2]{bui82}.
On the other hand, let $A=B$ and,
for any $x\in\mathbb{R}$, $w(x):=|x|^{-\frac{1}{2}}$.
By \cite[Example 7.1.7]{gra14a}, we find that $w\in A_1$
and hence $r_w=1$. In this case,
using Definition \ref{def-red-ope},
we then conclude that $\mathbb{A}:=\{A_Q\}_{Q\in\mathcal{D}}
:=\{[\fint_{Q}w(x)\,dx]^{\frac{1}{p}}\}_{Q\in\mathcal{D}}$
is a sequence of reducing operators of order $p$ for $w$.
We next claim that $\alpha_{p}(w)=\frac{1}{2p}$.
From \cite[Corollary 2.41]{bhyy1}, we infer that, for any
$Q\in\mathcal{D}$, $\fint_{Q}w(x)\,dx\sim[|x_Q|+\ell(Q)]^{-\frac{1}{2}}$.
By this, the construction of $\mathbb{A}$, and the
triangle inequality of $|\cdot|$, we find that,
for any $Q,R\in\mathcal{D}$ with $\ell(Q)=\ell(R)$,
\begin{align}\label{eq-A}
\left\|A_QA^{-1}_R\right\|&=\left[\frac{\fint_{Q}w(x)\,dx}{\fint_{R}w(x)\,dx}
\right]^{\frac{1}{p}}\sim\left[\frac{|x_R|+\ell(R)}{|x_Q|+\ell(Q)}
\right]^{\frac{1}{2p}}\\
&\leq\left[\frac{|x_Q|+|x_Q-x_R|+\ell(R)}{|x_Q|+\ell(Q)}
\right]^{\frac{1}{2p}}\leq
\left\{1+[\ell(Q)]^{-1}|x_Q-x_R|\right\}^{\frac{1}{2p}}.\nonumber
\end{align}
This, together with Definition \ref{def-doub-seq}(ii),
further implies that $\mathbb{A}$ is weakly doubling of
order $\frac{1}{2p}$. We now show that, for
any $\beta\in[0, \frac{1}{2p})$, $\mathbb{A}$ is not
weakly doubling of order $\beta$. Let $\beta\in[0, \frac{1}{2p})$.
Suppose that $\mathbb{A}$ is weakly doubling of order $\beta$.
Using \eqref{eq-A} and Definition \ref{def-doub-seq}(ii),
we conclude that, for any $Q,R\in\mathcal{D}$
with $\ell(Q)=\ell(R)$ and $x_Q=\mathbf{0}$,
\begin{align*}
\left\{1+[\ell(Q)]^{-1}|x_R|\right\}^{\frac{1}{2p}}&=
\left[\frac{|x_R|+\ell(R)}{\ell(Q)}\right]^{\frac{1}{2p}}
=\left[\frac{|x_R|+\ell(R)}{\ell(Q)}\right]^{\frac{1}{2p}}\\
&\sim\left\|A_QA^{-1}_R\right\|\lesssim
\left\{1+[\ell(Q)]^{-1}|x_R|\right\}^{\beta},
\end{align*}
which induces a contradiction as $|x_R|\to\infty$ and hence
shows that $\mathbb{A}$ is not weakly doubling of order $\beta$.
From the above arguments and \eqref{eq-beta-p-weak},
we deduce that $\alpha_{p}(w)=\frac{1}{2p}$ and hence
prove the claim. Combining the above discussions, we conclude that
\begin{align*}
\frac{r_{w}}{p}=\frac{1}{p}<\frac{1}{p}
+\frac{1}{2p}=\frac{1}{p}+\alpha_{p}(w),
\end{align*}
which further implies that, in this case,
the restriction on $\eta$ in \cite[Theorem 2.2]{bui82}
is better than the corresponding one in Theorem \ref{thm-Pee-cha}.
\end{itemize}	
\end{remark}

We next illustrate that the Lusin area function and the
Littlewood--Paley $g^{*}_{\lambda}$-function characterizations of
$\dot{A}^{s,\upsilon}_{p,q}(W)$ established in Theorem \ref{thm-G-L-cha}
also improves some known results.

\begin{remark}
Let all the symbols be the same as in Theorem \ref{thm-G-L-cha}.
Let $\tau\in[0, \infty)$ and, for any $Q\in\mathcal{D}$,
$\upsilon(Q):=|Q|^{\tau}$.
By Example \ref{examp}(i), we find that
$\upsilon\in\mathcal{G}(\tau,\tau;0)$.
Then the space $\dot{A}^{s,\upsilon}_{p,q}(W)$ reduces
to the matrix-weighted BTL-type space
$\dot{A}^{s,\tau}_{p, q}(W)$. In this case,
Theorem \ref{thm-G-L-cha} is also new.
In particular, the space $\dot{F}^{s,0}_{p,q}(W)$ is
exactly the matrix-weighted Triebel--Lizorkin space
$\dot{F}^{s}_{p, q}(W)$. In this case,
the Lusin area function characterization
of $\dot{F}^{s}_{p, q}(W)$ in Theorem \ref{thm-G-L-cha}
improves \cite[Theorem 3.11]{wyy23}
in which only the case where $\alpha=1$, $r=q$,
and $W$ is a matrix $\mathcal{A}_p$ weight was proved.
On the other hand, the Littlewood--Paley $g^{*}_{\lambda}$-function
characterization of $\dot{F}^{s}_{p, q}(W)$
in Theorem \ref{thm-G-L-cha} also improves
\cite[Theorem 3.14]{wyy23} in which only
the case where $r=q$ and $W$ is a matrix $\mathcal{A}_p$ weight
was showed. By \eqref{eq-alpha-beta}, we find that
the restriction on $\lambda$ in Theorem \ref{thm-G-L-cha},
namely $\lambda\in(\frac{n}{p\wedge q}+\alpha_{p}(W),\infty)$,
is better than the corresponding
one in \cite[Theorem 3.14]{wyy23} that $\lambda\in(\frac{n}{1\wedge p\wedge q}+\frac{\beta_{p}(W)}{p},\infty)$.
Furthermore, when $m=1$ and $W\equiv1$, Theorem \ref{thm-G-L-cha}
coincides with \cite[Theorem 3.2]{cho10}.
\end{remark}

We present the following remark to discuss
the results on the boundedness of almost diagonal operators
on $\dot{a}_{p,q}^{s,\upsilon}$ and
$\dot{a}_{p,q}^{s,\upsilon}(W)$.

\begin{remark}\label{rmk-com-ad}
\begin{itemize}
\item[{\rm (i)}] Let all the symbols be the same as in Theorem \ref{thm-bound-ad}. Let $\tau\in[0,\infty)$ and,
for any $Q\in\mathcal{D}$, $\upsilon(Q):=|Q|^{\tau}$.
From Example \ref{examp}(i), it follows that
$\upsilon\in\mathcal{G}(\tau, \tau; 0)$.
In this case, the space $\dot{a}_{p,q}^{s,\upsilon}$
reduces to the BTL-type sequence space $\dot{a}_{p,q}^{s,\tau}$,
and the conditions on $D,E,F$ in Theorem \ref{thm-bound-ad} are precisely
\begin{align*}
D>J_{\dot{a}_{p,q}^{s,\tau}},\
E>\frac{n}{2}+s+n\left(\tau-\frac{1}{p}\right)_{+},
\text{ and }F>J_{\dot{a}_{p,q}^{s,\tau}}-\frac{n}{2}-s-n
\left(\tau-\frac{1}{p}\right)_+.
\end{align*}
Thus, Theorem \ref{thm-bound-ad} coincides with
\cite[Theorem 4.4(ii)]{bhyy2}. In particular,
when $\tau=0$, the space $\dot{a}_{p,q}^{s,0}$
is exactly the classical BTL sequence space $\dot{a}_{p,q}^{s}$
and, in this case, Theorem \ref{thm-bound-ad} coincides with
\cite[Theorems 7.1 and Lemma 9.1]{bhyy2} in which
the sharpness on
$D,E,F$ for $\dot{b}_{p,q}^{s}$ and for
$\dot{f}_{p,q}^{s}$ with $q\in[1\wedge p, \infty]$ was also proved.
\item[{\rm (ii)}] Let all the symbols be the same as in Theorem \ref{a(W)adopebound}.
Let $\tau\in[0,\infty)$ and, for any $Q\in\mathcal{D}$,
$\upsilon(Q):=|Q|^{\tau}$.
In this case, the space $\dot{a}_{p,q}^{s,\upsilon}(W)$
is precisely the matrix-weighted BTL-type sequence space
$\dot{a}_{p,q}^{s,\tau}(W)$ studied in \cite{bhyy1,bhyy2,bhyy3,bhyy5}.
Moreover, Theorem \ref{a(W)adopebound}
coincides with \cite[Theorem 4.5]{bhyy5}, which gives
the boundedness of almost diagonal operators
on $\dot{a}_{p,q}^{s,\tau}(W)$.
For the comparison of \cite[Theorem 4.5]{bhyy5}
with some known results on the
boundedness of almost diagonal operators,
we refer to \cite[Remark 4.6 and Subsection 4.2]{bhyy5}.
\end{itemize}
\end{remark}

Finally, we compare the Sobolev-type embedding
of $\dot{A}_{p,q}^{s, \upsilon}(W)$ obtained
in Subsection \ref{s5.2} with some known results.

\begin{remark}
\begin{itemize}
\item[{\rm (i)}]In Corollary \ref{cor-sobolev-w},
for any $Q\in\mathcal{D}$, let $\upsilon(Q):=1$.
In this case, the embeddings in Corollary \ref{cor-sobolev-w}
were obtained in \cite[(iv) and (v) of Theorem 2.6]{bui82}.
Here, we also prove that condition \eqref{eq-sobolev-w-d}
in Corollary \ref{cor-sobolev-w} is necessary.
However, it is worth pointing out that,
for scalar weights satisfying \eqref{eq-sobolev-w-d},
both the embeddings (i) and (ii) of Corollary \ref{cor-sobolev-w}
were proved in \cite[(iv) and (v) of Theorem 2.6]{bui82}
without assuming $w\in A_{\infty}$.
Moreover, Corollary \ref{cor-sobolev-w}
also contains \cite[Theorem 5.1]{hhhlp21} as a special case.

\item[{\rm (ii)}] In Corollary \ref{cor-sobolev-1},
let $\tau\in[0, \infty)$ and, for any $Q\in\mathcal{D}$,
$\upsilon(Q):=|Q|^{\tau}$. In this case,
the embeddings in Corollary \ref{cor-sobolev-1}
coincide with \cite[Proposition 3.3]{yy10}.
Here, we also prove the condition
$s_0-\frac{n}{p_0}=s_1-\frac{n}{p_1}$
in Corollary \ref{cor-sobolev-1} is necessary.
\end{itemize}
\end{remark}

\noindent\textbf{Acknowledgements}\quad
The last author would like to thank Yiqun Chen
for providing the idea to construct the
sequence in the proof of Proposition \ref{prop-3=4-b}.

\section*{Data availability}

No data was used for the research described in the article.

\bigskip

\noindent Fan Bu, Dachun Yang (Corresponding author),
Wen Yuan and Mingdong Zhang

\medskip

\noindent Laboratory of Mathematics and Complex Systems
(Ministry of Education of China),
School of Mathematical Sciences, Beijing Normal University,
Beijing 100875, The People's Republic of China

\smallskip

\noindent{\it E-mails:}
\texttt{fanbu@mail.bnu.edu.cn} (F. Bu)

\noindent\phantom{{\it E-mails:}}
\texttt{dcyang@bnu.edu.cn} (D. Yang)

\noindent\phantom{{\it E-mails:}}
\texttt{wenyuan@bnu.edu.cn} (W. Yuan)

\noindent\phantom{{\it E-mails:}}
\texttt{mdzhang@mail.bnu.edu.cn} (M. Zhang)

\begin{thebibliography}{999}

\bibitem{bx24a}	
T. Bai and J. Xu,
Pseudo-differential operators on matrix weighted
Besov--Triebel--Lizorkin spaces,
Bull. Iranian Math. Soc. 50 (2024), Paper No. 31, 26 pp.

\vspace{-0.3cm}

\bibitem{bx24b}	
T. Bai and J. Xu,
Non-regular pseudo-differential operators on
matrix weighted Besov--Triebel--Lizorkin spaces,
J. Math. Study 57 (2024), 84--100.

\vspace{-0.3cm}

\bibitem{bx24c}	
T. Bai and J. Xu,
Precompactness in matrix weighted Bourgain--Morrey spaces,
Filomat (to appear) or arXiv: 2406.11531.

\vspace{-0.3cm}	

\bibitem{bes59}
O. V. Besov, On some families of functional spaces.
Imbedding and extension theorems,
Dokl. Acad. Nauk SSSR 126 (1959), 1163--1165.

\vspace{-0.3cm}

\bibitem{bes61}
O. V. Besov, Investigation of a class of
function spaces inconnection with imbedding and
extension theorems, Trudy Mat. Inst. Steklov. 60 (1961), 42--81.

\vspace{-0.3cm}

\bibitem{bow05}
M. Bownik,
Atomic and molecular decompositions
of anisotropic Besov spaces,
Math. Z. 250 (2005), 539--571.

\vspace{-0.3cm}

\bibitem{bow07}
M. Bownik,
Anisotropic Triebel--Lizorkin spaces
with doubling measures,
J. Geom. Anal. 17 (2007), 387--424.

\vspace{-0.3cm}

\bibitem{bow08}
M. Bownik,
Duality and interpolation of anisotropic
Triebel--Lizorkin spaces,
Math. Z. 259 (2008), 131--169.

\vspace{-0.3cm}

\bibitem{bh06}
M. Bownik and K.-P. Ho,
Atomic and molecular decompositions of anisotropic Triebel--Lizorkin spaces,
Trans. Amer. Math. Soc. 358 (2006), 1469--1510.

\vspace{-0.3cm}

\bibitem{bc22}
M. Bownik and D. Cruz-Uribe,
Extrapolation and Factorization of matrix weights,
arXiv: 2210.09443v2.

\vspace{-0.3cm}

\bibitem{bcyy25} F. Bu, Y. Chen, D. Yang and W. Yuan,
Maximal function and atomic characterizations of matrix-weighted Hardy spaces
with their applications to boundedness of Calder\'on--Zygmund operators,
Submitted or arXiv: 2501.18800.

\vspace{-0.3cm}

\bibitem{bhyy1}
F. Bu, T. Hyt\"{o}nen, D. Yang and W. Yuan,
Matrix-weighted Besov-type and Triebel--Lizorkin-type spaces I:
$A_p$-dimensions of matrix weights and $\varphi$-transform characterizations,
Math. Ann. 391 (2025), 6105--6185.

\vspace{-0.3cm}

\bibitem{bhyy2}
F. Bu, T. Hyt\"{o}nen, D. Yang and W. Yuan,
Matrix-weighted Besov-type and Triebel--Lizorkin-type spaces II:
Sharp boundedness of almost diagonal operators,
J. Lond. Math. Soc. (2) 111 (2025), Paper No. e70094, 59 pp.

\vspace{-0.3cm}

\bibitem{bhyy3}
F. Bu, T. Hyt\"{o}nen, D. Yang and W. Yuan,
Matrix-weighted Besov-type and Triebel--Lizorkin-type spaces III:
Characterizations of molecules and wavelets, trace theorems,
and boundedness of pseudo-differential operators
and Calderón--Zygmund operators, Math. Z. 308 (2024),
Paper No. 32, 67 pp.

\vspace{-0.3cm}

\bibitem{bhyy4}
F. Bu, T. Hyt\"{o}nen, D. Yang and W. Yuan,
New characterizations and properties of matrix $A_{\infty}$ weights,
arXiv: 2311.05974.

\vspace{-0.3cm}

\bibitem{bhyy5}
F. Bu, T. Hyt\"{o}nen, D. Yang and W. Yuan,
Besov--Triebel--Lizorkin-type spaces with matrix $A_{\infty}$ weights,
Sci. China Math. (2025), https://doi.org/10.1007/s11425-024-2385-x.

\vspace{-0.3cm}

\bibitem{byy23}
F. Bu, D. Yang and W. Yuan,
Real-variable characterizations and their applications of
matrix-weighted Besov spaces on spaces of homogeneous type,
Math. Z. 305 (2023), Paper No. 16, 81 pp.


\vspace{-0.3cm}

\bibitem{byyz25}
F. Bu, D. Yang, W. Yuan and Y. Zhao,
A survey on matrix weights and matrix-weighted
Besov--Triebel--Lizorkin-type spaces, Submitted.

\vspace{-0.3cm}

\bibitem{b20a}
T. A. Bui,
Besov and Triebel--Lizorkin spaces
for Schr\"odinger operators with inverse-square potentials and applications,
J. Differential Equations 269 (2020), 641--688.

\vspace{-0.3cm}

\bibitem{b20b}
T. A. Bui,
Hermite pseudo-multipliers on new Besov and Triebel--Lizorkin spaces,
J. Approx. Theory 252 (2020), 105348, 16 pp.

\vspace{-0.3cm}

\bibitem{bbd22}
T. A. Bui, T. Q. Bui and X. T. Duong,
Decay estimates on Besov and Triebel--Lizorkin
spaces of the Stokes flows and the incompressible
Navier--Stokes flows in half-spaces,
J. Differential Equations 340 (2022), 83--110.

\vspace{-0.3cm}

\bibitem{bd15}
T. A. Bui and X. T. Duong,
Besov and Triebel--Lizorkin spaces associated to Hermite operators,
J. Fourier Anal. Appl. 21 (2015), 405--448.

\vspace{-0.3cm}

\bibitem{bd21a}
T. A. Bui and X. T. Duong,
Laguerre operator and its associated weighted Besov
and Triebel--Lizorkin spaces,
Trans. Amer. Math. Soc. 369 (2017), 2109--2150.

\vspace{-0.3cm}

\bibitem{bd21b}
T. A. Bui and X. T. Duong,
Spectral multipliers of self-adjoint operators
on Besov and Triebel--Lizorkin spaces associated to operators,
Int. Math. Res. Not. IMRN 2021, 18181--18224.

\vspace{-0.3cm}

\bibitem{bd21c}
T. A. Bui and X. T. Duong,
Higher-order Riesz transforms of
Hermite operators on new Besov and Triebel--Lizorkin spaces,
Constr. Approx. 53 (2021), 85--120.

\vspace{-0.3cm}

\bibitem{bui82}
H.-Q. Bui,
Weighted Besov and Triebel spaces: Interpolation by the real method,
Hiroshima Math. J. 12 (1982), 581--605.

\vspace{-0.3cm}

\bibitem{bt00}
H.-Q. Bui and M. H. Taibleson,
The characterization of the
Triebel--Lizorkin spaces for $p=\infty$,
J. Fourier Anal. Appl. 6 (2000), 537--550.

\vspace{-0.3cm}

\bibitem{cho10}
Y.-K. Cho,
Continuous characterization of the
Triebel--Lizorkin spaces and Fourier multipliers,
Bull. Korean Math. Soc. 47 (2010), 839--857.

\vspace{-0.3cm}

\bibitem{dx04}
G. Dafni and J. Xiao, Some new tent spaces
and duality theorems for fractional Carleson measures
and $Q_\alpha({\mathbb R}^n)$, J. Funct. Anal. 208 (2004),
377--422.

\vspace{-0.3cm}

\bibitem{dau88}
I. Daubechies,
Orthonormal bases of compactly supported wavelets,
Comm. Pure Appl. Math. 41 (1988), 909--996.

\vspace{-0.3cm}

\bibitem{dau92}
I. Daubechies,
Ten Lectures on Wavelets, CBMS-NSF Regional Conference Series in
Applied Mathematics 61, Society for Industrial
and Applied Mathematics (SIAM), Philadelphia, PA, 1992.

\vspace{-0.3cm}

\bibitem{dhl20}
F. Di Plinio, T. Hyt\"{o}nen and K. Li,
Sparse bounds for maximal rough singular
integrals via the Fourier transform,
Ann. Inst. Fourier (Grenoble) 70 (2020), 1871--1902.

\vspace{-0.3cm}

\bibitem{dptv24}
K. Domelevo, S. Petermichl, S. Treil and A. Volberg,
The matrix $A_2$ conjecture fails, i.e. $3/2>1$,
arXiv: 2402.06961.

\vspace{-0.3cm}

\bibitem{dly21} X. T. Duong, J. Li and D. Yang, Variation of
Calder\'on--Zygmund operators with matrix weight, Commun. Contemp.
Math. 23 (2021), Paper No. 2050062, 30 pp.

\vspace{-0.3cm}

\bibitem{fs71}
C. Fefferman and E. M. Stein,
Some maximal inequalities,
Amer. J. Math. 93 (1971), 107--115.

\vspace{-0.3cm}

\bibitem{fs72}
C. Fefferman and E. M. Stein, $H^p$ spaces of several variables, 
Acta Math. 129 (1972), 137--193.

\vspace{-0.3cm}

\bibitem{fol99}
G. B. Folland,
Real Analysis, Modern Techniques and Their Applications,
Second edition, Pure and Applied Mathematics,
John Wiley \& Sons, Inc., New York, 1999.

\vspace{-0.3cm}

\bibitem{fj90}
M. Frazier and B. Jawerth,
A discrete transform and decompositions of distribution spaces,
J. Funct. Anal. 93 (1990), 34--170.

\vspace{-0.3cm}

\bibitem{fjw91}
M. Frazier, B. Jawerth and G. Weiss,
Littlewood--Paley Theory and The Study of Function Spaces,
CBMS Regional Conference Series in Mathematics 79,
Published for the Conference Board of the Mathematical Sciences,
Washington, DC, by the American Mathematical
Society, Providence, RI, 1991.

\vspace{-0.3cm}

\bibitem{fr04}
M. Frazier and S. Roudenko,
Matrix-weighted Besov spaces and conditions
of $A_p$ type for $0<p\leq 1$,
Indiana Univ. Math. J. 53 (2004), 1225--1254.

\vspace{-0.3cm}

\bibitem{fr21}
M. Frazier and S. Roudenko,
Littlewood--Paley theory for matrix-weighted function spaces,
Math. Ann. 380 (2021), 487--537.

\vspace{-0.3cm}

\bibitem{gkkp16}
A. G. Georgiadis, G. Kerkyacharian, G. Kyriazis and P. Petrushev,
Atomic and molecular decomposition of homogeneous spaces of distributions
associated to non-negative self-adjoint operators,
J. Fourier Anal. Appl. 25 (2019), 3259--3309.

\vspace{-0.3cm}

\bibitem{gol03}
M. Goldberg,
Matrix $A_p$ weights via maximal functions,
Pacific J. Math. 211 (2003), 201--220.

\vspace{-0.3cm}

\bibitem{ghs21}
H. F. Gon{\c c}lves, D. D. Haroske and L. Skrzypczak,
Compact embeddings in Besov-type and Triebel--Lizorkin-type
spaces on bounded domains,
Rev. Mat. Complut. 34 (2021), 761--795.

\vspace{-0.3cm}

\bibitem{ghs23}
H. F. Gon{\c c}lves, D. D. Haroske and L. Skrzypczak,
Limiting embeddings of Besov-type and Triebel--Lizorkin-type
spaces on domains and an extension operator,
Ann. Mat. Pura Appl. (4) 202 (2023), 2481--2516.

\vspace{-0.3cm}

\bibitem{gra14a}
L. Grafakos,
Classical Fourier Analysis,
Third edition, Graduate Texts in Mathematics 249,
Springer, New York, 2014.

\vspace{-0.3cm}

\bibitem{gra14b}
L. Grafakos,
Modern Fourier Analysis,
Third edition, Graduate Texts in Mathematics 250,
Springer, New York, 2014.

\vspace{-0.3cm}

\bibitem{hhhlp21}
Y. Han, Y. Han, Z. He, J. Li and C. Pereyra,
Geometric characterizations of embedding theorems:
For Sobolev, Besov, and Triebel--Lizorkin spaces
on spaces of homogeneous type--via orthonormal wavelets,
J. Geom. Anal. 31 (2021), 8947--8978.

\vspace{-0.3cm}

\bibitem{hl23}
D. D. Haroske and Z. Liu,
Generalized Besov-type and Triebel--Lizorkin-type spaces,
Studia Math. 273 (2023), 161--199.

\vspace{-0.3cm}

\bibitem{hlms23}
D. D. Haroske, Z. Liu, S. D. Moura and L. Skrzypczak,
Embeddings of generalised Morrey smoothness spaces,
Acta Math. Sin. (Engl. Ser.) 41 (2025), 413--456.

\vspace{-0.3cm}

\bibitem{hms22}
D. D. Haroske, S. D. Moura and L. Skrzypczak,
Wavelet decomposition and embeddings of generalised Besov--Morrey spaces,
Nonlinear Anal. 214 (2022), Paper No. 112590, 26 pp.

\vspace{-0.3cm}

\bibitem{hms24}
D. D. Haroske, S. D. Moura and L. Skrzypczak,
On a bridge connecting Lebesgue and Morrey spaces
in view of their growth properties,
Anal. Appl. (Singap.) 22 (2024), 751--790.

\vspace{-0.3cm}

\bibitem{hs12}
D. D. Haroske and L. Skrzypczak,
Continuous embeddings of Besov--Morrey function
spaces, Acta Math. Sin. (Engl. Ser.) 28 (2012), 1307--1328.

\vspace{-0.3cm}

\bibitem{hs13}
D. D. Haroske and L. Skrzypczak,
Embeddings of Besov--Morrey spaces on bounded domains,
Studia Math. 218 (2013), 119--144.

\vspace{-0.3cm}

\bibitem{ht23}
D. D. Haroske and H. Triebel,
Morrey smoothness spaces: A new approach,
Sci. China Math. 66 (2023), 1301--1358.

\vspace{-0.3cm}

\bibitem{h16a}
K.-P. Ho, Singular integral operators,
John--Nirenberg inequalities and Triebel--Lizorkin type
spaces on weighted Lebesgue spaces with variable exponents,
Rev. Un. Mat. Argentina 57 (2016), 85--101.

\vspace{-0.3cm}

\bibitem{h16b}
K.-P. Ho, Vector-valued operators with singular
kernel and Triebel--Lizorkin block spaces with variable exponents,
Kyoto J. Math. 56 (2016), 97--124.

\vspace{-0.3cm}

\bibitem{hj94}
R. A. Horn and C. R. Johnson,
Topics in Matrix Analysis,
Corrected reprint of the 1991 original,
Cambridge University Press, Cambridge, 1994.

\vspace{-0.3cm}

\bibitem{hj13}
R. A. Horn and C. R. Johnson,
Matrix Analysis, Second edition,
Cambridge University Press, Cambridge, 2013.

\vspace{-0.3cm}

\bibitem{ky94}
H. Kozono and M. Yamazaki,
Semilinear heat equations and the Navier--Stokes equation
with distributions in new function spaces as initial data,
Comm. Partial Differential Equations 19 (1994), 959--1014.

\vspace{-0.3cm}

\bibitem{ks24}
S. Kakaroumpas and O. Soler i Gibert,
Vector valued estimates for matrix weighted maximal operators
and product $\operatorname{BMO}$, arXiv: 2407.16776v2.

\vspace{-0.3cm}

\bibitem{llor23}
A. Lerner, K. Li, S. Ombrosi and I. Rivera-R\'{\i}os,
On the sharpness of some quantitative
Muckenhoupt--Wheeden inequalities,
C. R. Math. Acad. Sci. Paris 362 (2024), 1253--1261.

\vspace{-0.3cm}

\bibitem{llor24}
A. Lerner, K. Li, S. Ombrosi and I. Rivera-R\'{\i}os,
On some improved weighted weak type inequalities,
Ann. Sc. Norm. Super. Pisa Cl. Sci. (5) (2024),
https://doi.org/10.2422/2036-2145.202407\_012.

\vspace{-0.3cm}

\bibitem{lbyy12}
B. Li, M. Bownik, D. Yang and W. Yuan,
Duality of weighted anisotropic Besov and Triebel--Lizorkin spaces,
Positivity 16 (2012), 213--244.

\vspace{-0.3cm}

\bibitem{lxy14}
P. Li, J. Xiao and Q. Yang,
Global mild solutions to modified Navier--Stokes equations with
small initial data in critical Besov-$Q$ spaces,
Electron. J. Differential Equations 2014, No. 185, 37 pp.

\vspace{-0.3cm}

\bibitem{ly13}
P. Li and Q. Yang,
Wavelets and the well-posedness of incompressible
magneto-hydro-dynamic equations in Besov type $Q$-space,
J. Math. Anal. Appl. 405 (2013), 661--686.

\vspace{-0.3cm}

\bibitem{lyy24a}
Z. Li, D. Yang and W. Yuan,
Matrix-weighted Besov--Triebel--Lizorkin spaces
with logarithmic smoothness,
Bull. Sci. Math. 193 (2024), Paper No. 103445, 54 pp.

\vspace{-0.3cm}

\bibitem{lyy24b}
Z. Li, D. Yang and W. Yuan,
Matrix-weighted Poincar\'{e}-type inequalities
with applications to logarithmic
Haj\l asz--Besov spaces on spaces of homogeneous type,
Submitted.

\vspace{-0.3cm}

\bibitem{liz72}
P. I. Lizorkin,
Operators connected with fractional differentiation,
and classes of differentiable functions,
Trudy Mat. Inst. Steklov. 117 (1972), 212--243.

\vspace{-0.3cm}

\bibitem{liz74}
P. I. Lizorkin, Properties of functions in the spaces
$\Lambda^{r}_{p, \theta}$,
Trudy Mat. Inst. Steklov. 131 (1974), 158--181.

\vspace{-0.3cm}

\bibitem{maz03}
A. L. Mazzucato,
Besov--Morrey spaces:
Function space theory and applications to non-linear PDE,
Trans. Amer. Math. Soc. 355 (2003), 1297--1364.

\vspace{-0.3cm}

\bibitem{mey92}
Y. Meyer,
Wavelets and Operators,
Translated from the 1990 French original by D. H.
Salinger, Cambridge Studies in Advanced Mathematics 37,
Cambridge University Press, Cambridge, 1992.

\vspace{-0.3cm}

\bibitem{nns16}
S. Nakamura, T. Noi and Y. Sawano,
Generalized Morrey spaces and trace operator,
Sci. China Math. 59 (2016), 281--336.

\vspace{-0.3cm}

\bibitem{nptv17}
F. Nazarov, S. Petermichl, S. Treil and A. Volberg,
Convex body domination and weighted estimates with matrix weights,
Adv. Math. 318 (2017), 279--306.

\vspace{-0.3cm}

\bibitem{nt97}
F. L. Nazarov and S. R. Tre\u{\i}l',
The hunt for a Bellman function: Applications to estimates for
singular integral operators and to other
classical problems of harmonic analysis,
Algebra i Analiz 8 (1996), 32--162;
translation in St. Petersburg Math. J. 8 (1997), 721--824.

\vspace{-0.3cm}

\bibitem{nie25}
M. Nielsen,
Bandlimited multipliers on matrix-weighted $L^p$-spaces,
J. Fourier Anal. Appl. 31 (2025), Paper No. 3, 10 pp.

\vspace{-0.3cm}

\bibitem{nik51}
S. M. Nikol'ski\u{\i},
Inequalities for entire analytic functions of
finite order and their application to the
theory of differentiate functions of several variables,
Trudy Mat. Inst. Steklov 38 (1951), 244--278.

\vspace{-0.3cm}

\bibitem{rs95}
A. Ron and Z. Shen,
Frames and stable bases for shift-invariant
subspaces of $L_2(\mathbf{R}^d)$,
Canad. J. Math. 47 (1995), 1051--1094.

\vspace{-0.3cm}

\bibitem{rou03}
S. Roudenko,
Matrix-weighted Besov spaces,
Trans. Amer. Math. Soc. 355 (2003), 273--314.

\vspace{-0.3cm}

\bibitem{rou04}
S. Roudenko,
Duality of matrix-weighted Besov spaces,
Studia Math. 160 (2004), 129--156.

\vspace{-0.3cm}

\bibitem{sal02}
L. Saloff-Coste,
Aspects of Sobolev-Type Inequalities,
London Mathematical Society Lecture Note Series,
289. Cambridge University Press, Cambridge, 2002.

\vspace{-0.3cm}

\bibitem{saw08}
Y. Sawano,
Wavelet characterization of Besov--Morrey
and Triebel--Lizorkin--Morrey spaces,
Funct. Approx. Comment. Math. 38 (2008), 93--107.

\vspace{-0.3cm}

\bibitem{saw09}
Y. Sawano,
A note on Besov--Morrey spaces
and Triebel--Lizorkin--Morrey spaces, Acta
Math. Sin. (Engl. Ser.) 25 (2009), 1223--1242.

\vspace{-0.3cm}

\bibitem{saw10a}
Y. Sawano,
Br\'{e}zis--Gallou\"{e}t--Wainger type
inequality for Besov--Morrey spaces,
Studia Math. 196 (2010), 91--101.

\vspace{-0.3cm}

\bibitem{saw10b}
Y. Sawano,
Besov--Morrey spaces and
Triebel--Lizorkin--Morrey spaces on domains,
Math. Nachr. 283 (2010), 1456--1487.

\vspace{-0.3cm}

\bibitem{sdh20}
Y. Sawano, G. Di Fazio and D. I. Hakim,
Morrey Spaces-Introduction and Applications to Integral Operators and PDE's.
Vol. II, Monographs and Research Notes in Mathematics,
CRC Press, Boca Raton, FL, 2020.

\vspace{-0.3cm}

\bibitem{st07}
Y. Sawano and H. Tanaka,
Decompositions of Besov--Morrey spaces
and Triebel--Lizorkin--Morrey spaces,
Math. Z. 257 (2007), 871--905.

\vspace{-0.3cm}

\bibitem{st09}
Y. Sawano and H. Tanaka,
Besov--Morrey spaces and Triebel--Lizorkin--Morrey
spaces for non-doubling measures,
Math. Nachr. 282 (2009), 1788--1810.

\vspace{-0.3cm}

\bibitem{syy24}
J. Sun, D. Yang and W. Yuan,
A framework of Besov--Triebel--Lizorkin type spaces via ball
quasi-Banach function sequence spaces I: Real-variable characterizations,
Math. Ann. 390 (2024), 4283--4360.

\vspace{-0.3cm}

\bibitem{tang13}
C. Tang,
A note on weighted Besov-type and Triebel--Lizorkin-type spaces,
J. Funct. Spaces Appl. 2013, Art. ID 865835, 12 pp.

\vspace{-0.3cm}

\bibitem{tx05}
L. Tang and J. Xu,
Some properties of Morrey type Besov--Triebel spaces,
Math. Nachr. 278 (2005), 904--917.

\vspace{-0.3cm}

\bibitem{tv97}
S. Treil and A. Volberg,
Wavelets and the angle between past and future,
J. Funct. Anal. 143 (1997), 269--308.

\vspace{-0.3cm}

\bibitem{tri73}
H. Triebel,
Spaces of distributions of Besov type on
Euclidean $n$-space, duality, interpolation,
Ark. Mat. 11 (1973), 13--64.	

\vspace{-0.3cm}

\bibitem{tri83}
H. Triebel,
Theory of Function Spaces,
Monographs in Mathematics 78,
Birkh\"auser, Basel, 1983.	

\vspace{-0.3cm}

\bibitem{tri92}
H. Triebel,
Theory of function spaces. II,
Monographs in Mathematics 84,
Birkh\"auser, Basel, 1992.

\vspace{-0.3cm}

\bibitem{tri06}
H. Triebel,
Theory of Function Spaces. III,
Monographs in Mathematics 100,
Birkh\"auser Verlag, Basel, 2006.

\vspace{-0.3cm}

\bibitem{tri13}
H. Triebel,
Local Function Spaces,
Heat and Navier--Stokes Equations,
EMS Tracts in Mathematics 20,
European Mathematical Society (EMS),
Z\"{u}rich, 2013.

\vspace{-0.3cm}

\bibitem{tri14}
H. Triebel,
Hybrid Function Spaces,
Heat and Navier--Stokes Equations,
EMS Tracts in Mathematics 24,
European Mathematical Society (EMS),
Z\"{u}rich, 2014.

\vspace{-0.3cm}

\bibitem{vol97}
A. Volberg,
Matrix $A_p$ weights via $S$-functions,
J. Amer. Math. Soc. 10 (1997), 445--466.

\vspace{-0.3cm}

\bibitem{wgx24}
S. Wang, P. Guo and J. Xu,
Embedding and duality of matrix-weighted modulation spaces,
Taiwanese J. Math. 29 (2025), 171--187.

\vspace{-0.3cm}

\bibitem{wyy23}
Q. Wang, D. Yang and Y. Zhang,
Real-variable characterizations and their
applications of matrix-weighted
Triebel--Lizorkin spaces,
J. Math. Anal. Appl. 529 (2024),
Paper No. 127629, 37 pp.

\vspace{-0.3cm}

\bibitem{wm58}
N. Wiener and P. Masani,
The prediction theory of multivariate
stochastic processes. II. The
linear predictor, Acta Math. 99 (1958), 93--137.

\vspace{-0.3cm}

\bibitem{x06}
J. Xiao,
Geometric $Q_p$ Functions,
Frontiers in Mathematics,
Birkh\"auser Verlag, Basel, 2006.

\vspace{-0.3cm}

\bibitem{x19}
J. Xiao,
$Q_\alpha$ Analysis on Euclidean Spaces,
Advances in Analysis and Geometry 1,
De Gruyter, Berlin, 2019.

\vspace{-0.3cm}

\bibitem{yy08}	
D. Yang and W. Yuan,
A new class of function spaces connecting
Triebel--Lizorkin spaces and $Q$ spaces,
J. Funct. Anal. 255 (2008), 2760--2809.

\vspace{-0.3cm}

\bibitem{yy10}
D. Yang and W. Yuan,
New Besov-type spaces and Triebel--Lizorkin-type
spaces including $Q$ spaces,
Math. Z. 265 (2010), 451--480.

\vspace{-0.3cm}

\bibitem{yy13}
D. Yang and W. Yuan,
Relations among Besov-type spaces,
Triebel--Lizorkin-type spaces and
generalized Carleson measure spaces,
Appl. Anal. 92 (2013), 549--561.

\vspace{-0.3cm}

\bibitem{yyz14}
D. Yang, W. Yuan and C. Zhuo,
Musielak--Orlicz Besov-type
and Triebel--Lizorkin-type spaces,
Rev. Mat. Complut. 27 (2014), 93--157.

\vspace{-0.3cm}

\bibitem{yyz24}	
D. Yang, W. Yuan and M. Zhang,
Matrix-weighted Besov--Triebel--Lizorkin spaces of
optimal scale: Boundedness of pseudo-differential operators,
the trace operator,
and Calder\'{o}n--Zygmund operators,
Submitted or arXiv: 2504.19060.

\vspace{-0.3cm}

\bibitem{yhmsy15}
W. Yuan, D. D. Haroske, S. D. Moura, L. Skrzypczak and D. Yang,
Limiting embeddings in smoothness Morrey spaces,
continuity envelopes and applications,
J. Approx. Theory 192 (2015), 306--335.

\vspace{-0.3cm}

\bibitem{yhsy15a}
W. Yuan, D. D. Haroske, L. Skrzypczak and D. Yang,
Embedding properties of Besov-type spaces,
Appl. Anal. 94 (2015), 319--341.

\vspace{-0.3cm}

\bibitem{yhsy15b}
W. Yuan, D. D. Haroske, L. Skrzypczak and D. Yang,
Embedding properties of weighted Besov-type spaces,
Anal. Appl. (Singap.) 13 (2015), 507--553.

\vspace{-0.3cm}

\bibitem{ysy10}					
W. Yuan, W. Sickel and D. Yang,
Morrey and Campanato Meet Besov, Lizorkin and Triebel,
Lecture Notes in Mathematics 2005,
Springer-Verlag, Berlin, 2010.

\end{thebibliography}
\end{document}